\def\inte#1{
\displaystyle\mathop{#1\kern0pt}^\circ }
\let\pa=\partial
\let\al=\alpha
\let\Ga=\Gamma
\let\f=\frac
\let\om=\omega
\let\G= \Gamma
\let\ka=\kappa
\def\mP{\mathbf{P}}
\def\mF{\mathbf{F}}
\def\cB{{\mathcal B}}
\def\cC{{\mathcal C}}
\def\cF{{\mathcal F}}
\def\cM{{\mathcal M}}
\def\fM{{\mathfrak M}}
\def\cN{{\mathcal N}}
\def\cP{{\mathcal P}}
\def\pa{\partial}
\def\virgp{\raise 2pt\hbox{,}}
\def\cdotpv{\raise 2pt\hbox{;}}
\def\eqdefa{\buildrel\hbox{\footnotesize def}\over =}
 \def\si{\sigma}
\def\C{\mathop{\mathbb C\kern 0pt}\nolimits}
\def\DD{\mathop{\mathbb D\kern 0pt}\nolimits}
\def\EE{\mathop{{\mathbb E \kern 0pt}}\nolimits}
\def\K{\mathop{\mathbb K\kern 0pt}\nolimits}
\def\N{\mathop{\mathbb N\kern 0pt}\nolimits}
\def\Q{\mathop{\mathbb Q\kern 0pt}\nolimits}
\def\R{\mathop{\mathbb R\kern 0pt}\nolimits}
\def\SS{\mathop{\mathbb S\kern 0pt}\nolimits}
\def \mh{\mathbf{h}}
\def \mj{\mathbf{j}}
\def \mn{\mathbf{n}}
\def \M{\mathcal M}
\def\F{{\mathfrak F }}
\def\U{{\mathfrak U }}
\def\tP{{\tilde{\mathcal{P}} }}
\def\tF{{\tilde{\mathfrak F} }}
\def\tm{{\tilde{m} }}
\def\tphi{{\tilde{\phi} }}
\def\<{\langle}
\def\>{\rangle}
\def\S{\mathbb{S}}
\def\R{\mathbb{R}}
\def\T{\mathbb{T}}
\def\Z{\mathbb{Z}}
\def\N{\mathbb{N}}
\def\si{\sigma}
\def\th{\theta}
\def\ga{\gamma}
\def\al{\alpha}
\def\be{\beta}
\def\de{\delta}
\def\vphi{\varphi}
\def\eqdefa{\buildrel\hbox{\footnotesize def}\over =}
\def\A{{\mathcal A}}
\def\gs{\gtrsim}
\def\ls{\lesssim}
\def\pa{\partial}
\def\vep{\varepsilon}
\def\ZZ{\mathop{\mathbb Z\kern 0pt}\nolimits}
\def\TT{\mathop{\mathbb T\kern 0pt}\nolimits}
\def\P{\mathop{\mathbb P\kern 0pt}\nolimits}
\def\na{\nabla}
\def\th{\theta}
\newcommand{\beq}{\begin{equation}}
\newcommand{\eeq}{\end{equation}}
\newcommand{\ben}{\begin{eqnarray}}
\newcommand{\een}{\end{eqnarray}}
\newcommand{\beno}{\begin{eqnarray*}}
\newcommand{\eeno}{\end{eqnarray*}}
\newtheorem{defi}{Definition}[section]
\newtheorem{thm}{Theorem}[section]
\newtheorem{lem}{Lemma}[section]
\newtheorem{rmk}{Remark}[section]
\newtheorem{col}{Corollary}[section]
\newtheorem{prop}{Proposition}[section]
\renewcommand{\theequation}{\thesection.\arabic{equation}}
\begin{document}

\title[Propagation of moments and sharp convergence rate for Boltzmann equation]
{Propagation of moments and sharp convergence rate for inhomogeneous non-cutoff Boltzmann equation with soft potentials
}

\author[C. Cao, L.-B. He and J. Ji]{Chuqi Cao, Ling-Bing He and Jie Ji}
\address[C.-Q. Cao]{Yau Mathematical Science Center and Beijing Institute of Mathematical Sciences and Applications, Tsinghua University\\
Beijing 100084,  P. R.  China.} \email{chuqicao@gmail.com}
\address[L.-B. He]{Department of Mathematical Sciences, Tsinghua University\\
Beijing 100084,  P. R.  China.} \email{hlb@tsinghua.edu.cn}
\address[J. Ji]{Department of Mathematical Sciences, Tsinghua University\\
Beijing 100084,  P. R.  China.} \email{jij19@mails.tsinghua.edu.cn}


\begin{abstract}  We prove the well-posedness  for the  non-cutoff Boltzmann equation with soft potentials when the initial datum  is close to the {\it global Maxwellian} and has only polynomial  decay at the large velocities in   $L^2$ space. As a result, we get the {\it propagation of the exponential moments}  and the {\it sharp rates} of the convergence to the {\it global Maxwellian} which seems the first results for the original equation with soft potentials. The new ingredients of the proof lie in localized techniques, the  semigroup method as well as the propagation of the polynomial and exponential  moments in   $L^2$ space.
\end{abstract}

\maketitle

\setcounter{tocdepth}{1}
\tableofcontents


\noindent {\sl Keywords:} {inhomogeneous  Boltzmann
equation, soft potentials, non-cutoff, propagation of moments, sharp convergence rate.}

\vskip 0.2cm

\noindent {\sl AMS Subject Classification (2010):} {35Q20, 35A99, 82C40.}

\renewcommand{\theequation}{\thesection.\arabic{equation}}
\setcounter{equation}{0}

\section{Introduction} The main purpose of the article is to investigate the well-posedness, propagation of moments and sharp convergence rates for the inhomogeneous Boltzmann equation with soft potentials. Differing from previous works, in our framework, we do not require that the linearized operator is self-adjoint and non-negative(see \eqref{pereq} for details). To be precise, we introduce the original Boltzmann equation which reads
\ben\label{1}
\partial_t F +v \cdot \nabla_x F =Q(F, F).
\een
Here $F(t, x, v) \geq 0$ is a distributional functions of colliding particles which, at time $t>0$ and position $x \in \T^3$, move with velocity $v \in \R^3$. We remark that the Boltzmann equation is one of the fundamental equations of mathematical physics and is a cornerstone of statistical physics. The Boltzmann collision operator $Q$ is a bilinear operator which acts only on the velocity variable $v$, that is
\[
 Q(G,F)(v)=\int_{\R^3}\int_{\mathbb{S}^2}B(v-v_*,\si)(G'_*F'-G_*F)d\si dv_*.
\]
Let us give some explanations on the collision operator.
\begin{enumerate}
\item  We use the standard shorthand $F=F(v),G_*=G(v_*),F'=F(v'),G'_*=G(v'_*)$, where $v',v'_*$ are given by
\ben\label{sigmare}
v'=\frac{v+v_*}{2}+\frac{|v-v_*|}{2}\sigma,~~v_*'=\frac{v+v_*}{2}-\frac{|v-v_*|}{2}\sigma,~\si\in\mathbb{S}^2.
\een
 This representation follows the parametrization of set of solutions of the physical law of elastic collision:
\beno
  v+v_*=v'+v'_*,  \quad
  |v|^2+|v_*|^2=|v'|^2+|v'_*|^2.
\eeno

\item  The nonnegative function $B(v-v_*,\si)$ in the collision operator is called the Boltzmann collision kernel. It is always assumed to depend only on $|v-v_*|$ and the deviation angle $\th$ through $\cos\th:=\frac{v-v_*}{|v-v_*|}\cdot\si$.
\item  In the present work,  our {\bf basic assumptions on the kernel $B$}  can be concluded as follows:
\begin{itemize}
  \item[$\mathbf{(A1)}$] The Boltzmann kernel $B$ takes the product form: $B(v-v_*,\si)=|v-v_*|^\ga b(\frac{v-v_*}{|v-v_*|}\cdot\si)$, where   $b$ is a nonnegative function.

  \item[$\mathbf{(A2)}$] The angular function $b(t)$ is not locally integrable and it satisfies
\[
\mathcal{K}\theta^{-1-2s}\leq \sin\theta b(\cos\theta)\leq \mathcal{K}^{-1}\theta^{-1-2s},~\mbox{with}~0<s<1,~\mathcal{K}>0.
\]

  \item[$\mathbf{(A3)}$]
  The parameter $\ga$ and $s$ satisfy the condition $-3<\gamma \le 1, s \in (0, 1)$ and $\gamma+2s>-1.$

  \item[$\mathbf{(A4)}$]  Without lose of generality, we may assume that $B(v-v_*,\si)$ is supported in the set $0\leq \th\leq \pi/2$, i.e.$\frac{v-v_*}{|v-v_*|}\cdot\si\geq0$, for otherwise $B$ can be replaced by its symmetrized form:
\beno
\overline{B}(v-v_*,\si)=|v-v_*|^\gamma\big(b(\frac{v-v_*}{|v-v_*|}\cdot\si)+b(\frac{v-v_*}{|v-v_*|}\cdot(-\si))\big) \mathrm{1}_{\frac{v-v_*}{|v-v_*|}\cdot\si\ge0},
\eeno
where $\mathrm{1}_A$ is the characteristic function of the set $A$.
\end{itemize}
  \end{enumerate}
\begin{rmk} For inverse repulsive potential, it holds that $\gamma = \frac {p-5} {p-1}$ and $s = \frac 1 {p-1}$ with $p > 2$. It is easy to check that $\gamma + 4s = 1$ which means that assumption $\mathbf{(A3)}$  is satisfied for the full range of the inverse power law model. Generally, the case $\gamma > 0$,  $\gamma = 0$, and  $\gamma < 0$ correspond to so-called hard, Maxwellian, and soft potentials respectively.
\end{rmk}
\subsection{Basic properties and the perturbation equation} We list some basic facts on the   equation.
 \smallskip

\noindent$\bullet$ {\bf Conservation Law.}  Formally if $F$ is a solution to equation \eqref{1} with the initial data $F_0$, then it enjoys the conservation of mass, momentum and the energy, that is,
\ben \f{d}{dt}\int_{\T^3\times{\R}^3} F(t,x,v)\varphi(v)dvdx= 0,\quad \varphi(v)=1,v,|v|^2\label{1.Conserv}.\een
For simplicity, we  normalize   the initial data $F_0$ in the following sense:
\ben\label{f0}
\int_{\T^3\times\R^3} F_0(x,v)\, dvdx= 1, \quad \int_{\T^3\times\R^3} F_0(x,v)\, v\, dvdx =0,
\quad \int_{\T^3\times\R^3} F_0(x,v)\, |v|^2 \, dvdx =3.
\een
This means that the  equilibrium associated to \eqref{f0} will be the standard Gaussian function, i.e.
\ben\label{DefM} \mu(v):=(2\pi)^{-3/2} e^{-|v|^2/2}, \een which has the same mass, momentum and energy as $F_0$.
\smallskip

\noindent$\bullet$ {\bf Perturbation Equation.} In our perturbation framework, we assume that
\beno
F=\mu+f,
\eeno where $f$ is the perturbed function. Then
the equation (\ref{1}) becomes
\ben\label{pereq}
\pa_tf+v\cdot\na_xf=Q(\mu,f)+Q(f,\mu)+Q(f,f):=-L(f)+Q(f,f),
\een
with the linearized operator $L=-Q(\mu,\cdot)-Q(\cdot,\mu)$. We emphasize that here $L$ is not a self-adjoint operator which is quite different from previous works. In fact, it brings the main obstruction to our problem.

\subsection{Brief review of previous results} Let us review those works which are related closely to ours.
\smallskip

\noindent$\bullet$\,{\it Existence and regularity theory for the Boltzmann equation.}  For the renormalized  solutions to the equation,  we refer to the pioneering work \cite{DL2} by DiPerna \& Lions for the angular cutoff case and  the work  \cite{AV} by Alexandre \&Villani    for the non-cutoff case.   For the conditional regularity theory and  local well-posedness result for the equation, we refer readers to \cite{AMUXY,CH,CH2,HJZ,HST,HMUY}.  Very recently,
 in \cite{IMS, IMS2, IS, IS2, IS3, S}, the authors got the global regularity with sole assumption on the uniform-in-time bounds of the macroscopic quantities, i.e.,\[
0<m_0 \le M(t, x) \le M_0, \quad E(t, x) \le E_0, \quad  H(t, x) \le H_0,
\]
for some constant $m_0, M_0, E_0, H_0$, where
\[
M(t, x) =\int_{\R^3} f(t, x, v) dv, \quad E(t, x) = \int_{\R^3} f(t, x, v) |v|^2 dv, \quad H(t, x) = \int_{\R^3} f(t, x, v) \ln f(t, x ,v) dv.
\]
Finally let us mention the recent work on the global existence of the equation with the rough initial data via De Giorgi methods(see  \cite{AMSY,SS}) for  hard  and moderate soft potentials respectively.
\smallskip

\noindent$\bullet$\,{\it Perturbing the equation in a symmetrized way.}
In this framework, we expand the solution as $F=\mu+\mu^{\f12}f$ where the perturbed part is in a specific way to make the linearized operator  self-adjoint and  non-negative. In fact, the corresponding linearized operator $L_\mu$ is defined by
\ben\label{Lmu}
L_{\mu} f  = -\frac {1} {\sqrt{\mu}}  (Q(\mu, \sqrt{\mu } f) - Q(\sqrt{\mu} f, \mu)  ),\quad \mbox{Null} (L_{\mu}  ) =\mbox{Span} \{\sqrt{\mu} ,\sqrt{\mu} v , \sqrt{\mu} |v|^2 \}.
\een
and enjoys the coercivity property(see \cite{P,MS}) as follows:
\[
 \quad (L_{\mu}f,  f)_{L^2_v} \ge C |||f|||^2, \quad  f \in \big(\hbox{Null} (L_{\mu}  )\big)^{\perp},
\]
where the triple norm represents the damping or the dissipation from the microscopic part of the function. For the global well-posedness around the global equilibrium, we refer readers to  Guo \cite{G2}  on the cutoff Boltzmann equation  and  Grassman-Strain \cite{GS} and Alexandre-Morimoto-Ukai-Xu-Yang \cite{ AMUXY2, AMUXY3, AMUXY4} on non-cutoff Boltzmann equation. See also \cite{DHWY,DLSS,DHYZ,HZ1,HZ2} and references therein for recent development. One may also check \cite{G4,GKTT} in the case of bounded domains.

\smallskip

\noindent$\bullet$\,{\it General perturbation theory via semi-group method.}  In the general perturbation framework, we refer readers to the earlier work \cite{Arkyard,Bernt}. By developing decay estimates on the resolvents and semigroups of non-symmetric operators in Banach spaces,  Gualdani-Mischler-Mouhot \cite{GMM} proved nonlinear stability for cutoff Boltzmann equation with  hard potentials in $L^1_vL^\infty_x(1+|v|^k), k>2$, with sharp rate of decay in time. It was later generalized to the non-cutoff case but still in the case of hard potentials in
\cite{HTT, AMSY}.

\bigskip

 Our current work is to investigate the equation in the similar setting but with soft potentials.
Our main goals can be summarized as follows:
\begin{enumerate}
 \item Prove the global well-posedness with the initial data that only have polynomial decay at large velocities in $L^2$ space;

 \item Prove the propagation of exponential moment in $L^2$ space;

 \item Obtain the sharp decay rate on the convergence and clarify its dependence on the initial data.
\end{enumerate}

\subsection{Basic notation and function space}  We begin with  basic notations.
\subsubsection{Notations}
  $(i)$ We write $a\ls b(a\gs b)$ indicate that there is a uniform constant $C$, which may be different on different lines, such that $a\leq Cb(a\geq Cb)$.
 We use the notation $a\sim b$ whenever $a\ls b$ and $b\ls a$.

 $(ii)$ We denote $C_{a_1,a_2,\cdots,a_n}$ by a constant depending on parameters $a_1,a_2,\cdots,a_n$. Moreover, we use parameter $\vep$ to represent different positive numbers much less than 1 and determined in different cases.

 $(iii)$ We write $a\pm$ indicate $a\pm\varepsilon$, where $\varepsilon>0$ is sufficiently small.   The notation $a^+$ means the maximum value of $a$ and $0$ and $[a]$ denotes the maximum integer which does not exceed $a$.

  $(iv)$ We use $(f, g)$ to denote the inner product of $f, g$ in the $v$ variable $(f, g)_{L^2_v}$ for short, if the integral is both in $x, v$  we will use $(f, g)_{L^2_{x, v}}$ to represent. We also use $(f, g)_{L^2_k}$ to denote $(f, g  \langle v \rangle^{2k})_{L^2_v}$.

 $(v)$ Gamma function and Beta function are defined by
\beno
 \Ga(x):=\int_0^\infty t^{x-1}e^{-t}dt,~ x>0,~B(p,q):=\int_0^1t^{p-1}(1-t)^{q-1}dt,~ p,q>0.
\eeno
We recall that Beta and Gamma functions fulfill the following properties:
\ben\label{gammafun}
B(p,q)=\frac{\G(p)\G(q)}{\G(p+q)};~\Ga(x+1)\sim_\vep\sqrt{2\pi x}\left(\frac{x}{e}\right)^x;~\Ga(x+z)\sim_{\vep,z} \Ga(x)x^z,~x\geq \vep>0,
\een
where $\sim_\vep$ or\ $\sim_{\vep,z}$ means the equivalence depends on parameter $\vep$ or $\vep$ and $z$.

\subsubsection{Function spaces} 

  $(i)$  For real numbers $m,l$, denote $\<v\>:=(1+|v|^2)^{1/2}$, and we define the weighted Sobolev space $H_l^m$ by
  \beno
   H^m_l:=\{f(v)|\|f\|_{ H^m_l}=\|\<D\>^m\<\cdot\>^lf\|_{L^2}<+\infty\}.
  \eeno

  $(ii)$ $a(D)$ is a pseudo-differential operator with the symbol $a(\xi)$ and it is defined by
  \beno
   (a(D)f)(v):= \frac{1}{(2\pi)^3}\int_{\R^3}\int_{\R^3}e^{i(v-u)\xi}a(\xi)f(u)dud\xi.
  \eeno

$(iii)$ Moreover, we define a norm $|[\cdot]|$ as
\ben\label{3norm}
|[f]|_{L^2_{k+\ga/2}}=\left(\int_{\R^3} \int_{\R^3}  |v-v_*|^\ga\mu(v)|f(v_*)\<v_*\>^k|^2dv dv_*\right)^{1/2}.
\een
Indeed, we have $|[f]|_{L^2_{k+\ga/2}}\sim_\ga \|f\|_{L^2_{k+\ga/2}}$ since $\int_{\R^3}|v-v_*|^\ga\mu(v)dv\sim \<v_*\>^\ga$.

$(iv)$ The $L\log L$ space is defined as
\beno
L\log L:=\Big\{f(v)|\|f\|_{L\log L}=\int_{\R^3}|f|\log(1+|f|)dv\Big\}.
\eeno

\subsection{Energy space, exponential  function and main results} To state our main results, we begin with the definitions of  energy spaces and  exponential  functions.

\begin{defi}[Energy spaces with polynomial weights]   For any function $f(x,v)$, we define
\beno
\|f\|_{H^n_xH^m_l}:=\left(\sum_{|\beta|\le n}\int_{\T^3}\|\pa_x^\beta f(x,\cdot)\|^2_{H^m_l}dx\right)^{1/2}.
\eeno
When $n=0$, we denote $H^0_xH^m_l:=L^2_xH^m_l$ for short. The energy space $X_k$    can be defined as follows:
\ben\label{X_k}
X_k:=\bigg\{ f\in L^2_{x,v}\big| \|f\|^2_{X_k}:=\sum_{|\alpha|=2}\|\pa^\al_xf\|_{ L^2_{x}L^2_{k-4|\al|}}^2+\|f\|_{L^2_x L^2_{k}}^2<\infty \bigg\},\quad \mbox{where} \quad k\ge8.
\een
The inner product in $X_k$ can be defined by
\beno
(f,g)_{X_k}:=\int_{x\in\TT^3}\bigg(\sum_{|\al|=2}\big(\<\cdot\>^{k-4|\al|}\pa^\al_xf,\<\cdot\>^{k-4|\al|}\pa^\al_xg\big)_{L^2_{v}}+\big(\<\cdot\>^{k}f,\<\cdot\>^{k}g\big)_{L^2_{v}}\bigg)dx.
\eeno
Similarly, we define
\ben\label{Y_k}
Y_k&:=&\bigg\{ f\in L^2_{x,v}\big| \|f\|^2_{Y_k}:=\sum_{|\alpha|=2}\|\pa^\al_xf\|_{ L^2_{x}H^s_{k-4|\al|+\ga/2}}^2+\|f\|_{L^2_x H^s_{k+\ga/2}}^2<\infty \bigg\},\\
\notag\bar{Y}_k&:=&\bigg\{ f\in L^2_{x,v}\big| \|f\|^2_{\bar{Y}_k}:= \sum_{|\alpha|=2}\|\pa^\al_xf\|_{ L^2_{x}H^s_{k-4|\al|+\ga/2+2s}}^2+\|f\|_{L^2_x H^s_{k+\ga/2+2s}}^2<\infty \bigg\}.
\een
\end{defi}

\begin{defi}[Energy spaces with exponential  weights]   Let $\beta \in (0,2),a>0$, $M\in\N$. The exponential function is defined as follows:
\ben\label{ex}
\mathcal{G}_\be^{a,M}(v):=\sum_{k=M}^\infty \f{a^{\f4\be ks}(\<v\>^\be)^{\f4\be ks}}{(\mathbb{G}(\f4\be k))^s},\quad\mbox{where}\quad \mathbb{G}(\f4\be k):=\Gamma(\f4\be k+1). \een
 If $\ka\in\N, s\in\R^+$ with $\ka s>6$, then the   energy space with exponential  weight can be defined by
\ben\label{exp space}
\mathcal{X}_{a,M}^\beta:=\bigg\{ f\in L^2_{x, v}\big|\|f\|_{\mathcal{X}_{a,M}^\beta}^2:=\sum_{|\al|=2}\|(\mathcal{G}_\be^{a,M})^{1/2}\<\cdot\>^{-\ka s|\al|}\pa^\al_x f\|_{L^2_{x,v}}^2+\|(\mathcal{G}_\be^{a,M})^{1/2}f\|_{L^2_{x,v}}^2<\infty\bigg\},
\een
For simplicity, we denote $\mathcal{G}_\be^{a}(v)=\mathcal{G}_\be^{a,0}(v)$ and $\mathcal{X}_{a}^\beta=\mathcal{X}_{a,0}^\beta$.
\end{defi}

\begin{rmk}
It is not difficult to see that  $\mathcal{G}_\be^{a}$ behaves like the exponential function $e^{C\<v\>^\be}$ (see Prop. \ref{expweight} below). The main advantage of $\mathcal{G}_\be^{a}$ results from the fact that it perfectly matches the structure of the collision operator which enables us to  prove the propagation of moments.
\end{rmk}

\begin{thm}[{\bf Global well-posedness and sharp decay rate}]\label{globaldecay}
  Consider the Cauchy problem of
\ben\label{14}
\partial_t f +v\cdot\na_xf= -L f +Q(f, f), \quad f|_{t=0} =f_0, \quad \Pi f_0 = 0, \quad \mu+f_0\geq0,
\een
with  the kernel $B$ verifying \big($\mathbf{(A1)}$-$\mathbf{(A4)}$\big) and
\ben\label{Pi}
\Pi g = \sum_{\varphi \in \{1,v_1,v_2,v_3,|v|^2\}} \left(\int_{\T^3\times \R^3 } g \varphi dx dv  \right)\varphi \mu.
\een
Assume that $\|f_0\|_{X_{k_1}}<\vep_0$ with $k_1:=48$ and $\varepsilon_0$ sufficiently small.
\begin{enumerate}
  \item[(i)]{\bf (When the initial data only have polynomial moments).} Let $\gamma \in (-3, 0]$.
   Then there exists a unique global solution $f$ satisfying $\mu+f(t, x, v)\ge 0$ and $\|f\|_{L^\infty([0,\infty);X_{k_1})\cap L^2([0,\infty);Y_{k_1})}\ls \vep_0$. Moreover for any $k\ge k_1$,  if $\|f_0\|_{X_{k}}<\infty$, then $f\in L^\infty([0,\infty);X_{k})\cap L^2([0,\infty);Y_{k})$.
\begin{itemize}
  \item[(1)]{\bf (Maxwellian Molecules($\gamma=0$))} There exists a  constant $  \lambda>0$ such that
\beno
\|f(t)\|_{X_k}\lesssim  e^{-\lambda t}\|f_0\|_{X_k}.
\eeno
 \item[(2)]{\bf (Soft potentials($\gamma \in (-3, 0)$))} If $k_1\leq\tilde{k}<k$, then
\ben\label{softdecaypo}
\|f(t)\|_{X_{\tilde{k}}}\lesssim  \<t\>^{-\frac{k-\tilde{k}}{|\ga|}}\|f_0\|_{X_k}.
\een
\item[(3)]{\bf (Upper bound of convergent rate for soft potentials)} Given $k>48$. Define the set of solutions to \eqref{14} by
$\mathfrak{A}:=\{f(t,x,v)~  \mbox{satisfies \eqref{14}}| \|f_0\|_{X_{k_1}}<\vep_0, \|f_0\|_{L^2_xL^1_k}\ls 1\}$. 
Then   for any $\eta>0$, we have
\ben\label{upperbound}
\sup_{f\in \mathfrak{A}}\|f(t)(1+t)^{\f k{|\ga|}+\eta}\|_{L^\infty_tL^1_{x,v}}=+\infty.
\een
\end{itemize}

 \item[(ii)] {\bf (When the initial data   have exponential moments).} Let $\gamma \in (-3, 1], s \in (0, 1), \gamma+2s>-1$. If $\|f_0\|_{\mathcal{X}_{b}^\beta}< \infty$ with $\be\in(0,2)$ and $b\in\R^+$, then there exists a constant $a=a_{f_0,\beta,b}\in(0,b)$ such that
\ben\label{softdecayexp}
\|f(t)\|_{\mathcal{X}_{a}^\beta}\leq c_1 e^{-c_2t^{\varrho}}\|f_0\|_{\mathcal{X}_{b}^\beta},
\een
  where $c_1,c_2$ are constants depending on $a,b,f_0$ and $\varrho=1$ if $\ga+\be s \ge 0$, $\varrho = \frac{\beta}{\beta-(\ga+\be s)}$ if $\gamma+\beta s<0$.
\end{enumerate}
\end{thm}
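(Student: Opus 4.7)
The plan is to close an a priori energy estimate in $X_{k_1}$ (with $k_1=48$) and bootstrap it to global existence, while simultaneously propagating higher polynomial and exponential moments. Local well-posedness follows from a standard iteration/contraction scheme based on bilinear estimates for $Q$ on the polynomially weighted $L^2$-type spaces $X_k, Y_k$. The crucial novelty is that $L=-Q(\mu,\cdot)-Q(\cdot,\mu)$ is \emph{not} self-adjoint on these spaces, so the classical coercivity of \cite{P,MS} does not apply directly; the whole proof is organized around a substitute dissipative inequality. Once it is in place, the continuation argument is standard: if $\|f_0\|_{X_{k_1}}<\vep_0$ then $\|f(t)\|_{X_{k_1}}$ remains small, yielding global existence, and $\mu+f\ge 0$ is preserved along the approximation scheme.

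\textbf{The dissipative inequality --- main obstacle.} The core step is to establish, for every $k\ge k_1$,
\begin{equation*}
\tfrac12\tfrac{d}{dt}\|f\|_{X_k}^2 + c_0\,\|f\|_{Y_k}^2 \;\lesssim\; \|f\|_{L^2_{x,v}}^2 + \|f\|_{X_k}\,\|f\|_{Y_k}^2 .
\end{equation*}
For the linear contribution I would use a semigroup-type splitting $L=\mathcal{A}_R+\mathcal{B}_R$ in the spirit of \cite{GMM,HTT}, adapted to the $L^2$ framework: $\mathcal{A}_R$ is a velocity-truncated smoothing remainder producing only a lower-order $L^2_{x,v}$ term (absorbed using that $\Pi f_0=0$ is propagated by the flow together with $(\ker L)^\perp$-coercivity), while $\mathcal{B}_R$ carries the anisotropic non-cutoff dissipation $\langle v\rangle^{\gamma/2}\langle D_v\rangle^s$. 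The non-self-adjointness forces a separate treatment of $Q(\mu,f)$ and $Q(f,\mu)$: the former via the pseudo-differential symbol calculus of \cite{AMUXY,HJZ}, the latter directly, using that the super-polynomial decay of $\mu$ dominates any weight $\langle v\rangle^{2k}$. The nonlinear term $(Q(f,f),f)_{X_k}$ is controlled by the trilinear decomposition of \cite{HJZ,AMUXY2}, placing one factor into the $Y_k$ norm (to be absorbed on the left) and the other two into $X_k$, closing the estimate by smallness.

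\textbf{Decay rates.} For Maxwellian molecules ($\gamma=0$), $Y_k$ controls $X_k$ on $(\ker\Pi)^\perp$, which is preserved since $\Pi f_0=0$; Gronwall then yields exponential decay. For soft potentials, $\|\cdot\|_{Y_k}$ loses $|\gamma|$ powers of weight compared to $\|\cdot\|_{X_k}$, and the interpolation
\begin{equation*}
\|f\|_{X_{\tilde k}}^{\,2(k-\tilde k)+|\gamma|} \;\lesssim\; \|f\|_{Y_{\tilde k}}^{\,2(k-\tilde k)}\,\|f\|_{X_k}^{\,|\gamma|},
\end{equation*}
combined with the uniform propagation of $\|f(t)\|_{X_k}$ (by the same energy method at level $k$), converts the energy inequality into an ODE of type $y'+y^{1+a}\lesssim 0$ with $a=|\gamma|/(2(k-\tilde k))$, producing precisely \eqref{softdecaypo}. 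Case (ii) runs on the same blueprint with $X_k$ replaced by $\mathcal{X}_a^\beta$: the combinatorial structure of $\mathcal{G}_\beta^{a,M}$ is designed so that Povzner-type cancellations in $Q$ close the propagation with a slightly smaller parameter $a<b$ (the threshold $M$ absorbs the initial defect in \emph{low} exponential modes), and the same interpolation scheme, now between $\mathcal{X}_a^\beta$ and $\mathcal{X}_{a'}^\beta$ for $a<a'<b$, produces \eqref{softdecayexp} with $\varrho=\beta/(\beta-(\gamma+\beta s))$ when $\gamma+\beta s<0$, and $\varrho=1$ otherwise.

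\textbf{Sharpness.} For \eqref{upperbound}, I would construct, for each $n\in\N$, an initial datum $f_{0,n}$ with tail $f_{0,n}(v)\sim c_n\langle v\rangle^{-(k+3+\eta_n)}\mathbf{1}_{|v|\ge R_n}$, choosing $R_n\to\infty$, $\eta_n\downarrow 0$, and $c_n$ so that $\|f_{0,n}\|_{L^2_xL^1_k}\lesssim 1$, $\|f_{0,n}\|_{X_{k_1}}<\vep_0$, and $\Pi f_{0,n}=0$. Because the linearized damping at velocity $v$ is $\sim\langle v\rangle^\gamma$, a direct computation of the high-velocity contribution to $\int f_n(t,x,v)\,dxdv$ shows the $L^1_{x,v}$ decay cannot exceed $\langle t\rangle^{-(k-\eta_n)/|\gamma|}$; by (i), the nonlinear correction is strictly subleading. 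Sending $n\to\infty$ then contradicts any bound of the form $\|f_n(t)(1+t)^{k/|\gamma|+\eta}\|_{L^\infty_tL^1_{x,v}}\le C$ uniform in $n$, yielding \eqref{upperbound}. The most delicate point is to check that the nonlinear perturbation truly does not spoil the leading-order tail; this rests on the refined moment estimates supplied by (i).
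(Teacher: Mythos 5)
Your outline reproduces the general flavor of the argument (weighted $L^2$ energy method, a Gualdani--Mischler--Mouhot type enlargement, interpolation of moments for the algebraic decay, and special initial data for sharpness), but it has a genuine gap at the very point the paper is organized around. You propose to absorb the lower-order term produced by the splitting $L=\mathcal{A}_R+\mathcal{B}_R$ ``using that $\Pi f_0=0$ is propagated together with $(\ker L)^\perp$-coercivity.'' For the non-symmetrized linearization $L=-Q(\mu,\cdot)-Q(\cdot,\mu)$ there is no coercivity estimate on $(\ker L)^\perp$ in the polynomially weighted spaces $X_k$: the best one gets from the direct computation is $(Lf,f\langle v\rangle^{2k})\ge c_0\|f\|^2_{H^s_{k+\gamma/2}}-C_k\|f\|^2_{L^2}$, and the term $-C_k\|f\|^2_{L^2}$ cannot be removed by orthogonality because $L$ is not self-adjoint and the weight is only polynomial. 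The paper's resolution is different and essential: one replaces $\|f\|_{X_k}^2$ by the equivalent norm $\eta\|f\|_{X_k}^2+\int_0^\infty\|\mathcal{S}_L(\tau)f\|^2_{H^2_xL^2_v}\,d\tau$, whose time derivative produces exactly $-\|f\|^2_{H^2_xL^2_v}$ and cancels the bad term; this in turn requires quantitative decay of $\mathcal{S}_L(t)\Pi^\perp$ and of the semigroup of the dual operator $L^*_l$ in polynomially weighted spaces (the $A+B$ factorization, the dual estimates, and a H\'erau-type hypoelliptic regularization step giving $\vartheta_1\vartheta\in L^1$), none of which appears in your plan. Without this device (or an equivalent substitute) the Gr\"onwall argument does not close, for either the global existence or the decay rates.

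A second concrete gap is in the sharpness statement \eqref{upperbound}. Your construction with heavy polynomial tails is plausible, but the sentence ``by (i), the nonlinear correction is strictly subleading'' is precisely the step that needs a proof: part (i) gives upper bounds on weighted norms of $f$, not a lower bound on how much $L^1_k$ mass survives at large velocities under the full nonlinear collision operator. The paper supplies this through a localization lemma: for data concentrated on the annulus $|v|\sim 2^j$ one proves $\frac{d}{dt}\|\mathcal{P}_jF\|_{L^1_k}\ge -C_k2^{j\ga}(1+\|F\|_{H^s_3})\|F\|_{L^1_k}$ for the solution $F=\mu+f$ itself, which requires splitting the collision integral according to $|v|\sim 2^j$, $|v|\gg 2^j$, $|v|\ll 2^j$ and controlling the singular region $|v-v_*|\ll|v|$ by the $H^s$ regularity of $F$ (integrable in time by (i)). Only with such a quantitative persistence estimate can one conclude that the localized moment survives up to times $t\sim 2^{-j(\ga+\de)}$ and deduce the lower bound $(1+t)^{-k/|\ga+\de|}$; asserting subleading nonlinear corrections for a tail-type datum is not enough. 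The exponential-moment part of your sketch is also only an outline, but there the missing ingredients (the $k^s$ gain in the coercivity, the combinatorial control of the Povzner expansion uniformly in $k$, and the weight-comparison argument producing the stretched exponential) are at least consistent in spirit with the paper's route.
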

\begin{rmk} The smallness assumption is only imposed on the initial data with finite polynomial moment. Compared to the results obtained in the symmertrized perturbation framework, we do not require the smallness assumption to prove the propagation of the exponential moment.
\end{rmk}

Several comments on the results are in order:

 \subsubsection{Comment on the global well-posedness for the initial data with polynomial moment.} As we mentioned before, in general perturbation framework, the Cauchy problem of the non-cutoff Boltzmann equation with hard potential had been solved in   \cite{HTT, AMSY} if the initial data  only have finite polynomial moment. In the present work, we focus on the Maxwellian molecules and soft potentials case(i.e., $\gamma\in(-3,0]$).

 \smallskip

To prove the desired results, we rely heavily on the following observations. Some of them are new and have independent interest.
\smallskip

\noindent (i) We develop the coercivity estimates for $(-Q(F, f),f\langle \cdot\rangle^{2k})_{L^2_v}$ where $F$ is a non-negative function(see Theorem \ref{T24}). Roughly speaking, we show that
\ben\label{CR1}(-Q(F, f),f\langle \cdot\rangle^{2k})_{L^2_v}\sim C_F(\|f\|_{H^s_{k+\gamma/2}}^2+k^s\|f\|_{L^2_{k+\gamma/2}}^2)+ \mathrm{L.O.T.}
\een
Here we not only catch the explicit factor $k^s$ in front of the damping term $\|f\|_{L^2_{k+\gamma/2}}^2$ but also
 have good control of the lower order terms(L.O.T), in particular for the coefficients related to $k$. It is the key point   to define  the special exponential moment function $\mathcal{G}_\be^{a}$ and then prove its propagation.
\smallskip

 \noindent (ii) To control the lower order terms in \eqref{CR1}, we resort to the semi-group method  by Gualdani-Mischler-Mouhot \cite{GMM}. Loosely speaking,  if $\mathcal{S}_L$ denotes the semi-group generated by $-L$ defined in \eqref{pereq}, then
 we  define a scalar product by
\[
((f, g))_k := (f, g)_{L^2_k} + \eta\int_0^{+\infty} (\mathcal{S}_L(\tau )f, \mathcal{S}_L(\tau) g )_{L^2_v} d\tau.
\] Due to the fact that
\[
\int_0^{+\infty} (\mathcal{S}_L(\tau ) L f, \mathcal{S}_L(\tau) f )_{L^2_v} d\tau = -\int_0^{+\infty} \frac d {d\tau} \Vert \mathcal{S}_L(\tau) f \Vert_{L^2}^2 d\tau = \Vert f \Vert_{L^2}^2-\lim_{\tau \to \infty} \Vert \mathcal{S}_L(\tau) f \Vert_{L^2}^2 =  \Vert f \Vert_{L^2}^2,
\]
 we deduce from \eqref{CR1} that
\[
((Lf, f))_k = (Lf, f)_{L^2_k} + \eta\int_0^\infty (\mathcal{S}_L(\tau ) L f, \mathcal{S}_L(\tau) f)_{L^2_v} d\tau \sim \|f\|_{H^s_{k+\gamma/2}}^2+k^s\|f\|_{L^2_{k+\gamma/2}}^2,
\]
if we choose  proper $\eta$. This indicates that  $L$  still can be regarded as a non-negative operator in a suitable function space.
Here technically we   will use the  regularity method by F. H\'earu in \cite{H2} and the dual method(i.e., to estimate $L^*$, the dual operator of $L$) to get the short time and long time behavior of the semi-group $\mathcal{S}_L$.
\smallskip

\noindent (iii) To implement the strategy, in particular to handle the nonlinear terms,
 technically we have to estimate  the   commutator  $(\langle D\rangle^\ell Q(g, h) - Q(g,\langle D \rangle^\ell h),\langle D\rangle^\ell f)_{L^2_v}$ for all  $\ell \ge 0$. Thanks to the localized techniques  in both phase and frequency space, fortunately we get the following results which have independent  interest:

\begin{thm}\label{le1.24}
Let $\gamma \in (-3, 1], s \in (0, 1), \gamma+2s>-1,~\ell\in \R^+, ~\{\omega_i\}_{i=1}^8\subset \R$ satisfies
$\omega_i+\omega_{i+1}=\gamma+2s-1,i=1,3,5,7$. Suppose that $g, f$ and $h$ are smooth functions. For any small constant $\delta>0$ we have
 \smallskip

$\bullet$   If $2s<1/2$, then for any $N \ge 0$ we have
  \beno
&&|(\<D\>^\ell Q(g,h)-Q(g,\<D\>^\ell h),\<D\>^\ell f)_{L^2_v}|\ls C_{N,\ell}(\|g\|_{L^1_{(\ga+2s)^++(-\omega_1)^++(-\omega_2)^++\de}}\|h\|_{H^{\ell}_{\omega_1}}\|f\|_{H^{\ell}_{\omega_2}}\\
&&+\|g\|_{L^2_{2+(-\omega_3)^++(-\omega_4)^+}}\|h\|_{H^{\ell}_{\omega_3}}\|f\|_{H^{\ell}_{\omega_4}}+\|h\|_{L^2_{(-\omega_5)^++(-\omega_6)^+}}\times\|g\|_{H^{\ell}_{\omega_5}}\|f\|_{H^{\ell}_{\omega_6}}+\|h\|_{H^\ell_{(-\om_7)^++(-\om_8)^+}}\|g\|_{L^{2}_{\om_7}}\\
&&\times\|f\|_{H^{\ell}_{\om_8}}+(\|g\|_{L^1_{(\ga+2s)^++(-\omega_1)^++(-\omega_2)^++\de}}+\|g\|_{L^2_{2+(-\omega_3)^++(-\omega_4)^+}}) \|h\|_{H_{-N}^{-N}}\|f\|_{H_{-N}^{-N}}+(\|h\|_{L^2_{(-\omega_5)^++(-\omega_6)^+}}\\
&&+\|h\|_{H^\ell_{(-\omega_7)^++(-\omega_8)^+}})\|g\|_{H_{-N}^{-N}}\|f\|_{H_{-N}^{-N}}).
\eeno

 $\bullet$   If $1/2\le 2s  \le  2$,  we set
$a, b, c_i, d_i \ge  0, i=1,2,3$ and $a+b=(2s-1)1_{2s>1}+(2s-1+\de)1_{2s=1}+0_{1/2\leq2s<1}$, $c_i, d_i\in[0,2s-1/2]$ with $c_i+d_i=2s-1/2,i=1,2,3$. Then  for any $N \ge 0$,
  \beno
&&|(\<D\>^\ell Q(g,h)-Q(g,\<D\>^\ell h),\<D\>^\ell f)_{L^2_v}|\ls C_{N,\ell}(\|g\|_{L^1_{(\ga+2s)^++(-\omega_1)^++(-\omega_2)^++\de}}\|h\|_{H^{\ell+a}_{\omega_1}}\|f\|_{H^{\ell+b}_{\omega_2}}\\
&&+\|g\|_{L^2_{2+(-\omega_3)^++(-\omega_4)^+}}\|h\|_{H^{\ell+c_1}_{\omega_3}}\|f\|_{H^{\ell+d_1}_{\omega_4}}+\|h\|_{L^2_{(-\omega_5)^++(-\omega_6)^+}}\|g\|_{H^{\ell+c_2}_{\omega_5}}\|f\|_{H^{\ell+d_2}_{\omega_6}}+\|h\|_{H^\ell_{(-\om_7)^++(-\om_8)^+}}\\
&&\times\|g\|_{H^{c_3}_{\om_7}}\|f\|_{H^{\ell+d_3}_{\om_8}}+(\|g\|_{L^1_{(\ga+2s)^++(-\omega_1)^++(-\omega_2)^++\de}}+\|g\|_{L^2_{2+(-\omega_3)^++(-\omega_4)^+}})\|h\|_{H_{-N}^{-N}}\|f\|_{H_{-N}^{-N}}\\
&&+(\|h\|_{L^2_{(-\omega_5)^++(-\omega_6)^+}}+\|h\|_{H^\ell_{(-\omega_7)^++(-\omega_8)^+}})\|g\|_{H_{-N}^{-N}}\|f\|_{H_{-N}^{-N}}).
\eeno
\end{thm}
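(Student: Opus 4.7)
The natural starting point is to write the commutator in a form where the symbol difference is explicit. I would use the standard representation
\beno
Q(g,h)(v)=\int_{\R^3}\int_{\SS^2}B(v-v_*,\si)\bigl(g'_*h'-g_*h\bigr)d\si dv_*,
\eeno
and insert the Fourier representation of $\<D\>^\ell$ to rewrite
\beno
\<D\>^\ell Q(g,h)(v)-Q(g,\<D\>^\ell h)(v)=\frac{1}{(2\pi)^3}\int e^{iv\cdot\xi}\bigl(\<\xi\>^\ell-\<\xi'\>^\ell\bigr)\mathcal{F}\bigl(B(\cdot-v_*,\si)g_*h\bigr)(\xi,\cdots)\,d\xi\,\cdots
\eeno
(schematically), where $\xi'$ is the image of $\xi$ under the collisional change of variables. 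The symbol difference $\<\xi\>^\ell-\<\xi'\>^\ell$, combined with the geometric identity $|v-v'|\ls\th|v-v_*|$, is what ultimately compensates the angular singularity $b(\cos\th)\sim\th^{-1-2s}$.

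The plan is then to perform a dyadic decomposition in both the velocity variable and the angular variable $\th$, writing $g=g^\phi+g^{1-\phi}$ relative to a cutoff separating the diagonal $v\sim v_*$, and decomposing the sphere integral into $\th\in[2^{-j-1},2^{-j}]$. On each dyadic piece, I would apply Taylor's formula at order one to the symbol, using
\beno
\<\xi\>^\ell-\<\xi'\>^\ell=\ell\int_0^1\<\xi_t\>^{\ell-2}\xi_t\cdot(\xi-\xi')dt,\qquad \xi_t=(1-t)\xi'+t\xi,
\eeno
to gain a factor $|\xi-\xi'|\ls\th|\xi|$, which after integration in $\si$ is summable against $\th^{-1-2s}$ when $2s<1$; when $2s\ge 1$, a \emph{second-order} Taylor expansion with remainder $\ls\th^2|\xi|^2$ (splitting the factor $|\xi|^{2s}$ between the $h$-slot and the $f$-slot, hence the parameters $a+b=2s-1$ or $c_i+d_i=2s-1/2$) is needed to preserve integrability. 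This second-order mechanism, combined with the cancellation lemma applied to the leading non-antisymmetric term, is the origin of the distinction between the two regimes in the statement.

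Once the symbol difference is handled, the remaining task is to place the various velocity weights. The angular manipulation produces factors of $|v-v_*|^{\ga+2s-1}$ (after splitting one factor of $|v-v_*|^{\ga+2s}$ between the symbol bound and the dyadic sum), which I distribute asymmetrically as $\<v\>^{\om_i}\<v_*\>^{\om_{i+1}}$ with $\om_i+\om_{i+1}=\ga+2s-1$ using the elementary inequality $|v-v_*|\ls\<v\>\<v_*\>$. The four pairs $(\om_1,\om_2),\ldots,(\om_7,\om_8)$ correspond respectively to the upper bound of singular region estimate (giving the $L^1_{(\ga+2s)^+}$ norm of $g$ from the cancellation lemma), to the regular region estimate producing an $L^2_2$ norm of $g$, and to the two terms produced by swapping roles of $g$ with $h$ via duality or by moving $\<D\>^\ell$ through the $h'-h$ difference. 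Each piece is closed by Cauchy--Schwarz in $v,v_*$ and Plancherel in the frequency dual to $v$, yielding one of the four structural bilinear terms in the statement.

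Finally, I include the low frequency remainder terms $\|\cdot\|_{H^{-N}_{-N}}$: these arise because the Littlewood--Paley truncation separating ``high'' and ``low'' frequencies of $h$, $f$, and $g$ leaves an error supported on a compact frequency set, on which all Sobolev norms of negative index are equivalent; choosing $N$ arbitrarily large is harmless because the bilinear structure is trivial on that compact set. The main technical obstacle I anticipate is the careful bookkeeping in the case $1/2\le 2s\le 2$: there the first-order Taylor expansion is insufficient, and one must simultaneously track the order of derivatives transferred across the factors $g, h, f$ (giving the $a,b$ and $c_i,d_i$ splittings) while ensuring that no term produces a non-integrable angular factor. Once this symbolic calculus is in place, the remainder of the argument is a standard application of Hardy--Littlewood--Sobolev and weighted Cauchy--Schwarz, uniform in the dyadic parameters thanks to the factor gained from the symbol difference.
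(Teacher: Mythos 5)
There is a genuine gap, and it sits at the heart of your mechanism. For the commutator $\langle D\rangle^\ell Q(g,h)-Q(g,\langle D\rangle^\ell h)$, Bobylev's formula gives on the Fourier side the difference $\langle\xi\rangle^\ell-\langle\xi-\eta\rangle^\ell$, where $\xi$ is the output frequency and $\xi-\eta$ is the frequency of $h$; these differ by $\eta$, the frequency of $g$, and a Taylor expansion of this symbol difference gains a factor $|\eta|$ (a derivative transferred onto $g$), \emph{not} a factor $\theta|\xi|$. Your proposal instead posits a difference $\langle\xi\rangle^\ell-\langle\xi'\rangle^\ell$ with $\xi'$ a collisional rotation of $\xi$, of size $\theta|\xi|$, and makes this the source of the angular gain; that object does not appear in this commutator. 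Consequently, following your plan the angular integral $\int\theta^{-1-2s}d\theta$ is not regularized by the symbol step at all: the compensation of the singularity must come, as in the paper, from the cancellation in the kernel itself, i.e.\ from Taylor-expanding $\mathcal{F}(\Phi_k^\gamma)(\eta-\xi^-)-\mathcal{F}(\Phi_k^\gamma)(\eta)$ in $\xi^-$ (with $|\xi^-|=|\xi|\sin(\theta/2)$), combined with a splitting of the $(\eta,\xi^-)$ region and the regular/singular changes of variables. This kernel-side cancellation is what the paper's Lemma on $\langle\F_j\langle D\rangle^\ell Q_k(g,h)-Q_k(g,\F_j\langle D\rangle^\ell h),\F_j\langle D\rangle^\ell f\rangle$ exploits, and it is absent from your argument.

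Two further consequences of this misidentification show up in the quantitative structure. First, your first-order/second-order Taylor dichotomy predicts losses tied to the threshold $2s\ge1$ and attached to the $L^1$-in-$g$ term, whereas the theorem's loss $c_i+d_i=2s-1/2$ appears already for $2s\ge1/2$ and is attached to the $L^2$-based terms; in the paper this $2s-1/2$ arises from the \emph{singular} part of the kernel ($|v-v_*|\lesssim1$, the piece $Q_{-1}$), where the dyadic-in-$|v-v_*|$ gain $2^{k(\gamma+2s-1)}$ is unavailable and one pays a Bernstein/Cauchy--Schwarz factor $2^{(2s-1/2)^+j}$; your plan has no separate treatment of this region and so cannot produce the stated exponents for $1/2\le 2s<1$. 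Second, the remainder terms $\|h\|_{H^{-N}_{-N}}\|f\|_{H^{-N}_{-N}}$ carry arbitrarily strong \emph{velocity} decay as well as negative regularity; they come from pseudo-differential remainders in the commutators between the phase-space cutoffs $\cP_k$ and the frequency cutoffs $\F_j\langle D\rangle^\ell$ (the paper's $\mathfrak{D}_2$, via the symbolic calculus lemma for $[M(D),\Phi(v)]$) and from far-off-diagonal frequency interactions, not from "a compact frequency set on which all negative norms are equivalent"; that explanation would not yield the $\langle v\rangle^{-N}$ weights. To repair the proposal you would need: (i) the double dyadic decomposition in $|v-v_*|$ (the $\Phi_k^\gamma$) and in frequency, with the kernel-FT cancellation doing the angular work; (ii) a separate analysis of the singular block $Q_{-1}$ distinguishing the frequency regimes of $g,h,f$, which is where the four structural terms (including the one with $\|g\|_{H^{\ell+c_2}_{\omega_5}}$, when $g$ carries the dominant frequency) actually originate; and (iii) the pseudo-differential commutator estimates producing the smoothing remainders.
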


\subsubsection{Comment on propagation of the exponential moment.} The propagation of exponential moment for the inhomogeneous Boltzmann equation with cutoff for the hard potentials had been investigated in \cite{GMM}, see also \cite{F}. To our best knowledge, our result \eqref{softdecayexp} seems to the first one on the propagation of the exponential moment for the original equation with soft potentials, in particular for the inverse power law model.
\smallskip

\noindent (i)  We begin with a proposition which states that $\mathcal{G}_\be^{a}(v)$  behaves like  $e^{C_{s,\be}a\<v\>^\be}$.
\begin{prop}\label{expweight}
Suppose that $a>0$ and $\be\in (0,2)$. Then there exists constant $C$ which only depends on $a,\be$ and $s$ such that
\ben\label{pro1.1}
C_{a,s,\be}e^{C_{s,\be}\f a 2\<v\>^\be}\leq\mathcal{G}_\be^{a}(v)\leq C_{a,s,\be}e^{C_{s,\be}a\<v\>^\be}.
\een
Here and below, the constant  $C_{a,s,\be}$ depends on $a,s,\be$ and can be different in different lines. Thus for fixed $s,\be$ and any $a\in\R^+$, there exists $b<a$ such that
\ben\label{pro1.2}
\mathcal{G}_\be^{b}(v)/\mathcal{G}_\be^{a}(v)\leq C_{a,b,s,\be}e^{-C_{a,b,s,\be}\<v\>^\be}.
\een
\end{prop}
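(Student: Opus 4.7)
The plan is to view $\mathcal{G}_\be^a(v)$ as a one-dimensional power series in the scalar $r:=a\<v\>^\be\in[0,\infty)$ and to analyze it by a Laplace/saddle-point argument. With $\alpha:=4/\be\in(2,\infty)$, the definition rewrites as $\mathcal{G}_\be^a(v)=\sum_{k=0}^{\infty}\bigl(r^{\alpha k}/\Gamma(\alpha k+1)\bigr)^{s}$. The regime $r\le 1$ is trivial, since $\mathcal{G}_\be^a(v)$ is a continuous function of $r$ that takes values in a bounded positive interval containing $1$ (the $k=0$ term is $1$ and the remainder is summable because $\Gamma(\alpha k+1)$ grows faster than exponentially in $k$), and the exponentials in \eqref{pro1.1} are also bounded there. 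I will therefore focus on $r\ge 1$, where the Stirling asymptotic $\Gamma(\alpha k+1)\sim\sqrt{2\pi\alpha k}(\alpha k/e)^{\alpha k}$ from \eqref{gammafun} rewrites each summand with $k\ge 1$ as $(2\pi\alpha k)^{-s/2}\exp(\phi(\alpha k))(1+O((\alpha k)^{-1}))$, where $\phi(x):=sx\ln(er/x)$ is strictly concave on $(0,\infty)$, attains its maximum $\phi(r)=sr$ at the saddle $x=r$, and has $\phi''(r)=-s/r$. Thus the sum is expected to behave like $e^{sr}$ up to polynomial prefactors in $r$, and the target will be to prove \eqref{pro1.1} with constant $C_{s,\be}:=2s$ so that these prefactors can be absorbed into the exponential.

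For the upper bound I would split the series at $\alpha k=2r$. In the inner range $\alpha k\le 2r$ there are $\ls 1+r$ terms, each bounded by the saddle value $\le C_{s,\be}e^{sr}$, so this contribution is $\le C_{s,\be}(1+r)e^{sr}$. In the outer range $\alpha k>2r$, the summands decay at least geometrically with ratio $\le 2^{-s\alpha}<1$ once $\alpha k$ is sufficiently large, so the tail is dominated by its leading term which is again $\le C_{s,\be}e^{sr}$. Combining the two ranges and using the elementary inequality $(1+r)\le C_s e^{sr}$ yields $\mathcal{G}_\be^a(v)\le C_{a,s,\be}e^{2sr}=C_{a,s,\be}e^{C_{s,\be}a\<v\>^\be}$.

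For the lower bound I would retain only the single term $k^\star:=[r/\alpha]$ closest to the saddle. Writing $\alpha k^\star=r-\theta$ with $\theta\in[0,\alpha)$ and applying Stirling gives $\mathcal{G}_\be^a(v)\ge\bigl(r^{\alpha k^\star}/\Gamma(\alpha k^\star+1)\bigr)^s\ge c_{s,\be}e^{sr}/(1+r)^{s/2}\ge c'_{s,\be}e^{sr}=c'_{s,\be}e^{C_{s,\be}a\<v\>^\be/2}$, where the last inequality uses that $(e^r/(1+r))^{s/2}$ is bounded below on $[1,\infty)$. This establishes \eqref{pro1.1} with $C_{s,\be}=2s$.

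Assertion \eqref{pro1.2} then follows immediately by combining the upper bound for $\mathcal{G}_\be^b$ with the lower bound for $\mathcal{G}_\be^a$, which yields $\mathcal{G}_\be^b(v)/\mathcal{G}_\be^a(v)\le C_{a,b,s,\be}e^{C_{s,\be}(b-a/2)\<v\>^\be}$; any choice $b\in(0,a/2)$ then gives exponential decay at rate $C_{a,b,s,\be}:=C_{s,\be}(a/2-b)>0$. The main obstacle is to carry out the Laplace-type estimate uniformly in $r\in[1,\infty)$ with explicit constants: the Stirling expansion produces polynomial prefactors of size $(1+r)^{O(s)}$ coming from the $\sqrt{2\pi\alpha k}$ factor, from the displacement $\theta\in[0,\alpha)$ of the integer index $k^\star$ from the true saddle $r/\alpha$, and from summation over the Gaussian core of width $\sqrt{r/s}/\alpha$ around $k=r/\alpha$; these prefactors must then be absorbed into the exponential by strictly inflating the natural rate $s$ to $C_{s,\be}=2s$.
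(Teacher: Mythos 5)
Your Laplace/saddle-point analysis of the series in $r:=a\<v\>^\be$ is a genuinely different route from the paper's (which, via Stirling, replaces $\mathcal{G}_\be^{a}$ by an equivalent series with integer exponent and then interpolates by H\"older between the two neighbouring integer cases as in \eqref{1.11}); your upper bound and the deduction of \eqref{pro1.2} from \eqref{pro1.1} are fine. The gap is in the lower bound. Keeping only the single index $k^\star=[r/\alpha]$ gives, as you compute, $\mathcal{G}_\be^{a}(v)\ge c_{s,\be}(1+r)^{-s/2}e^{sr}$, but the asserted final inequality $c_{s,\be}(1+r)^{-s/2}e^{sr}\ge c'_{s,\be}e^{sr}$ is false, since $(1+r)^{-s/2}\to 0$; the justification you offer (boundedness below of $(e^{r}/(1+r))^{s/2}$) only yields $\mathcal{G}_\be^{a}\ge c'e^{sr/2}$, i.e.\ the lower half of \eqref{pro1.1} with $C_{s,\be}=s$, not with your announced $C_{s,\be}=2s$. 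Nor can you simply switch to $C_{s,\be}=s$: then the upper half of \eqref{pro1.1} would require $\mathcal{G}_\be^{a}\ls e^{sr}$, which is genuinely false for $s\in(0,1)$, because summing the Gaussian core of width $\sim\sqrt{r/s}$ around the saddle shows $\mathcal{G}_\be^{a}\asymp r^{(1-s)/2}e^{sr}$ for large $r$, so the polynomial prefactor cannot be removed. As written, the two halves of your argument do not hold simultaneously for any single constant $C_{s,\be}$.

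The repair stays inside your framework and is short. Either sum over the $\asymp\sqrt{r/s}$ indices of the Gaussian core instead of one term: on that core each summand is $\ge c\,r^{-s/2}e^{sr-1}$, so $\mathcal{G}_\be^{a}\ge c_{s,\be}\,r^{(1-s)/2}e^{sr}\ge c_{s,\be}e^{sr}$ for $r\ge 1$ (using $s<1$), which legitimizes $C_{s,\be}=2s$. Or keep the single-term bound, absorb the loss through $(1+r)^{-s/2}\ge c_\epsilon e^{-\epsilon r}$, and choose $C_{s,\be}$ strictly between $s$ and $2s$; for instance $C_{s,\be}=\tfrac32 s$ with $\epsilon=s/4$ gives the lower bound $\mathcal{G}_\be^{a}\ge c\,e^{(s-\epsilon)r}=c\,e^{C_{s,\be}r/2}$ and the upper bound $(1+r)e^{sr}\ls e^{C_{s,\be}r}$, so \eqref{pro1.1} and hence \eqref{pro1.2} follow. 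Please also state explicitly that the $r\le 1$ regime uses the uniform convergence of $\sum_k\Gamma(\alpha k+1)^{-s}$, which you only mention in passing.
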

\begin{proof}
Firstly, we claim that for $a>0,\mathcal{n}\in\N^+$,
\ben\label{1.11}
  C_{a,n}e^{\f a2\<v\>^\be}\leq \sum_{k=0}^\infty \f{a^{\mathcal{n}k}(\<v\>^\be)^{\mathcal{n}k}}{(\mathcal{n}k)!}\leq C_{a,n}e^{a\<v\>^\be}\quad\mbox{and}\quad C_{a,n}e^{C_{n}\f a2\<v\>^\be}\leq\sum_{k=0}^\infty\f{a^{\mathcal{n}k}(\<v\>^\be)^{\mathcal{n}k}}{(k!)^\mathcal{n}}\leq C_{a,n}e^{C_{n}a\<v\>^\be}.
\een

\noindent\underline{ {\it (1) Proof of the first estimate in \eqref{1.11}.}} It is obvious that $\sum_{k=0}^\infty \f{a^{\mathcal{n}k}(\<v\>^\be)^{\mathcal{n}k}}{(\mathcal{n}k)!}\leq e^{a\<v\>^\be}$. To complete the estimate, we observe that
\beno
 \mathcal{n} \f{a^{\mathcal{n}k}(\<v\>^\be)^{\mathcal{n}k}}{(\mathcal{n}k)!}\geq \min\{1, a^{-\mathcal{n}}\}\<v\>^{-\mathcal{n}\be}\sum_{i=0}^{\mathcal{n}-1} \f{a^{\mathcal{n}k+i}(\<v\>^\be)^{\mathcal{n}k+i}}{(\mathcal{n}k+i)!},
\eeno
which implies that
\beno
\sum_{k=0}^\infty \f{a^{\mathcal{n}k}(\<v\>^\be)^{\mathcal{n}k}}{(\mathcal{n}k)!}\geq C_{a,\mathcal{n}}\<v\>^{-\be \mathcal{n}}e^{a\<v\>^\be}\geq C_{a,\mathcal{n}}e^{\f a2\<v\>^\be}.
\eeno

\noindent\underline{ {\it (2) Proof of the second estimate in \eqref{1.11}.}}  Since the Gamma function fulfills that
\ben\label{1.12}
\Ga(x+1)\sim_\vep \sqrt{2\pi x}(x/e)^x,x\geq\vep>0.
\een
We have  $(k!)^{\mathcal{n}}/(nk)!\sim (2\pi k)^{(\mathcal{n}-1)/2}/(\mathcal{n})^{\mathcal{n}k-1/2}$, from
which together with the first equivalence implies the second estimate.

\noindent\underline{ {\it (3) Proof of \eqref{pro1.1}}.} We only need to prove that $\mathcal{G}_\be^{a}(v)\sim\sum_{k=0}^\infty \f{C^k(\<v\>^\be)^{\mathcal{n}k}}{(k!)^\mathcal{n}}$.
Again from (\ref{1.12}), we have that for $k\geq1$,
 $\Ga^s(\f4\be k+1)\sim C_{s,\beta}k^{s/2}(\f 4\be k/ e)^{\f4\be ks}\sim C_{s,\be}^kk^{(1/2-2/\beta)s}\Ga^{\f4\be s}(k+1),
$ which implies that
\beno
\mathcal{G}_\be^{a}(v)=\sum_{k=0}^\infty \f{a^{\f4\be ks}(\<v\>^\be)^{\f4\be ks}}{(\mathbb{G}(\f4\be k))^s}\sim\sum_{k=0}^\infty\f{k^{(1/2-2/\beta)s}(C_{s,\be}a)^{\f4\be ks}(\<v\>^\be)^{\f4\be ks}}{(k!)^{\f4\be s}}.
\eeno
If $\f4 \be s\in\N^+$, we can prove (\ref{pro1.1}) directly by (\ref{1.11}).
Otherwise, suppose that $\mathcal{n}=[\f4 \be s]$, the maximum integer which does not exceed $\f4 \be s$. Then $\f4 \be s=(\mathcal{n}+1)\th$ and $\mathcal{n}=\vartheta\f4 \be s$ with $\th,\vartheta\in(0,1)$. By H$\ddot{o}$lder inequality, we have
\beno
&&\sum_{k=0}^\infty\f{k^{(1/2-2/\beta)s}(C_{s,\be}a)^{\f4\be ks}(\<v\>^\be)^{\f4\be ks}}{(k!)^{\f4\be s}}=\sum_{k=0}^\infty\f{k^{(1/2-2/\beta)s}(C_{s,\be}a)^{(\mathcal{n}+1)\th k}(\<v\>^\be)^{(\mathcal{n}+1)\th k}}{(k!)^{(\mathcal{n}+1)\th}}\\
&&\leq \Big(\sum_{k=0}^\infty\f{(2^{\f{1-\th}{\th(\mathcal{n}+1)}}C_{s,\be}a)^{(\mathcal{n}+1)k}(\<v\>^\be)^{(\mathcal{n}+1) k}}{(k!)^{(\mathcal{n}+1)}}\Big)^\th\Big(\sum_{k=0}^\infty2^{-k}k^{\f{(1/2-2/\beta)s}{1-\theta}}\Big)^{1-\th}\leq C_{s,\be}\bigg(\sum_{k=0}^\infty\f{(2C_{s,\be}a)^{(\mathcal{n}+1)k}(\<v\>^\be)^{(\mathcal{n}+1) k}}{(k!)^{(\mathcal{n}+1)}}\bigg)^\theta.
\eeno
Similarly, we   also have
\beno
\sum_{k=0}^\infty\f{(\f12C_{s,\be}a)^{\mathcal{n}k}(\<v\>^\be)^{\mathcal{n} k}}{(k!)^{\mathcal{n}}}=\sum_{k=0}^\infty\f{{(\f12C_{s,\be}a)}^{\f 4\be s\vartheta k}(\<v\>^\be)^{\f4\be k \vartheta s}}{(k!)^{\f4\be \vartheta s}}\leq C_{s,\be} \bigg(\sum_{k=0}^\infty\f{k^{(1/2-2/\beta)s}(C_{s,\be}a)^{\f4\be ks}(\<v\>^\be)^{\f4\be ks}}{(k!)^{\f4\be s}}\bigg)^{\vartheta}.
\eeno
Then we conclude the desired results by (\ref{1.11}).
\end{proof}

From the above proposition, it is clear that the parameters $a$ and $\beta$ are used to classify the exponential function $\mathcal{G}_\be^{a}$. We emphasize   that $\mathcal{G}_\be^{a}$ with $\beta\in (0,2)$ covers almost the whole classes of the exponential functions  in the present work.  The motivation of the factorial term $(\mathbb{G}(\f4\be k))^s$ in $\mathcal{G}_\be^{a}(v)$ stems from  the factor $k^s$ in $L^2$ coecivity estimate \eqref{CR1}. The series form of  $\mathcal{G}_\be^{a}(v)$ comes from the strategy that the proof of the propagation of the exponential moment can be reduced to the proof of the propagation of the $k$-th moment. The reduction enables to catch the accumulation effect by summation with respect to $k$. In particular, it allows us to  prove that the linear operator $L$ does have the spectral gap in energy space with exponential weights if $\gamma+\beta s\ge0$. Of course the price we need to pay is to keep track of the dependence of $k$ in each estimate which requires  more careful analysis.

\smallskip

(ii) In what follows, we show main difference on the propagation of the exponential moment between in $L^1$ space for the homogeneous equation and in $L^2$ space for the inhomogeneous equation.  Indeed, in $L^1$ space  the homogeneous equation behaves more or less like a linear equation in particular for the propagation of the moment. While in   $L^2$ space for the inhomogeneous equation, the strong nonlinearity will force us to face the   technical problem which arises from the upper bound of the collision operator, that is,
\[|(Q(f, g), h)_{L^2_v}| \le \Vert f \Vert_{L^2_5} \Vert g \Vert_{H^s_{\gamma/2+2s}}    \Vert h \Vert_{H^s_{\gamma/2}}.\] It is obvious that in the upper bound we   cost additional weight $\langle v\rangle^{2s}$ compared to the gain of the regularity in \eqref{CR1}. To prove the desired result,  on one hand, we consider the problem in suitable weighted Sobolev spaces due to strong nonlinearity. On the other hand,  we introduce the factor $\f4\be$ in the fractional factorial term $(\mathbb{G}(\f4\be k))^s$ in $\mathcal{G}_\be^{a}(v)$   to   balance the additional weight $\langle v\rangle^{2s}$.
\smallskip


\subsubsection{Comment on the sharp decay rate for  soft potentials} Our results indeed give the classification of the sharp  convergence rate with respect to the   initial data. Some of them cannot be observed in the symmetrized perturbation theory.
\smallskip

\noindent$\bullet$ \underline{\it When the initial data only have $k$-th polynoimal moment.} 
As you can see in Theorem \ref{globaldecay}, on one hand, we obtain the convergent rate $k/|\ga|$ for the initial data with $k$-th moment in \eqref{softdecaypo}. On the other hand, for any $\eta>0
$ in \eqref{upperbound}, there is no consistent constant $M$ such that $\|f(t)\|_{L^1_{}}\leq M(1+t)^{-(\f k{|\ga|}+\eta)}\|f_0\|_{L^1_k}$.
These indicate that convergent rate $k/|\ga|$ can be  regarded as the optimal one for the solutions to the equation.
 \smallskip

\noindent$\bullet$ \underline{\it When the initial data  have exponential moment $\mathcal{G}_\be^{a}$ with $\gamma+\beta s\ge0$.} Loosely speaking, by the energy estimates, we can prove that
\[ \f{d}{dt} \|f\|_{\mathcal{X}_{a}}^2+c \|f\|_{\mathcal{X}_{a}}^2\le0, \]
thanks to the definition of $\mathcal{G}_\be^{a}$ and \eqref{CR1}.    It indicates that the linear operator $L$  with long-range interaction does have the spectral gap in space $L^2(e^{c\<v\>^\be}dv)$. In fact, one may prove that
\ben\label{Lprop}
(Lf, f e^{2 \alpha\langle  v \rangle^{\beta}} )_{L^2_v}\ge c \Vert \langle v \rangle^{\frac {\gamma} 2+\f \beta2 s } f e^{ \alpha\langle  v \rangle^{\beta}}  \Vert_{L^2}^2 -C \Vert f \Vert_{L^2}^2, \quad \alpha>0, \quad \beta \in (0, 2).
\een
This is consistence with the well-known result that the linearized and self-adjoint operator $L_\mu$ defined in \eqref{Lmu} does have the spectral gap if $\gamma+2s\ge0$.

 \smallskip

\noindent$\bullet$ \underline{\it  When the initial data  have exponential moment $\mathcal{G}_\be^{a}$ with $\gamma+\beta s<0$.}
In this situation, it is expected that stretched exponential decay rate can be obtained since the linear operator does not have spectral gap in   space $L^2(e^{c\<v\>^\be}dv)$. To see the sharpness of the decay rate in \eqref{softdecayexp}, we only need to consider the toy model which stems from \eqref{Lprop}:
\[ \pa_t (fe^{  \alpha\langle  v \rangle^{\beta}})+\langle v\rangle^{\gamma+\be s} (fe^{  \alpha\langle  v \rangle^{\beta}})=0.\]
This implies that for any $R>0$,
\[\|f(t)\|_{L^2}=\|e^{-\<v\>^{\ga+\be s}t-\alpha_1\<v\>^\be}e^{\alpha_1\<v\>^\be}f_0\|_{L^2}\le \big(e^{-\<R\>^{\ga+\be s}t}+e^{-\alpha_1\<R\>^\be}\big)\|e^{\alpha_1\<v\>^\be}f_0\|_{L^2}. \]
By choosing $\<R\>=t^{\f1{\be-(\ga+\be s)}}$, we finally obtain that
\beno
\|f(t)\|_{L^2}\leq Ce^{-Ct^{\varrho}}\|e^{\al_1\<v\>^\be}f_0\|_{L^2},
\eeno
where $ \varrho = \frac{\beta}{\beta-(\ga+\be s)}$. This shows the sharpness of \eqref{softdecayexp}.

\subsubsection{Comment on the strong connection between Boltzmann and Landau equations.} We comment that the similar results had been proved for the Landau equations(see \cite{CTW, CM}). Since these two equations are linked by so-called grazing collisions limit(for instance see \cite{DHYZ}), it seems possible to  solve both of equations simultaneously in the general perturbation setting in a unified way. In fact, the problem can be reduced to consider the  perturbation theory for the rescaled Boltzmann equation  by rescaling the collision operator $Q$ in a proper way. We leave it as a future work.

 \subsection{Organization of the paper}
In Section 2,  we give the upper bounds and coercivity estimate on collision operator $Q$. Section 3 is devoted to the global well-posedness, propagation of moments and convergence rate.  Basic properties for the Boltzmann equation, useful lemmas and commutator estimates between collision operator $Q$ and differential operator $\<D\>^\ell$ will be given in Section 4.

\setcounter{equation}{0}
\section{Analysis of the collision operator}
In this section, we shall derive several estimates for the collision operator $Q$, including the upper, lower bounds and the commutators, which will be frequently used in the later. We begin with some preliminaries.

\subsection{Two technical lemmas for the collision} We present two lemmas related to the so-called Povzer's inequality. In particular, we   introduce two decompositions for  $v'$ in terms of $v$ and  $v_*$.
\begin{lem}(see \cite{HJZ,AMSY})\label{L18}
We have the following two decompositions about $\<v'\>^2$:
\begin{itemize}
  \item  Let $\mathbf{h}:=\frac{v+v_*}{|v+v_*|},\mn:=\frac{v-v_*}{|v-v_*|}$, $\mj:=\frac{\mh-(\mh\cdot\mn)\mn}{\sqrt{1-(\mh\cdot\mn)^2}}$, $E(\theta):=\<v\>^2\cos^2(\theta/2)+\<v_*\>^2\sin^2(\theta/2)$ and  $\si=\cos\th\mn+\sin\th\hat{\omega}$ with $\hat{\omega}\in\S^1(\mn)\eqdefa\{\hat{\om}\in\S^2|\hat{\om}\perp \mn\}$. Then\,  $\mh\cdot\si=(\mh\cdot\mn)\cos\th+\sqrt{1-(\mh\cdot \mn)^2}\sin\th(\mj\cdot\hat{\omega})$ and
      \beno
      \<v'\>^2=E(\th)+\sin\theta(\mj \cdot\hat{\omega})\tilde{h}
      \eeno
        with
      $\tilde{h}:=\f12\sqrt{|v+v_*|^2|v-v_*|^2-(v+v_*)^2\cdot(v-v_*)^2}=\sqrt{|v|^2|v_*|^2-(v\cdot v_*)^2}.$
  \item If $\omega = \frac {\sigma - (\sigma \cdot \mn)\mn } {|\sigma - (\sigma \cdot \mn)\mn |}$(which implies $\omega \perp (v-v_*)$), then
\ben\label{Eth}
\langle v' \rangle^2 = E(\theta)+  \sin  \theta |v-v_*| v \cdot \omega, \quad\mbox{which implies}\quad \sin  \theta |v-v_*| v \cdot \omega = \sin\theta(\mj \cdot\hat{\omega})\tilde{h}.
\een
We also have $\omega = \tilde{\omega} \cos \frac \theta 2  + \frac {v'-v_*} {|v'-v_*|} \sin \frac \theta 2$, where $\tilde{\omega}=(v'-v)/|v'-v|$.
\end{itemize}
 Moreover, if $k\geq2$, $l_k:=[(k+1)/2]$, then
\ben\label{gamma}
&&\sum_{p=1}^{l_{k/2}-1}\frac{\Ga(k/2+1)}{\Ga(p+1)\Ga(k/2+1-p)}[(\<v\>^2\cos^2(\th/2))^p(\<v_*\>^2\sin^2(\th/2))^{k/2-p}\\
\notag&&+(\<v\>^2\cos^2(\th/2))^{k/2-p}(\<v_*\>^2\sin^2(\th/2))^{p}]+(\cos^2(\th/2))^{k/2}\<v\>^k+(\sin^2(\th/2))^{k/2}\<v_*\>^k\\
\notag&\leq&(E(\th))^{k/2}\leq\sum_{p=1}^{l_{k/2}}\frac{\Ga(k/2+1)}{\Ga(p+1)\Ga(k/2+1-p)}[(\<v\>^2\cos^2(\th/2))^p(\<v_*\>^2\sin^2(\th/2))^{k/2-p}\\
\notag&&+(\<v\>^2\cos^2(\th/2))^{k/2-p}(\<v_*\>^2\sin^2(\th/2))^{p}]+(\cos^2(\th/2))^{k/2}\<v\>^k+(\sin^2(\th/2))^{k/2}\<v_*\>^k,
\een
where $\Ga$ denotes the Gamma function.
\end{lem}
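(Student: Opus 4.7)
The plan is to reduce all three claims to direct algebraic manipulation of the definition $v' = \tfrac{v+v_*}{2} + \tfrac{|v-v_*|}{2}\sigma$ together with the orthogonal decompositions of $\sigma$ and of $\mathbf{h}$ relative to $\mathbf{n}$. For the first identity I square the definition of $v'$ and add $1$ to obtain
\[
\langle v'\rangle^2 = \tfrac{\langle v\rangle^2 + \langle v_*\rangle^2}{2} + \tfrac{|v-v_*|\,|v+v_*|}{2}\,\mathbf{h}\cdot\sigma.
\]
Substituting $\sigma = \cos\theta\,\mathbf{n} + \sin\theta\,\hat{\omega}$ and the orthogonal decomposition $\mathbf{h} = (\mathbf{h}\cdot\mathbf{n})\mathbf{n} + \sqrt{1-(\mathbf{h}\cdot\mathbf{n})^2}\,\mathbf{j}$ yields $\mathbf{h}\cdot\sigma = (\mathbf{h}\cdot\mathbf{n})\cos\theta + \sqrt{1-(\mathbf{h}\cdot\mathbf{n})^2}\sin\theta\,(\mathbf{j}\cdot\hat{\omega})$, which already proves the stated formula for $\mathbf{h}\cdot\sigma$. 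The constant and $\cos\theta$ contributions combine through $|v-v_*|\,|v+v_*|\,(\mathbf{h}\cdot\mathbf{n}) = \langle v\rangle^2 - \langle v_*\rangle^2$ and the half-angle identities into exactly $E(\theta)$, while the Lagrange identity
\[
|v+v_*|^2|v-v_*|^2 - (|v|^2-|v_*|^2)^2 = 4(|v|^2|v_*|^2 - (v\cdot v_*)^2) = 4\tilde{h}^2
\]
collapses the transverse prefactor to $\tilde{h}$, giving $\langle v'\rangle^2 = E(\theta) + \sin\theta\,(\mathbf{j}\cdot\hat{\omega})\tilde{h}$.

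For the second decomposition, the vector $\omega$ in the statement is exactly the unit vector obtained by normalizing the $\mathbf{n}$-perpendicular component of $\sigma$; since $\sigma\cdot\mathbf{n} = \cos\theta$ and $|\sigma - \cos\theta\,\mathbf{n}| = \sin\theta$, this is the specialization $\hat{\omega} = \omega$ of the first decomposition, so \eqref{Eth} will follow from the first identity once the coefficient identity $\tilde{h}\,(\mathbf{j}\cdot\omega) = |v-v_*|\,(v\cdot\omega)$ is established. A short computation in coordinates with $\mathbf{n} = \mathbf{e}_1$ makes $v$ and $v_*$ differ only in their first component, so $v\cdot\omega = v_*\cdot\omega$, and both sides reduce to $(v_*^2\omega^2 + v_*^3\omega^3)|v-v_*|$. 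For the identity $\omega = \tilde{\omega}\cos(\theta/2) + \tfrac{v'-v_*}{|v'-v_*|}\sin(\theta/2)$, I compute $v'-v = \tfrac{|v-v_*|}{2}(\sigma-\mathbf{n})$ and $v'-v_* = \tfrac{|v-v_*|}{2}(\sigma+\mathbf{n})$, giving $|v'-v| = |v-v_*|\sin(\theta/2)$ and $|v'-v_*| = |v-v_*|\cos(\theta/2)$; the right-hand side then collapses via $2\sin(\theta/2)\cos(\theta/2) = \sin\theta$ and $\cos^2(\theta/2)-\sin^2(\theta/2) = \cos\theta$ to $(\sigma-\cos\theta\,\mathbf{n})/\sin\theta = \omega$.

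For the Povzner-type inequality \eqref{gamma}, I set $a := \langle v\rangle^2\cos^2(\theta/2)$ and $b := \langle v_*\rangle^2\sin^2(\theta/2)$, so that $E(\theta) = a+b$. When $k/2$ is a positive integer the standard binomial identity
\[
(a+b)^{k/2} = a^{k/2} + b^{k/2} + \sum_{p=1}^{k/2-1}\binom{k/2}{p}a^p b^{k/2-p},
\]
together with the involution $p \leftrightarrow k/2 - p$, regroups the interior sum into symmetric pairs with equal coefficients. Truncating these pairs at $p = l_{k/2} - 1$ drops the non-negative midpoint contribution and yields the lower bound, while truncating at $p = l_{k/2}$ double-counts the midpoint and yields the upper bound; the parity of $k/2$ only affects which index contains the midpoint. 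For non-integer $k/2$, the coefficients $\Gamma(k/2+1)/(\Gamma(p+1)\Gamma(k/2+1-p))$ are the analytic extension of $\binom{k/2}{p}$, and the bounds follow from an analogous regrouping of the Newton series together with interpolation between the bounds for $\lfloor k/2\rfloor$ and $\lceil k/2\rceil$. The only real obstacle throughout the lemma is the combinatorial bookkeeping of midpoint indices in the binomial rearrangement; no genuine analytic difficulty arises.
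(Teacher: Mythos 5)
Your treatment of the two geometric decompositions is correct and is the standard computation (the paper itself gives no proof here, citing \cite{HJZ,AMSY}): squaring $v'=\frac{v+v_*}{2}+\frac{|v-v_*|}{2}\sigma$, using $(v+v_*)\cdot(v-v_*)=\langle v\rangle^2-\langle v_*\rangle^2$ together with the Lagrange identity to identify $\tilde h$, the observation $\omega=\hat\omega$ with the coefficient identity $\tilde h(\mathbf{j}\cdot\omega)=|v-v_*|(v\cdot\omega)$, and the half-angle computation $v'-v=\frac{|v-v_*|}{2}(\sigma-\mathbf{n})$, $v'-v_*=\frac{|v-v_*|}{2}(\sigma+\mathbf{n})$ giving $\omega=\tilde\omega\cos\frac\theta2+\frac{v'-v_*}{|v'-v_*|}\sin\frac\theta2$. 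Likewise, your proof of \eqref{gamma} when $k/2$ is a positive integer (binomial theorem, symmetric pairing $p\leftrightarrow k/2-p$, dropping resp.\ double-counting the midpoint according to the parity of $k/2$) is complete.

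The genuine gap is the case of non-integer $k/2$, which you dismiss with ``analytic extension of the Newton series together with interpolation between the bounds for $[k/2]$ and $[k/2]+1$.'' Neither ingredient works as stated: the binomial series for $(a+b)^{k/2}$ with $a=\langle v\rangle^2\cos^2(\theta/2)$, $b=\langle v_*\rangle^2\sin^2(\theta/2)$ only converges when $b<a$, and its generalized binomial coefficients change sign for indices beyond $k/2$, so truncating it does not produce one-sided bounds; and ``interpolation'' in the exponent is not a legitimate step, since both sides of \eqref{gamma} depend on $k$ in a non-monotone way (the cutoff $l_{k/2}=[(k/2+1)/2]$ jumps and the Gamma-function coefficients vary with $k$), so the inequality at a real exponent does not follow from the inequalities at the neighbouring integers. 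This case cannot be waved away in this paper: \eqref{gamma} and its variants \eqref{Eexpan}, \eqref{Ek} are applied with $k\geq 22$ arbitrary real and, in the exponential-moment analysis, with $k=2js$ and exponents such as $js-2$, where $s\in(0,1)$ is not rational in general. The argument in the cited references proceeds differently: one writes $k/2=n+\kappa$ with $n=[k/2]$, $\kappa\in[0,1)$, uses the exact binomial theorem for $(a+b)^n$ together with $\max(a,b)^{\kappa}\leq(a+b)^{\kappa}\leq a^{\kappa}+b^{\kappa}$, and then compares the resulting monomials $a^{p+\kappa}b^{n-p}$, $a^{p}b^{n-p+\kappa}$ and the integer binomial coefficients $\binom{n}{p}$ with the Gamma-function coefficients $\Gamma(k/2+1)/(\Gamma(p+1)\Gamma(k/2+1-p))$; this coefficient comparison (in both directions, for the upper and the lower bound) is precisely the work your proposal omits.
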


\begin{lem}\label{lemma2.2}
(i) If $k  \ge   4$, then
\beno
 \int_{\S^2}b(\cos\th)(1-\cos^k\f{\th}2) d\si\sim k^s;\,
 \int_{\S^2}b(\cos\th)\sin^{k-\f32-\f\ga2}\f{\th}2d\si\ls \f1{k};\,
 \int_{\S^2}b(\cos\th)\sin^2\f{\th}2\cos^{k}\f{\th}2d\th\ls k^{s-1}.
\eeno

(ii) Recall that $l_k:=[(k+1)/2]$, then
\ben\label{lk}
\sum_{p=1}^{l_{k/2}}\frac{\Ga(k/2+1)}{\Ga(p+1)\Ga(k/2+1-p)}\frac{\Ga(p-s)\Ga(k/2-p-s)}{\Ga(k/2-2s)}\ls k^s.
\een
\end{lem}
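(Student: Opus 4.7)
My plan is to treat (i) by using assumption $\mathbf{(A2)}$ to reduce each $\S^2$-integral to a one-dimensional integral on $[0,\pi/2]$ with weight $\theta^{-1-2s}\,d\theta$. Writing $d\sigma=\sin\theta\,d\theta\,d\phi$ and using $\sin\theta\, b(\cos\theta)\sim\theta^{-1-2s}$ from (A2), integrating out the azimuthal angle leaves, schematically, $\int_0^{\pi/2} \theta^{-1-2s} f(\theta)\,d\theta$. For the first integral, the natural scale is $\theta_0=c/\sqrt{k}$: on $[0,\theta_0]$ the elementary bound $1-\cos^k(\theta/2)\le k\theta^2/8$ yields a contribution $\sim k\cdot\theta_0^{2-2s}\sim k^s$; on $[\theta_0,\pi/2]$ the direct estimate $\int_{\theta_0}^{\pi/2}\theta^{-1-2s}\,d\theta\sim\theta_0^{-2s}\sim k^s$ does the job. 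For the matching lower bound one takes $\theta_0=C/\sqrt{k}$ with $C$ large enough so that $\cos^k(\theta/2)\le e^{-k\theta^2/8}\le e^{-C^2/8}$ on $[\theta_0,\pi/2]$, hence $1-\cos^k(\theta/2)\gtrsim 1$ there, producing the same order $k^s$ from below.

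For the second integral I will use that $\sin(\theta/2)\le \pi/4<1$ on $[0,\pi/2]$, so the prefactor $\sin^{k-3/2-\gamma/2}(\theta/2)$ is exponentially small, $\lesssim(\pi/4)^{k}$, which trivially dominates any $k^{-1}$. For the third, I combine $\cos^k(\theta/2)\le e^{-k\theta^2/8}$ with $\sin^2(\theta/2)\le \theta^2/4$ and substitute $u=\theta\sqrt{k/8}$ to obtain
\[
\int_0^{\pi/2}\theta^{1-2s}e^{-k\theta^2/8}\,d\theta \lesssim (8/k)^{1-s}\int_0^\infty u^{1-2s} e^{-u^2}\,du \sim k^{s-1}.
\]

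For (ii), I will regroup the Gamma factors as
\[
\frac{\Gamma(k/2+1)}{\Gamma(p+1)\Gamma(k/2+1-p)}\cdot\frac{\Gamma(p-s)\Gamma(k/2-p-s)}{\Gamma(k/2-2s)} = \frac{\Gamma(k/2+1)}{\Gamma(k/2-2s)}\cdot\frac{\Gamma(p-s)}{\Gamma(p+1)}\cdot\frac{\Gamma(k/2-p-s)}{\Gamma(k/2+1-p)},
\]
and then apply the asymptotics $\Gamma(x+z)/\Gamma(x)\sim x^z$ from \eqref{gammafun}. The three ratios behave respectively like $(k/2)^{1+2s}$, $p^{-1-s}$, and $(k/2-p)^{-1-s}$. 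Since $p\le l_{k/2}\le k/4+1$, we have $k/2-p\gtrsim k$, so the third ratio contributes $\lesssim k^{-1-s}$. The remaining sum reduces to $k^s\sum_{p=1}^\infty p^{-1-s}$, which converges as $s>0$, yielding the claim $\lesssim k^s$.

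The main obstacle I expect is matching the constants in the lower bound of the first estimate of (i): both regimes $\theta<\theta_0$ and $\theta>\theta_0$ contribute at the same order $k^s$, so the threshold $\theta_0=C/\sqrt{k}$ has to be chosen delicately, with $C$ large enough to kill $\cos^k(\theta/2)$ but not so large as to lose the high-angle mass. A smaller technical issue in (ii) is that the asymptotic $\Gamma(x+z)/\Gamma(x)\sim x^z$ in \eqref{gammafun} is stated for $x\ge\varepsilon>0$, so the endpoints $p=1$ and $p=l_{k/2}$ (where either $p-s$ or $k/2-p-s$ is close to a small positive constant) must be handled separately using the fact that $\Gamma(1-s)$ is finite, to ensure the implicit constants are uniform in $k$.
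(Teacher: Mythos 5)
Your handling of the first and third integrals in (i), and of (ii), is sound. For the first integral the paper takes a different route: with $u=\sin(\th/2)$ it sets $\nu(k)=\int_0^{1/2}u^{-1-2s}(1-(1-u^2)^{k/2})\,du$, computes $\nu'(k)\sim\int_0^1u^{1-2s}(1-u^2)^{k/2}\,du\sim k^{s-1}$ via the Beta function, and integrates in $k$; your splitting at $\th_0\sim k^{-1/2}$ together with $\cos^k(\th/2)\le e^{-k\th^2/8}$ gives the same two-sided bound, and the choice of the constant $C$ is not actually delicate (for the lower bound only the region $\th\ge\th_0$ is used, and the finitely many small values of $k$ are trivial). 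Your part (ii) is exactly the paper's computation, regrouping the Gamma factors and using \eqref{gammafun}, with $k/2-p\gtrsim k$ on the range $p\le l_{k/2}$.

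The second estimate of (i) is where your argument genuinely fails. You bound the factor $\sin^{k-\f32-\f\ga2}(\th/2)$ by its supremum and declare the result exponentially small, but what is then left to integrate is $\int_{\S^2}b(\cos\th)\,d\si$, which is infinite: by $\mathbf{(A2)}$, $b(\cos\th)\sin\th\sim\th^{-1-2s}$ is not locally integrable, and this divergence is the whole point of the non-cutoff setting. An exponentially small prefactor times a divergent integral proves nothing, so as written the step does not yield $\ls 1/k$ (nor any bound at all). The repair is to keep enough powers of $\sin(\th/2)$ to absorb the angular singularity before taking any supremum, or simply to compute directly: with $u=\sin(\th/2)$ (so $\th\sim u$ and $d\th\sim du$ on $[0,\pi/2]$),
\beno
\int_{\S^2}b(\cos\th)\sin^{k-\f32-\f\ga2}\f{\th}2\,d\si\ls\int_0^{\sqrt2/2}u^{k-\f52-\f\ga2-2s}\,du=\frac{(\sqrt2/2)^{\,k-\f32-\f\ga2-2s}}{k-\f32-\f\ga2-2s}\ls\f1k,
\eeno
which is precisely the ``change of variables plus the definition of the Gamma (Beta) function'' argument the paper alludes to for the latter two inequalities in (i).
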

\begin{proof} We first give the proof to $(i)$. By change of variables, the latter two inequalities  can be proved by the definition of Gamma function.
 For the first one, by   change of variable $u=\sin\f\th 2$, we get
\beno
\int_{\S^2}b(\cos\th)(1-(1-\cos^2(\th/2))^{k/2})d\si\sim \int_0^{1/2} u^{-1-2s}(1-(1-u^2)^{k/2})du:=\nu(k).
\eeno
Taking derivative with respect to $k$, we have
\beno
\nu'(k)=-\f12\int_0^{1/2} u^{-1-2s}(1-u^2)^{k/2}\ln(1-u^2)du\sim \int_0^1u^{1-2s}(1-u^2)^{k/2}du=\f{\Ga(1-s)\Ga(k/2+1)}{\Ga(\ka/2+2-s)}\sim k^{s-1},
\eeno
where we use   (\ref{gammafun}). Then we have $\nu(k)=\nu(4)+\int_4^k \nu'(p)dp\sim k^s,$ which implies  the desired result.

As for (\ref{lk}), again by (\ref{gammafun}), we have
$\Ga(k/2+1)\sim\Ga(k/2-2s)k^{1+2s},~\Ga(p+1)\sim\Ga(p-s)p^{1+s},~\Ga(k/2+1-p)\sim\Ga(k/2-p-s)(k/2-p)^{1+s},$
which imply that
\beno
\sum_{p=1}^{l_{k/2}}\frac{\Ga(k/2+1)}{\Ga(p+1)\Ga(k/2+1-p)}\frac{\Ga(p-s)\Ga(k/2-p-s)}{\Ga(k/2-2s)}\leq k^s\sum_{p=1}^{l_{k/2}}\Big(\f{k}{p(k/2-p)}\Big)^{1+s}.
\eeno
Notice that $k/2-p\sim k$  since $p\leq l_{k/2}$. The above can be bounded by $k^s\sum_{p=1}^\infty p^{-(1+s)}\ls k^s$. We complete the proof of this lemma.
\end{proof}

\subsection{Upper bounds of the collision operator $Q$}
We first give the upper bound of   $Q(h,\mu)$.
\begin{lem}\label{L21}
Suppose that $\ga\in (-3,1], \gamma+2s>-1$ ,$k \ge 22$ and  $\mu(v)= (2\pi)^{-3/2} e^{-|v|^2/2}$. Then for smooth functions $g$ and $h$, we have
\ben\label{poly1}
&&|(Q( h , \mu), g \langle v \rangle^{2k})|\le   \Vert b(\cos \theta) \sin^{k - 3/ 2-\gamma/2}( \theta/ 2) \Vert_{L^1_\theta}|[ h ]|_{L^2_{k+\f\gamma2}}|[ g]|_{L^2_{k+\f\gamma2}} + C_{k} \Vert h \Vert_{L^2_{k+\f\gamma2-\f12}}\Vert g \Vert_{L^2_{k+\f\gamma2-\f12}}
\een
 where $C_k>0$ is a constant depending on $k$ and $|[\cdot]|$ is defined in (\ref{3norm}).
Moreover, if $2js\geq20$, we have
\ben\label{exp1}
&&|(Q(h,\mu),g\<v\>^{4js})|-\|b(\cos\th)\sin^{2js-3/2-\ga/2}(\th/2)\|_{L^1_\th}|[h]|_{L^2_{2js+\ga/2}}|[g]|_{L^2_{2js+\ga/2}}\\
\notag&\ls& j\|h\|_{L^2_{2js+\ga/2-1}}\|g\|_{L^2_{2js+\ga/2-1}}+j^{4}\|h\|_{L^2_{2js+\ga/2-4}}\|g\|_{L^2_{2js+\ga/2-4}}+\|h\|_{L^2_{2js+\ga/2-1/2}}\|g\|_{L^2_{2js+\ga/2-1/2}}\\
\notag&&+j^7\|\mu\<v\>^{2js+12}\|_{L^2}\|h\|_{L^2_{17}}\|g\|_{L^2_{2js+\ga/2-1}}+\mathcal{H}_j(h,\mu,g),
\een
where $\mathcal{H}_j$ is defined as follows
\ben\label{Hgfh}
\mathcal{H}_j(g,f,h)&:=&j^2\int_{\R^6\times\S^2}\int_0^1b(\cos\th)\sin^2\th|v-v_*|^{\ga}(1-t)(E(\th)+t\tilde{h}\sin\th(\mj\cdot\hat{\omega}))^{js-2}\tilde{h}^2(\mj\cdot\hat{\om})^2\notag\\
&&\qquad\qquad\times|g(v_*)|f(v)|h(v')\<v'\>^{2js}|dvdv_*d\si dt.
\een
\end{lem}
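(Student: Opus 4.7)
The plan is to start from the symmetric form obtained by pre/post-collisional change of variables in the gain term:
\[
(Q(h,\mu),g\langle v\rangle^{2k})=\int B(v-v_*,\sigma)h(v_*)\mu(v)\bigl[g(v')\langle v'\rangle^{2k}-g(v)\langle v\rangle^{2k}\bigr]d\sigma\,dv\,dv_*.
\]
Using the identity $\langle v'\rangle^2=E(\theta)+\sin\theta(\mathbf{j}\cdot\hat\omega)\tilde h$ from Lemma \ref{L18} and a second-order Taylor expansion of $t\mapsto (E(\theta)+t\sin\theta(\mathbf{j}\cdot\hat\omega)\tilde h)^k$ at $t=0$, I would split the bracketed difference as $\mathcal{T}_1+\mathcal{T}_2+\mathcal{T}_3+\mathcal{T}_4$, where $\mathcal{T}_1:=g(v')E(\theta)^k-g(v)\langle v\rangle^{2k}$ is the Povzner piece, $\mathcal{T}_2:=kE(\theta)^{k-1}\sin\theta(\mathbf{j}\cdot\hat\omega)\tilde h\,g(v)$ and $\mathcal{T}_3:=kE(\theta)^{k-1}\sin\theta(\mathbf{j}\cdot\hat\omega)\tilde h[g(v')-g(v)]$ are the first-order Taylor contributions, and $\mathcal{T}_4$ is the second-order Taylor remainder carrying $\sin^2\theta$.

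Next I would estimate each piece separately. The term $\mathcal{T}_2$ vanishes after integration over $\hat\omega\in\mathbb{S}^1(\mathbf{n})$, since neither $E(\theta)$, $\tilde h$ nor $g(v)$ depends on $\hat\omega$ and $\int_{\mathbb{S}^1(\mathbf{n})}\hat\omega\,d\hat\omega=0$. The term $\mathcal{T}_3$ is handled by Taylor-expanding $g(v')-g(v)$ along the chord, using $|v'-v|=\sin(\theta/2)|v-v_*|$; the extra $\sin(\theta/2)$-factor explains the fractional weight loss of $1/2$ in the $C_k\|h\|_{L^2_{k+\gamma/2-1/2}}\|g\|_{L^2_{k+\gamma/2-1/2}}$ remainder of \eqref{poly1}. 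The term $\mathcal{T}_4$ carries $\sin^2\theta$ and a combinatorial coefficient $k(k-1)$; the angular integral $\int b(\cos\theta)\sin^2(\theta/2)\cos^{k}(\theta/2)\,d\theta\lesssim k^{s-1}$ from Lemma \ref{lemma2.2}(i) absorbs $k^2$ into a controllable polynomial factor, and for the exponential version \eqref{exp1} this remainder is retained in its raw integral form, yielding precisely $\mathcal{H}_j(h,\mu,g)$ as defined in \eqref{Hgfh}. Finally, the main term $\mathcal{T}_1$ is treated using the Povzner inequality \eqref{gamma} applied to $E(\theta)^k$: the $\cos^{2k}(\theta/2)\langle v\rangle^{2k}$ piece paired with $-\langle v\rangle^{2k}$ gives $[g(v')\cos^{2k}(\theta/2)-g(v)]\langle v\rangle^{2k}$, whose $(1-\cos^{2k}(\theta/2))$-part has angular integral $\sim k^s$ by Lemma \ref{lemma2.2}(i) and is absorbed in the $C_k$ remainder, while the residual $g(v')-g(v)$-contribution is again treated by a chord-Taylor; the Povzner cross sums are bounded using \eqref{lk} of Lemma \ref{lemma2.2}(ii) and also contribute to the $C_k$ remainder; the only genuinely unavoidable piece is $g(v')\sin^{2k}(\theta/2)\langle v_*\rangle^{2k}$, which, after a Cauchy--Schwarz in $(v,v_*,\sigma)$ with the weight $|v-v_*|^{\gamma/2}\mu^{1/2}(v)$ distributed symmetrically between $h(v_*)\langle v_*\rangle^k$ and $g(v')\langle v_*\rangle^k$, followed by the change of variables $v\to v'$ (Jacobian $4/\cos^2(\theta/2)$ together with $|v-v_*|=|v'-v_*|/\cos(\theta/2)$) and the collisional identity $\mu(v)\mu(v_*)=\mu(v')\mu(v_*')$ repositioning the Gaussian weight onto $v'$, produces exactly the principal bound $\|b(\cos\theta)\sin^{k-3/2-\gamma/2}(\theta/2)\|_{L^1_\theta}|[h]|_{L^2_{k+\gamma/2}}|[g]|_{L^2_{k+\gamma/2}}$.

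The main obstacle will be twofold. First, achieving the sharp $k$-dependence in the principal term requires a careful bookkeeping of the $\sin^{2k}(\theta/2)$ and $\langle v_*\rangle^{2k}$ weights across the Cauchy--Schwarz splitting so that the $v\to v'$ change of variables precisely converts the excess power into the exponent $k-3/2-\gamma/2$; this relies on combining the Jacobian factor $\cos^{-(2+\gamma)}(\theta/2)$ with the identity $\sin^2(\theta/2)+\cos^2(\theta/2)=1$ to rearrange powers. Second, in the exponential setting \eqref{exp1} with $k=2js$ one must track the precise polynomial growth $j,j^4,j^7$ in the various remainders, using \eqref{lk} for the factorial cross sums and the $k^{s-1}$-bound of Lemma \ref{lemma2.2}(i) for the $\sin^2(\theta/2)$-contributions; the second-order Taylor remainder must be preserved as the explicit integral representation $\mathcal{H}_j$ in \eqref{Hgfh} rather than bounded crudely, so that the subsequent summation over $j$ in the propagation of exponential moments can exploit the smallness hidden inside $(E(\theta)+t\tilde h\sin\theta(\mathbf{j}\cdot\hat\omega))^{js-2}$.
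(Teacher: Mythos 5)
The decisive gap is your treatment of $\mathcal{T}_3$. The target bounds \eqref{poly1}--\eqref{exp1} contain only weighted $L^2$ norms of $g$ and $h$ (no derivatives), so Taylor-expanding $g(v')-g(v)$ along the chord produces $\nabla g$ and cannot yield the stated estimate; and without that expansion the angular factor $b(\cos\theta)\sin\theta\,(\mathbf{j}\cdot\hat\omega)\sim\theta^{-2s}$ is not integrable once $2s\ge 1$, so $\mathcal{T}_3$ cannot be absorbed crudely either. Your $\hat\omega$-oddness cancellation is correct but only removes the harmless half $\mathcal{T}_2$ (the first-order term with $g$ at $v$); the dangerous half is precisely the first-order term carrying $g(v')$. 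The paper never touches $g$ or $h$ here: it transfers the cancellation to the smooth Maxwellian factor, observing that the analogous integral with $\mu(v)\langle v\rangle^{k-2}$ replaced by $\mu(v')\langle v'\rangle^{k-2}$ vanishes identically (the $\mathscr{B}_{1,1}$ step), subtracting it, and Taylor-expanding only $\mu\langle\cdot\rangle^{k+6}$ at the harmless cost $\|\nabla(\mu\langle\cdot\rangle^{k+6})\|_{L^\infty}$, after splitting $\omega=\tilde\omega\cos(\theta/2)+\frac{v'-v_*}{|v'-v_*|}\sin(\theta/2)$. Some device of this type is indispensable and your plan has no substitute for it; note also that the $-1/2$ weight loss in the $C_k$-remainder of \eqref{poly1} comes from the $\mathscr{B}_2$ part of the expansion, not from any gradient of $g$.

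The principal term is also not obtained as you describe. Working with the full weight $\langle v\rangle^{2k}$, the unavoidable Povzner piece is $h_*\,\mu(v)\,g(v')\sin^{2k}(\theta/2)\langle v_*\rangle^{2k}$, in which $g(v')$ carries no weight; your fix (put $\langle v_*\rangle^{k}$ on $g(v')$ and use $\mu(v)\mu(v_*)=\mu(v')\mu(v_*')$ to move the Gaussian onto $v'$) is unavailable because the integrand has no $\mu(v_*)$ factor (the factor at $v_*$ is $h$), and converting $\sin^k(\theta/2)\langle v_*\rangle^{k}$ into $\langle v'\rangle^{k}$ costs constants raised to the power $k$, which would destroy the $k$-independent, constant-one coefficient in front of $\Vert b\sin^{k-3/2-\gamma/2}(\theta/2)\Vert_{L^1_\theta}|[h]|\,|[g]|$ that Corollary \ref{L} later needs against $\frac18\Vert b(1-\cos^{2k-3-\gamma}(\theta/2))\Vert_{L^1_\theta}$. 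The paper avoids this by first writing $(Q(h,\mu),g\langle v\rangle^{2k})=(Q(h,\mu\langle\cdot\rangle^{k}),g\langle\cdot\rangle^{k})+(\langle v\rangle^{k}Q(h,\mu)-Q(h,\mu\langle\cdot\rangle^{k}),g\langle\cdot\rangle^{k})$, estimating the first term by Lemma \ref{L12}; in the commutator the test function already carries $\langle v'\rangle^{k}$, and one Cauchy--Schwarz splitting $\sin^{k-3/2-\gamma/2}$ against $\sin^{k+3/2+\gamma/2}$, followed by the singular change of variables $v_*\to v'$ with $\mu(v)$ left in place, gives the principal bound with constant one. This splitting also matters for your last claim: the second-order remainder equals $\mathcal{H}_j(h,\mu,g)$ only when one expands $\langle v'\rangle^{k}=(E(\theta)+\tilde h\sin\theta(\mathbf{j}\cdot\hat\omega))^{k/2}$ with $k=2js$ (exponent $js-2$, the factor $g(v')\langle v'\rangle^{2js}$ kept intact), whereas your expansion of the full power $k$ produces exponent $2js-2$ and a weightless $g(v')$, which is not \eqref{Hgfh}.
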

\begin{proof}
We first have
$(Q( h , \mu), g \langle v \rangle^{2k}) =(Q(h,\mu\<v\>^k),g\<v\>^k)+(\<v\>^kQ(h,\mu)-Q(h,\mu\<v\>^k),g\<v\>^k)$.
The first term in the right hand side is easily estimated by Lemma \ref{L12} as follows:
\beno
| (Q(h, \mu \langle \cdot \rangle^{k} ), g \langle \cdot \rangle^{k}) |&\lesssim&
  \|h\|_{L^2_8}\|\mu\<v\>^k\|_{H^{2s}_{\ga/2+2s+1}}\Vert g \Vert_{L^2_{k+\gamma/2-1}}.
\eeno
For the second term, we split it into two parts:
\beno
&&(\<v\>^kQ(h,\mu)-Q(h,\mu\<v\>^k),g\<v\>^k)
=\int_{\R^6\times\S^2}b(\cos\th)|v-v_*|^\ga h(v_*)\mu(v) g(v')\<v'\>^k(E(\th)^{k/2}-\<v\>^k)dvdv_*d\si\\
&&+\int_{\R^6\times\S^2}b(\cos\th)|v-v_*|^\ga h(v_*)\mu(v) g(v')\<v'\>^k(\<v'\>^k-E(\th)^{k/2})dvdv_*d\si:=\mathscr{A}+\mathscr{B}.
\eeno

\underline{\it Step ~1: Estimate of $\mathscr{A}$.} By Lemma \ref{L18} (\ref{gamma}), we have
\ben\label{Eexpan}
&&|E(\th)^{k/2}-\<v\>^k|\le \sum_{p=1}^{l_{k/2}}\frac{\Ga(k/2+1)}{\Ga(p+1)\Ga(k/2+1-p)}[(\<v\>^2\cos^2(\th/2))^p(\<v_*\>^2\sin^2(\th/2))^{k/2-p}\\
\notag&&+(\<v\>^2\cos^2(\th/2))^{k/2-p}(\<v_*\>^2\sin^2(\th/2))^{p}]+(1-(\cos^2(\th/2))^{k/2})\<v\>^k+(\sin^2(\th/2))^{k/2}\<v_*\>^k.
\een
\noindent$\bullet$ For the term containing $(1-(\cos^2(\th/2))^{k/2})\<v\>^k$, due to the fact that $\<v'\>\leq\<v\>\<v_*\>$, we have
\beno
&&\int_{\R^6\times\S^2}b(\cos\th)|v-v_*|^\ga |h(v_*)|\mu(v) |g(v')\<v'\>^k|(1-(\cos^2(\th/2))^{k/2})\<v\>^kdvdv_*d\si\\
&\leq&\int_{\R^6\times\S^2}b(\cos\th)(1-(\cos^2(\th/2))^{k/2})|v-v_*|^\ga|h(v_*)\<v_*\>^7|\mu(v)\<v\>^{k+7} |g(v')\<v'\>^{k-7}|dvdv_*d\si.
\eeno
Thanks to Lemma \ref{chv},  Lemma \ref{lemma2.2} and Lemma \ref{L28}, it can be bounded by
\beno
&&\|\mu\<v\>^{k+7}\|_{L^\infty} \int_{\S^2}b(\cos\th)(1-(\cos^2(\th/2))^{k/2})\cos^{-3-\ga}(\th/2)d\si\int_{\R^6}|v-v_*|^\ga|h(v_*)|\<v_*\>^7|g(v)|\<v\>^{k-7}dvdv_*\\
&\lesssim&  k^s \|\mu\<v\>^{k+7}\|_{L^\infty}\|h\|_{L^2_{10}}\|g\|_{L^2_{k+\ga/2-1}}.
\eeno

\noindent$\bullet$ For the term containing $(\sin^2(\theta/2))^{k/2}\<v_*\>^k$, by Lemma \ref{chv} and Lemma \ref{L110}, we have
\beno
&&\int_{\R^6\times\S^2}b(\cos\theta )  |v-v_*|^\ga |h(v_*)|\mu(v) |g(v')\<v'\>^k(\sin^2(\theta/2))^{k/2}    \<v_*\>^k|dvdv_*d\si\\
&\leq&\int_{\S^2}b(\cos\theta)\sin^{k-3/2-\ga/2}   (\theta/2)  d\si\left(\int_{\R^6}|v-v_*|^\ga |h(v_*)\<v_*\>^k|^2\mu(v)dv_*dv\right)^{\f12}  \left(\int_{\R^6}|v-v'|^\ga |g(v')\<v'\>^k|^2|\mu(v)dv  'dv\right)^{\f12}\\
&\leq& \|b(\cos\th)\sin^{k-3/2-\ga/2}(\theta/2)\|_{L^1_\theta}|[h]|_{L^2_{k+\ga/2}}|[g]|_{L^2_{k+\ga/2}}.
\eeno

\noindent$\bullet$ Similarly, for the term containing  $p=1,2$ in (\ref{Eexpan}), we have
\beno
&&k\int_{\R^6\times\S^2}b(\cos\th)|v-v_*|^\ga |h(v_*)|\mu(v) |g(v')\<v'\>^k|\big[\<v\>^2\cos^2(\th/2)(\<v_*\>^2\sin^2(\th/2))^{k/2-1}
 +(\<v\>^2\cos^2(\th/2))^{k/2-1}\<v_*\>^2
 \\ && \times \sin^2(\th/2)\big]dvdv_*d\si +k^2\int_{\R^6\times\S^2}b(\cos\th)|v-v_*|^\ga |h(v_*)|\mu(v) |g(v')\<v'\>^k|\big[(\<v\>^2\cos^2(\th/2))^2(\<v_*\>^2\sin^2(\th/2))^{k/2-2}\\
\notag&&+(\<v\>^2\cos^2(\th/2))^{k/2-2}(\<v_*\>^2\sin^2(\th/2))^2\big]dvdv_*d\si  \lesssim k\|\mu\<v\>^{k+7}\|_{L^\infty}\|h\|_{L^2_{11}}\|g\|_{L^2_{k+\ga/2-1}}+k\|h\|_{L^2_{k+\ga/2-1}}\|g\|_{L^2_{k+\ga/2-1}}\\
&& +k^2\|\mu\<v\>^{k+5}\|_{L^\infty}\|h\|_{L^2_{13}}\|g\|_{L^2_{k+\f\ga2-1}}+k^2\|h\|_{L^2_{k+\f\ga2-2}}\|g\|_{L^2_{k+\f\ga2-2}}.
\eeno

\noindent$\bullet$ By singular change of variables in Lemma \ref{chv}, we   obtain that
\beno
&&\mathcal{I}:=\sum_{p=3}^{l_{k/2}}\frac{\Ga(k/2+1)}{\Ga(p+1)\Ga(k/2+1-p)}\int_{\R^6\times\S^2}b(\cos\th)|v-v_*|^\ga|h(v_*)|\mu(v) |g(v')\<v'\>^k|\big[(\<v\>^2\cos^2(\th/2))^p(\<v_*\>^2\\
\notag&&\times\sin^2(\th/2))^{k/2-p}+(\<v\>^2\cos^2(\th/2))^{k/2-p}(\<v_*\>^2\sin^2(\th/2))^{p}\big]dvdv_*d\si\\
&\leq&\sum_{p=3}^{l_{k/2}}\frac{\Ga(k/2+1)}{\Ga(p+1)\Ga(k/2+1-p)}\int_{\R^6\times\S^2}b(\cos\th)|v-v_*|^\ga|h(v_*)|\mu(v) |g(v')\<v'\>^k|\big[(\cos^2(\th/2))^p(\sin^2(\th/2))^{k/2-p}\\
\notag&&+(\cos^2(\th/2))^{k/2-p}(\sin^2(\th/2))^{p}\big]\big[(\<v\>^2)^3(\<v_*\>^2)^{k/2-3}+(\<v_*\>^2)^3(\<v\>^2)^{k/2-3}\big]dvdv_*d\si\\
&\leq&\underbrace{\sum_{p=3}^{l_{k/2}}\frac{\Ga(k/2+1)}{\Ga(p+1)\Ga(k/2+1-p)}\int_{\S^2}b(\cos\th)\big[(\cos^2(\th/2))^p(\sin^2(\th/2))^{\f k 2-p-\f32-\f\ga 2}+(\cos^2(\th/2))^{\f k2-p}(\sin^2(\th/2))^{p-\f32-\f\ga 2}\big]d\si}_{\mathcal{C}(k)}\\
&&\times(\|h\|_{L^2_{15}}\|\mu\<v\>^{k+3}\|_{L^\infty}\|g\|_{L^2_{k+\ga/2-1}}+\|h\|_{L^2_{k+\ga/2-3}}\|g\|_{L^2_{k+\ga/2-3}}).
\eeno
Let us compute the coefficient $\mathcal{C}(k)$ in the above. By change of variable, we have
\ben\label{sum}
\mathcal{C}(k)&\ls&\sum_{p=3}^{l_{k/2}}\frac{\Ga(k/2+1)}{\Ga(p+1)\Ga(k/2+1-p)}\int_0^{\f12}\big[(1-t)^{p-s-1}t^{\f k2-p-\f32-\f\ga 2-s-1}+t^{p-\f32-\f\ga 2-s-1}(1-t)^{\f k2-p-s-1}\big]dt\\
\notag&\ls& \sum_{p=3}^{l_{k/2}}\frac{\Ga(k/2+1)}{\Ga(p+1)\Ga(k/2+1-p)}\left(\frac{\Ga(p-s)\Ga(k/2-p-s-\f32-\f\ga 2)}{\Ga(k/2-2s-\f32-\f\ga 2)}+\frac{\Ga(p-s-\f32-\f\ga 2)\Ga(k/2-p-s)}{\Ga(k/2-2s-\f32-\f\ga 2)}\right).
\een
Notice that
$\frac{\Ga(p-s)\Ga(k/2-p-s-\f32-\f\ga 2)}{\Ga(k/2-2s-\f32-\f\ga 2)}+\frac{\Ga(p-s-\f32-\f\ga 2)\Ga(k/2-p-s)}{\Ga(k/2-2s-\f32-\f\ga 2)}\ls k^{\f32+\f\ga 2}\frac{\Ga(p-s)\Ga(k/2-p-s)}{\Ga(k/2-2s)}$. By Lemma \ref{lemma2.2}, it holds that $\mathcal{C}(k)\ls k^{\f32+\f\ga 2+s}\le k^3$ which yields that $\mathcal{I}\ls k^3(\|h\|_{L^2_{k+\f\ga2-3}}\|g\|_{L^2_{k+\f\ga2-3}}+\|h\|_{L^2_{15}}\|\mu\<v\>^{k+3}\|_{L^\infty}\|g\|_{L^2_{k+\ga/2-1}})$.

Patching together all the estimates and using the interpolation that
\ben\label{interp}
k^2\|h\|_{L^2_{k+\ga/2-2}}\|g\|_{L^2_{k+\ga/2-2}}\leq k\|h\|_{L^2_{k+\ga/2-1}}\|g\|_{L^2_{k+\ga/2-1}}+k^3\|h\|_{L^2_{k+\ga/2-3}}\|g\|_{L^2_{k+\ga/2-3}},
\een
we derive that
\beno
&&\mathscr{A}-\|b(\cos\th)\sin^{k-3/2-\ga/2}(\theta/2)\|_{L^1_\theta}|[h]|_{L^2_{k+\ga/2}}|[g]|_{L^2_{k+\ga/2}}\ls k\|h\|_{L^2_{k+\ga/2-1}}\|g\|_{L^2_{k+\ga/2-1}}\\
&&+k^3\|h\|_{L^2_{k+\ga/2-3}}\|g\|_{L^2_{k+\ga/2-3}}+k^3\|\mu\<v\>^{k+9}\|_{L^\infty}\|h\|_{L^2_{15}}\|g\|_{L^2_{k+\ga/2-1}}.
\eeno

\underline{\it Step ~2: Estimate of $\mathscr{B}$.} Thanks to Lemma \ref{L18} and Taylor expansion, we have
\ben\label{v'}
\notag\<v'\>^k-(E(\th))^{k/2}&=&\frac{k}{2}\big[(\<v\>^2\cos^2(\th/2))^{k/2-1}+((E(\th))^{k/2-1}-(\<v\>^2\cos^2(\th/2))^{k/2-1})\big]|v-v_*|\sin\th(v\cdot\om)\\
&&+\frac{k(k-2)}{4}\int_0^1(1-t)(E(\th)+t\tilde{h}\sin\th(\mj\cdot\hat{\om}))^{k/2-2}dt \tilde{h}^2\sin^2\th(\mj\cdot\hat{\om})^2,
\een
 which yields that
\beno
&&|\mathscr{B}|\leq\left|k\int_{\R^6\times\S^2}b(\cos\th)\cos^{k-1}(\th/2)\sin(\th/2)(v\cdot\omega)|v-v_*|^{1+\ga}h(v_*)\mu(v)\<v\>^{k-2}g(v')\<v'\>^kdvdv_*d\si\right|\\
&&+\frac{k}{2}\int_{\R^6\times\S^2}b(\cos\th)\sin\th|v-v_*|^{1+\ga}|v\cdot\om||(E(\th))^{\f{k}2-1}-(\<v\>^2\cos^2(\th/2))^{k/2-1}||h(v_*)|\mu(v)|g(v')\<v'\>^k|dvdv_*d\si\\
&&+k^2\int_{\R^6\times\S^2}\int_0^1b(\cos\th)|v-v_*|^{\ga}(1-t)(E(\th)+t\tilde{h}\sin\th(\mj\cdot\tilde{\omega}))^{\f{k}2-2}\tilde{h}^2\sin^2\th(\mj\cdot\hat{\om})^2|h(v_*)|\mu(v)\\
&&\times|g(v')\<v'\>^k|dvdv_*d\si:=\mathscr{B}_1+\mathscr{B}_2+\mathscr{B}_3.
\eeno

\noindent$\bullet$\underline{\it Estimate of $\mathscr{B}_1$.} By Lemma \ref{L18},   $v\cdot\om=v_*\cdot\om$  and
$\omega = \tilde{\omega} \cos \frac \theta 2  + \frac {v'-v_*} {|v'-v_*|} \sin \frac \theta 2$ with $ \tilde{\omega} = \frac {v'-v} {|v'-v|}$. We get that
\beno
\mathscr{B}_1&\leq& \left|k\int_{\R^6\times\S^2} b(\cos \theta) |v-v_*|^{1+\ga} (v_*\cdot \tilde{\omega}) \cos^{k} \frac \theta 2 \sin \frac \theta 2h(v_*) \mu(v)\<v\>^{k-2} g'\langle v' \rangle^{k}  dvdv_* d\sigma\right|
\\
&&+\left|k\int_{\R^6\times\S^2} b(\cos \theta) |v-v_*|^{1+\gamma} (v_*\cdot \frac {v'-v_*} {|v'-v_*|}) \cos^{k-1} \frac \theta 2 \sin^2 \frac \theta 2 h(v_*) \mu(v)\<v\>^{k-2}g(v')\langle v' \rangle^{k}   dvdv_* d\sigma\right|
\\
&:=& \mathscr{B}_{1,1} +\mathscr{B}_{1,2}.
\eeno

For $\mathscr{B}_{1, 2}$, by the regular change of variable(see Lemma \ref{chv}) and Lemma \ref{L28}, we have
\beno
\mathscr{B}_{1,2}
&\lesssim &k\int_{\R^6\times\S^2} b(\cos \theta) \cos^{k-1} \frac \theta 2 \sin^2 \frac \theta 2|v-v_*|^{1+\gamma}   \langle v_* \rangle^{7 }  |h_*| \mu  \langle v \rangle^{k+4}  |g(v')| \langle v' \rangle^{k-6}  dvdv_* d\sigma
\\
&\lesssim &k\|\mu\<v\>^{k+4}\|_{L^\infty}\int_{\mathbb{S}^2} b(\cos \theta) \sin^2   \frac \theta 2 d\sigma \int_{\R^6} |v'-v_*|^{1+\gamma}  \langle v_* \rangle^{7}  |h_*|  |g(v')|\langle v' \rangle^{k-6}dv' dv_*
\\
&\lesssim & k\|\mu\<v\>^{k+4}\|_{L^\infty}\Vert h \Vert_{L^{2}_{11}}\Vert g \Vert_{L^{2}_{k+\gamma/2-1}}.
\eeno

For $\mathscr{B}_{1, 1}$, our key observation results from the fact that
\beno
 k\int_{\R^6\times\S^2} b(\cos \theta) |v-v_*|^{1+\gamma} (v_*\cdot \tilde{\omega}) \cos^{k} \frac \theta 2 \sin \frac \theta 2h(v_*) \mu(v')\langle v' \rangle^{k -2} g'\langle v' \rangle^{k} dvdv_* d\sigma=0,
\eeno which implies that
\beno
\mathscr{B}_{1, 1} &=& \left|k \int_{\R^6\times\S^2}b(\cos \theta) |v-v_*|^{1+\gamma} (v_* \cdot \tilde{\omega}) \cos^{k} \frac \theta 2 \sin \frac \theta 2 h_* (\mu \langle v \rangle^{k-2} -\mu' \langle v' \rangle^{k -2}) g'\langle v' \rangle^{k} dv dv_* d\sigma\right|.\\ &\leq& \left|k \int_{\R^6\times\S^2} b(\cos \theta) |v-v_*|^{1+\gamma} (v_* \cdot \tilde{\omega}) \cos^{k} \frac \theta 2 \sin \frac \theta 2 h_* g'\langle v' \rangle^{ k-8} (\mu \langle v \rangle^{ k + 6} -\mu' \langle v' \rangle^{k + 6}) dv dv_* d\sigma\right|
\\
&&+ \left|k  \int_{\R^6\times\S^2} b(\cos \theta) |v-v_*|^{1+\gamma} (v_* \cdot \tilde{\omega}) \cos^{k} \frac \theta 2 \sin \frac \theta 2 h_* g'\langle v' \rangle^{k-8} \mu \langle v \rangle^{ k  -2}(\langle v' \rangle^{8} - \langle v \rangle^{8 }) dv dv_* d\sigma\right|
\\
&:=&\mathscr{B}_{1,1,1} + \mathscr{B}_{1,1,2}.
\eeno
Thanks to the mean value theorem, it holds that
$|\mu \langle v \rangle^{ k + 6} -\mu' \langle v' \rangle^{  k + 6 } | \lesssim |v'-v| \int_0^1 \na(\mu\<\cdot\>^{k+6}) (v+t(v-v')) dt  \leq \|\na(\mu\<v\>^{k+6} )\|_{L^\infty}|v-v_*| \sin \frac \theta 2.$
Then by regular change of variable in Lemma \ref{chv} and Lemma \ref{L28}, we  are led to
\beno
\mathscr{B}_{1,1,1}
&\lesssim & k\|\na(\mu\<v\>^{k + 6})\|_{L^\infty}\int_{\mathbb{S}^2} b(\cos \theta)  \cos^{k-3-\gamma} \frac \theta 2 \sin^2 \frac \theta 2  d\sigma\int_{\R^6} |v'-v_*|^{2+\gamma}   \langle v_* \rangle |h_*| |g'|\langle v' \rangle^{k-8} dv' dv_*
\\
&\lesssim &k\|\na(\mu\<v\>^{k+  6 })\|_{L^\infty}\Vert h \Vert_{L^{2}_{10}}\Vert g \Vert_{L^{2}_{k+\gamma/2-1}}.
\eeno

For $\mathscr{B}_{1,1,2}$, similarly, using
$| \langle v' \rangle^{8} - \langle v \rangle^{8} | \lesssim |v'-v| (\langle v' \rangle^{7} + \langle v \rangle^{7})  \lesssim |v-v_*| \sin \frac \theta 2 \langle v\rangle^{7} \langle v_*\rangle^{7},$
then from the regular change of variable and Lemma \ref{L28}, we have
\beno
\mathscr{B}_{1,1,2}
&\lesssim & k\|\mu\<v\>^{k+5} \|_{L^\infty}  \int_{\R^6\times\mathbb{S}^2} b(\cos \theta) \cos^k \frac \theta 2 \sin^2 \frac \theta 2 |v-v_*|^{2+\gamma}  \langle v_* \rangle^{8}  |h_*| |g'| \langle v' \rangle^{k - 8} \  dv dv_* d\sigma
\\
&\lesssim & k\|\mu\<v\>^{k+5} \|_{L^\infty}\int_{\R^3} \int_{\R^3}    |v'-v_*|^{2+\gamma}  \langle v_* \rangle^{8} | h_*| |g'|\langle v' \rangle^{k-8}  dv' dv_*.
\lesssim k\|\mu\<v\>^{k+ 5}\|_{L^\infty}    \|h\|_{L^2_{12}}   \|g\|_{L^2_{k+\ga/2-1}}.
\eeno

Gathering together all the above, we derive that
\beno
\mathscr{B}_{1}\ls k\|\mu\<v\>^{k+6}\|_{H^3}\|h\|_{L^2_{12}}\|g\|_{L^2_{k+\ga/2-1}}.
\eeno

\noindent$\bullet$\underline{\it Estimate of $\mathscr{B}_2$.} We observe that $(E(\th))^{\f k2-1}-(\<v\>^2\cos^2(\f{\th}2))^{\f k2-1}$ enjoys the similar structure as \eqref{Eexpan}, i.e.
 \ben\label{Ek}
&&\notag|(E(\th))^{k/2-1}-(\<v\>^2\cos^2(\th/2))^{k/2-1}|\leq \sum_{p=1}^{l_{k/2-1}}\frac{\Ga(k/2)}{\Ga(p+1)\Ga(k/2-p)}[(\<v\>^2\cos^2(\th/2))^p(\<v_*\>^2\sin^2(\th/2))^{k/2-1-p}\\
&&\quad\quad\quad\quad+(\<v\>^2\cos^2(\th/2))^{k/2-1-p}(\<v_*\>^2\sin^2(\th/2))^{p}]+(\sin^2(\th/2))^{k/2-1}\<v_*\>^{k-2}.
\een
Thus following the similar argument, one may conclude that
\beno
|\mathscr{B}_2|&\leq& \|h\|_{L^2_{k+\ga/2-1/2}}\|g\|_{L^2_{k+\ga/2-1/2}}+k^{4}\|h\|_{L^2_{k+\ga/2-4}}\|g\|_{L^2_{k+\ga/2-4}}+k^4\|\mu\<v\>^{k+9}\|_{L^\infty}\|h\|_{L^2_{17}}\|g\|_{L^2_{k+\ga/2-1}}.
\eeno

\noindent\underline{\it Estimate of $\mathscr{B}_3$.} We first give the proof for polynomial case (\ref{poly1}). Recall that
\beno
\mathscr{B}_3=k^2\int_{\R^6\times\S^2}\int_0^1b(\cos\th)|v-v_*|^{\ga}(1-t)(E(\th)+t\tilde{h}\sin\th(\mj\cdot\hat{\omega}))^{\f k2-2}\tilde{h}^2\sin^2\th(\mj\cdot\hat{\om})^2|h(v_*)|\mu(v)|g(v')\<v'\>^k|dvdv_*d\si dt.
\eeno
Since $|\tilde{h}\sin\th(\mj\cdot\hat{\om})|=|v-v_*|\sin\th|(v\cdot\om)|=|v-v_*|\sin\th|(v_*\cdot\om)|\ls E(\th)$, one has
\beno
|\mathscr{B}_3|&\leq&C_k\int_{\R^6\times\S^2}b(\cos\th)\sin^2\th|v-v_*|^{2+\ga}\min\{\<v\>^2,\<v_*\>^2\}\max\{\<v\>^2\cos^2(\th/2),\<v_*\>^2\sin^2(\th/2)\}^{k/2-2}\\
&&\times|h(v_*)|\mu(v)|g(v')\<v'\>^k|dvdv_*d\si,
\eeno
which yields that $\mathscr{B}_3\leq C_k\|h\|_{L^2_{k+\ga/2-1}}\|g\|_{L^2_{k+\ga/2-1}}$.

Finally we conclude that
\beno
|\mathscr{B}|&\ls&  k^4\|\mu\<v\>^{k+9}\|_{H^3}\|h\|_{L^2_{17}}\|g\|_{L^2_{k+\ga/2-1}}+\|h\|_{L^2_{k+\ga/2-1/2}}\|g\|_{L^2_{k+\ga/2-1/2}}+k^{4}\|h\|_{L^2_{k+\ga/2-4}}\|g\|_{L^2_{k+\ga/2-4}}\\
&&+C_k\|h\|_{L^2_{k+\ga/2-1}}\|g\|_{L^2_{k+\ga/2-1}}.
\eeno
Noting that $k+\ga/2-1/2>17$ if $k\geq22$,   we complete the proof of (\ref{poly1}) by combining all the estimates.
\smallskip

Due to the fact that $k^4\|\mu\<v\>^{k+9}\|_{H^3}\ls k^7\|\mu\<v\>^{k+12}\|_{L^2}$, if $k=2js$, we can easily obtain (\ref{exp1}) since $\mathscr{B}_3=\mathcal{H}_j(h,\mu,g)$ will be revisited in Lemma \ref{B_3}. This ends the proof of this lemma.
\end{proof}
\smallskip

The following estimates is about a commutator on the collision operator $Q$ with weight $\<v\>^k$.
\begin{lem}\label{L23}
Suppose $\ga\in(-3,1],\ga+2s>-1$ and $k\geq22$. Then for any $s\leq \mathbf{a}\leq1$,
\beno
&&|(\<v\>^kQ(g,f)-Q(g,\<v\>^kf),h\<v\>^k)|\leq k^s\|g\|_{L^2_{14}}\|f\|_{L^2_{k+\ga/2}}\|h\|_{L^2_{k+\ga/2}}+ \f1k\|f\|_{L^2_{14}}\|g\|_{L^2_{k+\ga/2}}\|h\|_{L^2_{k+\ga/2}}\\
&&+C_k\|f\|_{L^2_{14}}\|g\|_{H^s_{k-1+\ga/2}}\|h\|_{H^s_{k+\ga/2}}+C_k\|g\|_{L^2_{14}}\|f\|_{H^s_{k+s-\mathbf{a}+\ga/2}}\|h\|_{H^s_{k+\mathbf{a}-1+\ga/2}}+C_k\|g\|_{L^2_{14}}\|f\|_{H^s_{k-1+\ga/2}}\|h\|_{H^s_{k+\ga/2}}.
\eeno
Moreover, if $2js\geq22$, we have
\beno
&&|(\<v\>^{2js}Q(g,f)-Q(g,\<v\>^{2js}f),h\<v\>^{2js})|\ls j^s\|g\|_{L^2_{14}}\|f\|_{L^2_{2js+\ga/2}}\|h\|_{L^2_{2js+\ga/2}}+\f1j\|f\|_{L^2_{14}}\|g\|_{L^2_{2js+\ga/2}}\|h\|_{L^2_{2js+\ga/2}}\\
&&+\|f\|_{L^2_{14}}\|g\|_{H^s_{2js-1+\ga/2}}\|h\|_{H^s_{2js+\ga/2}}+j^4\|f\|_{L^2_{14}}\|g\|_{H^s_{2js-8+\ga/2}}\|h\|_{H^s_{2js+\ga/2}}+j^{\f12(1+s)} \|g\|_{L^2_{14}}\|f\|_{H^s_{2js+s-\mathbf{a}+\ga/2}}\\
&&\times\|h\|_{H^s_{2js-2+\mathbf{a}+\ga/2}}+j^{\f12(1+s)} \|g\|_{L^2_{14}}\|f\|_{H^s_{2js+s-\mathbf{a}+\ga/2}}\|h\|_{L^2_{2js-1+\mathbf{a}+\ga/2}}+j\|g\|_{L^2_{11}}\|f\|_{H^s_{2js-2+\ga/2}}\|h\|_{H^s_{2js+\ga/2}}\\
&&+j^4\|g\|_{L^2_{14}}\|f\|_{H^s_{2js-8+\ga/2}}\|h\|_{H^s_{2js+\ga/2}}+j^{3+s}\|f\|_{L^2_{14}}\|g\|_{L^2_{2js-7+\ga/2}}\|h\|_{L^2_{2js+\ga/2}}+\mathcal{H}_j(g,f,h),
\eeno
where $\mathcal{H}_j$ is defined in (\ref{Hgfh}). We address that $\|f\|_{H^s_{2js}}:=\|\<D\>^s\<\cdot\>^{2js}f\|_{L^2}$.
\end{lem}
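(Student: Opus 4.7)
The plan is to expand the commutator in weak form and exploit the pre--post collisional change of variables. Writing
\[
\cI := (\<v\>^k Q(g,f) - Q(g,\<v\>^k f), h\<v\>^k)_{L^2_v} = \int_{\R^6\times\S^2} b(\cos\th) |v-v_*|^\ga g'_* f' (\<v\>^k - \<v'\>^k) h\<v\>^k \, d\si dv_* dv,
\]
and applying the pre--post change of variables $(v,v_*,\si)\mapsto(v',v'_*,\si')$ (Jacobian one), one obtains the more workable form
\[
\cI = \int_{\R^6\times\S^2} b(\cos\th) |v-v_*|^\ga g_* f\, h(v')\<v'\>^k \bigl(\<v'\>^k-\<v\>^k\bigr) d\si dv_* dv.
\]
I then split $\<v'\>^k - \<v\>^k = \bigl(\<v'\>^k - E(\th)^{k/2}\bigr) + \bigl(E(\th)^{k/2} - \<v\>^k\bigr)$ with $E(\th)=\<v\>^2\cos^2(\th/2)+\<v_*\>^2\sin^2(\th/2)$ from Lemma \ref{L18}, reducing to a \emph{Povzner part} and a \emph{Taylor part}; this mirrors the decomposition $\mathscr A + \mathscr B$ used in the proof of Lemma \ref{L21}.

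For the Povzner part, I would apply the binomial expansion \eqref{Eexpan}. The dominant cancellation term $-(1-\cos^k(\th/2))\<v\>^k$, after pairing with $h(v')\<v'\>^k$ and a regular change of variables $v\mapsto v'$ to transfer the $\<v\>$--weight onto the primed side, factorizes into $\bigl(\int_{\S^2}b(\cos\th)(1-\cos^k(\th/2))d\si\bigr)\times(L^2\text{ integral})$; Lemma \ref{lemma2.2}(i) supplies exactly the $k^s$ coefficient in front of $\|g\|_{L^2_{14}}\|f\|_{L^2_{k+\ga/2}}\|h\|_{L^2_{k+\ga/2}}$. The symmetric term $\sin^k(\th/2)\<v_*\>^k$ yields the $1/k$--coefficient via $\int_{\S^2}b(\cos\th)\sin^{k-3/2-\ga/2}(\th/2)d\si\lesssim 1/k$. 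The binomial cross--terms (at $p=1,2$ and at $p\ge 3$) are controlled by the same summation estimate \eqref{sum} that appears in the proof of Lemma \ref{L21}, producing the $C_k$ polynomial--weight contribution.

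For the Taylor part I would use formula \eqref{v'} and follow the treatment of $\mathscr B_1$ in Lemma \ref{L21}. The linear--in--$\sin\th$ piece contains the odd factor $(v\cdot\om)=(v_*\cdot\om)$, so its $\si$--integral against a symmetric integrand vanishes. Subtracting the vanishing quantity and using $\om=\tilde\om\cos(\th/2)+\tfrac{v'-v_*}{|v'-v_*|}\sin(\th/2)$ upgrades one factor of $\sin\th$ to $\sin^2(\th/2)$, while the mean value theorem allows replacing $(\<v\>^{k-2}h-\<v'\>^{k-2}h')$ by a derivative times $|v-v_*|\sin(\th/2)$. The angular bound $\int_{\S^2}b(\cos\th)\sin^2(\th/2)\cos^k(\th/2)d\si\lesssim k^{s-1}$ from Lemma \ref{lemma2.2}(i), combined with a regular change of variables and with the non-cutoff $H^s$ estimate from Lemma \ref{L12} quoted in Lemma \ref{L21}, gives the $H^s$--type terms in the statement; the free parameter $\mathbf a\in[s,1]$ encodes the freedom to redistribute one $\<v\>$--power between $f$ and $h$ in the surviving integrand. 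The quadratic remainder in \eqref{v'} is treated like $\mathscr B_3$: for the polynomial--$k$ statement one uses $|\tilde h\sin\th(\mj\cdot\hat\om)|\lesssim E(\th)$ to obtain a pointwise $C_k$--bound, while for $k=2js$ the corresponding integrand is by construction exactly $\cH_j(g,f,h)$ from \eqref{Hgfh}, which is left in closed form.

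The main obstacle is sharp bookkeeping. Unlike the proof of Lemma \ref{L21}, the function $h$ carries no Gaussian decay, so every polynomial factor in $k$ that was previously absorbed into $\|\mu\<v\>^{k+\cdot}\|_{L^\infty}$ must now be kept explicit. In the $k=2js$ setting these must be translated into the precise powers $j^s,\, j,\, j^{(1+s)/2},\, j^4,\, j^{3+s}$ stated, which forces a delicate choice of Sobolev exponents together with the interpolation \eqref{interp} to avoid any spurious gain in $j$. In parallel, the cancellation in the linear Taylor piece is genuinely asymmetric between $f$ and $h$: the parameter $\mathbf a$ must be selected so that the $\<v\>^{2s}$--loss inherent to the upper bound of $Q$ is absorbed by whichever of $f,h$ has the necessary regularity/weight budget. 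Making these two ingredients consistent for all $\mathbf a\in[s,1]$ simultaneously, while not losing the $k^s$ leading coefficient coming from the Povzner cancellation, is the principal accounting difficulty.
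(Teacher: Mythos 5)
Your overall route is the same as the paper's: the weak form plus pre--post change of variables reduces the commutator to $\int b(\cos\th)|v-v_*|^\ga g_*f\,h(v')\<v'\>^k(\<v'\>^k-\<v\>^k)$, which is then split through $E(\th)^{k/2}$ into a Povzner part and a Taylor part (the paper's $\mathscr D+\mathscr E$), with Lemma \ref{lemma2.2} supplying the $k^s$ and $1/k$ coefficients, the weight balancing done by splitting $|v-v_*|\le 1$ versus $|v-v_*|>1$ and $\<v\>\ge\<v_*\>$ versus $\<v\><\<v_*\>$, and the quadratic remainder of \eqref{v'} identified with $\mathcal H_j$ in the $k=2js$ case. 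Up to that level of detail the outline matches.

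The genuine gap is in the linear Taylor piece. After the vanishing-integral trick you propose to control the surviving difference by ``the mean value theorem applied to $(\<v\>^{k-2}h-\<v'\>^{k-2}h')$''. In Lemma \ref{L21} this device works because the corresponding factor is $\mu\<\cdot\>^{k+6}$, a smooth function whose gradient is under control; here the analogous difference is $f\<v\>^{k-\mathbf a}-f'\<v'\>^{k-\mathbf a}$ with $f$ (and $h$) only in weighted $L^2/H^s$, so no pointwise mean-value bound exists and that step fails as written. What the paper actually does for $\mathscr E_{1,1,1}$ is: insert $\<v\>^{-(2-\mathbf a)}$, apply Cauchy--Schwarz so that one factor is $\int b|v-v_*|^\ga|g_*|\,(f\<v\>^{k-\mathbf a}-f'\<v'\>^{k-\mathbf a})^2$, and then use the identity $(a-b)^2=-2a(b-a)+(b^2-a^2)$ together with the cancellation lemma (Lemma \ref{canlemma}) and the upper bound of Lemma \ref{L12} to bound this factor by $\|g\|_{L^2_{|\ga|+7}}\|f\|^2_{H^s_{k+s-\mathbf a+\ga/2}}$; the other Cauchy--Schwarz factor, after a regular change of variables and Lemma \ref{lemma2.2}, produces $k^{s-1}$ and the weights $k-2+\mathbf a+\ga/2$ (in $H^s$) or $k-1+\mathbf a+\ga/2$ (in $L^2$) on $h$. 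This is precisely where the parameter $\mathbf a\in[s,1]$, the weights $k+s-\mathbf a+\ga/2$ on $f$, and the coefficient $k^{(1+s)/2}$ in the statement come from; the mean value theorem enters only for the smooth weight difference $\<v\>^{-(2-\mathbf a)}-\<v'\>^{-(2-\mathbf a)}$ in $\mathscr E_{1,1,2}$. Without this Cauchy--Schwarz/cancellation-lemma mechanism your sketch cannot generate the stated $\mathbf a$-dependent terms, so this part of the argument needs to be redone along the paper's lines; the remainder of your plan is consistent with the paper's proof.
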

\begin{proof}
It is not difficult to see that
\beno
&&(\<v\>^kQ(g,f)-Q(g,\<v\>^kf),h\<v\>^k)
=\int_{\R^6\times\S^2}b(\cos\th)|v-v_*|^\ga g(v_*)f(v) h(v')\<v'\>^k(E(\th)^{k/2}-\<v\>^k)dvdv_*d\si\\
&&+\int_{\R^6\times\S^2}b(\cos\th)|v-v_*|^\ga g(v_*)f(v) h(v')\<v'\>^k(\<v'\>^k-E(\th)^{k/2})dvdv_*d\si:=\mathscr{D}+\mathscr{E}.
\eeno

\underline{\it Step ~1: Estimate of $\mathscr{D}$.} Thanks to \eqref{Eexpan}, one may easily apply  the strategy used for $\mathcal{A}$ in Lemma \ref{L21} to $\mathscr{D}$. In what follows, we only point out the main difference.

\noindent$\bullet$ For the term containing $(1-(\cos^2(\th/2))^{k/2})\<v\>^k$, we consider it by two cases: $|v-v_*|\leq1$ and $|v-v_*|>1$. If $|v-v_*|\leq1$, then $\<v\>\sim \<v_*\>$. By Lemma \ref{L110} and Lemma \ref{lemma2.2}, we get that
\beno
\mathbf{C}_1&:=&\int_{|v-v_*|\leq1}b(\cos\th)|v-v_*|^\ga |g(v_*)f(v) h(v')|\<v'\>^k(1-(\cos^2(\th/2))^{k/2})\<v\>^kdvdv_*d\si\\
&\ls& k^s\|g\|_{L^2_{|\ga|+4}}\|f\|_{H^s_{k-2+\ga/2}}\|h\|_{H^s_{k+\ga/2}}.
\eeno
While if $|v-v_*|>1$, if $\gamma \le 0$ then $|v-v_*|^\ga\sim\<v-v_*\>^\ga\ls\<v_*\>^{|\ga|}\<v\>^\ga$ and $\<v'\>^{|\ga|/2}\leq\<v\>^{|\ga|/2}\<v_*\>^{|\ga|/2}$, we get that
\beno
\mathbf{C}_2&:=&\int_{|v-v_*|>1}b(\cos\th)|v-v_*|^\ga |g(v_*)f(v) h(v')|\<v'\>^k(1-(\cos^2(\th/2))^{k/2})\<v_*\>^kdvdv_*d\si\\
&\ls&k^s\|g\|_{L^2_{|\ga|+4}}\|f\|_{L^2_{k+\ga/2}}\|h\|_{L^2_{k+\ga/2}}.
\eeno
If $\gamma >0$ we can use $|v-v_*| \sim  |v_* - v'| $, and $|v-v_*|^\gamma  \sim |v-v_*|^{\gamma/2}  |v_* -v'|^{\gamma/2} \le \langle v_* \rangle^{\gamma}   \langle v \rangle^{\gamma/2} \langle v' \rangle^{\gamma/2}  $ to get the same result. 
For the term containing $(\sin^2(\theta/2))^{k/2}\<v_*\>^k$ and $p=1,2$, they can be handled similarly. Indeed, they are bounded from above by
\beno &&k\|f\|_{L^2_{|\ga|+6}}\|g\|_{H^s_{k-2+\ga/2}}\|h\|_{H^s_{k+\ga/2}}+k^2\|f\|_{L^2_{|\ga|+6}}\|g\|_{H^s_{k-4+\ga/2}}\|h\|_{H^s_{k+\ga/2}}+k\|g\|_{L^2_{|\ga|+6}}\|f\|_{H^s_{k-2+\ga/2}}\|h\|_{H^s_{k+\ga/2}}\\
&&+k^2\|g\|_{L^2_{|\ga|+6}}\|f\|_{H^s_{k-4+\ga/2}}\|h\|_{H^s_{k+\ga/2}}+\f1k\|f\|_{L^2_{|\ga|+6}}\|g\|_{L^2_{k+\ga/2}}\|h\|_{L^2_{k+\ga/2}}.
\eeno

\noindent$\bullet$ For the term $p\geq3$, considering that the weights for $g$ and $f$ have to be balanced, we split the integration domain into two parts: $\<v\>\geq\<v_*\>$ and $\<v\><\<v_*\>$. Let us give a detailed estimate for the typical term in (\ref{Eexpan}): \beno&& \mathcal{I}:=\sum_{p=3}^{l_{k/2}}\frac{\Ga(k/2+1)}{\Ga(p+1)\Ga(k/2+1-p)}\int_{\R^6}\int_{\S^2}b(\cos\th)|v-v_*|^\ga|g(v_*)||f(v)| |h(v')\<v'\>^k|\\&&\quad\times[(\<v\>^2\cos^2(\th/2))^p(\<v_*\>^2\sin^2(\th/2))^{k/2-p}
+(\<v\>^2\cos^2(\th/2))^{k/2-p}(\<v_*\>^2\sin^2(\th/2))^{p}]dvdv_*d\si.\eeno
We decompose it into
  $\mathbf{A}$ and $\mathbf{B}$ which denote the regions $\<v\>\geq\<v_*\>$ and $\<v\><\<v_*\>$ respectively. Then
\beno
\mathbf{A}
&\leq&\sum_{p=3}^{l_{k/2}}\frac{\Ga(k/2+1)}{\Ga(p+1)\Ga(k/2+1-p)}\int_{\<v\>\geq\<v_*\>}\int_{\S^2}b(\cos\th)|v-v_*|^\ga|g(v_*)||f(v)| |h(v')\<v'\>^k|\Big[(\cos^2(\th/2))^p\\
\notag&&\times(\sin^2(\th/2))^{k/2-p}+(\cos^2(\th/2))^{k/2-p}(\sin^2(\th/2))^{p}\Big](\<v_*\>^2)^3(\<v\>^2)^{k/2-3}dvdv_*d\si\\
&\leq&\sum_{p=3}^{l_{k/2}}\frac{\Ga(k/2+1)}{\Ga(p+1)\Ga(k/2+1-p)}\int_{\S^2}b(\cos\th)[(\cos^2(\th/2))^p(\sin^2(\th/2))^{k/2-p}+(\cos^2(\th/2))^{k/2-p}(\sin^2(\th/2))^{p}]d\si\\
&&\times\|g\|_{L^2_{|\ga|+8}}\|f\|_{H^s_{k-6+\ga/2}}\|h\|_{H^s_{k+\ga/2}}\ls k^3\|g\|_{L^2_{|\ga|+8}}\|f\|_{H^s_{k-6+\ga/2}}\|h\|_{H^s_{k+\ga/2}}.
\eeno
Similarly, we have
$\mathbf{B}\ls k^3\|f\|_{L^2_{|\ga|+8}}\|g\|_{H^s_{k-6+\ga/2}}\|h\|_{H^s_{k+\ga/2}}$. Then  we conclude that
\beno
|\mathscr{D}|&\ls& \f1k\|f\|_{L^2_{11}}\|g\|_{L^2_{k+\ga/2}}\|h\|_{L^2_{k+\ga/2}}+k\|f\|_{L^2_{|\ga|+6}}\|g\|_{H^s_{k-2+\ga/2}}\|h\|_{H^s_{k+\ga/2}}+k^3\|f\|_{L^2_{11}}\|g\|_{H^s_{k-6+\ga/2}}\|h\|_{H^s_{k+\ga/2}}\\
&&+k \|g\|_{L^2_{11}}\|f\|_{H^s_{k-2+\ga/2}}\|h\|_{H^s_{k+\ga/2}}+k^3\|g\|_{L^2_{11}}\|f\|_{H^s_{k-6+\ga/2}}\|h\|_{H^s_{k+\ga/2}}+k^s \|g\|_{L^2_{11}}\|f\|_{L^2_{k+\ga/2}}\|h\|_{L^2_{k+\ga/2}}.
\eeno

\underline{\it Step ~2: Estimate of $\mathscr{E}$.} Similar to the estimate of $\mathscr{B}$ in Lemma \ref{L21}, we have
\beno
\mathscr{E}&=&\left|k\int_{\R^6\times\S^2}b(\cos\th)\cos^{k-1}(\th/2)\sin(\th/2)(v\cdot\omega)|v-v_*|^{1+\ga}g(v_*)f(v)\<v\>^{k-2}h(v')\<v'\>^kdvdv_*d\si\right|\\
&&+\frac{k}{2}\int_{\R^6\times\S^2}b(\cos\th)\sin\th|v-v_*|^{1+\ga}|v\cdot\om||(E(\th))^{k/2-1}-(\<v\>^2\cos^2(\th/2))^{k/2-1}||g(v_*)|f(v)|\\
&&\times|h(v')\<v'\>^k|dvdv_*d\si+\mathcal{H}_{k/(2s)}(g,f,h):=\mathscr{E}_1+\mathscr{E}_2+\mathcal{H}_{k/(2s)}(g,f,h).
\eeno

\noindent$\bullet\,$\underline{\it Estimate of $\mathscr{E}_2$.} Due to (\ref{Ek}), we shall give the estimates term by term. For the term $(\sin^2(\th/2))^{k/2-1}\<v_*\>^{k-2}$, the fact that $|v-v_*||v\cdot\om|=|v-v_*||v_*\cdot\om|\ls \<v\>\<v_*\>$(see Lemma \ref{L18}), Lemma \ref{L110} and Lemma \ref{lemma2.2} imply that
\beno
&&k\int_{\R^6\times\S^2}b(\cos\th)\sin\th|v-v_*|^{1+\ga}|v\cdot\om|(\sin^2(\th/2))^{k/2-1}\<v_*\>^{k-2}|g(v_*)||f(v)||h(v')\<v'\>^k|dvdv_*d\si\\
&\ls&\|f\|_{L^2_{|\ga|+3}}\|g\|_{H^s_{k-1+\ga/2}}\|h\|_{H^s_{k+\ga/2}}.
\eeno

Let us choose $p=1$ in the summation form of (\ref{Ek}) as the typical term.  By Lemma \ref{lemma2.2}, it suffices to consider the term as follows
\beno
&&\mathcal{I}_1:=k^2\int_{\R^6\times\S^2}b(\cos\th)\sin\th|v-v_*|^{\ga}\<v\>\<v_*\>|g(v_*)||f(v)||h(v')\<v'\>^k|[\<v\>^2\cos^2(\th/2)(\<v_*\>^2\sin^2(\th/2))^{k/2-2}\\
&& +(\<v\>^2\cos^2(\th/2))^{k/2-2}\<v_*\>^2\sin^2(\th/2)]dvdv_*d\si:=\mathcal{I}_{1,1}+\mathcal{I}_{1,2}.
\eeno
It is easy to see that
$\mathcal{I}_{1,1}\ls k\|f\|_{L^2_{|\ga|+5}}\|g\|_{H^s_{k-3+\ga/2}}\|h\|_{H^s_{k+\ga/2}}$. We split the integration domain of $\mathcal{I}_{1,2}$ into two parts: $|v-v_*|\leq1$ and $|v-v_*|>1$, denote them by $\mathbf{D}_1$ and $\mathbf{D}_2$. As the estimate of $\mathbf{C}_1$ and $\mathbf{C}_2$, we get that $\mathbf{D}_1\ls k^2\|g\|_{L^2_{|\ga|+6}}\|f\|_{H^s_{k-4+\ga/2}}\|h\|_{H^s_{k+\ga/2}}$ and
$\mathbf{D}_2\ls  k^{1+s}\|g\|_{L^2_{|\ga|+5}}\|f\|_{L^2_{k-3+\ga/2}}\|h\|_{L^2_{k+\ga/2}}$,
where we use Lemma \ref{lemma2.2}. Similarly, for $p=2$ and $p=3$, the associated terms can be bounded by
$k^2\|f\|_{L^2_{|\ga|+7}}\|g\|_{H^s_{k-5+\ga/2}}\|h\|_{H^s_{k+\ga/2}}+k^3\|g\|_{L^2_{|\ga|+8}}\|f\|_{H^s_{k-6+\ga/2}}\|h\|_{H^s_{k+\ga/2}}+k^{2+s}\|g\|_{L^2_{|\ga|+7}}\|f\|_{L^2_{k-5+\ga/2}}\|h\|_{L^2_{k+\ga/2}}$ and
$k^3\|f\|_{L^2_{|\ga|+9}}\|g\|_{H^s_{k-7+\ga/2}}\|h\|_{H^s_{k+\ga/2}}+k^4\|g\|_{L^2_{|\ga|+10}}\|f\|_{H^s_{k-8+\ga/2}}\|h\|_{H^s_{k+\ga/2}}+k^{3+s}\|g\|_{L^2_{|\ga|+9}}\|f\|_{L^2_{k-7+\ga/2}}\|h\|_{L^2_{k+\ga/2}}$.

For $p\geq4$, we need to split the associated term into two parts: $\<v\>\geq\<v_*\>$ and $\<v\><\<v_*\>$.  Similar to the estimate of $\mathbf{A}$ and $\mathbf{B}$, one has
\beno
k^{3+s}\|g\|_{L^2_{|\ga|+11}}\|f\|_{H^s_{k-9+\ga/2}}\|h\|_{H^s_{k+\ga/2}}+k^{3+s}\|f\|_{L^2_{|\ga|+11}}\|g\|_{H^s_{k-9+\ga/2}}\|h\|_{H^s_{k+\ga/2}}.
\eeno
Then by interpolation \eqref{interp}, we conclude that
\beno
&&|\mathscr{E}_2|\ls\|f\|_{L^2_{14}}\|g\|_{H^s_{k-1+\ga/2}}\|h\|_{H^s_{k+\ga/2}}+k^4\|f\|_{L^2_{14}}\|g\|_{H^s_{k-8+\ga/2}}\|h\|_{H^s_{k+\ga/2}}+k\|g\|_{L^2_{14}}\|f\|_{H^s_{k-2+\ga/2}}\|h\|_{H^s_{k+\ga/2}}\\
&&\quad+k^4\|g\|_{L^2_{14}}\|f\|_{H^s_{k-8+\ga/2}}\|h\|_{H^s_{k+\ga/2}}+k^{1+s}\|g\|_{L^2_{14}}\|f\|_{L^2_{k-3+\ga/2}}\|h\|_{L^2_{k+\ga/2}}+k^{3+s}\|g\|_{L^2_{14}}\|f\|_{L^2_{k-7+\ga/2}}\|h\|_{L^2_{k+\ga/2}}.
\eeno

\noindent$\bullet$\underline{\it Estimate of $\mathscr{E}_1$.} Still by Lemma \ref{L18}, we have
\beno
|\mathscr{E}_1|&\leq& k\left|\int_{\R^6\times\mathbb{S}^2} b(\cos \theta) |v-v_*|^{1+\ga} ( v_*\cdot \tilde{\omega}) \cos^k(\th/2) \sin(\th/2) g(v_*) f(v)\<v\>^{k-2}  h(v')\<v'\>^k dvdv_* d\sigma\right|
\\
&& +k\left|\int_{\R^6\times\mathbb{S}^2} b(\cos \theta) |v-v_*|^{1+\ga}  (v_*\cdot \frac {v'-v_*} {|v'-v_*|}) \cos^{k-1}(\th/2)\sin^2(\th/2) g(v_*)f(v)\<v\>^{k-2} h(v')\<v'\>^k dvdv_* d\sigma\right|
\\
&:=& \mathscr{E}_{1, 1} +\mathscr{E}_{1, 2}.
\eeno

For $\mathscr{E}_{1, 2}$, we split the integration domain into two parts: $|v-v_*|\leq1$ and $|v-v_*|>1$. Similar to the argument for $\mathbf{C}_1$ and $\mathbf{C}_2$ or $\mathbf{D}_1$ and $\mathbf{D}_2$, we can derive that
\beno
 \mathscr{E}_{1, 2} &\lesssim&   k\int_{\R^6\times\mathbb{S}^2} b(\cos \theta) \sin^{2} \frac \theta 2 \cos^k\f\th 2|v-v_*|^{1+\gamma} \langle v_* \rangle  |g_*| |f| \langle v \rangle^{k-2}   |h'|  \langle v' \rangle^k dvdv_* d\sigma\\
 &\ls&k^s\|g\|_{L^2_{|\ga|+4}}\|f\|_{H^s_{k+\ga/2-2}}\|h\|_{H^s_{k+\ga/2}}+k^s\|g\|_{L^2_{|\ga|+7}}\|f\|_{L^2_{k+\ga/2-1}}\|h\|_{L^2_{k+\ga/2}}.
\eeno

For $\mathscr{E}_{1, 1}$, similar to $\mathscr{B}_{1, 1}$, for $s\leq\mathbf{a}\leq1$, we have
\beno
\mathscr{E}_{1, 1} &=&k \left|\int_{\R^6\times\mathbb{S}^2} b(\cos \theta) |v-v_*|^{1+\gamma} (v_* \cdot \tilde{\omega}) \cos^k \frac \theta 2 \sin \frac \theta 2 g_* h' \langle v' \rangle^{k} \frac 1 {\langle v \rangle^{2-\mathbf{a}} }(f \langle v \rangle^{k-\mathbf{a}} -f' \langle v' \rangle^{k-\mathbf{a}})dv dv_* d\sigma\right|\\
&&+k \left|\int_{\R^6\times\mathbb{S}^2} b(\cos \theta) |v-v_*|^{1+\gamma} (v_* \cdot \tilde{\omega}) \cos^k \frac \theta 2 \sin \frac \theta 2 g_*   h'f' \langle v' \rangle^{2k-\mathbf{a}} (\frac 1 {\langle v \rangle^{2-\mathbf{a}}}- \frac 1 {\langle v' \rangle^{2-\mathbf{a}}}) dv dv_* d\sigma\right|
:=\mathscr{E}_{1,1,1} + \mathscr{E}_{1,1,2}.
\eeno
We first give the estimate to $\mathscr{E}_{1,1,2}$. By combining  the fact $|\langle v \rangle^{-(2-\mathbf{a})}-\langle v' \rangle^{-(2-\mathbf{a})}|\ls |v-v_*| \sin \frac \theta 2\langle v' \rangle^{-3+\mathbf{a}}\langle v_*\rangle^{2}$, the   change of variable and splitting of the integration domain into $|v-v_*|>1$ and $|v-v_*|\leq1$,
 we   get that
\beno
\mathscr{E}_{1,1,2}
&\lesssim&k^s\|g\|_{L^2_8}\|f\|_{H^s_{k-4+\ga/2}}\|h\|_{H^s_{k+\ga/2}}+k^s\|g\|_{L^2_{10}}\|f\|_{L^2_{k-1+\ga/2}}\|h\|_{L^2_{k+\ga/2}}.
\eeno

For  $\mathscr{E}_{1, 1, 1}$, by Cauchy-Schwarz inequality, we have
\beno
\mathscr{E}_{1,1,1} &\lesssim & k \left( \int_{\R^6\times\mathbb{S}^2} b(\cos \theta) |v-v_*|^{\gamma}   |g_*| (f \langle v \rangle^{k-\mathbf{a}} -f' \langle v' \rangle^{k-\mathbf{a}})^2  dv dv_* d\sigma \right)^{1/2}
\\
&&\times\left( \int_{\R^6\times\mathbb{S}^2} b(\cos \theta) \frac{ |v-v_*|^{\gamma+2} \langle v_*\rangle^2} {\langle v \rangle^{4-2\mathbf{a}}} \cos^{2k} \frac \theta 2\sin^2 \frac \theta 2 |g_*| |h'|^2\langle v' \rangle^{2k}   dv dv_* d\sigma \right)^{1/2}.
\eeno
Since $|v-v_*|^2 \langle v_*\rangle^2\langle v \rangle^{-(4-2\mathbf{a})}\<v'\>^{2-2\mathbf{a}}\lesssim \langle v_* \rangle^8$,
  by regular change of variable, we   derive that
\beno
&&\int_{\R^6\times\mathbb{S}^2} b(\cos \theta) \frac{ |v-v_*|^{\gamma+2} \langle v_*\rangle^2} {\langle v \rangle^{4-2\mathbf{a}}} \cos^{2k} \frac \theta 2\sin^2 \frac \theta 2 |g_*| |h'|^2\langle v' \rangle^{2k}   dv dv_* d\sigma
\\
&\lesssim&\int_{\mathbb{S}^2} b(\cos \theta) \cos^{2k-3-\gamma} \frac \theta 2\sin^2 \frac \theta 2 d\sigma \int_{\R^3} \int_{\R^3}      |v'-v_*|^{\gamma} \langle v_*\rangle^8  |g_*| |h'|^2\langle v' \rangle^{2k-2+2\mathbf{a}}   dv' dv_*
\\
&\lesssim& k^{s-1}\Vert g \Vert_{L^2_{|\gamma| +8}} \Vert h \Vert_{H^s_{k-2+\mathbf{a}+\gamma/2}}^2+k^{s-1}\Vert g \Vert_{L^2_{|\gamma| +8}} \Vert h \Vert_{L^2_{k-1+\mathbf{a}+\gamma/2}}^2.
\eeno
Observing that $(a-b)^2 =-2a(b-a)+ (b^2 -a^2)$, we have
\begin{equation*}
\begin{aligned}
 &\mathcal{I}_2:=\int_{\R^3} \int_{\R^3} \int_{\mathbb{S}^2} b(\cos \theta) |v-v_*|^{\gamma}  |g_*| (f \langle v \rangle^{k-\mathbf{a}} -f' \langle v' \rangle^{k-\mathbf{a}})^2  dv dv_* d\sigma
\\
=&-2(Q(|g|, f \langle \cdot \rangle^{k-\mathbf{a}}) , f \langle \cdot \rangle^{k-\mathbf{a}} )+\int_{\R^3} \int_{\R^3} \int_{\mathbb{S}^2} b(\cos \theta) |v-v_*|^{\gamma}  |g_*| (|f'|^2 \langle v' \rangle^{2k-2\mathbf{a}}  - |f|^2 \langle v \rangle^{2k-2\mathbf{a}} )  dv dv_* d\sigma.
\end{aligned}
\end{equation*}
By cancellation lemma  and Lemma \ref{L12}, we have $\mathcal{I}_2  \lesssim \Vert g \Vert_{L^2_{|\gamma| +7 }} \Vert f \Vert_{H^s_{k+s-\mathbf{a}+\gamma/2}}^2$.
Thus we obtain that
\beno
\mathscr{E}_{1,1,1}\ls k^{(1/2)(1+s)}\|g\|_{L^2_{11}}\|f\|_{H^s_{k+s-\mathbf{a}+\ga/2}}\|h\|_{H^s_{k-2+\mathbf{a}+\ga/2}}+k^{(1/2)(1+s)}\|g\|_{L^2_{11}}\|f\|_{H^s_{k+s-\mathbf{a}+\ga/2}}\|h\|_{L^2_{k-1+\mathbf{a}+\ga/2}}.
\eeno
Together with the estimate of $\mathscr{E}_{1,1,2}$, we get
\beno
\mathscr{E}_{1,1}&\ls& k^s\|g\|_{L^2_8}\|f\|_{H^s_{k-4+\ga/2}}\|h\|_{H^s_{k+\ga/2}}+k^s\|g\|_{L^2_{10}}\|f\|_{L^2_{k-1+\ga/2}}\|h\|_{L^2_{k+\ga/2}}\\
&&+k^{(1/2)(1+s)}\|g\|_{L^2_{11}}\|f\|_{H^s_{k+s-\mathbf{a}+\ga/2}}\|h\|_{H^s_{k-2+\mathbf{a}+\ga/2}}+k^{(1/2)(1+s)}\|g\|_{L^2_{11}}\|f\|_{H^s_{k+s-\mathbf{a}+\ga/2}}\|h\|_{L^2_{k-1+\mathbf{a}+\ga/2}}.
\eeno

Thus we arrive at
\beno
|\mathscr{E}_1|&\ls& k^s\|g\|_{L^2_{11}}\|f\|_{L^2_{k-1+\ga/2}}\|h\|_{L^2_{k+\ga/2}}+k^s\|g\|_{L^2_{11}}\|f\|_{H^s_{k+\ga/2-2}}\|h\|_{H^s_{k+\ga/2}}\\
&&+k^{(1/2)(1+s)}\|g\|_{L^2_{11}}\|f\|_{H^s_{k+s-\mathbf{a}+\ga/2}}\|h\|_{H^s_{k-2+\mathbf{a}+\ga/2}}+k^{(1/2)(1+s)}\|g\|_{L^2_{11}}\|f\|_{H^s_{k+s-\mathbf{a}+\ga/2}}\|h\|_{L^2_{k-1+\mathbf{a}+\ga/2}}.
\eeno

\noindent$\bullet$ \underline{Estimate of $\mathcal{H}_{k/(2s)}(g,f,h)$}. For polynomial case, similar to the estimate of $\mathscr{B}_3$ in Lemma \ref{L21}, we have
\beno
&&\mathcal{H}_{\f{k}{2s}}(g,f,h)\ls C_k\int_{\R^6\times\S^2}b(\cos\th)\sin^2\th|v-v_*|^{2+\ga}\min\{\<v\>^2,\<v_*\>^2\}\max\{\<v\>^2\cos^2(\th/2),\<v_*\>^2\sin^2(\th/2)\}^{\f{k}2-2}\\
&&\times|g(v_*)||f(v)||h(v')\<v'\>^k|dvdv_*d\si\ls C_k\|f\|_{L^2_{14}}\|g\|_{H^s_{k-2+\ga/2}}\|h\|_{H^s_{k+\ga/2}}+C_k\|g\|_{L^2_{14}}\|f\|_{H^s_{k-2+\ga/2}}\|h\|_{H^s_{k+\ga/2}}.
\eeno

At the end, by gathering together the estimates of $\mathscr{D}$ and $\mathscr{E}$, we conclude the desired result.
\end{proof}

In the next, we derive the upper bounds for the commutators of $Q$ with the spatial derivatives.
\begin{lem}\label{Qspatial}
Let $k\geq22$ and $\al$ be any multi-index such that $|\al|=2$ and $F=\mu+f$. Then
\beno
&&\left|\int_{\T^3}\int_{\R^3}(\pa^\al_xQ(F,g)-Q(F,\pa^\al_xg))\<v\>^{2k-8|\al|}\pa^\al_x hdxdv\right|
\le C_k(\| f\|_{H^2_{x}L^2_{14}}\|g\|_{Y_{k+s-1}}+\|g\|_{H^2_{x}L^2_{14}}\|f\|_{Y_{k+s-1}})\|h\|_{Y_k}\\
&&+(k^s\|f\|_{H^2_{x}L^2_{14}}\|g\|_{X_{k+\ga/2}}+\f1k\|g\|_{H^2_{x}L^2_{14}}\|f\|_{X_{k+\ga/2}})\|h\|_{X_{k+\ga/2}}.\eeno
\end{lem}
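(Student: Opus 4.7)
The strategy is to reduce the spatial commutator on the left-hand side to the velocity-weight commutator already handled in Lemma \ref{L23}, and then to integrate over $\T^3_x$ using Sobolev embedding. I fix $|\al|=2$ throughout. Since $\mu$ does not depend on $x$, the Leibniz rule gives
\[
\pa_x^\al Q(F,g)-Q(F,\pa_x^\al g)=\sum_{0<\be\le\al}\binom{\al}{\be}Q(\pa_x^\be f,\pa_x^{\al-\be}g),
\]
so the problem reduces to bounding trilinear forms $(Q(\pa_x^\be f,\pa_x^{\al-\be}g),\<v\>^{2(k-8)}\pa_x^\al h)_{L^2_{x,v}}$ with $|\be|\in\{1,2\}$.

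Next, at each fixed $x$ I would write $\<v\>^{2(k-8)}=\<v\>^{k-8}\cdot\<v\>^{k-8}$ and absorb one copy of the weight into the second slot of $Q$:
\[
(Q(a,b),\<v\>^{2(k-8)}c)_{L^2_v}=(Q(a,\<v\>^{k-8}b),\<v\>^{k-8}c)_{L^2_v}+([\<v\>^{k-8},Q(a,\cdot)]b,\<v\>^{k-8}c)_{L^2_v}.
\]
The commutator piece is bounded pointwise in $x$ by Lemma \ref{L23} (with $k\rightsquigarrow k-8$), which is the source of the sharp prefactors $k^s$ and $1/k$, attached respectively to the low-weight $L^2_{14}$ slot being the first or the second argument of $Q$; this matches exactly the two terms of the $X_{k+\ga/2}$-band in the statement. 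The main piece is controlled by the velocity upper bound for $Q$ (a Lemma \ref{L12}-type estimate), which transfers the low weight to one slot in $L^2_{14}$ and demands $H^s_v$-regularity of the other two with a weight shift by $2s$---precisely the index $k+s-1$ in $Y_{k+s-1}$. I would then integrate over $\T^3_x$ via Cauchy--Schwarz, placing into $L^\infty_x$ the factor carrying only the weak weight $L^2_{14}$ (bounded by $\|\cdot\|_{H^2_xL^2_{14}}$ through the embedding $H^2(\T^3)\hookrightarrow L^\infty(\T^3)$) and keeping the other two in $L^2_x$. In the case $|\be|=1$ either $\pa_x^\be f$ or $\pa_x^{\al-\be}g$ may be selected for the weak-weight slot, producing the symmetric pair $\|f\|_{H^2_xL^2_{14}}\|g\|_\star+\|g\|_{H^2_xL^2_{14}}\|f\|_\star$; the case $|\be|=2$ feeds only into the $\|g\|_{H^2_xL^2_{14}}$ half.

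The main obstacle is weight bookkeeping. Lemma \ref{L23} produces many intermediate velocity weights ($k-8+\ga/2-1$, $k-8+\ga/2-4$, $k-8+s-\mathbf{a}+\ga/2$, and so on) attached to various powers of $k$, all of which must be absorbed into the target norms $Y_{k+s-1}$, $X_{k+\ga/2}$ and $H^2_xL^2_{14}$ through monotonicity of the weighted Sobolev norms and Sobolev interpolation in $v$. A secondary subtlety is that when $|\be|=1$ both spatial factors carry exactly one derivative, which is not a norm appearing directly in $X_{k+\ga/2}$ or $Y_k$ (whose $x$-regularity lives only at orders $0$ and $2$); this is handled by Gagliardo--Nirenberg interpolation in $x$ between the $|\al|=0$ and $|\al|=2$ terms of the target norm, with the velocity weights chosen to balance. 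Crucially, the sharp prefactors $k^s$ and $1/k$ in the $X$-band must survive the spatial Cauchy--Schwarz step without degradation, as these precise factors drive the $k$-summation used downstream for propagation of exponential moments.
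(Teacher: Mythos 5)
Your reduction via the Leibniz rule and the frequency-free part of the plan (apply Lemma \ref{L23} with $\mathbf{a}=1$ and Lemma \ref{L12} slot-wise so that the prefactors $k^s$ and $1/k$ are produced at the level of the velocity estimate) coincides with the paper. The genuine gap is in the $x$-integration step. In the critical case $|\al_1|=|\al_2|=1$ the factor you want to place in $L^\infty_x$ carries one spatial derivative, and $\|\pa_x f\|_{L^\infty_xL^2_{14}}$ is \emph{not} controlled by $\|f\|_{H^2_xL^2_{14}}$: in dimension three $H^1(\T^3)\not\hookrightarrow L^\infty(\T^3)$, and you would need $H^{3/2+}$ on $\pa_xf$, i.e.\ more than two derivatives on $f$, which none of the norms in the statement provide. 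So the scheme ``weak-weight factor in $L^\infty_x$ via $H^2_x\hookrightarrow L^\infty_x$, other two in $L^2_x$'' breaks down exactly for the mixed-derivative terms, in both the $Y$-band and the $X$-band.

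Your proposed repair (Gagliardo--Nirenberg in $x$ between the $|\al|=0$ and $|\al|=2$ pieces ``with the velocity weights chosen to balance'') does not close this, because GN interpolation in $x$ keeps the velocity norm fixed, whereas what is needed is a simultaneous trade of spatial regularity against velocity weight. Concretely, the upper bound of Lemma \ref{L12} forces a weight of order $k-8+2s+\ga/2$ on the heavy slot, and at full spatial order two this is \emph{not} available: the $|\al|=2$ component of $Y_{k+s-1}$ carries only the weight $k+s-9+\ga/2$, which falls short by $s+1$. The estimate only closes if that slot is measured at fractional spatial order (roughly $(7-s)/4$), where the interpolated weight of $Y_{k+s-1}$ is exactly $k-8+2s+\ga/2$. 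This is precisely what the paper's proof implements: it expands the trilinear form in Fourier series in $x$, so that $\int_{\T^3}ghf\,dx=\sum_{p,q}\hat g(p)\hat h(q-p)\hat f(q)$, applies Lemmas \ref{L12} and \ref{L23} frequency-wise, and then uses the discrete inequality \eqref{estiprodx} with the fractional exponents $a=1$, $b=\f{3-s}4$ (and $a=0$, $b=\f{7-s}4$ when $|\al_1|=2$), at which point $\sum_p\<p\>^{2b+2}\|\<\cdot\>^{k-4(b+1)+s-1}\hat g(p)\|^2_{H^s_{\ga/2}}\le\|g\|^2_{Y_{k+s-1}}$ by construction of the $|\al|$-dependent weights. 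Either adopt that Fourier-side argument, or set up an honest mixed interpolation of the weighted spaces $L^2_xH^s_{k+s-1+\ga/2}$ and $H^2_xH^s_{k+s-9+\ga/2}$ at fractional order together with an $L^3_x$--$L^6_x$--$L^2_x$ H\"older splitting; as written, your proposal does not justify the step.
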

\begin{proof}
By the Leibniz rule for the bilinear operator and the fact $\partial^\alpha \mu =0, |\alpha| \ge 1$, it holds that
\[
\pa^{\al}_xQ(F, g)-Q(F,\pa^\al_x g)=\sum_{|\al_1|\neq0}C_{\al_1,\al_2}Q(\pa^{\al_1}_x  F ,\pa^{\al_2}_x g)=\sum_{|\al_1|\neq0}C_{\al_1,\al_2}Q(\pa^{\al_1}_x  f ,\pa^{\al_2}_x g),
\]
where $\alpha_2 =\alpha-\alpha_1$. By the fact $\int_{\TT^3} g(x)h(x)f(x)dx=\sum_{p\in\Z^3}\sum_{q\in\Z^3} \hat{g}(p)\hat{h}(q-p)\hat{f}(q)$ where $\hat{f}$ denotes the Fourier transform w.r.t. $x$ variable. Therefore
\beno \mathcal{I}:=\int_{\TT^3}\int_{\R^3} Q(\pa^{\al_1}_x  f ,\pa^{\al_2}_x g)\<v\>^{2k-8|\al|}\pa^\al_x hdxdv=\sum_{p\in\Z^3}\sum_{q\in\Z^3}p^{\alpha_1}(q-p)^{\alpha_2}q^{\alpha} \int_{\R^3}Q(\hat{f}(p),\hat{g}(q-p))\<v\>^{2k-8|\al|}\hat{h}(q)dv.\eeno
By Lemma  \ref{L12} and  Lemma \ref{L23} with $\mathbf{a}=1$, one has
\beno &&|\mathcal{I}|\ls \sum_{p\in\Z^3}\sum_{q\in\Z^3} |p|^{\alpha_1}|q-p|^{\alpha_2}|q|^{\alpha}\big(C_k\|\hat{f}(p)\|_{L^2_5}\|\<\cdot\>^{k-4|\al|}\hat{g}(q-p)\|_{H^s_{\ga/2+2s}}\|\<\cdot\>^{k-4|\al|}\hat{h}(q)\|_{H^s_{\ga/2}}+ C_k\|\hat{f}(p)\|_{L^2_{14}}\\
&&\times \|\<\cdot\>^{k-4|\al|}\hat{g}(q-p)\|_{H^s_{s-1+\ga/2}} \|\<\cdot\>^{k-4|\al|}\hat{h}(q) \|_{H^s_{\ga/2}} +C_k\| \hat{g}(q-p)\|_{L^2_{14}}\|\<\cdot\>^{k-4|\al|}\hat{f}(p)\|_{H^s_{-1+\ga/2}}\|\<\cdot\>^{k-4|\al|}\hat{h}(q)\|_{H^s_{\ga/2}}\\
&&+\f1k\| \hat{g}(q-p)\|_{L^2_{14}}\|\<\cdot\>^{k-4|\al|}\hat{f}(p)\|_{L^2_{\ga/2}}\|\<\cdot\>^{k-4|\al|}\hat{h}(q)\|_{L^2_{\ga/2}}+k^s\| \hat{f}(p)\|_{L^2_{14}}\|\<\cdot\>^{k-4|\al|}\hat{g}(q-p)\|_{L^2_{\ga/2}}\|\<\cdot\>^{k-4|\al|}\hat{h}(q)\|_{L^2_{\ga/2}}\big).   \eeno
Noticing that \ben\label{estiprodx}\sum_{p\in\Z^3}\sum_{q\in\Z^3}  | A_pB_{q-p}C_q|\ls(\sum_{p\in\Z^3} \<p\>^{2a} |A_p|^2)^{\f12}(\sum_{p\in\Z^3} \<p\>^{2b} |B_p|^2)^{\f12} (\sum_{p\in\Z^3}  |C_p|^2)^{\f12},\een where $a+b>\f32$ with $a,b\ge0$. We give the detailed estimates for the first term and others can be handled similarly. For $|\al_1|=|\al_2|=1$, choose $a=1,b=\f{3-s}4$ we have
\beno
&&\sum_{p\in\Z^3}\sum_{q\in\Z^3} |p||q-p||q|^{2}\big(C_k\|\hat{f}(p)\|_{L^2_5}\|\<\cdot\>^{k-4|\al|}\hat{g}(q-p)\|_{H^s_{\ga/2+2s}}\|\<\cdot\>^{k-4|\al|}\hat{h}(q)\|_{H^s_{\ga/2}}\\
&\leq&C_k(\sum_{p\in\Z^3} \<p\>^{2a+2}\|\hat{f}(p)\|_{L^2_5}^2)^{\f12}(\sum_{p\in\Z^3} \<p\>^{2b+2} \|\<\cdot\>^{k-8+2s}\hat{g}(p)\|_{H^s_{\ga/2}}^2)^{\f12} (\sum_{q\in\Z^3} \<q\>^4 \|\<\cdot\>^{k-4|\al|}\hat{h}(q)\|_{H^s_{\ga/2}}^2)^{\f12}.
\eeno
Since
$\sum_{p\in\Z^3} \<p\>^{2b+2} \|\<\cdot\>^{k-8+2s}\hat{g}(p)\|_{H^s_{\ga/2}}^2=\sum_{p\in\Z^3} \<p\>^{2b+2} \|\<\cdot\>^{k-4(b+1)+s-1}\hat{g}(p)\|_{H^s_{\ga/2}}^2\leq \|g\|^2_{Y_{k+s-1}}$, then the above can be bounded by $C_k \|f\|_{H^2_x L^2_{14}}\|g\|_{Y_{k+s-1}}\|h\|_{Y_k}$ thanks to the definition of the energy space $Y_k$(see \eqref{Y_k}).
For $|\al_1|=2,|\al_2|=0$, choose $a=0,b=\f{7-s}4$, we can also obtain the same bound by the fact that $\sum_{p\in\Z^3} \<p\>^{2b} \|\<\cdot\>^{k-8+2s}\hat{g}(p)\|_{H^s_{\ga/2}}^2=\sum_{p\in\Z^3} \<p\>^{2b} \|\<\cdot\>^{k-4b+s-1}\hat{g}(p)\|_{H^s_{\ga/2}}^2\leq \|g\|^2_{Y_{k+s-1}}$. Then we complete the proof of this lemma.
\end{proof}

\subsection{Lower bounds of  the  collision operator $Q$} We have:
\begin{thm}\label{T24}
Suppose that $\ga\in(-3,1], s \in (0, 1), \ga+2s>-1$. $F= \mu +g$ satisfies
\ben\label{Fcondition}
F \ge 0,\quad  \Vert F \Vert_{L^1} \ge 1/2, \quad \Vert F \Vert_{L^1_2} +\Vert F \Vert_{L \log L} \le 4.
\een
If $k\geq22$, then there exists a constant $\ga_1>0$ such that
\ben\label{211}
&&\notag(Q(F, f), f \langle v \rangle^{2k} )+ \ga_1 \Vert f \Vert_{H^s_{k+\f\gamma2}}^2  +\frac {1} {8} \Vert  b(\cos \theta) (1- \cos^{2k-3-\gamma} \frac \theta 2  )\Vert_{L^1_\theta}|[ f ]|_{L^2_{k+\f\gamma2}}^2 \le  C_{k}  \Vert f \Vert_{L^2_{k-1+\f\gamma2}}^2+C_k\Vert f \Vert_{L^2_{14 }} \Vert g \Vert_{H^s_{ k-1+\f\gamma2 }}\\
&&\times\Vert f \Vert_{H^s_{ k + \f\gamma2}}+C_k\|f\|_{L^2_{14}}\|g\|_{L^2_{k+\f \ga 2}}\|f\|_{L^2_{k+\f\ga 2}} +C_k\Vert g \Vert_{L^2_{14 }} \Vert f \Vert_{H^s_{ k-1 + \f\gamma2}}\|f\|_{H^2_{k+\f\ga 2}}+   C_k (\|g\|_{L^2_{14}}+\|g\|^4_{L^2_{14}})\|f\|^2_{L^2_{k+\f\ga 2}}.
\een
In particular when $g=0$, we have
\ben\label{212}
&&(Q(\mu, f), f \langle v \rangle^{2k} )+ \ga_1\Vert f \Vert_{H^s_{k+\f{\gamma}2}}^2+\frac {1} {8} \Vert  b(\cos \theta) ( 1- \cos^{2k-3-\gamma} \frac \theta 2  )\Vert_{L^1_\theta}|[ f ]|_{L^2_{k+\f{\gamma}2}}^2 \le  C_{k}  \Vert f \Vert_{L^2_{k+\f{\gamma}2-1}}^2.
\een
Moreover if $2js\geq22$, we have
\ben\label{exp2}
&&\hspace{0.5cm}(Q(F,f),f\<v\>^{4js})+\ga_1 \|f\|^2_{H^s_{2js+\ga/2}}+\frac{\ga_2}{8}\|b(\cos\th)(1-\cos^{4js-3-\ga}(\th/2))\|_{L^1_\th}\|f\|^2_{L^2_{2js+\ga/2}}\\
\notag&&\hspace{0.2cm}\ls j^{-1} \|f\|_{L^2_{14}}\|F\|_{L^2_{2js+\ga/2}}\|f\|_{L^2_{2js+\ga/2}}+j\|f\|_{L^2_{14}}\|F\|_{H^s_{2js-2+\ga/2}}\|f\|_{H^s_{2js+\ga/2}}\\
\notag&&\hspace{0.3cm}+j^{4}\|f\|_{L^2_{14}}\|F\|_{H^s_{2js-8+\ga/2}}\|f\|_{H^s_{2js+\ga/2}}+\|F\|^4_{L^2_{14}}\|f\|^2_{L^2_{2js+\ga/2}}+j\|F\|_{L^2_{14}}\|f\|_{H^s_{2js-2+\ga/2}}\|f\|_{H^s_{2js+\ga/2}}\\
\notag&&\hspace{0.3cm}+j^4\|F\|_{L^2_{14}}\|f\|_{H^s_{2js-8+\ga/2}}\|f\|_{H^s_{2js+\ga/2}}+j^{1+s}\|F\|_{L^2_{14}}\|f\|^2_{H^s_{2js-2+\ga/2}}+j^s\|F\|_{L^2_{14}}\|f\|_{L^2_{2js-1+\ga/2}}\|f\|_{L^2_{2js+\ga/2}}\\
\notag&&\hspace{0.3cm}+j^{3+s}\|F\|_{L^2_{14}}\|f\|_{L^2_{2js- 7  +\ga/2}}\|f\|_{L^2_{2js+\ga/2}}+j^{1+s}\|F\|_{L^2_{14}}\|f\|^2_{L^2_{2js-1+\ga/2}}+\mathcal{H}_j(F,f,f),
\een
where $\gamma_2$ is a constant verifying
\ben\label{ga2}
\int_{\R^3} |v-v_*|^\gamma F(v_*) dv_*\geq \gamma_2 \langle v \rangle^\gamma.
\een

\end{thm}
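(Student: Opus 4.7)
The plan is to reduce the weighted coercivity to the sum of an Alexandre--Desvillettes--Villani--Wennberg (ADVW) sub-elliptic dissipation and a Povzner-type moment loss, and then absorb all remainders with the upper-bound and commutator estimates already established in this section. Introduce the substitution $\phi := f\<v\>^k$ and write
\[
(Q(F, f), f\<v\>^{2k}) = (Q(F, \phi), \phi) + \bigl(\<v\>^k Q(F, f) - Q(F, \phi),\, \phi\bigr),
\]
so that the whole polynomial weight has been pulled inside the two factors of $\phi$ at the price of one commutator. That commutator is precisely the object estimated by Lemma~\ref{L23} with $(g,f,h)=(F,f,f)$ and $\mathbf{a}=1$, but a blind absolute bound there would wipe out the sign needed for the Povzner coercivity, so it must instead be analysed via the explicit Taylor expansion of $\<v'\>^k$ provided by Lemma~\ref{L18}.

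Split $F = \mu + g$ inside $(Q(F,\phi),\phi)$. The non-cutoff coercivity of $-Q(\mu,\cdot)$ yields $-(Q(\mu,\phi),\phi) \ge \gamma_1\|\phi\|_{H^s_{\gamma/2}}^2 - C\|\phi\|_{L^2_{\gamma/2}}^2$, which translates into $\gamma_1\|f\|_{H^s_{k+\gamma/2}}^2 - C\|f\|_{L^2_{k+\gamma/2}}^2$ once $\<v\>^k$ is exchanged with $\<D\>^s$ via a routine lower-order commutator. For the nonlinear piece $(Q(g,\phi),\phi)$, a Lemma~\ref{L12}-type upper bound combined with the uniform bounds in \eqref{Fcondition} produces precisely the mixed remainders $C_k\|g\|_{L^2_{14}}\|f\|_{H^s_{k-1+\gamma/2}}\|f\|_{H^s_{k+\gamma/2}}$ and $C_k(\|g\|_{L^2_{14}}+\|g\|_{L^2_{14}}^4)\|f\|_{L^2_{k+\gamma/2}}^2$ that appear on the right-hand side of \eqref{211}; the assumption $\|F\|_{L^1}\ge 1/2$ is what keeps $\gamma_1$ bounded below, through \eqref{ga2}.

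For the Povzner coercivity, expand $\<v'\>^k = E(\theta)^{k/2} + R_k$ via Lemma~\ref{L18}, then apply the binomial decomposition \eqref{gamma} to split $E(\theta)^{k/2}-\<v\>^k$ into the pure loss $(\cos^{k}(\theta/2)-1)\<v\>^k$, a gain $\sin^k(\theta/2)\<v_*\>^k$, and cross terms summed over $p$. Pairing these with $F_* = \mu(v_*)+g(v_*)$ inside the commutator $(\<v\>^k Q(F,f) - Q(F,\phi),\,\phi)$, the Maxwellian part $\mu(v_*)$ generates exactly the coercive quantity $\tfrac{1}{8}\|b(\cos\theta)(1-\cos^{2k-3-\gamma}(\theta/2))\|_{L^1_\theta}|[f]|^2_{L^2_{k+\gamma/2}}$---the exponent $2k-3-\gamma$ arises from the Jacobian $\cos^{-3-\gamma}(\theta/2)$ of the singular change of variables $v\mapsto v'$. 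The gain and cross contributions are combinatorially summed via \eqref{lk} and the angular integrals of Lemma~\ref{lemma2.2}(i), and are absorbed into $C_k\|f\|_{L^2_{k-1+\gamma/2}}^2$. The Taylor remainder $R_k$ generates precisely the $\mathcal{H}_j$-term appearing in \eqref{exp2}, which has no definite sign and must be carried forward.

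The exponential statement \eqref{exp2} is then obtained by specialising to $k=2js$ and keeping track of the $j$-dependence of every step; the polynomial factors $j^s$, $j$, $j^{1+s}$, $j^{3+s}$, $j^{4}$ on its right-hand side are exactly what Lemmas~\ref{L21} and \ref{L23} yield when their indices are kept sharp. The hard part will be twofold: \emph{(a)} the prefactor $\tfrac{1}{8}$ of the Povzner coercivity must survive the absorption of the cubic contribution $\tfrac{1}{2}\int BF_*(\phi-\phi')^2(\<v'\>^k-\<v\>^k)/\<v\>^k$ into the ADVW dissipation, which is possible only because that contribution has angular weight of lower order $k^{s-1}$ rather than $k^{s}$ by Lemma~\ref{lemma2.2}; and \emph{(b)} the splitting of the integration domain into $\{\<v\>\ge \<v_*\>\}$ and $\{\<v\> < \<v_*\>\}$ (as in the proofs of Lemmas~\ref{L21} and \ref{L23}) must be performed consistently at every occurrence so that the heavy polynomial moments always land on $g$ rather than on $f$, since only $\|g\|_{L^2_{14}}$ is assumed bounded under \eqref{Fcondition}.
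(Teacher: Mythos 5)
Your reduction to $(Q(F,\phi),\phi)$ plus a commutator, with $\phi=f\langle v\rangle^{k}$, breaks down at the step where you claim the commutator produces the weighted Povzner coercivity. In weak form,
\begin{equation*}
\bigl(\langle v\rangle^{k}Q(F,f)-Q(F,\phi),\,\phi\bigr)
=\int_{\R^6\times\S^2} B\,F_*\,f\,f'\langle v'\rangle^{k}\bigl(\langle v'\rangle^{k}-\langle v\rangle^{k}\bigr)\,dv\,dv_*\,d\sigma ,
\end{equation*}
which is bilinear in $f$ and $f'$ with no square structure; its Maxwellian part is therefore not sign-definite and cannot ``generate exactly'' the negative term $\tfrac18\Vert b(\cos\theta)(1-\cos^{2k-3-\gamma}\tfrac\theta2)\Vert_{L^1_\theta}|[f]|^2_{L^2_{k+\gamma/2}}$. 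In the paper that term comes from $\mathscr{J}_1=\int B F_*\bigl(|f||f'|\langle v'\rangle^{k}E(\theta)^{k/2}-f^2\langle v\rangle^{2k}\bigr)$: the presence of the full loss term $-f^2\langle v\rangle^{2k}$ is what allows the pointwise inequality $|f||f'|\langle v'\rangle^{k}\langle v\rangle^{k}\cos^{k}\tfrac\theta2\le\tfrac12\bigl(|f'|^2\langle v'\rangle^{2k}\cos^{2k}\tfrac\theta2+f^2\langle v\rangle^{2k}\bigr)$, after which the \emph{regular} (not singular) change of variables of Lemma \ref{chv} supplies the Jacobian $\cos^{-3-\gamma}(\theta/2)$ and yields $-\tfrac12\Vert b(1-\cos^{2k-3-\gamma}\tfrac\theta2)\Vert_{L^1_\theta}\int|v-v_*|^{\gamma}F_*f^2\langle v\rangle^{2k}$. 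In your single decomposition that loss term has been consumed entirely by the $H^s$-coercivity of $(Q(F,\phi),\phi)$, so nothing is left to pair against the $ff'\cos^{k}$ piece; the remaining commutator then only admits an absolute bound of size $k^{s}\Vert f\Vert^2_{L^2_{k+\gamma/2}}$ with no sign, and the $H^s$ dissipation (whose constant is $k$-independent) cannot absorb it. This is a genuine gap: \eqref{211} cannot be reached along your route.

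What your plan misses is the paper's convex-combination device: one estimates $(1+\eta)(Q(F,f),f\langle v\rangle^{2k})$ using \emph{two different} decompositions simultaneously, keeping the ``gain-the-weight'' decomposition at full strength (producing $\mathscr{J}_1$ above together with the remainders $\mathscr{J}_3,\mathscr{J}_4$ handled as in Lemmas \ref{L21} and \ref{L23}), while only an $\eta$-fraction of $(Q(F,f\langle v\rangle^{k}),f\langle v\rangle^{k})$ is devoted to the $H^s$ gain via Lemma \ref{L13}. This is precisely why the $\gamma_1$ of the statement is small and why the Povzner constant degrades from $1/2$ to $1/8$, the $\eta$-contamination $\eta\Vert b(1-\cos^{k}\tfrac\theta2)\cos^{-3/2-\gamma/2}\tfrac\theta2\Vert_{L^1_\theta}$ being absorbed through Lemma \ref{lemma2.2}. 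Two smaller corrections: in \eqref{211} the passage from $\int|v-v_*|^{\gamma}F_*|f\langle v\rangle^{k}|^2$ to $|[f]|^2_{L^2_{k+\gamma/2}}$ minus $\Vert g\Vert_{L^2_5}$-errors is done by writing $F=\mu+g$ and invoking Lemma \ref{L116}, whereas \eqref{ga2} and $\gamma_2$ enter only in the exponential estimate \eqref{exp2}; and the $\cos^{-3-\gamma}(\theta/2)$ factor you attribute to the singular change of variables in fact comes from the regular one.
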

\begin{proof} We begin with the proof of the existence of $\ga_2$ in (\ref{ga2}).
On one hand, it is easy to see that $\int_{\R^3} |v-v_*|^{\ga}F(v_*)dv_*\ge \int_{\R^3}1_{|v|\ge R}1_{|v_*|\le R/2} |v-v_*|^{\ga}F(v_*)dv_*\gtrsim   |v|^\gamma\mathrm{1}_{|v|\ge R}(1/2-16/R^2)$. On the other hand, if $\gamma<0$, it holds that  $\int_{\R^3} |v-v_*|^{\ga}F(v_*)dv_*\ge 1_{|v|\le 2R}\int_{\R^3} |v-v_*|^{\gamma}1_{|v_*|\le R}F(v_*)dv_*\ge(3R)^{\gamma} (1/2-16/R^2)$. While if $\gamma>0$, then $\int_{\R^3} |v-v_*|^{\ga}F(v_*)dv_*\ge 1_{|v|\le 2R}\int_{\R^3} |v-v_*|^{\gamma}1_{|v-v_*|\ge r}   1_{ F(v_*) \le R }   F(v_*)dv_*\ge r^{\gamma} (1/2-16/R^2-4/(\log R)- 4 \pi Rr^3)$. Then the result (\ref{ga2}) follows by choosing proper $R$ and $r$.

To prove the main results, we introduce the following two decompositions of $(Q(F,f),f\<v\>^{2k})$:
\beno
&&(Q(F,f),f\<v\>^{2k})=(Q(F,f\<v\>^k),f\<v\>^k)+\int_{\R^6\times\S^2}b(\cos\th)|v-v_*|^\ga F_*ff'\<v'\>^k(E(\th)^{k/2}-\<v\>^k)dvdv_*d\si\\
&&\qquad\qquad\qquad\qquad+\int_{\R^6\times\S^2}b(\cos\th)|v-v_*|^\ga F_*ff'\<v'\>^k(\<v'\>^k-E(\th)^{k/2})dvdv_*d\si\\
&&\mbox{\,\,and}\quad(Q(F,f),f\<v\>^{2k})
=\int_{\R^6\times\S^2}b(\cos\th)|v-v_*|^\ga F_*(ff'\<v'\>^{k}E(\th)^{k/2}-f^2\<v\>^{2k})dvdv_*d\si\\
&&\qquad\qquad\qquad\qquad\qquad\qquad+\int_{\R^6\times\S^2}b(\cos\th)|v-v_*|^\ga F_*ff'\<v'\>^{k}(\<v'\>^k-E(\th)^{k/2})dvdv_*d\si.
\eeno
We remark that the first one is used to gain the regularity while the other one is to gain the weight. To make full use of them, we introduce parameter  $\eta<1$ to derive that
\beno
&&(1+\eta)(Q(F,f),f\<v\>^{2k})\leq \int_{\R^6\times\S^2}b(\cos\th)|v-v_*|^\ga F_*(|f||f'|\<v'\>^{k}E(\th)^{\f k2}-f^2\<v\>^{2k})dvdv_*d\si\\
&&+\eta(Q(F,f\<v\>^k),f\<v\>^k)+(1+\eta)\int_{\R^6\times\S^2}b(\cos\th)|v-v_*|^\ga F_*ff'\<v'\>^k(\<v'\>^k-E(\th)^{k/2})dvdv_*d\si\\
&&+\eta\int_{\R^6\times\S^2}b(\cos\th)|v-v_*|^\ga F_*ff'\<v'\>^k(E(\th)^{ k/2}-\<v\>^k)dvdv_*d\si:=\mathscr{J}_1+\mathscr{J}_2+\mathscr{J}_3+\mathscr{J}_4.
\eeno

\underline{\it Step ~1: Estimate of $\mathscr{J}_1$.} Using(\ref{gamma}), we have
\beno
&&\mathscr{J}_1\leq\int_{\R^6\times\S^2}b(\cos\theta)|v-v_*|^\ga F_*(|f||f'|\<v'\>^{k}\<v\>^k\cos^k(\th/2)-f^2\<v\>^{2k})dvdv_*d\si\\
&&+\sum_{p=1}^{l_{k/2}}\frac{\Ga(k/2+1)}{\Ga(p+1)\Ga(k/2+1-p)}\int_{\R^6\times\S^2}   b(\cos\theta)  |v-v_*|^\gamma F_*   |f|  |f'|  \langle v' \rangle^k       [(\<v\>^2\cos^2(\th/2))^p(\<v_*\>^2\sin^2(\theta/2))^{k/2-p}\eeno\beno
\notag&&+(\<v\>^2\cos^2(\theta/2))^{k/2-p}(\<v_*\>^2\sin^2(\theta/2))^{p}] dvdv_*d\si+\int_{\R^6\times \S^2}b(\cos\theta)|v-v_*|^\ga F_*  |f|  |f'|  \<v'\>^k(\sin^2(\theta/2))^{k/2}\\
&&\times\<v_*\>^k dv dv_* d\si
:=\mathscr{J}_{1,1}+\mathscr{J}_{1,2}+\mathscr{J}_{1,3}.
\eeno

  Due to the cancellation lemma in Lemma  \ref{canlemma} and the fact that
$|f| |f'| \langle v' \rangle^{k} \langle v \rangle^k \cos^k \frac \theta 2 -|f|^2 \langle v \rangle^{2k} \le \frac 1 2 (|f'|^2\langle v' \rangle^{2k} \cos^{2k} \frac \theta 2-|f|^2 \langle v \rangle^{2k} )$, we have
\beno
\mathscr{J}_{1,1}&\leq& \frac 1 2\int_{\R^6\times\mathbb{S}^2} b(\cos \theta) |v-v_*|^\ga  F_* |f|^2  \langle v \rangle^{2k} ( \cos^{2k-3-\gamma} \frac \theta 2 -1    ) dv dv_* d\sigma\\
& \leq& - \frac {1} {2} \Vert  b(\cos \theta) (1- \cos^{2k-3-\gamma} \frac \theta 2  )\Vert_{L^1_\theta}\int_{\R^6}|v-v_*|^\ga F_*|f(v)\<v\>^k|^2dvdv_*.
\eeno
  Thanks to Lemma \ref{L110} and Lemma \ref{lemma2.2} and the estimates of $\mathscr{D}$ in Lemma \ref{L23}, we derive that
\beno
&&\mathscr{J}_1+\frac{\ga_2}{2}\|b(\cos\th)(1-\cos^{2k-3-\ga}(\th/2))\|_{L^1_\th}\|f\|^2_{L^2_{k+\ga/2}}\ls \f1k \|f\|_{L^2_{14}}\|F\|_{L^2_{k+\ga/2}}\|f\|_{L^2_{k+\ga/2}}\\
&&+k\|f\|_{L^2_{14}}\|F\|_{H^s_{k-2+\ga/2}}\|f\|_{H^s_{k+\ga/2}}+k\|F\|_{L^2_{14}}\|f\|_{H^s_{k-2+\ga/2}}\|f\|_{H^s_{k+\ga/2}}\\
&&+k^3\|f\|_{L^2_{14}}\|F\|_{H^s_{k-6+\ga/2}}\|f\|_{H^s_{k+\ga/2}}+k^3\|F\|_{L^2_{14}}\|f\|_{H^s_{k-6+\ga/2}}\|f\|_{H^s_{k+\ga/2}}.
\eeno

\underline{\it Step ~2: Estimate of $\mathscr{J}_2$.} By coercivity estimate in Lemma \ref{L13}, if $-1-2s<\ga<-2s$, we have
\beno
\mathscr{J}_2&\le& -\ga_1\eta  \|f\|^2_{H^s_{k+\ga/2}}+C\eta\|f\|^2_{L^2_{k+\ga/2}}+C\eta\|F\|^4_{L^2_{|\ga|+2}}\|f\|^2_{L^2_{k+\ga/2}},
\eeno
where $\ga_1$ and $C$ depend on $\|F\|_{L^1_1}$ and $\|F\|_{L\log L}$. By the fact that $F=\mu+g$, we also have that
\ben\label{J22}
\mathscr{J}_2
&\leq& -\ga_1\eta  \|f\|^2_{H^s_{k+\ga/2}}+C\eta\|f\|^2_{L^2_{k+\ga/2}}+C\eta\|g\|^4_{L^2_{|\ga|+2}}\|f\|^2_{L^2_{k+\ga/2}}.
\een
While $\ga+2s\geq0$, we have
$\mathscr{J}_2\le -\ga_1\eta \|f\|^2_{H^s_{k+\ga/2}}+C\eta\|f\|^2_{L^2_{k+\ga/2}}$.

\underline{\it Step ~3: Estimates of $\mathscr{J}_3$.} 
Due to (\ref{v'}), we have
\beno
|\mathscr{J}_3|&=&(1+\eta)\left|\int_{\R^6\times\S^2}b(\cos\th)|v-v_*|^\ga F_*ff'\<v'\>^k(\<v'\>^k-E(\th)^{k/2})dvdv_*d\si\right|\\
&\ls& k\left|\int_{\R^6\times\S^2}b(\cos\th)|v-v_*|^{1+\ga} F_*f\<v\>^{k-2}f'\<v'\>^k\cos^{k-1}(\th/2)\sin(\th/2)(v\cdot\om)dvdv_*d\si\right|\\
&&+k\left|\int_{\R^6\times\S^2}b(\cos\th)|v-v_*|^{1+\ga} F_*ff'\<v'\>^k[(E(\th))^{k/2-1}-(\<v\>^2\cos^2(\th/2))^{k/2-1}]\sin\th(v\cdot\om)dvdv_*d\si\right|\\
&&+\mathcal{H}_{k/(2s)}(F,f,f):=\mathscr{J}_{3,1}+\mathscr{J}_{3,2}+\mathcal{H}_{k/(2s)}(F,f,f).
\eeno

\noindent\underline{\it Estimate of $\mathscr{J}_{3,1}$.} We remark that the estimate of $\mathscr{J}_{3,1}$ is similar to the estimate of $\mathscr{E}_{1}$ in Lemma \ref{L23}. Recall that in Lemma \ref{L18},
\beno
\omega = \tilde{\omega} \cos \frac \theta 2  + \frac {v'-v_*} {|v'-v_*|} \sin \frac \theta 2,\quad \tilde{\omega} = \frac {v'-v} {|v'-v|}.
\eeno
Then we have
\beno
\mathscr{J}_{3,1}&\leq&k\left|\int_{\R^6\times\S^2} b(\cos \theta) |v-v_*|^{1+\ga} (v_*\cdot \tilde{\omega}) \cos^{k} \frac \theta 2 \sin \frac \theta 2F_* f\<v\>^{k-2} f'\langle v' \rangle^{k}  dvdv_* d\sigma\right|
\\
&&+k\left|\int_{\R^6\times\S^2} b(\cos \theta) |v-v_*|^{1+\gamma} (v_*\cdot \frac {v'-v_*} {|v'-v_*|}) \cos^{k-1} \frac \theta 2 \sin^2 \frac \theta 2 F_* f\<v\>^{k-2}f'\langle v' \rangle^{k}   dvdv_* d\sigma\right|
\\
&:=& \mathscr{J}_{3,1,1} +\mathscr{J}_{3,1,2}.
\eeno
We first give the estimate of $\mathscr{J}_{3,1,2}$, similar to the estimate of $\mathscr{E}_{1,2}$,
\beno
\mathscr{J}_{3,1,2}&\leq&k\int_{\R^6\times \S^2}b(\cos\th)\sin^2(\th/2)\cos^k\f\th2|v-v_*|^{1+\ga}F_*|f||f'|\<v'\>^k\<v\>^{k-2}\<v_*\>dvdv_*d\si\\
&\leq&k^s\|F\|_{L^2_{14}}\|f\|_{H^s_{k+\ga/2-2}}\|f\|_{H^s_{k+\ga/2}}+k^s\|F\|_{L^2_{14}}\|f\|_{L^2_{k+\ga/2-1}}\|f\|_{L^2_{k+\ga/2}}.
\eeno
Then we give the estimate of $\mathscr{J}_{3,1,1}$, by symmetry as before
\[
\mathscr{J}_{3,1,1}=k\left|\int_{\R^6\times\S^2} b(\cos \theta) |v-v_*|^{1+\ga}  (v_*\cdot \tilde{\omega}) \cos^{k} \frac \theta 2 \sin \frac \theta 2F_*  f'\langle v' \rangle^{k}(f\<v\>^{k-2}-f'\<v'\>^{k-2}) dvdv_* d\sigma\right|.
\]
Moreover,
\beno
\mathscr{J}_{3,1,1}&\leq&k\left|\int_{\R^6\times\S^2} b(\cos \theta) |v-v_*|^{1+\ga} (v_*\cdot \tilde{\omega}) \cos^{k} \frac \theta 2 \sin \frac \theta 2F_*  f'\langle v' \rangle^{k}\frac{1}{\<v\>^2}(f\<v\>^k-f'\<v'\>^k) dvdv_* d\sigma\right|\\
&&+k\left|\int_{\R^6\times\S^2} b(\cos \theta) |v-v_*|^{1+\ga} (v_*\cdot \tilde{\omega}) \cos^{k} \frac \theta 2 \sin \frac \theta 2F_*  (f'\<v'\>^k)^2(\frac{1}{\<v\>^2}-\frac{1}{\<v'\>^2}) dvdv_* d\sigma\right|\\
&:=&\Lambda_1+\Lambda_2.
\eeno
Since
\[
\left|\frac{1}{\<v\>^2}-\frac{1}{\<v'\>^2}\right|\leq\frac{|v-v_*|\sin(\th/2)\max\{\<v_*\>,\<v\>\}}{\<v'\>^2\<v\>^2}   \lesssim \frac{|v-v_*|\sin(\theta/2)  \langle v_* \rangle}{\<v'\>^2\<v\>}  ,
\]
we obtain that
\beno
\Lambda_2&\ls&k\int_{\R^6\times\S^2}b(\cos\th)|v-v_*|^{2+\ga}\sin^2(\th/2)\cos^k\f\th2F_*\<v_*\>^3|f'|^2\<v'\>^{2k-3}dvdv_*d\si\\
&\ls& k^s\|F\|_{L^2_{14}}\|f\|_{H^s_{k-2+\ga/2}}\|f\|_{H^s_{k+\ga/2}}+k^s\|F\|_{L^2_{14}}\|f\|_{L^2_{k-1+\ga/2}}\|f\|_{L^2_{k+\ga/2}}.
\eeno
Then we give the estimate of $\Lambda_1$, by cancellation lemma we have
\beno
\Lambda_1&\ls&\vep \int_{\R^6\times\S^2}b(\cos\th)|v-v_*|^\ga F_*(f\<v\>^k-f'\<v'\>^k)^2dvdv_*d\si\\
&&+C_{\vep}k^2\int_{\R^6\times\S^2}b(\cos\th)\frac{|v-v_*|^{2+\ga}\<v_*\>^2}{\<v\>^4}\sin^2(\th/2)\cos^{2k}\f\th2F_*|f'|^2\<v'\>^{2k}dvdv_*d\si:= \Lambda_{1,1}+\Lambda_{1,2}.
\eeno
Since $(a-b)^2 =-2a(b-a)+ (b^2 -a^2)$, we have
\beno
\Lambda_{1,1}&\leq&-2\vep\int_{\R^6\times\S^2} b(\cos\th)|v-v_*|^\ga F_*f\<v\>^k(f'\<v'\>^k-f\<v\>^k)dvdv_*d\si\\
&&+\vep\int_{\R^6\times\S^2} b(\cos\th)|v-v_*|^\ga F_*((f'\<v'\>^k)^2-(f\<v\>^k)^2)dvdv_*d\si
\ls -2\vep (Q(F,f\<v\>^k),f\<v\>^k)+\vep\|f\|^2_{L^2_{k+\ga/2}}.
\eeno
Due to the fact that $\frac{|v-v_*|^2\<v_*\>^2}{\<v\>^4}\ls \frac{\max\{\<v_*\>^2,\<v\>^2\}\<v_*\>^2}{\<v\>^4} \lesssim \frac{ \langle v_* \rangle^4}  {\langle v \rangle^2} $, we have
\beno
\Lambda_{1,2}&\ls& C_{\vep}k^2\int_{\R^6\times\S^2}b(\cos\th)|v-v_*|^\ga\sin^2(\th/2)\cos^{2k}\frac \theta 2 F_*\<v_*\>^6|f'|^2\<v'\>^{2k-2}dvdv_*d\si\\
&\ls& C_{\vep}k^{1+s}(\|F\|_{L^2_{14}}\|f\|^2_{H^s_{k-2+\ga/2}}+\|F\|_{L^2_{14}}\|f\|^2_{L^2_{k-1+\ga/2}}).
\eeno
Patching together above estimates, we conclude that
\beno
\mathscr{J}_{3,1}&\ls&k^s\|F\|_{L^2_{14}}\|f\|_{H^s_{k-2+\ga/2}}\|f\|_{H^s_{k+\ga/2}}+k^s\|F\|_{L^2_{14}}\|f\|_{L^2_{k-1+\ga/2}}\|f\|_{L^2_{k+\ga/2}}\\
&-&2\vep (Q(F,f\<v\>^k),f\<v\>^k)+\vep\|f\|^2_{L^2_{k+\ga/2}}+C_{\vep}k^{1+s}(\|F\|_{L^2_{14}}\|f\|^2_{H^s_{k-2+\ga/2}}+\|F\|_{L^2_{14}}\|f\|^2_{L^2_{k-1+\ga/2}}).
\eeno

\noindent\underline{\it Estimate of $\mathscr{J}_{3,2}$.} One may use the same argument in the estimate of $\mathscr{E}_{2}$ in Lemma \ref{L23} to obtain that
\beno
\mathscr{J}_{3,2}&\ls&\|f\|_{L^2_{14}}\|F\|_{H^s_{k-1+\ga/2}}\|f\|_{H^s_{k+\ga/2}}+k^4\|f\|_{L^2_{14}}\|F\|_{H^s_{k-8+\ga/2}}\|f\|_{H^s_{k+\ga/2}}+\|F\|_{L^2_{14}}\|f\|_{H^s_{k+\ga/2}}\|f\|_{H^s_{k+\ga/2}}\\
&&+k^4\|F\|_{L^2_{14}}\|f\|_{H^s_{k-8+\ga/2}}\|f\|_{H^s_{k+\ga/2}}+k^{1+s}\|F\|_{L^2_{14}}\|f\|_{L^2_{k-3+\ga/2}}\|f\|_{L^2_{k+\ga/2}}+k^{2+s}\|F\|_{L^2_{14}}\|f\|_{L^2_{k-5+\ga/2}}\|f\|_{L^2_{k+\ga/2}}.
\eeno

Thus, we conclude that for any   $\epsilon\ll\eta$,
\beno
&&|\mathscr{J}_3|\ls \vep\|f\|^2_{L^2_{k+\f{\ga}2}}-\vep (Q(F,f\<v\>^k),f\<v\>^k)+k\|F\|_{L^2_{14}}\|f\|_{H^s_{k-2+\f{\ga}2}}\|f\|_{H^s_{k+\f{\ga}2}}+k^4\|F\|_{L^2_{14}}\|f\|_{H^s_{k-8+\f{\ga}2}}\|f\|_{H^s_{k+\f{\ga}2}}\\
&&+k^s\|F\|_{L^2_{14}}\|f\|_{L^2_{k-1+\f{\ga}2}}\|f\|_{L^2_{k+\f{\ga}2}}+C_{\vep}k^{1+s}(\|F\|_{L^2_{14}}\|f\|^2_{H^s_{k-2+\f{\ga}2}}+\|F\|_{L^2_{14}}\|f\|^2_{L^2_{k-1+\f{\ga}2}})+k^{3+s}\|F\|_{L^2_{14}}\|f\|_{L^2_{k-7+\f{\ga}2}}\\
&&\times\|f\|_{L^2_{k+\ga/2}}+\|f\|_{L^2_{14}}\|F\|_{H^s_{k-1+\ga/2}}\|f\|_{H^s_{k+\ga/2}}+k^4\|f\|_{L^2_{14}}\|F\|_{H^s_{k-8+\ga/2}}\|f\|_{H^s_{k+\ga/2}}+\mathcal{H}_{k/2s}(F,f,f).\eeno Similar to the estimate of $ \mathcal{H}_{k/2s}(g,f,h)$ as before, we can also obtain
\beno \mathcal{H}_{k/2s}(F,f,f)\ls C_k\|f\|_{L^2_{14}}\|F\|_{H^s_{k-2+\ga/2}}\|f\|_{H^s_{k+\ga/2}}+C_k\|F\|_{L^2_{14}}\|f\|_{H^s_{k-2+\ga/2}}\|f\|_{H^s_{k+\ga/2}}.
\eeno

\underline{\it Step ~3: Estimates of $\mathscr{J}_4$.} 
 Observe that  structures of  $\mathscr{J}_4$ are similar to  $\mathscr{D}$ in Lemma \ref{L23}. Copying the argument used there, we may arrive at
\beno
&&\mathscr{J}_4-\eta\|b(\cos\th)(1-\cos^{k}(\th/2))\|_{L^1_\th}\|f\|^2_{L^2_{k+\f{\ga}2}}\ls \f 1 k \|f\|_{L^2_{14}}\|F\|_{L^2_{k+\f{\ga}2}}\|f\|_{L^2_{k+\f{\ga}2}}+k\|f\|_{L^2_{14}}\|F\|_{H^s_{k-2+\f{\ga}2}}\|f\|_{H^s_{k+\f{\ga}2}}\\
&&+k\|F\|_{L^2_{14}}\|f\|_{H^s_{k-2+\ga/2}}\|f\|_{H^s_{k+\ga/2}}+k^3\|f\|_{L^2_{14}}\|F\|_{H^s_{k-6+\ga/2}}\|f\|_{H^s_{k+\ga/2}}+k^3\|F\|_{L^2_{14}}\|f\|_{H^s_{k-6+\ga/2}}\|f\|_{H^s_{k+\ga/2}}.
\eeno

Patching together all the estimates of $\mathscr{J}_i,i=1,2,3,4$, we are led to
\beno
&&(1+\eta)(Q(F,f),\<v\>^{2k})+\left(\frac{1}{2}\|b(\cos\th)(1-\cos^{2k-3-\ga}(\th/2))\|_{L^1_\th}-\eta\|b(\cos\th)(1-\cos^{k}(\th/2))\cos^{-3/2-\ga/2}(\th/2)\|_{L^1_\th}\right)\\
&&\times\int_{\R^6}|v-v_*|^\ga F_*|f(v)\<v\>^k|^2dvdv_*-(C\eta+\vep)\|f\|^2_{L^2_{k+\ga/2}}+\f{\ga_1}2(\eta-2\vep) \|f\|^2_{H^s_{k+\ga/2}}\ls C_{\eta,\vep}\times\mathrm{L.O.T},
\eeno
where
\beno
&&\mathrm{L.O.T}=\f1k \|f\|_{L^2_{14}}\|F\|_{L^2_{k+\ga/2}}\|f\|_{L^2_{k+\ga/2}}+k\|f\|_{L^2_{14}}\|F\|_{H^s_{k-2+\ga/2}}\|f\|_{H^s_{k+\ga/2}}\\
\notag&& +k^{4}\|f\|_{L^2_{14}}\|F\|_{H^s_{k-8+\ga/2}}\|f\|_{H^s_{k+\ga/2}}+\|F\|^4_{L^2_{14}}\|f\|^2_{L^2_{k+\ga/2}}+k\|F\|_{L^2_{14}}\|f\|_{H^s_{k-2+\ga/2}}\|f\|_{H^s_{k+\ga/2}}\\
\notag&& +k^4\|F\|_{L^2_{14}}\|f\|_{H^s_{k-8+\ga/2}}\|f\|_{H^s_{k+\ga/2}}+k^{1+s}\|F\|_{L^2_{14}}\|f\|^2_{H^s_{k-2+\ga/2}}+k^s\|F\|_{L^2_{14}}\|f\|_{L^2_{k-1+\ga/2}}\|f\|_{L^2_{k+\ga/2}}\\
\notag&& +k^{3+s}\|F\|_{L^2_{14}}\|f\|_{L^2_{k-7+\ga/2}}\|f\|_{L^2_{k+\ga/2}}+k^{1+s}\|F\|_{L^2_{14}}\|f\|^2_{L^2_{k-1+\ga/2}}+\mathcal{H}_{k/2s}(F,f,f).
\eeno

To complete the proof of the lemma, it suffices to explain how to derive the gain of the weight in (\ref{211}) and  (\ref{exp2}). By Lemma \ref{lemma2.2}, it holds that for $\eta\ll1$,
\beno &&\frac{1}{2}\|b(\cos\th)(1-\cos^{2k-3-\ga}(\th/2))\|_{L^1_\th}-\eta\|b(\cos\th)(1-\cos^{k}(\th/2))\cos^{-3/2-\ga/2}(\th/2)\|_{L^1_\th} \\ &&\quad\quad\quad\quad\quad\ge \frac{1}{8}\|b(\cos\th)(1-\cos^{2k-3-\ga}(\th/2))\|_{L^1_\th}.\eeno
From this together with \eqref{ga2} and the fact \beno
\int_{\R^6}|v-v_*|^\ga F_*|f(v)\<v\>^k|^2dvdv_*\geq |[f]|^2_{L^2_{k+\ga/2}}-\|g\|_{L^2_{5}}\|f\|^2_{H^s_{k-1+\ga/2}}-\|g\|_{L^2_{5}}\|f\|^2_{L^2_{k+\ga/2}},\eeno  where we  use $F=\mu+g$ and Lemma \ref{L116}, we get the gain of the weight. This ends the proof.
\end{proof}

\subsection{Analysis of $\mathcal{H}_j$} To get the bounds of collision operator $Q$ with exponential weight, we need a more detailed calculation on $\mathcal{H}_j$. We first give a auxiliary lemma:
\begin{lem}\label{abc}
Let $1/2\leq s<1$ and $\cN_0:=[2s^{-1}]+1$, $j>\cN_0$. Then we have
\ben\label{mk1} \notag&&j^2\sum_{\substack{1\leq k\leq j-\cN_0 \\ 0\leq m\leq j-\cN_0-k\\1\leq m+k/2\leq  [(j-\cN_0)/2]} }(C_{j-\cN_0}^k)^s(C_{j-\cN_0-k}^m)^s(k+1)^{-2}2^{sk}\frac{\Ga(s(j-k/2-m-1)-1)\Ga(s(m+k/2-1)+1)}{\Ga(s(j-2))}\\
&&\hspace{3cm}\ls\sum_{m=1}^{[\frac{j-\cN_{0}}{2}]}(C_{j-1}^{j-m-1})^s m^{-(3/2)s}j^{s +1}+\sum_{m=1}^{[\frac{j-\cN_{0}+1}{2}]}(C_{j-1}^{j-m-1})^s m^{-s}j^{(1/2)s+1}
\een
and
\ben\label{mk2}
 &&j^2\sum_{\substack{1\leq k\leq j-\cN_0 \\ 0\leq m\leq j-\cN_0-k\\ [(j-\cN_0)/2]\leq m+k/2\leq j-\cN_0 } }(C_{j-\cN_0}^k)^s(C_{j-\cN_0-k}^m)^s\frac{\Ga(s(j-k/2-m-1)-1)\Ga(s(m+k/2-1)+1)}{\Ga(s(j-2))}\notag\\
&&\times (k+1)^{-2}2^{sk} \ls\sum_{m=[\frac{j-\cN_{0}}{2}]}^{j-\cN_{0}}(C_{j-1}^{j-m-1})^s (j-m)^{-2-\f52 s+2\cN_0s }j^{2s+3-2\cN_{0}s }+\sum_{m=[\frac{j-\cN_{0}+1}{2}]}^{j-\cN_{0}+1}(C_{j-1}^{j-m-1})^s\notag\\
&&\hspace{3cm} \times(j-m)^{-2s-2+2\cN_{0}s}j^{\f32s+3-2\cN_{0}s}.
\een
\end{lem}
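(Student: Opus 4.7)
The plan is to reduce each double sum to a single sum in $m$ by carrying out the inner $k$-summation first, then matching to the claimed right-hand side via Stirling's asymptotic from \eqref{gammafun}. The crucial observation is that, setting $n:=k/2+m$, the Gamma ratio appearing in both sums is exactly a Beta function:
\[
R(k,m,j):=\frac{\Gamma(s(j-n-1)-1)\,\Gamma(s(n-1)+1)}{\Gamma(s(j-2))}=B\!\left(s(j-n-1)-1,\,s(n-1)+1\right),
\]
since the two numerator arguments sum to exactly $s(j-2)$. In the regime of \eqref{mk1} ($n\le(j-\cN_0)/2$, so $A\gg B$) the standard Beta asymptotic gives $R\sim\Gamma(s(n-1)+1)\,(sj)^{-s(n-1)-1}$, while in the regime of \eqref{mk2} the roles are reversed and $R\sim\Gamma(s(j-n-1)-1)\,(sj)^{-s(j-n-1)+1}$.

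Next, I would replace the trinomial weight $(C_{j-\cN_0}^k)^s(C_{j-\cN_0-k}^m)^s$ by the target $(C_{j-1}^{j-m-1})^s=(C_{j-1}^m)^s$ via the direct factorial identity
\[
\frac{C_{j-\cN_0}^k\,C_{j-\cN_0-k}^m}{C_{j-1}^m}=\frac{1}{k!}\cdot\frac{\prod_{\ell=0}^{\cN_0+k-2}(j-1-m-\ell)}{\prod_{\ell=1}^{\cN_0-1}(j-\ell)}.
\]
In regime \eqref{mk1} this is bounded by $C_{\cN_0}\,j^k/k!$, so the $k$-series collapses, up to the binomial $(C_{j-1}^m)^s$ and the Stirling prefactors coming from $\Gamma(s(n-1)+1)$, to a peaked-in-$k$ expression of the form
\[
\sum_{k\ge 0}\frac{(Cj)^{sk}}{(k!)^s(k+1)^2}\,(sj)^{-sk/2}\,\text{(decreasing Gamma factor)},
\]
whose leading contribution comes from $k=0$ with a subleading one at $k=1$ and a geometric tail. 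Summing the peak and tail and inserting the overall $j^2$ produces the two advertised terms $m^{-(3/2)s}j^{s+1}$ and $m^{-s}j^{(1/2)s+1}$ multiplied by $(C_{j-1}^{j-m-1})^s$, thereby establishing \eqref{mk1}.

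For \eqref{mk2}, the analogous computation uses the other Beta asymptotic: the small Gamma is now $\Gamma(s(j-n-1)-1)$, and the trinomial-to-binomial ratio is controlled by $C_{\cN_0}(j-m)^{\cN_0+k-2}/(k!\,j^{\cN_0-1})$. The peaked-in-$k$ series at $k=0,1$ then yields the decays $(j-m)^{-2-5s/2+2\cN_0 s}$ and $(j-m)^{-2s-2+2\cN_0 s}$, while the accompanying $j^{2s+3-2\cN_0 s}$ and $j^{3s/2+3-2\cN_0 s}$ arise from collecting the $j^{-(\cN_0-1)}$ factor, the Stirling prefactors, and the initial $j^2$. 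The main obstacle will be keeping these exponents uniform in $s\in[1/2,1)$ and in $\cN_0=[2/s]+1$, particularly near the crossover $n\sim(j-\cN_0)/2$ where $A$ and $B$ are comparable and neither one-sided Stirling asymptotic for the Beta function is individually sharp; a careful treatment of this transitional range, or equivalently a uniform application of the full Stirling expansion to $R(k,m,j)$, should complete the argument and yield both \eqref{mk1} and \eqref{mk2}.
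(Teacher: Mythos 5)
Your two structural observations are fine as far as they go: the Gamma ratio is indeed the Beta function $B\big(s(j-n-1)-1,\,s(n-1)+1\big)$ with $n=m+k/2$, and your factorial identity relating $C_{j-\cN_0}^kC_{j-\cN_0-k}^m$ to $C_{j-1}^m$ is correct. But the heart of your plan — ``the $k$-series collapses \dots leading contribution from $k=0$ with a subleading one at $k=1$ and a geometric tail'' — is false, and this is where the argument breaks. After you pull out $(C_{j-1}^m)^s$ with the \emph{old} index $m$ and bound $\prod_{\ell}(j-1-m-\ell)\le j^{\cN_0+k-1}$, the residual $k$-dependence is of size $(k+1)^{-2}(2C)^{sk}j^{sk}(k!)^{-s}\,\Ga\big(s(m+k/2-1)+1\big)(sj)^{-s(m+k/2-1)}$; the factor $\Ga(s(n-1)+1)$ is \emph{increasing} in $k$, roughly like $(sn)^{sk/2}$, so the ratio of consecutive terms is of order $\big(C\sqrt{nj}/(k+1)\big)^{s}>1$ essentially throughout the admissible range $k\ls j$. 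For small $m$ the $k$-sum is therefore dominated by $k$ of order $j$ and is exponentially large in $j$, whereas the factor you need per fixed $m$ on the right-hand side, $m^{-(3/2)s}j^{s+1}+m^{-s}j^{s/2+1}$, is only polynomial; so the term-by-term match in $m$ that you describe cannot hold, and the route does not close. (Incidentally, the crossover $n\sim(j-\cN_0)/2$ you flag is not the real obstacle: $B(A,B)\le e\,\Ga(B)A^{-B}$ is a valid upper bound for all $1\le B\le A$, so the one-sided asymptotic is harmless there.)

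The missing idea is precisely the paper's device: carry the binomial with the \emph{shifted} index $n=m+k/2$, not with $m$. The paper converts $\Ga(s\,x)$-factors into $\Ga(x)^s$ via \eqref{gammafun}, absorbs $2^{sk}$ with the duplication formula $\Ga(k+1)\sim 2^{k+1}\Ga(k/2+1/2)\Ga(k/2+1)$, splits $k$ into even and odd, and changes the summation variable so the new index is $m+k/2$; this parity split is also what generates the \emph{two} families of terms $m^{-(3/2)s}j^{s+1}$ and $m^{-s}j^{s/2+1}$ with the half-shifted range $[(j-\cN_0+1)/2]$, for which your scheme has no mechanism. After the regrouping the Gamma quotient is recognized as $(C_{j-m-\cN_0}^{j-m-\cN_0-k})^s(C_{m+\cN_0-1}^{\cN_0+k-1})^s(k+1)^{(\cN_0-1/2)s}\le (C_{j-1}^{j-m-1})^s(k+1)^{(\cN_0-1/2)s}$, so the leftover $k$-dependence is only the power $(k+1)^{(\cN_0-1/2)s-2}$ summed over $k\ls m$; it is this polynomially growing $k$-sum (not a $k=0,1$ peak) that, combined with $(j-m)^{-1+(\cN_0-2)s}m^{-s+1-\cN_0 s}j^{(-\cN_0+3)s}$ and the prefactor $j^2$, produces exactly $m^{-(3/2)s}j^{s+1}$, and the analogous bookkeeping with $j-m$ in place of $j$ gives the exponents in \eqref{mk2}. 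Without this index shift your asserted exponents, e.g. $(j-m)^{-2-\f52 s+2\cN_0 s}j^{2s+3-2\cN_0 s}$, remain unsubstantiated.
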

\begin{proof} We only give a detailed proof to  $(\ref{mk1})$ since \eqref{mk2} can be derived similarly. We denote the left-hand side of $(\ref{mk1})$ by $\cC$. Thanks to $(\ref{gammafun})$, we have
$\frac{\Ga(s (j-k/2-m-1)-1)\Ga(s(m+k/2-1)+1)}{\Ga(s(j-2)))}\ls\frac{\Ga^s (j-m-k/2)\Ga^s (m+k/2)}{\Ga^s (j-2)}(j-m-\f k2)^{-1-s}(\f k2)^{-s+1}$, which gives that
\beno
 \cC&\ls& j^2\sum_{\substack{1\leq k\leq j-\cN_0 \\ 0\leq m\leq j-\cN_0-k\\1\leq m+k/2\leq  [(j-\cN_0)/2]} }2^{s k}\frac{\Ga^s (m+k/2)\Ga^s (j-m-k/2)\G^s(j-\cN_{0}+1)}{\Ga^s (k+1)\Ga^s (m+1)\Ga^s (j-\cN_{0}-k-m+1)\G^s(j-2)}\\
&&\times(j-m-k/2)^{-1-s}(m+k/2)^{-s+1}(k+1)^{-2}.
\eeno
Since $\G^s(j-\cN_{0}+1)\sim \G^s(j-2)(j-2)^{(-\cN_{0}+3)s}$ and  $\Ga(k+1)\sim 2^{k+1}\Ga(k/2+1/2)\Ga(k/2+1)$, we have
\beno
 \cC
&\ls& j^2\sum_{\substack{1\leq k\leq j-\cN_0 \\ 0\leq m\leq j-\cN_0-k\\1\leq m+k/2\leq  [(j-\cN_0)/2]} }\frac{\Ga^s (m+k/2)\Ga^s (j-m-k/2)}{\Ga^s (k/2+1/2)\Ga^s (k/2+1)\Ga^s (m+1)\Ga^s (j-\cN_{0}-k-m+1)}\\
&&\times(j-m-k/2)^{-1-s}(m+k/2)^{-s+1}(k+1)^{-2}(j-2)^{(-\cN_{0}+3)s }:=\cC_1+\cC_2,
\eeno
where $\cC_1$ and $\cC_2$ denote the above summation  when $k$ is even and   when $k$ is odd  respectively. When $k$ is even, by the change of variable from $k$ to $2k$ and then from $m+k$ to $m$, we have
\beno
\cC_{1}&\ls&j^2\sum_{k=1}^{[\frac{j-\cN_{0}}{2}]}\sum_{m=k,m+k\geq1}^{j-\cN_{0}-k}\frac{\Ga^s (j-m)\Ga^s (m)}{\Ga^s (k+1/2)\Ga^s (k+1)\Ga^s (m-k+1)\Ga^s (j-\cN_{0}-m-k+1)}\\
&&\times (k+1)^{-2}(j-m)^{-s-1}m^{-s+1}(j-2)^{(-\cN_{0}+3)s}.
\eeno
Thanks to $\G^s(j-m)\sim\G^s(j-m-\cN_{0}+1)(j-m-\cN_{0}+1)^{(\cN_{0}-1)s}$ and $\G^s(m)\sim\G^s(m+\cN_{0})(m+\cN_{0})^{-s \cN_{0}}$, we deduce that
\beno
\cC_{1}
&\ls&j^2\sum_{k=0}^{[\frac{j-\cN_{0}}{2}]}\sum_{m=k,m+k\geq1}^{j-\cN_{0}-k}\frac{\Ga^s (j-m-\cN_{0}+1)\Ga^s (m+\cN_{0})}{\Ga^s (k+1/2)\Ga^s (k+1)\Ga^s (m-k+1)\Ga^s (j-\cN_{0}-m-k+1)}\\
&&\times (k+1)^{-2}(j-m)^{-1+(\cN_{0}-2)s }m^{-s+1-\cN_{0}s }(j-2)^{(-\cN_{0}+3)s }.
\eeno
 Observing that
$\frac{\Ga^s (j-m-\cN_{0}+1)\Ga^s (m+\cN_{0})}{\Ga^s (k+1/2)\Ga^s (k+1)\Ga^s (m-k+1)\Ga^s (j-\cN_{0}-m-k+1)}
=\frac{\G^s(j-m-\cN_{0}+1)}{\G^s(j-\cN_{0}-m-k+1)\G^s(k+1)}\frac{\G^s(m+\cN_{0})}{\G^s(k+\cN_{0})\G^s(m-k+1)}\frac{\G^s(k+\cN_{0})}{\G^s(k+1/2)}\\\sim(C_{j-m-\cN_{0}}^{j-m-\cN_{0}-k})^s (C_{m+\cN_{0}-1}^{\cN_{0}+k-1})^s (k+1)^{(\cN_{0}-1/2)s}\leq (C_{j-1}^{j-m-1})^s(k+1)^{(\cN_{0}-1/2)s },$
 by exchanging the order of the summation, we derive that
$\cC_1\ls \sum\limits_{m=1}^{[\frac{j-\cN_{0}}{2}]}(C_{j-1}^{j-m-1})^s m^{-(3/2)s}j^{s +1}$.
When $k$ is odd, we change variables, from $k$ to $2k-1$ and then from $m+k$ to $m$, to get $\cC_2\ls \sum\limits_{m=1}^{[\frac{j-\cN_{0}+1}{2}]}(C_{j-1}^{j-m-1})^s m^{-s}j^{(1/2)s+1}.$ This ends the proof.
\end{proof}

\begin{lem}\label{B_3}
Suppose $\ga\in(-3,1],\ga+2s>-1$, $1/2\leq s<1$ and $\cN_0:=[2s^{-1}]+1$.  If
 $2js\geq22$, then $\mathcal{H}_j(g,f,h)\ls\bar{\mathcal{H}}_j(g,f,h)+\hat{\mathcal{H}}_j(g,f,h)$ where $\mathcal{H}_j$ is defined in  \eqref{Hgfh} and
\beno
&&\bar{\mathcal{H}}_j(g,f,h):= j\|f\|_{L^2_{14}}\|g\|_{H^s_{2js-2+\ga/2}}\|h\|_{H^s_{2js+\ga/2}}+j^4\|f\|_{L^2_{14}}\|g\|_{H^s_{2js-8+\ga/2}}\|h\|_{H^s_{2js+\ga/2}}\\
&&\qquad\qquad\qquad+j\|g\|_{L^2_{14}}\|f\|_{H^s_{2js-2+\ga/2}}\|h\|_{H^s_{2js+\ga/2}}+j^4\|g\|_{L^2_{14}}\|f\|_{H^s_{2js-8+\ga/2}}\|h\|_{H^s_{2js+\ga/2}}\\
&&\qquad\qquad\qquad+j^{1+s}\|g\|_{L^2_{14}}\|f\|_{L^2_{2js-2+\ga/2}}\|h\|_{L^2_{2js+\ga/2}}+j^{2+s}\|g\|_{L^2_{14}}\|f\|_{L^2_{2js-4+\ga/2}}\|h\|_{L^2_{2js+\ga/2}},\\
&&\hat{\mathcal{H}}_j(g,f,h):=\sum_{m=1}^{[\frac{j-\cN_0}{2}]}(C_{j-1}^{j-m-1})^sm^{-(3/2)s}j^{s+1}\|g\<\cdot\>^{2(ms+2)}\|_{L^1}\|f(\<\cdot\>^2)^{(j-m)s+\f\ga4-1}\|_{L^2}\|h\<\cdot\>^{2js+\f\ga2}\|_{L^2}
\\&&+\sum_{m=1}^{[\frac{j-\cN_0+1}{2}]}(C_{j-1}^{j-m-1})^sm^{-s}j^{\f12s+1}\|g(\<\cdot\>^2)^{(m-1/2)s+2}\|_{L^1}\|f(\<\cdot\>^2)^{(j-m+1/2)s+\ga/4-1}\|_{L^2}\|h\<\cdot\>^{2js+\ga/2}\|_{L^2}\\
&&+\sum_{m=[\frac{j-\cN_0}{2}]}^{j-\cN_0+1}(C_{j-1}^{j-m-1})^s(j-m)^{-2-(5/2)s+2\cN_0s}j^{2s+3-2\cN_0s}\|g(\<\cdot\>^2)^{sm+\ga/4-4}\|_{L^2}\|f(\<\cdot\>^2)^{s(j-m)+6}\|_{L^1}\\
&&\times\|h\<\cdot\>^{2js+\ga/2}\|_{L^2}+\sum_{m=1}^{[\frac{j-\cN_0+1}{2}]}(C^{j-m-1}_{j-1})^s m^{-\f32s}j^{1+s}\|g(\<\cdot\>^2)^{(m+\cN_0)s+2}\|_{L^1}\|f(\<\cdot\>^2)^{(j-m-\cN_0)s+\f\ga4-1}\|_{L^2}\\
&&\times\|h\<\cdot\>^{2js+\f\ga2}\|_{L^2}+\sum_{m=[\frac{j-\cN_0}{2}]}^{j-\cN_0}(C_{j-1}^{j-m-1})^s(j-m)^{-\f52s-2+2\cN_0s}j^{2s+3-2\cN_0s}\|g(\<\cdot\>^2)^{(m+\cN_0)s+\f\ga4-1}\|_{L^2}\\
&&\times\|f(\<\cdot\>^2)^{(j-m-\cN_0)s+1}\|_{L^1}\|h\<\cdot\>^{2js+\ga/2}\|_{L^2}+\sum_{m=[\frac{j-\cN_0+1}{2}]}^{j-\cN_0+1}(C_{j-1}^{j-m-1})^s(j-m)^{-2s-2+2\cN_0s}j^{\f32s+3-2\cN_0s}\\
&&\times\|g(\<\cdot\>^2)^{(m+\cN_0-\f12)s+\f\ga4-1}\|_{L^2}\|f(\<\cdot\>^2)^{(j-m-\cN_0+\f12)s+1}\|_{L^1}\|h\<\cdot\>^{2js+\f\ga2}\|_{L^2}.
\eeno

\end{lem}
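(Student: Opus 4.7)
The approach is to reduce $\mathcal{H}_j$ pointwise, expand the large power $E(\theta)^{js-2}$ as a trinomial-type series, carry out the angular $\sigma$-integration via a Beta function identity, and finally collapse the resulting double sum into single sums using Lemma \ref{abc} to produce $\hat{\mathcal{H}}_j$, with the boundary terms collected into $\bar{\mathcal{H}}_j$.

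First I would reduce the integrand pointwise. By Lemma \ref{L18} one has $|\tilde h\sin\theta(\mj\cdot\hat\om)|\le E(\theta)$, so the factor $(E(\theta)+t\tilde h\sin\theta(\mj\cdot\hat\om))^{js-2}$ is dominated by $C\cdot E(\theta)^{js-2}$. Combined with $\tilde h^2(\mj\cdot\hat\om)^2=|v-v_*|^2(v\cdot\omega)^2=|v-v_*|^2(v_*\cdot\omega)^2\le|v-v_*|^2\min\{\<v\>^2,\<v_*\>^2\}$ and $\sin^2\theta\ls\sin^2(\theta/2)$, integrating in $t\in[0,1]$ produces the working bound
\beno
\mathcal{H}_j(g,f,h)\ls j^2\int_{\R^6\times\S^2}b(\cos\theta)\sin^2(\theta/2)|v-v_*|^{2+\gamma}\min\{\<v\>^2,\<v_*\>^2\}E(\theta)^{js-2}|g_*|f(v)|h'\<v'\>^{2js}|dvdv_*d\sigma.
\eeno

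Next comes the key expansion. Writing $js-2=s(j-\cN_0)+(s\cN_0-2)$ with $s\cN_0-2\in[0,s)$, the fractional remainder $E(\theta)^{s\cN_0-2}$ is bounded crudely by $\<v\>^{2(s\cN_0-2)}+\<v_*\>^{2(s\cN_0-2)}$, while the dominant factor $E(\theta)^{s(j-\cN_0)}=((\<v\>^2\cos^2(\theta/2)+\<v_*\>^2\sin^2(\theta/2))^{j-\cN_0})^s$ is expanded first by the ordinary binomial theorem and then by the subadditivity $(\sum_i A_i)^s\le\sum_i A_i^s$, valid since $s\in(0,1]$. A further splitting along the Povzner-type decomposition \eqref{gamma} introduces the second summation index, producing a double sum indexed by $(k,m)$ with coefficients $(C_{j-\cN_0}^k)^s(C_{j-\cN_0-k}^m)^s$, mixed weights $(\<v\>^2)^{s(j-\cN_0-k-m)}(\<v_*\>^2)^{sm}$, and residual angular factor $(2\sin(\theta/2)\cos(\theta/2))^{sk}$. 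For each such term, a regular change of variables $v\mapsto v'$ places $h'\<v'\>^{2js}$ in $L^2$, and Cauchy--Schwarz in $v$ reduces matters to a $\sigma$-integral of the form
\beno
\int_0^{\pi/2}b(\cos\theta)\cos^{2\alpha-1}(\theta/2)\sin^{2\beta-1}(\theta/2)\,d\theta\sim\frac{\Gamma(\alpha-s)\Gamma(\beta-s)}{\Gamma(\alpha+\beta-2s)},
\eeno
with $\alpha=s(j-k/2-m-1)$ and $\beta=s(m+k/2-1)$ (up to additive constants) --- precisely the Gamma ratios inside the sums of Lemma \ref{abc}.

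Finally, applying \eqref{mk1} and \eqref{mk2} collapses the double sum into two single sums in $m$, each carrying the combinatorial factor $(C_{j-1}^{j-m-1})^s$ together with polynomial growth in $j$ and $m$; symmetrizing in $v\leftrightarrow v_*$ to account for the $\min\{\<v\>^2,\<v_*\>^2\}$ gives the six sums comprising $\hat{\mathcal{H}}_j$. The boundary terms in the expansion (small values of $k$ or $m$, outside the range of Lemma \ref{abc}, together with the contribution of the fractional remainder $E(\theta)^{s\cN_0-2}$) produce only finitely many terms with explicit polynomial growth in $j$; these are handled by arguments mimicking the proof of Lemma \ref{L23} (singular change of variables, Cauchy--Schwarz, and the angular bounds of Lemma \ref{lemma2.2}) and yield the six terms of $\bar{\mathcal{H}}_j$. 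The principal obstacle is the \emph{combinatorial bookkeeping}: ensuring that the Gamma arguments generated by the angular integration match the exact parameters in Lemma \ref{abc}, tracking how the weight $2js$ is distributed between $g$, $f$ and $h$ so as to reproduce exponents like $(j-m)s+\gamma/4-1$ on $f$ and $ms+2$ on $g$, and carefully identifying which $(k,m)$ pairs fall in the generic tail regime versus the finite boundary regime.
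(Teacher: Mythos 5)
Your overall skeleton does mirror the paper's route (split $js-2=s(j-\cN_0)+(\cN_0 s-2)$, expand the large power, evaluate the angular integrals as Beta/Gamma ratios, invoke Lemma \ref{abc}, and split the region $\<v\>\gtrless\<v_*\>$ with regular versus singular changes of variables), but the opening reduction contains a genuine gap that the rest of your argument cannot survive. Bounding $(E(\th)+t\tilde h\sin\th(\mj\cdot\hat\omega))^{js-2}$ by $C\,E(\th)^{js-2}$ via $|\tilde h\sin\th(\mj\cdot\hat\omega)|\le E(\th)$ forces $C=2^{js-2}$, i.e. a constant growing exponentially in $j$; that is admissible in the polynomial-moment setting (it is exactly how $\mathscr{B}_3$ is treated in Lemma \ref{L21}, with a $C_k$), but it is fatal here, since the whole point of this lemma is that every coefficient is tracked with the precise combinatorial/polynomial dependence on $j$ appearing in $\bar{\mathcal{H}}_j$ and $\hat{\mathcal{H}}_j$. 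Worse, the step is inconsistent with what you do next: once the cross term is discarded, there is no source for the index $k$, for the residual angular factor $\sin^{sk}\th$, or for the factor $(1+k)^{-2}$ coming from $\int_0^1(1-t)t^{sk}dt$ — and these are exactly the ingredients (the $(C^k_{j-\cN_0})^s(C^m_{j-\cN_0-k})^s(k+1)^{-2}2^{sk}$ coefficients and the $k/2$-shifted Gamma arguments) that Lemma \ref{abc} requires. The correct move is to apply the crude bound only to the bounded-exponent remainder $(E(\th)+t\tilde h\sin\th(\mj\cdot\hat\omega))^{\cN_0 s-2}\ls E(\th)^{\cN_0 s-2}$, and to expand $(E(\th)+t\tilde h\sin\th(\mj\cdot\hat\omega))^{s(j-\cN_0)}$ as a genuine trinomial, retaining the cross term to the power $sk$ before using $s$-subadditivity.

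Two further bookkeeping points go with this. The second summation index does not come from the Povzner decomposition \eqref{gamma}: the $(k,m)$ double sum with $s$-powered binomial coefficients arises from the integer trinomial expansion of exponent $j-\cN_0$ followed by $(a+b)^s\le a^s+b^s$; the decomposition \eqref{gamma} (with ordinary Gamma-ratio coefficients) only enters in the $k=0$ piece. And that $k=0$ piece is not a finite collection of boundary terms: it is the full $E(\th)^{js-2}$ contribution, which must be rerun through the $\mathscr{E}_1$, $\mathscr{E}_2$ machinery of Lemma \ref{L23} with the $j$-powers tracked explicitly; this is what produces the six terms of $\bar{\mathcal{H}}_j$.
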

\begin{proof}
 We only give a detailed proof for the case $\ga\leq0$. We recall that $\mathcal{H}_j(g,f,h)=j^2\int_{\R^6\times\S^2}\int_0^1b(\cos\th)\sin^2\th|v-v_*|^{\ga}(1-t)(E(\th)+t\tilde{h}\sin\th(\mj\cdot\hat{\omega}))^{js-2}\tilde{h}^2(\mj\cdot\hat{\om})^2 |g(v_*)|f(v)|h(v')\<v'\>^{2js}|dvdv_*d\si dt.$ Since $(\cN_{0}-1)s \leq 2<\cN_{0}s $, then $s j-2=s (j-\cN_{0})+\cN_{0}s -2$. We have
 \beno
 &&(E(\th)+t\tilde{h}\sin\th(\mj\cdot\hat{\omega}))^{js-2}\le \big[E(\th)^{(j-\cN_0)s}+\sum_{k=1}^{j-\cN_{0}}\sum_{m=0}^{j-\cN_{0}-k}(C_{j-\cN_{0}}^k)^s (C_{j-\cN_{0}-k}^m)^s (\<v\>^2\cos^2(\th/2))^{s (j-\cN_{0}-k-m)}\\&&\qquad\qquad\qquad\times
(\<v_*\>^2\sin^2(\th/2))^{s m}(t\tilde{h}\sin\th|\mj\cdot\hat{\omega}|)^{s k}\big] (E(\th)+t\tilde{h}\sin\th(\mj\cdot\hat{\omega}))^{\cN_0s-2},
 \eeno
 where we use Newton binomial expansion
$(a+b)^{(j-\cN_{0})s}=(\sum\limits_{k=0}^{j-\cN_{0}}C^k_{j-\cN_{0}} a^{(j-\cN_{0}-k)}b^k)^{s}$ and the fact $(a+b)^s\leq a^s+b^s$  with $a,b>0,s<1$.  Plugging the above into the definition of $\mathcal{H}_j$,
we may split it by $\mathcal{H}^{(1)}_j$ and $\mathcal{H}^{(2)}_j$ which correspond to the case $k=0$ and the case $k>0$ respectively.

\underline{\it Step 1: Estimate of $\mathcal{H}^{(1)}_j$.} Observing that $(E(\th)+t\tilde{h}\sin\th(\mj\cdot\hat{\omega}))^{\cN_0s-2}\ls E(\th)^{\cN_0s-2}$ and $\tilde{h} \leq \<v\>\<v_*\>$ , we first have
$\mathcal{H}^{(1)}_j\ls j^2\int_{\R^6\times\S^2}b(\cos\th)\sin^2\th|v-v_*|^{\ga}E(\th)^{js-2}|g(v_*)\<v_*\>^2|f(v)\<v\>^2|h(v')\<v'\>^{2js}|dvdv_*d\si$. By the expansion \eqref{Eth} that
$(E(\th))^{js-2}\le \sum_{p=0}^{l_{js-2}}\frac{\Ga(js-1)}{\Ga(p+1)\Ga(js-1-p)}[(\<v\>^2\cos^2(\th/2))^p(\<v_*\>^2\sin^2(\th/2))^{js-2-p}+(\<v\>^2\cos^2(\th/2))^{js-2-p}(\<v_*\>^2\sin^2(\th/2))^{p}]$, we may copy the argument used for $\mathscr{E}_1$ and $\mathscr{E}_2$  in Lemma \ref{L23} to get that
\beno
&&\mathcal{H}_j^{(1)}\ls j\|f\|_{L^2_{14}}\|g\|_{H^s_{2js-2+\f{\ga}2}}\|h\|_{H^s_{2js+\f{\ga}2}}+j^4\|f\|_{L^2_{14}}\|g\|_{H^s_{2js-8+\f{\ga}2}}\|h\|_{H^s_{2js+\f{\ga}2}}+j\|g\|_{L^2_{14}}\|f\|_{H^s_{2js-2+\f{\ga}2}}\|h\|_{H^s_{2js+\f{\ga}2}}\\
&&+j^4\|g\|_{L^2_{14}}\|f\|_{H^s_{2js-8+\f{\ga}2}}\|h\|_{H^s_{2js+\f{\ga}2}}+j^{1+s}\|g\|_{L^2_{14}}\|f\|_{L^2_{2js-2+\f{\ga}2}}\|h\|_{L^2_{2js+\f{\ga}2}}+j^{2+s}\|g\|_{L^2_{14}}\|f\|_{L^2_{2js-4+\f{\ga}2}}\|h\|_{L^2_{2js+\f{\ga}2}}.
\eeno

\underline{\it Step 2: Estimate of $\mathcal{H}^{(2)}_j$.} We first
recall that
\beno
&&\mathcal{H}^{(2)}_j=\sum_{k=1}^{j-\cN_{0}}\sum_{m=0}^{j-\cN_{0}-k}(C_{j-\cN_{0}}^k)^s (C_{j-\cN_{0}-k}^m)^sj^2\int_{\R^6\times\S^2}\int_0^1b(\cos\th)\sin^2\th|v-v_*|^{\ga}(1-t)(\<v\>^2\cos^2(\th/2))^{s (j-\cN_{0}-k-m)}\\
&&\times(\<v_*\>^2\sin^2(\th/2))^{s m}(t\tilde{h}\sin\th|\mj\cdot\hat{\omega}|)^{s k}(E(\th)+t\tilde{h}\sin\th(\mj\cdot\hat{\omega}))^{\cN_0s-2}\tilde{h}^2(\mj\cdot\hat{\om})^2|g(v_*)|f(v)|h(v')\<v'\>^{2js}|dvdv_*d\si dt.
\eeno
To get the desired result, we decompose  $\mathcal{H}^{(2)}_j$ into two parts: $\mathcal{H}^{(2)}_{j,1}$ and $\mathcal{H}^{(2)}_{j,2}$ according to the integration domain: $\<v\>\geq\<v_*\>$ and $\<v\>\leq\<v_*\>$ respectively. We first focus on  $\mathcal{H}^{(2)}_{j,1}$.
Noticing that $(E(\th)+t\tilde{h}\sin\th(\mj\cdot\hat{\omega}))^{\cN_0s-2}\ls(\max\{\<v\>^2,\<v_*\>^2\}\times\cos^2(\th/2))^{\cN_0s-2}\ls(\<v\>^2\cos^2(\th/2))^{\cN_0s-2}$, we have
\beno
&&\mathcal{H}^{(2)}_{j,1}\leq \sum_{k=1}^{j-\cN_{0}}\sum_{m=0}^{j-\cN_{0}-k}(C_{j-\cN_{0}}^k)^s (C_{j-\cN_{0}-k}^m)^sj^2\int_{\R^6\times\S^2}\int_0^1 1_{\<v\>\geq\<v_*\>}b(\cos\th)\sin^{2+sk}\th |v-v_*|^\ga(1-t)t^{sk}\eeno \beno
\times(\<v\>^2\cos^2(\th/2))^{s (j-\cN_{0}-k-m)}(\<v_*\>^2\sin^2(\th/2))^{s m}\tilde{h}^{sk+2}(\<v\>^2\cos^2(\th/2))^{\cN_0s-2}|g(v_*)|f(v)|h(v')\<v'\>^{2js}|dvdv_*d\si dt.
\eeno
Since $\ga+2s+1>0$ and  $\tilde{h}\leq\min\{\<v\>,\<v_*\>\}|v-v_*|$(see Lemma \ref{L18}), we have
\beno
\tilde{h}^{sk+2}|v-v_*|^\ga1_{\<v\>\geq\<v_*\>} \leq|v-v_*|^{2+\ga+s}\<v_*\>^{sk+2}\<v\>^{s(k-1)}1_{\<v\>\geq\<v_*\>}\leq\<v_*\>^{sk+2}\<v\>^{sk+2+\ga}.
\eeno
Moreover, since $b(\cos\th)\sim \sin^{-2-2s}\th$ and $\<v'\>^{-\ga/2}\leq\<v\>^{-\ga/2}\<v_*\>^{-\ga/2}$
for $\ga\leq0$, we   derive that
\beno
&&\mathcal{H}^{(2)}_{j,1}\leq j^2\int_{\R^6\times\S^2}1_{\<v\>\geq\<v_*\>}|g(v_*)||f(v)||h'\<v'\>^{2js+\ga/2}|\Big(\sum_{k=1}^{j-\cN_0}\sum_{m=0}^{j-\cN_0-k}(1+k)^{-2}2^{ks}(C_{j-\cN_0}^k)^s(C_{j-\cN_0-k}^m)^s\\
&&\times(\cos^2\frac{\th}{2})^{(j-\f k2-m)s-2-s}(\sin^2\f\th2)^{s(m+\f k2)-s}(\<v\>^2)^{s(j-m-\f k2)+\ga/4-1}(\<v_*\>^2)^{s(m+\f k2)-\ga/4+1}dvdv_*d\si,
\eeno
where we use the fact that $\int_0^1(1-t)t^{sk}dt\sim(1+k)^{-2}$. We remark that the same result holds if $\ga>0$. To get more precise estimate, we divide $\mathcal{H}^{(2)}_{j,1}$ into two cases: $m+k/2\in[1,[(j-\cN_0)/2]]$ and $m+k/2\in[[(j-\cN_0)/2],j-\cN_0]$, and denote them by $\mathcal{H}^{(2)}_{j,1,1}$ and $\mathcal{H}^{(2)}_{j,1,2}$.

\underline{\it Estimate of $\mathcal{H}^{(2)}_{j,1,1}$.} For this case, by the regular change of variable,  we derive that
\beno
&&\mathcal{H}^{(2)}_{j,1,1}\le j^2\sum_{\substack{1\leq k\leq j-\cN_0 \\ 0\leq m\leq j-\cN_0-k\\1\leq m+k/2\leq  [(j-\cN_0)/2]} }(C_{j-\cN_0}^k)^s(C_{j-\cN_0-k}^m)^s(k+1)^{-2}2^{sk}\int_0^{\f\pi2}(\cos^2\th/2)^{(j-\f k2-m-1)s-\f32}\\
&&\times(\sin^2\th/2)^{(m+\f k2)s+\f12-s}d\th\|g(\<\cdot\>^2)^{s(m+k/2)-\ga/4+1}\|_{L^1}\|f(\<\cdot\>^2)^{s(j-m-k/2)+\ga/4-1}\|_{L^2}\|h\<\cdot\>^{2js+\ga/2}\|_{L^2}\\
&&\leq j^2\sum_{\substack{1\leq k\leq j-\cN_0 \\ 0\leq m\leq j-\cN_0-k\\1\leq m+k/2\leq  [(j-\cN_0)/2]} }(C_{j-\cN_0}^k)^s(C_{j-\cN_0-k}^m)^s(k+1)^{-2}2^{sk}\frac{\Ga(s(j-k/2-m-1)-1)\Ga(s(m+k/2-1)+1)}{\Ga(s(j-2))}\\
&&\times\|g(\<\cdot\>^2)^{s(m+k/2)-\ga/4+1}\|_{L^1}\|f(\<\cdot\>^2)^{s(j-m-k/2)+\ga/4-1}\|_{L^2}\|h\<\cdot\>^{2js+\ga/2}\|_{L^2}.
\eeno
Thanks to Lemma \ref{abc}(\ref{mk1}), we have
\beno
&&\mathcal{H}^{(2)}_{j,1,1}\ls\sum_{m=1}^{[\frac{j-\cN_0}{2}]}(C_{j-1}^{j-m-1})^sm^{-(3/2)s}j^{s+1}\|g(\<\cdot\>^2)^{ms-\ga/4+1}\|_{L^1}\|f(\<\cdot\>^2)^{(j-m)s+\ga/4-1}\|_{L^2}\|h\<\cdot\>^{2js+\ga/2}\|_{L^2}\\
&&+\sum_{m=1}^{[\frac{j-\cN_0+1}{2}]}(C_{j-1}^{j-m-1})^sm^{-s}j^{(1/2)s+1}\|g(\<\cdot\>^2)^{(m-1/2)s-\ga/4+1}\|_{L^1}\|f(\<\cdot\>^2)^{(j-m+1/2)s+\ga/4-1}\|_{L^2}\|h\<\cdot\>^{2js+\ga/2}\|_{L^2}.
\eeno
Here for the first term in the above, we change the variables from $m+k/2$ to  $m+k$ and then from   $m+k$ to $ m$ for even $k$. For the second term, we change the variables: $m+k/2\rightarrow m+k-1/2\rightarrow m-1/2$.


\underline{\it Estimate of $\mathcal{H}^{(2)}_{j,1,2}$.} Since $m+k/2\in[(j-\cN_0)/2,j-\cN_0]$ and $\<v\>\ge \<v_*\>$, by singular change of variable we get that
\beno
&&\mathcal{H}^{(2)}_{j,1,2}\ls j^2\sum_{\substack{1\leq k\leq j-\cN_0 \\ 0\leq m\leq j-\cN_0-k\\ [(j-\cN_0)/2]\leq m+k/2\leq j-\cN_0 } }(C_{j-\cN_0}^k)^s(C_{j-\cN_0-k}^m)^s(k+1)^{-2}2^{sk}\int_0^{\pi/2}(\cos^2\th/2)^{(j-\f k2-m-1)s-3/2}\\
&&\times(\sin^2\f\th2)^{(m+\f k2)s+\f12-s-\f34-\f\ga4}d\th\|g (\<\cdot\>^2)^{s(m+\f k2)+\f\ga4-4}\|_{L^2}\|f(\<\cdot\>^2)^{s(j-m-\f k2)-\f\ga4+4}\|_{L^1}\|h\<\cdot\>^{2js+\ga/2}\|_{L^2}\\
&&\le j^2\sum_{\substack{1\leq k\leq j-\cN_0 \\ 0\leq m\leq j-\cN_0-k\\ [(j-\cN_0)/2]\leq m+k/2\leq j-\cN_0 } }(C_{j-\cN_0}^k)^s(C_{j-\cN_0-k}^m)^s(k+1)^{-2}2^{sk}\frac{\Ga(s(j-k/2-m-1)-1)\Ga(s(m+k/2-1)+1)}{\Ga(s(j-2))}\\
&&\times\|g(\<\cdot\>^2)^{s(m+\f k2)+\f\ga4-4}\|_{L^2}\|f(\<\cdot\>^2)^{s(j-m-\f k2)-\f\ga4+4}\|_{L^1}\|h\<\cdot\>^{2js+\ga/2}\|_{L^2}.
\eeno
Here we observe that $\frac{\Ga(s(m+\f k2-1)-\f34-\f\ga4)}{\Ga(s(j-2)-\f34-\f\ga4)}\sim\frac{\Ga(s(m+\f k2-1))}{\Ga(s(j-2))}$ if $m+\f k2\sim j$. By Lemma \ref{abc}(\ref{mk2}), we deduce that
\beno
&&\mathcal{H}^{(2)}_{j,1,2}\ls\sum_{m=[\frac{j-\cN_0}{2}]}^{j-\cN_0}(C_{j-1}^{j-m-1})^s(j-m)^{-2-(5/2)s+2\cN_0s}j^{2s+3-2\cN_0s}\|g(\<\cdot\>^2)^{sm+\ga/4-4}\|_{L^2}\|f(\<\cdot\>^2)^{s(j-m)-\ga/4+4}\|_{L^1}\\
&&\times\|h\<\cdot\>^{2js+\ga/2}\|_{L^2}+\sum_{m=[\frac{j-\cN_0+1}{2}]}^{j-\cN_0+1}(C_{j-1}^{j-m-1})^s(j-m)^{-2-2s+2\cN_0s}j^{(3/2)s+3-2\cN_0s}\|g(\<\cdot\>^2)^{s(m-\f12)+\ga/4-4}\|_{L^2}\\
&&\times\|f(\<\cdot\>^2)^{s(j-m+\f12)-\ga/4+4}\|_{L^1}\|h\<\cdot\>^{2js+\ga/2}\|_{L^2}\leq\sum_{m=[\frac{j-\cN_0}{2}]}^{j-\cN_0+1}(C_{j-1}^{j-m-1})^s(j-m)^{-2-\f52s+2\cN_0s}j^{2s+3-2\cN_0s}\\
&&\times\|g(\<\cdot\>^2)^{sm+\f\ga4-4}\|_{L^2}\|f(\<\cdot\>^2)^{s(j-m)-\f\ga4+5}\|_{L^1}\|h\<\cdot\>^{2js+\f\ga2}\|_{L^2}.
\eeno
Finally we conclude that
\beno
&&\mathcal{H}^{(2)}_{j,1}\ls\sum_{m=1}^{[\frac{j-\cN_0}{2}]}(C_{j-1}^{j-m-1})^sm^{-\f32s}j^{s+1}\|g(\<\cdot\>^2)^{ms+2}\|_{L^1}\|f(\<\cdot\>^2)^{(j-m)s+\f\ga4-1}\|_{L^2}\|h\<\cdot\>^{2js+\f\ga2}\|_{L^2}\\
&+&\sum_{m=1}^{[\frac{j-\cN_0+1}{2}]}(C_{j-1}^{j-m-1})^sm^{-s}j^{\f12s+1}\|g(\<\cdot\>^2)^{(m-\f12)s+2}\|_{L^1}\|f(\<\cdot\>^2)^{(j-m+\f12)s+\f\ga4-1}\|_{L^2}\|h\<\cdot\>^{2js+\f\ga2}\|_{L^2}\\
&+&\sum_{m=[\frac{j-\cN_0}{2}]}^{j-\cN_0+1}(C_{j-1}^{j-m-1})^s(j-m)^{-2-\f52s+2\cN_0s}j^{2s+3-2\cN_0s}\|g(\<\cdot\>^2)^{sm+\f\ga4-4}\|_{L^2}\|f(\<\cdot\>^2)^{s(j-m)+6}\|_{L^1}\|h\<\cdot\>^{2js+\f\ga2}\|_{L^2}.
\eeno

For $\mathcal{H}^{(2)}_{j,2}$, since  $\<v\><\<v_*\>$, we get that   $(E(\th)+t\tilde{h}\sin\th(\mj\cdot\omega))^{\cN_{0}\ka-2}\ls(\max\{\<v\>^2,\<v_*\>^2\}\cos^2(\th/2))^{\cN_{0}\ka-2}=(\<v_*\>^2\cos^2(\th/2))^{\cN_{0}\ka-2}$ and $\tilde{h}\ls |v-v_*||v|$. Using these, one may derive that the coefficient of $\mathcal{H}^{(2)}_{j,2}$ is exactly as same as that of $\mathcal{H}^{(2)}_{j,1}$. Thus  by taking care of the exponents of $\<v\>$ and $\<v_*\>$, we get that
\beno
&&\mathcal{H}^{(2)}_{j,2}\ls\sum_{m=1}^{[\frac{j-\cN_0+1}{2}]}(C^{j-m-1}_{j-1})^s m^{-(3/2)s}j^{1+s}\|g(\<\cdot\>^2)^{(m+\cN_0)s+2}\|_{L^1}\|f(\<\cdot\>^2)^{(j-m-\cN_0)s+\ga/4-1}\|_{L^2}\\
&&\times\|h\<\cdot\>^{2js+\ga/2}\|_{L^2}+\sum_{m=[\frac{j-\cN_0}{2}]}^{j-\cN_0}(C_{j-1}^{j-m-1})^s(j-m)^{-(5/2)s-2+2\cN_0s}j^{2s+3-2\cN_0s}\|g(\<\cdot\>^2)^{(m+\cN_0)s+\ga/4-1}\|_{L^2}\\
&&\times\|f(\<\cdot\>^2)^{(j-m-\cN_0)s+1}\|_{L^1}\|h\<\cdot\>^{2js+\ga/2}\|_{L^2}+\sum_{m=[\frac{j-\cN_0+1}{2}]}^{j-\cN_0+1}(C_{j-1}^{j-m-1})^s(j-m)^{-2s-2+2\cN_0s}j^{(3/2)s+3-2\cN_0s}\\
&&\times\|g(\<\cdot\>^2)^{(m+\cN_0-1/2)s+\ga/4-1}\|_{L^2}\|f(\<\cdot\>^2)^{(j-m-\cN_0+1/2)s+1}\|_{L^1}\|h\<\cdot\>^{2js+\ga/2}\|_{L^2}.
\eeno
We complete the proof of this lemma by patching together all the estimates.
\end{proof}

A similar result holds for the case $0<s<1/2$. We have
\begin{lem}
Suppose $\ga\in(-3,1],\ga+2s>-1$, $0<s<1/2$ and $\cN_1:=[s^{-1}]+1$. If
\ben\label{Hgfh2}
\mathcal{H}_j(g,f,h)&:=&j\int_{\R^6\times\S^2}\int_0^1b(\cos\th)\sin\th|v-v_*|^{\ga}(1-t)(E(\th)+t\tilde{h}\sin\th(\mj\cdot\hat{\omega}))^{js-1}\tilde{h}(\mj\cdot\hat{\om})\\
\notag&&\times|g(v_*)|f(v)|h(v')\<v'\>^{2js}|dvdv_*d\si dt.
\een
Then we have
\beno
&&\mathcal{H}_j(g,f,h)\ls j^{1/2+s}\|g\<\cdot\>^{2}\|_{L^1}\|f(\<\cdot\>^2)^{js+\ga/4-1/2}\|_{L^2}\|h|\<\cdot\>^{2js+\ga/2}\|_{L^2}+\sum_{m=1}^{[\frac{j-\cN_1}{2}]}(C_{j-1}^{j-m-1})^sm^{1-s-\cN_1s}j^{s+1/2}\\
&&\times\|g(\<\cdot\>^2)^{ms+2}\|_{L^1}\|f(\<\cdot\>^2)^{(j-m)s+\ga/4-1/2}\|_{L^2}\|h\<\cdot\>^{2js+\ga/2}\|_{L^2}+\sum_{m=1}^{[\frac{j-\cN_1+1}{2}]}(C_{j-1}^{j-m-1})^sm^{-(1/2)s}j^{(1/2)s+1/2}\\
&&\times\|g(\<\cdot\>^2)^{(m-1/2)s+2}\|_{L^1}\|f(\<\cdot\>^2)^{(j-m+1/2)s+\ga/4-1/2}\|_{L^2}\|h\<\cdot\>^{2js+\ga/2}\|_{L^2}+\sum_{m=[\frac{j-\cN_1}{2}]}^{j-\cN_1}(C_{j-1}^{j-m-1})^s(j-m)^{-2s+\cN_1s}\\&&\times j^{2s+1/2-2\cN_1s}\|g(\<\cdot\>^2)^{sm+\ga/4-4}\|_{L^2}|f(\<\cdot\>^2)^{s(j-m)+6}\|_{L^1}\|h\<\cdot\>^{2js+\f\ga2}\|_{L^2}+\sum_{m=1}^{[\frac{j-\cN_1+1}{2}]}(C^{j-m-1}_{j-1})^s m^{1-s-\cN_1s}j^{s+\f12}\eeno\beno
&&\times
\|g(\<\cdot\>^2)^{(m+\cN_1)s+2}\|_{L^1} \|f(\<\cdot\>^2)^{(j-m-\cN_1)s+\ga/4-1/2}\|_{L^2}\|h\<\cdot\>^{2js+\ga/2}\|_{L^2}+\sum_{m=[\frac{j-\cN_1}{2}]}^{j-\cN_1}(C_{j-1}^{j-m-1})^s(j-m)^{-2s+\cN_1s}\\
&&\times j^{2s+3/2-2\cN_1s}\|g(\<\cdot\>^2)^{(m+\cN_1)s+\ga/4-1/2}\|_{L^2}\|f(\<\cdot\>^2)^{(j-m-\cN_1)s+1/2}\|_{L^1}\|h\<\cdot\>^{2js+\ga/2}\|_{L^2}+\sum_{m=[\frac{j-\cN_1+1}{2}]}^{j-\cN_1+1}(C_{j-1}^{j-m-1})^s\\
&&\times(j-m)^{-\f32s+\cN_1s}j^{\f32s+\f32-2\cN_1s}\|g(\<\cdot\>^2)^{(m+\cN_1-\f12)s+\f\ga4-\f12}\|_{L^2}\|f(\<\cdot\>^2)^{(j-m-\cN_1+\f12)s+\f12}\|_{L^1}\|h\<\cdot\>^{2js+\f\ga2}\|_{L^2}.
\eeno
\end{lem}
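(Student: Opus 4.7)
The proof follows the same architecture as Lemma \ref{B_3}, with the crucial modification dictated by the smaller value of $s$. Since $0<s<1/2$, the choice $\cN_1:=[s^{-1}]+1$ enforces $(\cN_1-1)s\le 1<\cN_1 s$, which permits the splitting $sj-1=s(j-\cN_1)+(\cN_1 s-1)$ with a residual exponent $\cN_1 s-1\in[0,s)$. First I would apply Newton's binomial expansion and the subadditivity $(a+b)^s\le a^s+b^s$ to bound
\beno
&&(E(\theta)+t\tilde{h}\sin\theta(\mj\cdot\hat\omega))^{js-1}\le\Big[E(\theta)^{(j-\cN_1)s}+\sum_{k=1}^{j-\cN_1}\sum_{m=0}^{j-\cN_1-k}(C_{j-\cN_1}^k)^s(C_{j-\cN_1-k}^m)^s\\
&&\times(\<v\>^2\cos^2(\theta/2))^{s(j-\cN_1-k-m)}(\<v_*\>^2\sin^2(\theta/2))^{sm}(t\tilde{h}\sin\theta|\mj\cdot\hat\omega|)^{sk}\Big](E(\theta)+t\tilde{h}\sin\theta(\mj\cdot\hat\omega))^{\cN_1 s-1},
\eeno
and split $\mathcal{H}_j=\mathcal{H}_j^{(1)}+\mathcal{H}_j^{(2)}$ according to whether the leading $k=0$ term or the $k\ge1$ remainder is used.

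For $\mathcal{H}_j^{(1)}$, I bound $(E(\theta)+t\tilde{h}\sin\theta(\mj\cdot\hat\omega))^{\cN_1 s-1}\lesssim E(\theta)^{\cN_1 s-1}$ and then expand $E(\theta)^{js-1}$ by a Povzner-type formula analogous to \eqref{Eexpan} and \eqref{Ek}. Proceeding as in the estimates for $\mathscr{E}_1$ and $\mathscr{E}_2$ in Lemma \ref{L23} produces the leading term $j^{1/2+s}\|g\<\cdot\>^2\|_{L^1}\|f(\<\cdot\>^2)^{js+\gamma/4-1/2}\|_{L^2}\|h\<\cdot\>^{2js+\gamma/2}\|_{L^2}$; the half-power of $j$ gain (absent in Lemma \ref{B_3}) is due to the angular factor $b(\cos\theta)\sin\theta$ behaving like $k^{s-1/2}$ rather than $k^{s-1}$ when integrated against $\sin^{k-3/2-\gamma/2}(\theta/2)$, a direct consequence of Lemma \ref{lemma2.2}.

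For $\mathcal{H}_j^{(2)}$, I further split the integration domain into the regions $\{\<v\>\ge\<v_*\>\}$ and $\{\<v\><\<v_*\>\}$. On the first region I exploit the hypothesis $\gamma+2s>-1$ to write $\tilde{h}^{sk+1}|v-v_*|^\gamma\le|v-v_*|^{1+\gamma+s}\<v_*\>^{sk+1}\<v\>^{sk-s+1}$ and bound the residual factor by $(\<v\>^2\cos^2(\theta/2))^{\cN_1 s-1}$. The remaining angular integral collapses, via the Beta function identity and $\int_0^1(1-t)t^{sk}\,dt\sim(1+k)^{-2}$, into a Gamma ratio. I then split the resulting double sum according to whether $m+k/2\in[1,[(j-\cN_1)/2]]$ (use the regular change of variables, keeping $g$ in $L^1$ and $f$ in $L^2$) or $m+k/2\in[[(j-\cN_1)/2],j-\cN_1]$ (use the singular change of variables, putting $f$ in $L^1$ and $g$ in $L^2$). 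The region $\<v\><\<v_*\>$ is treated symmetrically, the extra $\cN_1$ powers being shifted onto $g$ instead of $f$; this is precisely what produces the last three sums of the statement.

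The main obstacle will be controlling the combinatorial double sums after the angular integration. I will need two inequalities strictly analogous to \eqref{mk1} and \eqref{mk2}, but with the prefactor $j^2$ replaced by $j$, the exponent $sj-2$ replaced by $sj-1$, and $\cN_0$ replaced by $\cN_1$. The proof strategy of Lemma \ref{abc} carries over: Stirling asymptotics from \eqref{gammafun}, the duplication identity $\Gamma(k+1)\sim 2^{k+1}\Gamma(k/2+1/2)\Gamma(k/2+1)$ separating even and odd $k$, and the rewriting of the Gamma ratios as binomial coefficients bounded by $(C_{j-1}^{j-m-1})^s$. The parity split of $k$ accounts for the two distinct sums indexed by $m\in[1,[(j-\cN_1)/2]]$ and $m\in[1,[(j-\cN_1+1)/2]]$ respectively, and similarly for the ranges $m\in[[(j-\cN_1)/2],j-\cN_1]$ and $m\in[[(j-\cN_1+1)/2],j-\cN_1+1]$. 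Once these combinatorial bounds are established, matching them term by term with the weight and regularity factors produced in the two domains yields the desired estimate.
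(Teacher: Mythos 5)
Your proposal is correct and follows exactly the route the paper intends: the paper omits this proof precisely because it is the straightforward adaptation of Lemma \ref{B_3} (binomial splitting of the exponent $sj-1=s(j-\cN_1)+(\cN_1 s-1)$, the $k=0$ versus $k\ge1$ decomposition, the $\<v\>\ge\<v_*\>$ versus $\<v\><\<v_*\>$ and $m+k/2$ range splits with regular/singular changes of variables) together with the analogue of Lemma \ref{abc} with $j^2\to j$, $sj-2\to sj-1$, $\cN_0\to\cN_1$, which you correctly identify. The only trivial slip is that the residual exponent $\cN_1 s-1$ lies in $(0,s]$ rather than $[0,s)$, which does not affect the argument.
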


\subsection{Corollaries on linearized Boltzmann operator $L$}
With the bounds  and coercivity estimates of collision operator $Q$ in hand, we obtain some corollaries on linearized Boltzmann operator.
\begin{col}\label{L}
Recall that
$L=-Q(\mu, f)-Q(f, \mu)$.
 Let $k\geq22$ and $f \in H^s_{k+\gamma/2}$ with $s \in(0, 1),\ga\in(-3,1]$ and $\gamma+2s >-1 $. There exist constants $c_0, C_k>0$ such that
\beno
( L f, f \langle v \rangle^{2k} )_{L^2_v}\ge c_0 \Vert f \Vert^2_{H^s_{k+\gamma/2}}- C_{k}  \Vert f \Vert_{L^2_v}^2.
\eeno
For inhomogeneous case since $(-v \cdot \nabla_x f, f \langle v \rangle^{2k})_{L^2_{x,v}}=0$, we have
\beno
(Lf,f\<v\>^{2k})_{L^2_{x,v}}\ge c_0\|f\|^2_{L^2_xH^s_{k+\ga/2}}-C_k\|f\|^2_{L^2_{x,v}}.
\eeno
\end{col}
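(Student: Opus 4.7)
The plan is to decompose
\[
(Lf, f\langle v\rangle^{2k})_{L^2_v} = -(Q(\mu, f), f\langle v\rangle^{2k})_{L^2_v} - (Q(f, \mu), f\langle v\rangle^{2k})_{L^2_v}
\]
and combine the two complementary estimates just proven in this section. For the first summand I will invoke the coercivity estimate \eqref{212} of Theorem \ref{T24} (the specialization $g=0$), which yields
\[
-(Q(\mu, f), f\langle v\rangle^{2k})_{L^2_v} \ge \ga_1 \|f\|^2_{H^s_{k+\ga/2}} + \tfrac{1}{8}\|b(\cos\th)(1-\cos^{2k-3-\ga}(\th/2))\|_{L^1_\th}\, |[f]|^2_{L^2_{k+\ga/2}} - C_k \|f\|^2_{L^2_{k+\ga/2-1}}.
\]
For the second summand, Lemma \ref{L21} \eqref{poly1} supplies the upper bound
\[
|(Q(f, \mu), f\langle v\rangle^{2k})_{L^2_v}| \le \|b(\cos\th)\sin^{k-3/2-\ga/2}(\th/2)\|_{L^1_\th}\, |[f]|^2_{L^2_{k+\ga/2}} + C_k \|f\|^2_{L^2_{k+\ga/2-1/2}}.
\]

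The crucial observation I will rely on is the asymptotic disparity of the two angular weights recorded in Lemma \ref{lemma2.2}(i): the coercive coefficient $\|b(\cos\th)(1-\cos^{2k-3-\ga}(\th/2))\|_{L^1_\th}\sim k^s$, whereas the upper-bound coefficient $\|b(\cos\th)\sin^{k-3/2-\ga/2}(\th/2)\|_{L^1_\th}\ls 1/k$. For $k\ge 22$ the former strictly dominates the latter, so subtracting the two inequalities leaves
\[
(Lf, f\langle v\rangle^{2k})_{L^2_v} \ge \ga_1 \|f\|^2_{H^s_{k+\ga/2}} + c_k\, |[f]|^2_{L^2_{k+\ga/2}} - C_k \|f\|^2_{L^2_{k+\ga/2-1/2}}
\]
with some $c_k>0$.

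To absorb the remainder I will apply the elementary weighted interpolation
\[
\|f\|^2_{L^2_{k+\ga/2-1/2}} \le \vep \|f\|^2_{L^2_{k+\ga/2}} + C_\vep \|f\|^2_{L^2_v},
\]
obtained by splitting the integral into $|v|\le R$ and $|v|>R$ for $R=R(\vep)$ large. Choosing $\vep$ so that $C_k\vep\le c_k/2$, and using $|[f]|^2_{L^2_{k+\ga/2}}\sim_\ga \|f\|^2_{L^2_{k+\ga/2}} \le \|f\|^2_{H^s_{k+\ga/2}}$ (as noted after \eqref{3norm}), one arrives at the stated inequality with $c_0=\ga_1$ and an enlarged $C_k$. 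For the inhomogeneous version I will integrate the velocity estimate over $x\in\TT^3$ and observe that the transport contribution $(v\cdot\na_x f, f\langle v\rangle^{2k})_{L^2_{x,v}}=\tfrac{1}{2}\int v\cdot\na_x(f^2\langle v\rangle^{2k})\,dx\,dv$ vanishes by the divergence theorem on the torus. There is no genuine obstacle here: the statement is a direct corollary of the delicate bookkeeping already performed in Theorem \ref{T24} and Lemma \ref{L21}, and the only substantive point is the comparison of the two angular weights.
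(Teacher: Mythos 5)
Your decomposition and the two ingredients you combine are exactly those of the paper: Theorem \ref{T24} in the form \eqref{212} for $-(Q(\mu,f),f\langle v\rangle^{2k})$, Lemma \ref{L21} \eqref{poly1} for $-(Q(f,\mu),f\langle v\rangle^{2k})$, followed by the weighted interpolation to push the remainders down to $\|f\|_{L^2_v}^2$, and integration in $x$ for the inhomogeneous statement. So the architecture of the argument is the paper's.

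The one step I would not accept as written is the comparison of the two angular coefficients. You invoke Lemma \ref{lemma2.2}(i), i.e. $\|b(\cos\theta)(1-\cos^{2k-3-\gamma}(\theta/2))\|_{L^1_\theta}\sim k^s$ and $\|b(\cos\theta)\sin^{k-3/2-\gamma/2}(\theta/2)\|_{L^1_\theta}\lesssim 1/k$, and conclude that ``for $k\ge 22$ the former strictly dominates the latter.'' Those relations carry untracked implicit constants, so they only give dominance for $k\ge k_0$ with $k_0$ depending on those constants; they do not justify the claim at the stated threshold $k=22$, nor do they give a lower bound for the difference that is uniform in $k$ (which you need, since $c_0$ in the corollary is $k$-independent and the fallback of absorbing a possibly negative coefficient into $\gamma_1\|f\|^2_{H^s_{k+\gamma/2}}$ runs into the same unquantified constants). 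The paper closes this by a pointwise inequality rather than an integrated asymptotic one: for $\theta\in[0,\pi/2]$ and $k\ge 22$,
\begin{equation*}
\bigl(1-\cos^{2k-3-\gamma}\tfrac{\theta}{2}\bigr)-8\sin^{k-\frac32-\frac{\gamma}{2}}\tfrac{\theta}{2}\;\ge\;\sin^2\tfrac{\theta}{2}\Bigl(1-\tfrac{8}{2^{(2k-7-\gamma)/4}}\Bigr)\;>\;\tfrac12\sin^2\tfrac{\theta}{2},
\end{equation*}
so that after multiplying by $b(\cos\theta)$ and integrating, the coefficient of $|[f]|^2_{L^2_{k+\gamma/2}}$ coming from $\tfrac18\|b(1-\cos^{2k-3-\gamma}(\theta/2))\|_{L^1_\theta}-\|b\sin^{k-3/2-\gamma/2}(\theta/2)\|_{L^1_\theta}$ is nonnegative for every $k\ge22$, with an explicit margin $\tfrac1{16}\|b\sin^2(\theta/2)\|_{L^1_\theta}$. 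Replace your asymptotic comparison by this (or by an equally explicit constant-tracking argument) and the rest of your proof goes through as stated.
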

\begin{proof}
The desired results are easily derived from  Lemma \ref{L21}, Theorem \ref{T24} and the fact that for any $\th\in[0,\pi/2],k\geq22$,
$(1-\cos^{2k-3-\ga}\f\th2)-8\sin^{k-\f32-\f\ga2}\f\th2\geq \sin^2{\f\th2}(1-\f{8}{2^{\f{2k-7-\ga}{4}}})>\f12\sin^2{\f\th2}$.
\end{proof}
\smallskip

\begin{col}\label{c2.2}
Let $\ga\in(-3,0]$, $k\geq k_0\geq22$ and $-L=Q(\mu, f)+Q(f, \mu)=A+B$ with $A=M\chi_R$ and $B=-L-M\chi_R$ for some $M, R>0$ large, where $\chi_R$ is the truncation function in a ball with center zero and radius $R>0$. $\mathcal{S}_L$ and $\mathcal{S}_B$ are semi-groups generated by $-L$ and $B$ respectively. Then we have
\beno
\|\mathcal{S}_B(t)\|_{H^2_xL^2_k\rightarrow H^2_xL^2_{k_0}}\leq \vartheta(t;k_0,k):=\left\{
\begin{aligned}
 C\<t\>^{-\frac{k-k_*}{|\gamma|}}, &  &\mbox{for~~ any}~k_*\in(k_0,k) ~\mbox{with}~ \ga\in(-3,0),  \\
Ce^{-Ct}, &  & k=k_0~\mbox{with}~\ga=0,
\end{aligned}
\right.
\eeno
and $\|A\|_{H^2_xL^2_{\mu^{-(1/2+\epsilon)}}\rightarrow H^2_xL^2_k}\leq C$.
Here $\|f\|_{L^2_{\mu^{-(\f12+\epsilon)}}}:=\left(\int_{\R^3}(|f|\mu^{-(1/2+\epsilon)})^2dv\right)^{1/2}$ with $\epsilon>0$ sufficiently small. Moreover, it holds that
$\|\mathcal{S}_L(t)\Pi^\perp\|_{H^2_xL^2_k\rightarrow H^2_xL^2_{k_0}}\leq \vartheta(t;k_0,k),$
where $\Pi^\perp$ is a projector onto the orthogonal of $\mathrm{Ker}(L)$.
\end{col}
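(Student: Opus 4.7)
The proof divides into the three assertions, treated in order. The bound on $A$ is immediate: since $A=M\chi_R$ is multiplication by a compactly supported bounded function of $v$ alone,
\beno
\|Af\|_{L^2_k}^2 = M^2\int_{|v|\le R}|f(v)|^2\<v\>^{2k}\,dv \le C_{M,R,k,\epsilon}\int |f|^2\mu^{-(1+2\epsilon)}\,dv,
\eeno
using that $\chi_R^2\<v\>^{2k}\mu^{1+2\epsilon}$ is uniformly bounded on $\R^3$. The $H^2_x$ version is identical since $A$ acts trivially in $x$.

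For $\mathcal{S}_B$, let $f(t)=\mathcal{S}_B(t)f_0$ solve $\pa_t f = Bf = -Lf - M\chi_R f$. The energy identity is
\beno
\tfrac{1}{2}\tfrac{d}{dt}\|f\|^2_{L^2_k} = -(Lf,f\<v\>^{2k})_{L^2_v} - M\|\chi_R^{1/2}f\<\cdot\>^k\|^2_{L^2_v}.
\eeno
Corollary \ref{L} gives $(Lf,f\<v\>^{2k})\ge c_0\|f\|^2_{H^s_{k+\ga/2}}-C_k\|f\|^2_{L^2_v}$, so one must absorb $C_k\|f\|^2_{L^2_v}$. Split $\int|f|^2\,dv=\int_{|v|\le R}+\int_{|v|>R}$: the low-velocity piece is dominated by a constant times $\|\chi_R^{1/2}f\<\cdot\>^k\|^2_{L^2_v}$ and hence absorbed by choosing $M$ large in terms of $R,k,C_k$; the high-velocity piece obeys $\int_{|v|>R}|f|^2\,dv\le R^{-2k+|\ga|}\|f\|^2_{L^2_{k+\ga/2}}$, absorbed into $\tfrac{c_0}{2}\|f\|^2_{H^s_{k+\ga/2}}$ for $R$ sufficiently large (legitimate since $k\ge 22\gg|\ga|/2$). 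Therefore $\tfrac{d}{dt}\|f\|^2_{L^2_k}\le -c_0\|f\|^2_{H^s_{k+\ga/2}}$. Applying the same argument to $\pa_x^\al f$ with $|\al|\le 2$ yields the $H^2_xL^2_k$ version, since $B$ is local in $x$.

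For $\ga=0$, $H^s_{k+\ga/2}\hookrightarrow L^2_k$ and Gronwall delivers the stated exponential decay. For $\ga<0$, H\"older's interpolation
\beno
\|f\|_{L^2_{k_0}}\le \|f\|^\theta_{L^2_{k_0+\ga/2}}\|f\|^{1-\theta}_{L^2_k},\qquad \theta=\frac{k-k_0}{k-k_0+|\ga|/2},
\eeno
together with the monotonicity of $t\mapsto\|f(t)\|_{L^2_k}$, turns the energy inequality into $y'(t)\le -c\,y(t)^{1/\theta}\|f_0\|^{-2(1-\theta)/\theta}_{L^2_k}$ for $y=\|f\|^2_{L^2_{k_0}}$. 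Integrating this Bernoulli-type ODE gives $y(t)\lesssim\<t\>^{-\theta/(1-\theta)}=\<t\>^{-2(k-k_0)/|\ga|}$, which is strictly stronger than (and hence implies) the claimed rate $\<t\>^{-(k-k_*)/|\ga|}$ for any $k_*\in(k_0,k)$.

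The decay of $\mathcal{S}_L\Pi^\perp$ is the main obstacle and calls for the factorization/enlargement scheme of Gualdani--Mischler--Mouhot. Iterating Duhamel's formula $\mathcal{S}_L=\mathcal{S}_B+\mathcal{S}_B*(A\mathcal{S}_L)$ $N$ times yields
\beno
\mathcal{S}_L(t) = \sum_{n=0}^{N-1}\bigl(\mathcal{S}_B*(A\mathcal{S}_B)^{*n}\bigr)(t)+\bigl(\mathcal{S}_B*(A\mathcal{S}_B)^{*(N-1)}*A\mathcal{S}_L\bigr)(t),
\eeno
where $*$ denotes time convolution. Each term in the finite sum is controlled by composing the $\mathcal{S}_B$ bound just established with $\|A\|_{L^2_{\mu^{-(1/2+\epsilon)}}\to L^2_k}\lesssim 1$; after $N$ iterations the remainder evaluates $\mathcal{S}_L$ on a function lying in the small Gaussian-weighted space $L^2_{\mu^{-(1/2+\epsilon)}}$, where the well-established spectral gap of $L$ on $\Pi^\perp$ (from the symmetric linearization theory, e.g. Mouhot--Strain and Alexandre--Morimoto--Ukai--Xu--Yang) supplies exponential decay. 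The hardest step is to verify that $\Pi^\perp$ threads cleanly through the iterated Duhamel expansion and that the time convolution of the exponential decay on the small space with the polynomial (or, when $\ga=0$, exponential) decay of the $\mathcal{S}_B$ factors on the large space reproduces the bound $\vartheta(t;k_0,k)$ uniformly for all $t\ge 0$, with particular care required in the short-time regime where the worst-case rate must be tracked before either ingredient becomes dominant.
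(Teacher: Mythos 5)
Your treatment of the first two assertions is sound. The bound on $A=M\chi_R$ is the same one-line computation the paper intends, and your derivation of the $\mathcal{S}_B$ decay is a legitimate variant of the paper's: where the paper splits the $L^2_{k_0}$ norm at $|v|\sim R$, runs Gronwall, and then optimizes $R$ in terms of $t$ (which is why it only reaches the rate $\<t\>^{-(k-k_*)/|\ga|}$ for $k_*>k_0$), you use the multiplicative interpolation $\|f\|_{L^2_{k_0}}\le\|f\|^\theta_{L^2_{k_0+\ga/2}}\|f\|^{1-\theta}_{L^2_k}$ plus the monotonicity of $\|f(t)\|_{L^2_k}$ and integrate a Bernoulli ODE; this actually yields the cleaner rate $\<t\>^{-(k-k_0)/|\ga|}$ with no loss, so it is fine (it does require, as in the paper, that $M,R$ be chosen large enough for both indices $k$ and $k_0$, so that the dissipative inequality holds at both weight levels).

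The gap is in the third assertion, which is the heart of the corollary. First, your iterated Duhamel identity is the wrong one for an enlargement argument: in the remainder $\mathcal{S}_B*(A\mathcal{S}_B)^{*(N-1)}*A\mathcal{S}_L$ the factor $\mathcal{S}_L$ acts \emph{first}, on the initial datum in $H^2_xL^2_k$, so you cannot invoke any decay of $\mathcal{S}_L$ on the Gaussian space there without already knowing a decaying bound for $\mathcal{S}_L$ on the large space --- which is precisely what is to be proved; with only a rough a priori bound this term destroys the decay, and there is no smallness to close a bootstrap. The identity that works (and the one the paper uses) is $\mathcal{S}_L\Pi^\perp=\Pi^\perp\mathcal{S}_B+(\mathcal{S}_L\Pi^\perp)\ast(A\mathcal{S}_B)$, in which $A\mathcal{S}_B(s)$ maps $H^2_xL^2_k\to H^2_xL^2_{k_0}\to H^2_xL^2_{\mu^{-(1/2+\epsilon)}}$ with norm $\lesssim\vartheta(s)$, and only then is $\mathcal{S}_L\Pi^\perp$ evaluated on Gaussian-weighted data; moreover, since $A=M\chi_R$ is compactly supported in $v$ it lands in the Gaussian space in one step, so no $N$-fold iteration is needed. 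Second, the small-space input you invoke ("spectral gap of $L$ on $\Pi^\perp$, hence exponential decay") does not exist in the regime covered here: for soft potentials with $\gamma+2s<0$ the linearized operator has no spectral gap even in the Gaussian-weighted space, and the available decay (the paper's Lemma \ref{SL}) is only stretched-exponential, $\|\Pi^\perp\mathcal{S}_L(t)\|_{H^2_xL^2(\mu^{-(1/2+\epsilon)})\to H^2_xL^2(\mu^{-1/2})}\le Ce^{-\lambda t^b}$ with $b\in(0,1)$, and requires the $\epsilon$-loss in the Gaussian weight --- which is exactly why the $\epsilon$ appears in the statement of the corollary. Stretched-exponential decay still suffices, because $\vartheta^{-1}(t)e^{-\lambda t^b}\in L^1(\R_+)$ while $\vartheta^{-1}(t)\|A\mathcal{S}_B(t)\|\in L^\infty(\R_+)$, and $\vartheta(t)^{-1}\lesssim\vartheta(t-s)^{-1}\vartheta(s)^{-1}$ turns the Duhamel term into an $L^1\ast L^\infty$ convolution; but you never carry out this step, and you explicitly defer both it and the compatibility of $\Pi^\perp$ with the factorization. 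As written, therefore, the main conclusion $\|\mathcal{S}_L(t)\Pi^\perp\|_{H^2_xL^2_k\to H^2_xL^2_{k_0}}\le\vartheta(t;k_0,k)$ is not established.
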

\begin{proof}
 Thanks to Corollary \ref{L}, we have
\beno
(Bf,f)_{L^2_k}=-(Lf,f\<v\>^{2k})_{L^2_v}-(M\chi_{R}f,f\<v\>^{2k})_{L^2_v}\leq -c_0\|f\|^2_{H^s_{k+\gamma/2}}+C_k\|f\|_{L^2}^2-M\int_{|v|\leq R}|f|^2dv.
\eeno
Noticing that
$\int_{\R^3}|f|^2dv\leq\int_{|v|\leq R}|f|^2dv+\frac{1}{R^{2}}\int_{\R^3}|f|^2\<v\>^{2}dv\leq\int_{|v|\leq R}|f|^2dv+\frac{1}{R^{2}}\|f\|^2_{H^s_{2}}$, we get that there exists  suitably large $M$ and $R$  which depend on $k$ such that
$(Bf,f)_{L^2_k}\lesssim -\|f\|^2_{H^s_{k+\gamma/2}}.$
 This implies the desired results for the case $\ga=0$.

  When $\ga\in(-3,0)$,
on one hand, we have $\|\mathcal{S}_B(t)f\|_{L^2_{k}}\leq \|f\|_{L^2_k}.$ On the other hand, we have
\beno
\frac{d}{dt}\|\mathcal{S}_B(t)f\|^2_{L^{2}_{k_0}}\leq-c\|\mathcal{S}_B(t)f\|^2_{L^2_{k_0+\gamma/2}}\leq-c\<R\>^{\gamma}\|\mathcal{S}_B(t)f\|^2_{L^2_{k_0}}+C\<R\>^{2(k-k_0)+\gamma}\|\mathcal{S}_B(t)f\|^2_{L^2_{k}},
\eeno
where we use the following interpolation
$\<R\>^{\gamma}\|f\|^2_{L^2_{k_0}}\leq \|f\|^2_{L^2_{k_0+\gamma/2}}+\<R\>^{2(k-k_0)+\gamma}\|f\|^2_{L^2_{k}}.$
Integrating the differential inequality, we obtain that
\beno
\|\mathcal{S}_B(t)f\|^2_{L^2_{k_0}}\lesssim e^{-c\<R\>^\gamma t}\|f\|^2_{L^2_{k_0}}+\<R\>^{2(k-k_0)}\|f\|^2_{L^2_k}\leq\inf_{R>0}(e^{-c\<R\>^\gamma t}+\<R\>^{2(k-k_0)})\|f\|^2_{L^2_k},
\eeno
which implies the desired results by choosing $\<R\>=(\<t\>[\log(1+t)]^{-2(k-k_0)/c})^{-1/\ga}$.

To get the bound of operator $A$, by Duhamel's formula,
$\mathcal{S}_L\Pi^\perp=\Pi^\perp \mathcal{S}_B+\mathcal{S}_L\Pi^\perp\ast A\mathcal{S}_B,$
we have
\beno
&&\vartheta(t)^{-1}\|\mathcal{S}_L\Pi^\perp\|_{H^2_x L^2_k\rightarrow H^2_x L^2_{k_0}}\lesssim  \vartheta(t)^{-1}\|\mathcal{S}_B\Pi^\perp\|_{H^2_x L^2_k\rightarrow H^2_x L^2_{k_0}}\\&&\qquad
 +(\vartheta^{-1}\|\Pi^\perp \mathcal{S}_L\|_{H^2_x L^2(\mu^{-(1/2+\epsilon)})\rightarrow H^2_x L^2_{k_0}}\ast\vartheta^{-1}\|A\mathcal{S}_B\|_{H^2_x L^2_{k}\rightarrow H^2_x L^2(\mu^{-(1/2+\epsilon})}).
\eeno
Since   $t\mapsto \vartheta^{-1}(t)\|\mathcal{S}_B(t)\Pi^\perp\|_{H^2_x L^2_k\rightarrow H^2_x L^2_{k_0}}\in L^\infty(\R_+),$ $t\mapsto \vartheta^{-1}(t)\|\Pi^\perp \mathcal{S}_L(t)\|_{H^2_x L^2(\mu^{-(1/2+\epsilon)})\rightarrow H^2_x L^2({\mu^{-1/2}})\rightarrow H^2_x L^2_{k_0}}\in L^1(\R_+)$ and $t\mapsto \vartheta^{-1}(t)\|A\mathcal{S}_B(t)\|_{H^2_x L^2_k\rightarrow H^2_x L^2_{k_0}\rightarrow H^2_x L^2(\mu^{-(1/2+\epsilon)})}\in L^\infty(\R_+)$,
where the second result comes from Lemma \ref{SL},  we conclude the desired results.
\end{proof}

\setcounter{equation}{0}
\section{Global well-posedness, propagation of moments and sharp convergence rate} In this section, we shall give the proof to Theorem \ref{globaldecay}. We divide it into several steps.   Since the local well-posedness and the non-negativity of the original solution to \eqref{1} has been well-established in \cite{HJZ,HST}, we only   provide  {\it a priori  estimates} for the equations.

\subsection{Preliminaries} We start with some key lemmas.

\begin{lem}\label{T26}
Suppose $k\geq22$ and $F(x,v)= \mu +f(x,v)$ satisfies
\beno
F \ge 0,\quad  \Vert F(x,\cdot) \Vert_{L^1} \ge 1/2, \quad \Vert F(x,\cdot) \Vert_{L^1_2} +\Vert F(x,\cdot) \Vert_{L \log L} \le 4,\quad \forall x\in\T^3
\eeno
Then there exist constants $c_0, C_k>0$ such that
\beno
( Q(\mu+f, \mu+f) , f )_{X_k}
&\le& -c_0 \Vert f \Vert_{Y_k}^2-k^s(1-\|f\|_{H^2_xL^2_{14}}-\|f\|^4_{H^2_xL^2_{14}})\|f\|^2_{X_{k+\ga/2}} \\
 &&+ C_k\|f\|_{H^2_xL^2_{14}} \Vert f \Vert_{Y_{k+s-1}}\|f\|_{Y_k}+ C_{k}  \Vert f \Vert_{H^2_xL^2_v}^2.
\eeno
\end{lem}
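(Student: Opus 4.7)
The plan is to expand $(Q(\mu+f,\mu+f),f)_{X_k}$ into the zeroth-order summand $(Q(F,F),f\langle v\rangle^{2k})_{L^2_{x,v}}$ and the second-order summands $\sum_{|\alpha|=2}(\partial^\alpha_x Q(F,F),\partial^\alpha_x f\langle v\rangle^{2(k-8)})_{L^2_{x,v}}$, with $F=\mu+f$. Since $Q(\mu,\mu)=0$, I will rewrite $Q(F,F)=Q(F,f)+Q(f,\mu)$ in order to exploit Theorem~\ref{T24} on the piece $Q(F,f)$ (which yields both the regularity and weight coercivity) while treating $Q(f,\mu)$ as a small perturbation via Lemma~\ref{L21}. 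For the derivative summands I will further split
\[
\partial^\alpha_x Q(F,F)=Q(F,\partial^\alpha_x f)+\bigl(\partial^\alpha_xQ(F,f)-Q(F,\partial^\alpha_x f)\bigr)+Q(\partial^\alpha_x f,\mu),
\]
using $\partial^\alpha_x\mu=0$, so that Theorem~\ref{T24} with $k$ replaced by $k-8$ handles the first term, the spatial-commutator estimate of Lemma~\ref{Qspatial} handles the middle term, and Lemma~\ref{L21} with $k\to k-8$ handles the last.

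For the zeroth-order piece, applying \eqref{211} pointwise in $x$ and integrating in $x$ produces the regularity gain $-\gamma_1\|f\|^2_{L^2_xH^s_{k+\gamma/2}}$ and the weight gain $-\tfrac{\gamma_2}{8}\|b(\cos\theta)(1-\cos^{2k-3-\gamma}(\theta/2))\|_{L^1_\theta}\,|[f]|^2_{L^2_xL^2_{k+\gamma/2}}$; by Lemma~\ref{lemma2.2}(i) the angular integral is $\sim k^s$ and $|[f]|_{L^2_{k+\gamma/2}}\sim\|f\|_{L^2_{k+\gamma/2}}$. The $Q(f,\mu)$ contribution, bounded by \eqref{poly1}, has leading angular coefficient $\|b(\cos\theta)\sin^{k-3/2-\gamma/2}(\theta/2)\|_{L^1_\theta}\lesssim 1/k$, and is thus absorbed by the $k^s$-weight gain. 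The derivative pieces will contribute analogously, while Lemma~\ref{Qspatial} will produce both the nonlinear term $C_k\|f\|_{H^2_xL^2_{14}}\|f\|_{Y_{k+s-1}}\|f\|_{Y_k}$ appearing in the target and the commutator piece $k^s\|f\|_{H^2_xL^2_{14}}\|f\|^2_{X_{k+\gamma/2}}$, which is exactly the $\|f\|_{H^2_xL^2_{14}}$ correction inside the stated $k^s(1-\cdots)$ factor.

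Upon assembling, the quartic correction $\|f\|^4_{H^2_xL^2_{14}}\|f\|^2_{X_{k+\gamma/2}}$ will originate from \eqref{J22} inside the proof of Theorem~\ref{T24} when $\gamma+2s<0$, where the coercivity constant of the linearized operator depends on $\|g\|^4_{L^2_{|\gamma|+2}}$; all remaining lower-order weighted norms of the form $L^2_{k+\gamma/2-1/2}$ or $L^2_{k-1+\gamma/2}$ will be reduced to $C_k\|f\|^2_{H^2_xL^2_v}$ by interpolation against $\|f\|_{Y_k}$ using the strict weight gap to $L^2_{k+\gamma/2}$. The hard part will be the bookkeeping in $k$: I must verify that every L.O.T.~from \eqref{211}, every upper-bound contribution from \eqref{poly1}, and every commutator term from Lemma~\ref{Qspatial} is dominated by either the $c_0\|f\|^2_{Y_k}$ gain or the $k^s\|f\|^2_{X_{k+\gamma/2}}$ weight gain after invoking the smallness of $\|f\|_{H^2_xL^2_{14}}$, so that the precise coefficient structure $-k^s(1-\|f\|_{H^2_xL^2_{14}}-\|f\|^4_{H^2_xL^2_{14}})$ survives and $c_0$ remains independent of $k$.
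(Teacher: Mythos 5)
Your proposal is correct and follows essentially the same route as the paper: the Leibniz splitting you describe (coercive part $Q(F,\pa^\al_x f)$ via Theorem~\ref{T24}, the piece $Q(\pa^\al_x f,\mu)$ via Lemma~\ref{L21}, and the spatial commutator $\pa^\al_xQ(F,f)-Q(F,\pa^\al_x f)$ via Lemma~\ref{Qspatial}) is exactly the paper's decomposition into $\mathcal{Q}_1$ and $\mathcal{Q}_2$, with the same mechanism (Lemma~\ref{lemma2.2} giving gain $\sim k^s$ versus loss $\lesssim 1/k$, the quartic factor from the coercivity constant, and $\sup_x$ bounds by $H^2_x$) producing the stated coefficient structure. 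Only note that no smallness of $\|f\|_{H^2_xL^2_{14}}$ is needed or assumed: those factors are simply carried explicitly on the right-hand side, as your final assembly in fact does.
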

\begin{proof}
By definition, we have
\beno
( Q(\mu+f, \mu+f) , f )_{X_k}&=&\sum_{|\al|=0,2}(Q(\mu+f,\pa^\al_x f),\pa^\al_x f\<v\>^{2k-8|\al|})+( Q(\pa^\al_x f,\mu),\pa^\al_x f\<v\>^{2k-8|\al|})
\\
&&+\sum_{|\al_1|\neq0}C_{\al_1,\al_2}(Q(\pa^{\al_1}_xf,\pa^{\alpha -\alpha_1}_x f),\pa^\al_xf\<v\>^{2k-8|\al|}):=\mathcal{Q}_1+\mathcal{Q}_2.
\eeno
Thanks to Lemma \ref{L21}, Theorem \ref{T24} and $k\geq22$, we have
\beno
\mathcal{Q}_1&\leq& -c_0\|f\|^2_{Y_k}-(k^s-\|f\|_{H^2_xL^2_{14}}-\|f\|^4_{H^2_xL^2_{14}})\|f\|^2_{X_{k+\ga/2}}+C_k\|f\|^2_{H^2_xL^2_v}+C_k\sum_{|\al|=0,2}\int_{\T^3}\|\pa^\al_xf\|_{L^2_{14}}\|\<v\>^{k-4|\al|}f\|_{H^s_{-1+\f\ga 2}}\\
&&\times\|\<v\>^{k-4|\al|}\pa^\al_xf\|_{H^s_{\ga/2}}dx+C_k\sum_{|\al|=0,2}\int_{\T^3}\|f\|_{L^2_{14}}\|\<v\>^{k-4|\al|}\pa^\al_xf\|_{H^s_{-1+\ga/2}}\|\<v\>^{k-4|\al|}\pa^\al_xf\|_{H^s_{\ga/2}}dx.
\eeno
For $|\al|=0$, we have $\sup\limits_{x\in\T^3}\|f\|_{L^2_{14}}\leq \|f\|_{H^2_xL^2_{14}}$. While for $|\al|=2$, we have $\sup\limits_{x\in\T^3}\|\<v\>^{k-4|\al|}f\|_{H^s_{\ga/2}}\leq \|f\|_{Y_k}$, which implies that
$\mathcal{Q}_1\leq -c_0\|f\|^2_{Y_k}-(k^s-\|f\|_{H^2_xL^2_{14}}-\|f\|^4_{H^2_xL^2_{14}})\|f\|^2_{X_{k+\ga/2}}+C_k\|f\|_{H^2_xL^2_{14}}\|f\|_{Y_{k-1}}\|f\|_{Y_k}+C_k\|f\|^2_{H^2_xL^2_v}.$
On the other hand, due to Lemma \ref{Qspatial}, we have $\mathcal{Q}_2\leq C_k\|f\|_{H^2_xL^2_{14}}\|f\|_{Y_{k+s-1}}\|f\|_{Y_{k}}+k^s\|f\|_{H^2_xL^2_{14}}\|f\|^2_{X_{k+\ga/2}}.$ Then we get the desired results.
\end{proof}

\begin{lem}\label{L31} It holds that
$(Q(f, g), h)_{X_k} \leq C_k(  \|f\|_{H^2_xL^2_{14}} \Vert g \Vert_{\bar{Y}_k} \Vert h \Vert_{Y_k} + \|g\|_{H^2_xL^2_{14}}\Vert f \Vert_{Y_k}  \Vert h \Vert_{Y_k}).$
In particular,
\beno
\Vert Q(f, g) \Vert_{Z_k} \leq C_k( \|f\|_{H^2_xL^2_{14}} \Vert g \Vert_{\bar{Y}_k} + \|g\|_{H^2_xL^2_{14}}\Vert f \Vert_{Y_k} ),
\eeno
where $Z_k$ is the dual of $Y_k$ with respect to $X_k$ defined in \eqref{X_k} and \eqref{Y_k}.
\end{lem}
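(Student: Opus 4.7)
The plan is to expand the inner product according to the definition of $X_k$, which yields
\[
(Q(f,g), h)_{X_k} = \int_{\T^3}(Q(f,g), h\<v\>^{2k})_{L^2_v}\,dx + \sum_{|\alpha|=2}\int_{\T^3}(\partial^\alpha_x Q(f,g), \partial^\alpha_x h\,\<v\>^{2(k-8)})_{L^2_v}\,dx,
\]
and then to estimate each piece by the now-familiar recipe of a main term plus a commutator, passing finally to $L^\infty_x$ via Sobolev embedding $H^2_x \hookrightarrow L^\infty_x$.

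For the $|\alpha|=0$ contribution I would write
\[
(Q(f,g), h\<v\>^{2k})_{L^2_v} = (Q(f, g\<v\>^k), h\<v\>^k)_{L^2_v} + \bigl(\<v\>^k Q(f,g) - Q(f, g\<v\>^k),\, h\<v\>^k\bigr)_{L^2_v}.
\]
The main term is controlled by the standard non-cutoff upper bound (Lemma \ref{L12}), giving $\|f\|_{L^2_5}\|g\|_{H^s_{k+\gamma/2+2s}}\|h\|_{H^s_{k+\gamma/2}}$, which accounts exactly for the $\bar{Y}_k$ weight on $g$ and the $Y_k$ weight on $h$. The commutator is controlled by Lemma \ref{L23}, whose right-hand side mixes $\|f\|_{L^2_{14}}$ with lower-order weighted Sobolev norms of $g$ and $h$. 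To produce the second, ``swapped'' term in the statement (with $\|g\|_{L^2_{14}}$ and $\|f\|_{Y_k}$), I would apply the complementary upper bound obtained by using the alternative form of Lemma \ref{L12} in which the regularity burden is placed on $f$ rather than on $g$; this is available in the non-cutoff literature via the pre-post collision identity $(Q(f,g), h)_{L^2_v} = \int B f_*\,g\,(h'-h)\,dv\,dv_*\,d\sigma$ combined with Cauchy--Schwarz and the cancellation lemma, exactly parallel to the way $\|g\|_{L^2_{14}}$ appears in Lemma \ref{L23}.

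For the $|\alpha|=2$ piece I would use Leibniz, $\partial^\alpha_x Q(f,g) = \sum_{\alpha_1+\alpha_2=\alpha}C_{\alpha_1,\alpha_2}\,Q(\partial^{\alpha_1}_x f, \partial^{\alpha_2}_x g)$, together with the Fourier-in-$x$ decomposition and the convolution-type inequality
\[
\sum_{p,q\in\Z^3}|A_p B_{q-p} C_q| \lesssim \Bigl(\sum_p\<p\>^{2a}|A_p|^2\Bigr)^{1/2}\Bigl(\sum_p\<p\>^{2b}|B_p|^2\Bigr)^{1/2}\Bigl(\sum_q|C_q|^2\Bigr)^{1/2},\quad a+b>3/2,
\]
exactly as in the proof of Lemma \ref{Qspatial}. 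Choosing $(a,b)=(1,(3-s)/4)$ when $|\alpha_1|=|\alpha_2|=1$ and $(a,b)=(0,(7-s)/4)$ when $|\alpha_1|=2$, $|\alpha_2|=0$, I would then apply the same main-part-plus-commutator splitting to each Fourier mode and invoke Lemma \ref{L12}/Lemma \ref{L23}. The weights in $\bar{Y}_k$, $Y_k$ have been designed so that $\|\<\cdot\>^{k-4|\alpha|+s-1}\hat g(p)\|_{H^s_{\gamma/2}}$ and its analogues sum to precisely $\|g\|_{\bar{Y}_k}$ or $\|f\|_{Y_k}$ after the weight bookkeeping $k-4b+s-1$ (or $k-4(b+1)+s-1$).

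The main technical obstacle is the production of the swapped upper bound that carries $\|g\|_{L^2_{14}}$ rather than a Sobolev norm of $g$; everything else is a mechanical repetition of the bookkeeping already carried out in Lemmas \ref{L21}, \ref{L23}, and \ref{Qspatial}. Once the first estimate is established, the $Z_k$-bound on $Q(f,g)$ is immediate: by definition of the dual norm with respect to the pairing $(\cdot,\cdot)_{X_k}$,
\[
\|Q(f,g)\|_{Z_k} = \sup_{\|h\|_{Y_k}\le 1}\bigl|(Q(f,g), h)_{X_k}\bigr| \le C_k\bigl(\|f\|_{H^2_x L^2_{14}}\|g\|_{\bar Y_k} + \|g\|_{H^2_x L^2_{14}}\|f\|_{Y_k}\bigr),
\]
which is exactly the stated bound.
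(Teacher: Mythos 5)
Your overall architecture (Leibniz in $x$, Fourier-in-$x$ summation for $|\alpha|=2$, and for the pieces with no $x$-derivative on $f$ a main term plus a $\langle v\rangle^{k}$-weight commutator, finished by duality) is the paper's route, but the step you single out as "the main technical obstacle" is a genuine gap. You propose to generate the second term $\|g\|_{H^2_xL^2_{14}}\Vert f\Vert_{Y_k}\Vert h\Vert_{Y_k}$ from a "complementary" version of Lemma \ref{L12} in which $g$ is measured only in $L^2_{14}$ while the regularity/weight burden is shifted to $f$. No such bound exists for the main term: in $(Q(f,g),h\langle v\rangle^{2k})_{L^2_v}$ the full weight $\langle v\rangle^{2k}$ sits at the velocity shared by $g$ and $h$, a test function in $Y_k$ can absorb at most weight $k+\gamma/2$ of it, and the remaining $\approx k$ must stay on $g$ — it cannot be transferred to $f$, which lives at $v_*$. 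Concretely, take $g=h$ a bump at $|v|\sim R$ and $f$ a bump near the origin: the left-hand side is of size $R^{2k}$ while $\|g\|_{L^2_{14}}\|f\|_{H^s_{k+\gamma/2}}\|h\|_{H^s_{k+\gamma/2}}\sim R^{k+14+\gamma/2}$, so the "swapped" main-term estimate fails for large $k$. This is precisely why the statement is asymmetric, with $\bar{Y}_k$ appearing only on $g$.

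The good news is that the step is also unnecessary, and once it is deleted your plan coincides with the paper's proof. The swapped product is already produced by the lemmas you cite: Lemma \ref{L23}, applied with first argument $f$ and second argument $\partial^\alpha_x g$, carries on its right-hand side terms with the $L^2_{14}$ norm on either argument (so the weight-commutator piece yields both $\|f\|_{H^2_xL^2_{14}}\Vert g\Vert_{Y_k}\Vert h\Vert_{Y_k}$ and $\|g\|_{H^2_xL^2_{14}}\Vert f\Vert_{Y_k}\Vert h\Vert_{Y_k}$ after taking $\sup_x$ of the low-weight factor via $H^2_x\hookrightarrow L^\infty_x$), and Lemma \ref{Qspatial} bounds the Leibniz terms where $x$-derivatives fall on $f$ by $C_k(\|f\|_{H^2_xL^2_{14}}\|g\|_{Y_{k+s-1}}+\|g\|_{H^2_xL^2_{14}}\|f\|_{Y_{k+s-1}})\|h\|_{Y_k}$ plus $X_{k+\gamma/2}$-terms, all dominated by the stated right-hand side since $s<1$ and $X_{k+\gamma/2}\subset$ the $Y_k$-control. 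The only place $\bar{Y}_k$ is needed is the undifferentiated main term $(Q(f,\langle v\rangle^{k-4|\alpha|}\partial^\alpha_x g),\langle v\rangle^{k-4|\alpha|}\partial^\alpha_x h)$, which you already bound correctly via Lemma \ref{L12}; the concluding duality identity for $Z_k$ is fine as written.
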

\begin{proof}
By definition of $X_k$ (see (\ref{X_k})), we have that
\beno
(Q(f,g),h)_{X_k}&=&\sum_{|\al|=0,2}(Q(f,\pa^\al_xg),\pa^\al_xh\<v\>^{2k-8|\al|})+\sum_{|\al_1|\neq0}(Q(\pa^{\al_1}_xf,\pa^{\al_2}_xg),\pa^\al_xh\<v\>^{2k-8|\al|}).
\eeno
Thanks to Lemma \ref{Qspatial}, the second term on righthand side can be bounded by
\beno
C_k(\|f\|_{H^2_xL^2_{14}}\|g\|_{Y_k}\|h\|_{Y_k}+\|g\|_{H^2_xL^2_{14}}\|f\|_{Y_k}\|h\|_{Y_k}).
\eeno
We only need to handle the first term. Observe that
\beno
&&\sum_{|\al|=0,2}(Q(f,\pa^\al_xg),\pa^\al_xh\<v\>^{2k-8|\al|})_{L^2_{x,v}}=\sum_{|\al|=0,2}(\<v\>^{k-4|\al|}Q(f,\<v\>^{k-4|\al|}\pa^\al_xg)\\
&&\qquad-Q(f,\<v\>^{k-4|\al|}\pa^\al_xg),\<v\>^{k-4|\al|}\pa^\al_xh)_{L^2_{x,v}}+\sum_{|\al|=0,2}(Q(f,\<v\>^{k-4|\al|}\pa^\al_xg),\<v\>^{k-4|\al|}\pa^\al_xg)_{L^2_{x,v}}:=\mathcal{S}_1+\mathcal{S}_2.
\eeno
By Lemma \ref{L12}, Lemma \ref{L23} and the proof of Lemma \ref{Qspatial}, we have
$|\mathcal{S}_1|\leq C_k(\|f\|_{Y_k}\|g\|_{H^2_xL^2_{14}}\|h\|_{Y_k}+\|g\|_{Y_k}\|f\|_{H^2_xL^2_{14}}\|h\|_{Y_k})$ and
 $|\mathcal{S}_2|\leq \|f\|_{H^2_xL^2_{14}}\|g\|_{\bar{Y}_k}\|h\|_{Y_k}.$ We complete the proof of this lemma by combining the above estimates.
\end{proof}

 To prove the global results, we introduce conjugate operator of $L$ in $L^2$. For any constant $l \ge 0$ we define
\beno
L_l f := \<v\>^l L(\<v\>^{-l} f  ) = -\<v\>^l( Q( {\<v\>^{-l}}{f}, \mu ) -Q (\mu,  {\<v\>^{-l}}{f})),
\eeno
and its dual $L^*_l$
\beno
(L_l f, g)_{L^2_v} = (f, L^{*}_l g)_{L^2_v}.
\eeno
We have that
\beno
(L^*_l f, g )_{L^2_k} &=& (L^*_l f,  \langle v \rangle^{2k} g )_{L^2_v} =  (L_l( \langle v \rangle^{2k} g), f ) =(L({\<v\>^{-l}} \langle v \rangle^{2k} g) , \<v\>^l f)_{L^2_v} \\
&=&-(Q (\mu, g \langle v \rangle^{2k-l}), \langle v \rangle^{l} f))_{L^2_v}  - (Q( g \langle v \rangle^{2k-l}  , \mu), \langle v \rangle^{l} f)_{L^2_v}.
\eeno
Similar to Corollary \ref{L} and \ref{c2.2}, we have

\begin{col}\label{c4.1}
 Let $\ga\in(-3,0]$, $k, k_0 \ge 0$ such that $l-k_0\geq l-k\geq 22$ and $f \in H^s_{k+\gamma/2}$. $-L^*_l=A_l+B^*_l$ with $A_l=M_l\chi_{R_l}$ and $B^*_l=-L^*_l-M_l\chi_{R_l}$ for some $M_l, R_l>0$ large, $\mathcal{S}_{L^*_l}$ and $\mathcal{S}_{B^*_l}$ are semigroups generated by $-L^*_l$ and $B^*_l$ respectively. There exist constants $c_0, C_
{k, l} >0$ such that
\beno
( L^*_l f, f \langle v \rangle^{2k} )_{L^2_xL^2_v}\ge c_0 \Vert f \Vert_{L^2_xH^s_{k+\gamma/2}}^2- C_{k, l}  \Vert f \Vert_{L^2_{x,v}}^2.
\eeno
Moreover, we have
$\|\mathcal{S}_{B^*_l}(t)\|_{H^2_x L^2_k\rightarrow H^2_xL^2_{k_0}}\leq \vartheta(t;k_0,k)$
with $\vartheta(t;k_0,k)$ defined in $\mathrm{Cor.}$ \ref{c2.2}.
\end{col}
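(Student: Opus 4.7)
\textbf{Plan for proving Corollary \ref{c4.1}.} The guiding idea is that $L^*_l$ can be reduced to $L$ itself through the duality identity already displayed in the excerpt, combined with a weight-shift substitution, so that Corollary \ref{L} applied with weight $l-k\geq 22$ supplies the coercivity. Concretely, using
\[
(L^*_l f, f\langle v\rangle^{2k})_{L^2_v} = -\bigl(Q(\mu, f\langle v\rangle^{2k-l}) + Q(f\langle v\rangle^{2k-l}, \mu),\ \langle v\rangle^l f\bigr)_{L^2_v},
\]
I set $h := \langle v\rangle^{2k-l} f$, so that $\langle v\rangle^l f = h\langle v\rangle^{2(l-k)}$, and the right-hand side rewrites as $(Lh, h\langle v\rangle^{2(l-k)})_{L^2_v}$. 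Corollary \ref{L} (with the weight parameter $l-k\geq 22$) then gives
\[
(L^*_l f, f\langle v\rangle^{2k})_{L^2_v} \geq c_0 \|h\|_{H^s_{l-k+\gamma/2}}^2 - C_{l-k}\|h\|_{L^2_v}^2.
\]

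The first term is equivalent to $c_0\|f\|_{H^s_{k+\gamma/2}}^2$ up to a commutator $[\langle D\rangle^s,\langle v\rangle^{2k-l}]$ of one lower order, which can be absorbed by interpolation since the total weight $(2k-l)+(l-k+\gamma/2)=k+\gamma/2$ matches. The second term equals $C\|f\|_{L^2_{2k-l}}^2$; because $l-k\geq 22$ forces $2k-l < k+\gamma/2$ (for any $\gamma\in(-3,0]$), the standard interpolation $\|f\|_{L^2_{2k-l}}^2 \leq \varepsilon\|f\|_{H^s_{k+\gamma/2}}^2 + C_\varepsilon \|f\|_{L^2_v}^2$ lets me absorb it into the gain. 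Integrating over $x\in\TT^3$ (the transport $v\cdot\nabla_x$ plays no role since $L^*_l$ acts only in $v$) yields the claimed lower bound on $(L^*_l f, f\langle v\rangle^{2k})_{L^2_xL^2_v}$.

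The semigroup estimate on $\mathcal{S}_{B^*_l}$ then follows the template of Corollary \ref{c2.2} line by line. From the coercivity just proved, together with $\int_{\R^3}|f|^2 dv\leq \int_{|v|\leq R_l}|f|^2 dv + R_l^{-2}\|f\|_{H^s_2}^2$, one chooses $M_l,R_l$ large (depending on $l-k_0$) to obtain $(B^*_l f, f)_{L^2_k}\lesssim -\|f\|_{H^s_{k+\gamma/2}}^2$. For $\gamma=0$ this is immediately an exponential-decay differential inequality; for $\gamma\in(-3,0)$, one instead has
\[
\tfrac{d}{dt}\|\mathcal{S}_{B^*_l}(t)f\|_{L^2_{k_0}}^2 \leq -c\langle R\rangle^\gamma \|\mathcal{S}_{B^*_l}(t)f\|_{L^2_{k_0}}^2 + C\langle R\rangle^{2(k-k_0)+\gamma}\|\mathcal{S}_{B^*_l}(t)f\|_{L^2_k}^2,
\]
integrated and then optimized over $R$ (choosing $\langle R\rangle$ as in Corollary \ref{c2.2}) to recover precisely the rate $\vartheta(t;k_0,k)$.

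The main technical obstacle will be the weighted Sobolev identification $\|\langle v\rangle^{2k-l} f\|_{H^s_{l-k+\gamma/2}} \sim \|f\|_{H^s_{k+\gamma/2}}$ up to terms absorbable by the dissipation plus $\|f\|_{L^2_v}$: the exponent $2k-l$ need not have a fixed sign, so the commutator between $\langle D\rangle^s$ and the weight must be bounded by a careful, symbol-calculus argument. Beyond this point the proof is essentially bookkeeping, since every other ingredient already appears in Corollaries \ref{L} and \ref{c2.2}.
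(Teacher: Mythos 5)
Your proposal is correct and follows essentially the same route as the paper: the duality identity with the substitution $g=\langle v\rangle^{2k-l}f$ reduces the bilinear form to $(Lg,\,g\langle v\rangle^{2(l-k)})_{L^2_v}$, to which the coercivity at weight $l-k\geq 22$ (Corollary \ref{L}, i.e. Lemma \ref{L21} plus Theorem \ref{T24}) applies, and the semigroup bound repeats the argument of Corollary \ref{c2.2} verbatim. The only remark is that your anticipated "main technical obstacle" is vacuous: with the paper's convention $\|f\|_{H^m_l}=\|\langle D\rangle^m\langle\cdot\rangle^l f\|_{L^2}$, one has exactly $\|\langle v\rangle^{2k-l}f\|_{H^s_{l-k+\gamma/2}}=\|f\|_{H^s_{k+\gamma/2}}$ and $\|\langle v\rangle^{2k-l}f\|_{L^2}=\|f\|_{L^2_{2k-l}}$, so no commutator estimate is needed.
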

\begin{proof}
Observe that
$(Q(\mu,f\<v\>^{2k-l}),\<v\>^lf)_{L^2_v}=(Q(\mu,g),\<v\>^{2(l-k)}g)_{L^2_v}$ and  $(Q(f\<v\>^{2k-l},\mu),\<v\>^lf)_{L^2_v}=(Q(g,\mu),\<v\>^{2(l-k)}g)_{L^2_v}$
with $g:=\<v\>^{2k-l}f$. We may easily get the desired results by Lemma \ref{L21}, Theorem \ref{T24} and the same argument used in $\mathrm{Cor.}$ \ref{c2.2}.
\end{proof}

The following lemma is the commutator between $L^*$ and $\<D\>^s$:
\begin{lem}\label{com}
For any $k_0\geq0$, we have
\beno
 |\mathcal{A}|:=|([-L_{k_0}^* , \langle D \rangle^s] f, \langle D \rangle^s f )_{L^2_v}| \le \epsilon \Vert f \Vert_{H^{2s}_{\gamma/2}}^2  + C_{\epsilon,k_0} \Vert f \Vert^2_{H^s_{10+\gamma/2}},
\eeno
 where $\epsilon$ can be sufficiently small. It  also holds true if we replace $L^*_{k_0}$ by $B^*_{k_0}$.
\end{lem}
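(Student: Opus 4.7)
The plan is to unfold $L^*_{k_0}$ in terms of the bilinear Boltzmann operator $Q$ and then reduce the problem to commutators of the form $\langle D\rangle^s Q(\cdot,\cdot)-Q(\cdot,\langle D\rangle^s\cdot)$, to which Theorem \ref{le1.24} (with $\ell=s$) applies directly. First, using the self-adjointness of $\langle D\rangle^s$ on $L^2_v$, I rewrite
\[
\mathcal{A}=-(L_{k_0}^*\langle D\rangle^s f,\langle D\rangle^s f)_{L^2_v}+(L_{k_0}^*f,\langle D\rangle^{2s}f)_{L^2_v},
\]
and then expand each pairing via the definition $(L^*_{k_0}u,v)_{L^2_v}=-(Q(\mu,\langle v\rangle^{-k_0}v),\langle v\rangle^{k_0}u)_{L^2_v}-(Q(\langle v\rangle^{-k_0}v,\mu),\langle v\rangle^{k_0}u)_{L^2_v}$. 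After moving the smooth polynomial weights $\langle v\rangle^{\pm k_0}$ inside the $Q$-brackets (the symbolic error $[\langle v\rangle^{\pm k_0},\langle D\rangle^s]$ is a zero-th order smoothing whose contribution is harmless), $\mathcal{A}$ becomes a finite sum of terms of the shape $(\langle D\rangle^s Q(g,h)-Q(g,\langle D\rangle^s h),\langle D\rangle^s\tilde f)_{L^2_v}$ with $g\in\{\mu,\mu\langle v\rangle^{k_0}\}$ and $h,\tilde f\in\{f,\langle v\rangle^{\pm k_0}f\}$, plus a residual in which $\langle v\rangle^{\pm k_0}$ generate at worst $H^s_{-N}$ terms.

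Second, I apply Theorem \ref{le1.24} with $\ell=s$ to each of these commutators. Since one slot is always a Gaussian-weighted function, its $L^1_\cdot$ and $L^2_\cdot$ norms (with any polynomial weight) are bounded by $C_{k_0}$, and the remaining regularity on the $f$-factors is at most $H^{s+a}$ with $a+b\le (2s-1)^+$ (or $a=b=0$ when $2s<1/2$, and $a+b=2s-1+\delta$ when $2s=1$). Picking the parameters $\omega_i$ with $\omega_i+\omega_{i+1}=\gamma+2s-1$ symmetrically and choosing $a,b,c_i,d_i\le s-1/2$ symmetrically yields estimates of the form
\[
|\mathcal{A}|\le C_{k_0}\big(\|f\|_{H^{s+a_1}_{w_1}}\|f\|_{H^{s+b_1}_{w_2}}+\dots\big)+C_{k_0}\|f\|_{H^s_{-N}}^2,
\]
where $a_i+b_i<s$ and the polynomial weights $w_1,w_2$ are bounded by an absolute constant (say $\le 9$) thanks to $\gamma+2s>-1$ and the decay of $\mu$ on the $g$-slot.

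Third, I interpolate. For any of the mixed-index products above, standard weighted Gagliardo--Nirenberg between $H^s_{10+\gamma/2}$ and $H^{2s}_{\gamma/2}$ gives
\[
\|f\|_{H^{s+a}_{w}}\le \|f\|_{H^s_{10+\gamma/2}}^{1-a/s}\|f\|_{H^{2s}_{\gamma/2}}^{a/s}
\]
as soon as $w\le (1-a/s)(10+\gamma/2)+(a/s)(\gamma/2)=\gamma/2+10(1-a/s)$, which is amply satisfied since $w\le 9$ and $a<s$. Young's inequality with exponent $\epsilon$ then produces the target bound $\epsilon\|f\|_{H^{2s}_{\gamma/2}}^2+C_{\epsilon,k_0}\|f\|_{H^s_{10+\gamma/2}}^2$. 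The case of $B^*_{k_0}=-L^*_{k_0}-M_{k_0}\chi_{R_{k_0}}$ follows at once, since $[M_{k_0}\chi_{R_{k_0}},\langle D\rangle^s]$ is a compactly-supported smoothing operator whose contribution to $\mathcal{A}$ is controlled by $C_{k_0}\|f\|_{L^2_v}^2$.

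The main obstacle is the careful bookkeeping of weight and derivative indices when invoking Theorem \ref{le1.24} in the regime $2s\ge 1/2$, since there one must accept a derivative loss up to $2s-1+\delta$ on the $f$-factors and still maintain $a+b<s$ so that the interpolation step truly gains a small factor in front of $\|f\|_{H^{2s}_{\gamma/2}}^2$; the slack in the weight index (the $10$ in the statement is comfortably larger than what the commutator demands) is exactly what buys the room to trade regularity for weight via Young's inequality, without tracking sharp constants.
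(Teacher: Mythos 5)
Your overall architecture (split $\mathcal{A}$ into the two pairings, commute the weights $\langle v\rangle^{\pm k_0}$ with $\langle D\rangle^s$ via the pseudo-differential lemma, invoke Theorem \ref{le1.24} with $\ell=s$ for the genuine $\langle D\rangle^s$--$Q$ commutator, then interpolate and use Young's inequality) coincides with the paper's treatment of the $Q(\mu,\cdot)$ part. But your reduction step contains a genuine error: $L^*_{k_0}$ produces terms of \emph{both} types, $Q(\mu,\langle v\rangle^{-k_0}\,\cdot\,)$ and $Q(\langle v\rangle^{-k_0}\,\cdot\,,\mu)$, and for the second family the function built from $f$ sits in the \emph{first} slot of $Q$ while $\mu$ sits in the second. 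After you move the weights around, the difference of the two pairings in that family is (up to harmless weight commutators) of the form $(\langle D\rangle^s Q(G,\mu)-Q(\langle D\rangle^s G,\mu),\langle v\rangle^{k_0}f)$ with $G=\langle v\rangle^{-k_0}\langle D\rangle^s f$, i.e.\ a commutator of $\langle D\rangle^s$ with $Q$ in its \emph{first} argument. This is not of the shape covered by Theorem \ref{le1.24}, so your claim that every term can be written as $(\langle D\rangle^s Q(g,h)-Q(g,\langle D\rangle^s h),\langle D\rangle^s\tilde f)$ with $g\in\{\mu,\mu\langle v\rangle^{k_0}\}$ is false for exactly half of the operator, and the step where you bound the $g$-norms by $C_{k_0}$ because "$g$ is Gaussian" breaks down there. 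The repair is that no cancellation is needed for these terms at all: since $\mu$ occupies the second slot, each pairing can be estimated separately by the upper bound of Lemma \ref{L12} (giving $C_{k_0}\|f\|_{H^s_{\gamma/2+2s+2}}\|f\|_{H^s_{\gamma/2}}+C_{k_0}\|f\|_{H^{2s}_{\gamma/2}}\|f\|_{L^2_{2+2s+\gamma/2}}$), and interpolation then yields the $\epsilon$--$C_{\epsilon,k_0}$ bound; this is precisely how the paper disposes of its term $\mathcal{A}_1$ before reserving Theorem \ref{le1.24} for the $Q(\mu,\cdot)$ piece only.

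Two smaller points. First, your symmetric choice "$c_i,d_i\le s-1/2$" is impossible, since Theorem \ref{le1.24} imposes $c_i+d_i=2s-1/2$; the correct symmetric choice is $c_i=d_i=s-1/4$, which still keeps each factor strictly below $H^{2s}$ (as $s<1$) so the interpolation/Young step goes through — worth fixing, since for $s>1/2$ an unbalanced choice such as $c_i=2s-1/2$ would push one factor above $2s$ derivatives and the argument would fail. Second, your treatment of $B^*_{k_0}$ via the smoothing commutator $[M_{k_0}\chi_{R_{k_0}},\langle D\rangle^s]$ is fine and consistent with the paper.
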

\begin{proof}   We assume $2s\ge1$ since the cases $2s<1$ can be derived similarly. By definition,  one has
\beno
 \mathcal{A}=
\big(-\<v\>^{k_0}\<D\>^sf,L(\<v\>^{-k_0}\<D\>^sf))+(\<v\>^{k_0}f,L(\<v\>^{-k_0}\<D\>^{2s}f)\big):=\mathcal{A}_1+\mathcal{A}_2,
\eeno where  $\mathcal{A}_1=(Q(\<v\>^{-k_0}\<D\>^sf,\mu),\<v\>^{k_0}\<D\>^sf) -(Q(\<v\>^{-k_0}\<D\>^{2s}f,\mu),\<v\>^{k_0}f)$ and
$\mathcal{A}_2=(Q(\mu,\<v\>^{-k_0}\<D\>^sf),\\ \<v\>^{k_0}\<D\>^sf)-(Q(\mu,\<v\>^{-k_0}\<D\>^{2s}f),\<v\>^{k_0}f)$.

\underline{\it Estimate of $\mathcal{A}_1$.} By Lemma \ref{L12}, we deduce that
\beno&&
|\mathcal{A}_{1}|\ls\|\<\cdot\>^{-k_0}\<D\>^sf\|_{L^2_{k_0+\gamma/2+2s+2}}\|\mu\|_{H^{2s}_{k_0+2s+\gamma/2}}\|\<\cdot\>^{k_0}\<D\>^sf\|_{L^2_{-k_0+\gamma/2}} +\|\<\cdot\>^{-k_0}\<D\>^{2s}f\|_{L^2_{k_0+\gamma/2}}\|\mu\|_{H^{2s}_{k_0-2+\gamma/2}}\\&&\times\|\<\cdot\>^{k_0} f\|_{L^2_{-k_0+\gamma/2+2s+2}}\ls C_{k_0}\|f\|_{H^s_{\gamma/2+2s+2}}\|f\|_{H^s_{\gamma/2}}+C_{k_0}\|f\|_{H^{2s}_{\gamma/2}}\|f\|_{L^2_{2+2s+\gamma/2}}.
\eeno
By interpolation, we obtain that $|\mathcal{A}_{1}|\le\epsilon\|f\|^2_{H^{2s}_{\gamma/2}}+C_{\epsilon,k_0}\|f\|^2_{H^s_{10+\gamma/2}}.$

\medskip
\underline{\it Estimate of $\mathcal{A}_2$.} We first have that
\beno
|\mathcal{A}_2|
&\le&|(Q(\mu,[\<v\>^{-k_0},\<D\>^s]\<D\>^sf),\<v\>^{k_0}f)|+|(Q(\mu,\<v\>^{-k_0}\<D\>^s)f),[\<v\>^{k_0},\<D\>^s]f)|\\
&&+|(Q(\mu,\<D\>^s\<v\>^{-k_0}\<D\>^sf),\<v\>^{k_0}f)-(\<D\>^sQ(\mu,\<v\>^{-k_0}\<D\>^sf),\<v\>^{k_0}f)|\\
&:=&\mathcal{A}_{21}+\mathcal{A}_{22}+\mathcal{A}_{23}.
\eeno
 For $\mathcal{A}_{21}$, thanks to Lemma \ref{L12} and Lemma \ref{le1.2}, we have
\beno
\mathcal{A}_{21}&\lesssim&\|\mu\|_{L^2_{\gamma/2+2s+2+k_0}}\|[\<v\>^{-k_0},\<D\>^s]\<D\>^sf\|_{L^2_{k_0+2s+\gamma/2}}\|\<v\>^{k_0}f\|_{H^{2s}_{-k_0+\gamma/2}}\\
&\lesssim& C_{k_0}\|f\|_{H^{2s-1}_{2s-1+\gamma/2}}\|f\|_{H^{2s}_{\gamma/2}}\lesssim \epsilon\|f\|^2_{H^{2s}_{\gamma/2}}+C_{\epsilon,k_0}\|f\|^2_{H^s_{10+\gamma/2}}.
\eeno
For $\mathcal{A}_{22}$, similar to the argument of $\mathcal{A}_{21}$, we get that
\beno
\mathcal{A}_{22}&\lesssim& \|\mu\|_{L^2_{k_0+2+\gamma/2}}\|\<v\>^{-k_0}\<D\>^sf\|_{H^s_{k_0+\gamma/2}}\|[\<v\>^{k_0},\<D\>^s]f\|_{H^s_{-k_0+2s+\gamma/2}}\\
&\lesssim& C_{k_0}\|f\|_{H^{2s}_{\gamma/2}}\|f\|_{H^{2s-1}_{2s-1+\gamma/2}}\lesssim \epsilon\|f\|_{H^{2s}_{\gamma/2}}+C_{\epsilon,k_0}\|f\|_{H^s_{10+\gamma/2}}.
\eeno
\medskip
 To estimate  $\mathcal{A}_{23}$, we first observe that $\mathcal{A}_{23}$ is a commutation between $\<D\>^s$ and $Q$. From Theorem \ref{le1.24} with functions $\mu,\<v\>^{-k_0}\<D\>^sf$ and $\<D\>^{-s}\<v\>^{k_0}f$,  we have
\beno
\mathcal{A}_{23}&\ls&C_{N}(\|\mu\|_{L^1_{(\ga+2s)^++(-\omega_1)^++(-\omega_2)^++\de}}\|f\|_{H^{2s+a}_{\omega_1-k_0}}\|f\|_{H^{b}_{\omega_2+k_0}}+\|\mu\|_{L^2_{2+(-\omega_3)^++(-\omega_4)^+}}\|f\|_{H^{2s+c_1}_{\omega_3-k_0}}\|f\|_{H^{d_1}_{\omega_4+k_0}}\\
&&+\|f\|_{H^s_{(-\omega_5)^++(-\omega_6)^+-k_0}}\|\mu\|_{H^{s+c_2}_{\omega_5}}\|f\|_{H^{d_2}_{\omega_6+k_0}}+\|f\|_{H^{2s}_{(-\om_7)^++(-\om_8)^+-k_0}}\|\mu\|_{H^{c_3}_{\om_7}}\|f\|_{H^{d_3}_{\om_8+k_0}}\\
&&+(\|\mu\|_{L^1_{(\ga+2s)^++(-\omega_1)^++(-\omega_2)^++\de}}+\|\mu\|_{L^2_{2+(-\omega_3)^++(-\omega_4)^+}})\|f\|^2_{H_{-N}^{-N}}\\
&&+(\|f\|_{H^s_{(-\omega_5)^++(-\omega_6)^+-k_0}}+\|f\|_{H^{2s}_{(-\omega_7)^++(-\omega_8)^+-k_0}})\|\mu\|_{H_{-N}^{-N}}\|f\|_{H_{-N}^{-N}}).
\eeno
By choosing $N$ large enough, $\om_1=\om_3=k_0+\f{\ga}2,a=c_1=0$, $\om_6=-k_0-\f{\ga}2,d_2=0$ and $\om_8=-k_0-\f{\ga}2,d_3=0$, we get
\beno
\mathcal{A}_{23}&\lesssim& C_{k_0}(\|f\|_{H^{2s}_{\gamma/2}}\|f\|_{H^{2s-1}_{2s+\gamma/2-1}}+\|f\|_{H^{2s}_{\gamma/2}}\|f\|_{H^{2s-1/2}_{2s+\gamma/2-1}}+\|f\|_{H^s_{\ga/2}}\|f\|_{L^2_{-\ga/2}}+\|f\|_{H^{2s}_{\ga/2}}\|f\|_{L^2_{-\ga/2}})\\
&\lesssim& \epsilon\|f\|^2_{H^{2s}_{\gamma/2}}+C_{\epsilon,k_0}\|f\|^2_{H^s_{10+\gamma/2}},
\eeno
where we use interpolation inequalities.
The lemma follows by combining all above estimates.
\end{proof}
\smallskip

\begin{lem}\label{le4.2}
Let $\ga\in(-3,0]$, $\tilde{k}_0=13+22$, $\tilde{X}_0=H^2_xL^2_{\tilde{k}_0},\tilde{Y}_0=H^2_xH^s_{\tilde{k}_0+\ga/2},$ and $\tilde{Z}_0=H^2_xH^{-s}_{\tilde{k}_0-\ga/2}$ be the dual space of $\tilde{Y}_0$ with respect to $\tilde{X}_0$ . Then we have
$\Vert \mathcal{S}_B(t) f\Vert_{H^2_xL^2_v} \le\vartheta_1(t) \Vert f \Vert_{\tilde{Z}_0}$,
where $\vartheta_1(t)$ satisfies
\beno
\int_0^\infty \vartheta_1(t) \vartheta(t) dt < +\infty
\eeno
with $\vartheta(t)=\vartheta(t;10,13)$.  Moreover, by Duhamel's formula, it holds that
 $\Vert \mathcal{S}_L(t)\Pi^{\perp} f\Vert_{H^2_xL^2_v} \le\vartheta_1(t) \Vert f \Vert_{\tilde{Z}_0}$.

\end{lem}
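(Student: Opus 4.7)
The plan is to combine a H\'erau-type short-time smoothing for $\mathcal{S}_B$ with the large-time weighted decay of Corollary~\ref{c2.2}. The weight $\tilde{k}_0 = 13 + 22$ is tailored to this: the $22$ is the coercivity threshold $l - k \ge 22$ of Corollary~\ref{c4.1} for $L^*_l$, while $13$ and $10$ are the upper/lower weights of the target decay $\vartheta(t; 10, 13)$, which is exactly what is needed to absorb the lower-order term $\|\cdot\|_{H^s_{10 + \gamma/2}}$ appearing in the commutator bound of Lemma~\ref{com}.

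First, for $g \in H^2_x L^2_v$ and $u(\tau) = \mathcal{S}_{B^*_{\tilde{k}_0}}(\tau) g$, I would build the time-weighted Lyapunov functional
\[
\Phi(\tau) = \|u\|_{H^2_x L^2_v}^2 + \alpha\, \tau\, \|\langle D_v\rangle^s u\|_{H^2_x L^2_v}^2,
\]
differentiate in $\tau$, and apply the $H^s$-coercivity from Corollary~\ref{c4.1} to the first term; for the second term I commute $\langle D_v\rangle^s$ past $B^*_{\tilde{k}_0}$ and use Lemma~\ref{com}, whose loss $C_\epsilon \|u\|_{H^s_{10+\gamma/2}}^2$ is absorbed by the stronger coercivity available at weight $k = 13$. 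Choosing $\alpha, \epsilon$ small and closing a Gr\"onwall loop yields $\Phi(\tau) \le C \|g\|_{H^2_x L^2_v}^2$ on $(0, 1]$, hence the short-time smoothing $\|\mathcal{S}_{B^*_{\tilde{k}_0}}(\tau) g\|_{H^2_x H^s_v} \le C\tau^{-1/2} \|g\|_{H^2_x L^2_v}$. Dualising in the $\tilde{X}_0$ pairing (under which $\tilde{Z}_0 = \tilde{Y}_0^*$ by definition) then translates this into $\|\mathcal{S}_B(\tau) f\|_{H^2_x L^2_{\tilde{k}_0}} \le C\tau^{-1/2} \|f\|_{\tilde{Z}_0}$ for $\tau \in (0, 1]$.

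For $t \ge 1$ I bootstrap via $\mathcal{S}_B(t) = \mathcal{S}_B(t - \tfrac{1}{2}) \circ \mathcal{S}_B(\tfrac{1}{2})$: the first factor lands in $H^2_x L^2_{\tilde{k}_0} \hookrightarrow H^2_x L^2_{13}$ by Step~1, and the second factor contributes the decay $\vartheta(t - \tfrac{1}{2}; 10, 13)$ from Corollary~\ref{c2.2} into $H^2_x L^2_{10} \hookrightarrow H^2_x L^2_v$. Glueing the two regimes defines $\vartheta_1$ with $\vartheta_1(t) \sim t^{-1/2}$ near $0$ and $\vartheta_1(t) \sim \vartheta(t; 10, 13)$ at infinity; integrability of $\vartheta_1 \vartheta$ follows from $\int_0^1 t^{-1/2}\, dt < \infty$ near the origin, the exponential decay of $\vartheta$ when $\gamma = 0$, and (for $\gamma \in (-3, 0)$) the freedom to pick $k_*$ close to $10$ in $\vartheta(t; 10, 13) \lesssim \langle t\rangle^{-(13 - k_*)/|\gamma|}$, so that the product exponent exceeds $1$ for every admissible $\gamma$. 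The extension to $\mathcal{S}_L \Pi^\perp$ is then immediate from the Duhamel expansion $\mathcal{S}_L \Pi^\perp = \Pi^\perp \mathcal{S}_B + \mathcal{S}_L \Pi^\perp * (A \mathcal{S}_B)$, the boundedness of $A$, and a Young convolution in time, exactly as at the end of the proof of Corollary~\ref{c2.2}.

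The main obstacle is the Lyapunov step: one must verify that after differentiating $\alpha\,\tau\, \|\langle D_v\rangle^s u\|^2$, the commutator loss from Lemma~\ref{com} and the remainder $C\|u\|_{H^s_{10 + \gamma/2}}^2$ can be absorbed into the $H^s_{13 + \gamma/2}$ coercivity of Corollary~\ref{c4.1} while preserving the time-weight structure and the $H^2_x$ derivatives (which pass through $L^*_{\tilde{k}_0}$ trivially since the latter acts only in $v$). This is precisely what the identity $\tilde{k}_0 = 22 + 13$ is designed to enforce: it leaves three units of weight between the loss ($10$) and the coercivity ($13$), matching the gap in $\vartheta(t; 10, 13)$ that is later consumed by the integrability check.
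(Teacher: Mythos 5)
Your overall architecture is the same as the paper's: a time-weighted Lyapunov functional for the dual semigroup $\mathcal{S}_{B^*_{\tilde{k}_0}}$, coercivity from Corollary \ref{c4.1}, the commutator bound of Lemma \ref{com}, then a duality step against $L^2(m)$ with $m=\langle v\rangle^{\tilde{k}_0}$, the decay of the $B$-semigroup, and Duhamel for $\mathcal{S}_L\Pi^{\perp}$. But there is a genuine gap in your Step 1, and it propagates into Step 2. You take the base term of the functional to be the \emph{unweighted} norm $\|u\|^2_{H^2_xL^2_v}$. Differentiating it and applying Corollary \ref{c4.1} (with $l=\tilde{k}_0$, $k=0$) only yields dissipation $-c_0\|u\|^2_{L^2_xH^s_{\gamma/2}}$, which cannot absorb the commutator remainder $C_\epsilon\|u\|^2_{H^s_{10+\gamma/2}}$ coming from Lemma \ref{com}: that loss carries ten extra units of velocity weight, and your invocation of ``the stronger coercivity available at weight $k=13$'' has no counterpart in the functional you wrote down. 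The paper's fix is precisely to take the base norm to be $X_1=L^2_{10}$ (so the dissipation is $Y_1=H^s_{10+\gamma/2}$, matching the commutator loss), which means the smoothing you can actually prove on the dual side is from the \emph{weighted} space $L^2_{10}$ into $H^s$, not from plain $L^2_v$.

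This matters for what duality gives you. Testing $\mathcal{S}_{B_{\tilde{k}_0}}(t)g$ against functions normalized in $L^2(m)$ (so that $\|\phi\|_{L^2_{10}}\le\|\phi\|_{L^2(m)}\le 1$), one obtains $\|\mathcal{S}_B(t)f\|_{H^2_xL^2_v}\lesssim \vartheta_1(t)\|f\|_{H^{-s}_{\tilde{k}_0}}\le\vartheta_1(t)\|f\|_{\tilde{Z}_0}$ — output in the \emph{unweighted} $L^2_v$. Your claimed short-time estimate $\|\mathcal{S}_B(\tau)f\|_{H^2_xL^2_{\tilde{k}_0}}\le C\tau^{-1/2}\|f\|_{\tilde{Z}_0}$, with output at the full weight $\tilde{k}_0$, is strictly stronger than what this duality delivers (it would require dual-side smoothing from unweighted $L^2$, which is exactly the step blocked by the absorption problem). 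Your large-time bootstrap $\mathcal{S}_B(t)=\mathcal{S}_B(t-\tfrac12)\circ\mathcal{S}_B(\tfrac12)$ then fails as designed, because Corollary \ref{c2.2} needs the intermediate datum $\mathcal{S}_B(\tfrac12)f$ in $H^2_xL^2_{13}$, which is precisely the unproven weighted output; reversing the order does not help, since $f\in\tilde{Z}_0$ has negative regularity and Corollary \ref{c2.2} cannot be applied to it. The paper avoids this entirely by keeping \emph{both} the regularization and the polynomial decay on the dual side — combining the short-time $X_1\to H^s$ smoothing with $\|\mathcal{S}_{B^*_{\tilde{k}_0}}(t)\|_{X_0\to X_1}\lesssim\vartheta(t;10,13)$ via the semigroup splitting to get $\|\mathcal{S}_{B^*_{\tilde{k}_0}}(t)\phi\|_{H^s}\lesssim\vartheta_1(t)\|\phi\|_{X_0}$ — and only then dualizing once. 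Your integrability check for $\vartheta_1\vartheta$ and the final Duhamel argument are fine, but Step 1 and the composition in Step 2 need to be reorganized along these lines.
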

\begin{proof}
We only prove the lemma for homogeneous case by duality. Let $\tilde{X}_0=L^2_{\tilde{k}_0},X_0=L^2_{13}, X_1=L^2_{10}$ and $Y_1=H^s_{10+\ga/2}$. Suppose that $f_* $ is a solution of equation
$\frac d  {d t} f_* = B^*_{\tilde{k}_0} f_*$.
Set $H(f_*, f_*) = (f_*, f_*)_{X_1} +  at(f_*, f_*)_{H^s}$
with some $a>0$ to be fixed. Then we have
\beno
\frac{d}{dt}H(f_*,f_*)&=&\frac{d}{dt}((f_*, f_*)_{X_1} +  at(f_*, f_*)_{H^s})\\
&=&( f_* ,B_{\tilde{k}_0}^*f_*)_{X_1}+at(B^*_{\tilde{k}_0}(\<D\>^sf_*),\<D\>^sf_*)+at ( \langle D \rangle^s f_* , [B_{\tilde{k}_0}^* , \langle D \rangle^s] f_*))+a(f_*, f_*)_{H^s}.
\eeno

Observing that $\tilde{k}_0=13+22> 10+22$, thanks to Corollary \ref{c4.1}, we have
$( f_* ,B_{\tilde{k}_0}^*f_*)_{X_1} \le -C_1\Vert f_* \Vert_{Y_1}^2+C_2\Vert f_* \Vert_{L^2}^2$.
Moreover,
 $(B_{k_0}^* ( \langle D \rangle^s f),\langle D \rangle^s f_* )_{L^2}\le -C_3 \Vert\langle D \rangle^s f \Vert_{H^s_{\gamma/2}}^2 + C_4\Vert \langle D \rangle^s f_*   \Vert_{L^2}^2.$
By Lemma \ref{com}, we deduce that
\beno
\frac{d}{dt}H(f_*,f_*)&\leq&-C_1\|f_*\|^2_{Y_1}+C_2\|f_*\|^2_{L^2}-atC_3\|\<D\>^sf_*\|^2_{H^s_{\ga/2}}+atC_4\|\<D\>^sf_*\|^2_{L^2}\\
&&+at\vep \|\<D\>^sf_*\|^2_{H^s_{\ga/2}}+atC_{\vep,k_0}\|f_*\|^2_{Y_1}+a\|f_*\|^2_{H^s}.
\eeno
Taking  suitable $\vep, a>0$ small, we easily get that
$\frac{d}{dt}H(f_*,f_*)\ls-\|f_*\|^2_{Y_1}+\|f_*\|^2_{L^2}, \quad \forall t\in(0,1),$
which implies that
$H(f_*,f_*)(t)\leq C\|f_0\|^2_{X_1}+C\int_0^t\|\mathcal{S}_{B^*_{\tilde{k}_0}}(\tau)f_0\|_{L^2}^2d\tau.$
By Corollary \ref{c4.1}, the above can be bounded by
$H(f_*, f_*) \le C\Vert f_0\Vert_{X_1}^2, ~t\in(0,1),$
which implies
$at(f_*, f_*)_{H^s} \le C \Vert f_0 \Vert_{X_1}^2, ~t\in(0,1).$

For large values of time $t\geq1$, let $\tilde{H}(f_*,f_*):=(f_*, f_*)_{X_1} + \vep(f_*, f_*)_{H^s}$ with suitably small $\vep$, then by the similar argument, we can obtain that
$(f_*, f_*)_{H^s} \le C \Vert f_0 \Vert_{X_1}^2, ~t\geq1.$
So combining the fact that
$\Vert \mathcal{S}_{B^*_{\tilde{k}_0}}(t)  f\Vert_{X_1} \lesssim \vartheta(t) \Vert f \Vert_{X_0}$
with $\vartheta(t)=\vartheta(t;10,13)$, we deduce that
\beno
\Vert \mathcal{S}_{B^*_{\tilde{k}_0}}(t)  f\Vert_{H^s} \lesssim \frac{\vartheta(t)}{t^{-1/2}\wedge1} \Vert f \Vert_{X_0}\leq\frac{\vartheta(t)}{t^{-1/2}\wedge1} \Vert f \Vert_{\tilde{X}_0}.
\eeno

Denote $m = \langle v \rangle^{\tilde{k}_0}$, $ g=  m f$, it is easy to see that
 $m B f = m B(m^{-1} g) = B_{\tilde{k}_0} g$. Then
 $ m \mathcal{S}_B(t) f = \mathcal{S}_{B_{\tilde{k}_0}} (t) g, \quad (\mathcal{S}_{B_{\tilde{k}_0}}(t)g, \phi )_{L^2}  = (g, \mathcal{S}_{B^*_{\tilde{k}_0}}(t)\phi )_{L^2}, \quad \forall t \ge 0$. Therefore, we have
\begin{equation*}
\begin{aligned}
\Vert  \mathcal{S}_B(t) f \Vert_{L^2} &= \Vert  m^{-1}\mathcal{S}_{B_{\tilde{k}_0}} (t) g \Vert = \sup_{\Vert \varphi \Vert_{L^2} \le 1} ( \mathcal{S}_{B_{\tilde{k}_0}}(t)g, m^{-1} \varphi )_{L^2}
 = \sup_{\Vert \phi \Vert_{L^2(m)} \le 1} (g,  \mathcal{S}_{B^*_{\tilde{k}_0}}(t)\phi )_{L^2}\\
 &\le \sup_{\Vert \phi \Vert_{L^2(m)} \le 1} \Vert g \Vert_{H^{-s}}  \Vert \mathcal{S}_{B^*_{\tilde{k}_0}}(t)\phi \Vert_{H^s}
\lesssim \frac{\vartheta(t)}{t^{-1/2}\wedge1}  \sup_{\Vert \phi \Vert_{L^2(m)} \le 1} \Vert g \Vert_{H^{-s}}  \Vert \phi \Vert_{L^2(m)}
\\
&=  \frac{\vartheta(t)}{t^{-1/2}\wedge1}  \Vert g \Vert_{H^{-s}}
  =  \frac{\vartheta(t)}{t^{-1/2}\wedge1} \Vert   f \Vert_{H^{-s}_{\tilde{k}_0}}\leq  \frac{\vartheta(t)}{t^{-1/2}\wedge1} \Vert   f \Vert_{\tilde{Z}_0}.
\end{aligned}
\end{equation*}
Take
$\vartheta_1(t) =\frac{\vartheta(t)}{t^{-1/2}\wedge1},$
we conclude the desired results.
\end{proof}
\bigskip

Let $\tilde{k}_0,\tilde{X}_0$ be defined in Lemma \ref{le4.2} and $\bar{Y}_0:=H^2_xH^s_{\tilde{k}_0+\ga/2+2s}$, $k\geq \tilde{k}_0+13$, $X_k,Y_k,\bar{Y}_k$ be defined in (\ref{X_k}) and (\ref{Y_k}),  It is easy to see that $Y_{k}\hookrightarrow X_{k-2}\hookrightarrow H^2_xL^2_{k-10}$, then thanks to Corollary \ref{c2.2}, we have
\ben\label{slt}
&&\Vert \mathcal{S}_L(t)\Pi^{\perp} f\Vert_{\tilde{X}_0} \le\vartheta(t;\tilde{k}_0,k-10)\|f\|_{H^2_xL^2_{k-10}}\leq \vartheta(t;\tilde{k}_0,k-10) \Vert f\Vert_{Y_k},\\
 \notag&&\lim_{t \to \infty}\vartheta(t;\tilde{k}_0,k-10) =\lim_{t \to \infty}\<t\>^{-\frac{k-\tilde{k}_0-10}{|\gamma|}}=0, \quad \vartheta \in L^2(0, \infty).
\een

We now state a key stability theorem:

\begin{thm}\label{th4.1}
Let $f$ be a solution to the following equation
\ben\label{417}
\partial_t f +v\cdot\na_x f= L f +Q(f, f), \quad f|_{t=0} =f_0(x,v), \quad \Pi f_0 = 0
\een
such that $F(t,x,v)= \mu +f(t,x,v)$ satisfies
\ben\label{uni}
F \ge 0,\quad  \Vert F(x,\cdot) \Vert_{L^1} \ge 1/2, \quad \Vert F(x,\cdot) \Vert_{L^1_2} +\Vert F(x,\cdot) \Vert_{L \log L} \le 4,\quad \forall x\in\T^3.
\een
Suppose that $k\geq k_1 := \tilde{k}_0+13=26+22$, $\tilde{X}_0=H^2_xL^2_{\tilde{k}_0}$, and introduce a norm $||| f |||_{X_k}$ on $\Pi X$ by
\beno
||| f |||_{X_k}^2  = \eta \Vert f \Vert_{X_k}^2 + \int_0^\infty \Vert \mathcal{S}_L(\tau ) f \Vert_{H^2_xL^2_v}^2 d \tau,
\eeno
where $\eta>0$.  The associate scalar product can be defined by
\beno
\<\<f, g \>\>_{X_k}  = \eta (f, g )_{X_k} + \int_0^\infty (\mathcal{S}_L(\tau ) f,  \mathcal{S}_L(\tau ) g)_{L^2_vH^2_x} d \tau.
\eeno
Then there exists some $\eta_k>0$ such that $|||\cdot|||_{X_k}$ is equivalent to $\Vert \cdot \Vert_X$ on $\Pi X$. Moreover,
 there exist universal constants $C, K>0$,  not depending on $k$, such that
\beno
\frac d {dt} ||| f |||^2_{X_k} &\le& \eta_k(\| f \|_{\tilde{X}_0} - K ) \Vert f \Vert^2_{Y_k}-\eta_kk^s(C-\|f\|_{\tilde{X}_0}-\|f\|^4_{\tilde{X}_0})\|f\|^2_{X_{k+\ga/2}}+C_k\|f\|_{\tilde{X}_0}\|f\|_{Y_{48}}^2.
\eeno
\end{thm}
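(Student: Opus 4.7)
The plan is to differentiate $|||f(t)|||^2_{X_k}$ in time using the perturbation equation $\partial_t f = -v\cdot\nabla_x f - Lf + Q(f,f)$ and estimate the two pieces of the modified norm separately.

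For the $\eta\|f\|_{X_k}^2$ piece I would first use $(v\cdot\nabla_x f, f)_{X_k}=0$ on $\mathbb{T}^3$ to kill the transport, then rewrite $-Lf + Q(f,f) = Q(F,F)$ via $F=\mu+f$ and $Q(\mu,\mu)=0$, and apply Lemma \ref{T26} directly. This yields the regularity dissipation $-2\eta c_0\|f\|_{Y_k}^2$, the weighted dissipation $-2\eta k^s(1-\|f\|_{\tilde{X}_0}-\|f\|_{\tilde{X}_0}^4)\|f\|_{X_{k+\gamma/2}}^2$, a trilinear error $2\eta C_k\|f\|_{\tilde{X}_0}\|f\|_{Y_{k+s-1}}\|f\|_{Y_k}$ (which I would distribute between $\|f\|_{Y_k}^2$ and the weighted dissipation using $\|f\|_{Y_{k+s-1}}\le\|f\|_{Y_k}$ and Young's inequality), and a quadratic remainder $2\eta C_k\|f\|_{H^2_xL^2_v}^2$; the last one is precisely what the semi-group piece of the norm is designed to absorb.

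For the integral part I would compute $\frac{d}{dt}\int_0^\infty \|\mathcal{S}_L(\tau)f\|_{H^2_xL^2_v}^2 d\tau = 2\int_0^\infty (\mathcal{S}_L(\tau)f, \mathcal{S}_L(\tau)\partial_t f)_{H^2_xL^2_v}\,d\tau$ and split according to the three terms of $\partial_t f$. The $-Lf$ contribution collapses, via $[\mathcal{S}_L,L]=0$ and the fundamental identity $-2(L\mathcal{S}_L(\tau)f,\mathcal{S}_L(\tau)f)=\frac{d}{d\tau}\|\mathcal{S}_L(\tau)f\|^2$, into exactly $-\|f\|_{H^2_xL^2_v}^2$ (using $\Pi f=0$ and the decay of $\mathcal{S}_L\Pi^\perp$ from Corollary \ref{c2.2}). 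The transport contribution I handle by the splitting $\mathcal{S}_L(\tau)(v\cdot\nabla_x f)=v\cdot\nabla_x \mathcal{S}_L(\tau)f + [\mathcal{S}_L(\tau),v]\cdot\nabla_x f$: the first piece vanishes after integration by parts in $x$ on the torus for every derivative level up to $H^2_x$, and the commutator term is controlled using the Duhamel-type decomposition $\mathcal{S}_L=\mathcal{S}_B+\mathcal{S}_L*A\mathcal{S}_B$ together with the $\vartheta$-decay of Corollary \ref{c2.2}, producing an absorbable $\epsilon\|f\|_{Y_k}^2$-type bound. The nonlinear contribution is treated by Cauchy--Schwarz: Lemma \ref{le4.2} applied to $Q(f,f)\in\Pi^\perp$ gives $\vartheta_1(\tau)\|Q(f,f)\|_{\tilde{Z}_0}$, the dual bound of Lemma \ref{L31} gives $\|Q(f,f)\|_{\tilde{Z}_0}\lesssim\|f\|_{\tilde{X}_0}\|f\|_{Y_{48}}$, and $\vartheta_1\in L^2_\tau$ combined with a second use of $\mathcal{S}_L$-decay on the $f$-factor produces the nonlinear remainder $C_k\|f\|_{\tilde{X}_0}\|f\|_{Y_{48}}^2$.

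Combining the two parts and choosing $\eta=\eta_k$ small enough that $2\eta_k C_k\le \tfrac12$, the $-\|f\|_{H^2_xL^2_v}^2$ from the semi-group identity absorbs the quadratic remainder from Lemma \ref{T26}; the surviving terms reorganize into $\eta_k(\|f\|_{\tilde{X}_0}-K)\|f\|_{Y_k}^2$, plus the weighted dissipation and the lower-order $C_k\|f\|_{\tilde{X}_0}\|f\|_{Y_{48}}^2$. The main technical obstacle is the commutator $[\mathcal{S}_L(\tau),v]\cdot\nabla_x f$ in the transport term, which must be integrated against the decay of $\mathcal{S}_L$ to keep the resulting error sub-leading compared to $\eta_k\|f\|_{Y_k}^2$, together with the careful bookkeeping needed to extract universal constants $C, K$ from the $k$-dependent coefficients appearing in Lemmas \ref{T26} and \ref{L31}.
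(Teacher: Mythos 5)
Your handling of the $\eta\Vert f\Vert_{X_k}^2$ piece (skew-adjointness of the transport in $X_k$, rewriting $-Lf+Q(f,f)=Q(\mu+f,\mu+f)$, Lemma \ref{T26}) and of the nonlinear contribution to the semigroup piece (Cauchy--Schwarz with Lemma \ref{le4.2}, the dual bound of Lemma \ref{L31}, the decay \eqref{slt} and $\int_0^\infty\vartheta_1\vartheta\,d\tau<\infty$) follows the paper. The genuine gap is your treatment of the transport term inside $\int_0^\infty\Vert\mathcal{S}_L(\tau)f(t)\Vert^2_{H^2_xL^2_v}d\tau$. In the construction used here (in the spirit of \cite{GMM}), the identity $\int_0^\infty(\mathcal{S}_L(\tau)\,[\mbox{linear part of }\partial_tf],\mathcal{S}_L(\tau)f)_{H^2_xL^2_v}\,d\tau=-\Vert f\Vert^2_{H^2_xL^2_v}$ is exact precisely because the linear part that appears in $\partial_t f$ is the generator of $\mathcal{S}_L$ itself, so it collapses into $\frac{d}{d\tau}\Vert\mathcal{S}_L(\tau)f\Vert^2$ and no commutator with $v\cdot\nabla_x$ ever arises; note that since $v\cdot\nabla_x$ is skew-adjoint in every $H^2_xL^2_l$-type norm used, all the semigroup decay estimates invoked (Corollary \ref{c2.2}, Lemma \ref{le4.2}, \eqref{slt}) are insensitive to whether the transport is included in the generator. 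You instead insist on a velocity-only semigroup and are forced to control $[\mathcal{S}_L(\tau),v]\cdot\nabla_xf$, and the control you sketch does not go through.

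Concretely: Corollary \ref{c2.2} and the Duhamel decomposition $\mathcal{S}_L\Pi^\perp=\Pi^\perp\mathcal{S}_B+\mathcal{S}_L\Pi^\perp\ast A\mathcal{S}_B$ give maps between weighted $L^2_v$ spaces at fixed $H^2_x$ regularity and say nothing about $[\mathcal{S}_L(\tau),v_j]$; worse, the term loses an $x$-derivative. Since a velocity-only $\mathcal{S}_L(\tau)$ commutes with $\partial_{x_j}$, one has $[\mathcal{S}_L(\tau),v_j]\partial_{x_j}f=\partial_{x_j}\big([\mathcal{S}_L(\tau),v_j]f\big)$, so pairing with $\mathcal{S}_L(\tau)f$ in $H^2_xL^2_v$ leaves, even after integrating by parts in $x$, three $x$-derivatives on one factor at the top level, which neither $X_k$ nor $Y_k$ (only two $x$-derivatives) controls; the skew-symmetry that would otherwise cancel this derivative is exactly what the commutator destroys. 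Hence an ``absorbable $\epsilon\Vert f\Vert_{Y_k}^2$'' bound is not attainable along the route you propose, and this step needs to be replaced by working with the full linearized semigroup as in the paper. A smaller point: to obtain the $k$-independent constants $C,K$ in the stated inequality you should split the trilinear error $C_k\Vert f\Vert_{\tilde{X}_0}\Vert f\Vert_{Y_{k+s-1}}\Vert f\Vert_{Y_k}$ with the interpolation $\Vert f\Vert_{Y_{k-1+s}}\le\vep\Vert f\Vert_{Y_k}+C_\vep\Vert f\Vert_{Y_{48}}$ with $\vep\sim C_k^{-1}$ (as the paper does), rather than use $\Vert f\Vert_{Y_{k+s-1}}\le\Vert f\Vert_{Y_k}$ and Young, which leaves a $k$-dependent factor in front of $\Vert f\Vert_{\tilde{X}_0}\Vert f\Vert^2_{Y_k}$; also the trilinear error cannot be ``distributed into the weighted dissipation'' $k^s\Vert f\Vert^2_{X_{k+\gamma/2}}$, since that term carries no $H^s$ regularity.
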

\begin{proof}
Thanks to (\ref{slt}),  $|||\cdot|||_{X_k}$ is equivalent to $\Vert \cdot \Vert_X$ on $\Pi X$ since
\ben\label{aa}
\int_0^\infty \Vert S_L(\tau ) f \Vert_{H^2_xL^2_v}^2 d \tau \lesssim \Vert f \Vert_{X_k}^2 \int_0^\infty \vartheta^2(\tau) d\tau\leq C_k\|f\|^2_{X_k}.
\een
 To get the desired result, we first have
\beno
\frac {d} {dt} ||| f(t) |||^2_{X_k}  =\eta (Q(\mu + f, \mu+f) ,f )_{X_k} + \int_0^\infty ( \mathcal{S}_L(\tau)(-L)f, \mathcal{S}_L(\tau) f )_{H^2_xL^2_v} d\tau
 + \int_0^\infty ( \mathcal{S}_L(\tau)Q(f, f ) , \mathcal{S}_L(\tau) f )_{H^2_xL^2_v} d\tau.
\eeno
 By Lemma \ref{T26} and the interpolation inequality $\|f\|_{Y_{k-1+s}}\le \vep\|f\|_{Y_{k}}+C_\vep\|f\|_{Y_{48}}$, we first have
\beno
(Q(\mu + f, \mu+f), f)_{X_k} &\le &-c_0 \Vert f \Vert_{Y_k}^2-k^s(1-\|f\|_{H^2_xL^2_{14}} -\|f\|^4_{H^2_xL^2_{14}} )\|f\|^2_{X_{k+\ga/2}}  
\\
&&+ C_k\| f \|_{H^2_xL^2_{14}} \|f\|_{Y_{k-1+s}}\Vert f \Vert_{Y_k}+ C_{k}  \Vert f \Vert_{H^2_xL^2_v}^2\\
&\leq&-(\f{c_0}2-\|f\|_{H^2_xL^2_{14}})\|Y\|^2_{Y_k}-k^s(1-\|f\|_{H^2_xL^2_{14}}-\|f\|^4_{H^2_xL^2_{14}})\|f\|^2_{X_{k+\ga/2}}
\\
&&+C_k\|f\|^2_{H^2_xL^2_v}+C_k\|f\|_{H^2_xL^2_{14}}\|f\|^2_{Y_{48}}.
\eeno
Recalling that
$\Vert \mathcal{S}_L(\tau) f(t) \Vert_{H^2_xL^2_v} \le \vartheta(\tau+t) \Vert f_0\Vert_{X_k} ,  \quad \lim_{\tau \to \infty}\vartheta(\tau+t) =0, \quad \forall  t \ge 0$, we deduce that
\beno
\int_0^\infty ( \mathcal{S}_L(\tau)(-L)f, \mathcal{S}_L(\tau) f )_{H^2_xL^2_v} d\tau =\int_0^\infty \frac {d} {d\tau} \Vert \mathcal{S}_L(\tau) f(t)\Vert_{H^2_xL^2_v}^2 d\tau =  -\Vert f(t) \Vert_{H^2_xL^2_v}^2.
\eeno
Since
  $\int_0^\infty ( \mathcal{S}_L(\tau)Q(f, f ) , \mathcal{S}_L(\tau) f )_{H^2_xL^2_v} d\tau
  \le\int_0^\infty \Vert \mathcal{S}_L(\tau)Q(f, f )\Vert_{H^2_xL^2_v}   \Vert \mathcal{S}_L(\tau) f \Vert_{H^2_xL^2_v} d\tau$,
by Lemma \ref{le4.2}, Lemma \ref{L31} and (\ref{slt}),  one has
\beno &&|\int_0^\infty ( \mathcal{S}_L(\tau)(-L)f, \mathcal{S}_L(\tau) f )_{H^2_xL^2_v} d\tau| \le
\Vert Q(f, f )\Vert_{\tilde{Z}_0}    \Vert  f \Vert_{Y_k} \int_0^\infty \vartheta_1(t) \vartheta(t) dt
\lesssim \Vert f \Vert_{\tilde{X}_0}\Vert f\Vert_{\bar{Y_0}} \Vert  f \Vert_{Y_k}\\
&&\le  \vep\| f \|_{\tilde{X}_0} \Vert  f \Vert_{Y_k}^2+C_\vep \| f \|_{\tilde{X}_0}\|f\|^2_{Y_{48}},
\eeno
where we use the fact that $\vartheta_1(t) =\frac{\vartheta(t)}{t^{-1/2}\wedge1}$ and $\vartheta(t)\in L^2(0,\infty)$.
We end the proof by taking $\eta_k=1/C_k$ and combining all the estimates.
\end{proof}

\subsection{Proof of  Theorem \ref{globaldecay}: Polynomial moment}
We are now ready to prove the global well-posedness and the decay rate when the initial data have polynomial moment.

\begin{proof}[Proof of Theorem \ref{globaldecay}: Polynomial moment]  Let $k_1=\tilde{k}_0+13=26+22$ and $f$ be a solution of (\ref{417}). By local well-posedness, we may assume that $T^*:=\sup\{t>0| ||| f(t) |||^2_{X_{k_1}}<\vep_0\}$. Thus if $F(t,x,v)=\mu+f(t,x,v)$, then it satisfies (\ref{uni}). Thanks to Theorem \ref{th4.1}, if $k\ge k_1$  we have
 \ben\label{L348}
 \frac d {dt} ||| f |||^2_{X_{k}} \le \eta_{k}(||| f |||_{X_{k_1}} - K) \| f \|^2_{Y_{k}}+\eta_{k}k_1^s(|||f|||^4_{X_{k_1}}-|||f|||_{X_{k_1}}-C) \|f\|^2_{X_{k+\ga/2}}  +C_k\|f\|_{X_{k_1}}\|f\|_{Y_{k_1}}^2.
 \een
 By the standard continuity argument, we deduce that $T^*=\infty$ which implies that $f\in L^\infty([0,\infty); X_{k_1})\cap L^2([0,\infty),Y_{k_1})$ if $\vep_0\ll1$.
 Moreover, $|||f|||_{L^\infty([0,\infty); X_{k_1})}\leq \epsilon$.
Thus for any $k\geq k_1$, if $\|f_0\|_{X_k}\leq \infty$, from \eqref{L348}, we have $f\in L^{\infty}([0,\infty],X_k)\cap L^2([0,\infty),Y_{k})$. Moreover, together with  (\ref{L348}), we can derive that $\|f(t)\|_{X_{k_1}}\leq C_{k_1,k}\<t\>^{-\f{k-k_1}{|\ga|}}\|f_0\|_{X_{k}}$.

This ends the  proof of the first part of the main theorem.
\end{proof}

\subsection{Proof of  Theorem \ref{globaldecay}: Upper bound of convergent rate}
 In this subsection, we shall prove the upper bound of the convergence rate for the longtime behavior of solutions in $L^1$ space. We prove it by construction of the sequence of the solutions to verify \eqref{upperbound}. We restrict solutions to the homogeneous case. The main idea is to use the localized techniques to focus on the propagation of the $L^1$ moment when initially it concentrates the region which is far away from the original point. Recall that $\cP_j(v)\<v\>^k=\vphi(2^{-j}v)\<v\>^k$ with $j\gg1$, then
we have
\beno
\frac{d}{dt}\int_{\R^3}\cP_j(v)F(t,v)\<v\>^kdv=\int_{\R^3}\cP_j(v)Q(F,F)\<v\>^kdv.
\eeno

We begin with a localization lemma:
\begin{lem}\label{le6.1}
	Let $\chi(v)$ be a smooth function satisfying $\mathrm{supp}~\chi\subset\{v|1/2<|v|<3\}$ and $\chi(v)=1$ if $v\in \mathrm{supp}~\vphi$, where $\vphi$ is defined in (\ref{7.1}), $\chi_{j}(v)=\chi(2^{-j}v)$. Suppose that $F(t,v)\geq0$ is a homogeneous solution to Eq.\eqref{1} with $\|F_0-\mu\|_{L^2_{48}}=\|f_0\|_{L^2_{48}}=\left(\int_{\R^3}f_0^2(v)\<v\>^{96}dv\right)^{1/2}<\vep_0$.
	Then we have
	\beno
	\frac{d}{dt}\|\cP_jF\|_{L^1_k}\geq -C_k2^{j\ga}(1+\|F\|_{H^s_{3}})\|F\|_{L^1_{k}}.
	\eeno
\end{lem}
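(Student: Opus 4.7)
The plan is to compute the time derivative directly via the weak formulation and then extract a one-sided bound by localizing the contributions around $\mathrm{supp}\,\cP_j$. Setting $\psi(v):=\cP_j(v)\<v\>^k\geq 0$, I would start from
$$\tfrac{d}{dt}\|\cP_j F\|_{L^1_k}=(Q(F,F),\psi)=\int\!\!\int\!\!\int F(v)F(v_*)B(v-v_*,\sigma)[\psi(v')-\psi(v)]\,d\sigma\,dv\,dv_*$$
and aim to lower-bound the right-hand side. Because both $F$ and $\psi$ are nonnegative, the gain contribution contributes a nonnegative term, and only the effective ``loss'' needs to be controlled from above.

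The key tool is Taylor expansion of $\psi(v')-\psi(v)$ around $v$, performed inside the $\sigma$-integral. The first-order term gives, via the standard symmetry identity $\int_{\S^2} b(v'-v)\,d\sigma=-\tfrac12\|b(1-\cos\theta)\|_{L^1_\sigma}(v-v_*)$, the contribution $-\tfrac12\|b(1-\cos\theta)\|_{L^1_\sigma}\,|v-v_*|^\gamma(v-v_*)\cdot\nabla\psi(v)$, in which $(1-\cos\theta)\sim\theta^2$ tames the non-cutoff singularity $b\sim\theta^{-1-2s}$. The second-order remainder is dominated by $|v-v_*|^2\sin^2(\theta/2)\,\|D^2\psi\|_{L^\infty}$, with $\|b\sin^2(\theta/2)\|_{L^1_\sigma}<\infty$ by the same mechanism. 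Using $F\geq 0$ and these two estimates yields
$$(Q(F,F),\psi)\geq -C\!\int\!\!\!\int\! FF_*|v-v_*|^\gamma\bigl\{|v-v_*|\,|\nabla\psi(v)|+|v-v_*|^2\|D^2\psi\|_{L^\infty}\bigr\}\,dv\,dv_*.$$

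Next I would exploit the scaling $\cP_j(v)=\vphi(2^{-j}v)$: on $\mathrm{supp}\,\cP_j$ one has $\<v\>\sim 2^j$, and a direct Leibniz computation gives $|\nabla\psi|\lesssim 2^{j(k-1)}\chi_j$ and $|D^2\psi|\lesssim 2^{j(k-2)}\chi_j$ (the enlarged bump $\chi_j$ from the statement accommodates the Hessian along segments $[v,v']$). Since $\<v\>^k\sim 2^{jk}$ on $\mathrm{supp}\,\chi_j$, the trivial bound $\int\chi_j F\,dv\leq 2^{-jk}\|F\|_{L^1_k}$ reduces the entire problem to showing, uniformly in $v\in\mathrm{supp}\,\chi_j$,
$$\int F(v_*)|v-v_*|^{\gamma+i}\,dv_*\lesssim 2^{j(\gamma+i)}\bigl(1+\|F\|_{H^s_3}\bigr),\qquad i=1,2.$$
For $|v-v_*|\geq 1$ this is immediate, bounding $|v-v_*|^{\gamma+i}\lesssim\<v\>^{\gamma+i}\<v_*\>^{|\gamma+i|}$ and extracting $\<v\>^{\gamma+i}\sim 2^{j(\gamma+i)}$ while absorbing $\<v_*\>$-weights into low-order moments. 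For $|v-v_*|<1$ I would invoke the structural condition $\gamma+2s>-1$ to rewrite $|v-v_*|^{\gamma+i}\leq|v-v_*|^{-2s+(i-1)}$ and apply a Hardy–Littlewood–Sobolev/Riesz-potential estimate (of the type already used in the paper, compare Lemma \ref{L110}), interpolating between $L^1$ and $H^s$ with $\<v\>^3$ weights to account for moments lost in the Sobolev embedding.

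The main obstacle is precisely this near-diagonal convolution in the regime $\gamma$ near $-3$: the naive $L^2$–$L^2$ duality is insufficient once $\gamma<-5/2$, and the argument hinges on the sharp interplay between the angular singularity condition $\gamma+2s>-1$ and the fractional regularity $F\in H^s$. Once that estimate is secured, collecting the powers yields $2^{j(\gamma+1)}\cdot 2^{j(k-1)}\cdot 2^{-jk}=2^{j\gamma}$ for the first-order term (and identically $2^{j\gamma}$ for the Hessian term), giving the stated inequality with constant $C_k$ depending only on $k$.
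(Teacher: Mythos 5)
Your overall skeleton (weak formulation, Taylor expansion of the test function, angular symmetry to tame the non-cutoff singularity, and a Cauchy--Schwarz/Hardy argument producing the $\|F\|_{H^s_3}$ factor near the diagonal) is the same as the paper's, but there is a genuine gap in how you control the Taylor remainder. The bound $|D^2\psi|\lesssim 2^{j(k-2)}\chi_j$ is a statement about the point where the Hessian is evaluated, and in the remainder the Hessian is evaluated at $v+t(v'-v)$, not at $v$. The slightly enlarged bump $\chi$ only covers the case $|v'-v|\ll 2^j$; but since $|v'-v|=|v-v_*|\sin(\theta/2)$, whenever $|v-v_*|\gtrsim 2^j$ the segment $[v,v']$ can enter the annulus $\{|w|\sim 2^j\}$ while $\chi_j(v)=0$ (e.g. $|v|\ll 2^j$, $|v_*|\gtrsim 2^j$, or $|v|,|v_*|\gg 2^j$ with $v'$ falling back onto the annulus). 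For those configurations your scheme either loses the $v$-localization altogether — and then the crude bound $\|D^2\psi\|_{L^\infty}\sim 2^{j(k-2)}$ over the full double integral gives a contribution of order $2^{j(k-2)}\|F\|_{L^1_2}^2$, which destroys the crucial factor $2^{j\gamma}$ (recall that in the application $\|F\|_{L^1_k}$ is bounded uniformly in $j$, so $2^{j\gamma}\|F\|_{L^1_k}$ is small while $2^{j(k-2)}$ is huge) — or it simply does not apply. The paper treats exactly these regimes separately (its terms $II_2$ and $II_3$, and the corresponding splitting inside $I$): when $|v|\ll 2^j$ and $|v'|\sim 2^j$, the relation $2^j\sim|v-v'|=\sin(\theta/2)|v-v_*|$ forces $|v_*|\gtrsim 2^j$ and $\theta\gtrsim 2^j/|v-v_*|$, so the weight $2^{jk}$ is absorbed by $\<v_*\>^k$ and the angular singularity is cut off; when $|v|,|v_*|\gg 2^j$ the weight is absorbed by $\<v\>^k$, yielding directly $2^{j\gamma}\|F\|_{L^1_k}$. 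Without an argument of this type your final accounting $2^{j(k-i)}\cdot 2^{j(\gamma+i)}\cdot 2^{-jk}=2^{j\gamma}$ only covers part of $(Q(F,F),\psi)$.

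A second, more minor, gap is in your reduction estimate $\int F_*|v-v_*|^{\gamma+i}dv_*\lesssim 2^{j(\gamma+i)}(1+\|F\|_{H^s_3})$ for $v\in\mathrm{supp}\,\chi_j$. In the range $\gamma+i<0$ (i.e. $\gamma<-1$ for $i=1$, $\gamma<-2$ for $i=2$, which is the interesting soft regime) the step $|v-v_*|^{\gamma+i}\le|v-v_*|^{(i-1)-2s}$ followed by an HLS/Riesz estimate can only give a $j$-independent bound $\lesssim 1+\|F\|_{H^s_3}$; it produces no mechanism for the decaying factor $2^{j(\gamma+i)}$. That factor must come from the observation that, near the diagonal with $|v|\sim 2^j$, one also has $|v_*|\sim 2^j$, so negative weights on $F_*$ convert into negative powers of $2^j$; this is precisely the paper's treatment of $II_{1,1}$ and $II_{1,2}$ (Cauchy--Schwarz with $|v-v_*|^{-s}$, Hardy's inequality, and a weight transfer using $\<v_*\>\sim 2^j$, which is where $\|F\|_{H^s_3}$ actually enters). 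This second gap is fixable in one line once the localization is stated, but as written the claimed uniform estimate does not follow. Incidentally, the opening remark that the gain term is nonnegative and only the loss needs control is not usable in the non-cutoff setting (gain and loss are not separately finite), but your actual estimate does not rely on it.
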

\begin{proof}
The global existence of solution $f=F-\mu\in L^{\infty}([0,\infty],L^2_{48})\cap L^2([0,\infty),H^s_{48+\ga/2})$ is guaranteed by the proof of polynomial moment of Theorem \ref{globaldecay}. We only provide the detailed proof for $2s\geq1$.  It is easy to check that
	\beno
	\frac{d}{dt}\|\cP_jF\|_{L^1_k}&=&\int_{|v|\sim 2^j}Q(F,\cP_jF)\<v\>^kdv+\int_{|v|\sim 2^j}[\cP_jQ(F,F)-Q(F,\cP_jF)]\<v\>^kdv
	:= I+II.
	\eeno

	\noindent\underline{\it Step 1: Estimate of $II$.} One has
	\beno
	II&=&\int_{\R^3}[\cP_jQ(F,F)-Q(F,\cP_jF)]\<v\>^k\chi_{j}(v)dv\\
	&=&\int_{\R^6\times\S^2}b(\cos\th)|v-v_*|^\ga F(v_*)F(v)\chi_{j}(v')\<v'\>^k(\vphi(2^{-j}v')-\vphi(2^{-j}v))dvdv_*d\sigma.
	\eeno
	Since   $|v'|\sim 2^j$, we split the integration domain into three parts: $|v|\sim 2^j,|v|\gg2^j$ and $|v|\ll2^j$. Correspondingly  $II$ can be decomposed into three parts: $II_1,II_2$ and $II_3$ respectively.
	\smallskip
	
	\noindent\underline{\it Estimate of $II_1$}. By Taylor expansion, one has
	\beno
	II_1&=&\int_{|v|\sim 2^j}b(\cos\th)|v-v_*|^\ga F(v_*)F(v)\chi_{j}(v')\<v'\>^k(\vphi(2^{-j}v')-\vphi(2^{-j}v))dvdv_*d\sigma\\
	&=&2^{-j}\int_{\R^6\times \S^2}b(\cos\th)|v-v_*|^\ga F(v_*)F(v)\big(\chi_{j}(v')\<v'\>^k-\chi_{j}(v)\<v\>^k\big)\na\vphi(2^{-j}v)\cdot(v'-v)dvdv_*d\si\\
	&&+2^{-j}\int_{\R^6\times \S^2}b(\cos\th)|v-v_*|^\ga F(v_*)F(v)\chi_{j}(v)\<v\>^k\na\vphi(2^{-j}v)\cdot(v'-v)dvdv_*d\si+\int_0^1\int_{|v|\sim 2^j}b(\cos\th)\\
	&&\times2^{-2j-1}|v-v_*|^\ga F(v_*)F(v)\chi_{j}(v')\<v'\>^k(1-t) \na^2\vphi(2^{-j}(v+t(v-v')))(v-v')\otimes(v-v')dtdvdv_*d\si\\
	&:=& II_{1,1}+II_{1,2}+II_{1,3}.
	\eeno
	
	By symmetry with respect to $\sigma$, we have
	\beno
	|II_{1,2}|&\ls & 2^{-j}\int_{\R^6}\int_0^{\pi/2}\int_0^\pi b(\cos\th)\sin^2(\th/2)|v-v_*|^{1+\ga}F(v_*)F(v)\chi_{j}(v)\<v\>^kdvdv_*d\phi d\th.
	\eeno
	For $\ga\geq-1$, we have
	\beno
	|II_{1,2}|
	&\ls&2^{-j}\int_{\R^6}\max\{|v|,|v_*|\}^{1+\ga}F(v_*)F(v)\chi_{j}(v)\<v\>^kdvdv_*\\
	&\ls&\|F\|_{L^1_1}\|\chi_{j}F\|_{L^1_{k-1}}+\|F\|_{L^1}\|\chi_{j}F\|_{L^1_{k+\ga}}\ls\|\chi_{j}F\|_{L^1_{k+\ga}}.
	\eeno
	For $\ga<-1$, we have
	\beno
	|II_{1,2}|&\ls &2^{-j}\int_{|v-v_*|\geq \frac{1}{2}|v|}|v-v_*|^{1+\ga}F(v_*)F(v)\chi_{j}(v)\<v\>^kdvdv_*\\
	&&+2^{-j}\int_{|v-v_*|< \frac{1}{2}|v|}|v-v_*|^{1+\ga}F(v_*)F(v)\chi_{j}(v)\<v\>^kdvdv_*.
	\eeno
	The first term can be bounded by $ \|F\|_{L^1}\|\chi_{j}F\|_{L^1_{k+\ga}}$. For the second term, noticing  that the condition $|v-v_*|<\frac{1}{2}|v|$ implies that $|v_*|\sim |v|\sim 2^j$. We derive that
	\beno
	&&2^{-j}\int_{|v-v_*|< \frac{1}{2}|v|}|v-v_*|^{1+\ga}F(v_*)F(v)\chi_{j}(v)\<v\>^kdvdv_*\\
	&\leq& 2^{(-1-\ga)j}(\int_{|v-v_*|<\f12|v|}|v-v_*|^{2(1+\ga+s)}dv_*)^{1/2}(\int_{|v_*|\sim 2^j}\f{F_*^2}{|v-v_*|^{2s}}dv_*)^{1/2}\int_{\R^3}\chi_{j}(v)F \<v\>^{k+\ga}dv\\
	&\ls&\|F\|_{H^s_{s+3/2}} \|\chi_{j}F\|_{L^1_{k+\gamma}},
	\eeno
where we use the fact $\ga+s+1>-1$ and Hardy inequality. Thus we obtain that $|II_{1,2}|\ls (1+\|F\|_{H^s_{s+3/2}})\|\chi_{j}F\|_{L^1_{k+\gamma}}$.
	
	Observe that  $|\chi_{j}(v')\<v'\>^k-\chi_{j}(v)\<v\>^k|\ls 2^{j(k-1)}|v-v_*|\sin\theta$. Then for  $II_{1,1}$, we have
	\beno
	|II_{1,1}|&\ls & 2^{j(k-2)}\int_{|v|\sim 2^{j}} |v-v_*|^{2+\ga}F_*F dvdv_*.
	\eeno
For $\ga\geq-2$, we have 
\beno
|II_{1,1}|&\ls &\|F\|_{L^1_2}\|\chi_j F\|_{L^1_{k-2}}+\|F\|_{L^1}\|\chi_jF\|_{L^1_{k+\ga}}\ls \|\chi_jF\|_{L^1_{k+\ga}}.
\eeno	
For $\ga<-2$, wo also split into two part: $|v-v_*|\geq\f12|v|$ and $|v-v_*|<\f12|v|$ and we can derive that
\beno
|II_{1,1}|&\ls & \|F\|_{L^1}\|\chi_jF\|_{L^1_{k+\ga}}+\|F\|_{L^2_{3/2}}\|\chi_jF\|_{L^1_{k+\ga}}\ls \|\chi_jF\|_{L^1_{k+\ga}}.
\eeno
Thus we obtain that $|II_{1,1}|\ls\|\chi_jF\|_{L^1_{k+\ga}} $.

By the same argument, we have 
	\beno
	|II_{1,3}|\ls  2^{-(k-2)j}\int_{\R^6\times\S^2}b(\cos\th)\sin^2(\th/2)|v-v_*|^{2+\ga}F(v_*)F(v)\chi_{j}(v) dvdv_*d\si\ls\|\chi_{j}F\|_{L^1_{k+\ga}}.
	\eeno
	
	Patching together all the estimates, we derive that $|II_1|\ls (1+\|F\|_{H^s_{3}})\|\chi_{j}F\|_{L^1_{k+\ga}}$.
	
	\noindent\underline{\it Estimate of $II_2$.} Since  $|v|\gg2^j$ and $|v'|\sim 2^j$, we have $|v_*|\gg 2^j$.
	\beno
	|II_2|&=&\int_{|v|,|v_*|\gg 2^j}b(\cos\th)|v-v_*|^\ga F(v_*)F(v)\chi_{j}(v')\<v'\>^k\vphi(2^{-j}v')dvdv_*d\si\\
	&\ls& \Big|2^{jk}\int_{|v|, |v_*|\gg 2^j}b(\cos\th)|v-v_*|^\ga F(v_*)F(v)(\vphi(2^{-j}v')-\vphi(2^{-j}v))dvdv_*d\si\Big|\\
	&\ls& 2^{jk}\big(2^{-j}\int_{|v|, |v_*|\gg 2^j} |v-v_*|^{\ga+1}FF_*dvdv_*+2^{-2j}\int_{|v|, |v_*|\gg 2^j} |v-v_*|^{\ga+2}FF_*dvdv_*\big)\\
	&:=&II_{2,1}+II_{2,2}.
	\eeno
For $II_{2,1}$, if $\ga\geq-1$, it easy to check that $II_{2,1}\ls 2^{j\ga}\|F\|_{L^1_k}$. If $\ga<-1,$
\beno
II_{2,1}&=&2^{(k-1)j}(\int_{|v-v_*|\leq1}|v-v_*|^{1+\ga} FF_*dvdv_*+\int_{|v-v_*|>1} \<v-v_*\>^{1+\ga}FF_*dvdv_*)\\
&\ls&  2^{(-1-\ga)j}(\int_{|v-v_*|<1}|v-v_*|^{2(1+\ga+s)}dv_*)^{1/2}(\int_{|v_*|\gg 2^j}\f{F_*^2}{|v-v_*|^{2s}}dv_*)^{1/2}\int_{|v|\gg2^j}F \<v\>^{k+\ga}dv\\
&&+2^{j\ga}\|F\|_{L^1_{2}}\|F\|_{L^1_k}\ls2^{j\ga}(1+\|F\|_{H^s_3})\|F\|_{L^1_k}.\eeno

Similarly, we have  $II_{2,2}\ls 2^{j\ga}\|F\|_{L^1_k}$ and $|II_2|\ls2^{j\ga}(1+\|F\|_{H^s_3})\|F\|_{L^1_k}$.

	\noindent\underline{\it Estimate of $II_3$.} Since $|v|\ll 2^j, |v'|\sim 2^j$, from the fact $2^j \sim|v-v'|=\sin(\th/2)|v-v_*|$, we get that $|v_*|\gs 2^j$ and $\th\gs 2^j/|v-v_*|$.  Then we have
	\beno
	|II_3|&=&\int_{ |v_*|\gs 2^j}b(\cos\th)|v-v_*|^\ga F(v_*)F(v)\chi_{j}(v')\<v'\>^k\vphi(2^{-j}v')dvdv_*d\si\\
	&\ls& 2^{j(k-2)}\int_{|v_*|\gs 2^j} |v-v_*|^{\ga+2}FF_*dvdv_*\ls 2^{j\ga}\|F\|_{L^1_{k}}.
	\eeno
	
	Summing up all the estimates, we conclude that
$|II|\ls(1+\|F\|_{H^s_3}) (\|\chi_jF\|_{L^1_{k+\ga}} +2^{j\ga}\|F\|_{L^1_{k}}).$

	\noindent\underline{\it Step 2: Estimate of $I$.}  We first notice that
	\beno
	I
	&=&\int_{\R^6\times \S^2}b(\cos\th)|v-v_*|^\ga F(v_*)(\cP_jF)(v)(\chi_{j}(v')\<v'\>^k-\chi_{j}(v)\<v\>^k)dvdv_*d\si.\eeno
	Applying Taylor expansion for $\chi_{j}(v)\<v\>^l$ and copying the argument for $II_{2}$, we obtain that
	\beno &&|I|\ls  2^{j(k-1)}\int_{\R^6}  |v-v_*|^{\ga+1} F_* |\cP_jF|dvdv_*+ 2^{j(k-2)}\int  |v-v_*|^{\ga+2} F_* |\cP_jF| dvdv_*\ls
	(1+\|F\|_{H^s_3})\|\cP_jF\|_{L^1_{l+\ga}},\eeno
	from which together with the estimate of $II$, we get the desired result.

	Finally we mention that the case of $2s<1$ can be proved in a similar way.   We complete the proof of this lemma.
\end{proof}

Now we are ready to prove \eqref{upperbound}.
\begin{proof}[Proof of Theorem \ref{globaldecay}:]  Given $k>48$ and $\eta>0$. To construct the sequence of the solutions,  
for $j\gg1$,  we introduce the non-negative radial function $G_0(v):=2^{-(k+3)j}\vphi(2^{-j}v)+g_j(v)$. Here $\vphi$ is defined in \eqref{7.1} and $g_j$ is   a non-negative function supported in $\{v\in\R^3|1<|v|<4\}$ satisfying that $\|g_j\|_{L^\infty}<4$ and
	\beno
	\int_{\R^3}G_0(v)dv=1,\quad \int_{\R^3}G_0(v)|v|^2=3.
	\eeno
If $F_0=\mu+\f{\vep_0}{100^{100}}(G_0-\mu)=\mu+f_0$, then we have
\beno
\|f_0\|_{L^2_{48}}&\leq& \f{\vep_0}{100^{100}}(\|G_0\|_{L^2_{48}}+\|\mu\|_{L^2_{48}})\leq \f{\vep_0}{100^{100}} (2^{-(k-48+3/2)j}\|\vphi\|_{L^2}+\|g_j\|_{L^2_{48}}+\|\mu\|_{L^2_{48}})<\vep_0;\\
\|f_0\|_{L^1_{k}}&\leq & \f{\vep_0}{100^{100}}(\|G_0\|_{L^1_{k}}+\|\mu\|_{L^1_{k}})\leq \f{\vep_0}{100^{100}}(\|\vphi\|_{L^1}+4^k\|g\|_{L^1}+\|\mu\|_{L^1_k})\leq C_k;\\
\|\cP_jF_0\|_{L^1_k}&\geq& \|\cP_jf_0\|_{L^1_k}-\|\cP_j\mu\|_{L^1_k}\geq\f{\vep_0}{100^{100}}(\|\vphi\|_{L^1}-\|\cP_j\mu\|_{L^1_k})-\|\cP_j\mu\|_{L^1_k}\gs \f{\vep_0}{100^{100}}-e^{-2^j}2^{-kj}\geq \f{\vep_0}{200^{100}}.
\eeno
Therefore $f(t,v)\in \mathfrak{A}$ is a solution of \eqref{14} with initial data $f_0$.

Moreover, for any $0<\de<|\gamma|$, we have $\|f_0\|_{L^2_{k+3/2+\de}} \leq\f{\vep_0}{100^{100}} (2^{\de j}\|\vphi\|_{L^2}+\|\mu\|_{L^2_{k+3/2+\de}})\ls C_k2^{\de j}$, from which and the propagation of  polynomial moment in Theorem \ref{globaldecay}, we get that $\|F(t)\|_{L^1_k}\leq\|f(t)\|_{L^1_k}+\|\mu\|_{L^1_k}\leq\|f(t)\|_{L^2_{k+3/2+\de}}+\|\mu\|_{L^1_k}\leq C_{\de,k} 2^{\de j}$.
Thus from Lemma \ref{le6.1} and the fact that $F\in L^2_tH^s_3$, we have
	\beno
	\|\cP_jF(t)\|_{L^1_k}\geq \|\cP_jF_0\|_{L^1_k}-C_{\de,k}2^{j(\ga+\de)}\int_0^t(1+\|F\|_{H^s_3})dt\geq \|\cP_jF_0\|_{L^1_k}-C_{\de,k}2^{j(\ga+\de)}(1+t) ,
	\eeno
	which yields that  for $t\leq \frac{1}{2C_{\de,k}}2^{-j(\ga+\de)}\|\cP_jF_0\|_{L^1_k}-1$,
	$\|\cP_jF(t)\|_{L^1_k}\geq\frac{1}{2}\|\cP_jF_0\|_{L^1_k}.$
	Observe that
	$\|f(t)\|_{L^1}=\|F(t)-\mu\|_{L^1}\ge \|\cP_j(F-\mu)\|_{L^1}\ge C_k2^{-jk}\big(\f12 \|\cP_jF_0\|_{L^1_k}-\|\cP_j\mu\|_{L^1_k}\big).$ Thus when $t+1\sim C_{\de,k}2^{-j(\ga+\de)}$, we have
	\beno
	\|f(t)\|_{L^1}\ge \f14C_k2^{-jk}\|\cP_jF_0\|_{L^1_k}\ge C_{\de,k}2^{-jk}\ge C_{\de,k}(1+t)^{-\f{k}{|\gamma+\de|}}. 
	\eeno
It yields that $\sup_{f\in \mathfrak{A}}\|f(t)(1+t)^{\f k{|\ga|}+\eta}\|_{L^\infty_tL^1_{x,v}}>C_{\de,k}\sup_{t>0}(1+t)^{k(\f1{|\ga|}-\f1{|\ga+\de|}+\eta)}=+\infty$ since $\de$ can be sufficiently small. It ends the second part of the main theorem.
\end{proof}

\subsection{Proof of Theorem \ref{globaldecay}: Exponential moment} Since the existence of solution has been established, we now focus on the propagation of the exponential moment. To keep the length of the paper, we only
 provide a detailed proof for high singularity $2s\geq1$. The case $2s<1$ can be handled similarly.
\begin{lem}
Suppose $\ga\in(-3,1],\ga+2s>-1,2s\geq1$ and $\cN_0:=[2s^{-1}]+1$. Then there exist constants $c_1,c_2>0$ depending on $\ga_1,\ga_2$ such that for $j\geq M_0$ with $M_0$ suitably large fixed, we have
\ben\label{ene}
&&\frac{d}{dt}\sum_{|\al|=0,2}\|\<\cdot\>^{2js-4\cN_0s|\al|}\pa^\al_xf\|^2_{L^2_{x,v}}+c_1\sum_{|\al|=0,2}j^s\|\<\cdot\>^{2js-4\cN_0s|\al|}\pa^\al_xf\|^2_{L^2_xL^2_{\ga/2}}\\&&+c_2\sum_{|\al|=0,2}\|\<\cdot\>^{2js-4\cN_0s|\al|}\pa^\al_xf\|^2_{L^2_xH^s_{\ga/2}}\ls \mathfrak{G}_j+\mathfrak{H}_j, 
\qquad\mbox{where}\notag\\
&&\mathfrak{G}_j=\|f\|_{H^2_xL^2_{17}}\Big(\sum_{|\al|=0,2}\|\<\cdot\>^{2js-4\cN_0s|\al|}\pa^\al_xf\|^2_{L^2_xH^s_{\ga/2}}+j^8\sum_{|\al|=0,2}\|\<\cdot\>^{2js-4\cN_0s|\al|-8}\pa^\al_xf\|^2_{L^2_xH^s_{\ga/2}}\notag\\
&&+j^s\sum_{|\al|=0,2}\|\<\cdot\>^{2js-4\cN_0s|\al|}\pa^\al_xf\|^2_{L^2_xL^2_{\ga/2}}+j^{6+s}\sum_{|\al|=0,2}\|\<\cdot\>^{2js-4\cN_0s|\al|-7}\pa^\al_xf\|^2_{L^2_xL^2_{\ga/2}}\Big)\notag\\
&&+\sum_{|\al|=0,2}\Big(j^2\|\<\cdot\>^{2js-4\cN_0s|\al|
-2}\pa^\al_xf\|^2_{L^2_xH^s_{\ga/2}}+j^{1+s}\|\<\cdot\>^{2js-4\cN_0s|\al|-1}\pa^\al_xf\|^2_{L^2_xL^2_{\ga/2}}\notag\\
&&+j^{8+s}\|\<\cdot\>^{2js-4\cN_0s|\al|-8}\pa^\al_xf\|^2_{L^2_xL^2_{\ga/2}}\Big)+j^{14}\|\mu\<\cdot\>^{2js+12}\|^2_{L^2_{v}}\|f\|^2_{H^2_xL^2_{17}},\notag
\een
and
\ben\label{HJ}
\mathfrak{H}_j&=&\sum_{|\al|=2}\int_{\T^3}\hat{\mathcal{H}}_{j-4\cN_0}(\pa^{\al_1}_xf,\pa^{\al_2}_xf,\pa^\al_xf)dx+\sum_{|\al|=0,2}\int_{\T^3}\hat{\mathcal{H}}_{j-2\cN_0|\al|}(\pa^\al_xf,\mu,\pa^\al_xf)dx\notag\\&&+\sum_{|\al|=0,2}\int_{\T^3}\hat{\mathcal{H}}_{j-2\cN_0|\al|}(F,\pa^\al_xf,\pa^\al_xf)dx.
\een
Here $\hat{\mathcal{H}}_j$ is defined in Lemma \ref{B_3}.
\end{lem}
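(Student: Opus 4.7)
The plan is to carry out a standard energy estimate for the perturbation equation $\partial_t f + v\cdot\nabla_x f = Q(F,f) + Q(f,\mu)$ with $F=\mu+f$, testing $\partial_x^\alpha$ of the equation against $\partial_x^\alpha f\,\langle v\rangle^{4js-8\mathcal{N}_0 s|\alpha|}$ and summing over $|\alpha|\in\{0,2\}$. The transport term vanishes after integration by parts on $\mathbb{T}^3$, so the time derivative on the left of \eqref{ene} equals the sum of (i) the main coercive contribution $(Q(F,\partial^\alpha f),\partial^\alpha f\,\langle v\rangle^{4j's})_{L^2_{x,v}}$ with $j':=j-2\mathcal{N}_0|\alpha|$; (ii) the linear contribution $(Q(\partial^\alpha f,\mu),\partial^\alpha f\,\langle v\rangle^{4j's})_{L^2_{x,v}}$; and (iii) the spatial commutator $\sum_{|\alpha_1|\neq 0}(Q(\partial^{\alpha_1}f,\partial^{\alpha_2}f),\partial^\alpha f\,\langle v\rangle^{4j's})_{L^2_{x,v}}$ coming from the Leibniz rule, which only appears when $|\alpha|=2$.

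For (i), I would invoke estimate \eqref{exp2} of Theorem \ref{T24} (applicable since $M_0$ can be chosen so that $2j's\ge 22$). After integrating in $x$, this furnishes the coercive pair $c_1\|\partial^\alpha f\|^2_{L^2_x H^s_{2j's+\gamma/2}}+\tfrac{c_2}{8}\|b(\cos\theta)(1-\cos^{4j's-3-\gamma}(\theta/2))\|_{L^1_\theta}\|\partial^\alpha f\|^2_{L^2_xL^2_{2j's+\gamma/2}}$—the second of which is $\gtrsim j^s\|\partial^\alpha f\|^2_{L^2_xL^2_{2j's+\gamma/2}}$ by Lemma~\ref{lemma2.2}(i)—plus the long list of polynomial-moment corrections (each carrying either an $\|F\|_{L^2_{14}}$ factor, explicit $j$-powers up to $j^{4+s}$, or lower velocity weight) and the residual boundary term $\mathcal{H}_{j'}(F,\partial^\alpha f,\partial^\alpha f)$. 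The hypothesis \eqref{Fcondition} is secured because conservation of mass/momentum/energy from \eqref{f0} gives $\|F(x,\cdot)\|_{L^1}=1$, $\|F(x,\cdot)\|_{L^1_2}\le 4$, and $\|F(x,\cdot)\|_{L\log L}\le 4$, using the smallness of $\|f\|_{X_{k_1}}$. For (ii), I apply \eqref{exp1} of Lemma~\ref{L21}, producing the Gaussian-weighted term $j^7\|\mu\langle\cdot\rangle^{2j's+12}\|_{L^2}\|\partial^\alpha f\|_{L^2_{17}}\|\partial^\alpha f\|_{L^2_{2j's+\gamma/2-1}}$ (which, after $x$-integration, matches the last summand of $\mathfrak{G}_j$ once $|\alpha|=0$ is allowed to dominate by Sobolev embedding $H^2_x\hookrightarrow L^\infty_x$) plus analogous lower-order terms and the residual $\mathcal{H}_{j'}(\partial^\alpha f,\mu,\partial^\alpha f)$. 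For (iii), I first decompose the $x$-integral as in the proof of Lemma~\ref{Qspatial} and then apply Lemma~\ref{L23} with index $j-4\mathcal{N}_0$; every produced term is either quadratic in $\|f\|_{H^2_xL^2_{14}}$ (hence small by smallness assumption and absorbed into $\mathfrak{G}_j$) or carries strictly lower velocity weight, plus the residual $\mathcal{H}_{j-4\mathcal{N}_0}(\partial^{\alpha_1}f,\partial^{\alpha_2}f,\partial^\alpha f)$.

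At this stage every remaining $\mathcal{H}_{j'}(\cdot,\cdot,\cdot)$ is split via Lemma~\ref{B_3} as $\bar{\mathcal{H}}_{j'}+\hat{\mathcal{H}}_{j'}$. The $\bar{\mathcal{H}}$-part is a finite sum of terms of the form $j^{a}\|\cdot\|_{L^2_{14}}\|\cdot\|_{H^s_{2j's-b+\gamma/2}}\|\cdot\|_{H^s_{2j's+\gamma/2}}$ with $a\le 2+s$ and $b\ge 2$: after $x$-integration, Cauchy--Schwarz, and use of $H^2_x\hookrightarrow L^\infty_x$, every such term either carries a factor $\|f\|_{H^2_xL^2_{17}}$ (and fits exactly into the first bracket of $\mathfrak{G}_j$) or has a structure that matches the explicit lower-order polynomial-in-$j$ contributions listed in the second and third brackets of $\mathfrak{G}_j$. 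The $\hat{\mathcal{H}}$-part is exactly what we keep; its $x$-integrated version, summed over $|\alpha|\in\{0,2\}$ and over the commutator decomposition $\alpha=\alpha_1+\alpha_2$, is precisely $\mathfrak{H}_j$ defined in \eqref{HJ}.

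The main technical obstacle is bookkeeping rather than ideology. First, the shift $4\mathcal{N}_0 s|\alpha|$ in the weight is chosen so that when $|\alpha|=2$ and the collision operator is split across $\partial^{\alpha_1}f,\partial^{\alpha_2}f$, the total weight on the spatial-commutator $\mathcal{H}$ aligns with the index $j-4\mathcal{N}_0$ (rather than $j-2\mathcal{N}_0|\alpha|=j-4\mathcal{N}_0$ for the diagonal terms), and the differential-order mismatch is absorbed using the $H^2_x$ norm. Second, the explicit $j$-powers (ranging from $j$ up to $j^{14}$) must be matched exactly to those displayed in $\mathfrak{G}_j$; in particular, the factor $j^{14}\|\mu\langle\cdot\rangle^{2js+12}\|^2_{L^2}$ arises by squaring the linear-in-$j^7$ Gaussian term from \eqref{exp1} after Young's inequality. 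Third, one must verify that, by choosing $M_0$ large enough, interpolation estimates such as $j^{a}\|h\|_{L^2_{2js-b+\gamma/2}}\le \varepsilon j^s\|h\|_{L^2_{2js+\gamma/2}}+C_{\varepsilon,a,b}\|h\|_{L^2}$ allow a small fraction of the coercive gain to absorb the unwanted intermediate-weight terms without affecting $\mathfrak{H}_j$; the smallness factor $\|f\|_{H^2_xL^2_{17}}$ plus the fixed-weight factors displayed in $\mathfrak{G}_j$ are exactly what survives after such absorption.
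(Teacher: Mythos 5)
Your proposal is correct and follows essentially the same route as the paper: the same energy estimate with weights $\langle v\rangle^{4js-8\cN_0 s|\al|}$, the same three-way split into the $Q(F,\pa^\al_x f)$, $Q(\pa^\al_x f,\mu)$ and spatial-commutator pieces treated by Theorem \ref{T24} \eqref{exp2}, Lemma \ref{L21} \eqref{exp1} and Lemma \ref{L23} (via the Fourier decomposition in $x$ as in Lemma \ref{Qspatial}), followed by the split $\mathcal{H}_j\le\bar{\mathcal{H}}_j+\hat{\mathcal{H}}_j$ of Lemma \ref{B_3}, Young's inequality, and Lemma \ref{lemma2.2}(i) to extract the $j^s$ coercivity for $j\ge M_0$. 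The only point you leave implicit, which the paper states explicitly, is that the full-weight term $\|b(\cos\th)\sin^{2js-3/2-\ga/2}(\th/2)\|_{L^1_\th}\,|[\cdot]|^2_{L^2_{2js+\ga/2}}$ coming from \eqref{exp1} has coefficient $\ls j^{-1}$ and is therefore absorbed by the $j^s$ gain from \eqref{exp2}.
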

\begin{proof}
Since $\cN_0:=[2s^{-1}]+1$, then $4\cN_0s\geq8$. In what follows, we shall focus on $\mathfrak{G}_j$ since
 $\mathfrak{H}_j$ will be handled in Lemma \ref{h_j}. By applying the standard energy method to the equation
$(\partial_t + v\cdot\nabla_x)\pa^\al_xf=Q(F,\pa^\al_xf)+Q(\pa^\al_xf,\mu)+\sum_{|\al_1|\neq0}Q(\pa^{\al_1}_x f,\pa^{\alpha -\alpha_1}_x f)$, we derive that
\ben\label{energyeq}
&&\frac{d}{dt}\sum_{|\al|=0,2}\|\<\cdot\>^{2js-4\cN_0s|\al|}\pa^\al_xf\|^2_{L^2_{x,v}}= \sum_{|\al|=0,2}\int_{\T^3}(Q(F,\pa^\al_xf),\pa^\al_xf\<v\>^{4js-8\cN_0s|\al|})dx\\
\notag&&+\sum_{|\al|=0,2}\int_{\T^3}(Q(\pa^\al_xf,\mu),\pa^\al_xf\<v\>^{4js-8\cN_0s|\al|})dx+\sum_{|\al_1|\neq0}\int_{\T^3}(Q(\pa^{\al_1}_x f,\pa^{\alpha -\alpha_1}_x f),\<v\>^{4js-8\cN_0s|\al|}\pa^\al_xf)dx\\
\notag&&:=\mathcal{I}_1+\mathcal{I}_2+\mathcal{I}_3.
\een

\underline{\it Step 1: Estimate of $\mathcal{I}_3$.} Since $|
\al|=2$. Thanks to Lemma \ref{B_3}, we have
\beno
&&\left|\int_{\T^3}(Q(\pa^{\al_1}_xf,\pa^{\al_2}_xf),\<v\>^{4js-16\cN_0s}\pa^\al_xf)dx\right|=\left|\int_{\T^3}(Q(\pa^{\al_1}_xf,\<v\>^{2js-8\cN_0s}\pa^{\al_2}_xf),\<v\>^{2js-8\cN_0s}\pa^\al_xf)dx\right|\\
&&+\left|\int_{\T^3}(\<v\>^{2js-8\cN_0s}Q(\pa^{\al_1}_xf,\pa^{\al_2}_xf)-Q(\pa^{\al_1}_xf,\<v\>^{2js-8\cN_0s}\pa^{\al_2}_xf),\<v\>^{2js-8\cN_0s}\pa^\al_xf)dx\right|:=\mathcal{I}_{3,1}+\mathcal{I}_{3,2}.
\eeno
Due to Lemma \ref{L12},   one has
$\mathcal{I}_{3,1}\leq\|f\|_{H^2_xL^2_{14}}\sum\limits_{|\al|=2}\|\<\cdot\>^{2js-8\cN_0s}\pa^\al_xf\|^2_{L^2_xH^s_{\ga/2}}.
 $
By applying the Fourier transform w.r.t. $x$ variable to $\mathcal{I}_{3,2}$(see the proof of Lemma \ref{Qspatial}), Lemma \ref{L23}  and \eqref{estiprodx} will yield that
 \beno
 \mathcal{I}_3&\ls&
 j^s\|f\|_{H^2_xL^2_{14}}\sum_{|\al|=0,2}\|\<\cdot\>^{2js-4\cN_0s|\al|}\pa^\al_xf\|^2_{L^2_xL^2_{\ga/2}}+ \|f\|_{H^2_xL^2_{14}}\sum_{|\al|=0,2}\|\<\cdot\>^{2js-4\cN_0s|\al|}\pa^\al_xf\|^2_{L^2_xH^s_{\ga/2}}\\
 &&+j^{4}\|f\|_{H^2_xL^2_{14}}(\sum_{|\al|=0,2}\|\<\cdot\>^{2js-4\cN_0s|\al|-8}\pa^\al_xf\|_{L^2_xH^s_{\ga/2}})(\sum_{|\al|=0,2}\|\<\cdot\>^{2js-4\cN_0s|\al|}\pa^\al_xf\|_{L^2_xH^s_{\ga/2}})\\
 &&+j^{\f12(1+s)} \|f\|_{H^2_xL^2_{14}}(\sum_{|\al|=0,2}\|\<\cdot\>^{2js-4\cN_0s|\al|-2s}\pa^\al_xf\|_{L^2_xH^s_{\ga/2}})(\sum_{|\al|=0,2}\|\<\cdot\>^{2js-4\cN_0s|\al|-2+s}\pa^\al_xf\|_{L^2_xH^s_{\ga/2}})\\
 &&+j^{\f12(1+s)} \|f\|_{H^2_xL^2_{14}}(\sum_{|\al|=0,2}\|\<\cdot\>^{2js-4\cN_0s|\al|-2s}\pa^\al_xf\|_{L^2_xH^s_{\ga/2}})(\sum_{|\al|=0,2}\|\<\cdot\>^{2js-4\cN_0s|\al|-1+s}\pa^\al_xf\|_{L^2_xL^2_{\ga/2}})\\
 &&j^{3+s}\|f\|_{H^2_xL^2_{14}}(\sum_{|\al|=0,2}\|\<\cdot\>^{2js-4\cN_0s|\al|-7}\pa^\al_xf\|_{L^2_xL^2_{\ga/2}})(\sum_{|\al|=0,2}\|\<\cdot\>^{2js-4\cN_0s|\al|}\pa^\al_xf\|_{L^2_xL^2_{\ga/2}})\\
 &&+\int_{\T^3}\mathcal{H}_{j-4\cN_0}(\pa^{\al_1}_xf,\pa^{\al_2}_xf,\pa^\al_xf)dx.
 \eeno

\underline{\it Step 2: Estimate of $\mathcal{I}_2$.} Again by Fourier transform w.r.t. $x$ variable, Lemma \ref{L21} and \eqref{estiprodx}, we have
\beno
&&\mathcal{I}_2-\|b(\cos\th)\sin^{2js-3/2-\ga/2}\|_{L^1_\th}\sum_{|\al|=0,2}\|\<\cdot\>^{2js-4\cN_0s|\al|}\pa^\al_xf\|^2_{L^2_xL^2_{\ga/2}}\\
&\ls& \sum_{|\al|=0,2}\Big(j\|\<\cdot\>^{2js-4\cN_0s|\al|-1}\pa^\al_xf\|^2_{L^2_xL^2_{\ga/2}}+j^{4}\|\<\cdot\>^{2js-4\cN_0s|\al|-4}\pa^\al_xf\|^2_{L^2_xL^2_{\ga/2}}+\|\<\cdot\>^{2js-4\cN_0s|\al|-1/2}\pa^\al_xf\|^2_{L^2_xL^2_{\ga/2}}\\
&&+j^7\|\mu\<\cdot\>^{2js+12}\|_{L^2_{v}}\|\<\cdot\>^{17}\pa^\al_xf\|_{L^2_xL^2_v}\|\<\cdot\>^{2js-4\cN_0s|\al|}\pa^\al_xf\|_{L^2_xL^2_{\ga/2}}\Big)+\sum_{|\al|=0,2}\int_{\T^3}\mathcal{H}_{j-2\cN_0|\al|}(\pa^\al_xf,\mu,\pa^\al_xf)dx.
\eeno

\underline{\it Step 3: Estimate of $\mathcal{I}_1$.} Following the proof of Theorem \ref{T24} (\ref{exp2}),  we may get that
\beno
&&\mathcal{I}_1+\sum_{|\al|=0,2}(\ga_1\|\<\cdot\>^{2js-4\cN_0s|\al|}\pa^\al_xf\|^2_{L^2_xH^s_{\ga/2}}+\f{\ga_2}8\|b(\cos\th)(1-\cos^{4js-3-\ga}(\th/2))\|_{L^1_\th}\|\<\cdot\>^{2js-6|\al|
}\pa^\al_xf\|^2_{L^2_xL^2_{\ga/2}})\\
&\ls&\sum_{|\al|=0,2}\Big(\f1j\|f\|_{H^2_xL^2_{14}}\|\<\cdot\>^{2js-4\cN_0s|\al|}\pa^\al_xF\|_{L^2_xL^2_{\ga/2}}\|\<\cdot\>^{2js-4\cN_0s|\al|}\pa^\al_xf\|_{L^2_xL^2_{\ga/2}}+j\|f\|_{H^2_xL^2_{14}}\\
&&\times\|\<\cdot\>^{2js-4\cN_0s|\al|-2}\pa^\al_xF\|_{L^2_xH^s_{\ga/2}}\|\<\cdot\>^{2js-4\cN_0s|\al|}\pa^\al_xf\|_{L^2_xH^s_{\ga/2}}+j^4\|f\|_{H^2_xL^2_{14}}\|\<\cdot\>^{2js-6|\al|
-8}\pa^\al_xF\|_{L^2_xH^s_{\ga/2}}\\
&&\times\|\<\cdot\>^{2js-4\cN_0s|\al|}\pa^\al_xf\|_{L^2_xH^s_{\ga/2}}+\|F\|^4_{H^2_xL^2_{14}}\|\<\cdot\>^{2js-4\cN_0s|\al|}\pa^\al_xf\|^2_{L^2_xL^2_{\ga/2}}+j\|F\|_{H^2_xL^2_{14}}\\
&&\times\|\<v\>
^{2js-4\cN_0s|\al|
-2}\pa^\al_xf\|_{L^2_xH^s_{\ga/2}}\|\<\cdot\>^{2js-4\cN_0s|\al|}\pa^\al_xf\|_{L^2_xH^s_{\ga/2}}+j^4\|F\|_{H^2_xL^2_{14}}\|\<\cdot\>^{2js-4\cN_0s|\al|-8}\pa^\al_xf\|_{L^2_xL^2_{\ga/2}}\\
&&\times\|\<\cdot\>^{2js-4\cN_0s|\al|}\pa^\al_xf\|_{L^2_xL^2_{\ga/2}}+j^{1+s}\|F\|_{H^2_xL^2_{14}}(\|\<\cdot\>^{2js-4\cN_0s|\al|-2}\pa^\al_xf\|^2_{L^2_xH^s_{\ga/2}}+\|\<\cdot\>^{2js-4\cN_0s|\al|
-1}\pa^\al_xf\|^2_{L^2_xL^2_{\f\ga 2}})\\
&&+j^{s}\|F\|_{H^2_xL^2_{14}}\|\<\cdot\>^{2js-4\cN_0s|\al|-1}\pa^\al_xf\|_{L^2_xL^2_{\ga/2}}\|\<\cdot\>^{2js-4\cN_0s|\al|}\pa^\al_xf\|_{L^2_xL^2_{\ga/2}}+j^{3+s}\|F\|_{H^2_xL^2_{14}}\\
&&\times\|\<\cdot\>^{2js-4\cN_0s|\al|-7}\pa^\al_xf\|_{L^2_xL^2_{\ga/2}}\|\<\cdot\>^{2js-4\cN_0s|\al|}\pa^\al_xf\|_{L^2_xL^2_{\ga/2}}\Big)+\sum_{|\al|=0,2}\int_{\T^3}\mathcal{H}_{j-2\cN_0|\al|}(F,\pa^\al_xf,\pa^\al_xf)dx.
\eeno
We address that the Fourier transform w.r.t. $x$ variable and \eqref{estiprodx} are frequently used in particular when we try to give the upper bounds in the proof Theorem \ref{T24}.

Recalling that $\mathcal{H}_j\le\bar{\mathcal{H}}_j+\hat{\mathcal{H}}_j$, one may easily see that $\int_{\T^3}\bar{\mathcal{H}}_jdx$ enjoys the same estimate as $\mathcal{I}_i,i=1,2,3$. Plugging these estimates   into (\ref{energyeq}) and using facts that $\|F\|^4_{H^2_xL^2_{14}}\ls\|F\|_{H^2_xL^2_{14}}$ (thanks to bound of solution in Theorem \ref{globaldecay}: polynomial case), $ab\leq \vep a^2+b^2/(4\vep) $ and   $F=\mu+f$, we obtain that
\beno
&&\frac{d}{dt}\sum_{|\al|=0,2}\|\<\cdot\>^{2js-4\cN_0s|\al|}\pa^\al_xf\|^2_{L^2_{x,v}}+(\f{\ga_2}8\|b(\cos\th)(1-\cos^{4js-3-\ga}(\th/2))\|_{L^1_\th}-\|b(\cos\th)\sin^{2js-3/2-\ga/2}\|_{L^1_\th})\\
&&\times\sum_{|\al|=0,2}\|\<\cdot\>^{2js-4\cN_0s|\al|}\pa^\al_xf\|^2_{L^2_xL^2_{\ga/2}}+\ga_1\sum_{|\al|=0,2}\|\<\cdot\>^{2js-4\cN_0s|\al|}\pa^\al_xf\|^2_{L^2_xH^s_{\ga/2}}\leq \mathrm{L.O.T},\\
&&\mathrm{L.O.T}=\|f\|_{H^2_xL^2_{17}}\Big(\sum_{|\al|=0,2}\|\<\cdot\>^{2js-4\cN_0s|\al|}\pa^\al_xf\|^2_{L^2_xH^s_{\ga/2}}+j^8\sum_{|\al|=0,2}\|\<\cdot\>^{2js-4\cN_0s|\al|-8}\pa^\al_xf\|^2_{L^2_xH^s_{\ga/2}}\\
&&+j^s\sum_{|\al|=0,2}\|\<\cdot\>^{2js-4\cN_0s|\al|}\pa^\al_xf\|^2_{L^2_xL^2_{\ga/2}}+j^{6+s}\sum_{|\al|=0,2}\|\<\cdot\>^{2js-4\cN_0s|\al|-7}\pa^\al_xf\|^2_{L^2_xL^2_{\ga/2}}\Big)\\
&&+\sum_{|\al|=0,2}\Big(\vep\|\<\cdot\>^{2js-4\cN_0s|\al|}\pa^\al_xf\|^2_{L^2_xH^s_{\ga/2}}+\f1{4\vep}j^2\|\<\cdot\>^{2js-4\cN_0s|\al|
-2}\pa^\al_xf\|^2_{L^2_xH^s_{\ga/2}}+\vep j^s\|\<\cdot\>^{2js-4\cN_0s|\al|}\pa^\al_xf\|^2_{L^2_xL^2_{\ga/2}}\\
&&+\f1{4\vep}j^{1+s}\|\<\cdot\>^{2js-4\cN_0s|\al|-1}\pa^\al_xf\|^2_{L^2_xL^2_{\ga/2}}+\f1{4\vep}j^{8+s}\|\<\cdot\>^{2js-4\cN_0s|\al|-8}\pa^\al_xf\|^2_{L^2_xL^2_{\ga/2}}\Big)\\
&&+\|\<\cdot\>^{2js-4\cN_0s|\al|-1/2}\pa^\al_xf\|^2_{L^2_xL^2_{\ga/2}}+\f1{4\vep}j^{14}\|\mu\<\cdot\>^{2js+12}\|^2_{L^2_{v}}\|f\|^2_{H^2_xL^2_{17}}+\sum_{|\al|=2}\int_{\T^3}\hat{\mathcal{H}}_{j-4\cN_0}(\pa^{\al_1}_xf,\pa^{\al_2}_xf,\pa^\al_xf)dx\\
&&+\sum_{|\al|=0,2}\Big(\int_{\T^3}\hat{\mathcal{H}}_{j-2\cN_0|\al|}(\pa^\al_xf,\mu,\pa^\al_xf)dx+\hat{\mathcal{H}}_{j-2\cN_0|\al|}(F,\pa^\al_xf,\pa^\al_xf)dx\Big).
\eeno
Recall that $(\f{\ga_2}8\|b(\cos\th)(1-\cos^{4js-3-\ga}(\th/2))\|_{L^1_\th}-\|b(\cos\th)\sin^{2js-3/2-\ga/2}\|_{L^1_\th})\gs j^s$ because of Lemma \ref{lemma2.2}(i). Choosing suitably small $\vep$ and sufficiently large $j\geq M_0$, we can conclude the desired results (\ref{ene}).
\end{proof}
\bigskip

Now, we concentrate on the terms $\mathfrak{H}_j$. Let $\be\in(0,2)$ and $a\in(0,1],c\in\R$. We define
\beno
\mathcal{M}(\f4\be j,c)&:=&\sum_{|\al|=0,2}\int_{\T^3\times\R^3}\frac{a^{\f4\be (j-7\cN_0)s}(\<v\>^\be)^{\f4\be js-8\cN_0s|\al|/\be+c}}{(\mathbb{G}(\f4\be j))^s}|\pa^\al_xf|^2(t,x,v)dvdx,\\
\mathcal{M}_F(\f4\be j,c)&:=&\sum_{|\al|=0,2}\int_{\T^3\times\R^3}\frac{a^{\f4\be (j-7\cN_0)s}(\<v\>^\be)^{\f4\be js-8\cN_0s|\al|/\be+c}}{(\mathbb{G}(\f4\be j))^s}|\pa^\al_xF|^2(t,x,v)dvdx,\\
\mathcal{M}_\mu(\f4\be j,c)&:=&\int_{\R^3}\frac{a^{\f4\be (j-7\cN_0)s}(\<v\>^\be)^{\f4\be js+c}}{(\mathbb{G}(\f4\be j))^s}\mu^2(v)dv,\\
\mathcal{M}_s(\f4\be j,c)&:=&\sum_{|\al|=0,2}\int_{\T^3\times\R^3}\frac{a^{\f4\be (j-7\cN_0)s}|\<D_v\>^s((\<v\>^\be)^{\f2\be js-4\cN_0s|\al|/\be+c/2}\pa^\al_xf)|^2}{(\mathbb{G}(\f4\be j))^s}(t,x,v)dvdx.
\eeno
We remark that  $\mathcal{M}_s(\f4\be j,c)$ is different from others since it involves operator $\<D_v\>^s$. Furthermore, due to Prop. \ref{expweight},  the summation of $\mathcal{M}_\mu(\f4\be j,c)$ over $j$ always converges for any $\be\in(0,2)$ and $a,c\in\R$.

\begin{lem}\label{M123} Let $l\in\R^+$, $\mathfrak{N}=[(\be l)/(s4)]$ and $N_0>\cN_0+\mathfrak{N}+1$, $M_0\gg\max\{\mathfrak{N}+1,N_0\}$. Then we have
\ben
&&\sum_{j=M_0}^\infty\cM(\f4\be j,0)\leq C_{s,\be}a^{-\f4\be s(1-\th_1)}\sum_{j=M_0}^\infty j^s\cM(\f4\be j,-s); \label{M1}\\
&&\sum_{j=N_0}^\infty j^{l+s}\cM(\f4\be j,(\ga-2l)/\be)\leq C_{l,s,\be} a^{\f4\be(\mathfrak{N}+\th_2)s}( \sum_{j=M_0}^\infty j^{s}\cM(\f4\be j,\ga/\be)+M_0 \|f\|^2_{H^2_xL^2_v}),\label{M2}\\
&&\sum_{j=M_0}^\infty j^{l}\cM_s(\f4\be j,\f{\ga-2l}\be)\leq a^{\f4\be(\mathfrak{N}+\th_2)s}( C_{l,s,\be}\sum_{j=M_0}^\infty\cM_s(\f4\be j,\f\ga\be)+C_{M_0,l,s,\be} \|f\|^2_{H^2_xH^s_v}),\label{M3}
\een
where $\th_1=(4/\be-1)/(4/\be)$,  and $\th_2=(\f ls-\f4\be \mathfrak{N})/\f4\be$.
\end{lem}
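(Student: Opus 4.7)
\medskip

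\noindent\textbf{Proof plan for Lemma \ref{M123}.} The strategy is to exploit the following purely algebraic observation at the level of a single summand. Writing $w=\langle v\rangle^\beta$ and
\[
\widetilde{B}_j(v)\eqdefa \frac{a^{\frac{4}{\beta}(j-7\cN_0)s}\,w^{\frac{4}{\beta}js-8\cN_0s|\alpha|/\beta}}{\mathbb{G}(\frac{4}{\beta}j)^s},
\]
Stirling's asymptotic \eqref{gammafun} gives
\[
\frac{\widetilde B_{j+1}}{\widetilde B_j}\;\sim\; C_{s,\beta}\Big(\frac{aw}{j}\Big)^{\frac{4s}{\beta}},
\]
which says that the power series $\sum_j \widetilde B_j$ peaks at $j^{*}\sim aw$. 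This identity is the universal device by which we will trade $v$-weight factors $w^{\lambda}$ for polynomial-in-$j$ factors at the cost of a negative power of $a$.

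\smallskip

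\noindent For \eqref{M1}, I would first use the ratio identity above to write, pointwise in $v$,
\[
w^s \widetilde B_j \;\sim\; C_{s,\beta}\,a^{-s}\,j^s\Big(\frac{\widetilde B_{j+1}}{\widetilde B_j}\Big)^{\beta/4}\widetilde B_j \;=\; C_{s,\beta}\,a^{-s}\,j^s\,\widetilde B_{j+1}^{\beta/4}\widetilde B_j^{1-\beta/4},
\]
then invoke Young's inequality $x^{\theta}y^{1-\theta}\le \theta x+(1-\theta)y$ with $\theta=\beta/4=1-\theta_1$ together with a trivial index shift $j\mapsto j-1$ in the $\widetilde B_{j+1}$-term to collapse both pieces into $j^s\widetilde B_j$. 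Multiplying by $w^{-s}|\partial^{\alpha}_x f|^2$ and integrating over $x,v$ then yields \eqref{M1} with the announced constant $C_{s,\beta}a^{-\frac{4}{\beta}s(1-\theta_1)}=C_{s,\beta}a^{-s}$.

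\smallskip

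\noindent For \eqref{M2}, the same idea must be iterated enough times to convert $w^{2l}$ into a power of~$j$: one computes $\widetilde B_{j-\cN}/\widetilde B_j\sim (j/(aw))^{\frac{4\cN s}{\beta}}$ for any integer $\cN\ge 1$. Since $\frac{4}{\beta}(\mathfrak{N}+\theta_2)s=l$ is in general non-integer, I would interpolate between the integer shifts $\mathfrak{N}$ and $\mathfrak{N}+1$ by using the geometric-mean bound $\widetilde B_{j-\mathfrak{N}}^{1-\theta_2}\widetilde B_{j-\mathfrak{N}-1}^{\theta_2}/\widetilde B_j\sim (j/(aw))^{l}$. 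This produces
\[
j^{l+s}\widetilde B_j\, w^{-2l}\;\lesssim\; a^{l}\,w^{-l}\,j^{s}\,\widetilde B_{j-\mathfrak{N}}^{1-\theta_2}\widetilde B_{j-\mathfrak{N}-1}^{\theta_2}\;\le\; C\,a^{l}\,j^{s}\bigl[(1-\theta_2)\widetilde B_{j-\mathfrak{N}}+\theta_2\widetilde B_{j-\mathfrak{N}-1}\bigr],
\]
where we used $w\ge 1$ to drop $w^{-l}\le 1$ and Young's inequality for the interpolation. Summing over $j\ge N_0$ and shifting the index backward by $\mathfrak{N}$ and $\mathfrak{N}+1$, we end with $\sum_{j'\ge N_0-\mathfrak{N}-1}(j')^s\widetilde B_{j'}$, which splits as $\sum_{j'\ge M_0}+\sum_{N_0-\mathfrak{N}-1\le j'\le M_0-1}$. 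The first piece is exactly the main term in \eqref{M2}, while the second is a finite-range sum ($O(M_0)$ terms) whose summands are polynomial in $v$ of bounded degree; bounding the weight crudely on the support of the already-propagated polynomial moments of $f$ yields the remainder $M_0\|f\|^2_{H^2_xL^2_v}$. The inequality \eqref{M3} is proved in exactly the same way, the only difference being that $\mathcal{M}_s$ involves $\langle D_v\rangle^s$ acting on the weighted quantity; the purely algebraic manipulations above are applied under the Fourier/Plancherel identity defining the $H^s_v$ seminorm, and the residual error produced by moving the weight through $\langle D_v\rangle^s$ is of commutator type and absorbed into the remainder $C_{M_0,l,s,\beta}\|f\|^2_{H^2_xH^s_v}$.

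\smallskip

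\noindent The main difficulty lies in the fractional-shift step of \eqref{M2}, where the exponent $\theta_2\in[0,1)$ prevents an exact integer translation; the geometric-mean interpolation combined with the identity $\frac{4}{\beta}(\mathfrak{N}+\theta_2)s=l$ is precisely the device that makes the residual power $w^{-l}$ nonnegative and hence harmlessly absorbed by $w\ge 1$. Secondary care is needed to bookkeep how the low-$j$ boundary terms produced by the backward shift are matched against the rough remainder, which requires the largeness assumptions $N_0>\cN_0+\mathfrak{N}+1$ and $M_0\gg\max\{\mathfrak{N}+1,N_0\}$.
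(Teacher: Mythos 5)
Your core mechanism for \eqref{M1} and for the main term of \eqref{M2}--\eqref{M3} is essentially the paper's: the Stirling ratio $\widetilde{B}_{j+1}/\widetilde{B}_j\sim C_{s,\be}\,(a\<v\>^{\be}/j)^{4s/\be}$ and the geometric-mean interpolation between the integer shifts $\mathfrak{N}$ and $\mathfrak{N}+1$ are exactly the equivalences \eqref{3.53} and \eqref{3.54}, with your pointwise Young inequality playing the role of the paper's H\"older inequality in the sum; for \eqref{M1} your argument is complete. One bookkeeping slip in \eqref{M2}: with $w=\<v\>^\be$, the weight deficit of $\cM(\f4\be j,(\ga-2l)/\be)$ relative to $\cM(\cdot,\ga/\be)$ is $w^{-2l/\be}=\<v\>^{-2l}$, not $w^{-2l}$, so after extracting $(aw/j)^{l}$ from the shift the residual weight is $\<v\>^{(\be-2)l}$, and its non-positivity is precisely where $\be<2$ enters (the paper flags this as "the first reason we need $\be<2$"); in your version this role of $\be<2$ disappears, which signals the mis-transcribed exponent, though the corrected computation still closes.

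The genuine gap is your treatment of the low-index terms produced by the backward shift, $j'\in[N_0-\mathfrak{N}-1,M_0-1]$ in \eqref{M2} and the analogous range in \eqref{M3}. These summands carry velocity weights $\<v\>^{4j's+\ga-8\cN_0 s|\al|}$ of degree up to roughly $4M_0s$, which is enormous since $M_0$ is large; they are neither of "bounded degree" nor controllable by the weight-free norm $\|f\|^2_{H^2_xL^2_v}$ appearing in the statement, and invoking "already-propagated polynomial moments of $f$" is unavailable here (the lemma is a pure inequality for the quantities $\cM$) as well as insufficient for such weights. The paper's device is the further interpolation \eqref{356}: pointwise, the $j'$-summand is comparable to the $\f{j'}{j'+m}$-power of the summand at the shifted-up index $j'+m\ge M_0$, and the elementary bound $X^{\th}\le X+1$ then converts each boundary term into a main-sum term plus a completely weight-free term; summing the at most $M_0$ boundary indices yields exactly the remainder $M_0\|f\|^2_{H^2_xL^2_v}$ in \eqref{M2}, and in \eqref{M3} the same step, combined with the weight/$\<D\>^s$ commutation whose constants depend on $M_0$ and an $\vep$-absorption with $\vep=C^{-1}_{M_0,s}$, produces the remainder $C_{M_0,l,s,\be}\|f\|^2_{H^2_xH^s_v}$. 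Without this step your argument does not reach the stated right-hand sides.
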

\begin{proof} Let $Z\in\R^+$ and $\mathfrak{N}_Z=[\be Z/4]$.
  If $j\gg\max\{7\cN_0,\mathfrak{N}+1\}$ and $m\in\N$, then
\ben
&&\f{j^s(\<v\>^\be)^{\f 4\be js}}{(\mathbb{G}(\f4\be j))^s}\sim_{s,\be}\left(\f{(j+1)^s(\<v\>^{\be})^{\f4\be (j+1)s}}{(\mathbb{G}(\f4\be (j+1)))^s}\right)^{\f j{j+1}}\sim_{s,\be}\left(\f{(j+m)^s(\<v\>^{\be})^{\f4\be (j+m)s}}{(\mathbb{G}(\f4\be (j+m)))^s}\right)^{\f j{j+m}},\label{356}\\
&&\frac{1}{(\mathbb{G}(\f4\be j))^s}\sim_{s,\be} \left(\frac{(\<v\>^\be)^{-s}}{(\mathbb{G}(\f4\be j-1))^s}\right)^{\th_1}\left(\frac{(\<v\>^\be)^{(\f4\be -1)s}}{(\mathbb{G}(\f4\be (j+1)-1))^s}\right)^{1-\th_1}\label{3.53};\\
&&\frac{(\<v\>^\be)^{(-Zs)}}{(\mathbb{G}(\f4\be j-Z))^s}\sim_{s,\be,Z} \left(\frac{(\<v\>^\be)^{\f4\be (-\mathfrak{N}_Z-1)s}}{(\mathbb{G}(\f4\be (j-\mathfrak{N}_Z-1)))^s}\right)^{\th_2}\left(\frac{(\<v\>^\be)^{\f4\be (-\mathfrak{N}_Z)s}}{(\mathbb{G}(\f4\be (j-\mathfrak{N}_Z)))^s}\right)^{1-\th_2}\label{3.54}.
\een
 To see these, we resort to \eqref{gammafun} which implies that $(\mathbb{G}(\f4\be j))^{s/j}\sim_{s,\be}(\mathbb{G}(\f4\be (j+1)))^{s/(j+1)}$, $(\mathbb{G}(\f4\be j-1))^s\sim_{s,\be} j^{-s}(\mathbb{G}(\f4\be j))^s$, $(\mathbb{G}(\f4\be (j+1)-1))^s\sim_{s,\be} j^{(\f4\be -1)s}(\mathbb{G}(\f4\be j))^s$, $\mathbb{G}(\f4\be (j-\mathfrak{N}_Z-1))^s\sim_{s,\be,Z} j^{(Z-\f4\be(\mathfrak{N}_Z+1))s}\mathbb{G}(\f4\be j-Z)^s$ and $\mathbb{G}(\f4\be (j-\mathfrak{N}_Z))^s\sim_{s,\be,Z} j^{(Z-\f4\be\mathfrak{N}_Z)s}\mathbb{G}(\f4\be j-Z)^s$.
From these together with facts $j\sim(j+1)^{j/(j+1)}$, $\th_1=(\f4\be-1)(1-\th_1)$  and $(\mathfrak{N}_Z+1)\th_2+\mathfrak{N}_Z(1-\th_2)=\be Z/4$, we end the proof of (\ref{356}-\ref{3.54}).

Now we can give the proof of (\ref{M1}). 
By the definition of $\cM(\f4\be j,0)$ and (\ref{3.53}), we deduce that
\beno
\sum_{j=M_0}^\infty\cM(\f4\be j,0)
&\leq&C_{s,\be}a^{-\f4\be s(1-\th_1)}\left(\sum_{j=M_0}^\infty\sum_{|\al|=0,2}\int_{\T^3\times\R^3}\frac{a^{\f4\be (j-7\cN_0)s}(\<v\>^\be)^{(\f4\be j-1)s-8\cN_0s|\al|/\be}}{(\mathbb{G}(\f4\be j-1))^s}|\pa^\al_xf|^2(t,x,v)dvdx\right)^{\th_1}\\
&&\times\left(\sum_{j=M_0}^\infty\sum_{|\al|=0,2}\int_{\T^3\times\R^3}\frac{a^{\f4\be (j-7\cN_0+1)s}(\<v\>^\be)^{(\f4\be (j+1)-1)s-8\cN_0s|\al|/\be}}{(\mathbb{G}(\f4\be (j+1)-1))^s}|\pa^\al_xf|^2(t,x,v)dvdx\right)^{1-{\th_1}}.
\eeno
Since $(\mathbb{G}(\f4\be j-1))^s\sim_{s,\be} j^{-s}(\mathbb{G}(\f4\be j))^s$, then   (\ref{M1}) follows by the definition of $j^s\cM(\f4\be j,-s)$.

To prove (\ref{M2}), by \eqref{gammafun}, we get that $j^{l+s}/(\mathbb{G}(\f4\be j))^s\sim_{l,s,\be} j^s/(\mathbb{G}(\f4\be j-l/s))^s$, which implies that
\beno
\sum_{j=N_0}^\infty j^{l+s}\cM(\f4\be j, \f{\ga-2l}{\be})\sim_{l,s,\be}
 \sum_{j=N_0}^\infty\sum_{|\al|=0,2}\int_{\T^3\times\R^3}j^{s}\frac{a^{\f4\be (j-7\cN_0)s}(\<v\>^\be)^{\f4\be js-8\cN_0s|\al|/\be+(\ga-2l)/\be}}{(\mathbb{G}(\f4\be j-l/s))^s}|\pa^\al_xf|^2(t,x,v)dvdx.
\eeno
Using (\ref{3.54}) with $Z=l/s$, the above can be bounded by
\beno
&&C_{l,s,\be}\left(\sum_{j=N_0}^\infty\sum_{|\al|=0,2}\int_{\T^3\times\R^3}j^s\frac{a^{\f4\be (j-7\cN_0-\mathfrak{N}-1)s}(\<v\>^\be)^{\f4\be (j-\mathfrak{N}-1)s-8\cN_0s\f{|\al|}{\be}+\f{\ga}\be+(1-\f2{\be})l}}{(\mathbb{G}(\f4\be (j-\mathfrak{N}-1)))^s}|\pa^\al_xf|^2(t,x,v)dvdx\right)^{\th_2}\\
&&\times a^{\f4\be(\mathfrak{N}+\th_2)s}\left(\sum_{j=N_0}^\infty\sum_{|\al|=0,2}\int_{\T^3\times\R^3}j^{s}\frac{a^{\f4\be (j-7\cN_0-\mathfrak{N})s}(\<v\>^\be)^{\f4\be (j-\mathfrak{N})s-8\cN_0s\f{|\al|}\be+\ga/\be+(1-\f2\be)l}}{(\mathbb{G}(\f4\be (j-\mathfrak{N})))^s}|\pa^\al_xf|^2(t,x,v)dvdx\right)^{1-\th_2}\\
&&\leq C_{l,s,\be}a^{\f4\be(\mathfrak{N}+\th_2)s}\sum^{\infty}_{j=N_0-\mathfrak{N}-1} j^s\cM(\f4\be j,\ga/\be+(1-2/\be)l).
\eeno
Recalling that $(\be-2)l<0$ (this is the first reason we need $\be<2$) and $\<v\>\geq1$, we have
\beno
&&\sum_{j=N_0-\mathfrak{N}-1}^\infty j^s\cM(\f4\be j,\f{\ga}\be+(1-\f2{\be})l)\le\sum_{j=N_0-\mathfrak{N}-1}^{M_0-1} j^s\cM(\f4\be j,\ga/\be)+\sum_{j=M_0}^\infty j^s\cM(\f4\be j,\ga/\be).
\eeno
When $j\in[N_0-\mathfrak{N}-1, M_0-1]$, due to (\ref{356}) it holds that $j^s\cM(\f4\be j,\ga/\be)\ls (j+M_0-N_0+\mathfrak{N}+1)^s\cM(\f4\be (j+M_0-N_0+\mathfrak{N}+1),\ga/\be)+\|f\|^2_{H^2_xL^2_v}$. It implies that $\sum_{j=N_0-\mathfrak{N}-1}^{M_0-1} j^s\cM(\f4\be j,\ga/\be)\leq \sum_{j=M_0}^\infty j^s\cM(\f4\be j,\ga/\be)+M_0\|f\|^2_{H^2_xL^2_v}$. Thus we derive (\ref{M2}).

We finally prove (\ref{M3}). It can be handled similarly as the proof of (\ref{M2}). The main difference lies in the fact that (\ref{M3}) involves the derivative w.r.t. $v$ variable. By \eqref{3.54} and
$\|\<D\>^s\<v\>^\ell f\|_{L^2}\sim_{s,\ell}\|\<v\>^\ell\<D\>^sf\|_{L^2}$ for a finite number of parameters $\ell$(see Lemma \ref{le1.1}), we first obtain that
\beno
\sum_{j=M_0}^\infty j^{l}\cM_s(\f4\be j,(\ga-2l)/\be)\leq C_{l,s,\be}a^{\f4\be(\mathfrak{N}+\th_2)s}\sum_{j=M_0-\mathfrak{N}-1}^\infty\cM_s(\f4\be j,\ga/\be+(1-2/\be)l).
\eeno
Again by $(\be-2)l\leq 0$ and $\<v\>\geq1$, we have
\beno
&&\sum_{j=M_0-\mathfrak{N}-1}^\infty\cM_s(\f4\be j,\ga/\be+(1-2/\be)l)\sim_{l,s,\be}\sum_{j=M_0-\mathfrak{N}-1}^\infty\sum_{|\al|=0,2}\int_{\T^3\times\R^3}\<v\>^{(\be-2)l}\frac{a^{\f4\be (j-7\cN_0)s}}{(\mathbb{G}(\f4\be j))^s}\\
&&\times |\<D_v\>^s((\<v\>^\be)^{(\f2\be js-4\cN_0s|\al|/\be+\ga/(2\be)}\pa^\al_xf)|^2(t,x,v)dvdx \le C_{l,s,\be}  \sum_{j=M_0-\mathfrak{N}-1}^{\infty}\cM_s(\f4\be j,\ga/\be).
\eeno
If $j\in[M_0-\mathfrak{N}-1,M_0-1]$, by the fact $\|\<D\>^s\<v\>^\ell f\|_{L^2}\sim_{s,\ell}\|\<v\>^\ell\<D\>^sf\|_{L^2}$, we have
\beno
&&\cM_s(\f4\be j,\ga/\be)
\sim_{M_0,s,\be}\int_{\T^3\times\R^3}\frac{a^{\f4\be (j-7\cN_0)s}(\<v\>^\be)^{\f2\be js-8\cN_0s|\al|/\be+\ga/\be}|\<D_v\>^s\pa^\al_xf)|^2}{(\mathbb{G}(\f4\be j))^s}(t,x,v)dvdx\\
&&\leq \vep C_{M_0,s}\cM_s(\f4\be (j+M_0-N_0+\mathfrak{N}+1),\ga/\be)+C_{\vep,M_0}\|f\|^2_{H^2_xH^s_v},
\eeno
where  (\ref{356})  and Young inequality are used in the last inequality.
We emphasize that the constant  $C_{M_0,s}$ comes from the commutator between $\<v\>^{j},j\in[M_0-\mathfrak{N}-1,M_0]$ and $\<D_v\>^s$. By choosing $\vep=C^{-1}_{M_0,s}$, we obtain that $ \sum_{j=M_0-\mathfrak{N}-1}^{M_0-1}\cM_s(\f4\be j,\ga/\be)\leq \sum_{j=M_0}^\infty\cM_s(\f4\be j,\ga/\be)+C_{M_0}\|f\|^2_{H^2_xH^s_v}$. Thus we derive (\ref{M3}).
We end the proof of (\ref{M1}), (\ref{M2}) and (\ref{M3}).
\end{proof}


Now we are in a position to give the estimate for $\mathfrak{H}_j$. Again we only provide a detailed proof for $1/2\leq s<1$.

\begin{lem}\label{h_j}
Let $\ga\in(-3,1],\ga+2s>-1,2s\geq1$ and $\cN_0:=[2s^{-1}]+1,M_0\gg 7\cN_0$. For sufficiently large $N$ depending on $\be$ and $s$,
 we have \beno \sum_{j=M_0}^\infty\frac{a^{\f4\be (j-7\cN_0)s}}{(\mathbb{G}(\f4\be j))^s}\mathfrak{H}_j\ls \mathfrak{K}_1+\mathfrak{K}_2,\eeno where  $\mathfrak{H}_j$ be defined in \eqref{HJ} and
\beno
&&\mathfrak{K}_1:=\vep\sum_{j=M_0}^\infty j^s\cM(\f4\be j,\ga/\be)+C_{\vep}a\|\<\cdot\>^{2Ns+6}f\|^2_{H^2_xL^2_v}+C_{\vep}(\|\<\cdot\>^{2Ns+6}f\|^2_{H^2_xL^2_v}+1)\\
&&\times\Big(\sum_{m=0}^N\sum_{j=M_0}^\infty j^{2ms+2}\cM(\f4\be j,(\ga-4ms+2s-4)/\be)+\sum_{m=0}^N\sum_{j=M_0}^\infty j^{2ms+s+2}\cM(\f4\be j,(\ga-4ms-4)/\be)\Big)\\
&&\mathfrak{K}_2:=C_{\vep}\sum_{j=M_0}^\infty\sum_{m=N}^{[\frac{j-\cN_0+1}{2}]}\cM(\f4\be (m+7\cN_0),-6/\be)\Big((j-m)^{2+s}\cM_\mu(\f4\be(j-m),(\ga-4)/\be)\\
&&+(j-m)^{2}\cM_\mu(\f4\be(j-m),(\ga-4+2s)/\be)\Big)+C_{\vep}\sum_{j=M_0}^\infty\sum_{m=N}^{[\frac{j-\cN_0+1}{2}]}\cM_\mu(\f4\be (m+7\cN_0),-6/\be)\\
&&\times\Big((j-m)^{2+s}\cM(\f4\be(j-m),(\ga-4)/\be)+(j-m)^{2}\cM(\f4\be(j-m),(\ga-4+2s)/\be)\Big)\\
&&+C_{\vep}\sum_{j=M_0}^\infty\sum_{m=N}^{[\frac{j-\cN_0+1}{2}]}\cM(\f4\be (m+7\cN_0),-6/\be)\Big((j-m)^{2+s}\cM(\f4\be(j-m),(\ga-4)/\be)\\
&&+(j-m)^{2}\cM(\f4\be(j-m),(\ga-4+2s)/\be)\Big).
\eeno
\end{lem}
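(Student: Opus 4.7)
The plan is to substitute the decomposition of $\hat{\mathcal{H}}_j$ from Lemma \ref{B_3} into each of the three constituents of $\mathfrak{H}_j$, producing six families of double sums (in $j$ and $m$). I shall describe the argument in detail only for the first family in $\hat{\mathcal{H}}_j$, since the other five are structurally identical: they differ only in the particular weight indices, the polynomial prefactors in $m$ and $j$, and in which of the three slots holds an $L^1$-norm versus an $L^2$-norm. The outer index shifts $j-4\cN_0$ and $j-2\cN_0|\al|$ appearing in $\mathfrak{H}_j$ are harmless translations of the summation variable, absorbed by the hypothesis $M_0\gg 7\cN_0$. For each family, I will split the $m$-sum at a fixed threshold $m=N$ (chosen large in terms of $\be$ and $s$): the head $1\le m\le N$ will produce the $\mathfrak{K}_1$ contributions, and the tail $m>N$ will produce the $\mathfrak{K}_2$ contributions via a Cauchy-product rearrangement.

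For the head $m\le N$, I will use the crude bound $(C_{j-1}^{j-m-1})^s=\binom{j-1}{m}^s\ls j^{ms}/(m!)^s$ and the Cauchy--Schwarz embedding $\|g\<\cdot\>^{2(ms+2)}\|_{L^1_v}\ls\|\<v\>^{2ms+2+3/2}g\|_{L^2_v}$, which for $m\le N$ is dominated by $\|\<v\>^{2Ns+6}g\|_{L^2}$. This delivers the prefactor $\|\<\cdot\>^{2Ns+6}f\|^2_{H^2_xL^2_v}$ appearing in $\mathfrak{K}_1$. The remaining $j$-sum, after Young's inequality $ab\le\vep a^2+C_\vep b^2$, splits into one piece proportional to $\sum_j j^s\cM(\tfrac4\be j,\ga/\be)$ (absorbed by the $\vep$-term of $\mathfrak{K}_1$, which will itself be absorbed by the dissipation in \eqref{ene}), and a second piece of the form $\sum_j j^{2ms+2}\cM(\tfrac4\be j,(\ga-4ms+2s-4)/\be)$, matching exactly the inner sums in $\mathfrak{K}_1$. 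The $a$-weighted leftover from small-$j$ terms created by the shift in \eqref{M2} of Lemma \ref{M123} is precisely what produces the $C_\vep a\|\<\cdot\>^{2Ns+6}f\|^2$ summand.

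For the tail $m>N$ I will use the elementary identity
\[
\frac{\binom{j-1}{m}^s}{\mathbb{G}(\tfrac4\be j)^s}\,=\,\left(\frac{\binom{j-1}{m}}{\binom{\tfrac4\be j}{\tfrac4\be m}}\right)^{\!s}\cdot\frac{1}{\mathbb{G}(\tfrac4\be m)^s\,\mathbb{G}(\tfrac4\be(j-m))^s}.
\]
Since $\be\in(0,2)$ gives $\tfrac4\be>2$, Stirling's formula \eqref{gammafun} shows that the ratio $\binom{j-1}{m}/\binom{\tfrac4\be j}{\tfrac4\be m}$ is uniformly bounded in $j,m$ (and in fact decays exponentially near $m\sim j/2$). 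Factoring $a^{\tfrac4\be(j-7\cN_0)s}=a^{\tfrac4\be(m+7\cN_0-14\cN_0)s}\cdot a^{\tfrac4\be(j-m-7\cN_0)s}$ and changing variables $n=j-m$, the double sum becomes a genuine Cauchy product, while the polynomial prefactors $j^{s+1}m^{-3s/2}\le (j-m)^{2+s}+m^{-3s/2+s+1}$ distribute onto the two factors. This produces precisely the blocks $\sum_m\cM(\tfrac4\be(m+7\cN_0),-6/\be)\cdot\sum_n n^{2+s}\cM_\mu(\tfrac4\be n,(\ga-4)/\be)$ and the other variants in $\mathfrak{K}_2$; the offset $-6/\be$ arises from the two-unit gap between $ms+2$ (the $L^1$-weight) and the $L^2$-embedding weight $ms+2+3/2$, combined with the sign of $\gamma$. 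The six families in $\hat{\mathcal{H}}_j$ distribute between the three blocks of $\mathfrak{K}_2$ depending on whether the $L^1$-factor sits on the slot occupied by $\mu$ (yielding $\cM_\mu\cdot\cM$ or $\cM\cdot\cM_\mu$) or on a slot occupied by $f$ or $F$ (yielding $\cM\cdot\cM$).

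The main obstacle is the bookkeeping: one must verify that, across all six families and all three outer terms in $\mathfrak{H}_j$, the combinations of weight indices, polynomial prefactors in $m$ and $j-m$, and the index shifts $j-2\cN_0|\al|$ and $j-4\cN_0$ match the exponents declared in $\mathfrak{K}_2$ after the Cauchy-product reorganisation. A secondary difficulty is the last three families in $\hat{\mathcal{H}}_j$ (where the $L^2$-factor carries weight $sm+\ga/4-4$ rather than $(j-m)s+\ga/4-1$): for these the roles of $m$ and $j-m$ are reversed, and the resulting Cauchy product sits on the complementary diagonal, which is why $\mathfrak{K}_2$ must also include terms where the small factor $\cM(\tfrac4\be(m+7\cN_0),-6/\be)$ appears in the middle slot. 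The remaining six $\hat{\mathcal{H}}$-pieces and the case $0<2s<1/2$ (using the $\cN_1$-decomposition) are handled verbatim, the only difference being the numerical value of $\mathfrak{N}$ in Lemma \ref{M123}.
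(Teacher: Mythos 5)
Your proposal is correct and follows essentially the same route as the paper: insert the six-family decomposition of $\hat{\mathcal{H}}_j$ from Lemma \ref{B_3}, split the $m$-sum at a large threshold $N$, treat the head by the crude binomial bound, the $L^1_v\to L^2_v$ weighted embedding and Young's inequality (yielding $\mathfrak{K}_1$), and treat the tail by the Stirling/binomial computation that uses $4/\be>2$ to redistribute $\mathbb{G}(\f4\be j)^{-s}$ as $\mathbb{G}(\f4\be(m+7\cN_0))^{-s}\mathbb{G}(\f4\be(j-m))^{-s}$ with $N$ large absorbing the polynomial losses (yielding $\mathfrak{K}_2$), which is exactly the paper's $\Xi$-estimate. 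Two immaterial slips: the factor $C_\vep a\|\<\cdot\>^{2Ns+6}f\|^2_{H^2_xL^2_v}$ actually comes from the convergence $\sum_{j\ge M_0}\cM_\mu(\f4\be j,c)\ls a$ in the $\mu$-family (not from the shift in Lemma \ref{M123}), and the correct split of the $a$-power is $a^{\f4\be(j-7\cN_0)s}=a^{\f4\be ms}\,a^{\f4\be(j-m-7\cN_0)s}$.
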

\begin{proof} By \eqref{HJ}, we split $\mathfrak{H}_j$ into three parts. Set $\mathfrak{H}^1_j:=\sum_{|\al|=2}\int_{\T^3}\hat{\mathcal{H}}_{j-4\cN_0}(\pa^{\al_1}_xf,\pa^{\al_2}_xf,\pa^\al_xf)dx$, $\mathfrak{H}^{2}_j:=\sum_{|\al|=0,2}\int_{\T^3}\hat{\mathcal{H}}_{j-2\cN_0|\al|}(\pa^\al_xf,\mu,\pa^\al_xf)dx$ and $\mathfrak{H}^3_j:=\sum_{|\al|=0,2}\int_{\T^3}\hat{\mathcal{H}}_{j-2\cN_0|\al|}(F,\pa^\al_xf,\pa^\al_xf)dx$.

\underline{\it Step 1: Estimate of $\mathfrak{H}^{1}_j$.}  Thanks to Fourier transform w.r.t. $x$ variable, Lemma \ref{B_3} and \eqref{estiprodx}, we have
\beno
&& \sum_{j=M_0}^\infty \frac{a^{\f4\be (j-7\cN_0)s}}{(\mathbb{G}(\f4\be j))^s}\mathfrak{H}^{1}_j \ls \mathfrak{H}^{1,1}+\mathfrak{H}^{1,2}+\mathfrak{H}^{1,3}+\mathfrak{H}^{1,4}+\mathfrak{H}^{1,5}+\mathfrak{H}^{1,6},\eeno
where   \beno &&\mathfrak{H}^{1,1}:=\sum_{j=M_0}^\infty\frac{a^{\f4\be (j-7\cN_0)s}}{(\mathbb{G}(\f4\be j))^s} \sum_{m=1}^{[\frac{j-5\cN_0}{2}]}(C_{j-4\cN_0-1}^{j-4\cN_0-m-1})^sm^{-(3/2)s}j^{s+1}\| f\<\cdot\>^{2js-4\cN_0s|\al|+\ga/2}\|_{H_x^2L^2_v}\|f\<\cdot\>^{2ms+6}\|_{H_x^2L^2_v}\\&&\times\|f\<\cdot\>^{2(j-m)s-4\cN_0s|\al|+\ga/2-2}\|_{H_x^2L^2_v};\quad \mathfrak{H}^{1,2}:=\sum_{j=M_0}^\infty\frac{a^{\f4\be (j-7\cN_0)}}{(\mathbb{G}(\f4\be j))^s} \sum_{m=1}^{[\frac{j-5\cN_0+1}{2}]}(C_{j-4\cN_0-1}^{j-4\cN_0-m-1})^sm^{-s}j^{\f12s+1}\\
&& \times\|f\<\cdot\>^{2(m-1/2)s+6}\|_{H_x^2L^2_v}\|f\<\cdot\>^{2(j-m+1/2)s-4\cN_0s|\al|+\ga/2-2}\|_{H_x^2L^2_v}\| f\<\cdot\>^{2js-4\cN_0s|\al|+\ga/2}\|_{H_x^2L^2_v};\\
&& \mathfrak{H}^{1,3}:=\sum_{j=M_0}^\infty\frac{a^{\f4\be (j-7\cN_0)s}}{(\mathbb{G}(\f4\be j))^s} \sum_{m=[\frac{j-5\cN_0}{2}]}^{j-5\cN_0+1}(C_{j-4\cN_0-1}^{j-4\cN_0-m-1})^s(j-m)^{-2-(5/2)s+2\cN_0s}j^{2s+3-2\cN_0s}\| f\<\cdot\>^{2ms+\ga/2-8}\|_{H_x^2L^2_v}\\&&\times\|f\<\cdot\>^{2s(j-m)-4\cN_0s|\al|+14}\|_{H_x^2L^2_v}\| f\<\cdot\>^{2js-4\cN_0s|\al|+\ga/2}\|_{H_x^2L^2_v};\quad  \mathfrak{H}^{1,4}:=\sum_{j=M_0}^\infty\frac{a^{\f4\be (j-7\cN_0)s}}{(\mathbb{G}(\f4\be j))^s}\sum_{m=1}^{[\frac{j-5\cN_0+1}{2}]}(C_{j-4\cN_0-1}^{j-4\cN_0-m-1})^s \\
&&\times m^{-(3/2)s}j^{1+s}\|f\<\cdot\>^{2(m+\cN_0)s+6}\|_{H_x^2L^2_v}\|f\<\cdot\>^{2(j-m-\cN_0)s-4\cN_0s|\al|+\ga/2-2}\|_{H_x^2L^2_v}\|f\<\cdot\>^{2js-4\cN_0s|\al|+\ga/2}\|_{H_x^2L^2_v};\\
 &&\mathfrak{H}^{1,5}:=\sum_{j=M_0}^\infty\frac{a^{\f4\be (j-7\cN_0)s}}{(\mathbb{G}(\f4\be j))^s} \sum_{m=[\frac{j-5\cN_0}{2}]}^{j-5\cN_0}(C_{j-4\cN_0-1}^{j-4\cN_0-m-1})^s(j-m)^{-(5/2)s-2+2\cN_0s}j^{2s+3-2\cN_0s}\|f\<\cdot\>^{2(m+\cN_0)s+\ga/2-2}\|_{H_x^2L^2_v}\\
&&\times\|f\<\cdot\>^{2(j-m-\cN_0)s-4\cN_0s|\al|+4}\|_{H_x^2L^2_v}\|f\<\cdot\>^{2js-4\cN_0s|\al|+\ga/2}\|_{H_x^2L^2_v};  \\
&&\mathfrak{H}^{1,6}:=\sum_{j=M_0}^\infty\frac{a^{\f4\be (j-7\cN_0)s}}{(\mathbb{G}(\f4\be j))^s}\sum_{m=[\frac{j-5\cN_0+1}{2}]}^{j-5\cN_0+1}(C_{j-4\cN_0-1}^{j-4\cN_0-m-1})^s(j-m)^{-2s-2+2\cN_0s}j^{\f32s+3-2\cN_0s}\| f\<\cdot\>^{2(m+\cN_0-1/2)s+\ga/2-2}\|_{H_x^2L^2_v}\\
&&\times\| f\<\cdot\>^{2(j-m-\cN_0+1/2)s-4\cN_0s|\al|+4}\|_{H_x^2L^2_v}\| f\<\cdot\>^{2js-4\cN_0s|\al|+\ga/2}\|_{H_x^2L^2_v}.
\eeno
We provide a detailed proof for $\mathfrak{H}^{1,1}$ and $\mathfrak{H}^{1,5}$. The other cases can be treated similarly.

\noindent\underline{\it Estimate of $\mathfrak{H}^{1,1}$.} Thanks to the fact $|\alpha|=2$ and Young's inequality, for any $\vep>0$, we   have
\beno
\mathfrak{H}^{1,1}&\ls&\underbrace{C_\vep\sum_{j=M_0}^\infty\frac{a^{\f4\be (j-7\cN_0)s}}{(\mathbb{G}(\f4\be j))^s}\sum_{m=1}^{[\frac{j-5\cN_0}{2}]}(C_{j-4\cN_0-1}^{j-4\cN_0-m-1})^{2s}m^{-3s}j^{s+2}\|\<\cdot\>^{2ms+6}f\|^2_{H^2_xL^2_v}\|\<\cdot\>^{2(j-m)s-8\cN_0s-2}f\|^2_{H^2_xL^2_{\ga/2}}}_{:=\Delta}\\
&& +\vep\sum_{j=M_0}^\infty j^s\cM(\f4\be j,\ga/\be).
\eeno
  Split $\Delta$ into two parts $\Delta_1$ and $\Delta_2$ which correspond to  the cases $m\in[1,N]$ and the case $m\in(N,[\frac{j-5\cN_0}{2}]]$, where $N>\frac{\f4\be(6\cN_0)-3}{4/\be-2}$,  respectively. We first have
\beno
\Delta_1
&\ls&C_{\vep}\|\<\cdot\>^{2Ns+6}f\|^2_{H^2_xL^2_v}\sum_{m=1}^N\sum_{j=M_0}^\infty j^{2ms+s+2}\cM(\f4\be j,(\ga-4ms-4)/\be).
\eeno
For $\Delta_2$, noticing that $j-m\sim j$ when $m\in(N,[\frac{j-5\cN_0}{2}]]$,  we have
\beno
&&\Delta_2\ls C_\vep\sum_{j=M_0}^\infty\frac{a^{\f4\be (j-7\cN_0)s}}{(\mathbb{G}(\f4\be j))^s}\sum_{m=N}^{[\frac{j-5\cN_0}{2}]}(C_{j-4\cN_0-1}^{j-4\cN_0-m-1})^{2s}m^{-3s}j^{s+2}\|\<\cdot\>^{2ms+6}f\|^2_{H^2_xL^2_v}\|\<\cdot\>^{2(j-m)s-8\cN_0s-2}f\|^2_{H^2_xL^2_{\ga/2}}\\
&&\leq C_\vep\sum_{j=M_0}^\infty\frac{a^{\f4\be (j-7\cN_0)s}}{(\mathbb{G}(\f4\be j))^s}\sum_{m=N}^{[\frac{j-5\cN_0}{2}]}(C_{j-4\cN_0-1}^{j-4\cN_0-m-1})^{2s}m^{-3s}\|\<\cdot\>^{(2m+14\cN_0)s-8\cN_0s-3}f\|^2_{H^2_xL^2_v}(j-m)^{2+s}\\
&&\times\|\<\cdot\>^{2(j-m)s-8\cN_0s-2}f\|_{H^2_xL^2_{\ga/2}}\ls C_{\vep}\sum_{j=M_0}^\infty\sum_{m=N}^{[\frac{j-5\cN_0}{2}]}(C_{j-4\cN_0-1}^{j-4\cN_0-m-1})^{2s}\frac{(\mathbb{G}(\f4\be m))^s(\mathbb{G}(\f4\be (j-m)))^s}{(\mathbb{G}(\f4\be j))^s}m^{-3s}\\
&&\times\frac{\mathbb{G}(\f4\be (m+7\cN_0)))^s}{\mathbb{G}((\f4\be m))^s}\cM(\f4\be (m+7\cN_0),-6/\be)(j-m)^{2+s}\cM(\f4\be(j-m),(\ga-4)/\be).
\eeno
Let us compute  the coefficient
$\Xi:=(C_{j-4\cN_0-1}^{j-4\cN_0-m-1})^{2s}\frac{(\mathbb{G}(\f4\be m))^s(\mathbb{G}(\f4\be (j-m)))^s}{(\mathbb{G}(\f4\be j))^s}\frac{(\f4\be (m+7\cN_0))!)^s}{((\f4\be m)!)^s}m^{-3s}$.
Using facts $j-m\sim j$ and \eqref{gammafun}, we  derive that
$C_{j-4\cN_0-1}^{j-4\cN_0-m-1}=\f{(j-4\cN_0-1)!}{(j-4\cN_0-m-1)!(m)!}\sim \f{(j)!}{(j-m)!(m)!}$ and $\frac{(\mathbb{G}(\f4\be (m+7\cN_0)))^s}{(\mathbb{G}(\f4\be m))^s}\ls j^{\f 4\be (7\cN_0)s}$, which yields that
\beno
&&\Xi\ls\frac{(j!)^{2s}}{(\mathbb{G}(\f4\be j))^s}\frac{(\mathbb{G}(\f4\be m))^s}{(m!)^{2s}}\frac{(\mathbb{G}(\f4\be(j-m)))^s}{((j-m)!)^s}m^{\f4\be(7\cN_0)s-3s}.
\eeno
Since $\mathbb{G}(\f 4\be j)\sim j^{\f12}(\f{\f 4\be j}{e})^{\f 4\be j}\sim j^{\f 2\be}(\f j e)^{\f 4\be j}(\f 4\be)^{\f 4\be j}j^{\f12-\f 2\be}= (j!)^{\f 4\be}(\f 4\be)^{\f 4\be j}j^{\f12-\f 2\be}$ and $1-2/\beta\le0$ (this is the second reason we need $\be<2$), we have
\beno
\frac{(j!)^{2s}}{(\mathbb{G}(\f4\be j))^s}\frac{(\mathbb{G}(\f4\be m))^s}{(m!)^{2s}}\frac{(\mathbb{G}(\f4\be(j-m)))^s}{((j-m)!)^s}\ls\f{(j!)^{(2-\f 4\be)s}}{((j-m)!)^{(2-\f 4\be)s}(m!)^{(2-\f 4\be)s}}=(C_j^m)^{(2-4/\be)s},
\eeno
which  implies that
 $\Xi\ls (C_j^m)^{(2-4/\be)s}j^{\f4\be(7\cN_0)s-3s}\ls j^{(\f4\be(7\cN_0)-3-N(4/\be-2))s}$.
Noting that $N>\frac{\f4\be(7\cN_0)-3}{4/\be-2}$, we  obtain that
$\Delta_2\ls C_{\vep}\sum_{j=M_0}^\infty\sum_{m=N}^{[\frac{j-5\cN_0}{2}]}\cM(\f4\be (m+7\cN_0),-6/\be)(j-m)^{2+s}\cM(\f4\be(j-m),(\ga-4)/\be).$

Then we conclude that
\beno
\mathfrak{H}^{1,1}&\ls&\vep\sum_{j=M_0}^\infty j^s\cM(\f4\be j,\ga/\be)+C_{\vep}\|\<\cdot\>^{2Ns+6}f\|^2_{H^2_xL^2_v}\sum_{m=1}^N\sum_{j=M_0}^\infty j^{2ms+s+2}\cM(\f4\be j,(\ga-4ms-4)/\be)\\
&&+C_{\vep}\sum_{j=M_0}^\infty\sum_{m=N}^{[\frac{j-5\cN_0}{2}]}\cM(\f4\be (m+7\cN_0),-6/\be)(j-m)^{2+s}\cM(\f4\be(j-m),(\ga-4)/\be).
\eeno

\noindent\underline{\it Estimate of $\mathfrak{H}^{1,5}$.} By change of variable from $m$ to $j-m-5\cN_0$, we can derive that
\beno
\mathfrak{H}^{1,5}&\ls&C_\vep\sum_{j=M_0}^\infty\frac{a^{\f4\be (j-7\cN_0)s}}{(\mathbb{G}(\f4\be j))^s}\sum_{m=0}^{[\frac{j-5\cN_0}{2}]}(C_{j-4\cN_0-1}^{m+\cN_0-1})^{2s}(m+5\cN_0)^{-5s-4+4\cN_0s}j^{4s+6-4\cN_0s}\\
&&\times\|f\<\cdot\>^{2(j-m)s-8\cN_0s-2}\|^2_{H^2_xL^2_{\ga/2}}\|f\<\cdot\>^{2ms + 6}\|^2_{H^2_{x,v}}+\vep\sum_{j=M_0}^\infty j^s\cM(\f4\be j,\ga/\be).
\eeno
Observe that $(C_{j-4\cN_0-1}^{m+\cN_0-1})^{2s}\sim (C_{j-4\cN_0-1}^{m})^{2s}(j/m)^{(\cN_0-1)s}$ which implies
\[(C_{j-4\cN_0-1}^{m+\cN_0-1})^{2s}(m+5\cN_0)^{-5s-4+4\cN_0s}j^{4s+6-4\cN_0s}\sim (C_{j-4\cN_0-1}^{m})^{2s}(m+5\cN_0)^{-6s-4+3\cN_0s}j^{3s+6-3\cN_0s}.\]
From this together with the fact $3s+6-4\cN_0s\leq s+2$ since $\cN_0 s>2$, we may copy the argument used for $\mathfrak{H}^{1,1}$ to derive the similar estimate. Finally, for suitably large $N$, we are led to that
\beno
&&\sum_{j=M_0}^\infty\frac{a^{\f4\be (j-7\cN_0)s}}{(\mathbb{G}(\f4\be j))^s}\mathfrak{H}^1_j\ls \vep\sum_{j=M_0}^\infty j^s\cM(\f4\be j,\ga/\be)+C_{\vep}\|\<\cdot\>^{2Ns+6}f\|^2_{H^2_xL^2_v}\Big(\sum_{m=0}^N\sum_{j=M_0}^\infty j^{2ms+s+2}\\
&&\times\cM(\f4\be j,(\ga-4ms-4)/\be)+\sum_{m=0}^N\sum_{j=M_0}^\infty  j^{2ms+2}\cM(\f4\be j,(\ga-4ms+2s-4)/\be)\Big)+C_{\vep}\sum_{j=M_0}^\infty\sum_{m=N}^{[\frac{j-5\cN_0+1}{2}]}\\
&&\cM(\f4\be (m+7\cN_0),-6/\be)\Big((j-m)^{2+s}\cM(\f4\be(j-m),(\ga-4)/\be)+(j-m)^{2}\cM(\f4\be(j-m),(\ga-4+2s)/\be)\Big).
\eeno

\underline{\it Step 2: Estimate of $\mathfrak{H}^{3}_j$.}
Again by Lemma \ref{B_3},  we can bound $\sum_{j=M_0}^\infty \frac{a^{\f4\be (j-7\cN_0)s}}{(\mathbb{G}(\f4\be j))^s}\mathfrak{H}^{3}_j$ by six terms denoted by $\mathfrak{H}^{3,i},i=1,\cdots,6$ respectively which are similar to $\mathfrak{H}^{1,i},i=1,\cdots,6$. In the next, we  only give a detailed estimate for $\mathfrak{H}^{3,1}$. We recall that
\beno
\mathfrak{H}^{3,1}&:=&\sum_{|\al|=0,2}\sum_{j=M_0}^\infty\frac{a^{\f4\be (j-7\cN_0)s}}{(\mathbb{G}(\f4\be j))^s}\int_{\T^3}\sum_{m=1}^{[\frac{j-(2|\al|+1)\cN_0}{2}]}(C_{j-1}^{j-m-1})^sm^{-(3/2)s}j^{s+1}\|\<\cdot\>^{2ms+6}F\|_{H^2_{x,v}}\\
&&\times\|\pa^{\al}_xf\<\cdot\>^{2(j-m)s-4\cN_0s|\al|-2}\|_{L_x^2L^2_{\ga/2}}\|\<\cdot\>^{2js-4\cN_0s|\al|}\pa^\al_xf\|_{L_x^2L^2_{\ga/2}}.
\eeno
One may observe that  $\mathfrak{H}^{3,1}$ enjoys the similar structure as $\mathfrak{H}^{1,1}$. Therefore we can obtain that
\beno
\mathfrak{H}^{3,1}&\ls&\vep\sum_{j=M_0}^\infty j^s\cM(\f4\be j,\ga/\be)+C_{\vep}\|\<\cdot\>^{2Ns+6}F\|^2_{H^2_xL^2_v}\sum_{m=1}^Nj^{2ms+s+2}\cM(\f4\be j,(\ga-4ms-4)/\be)\\
&&+C_{\vep}\sum_{j=M_0}^\infty\sum_{m=N}^{[\frac{j-\cN_0}{2}]}\cM_F(\f4\be (m+7\cN_0),-6/\be)(j-m)^{2+s}\cM(\f4\be(j-m),(\ga-4)/\be).
\eeno
 We conclude that
\beno
&&\sum_{j=M_0}^\infty\frac{a^{\f4\be (j-7\cN_0)s}}{(\mathbb{G}(\f4\be j))^s}\mathfrak{H}^{3}_j\ls \vep\sum_{j=M_0}^\infty j^s\cM(\f4\be j,\ga/\be)+C_{\vep}\|\<\cdot\>^{2Ns+6}F\|^2_{H^2_xL^2_v}\Big(\sum_{m=0}^Nj^{2ms+s+2}\cM(\f4\be j,(\ga-4ms-4)/\be)\\
&&+\sum_{m=0}^Nj^{2ms+2}\cM(\f4\be j,(\ga-4ms-2s-4)/\be)\Big)+C_{\vep}\|\<\cdot\>^{2Ns+6}f\|^2_{H^2_xL^2_v}\Big(\sum_{m=0}^Nj^{2ms+s+2}\cM_F(\f4\be j,(\ga-4ms-4)/\be)\\
&&+\sum_{m=0}^Nj^{2ms+2}\cM_F(\f4\be j,(\ga-4ms+2s-4)/\be)\Big)+C_{\vep}\sum_{j=M_0}^\infty\sum_{m=N}^{[\frac{j-\cN_0+1}{2}]}\cM_F(\f4\be (m+7\cN_0),-6/\be)\Big((j-m)^{2+s}\\
&&\times\cM(\f4\be(j-m),(\ga-4)/\be)+(j-m)^{2}\cM(\f4\be(j-m),(\ga-4+2s)/\be)\Big)+C_{\vep}\sum_{j=M_0}^\infty\sum_{m=N}^{[\frac{j-\cN_0+1}{2}]}\\
&&\cM(\f4\be (m+7\cN_0),-6/\be)\Big((j-m)^{2+s}\cM_F(\f4\be(j-m),(\ga-4)/\be)+(j-m)^{2}\cM_F(\f4\be(j-m),(\ga-4+2s)/\be)\Big).
\eeno

\underline{\it Step 3: Estimate of $\mathfrak{H}^{2}_j$.}  By definition, we know that
 \beno\sum_{j=M_0}^\infty\mathcal{M}_\mu(\f4\be j,c)=\sum_{j=M_0}^\infty \int_{\R^3}\frac{a^{\f4\be (j-7\cN_0)s}(\<v\>^\be)^{\f4\be js+c}}{(\mathbb{G}(\f4\be j))^s}\mu^2(v)dv\ls a^{\f4\be(M_0-7\cN_0)}\leq a.\eeno  Thus we can obtain that
\beno
&&\sum_{j=M_0}^\infty\frac{a^{\f4\be (j-7\cN_0)s}}{(\mathbb{G}(\f4\be j))^s}\mathfrak{H}^{2}_j\ls C_\vep a\|\<\cdot\>^{2Ns+6}f\|^2_{H^2_xL^2_v}+\vep\sum_{j=M_0}^\infty j^s\cM(\f4\be j,\ga/\be)+C_{\vep}\Big(\sum_{m=0}^Nj^{2ms+s+2}\cM(\f4\be j,(\ga-4ms\\
&&-4)/\be)+\sum_{m=0}^Nj^{2ms+2}\cM(\f4\be j,(\ga-4ms-2s-4)/\be)\Big)+C_{\vep}\sum_{j=M_0}^\infty\sum_{m=N}^{[\frac{j-\cN_0+1}{2}]}\cM_\mu(\f4\be (m+7\cN_0),\ga/\be)\\
&&\times\Big((j-m)^{2+s}\cM(\f4\be(j-m),(\ga-4)/\be)+(j-m)^{2}\cM(\f4\be(j-m),(\ga-4+2s)/\be)\Big)+C_{\vep}\sum_{j=M_0}^\infty\sum_{m=N}^{[\frac{j-\cN_0+1}{2}]}\\
&&\cM(\f4\be (m+7\cN_0),-6/\be)\Big((j-m)^{2+s}\cM_\mu(\f4\be(j-m),(\ga-4)/\be)+(j-m)^{2}\cM_\mu(\f4\be(j-m),(\ga-4+2s)/\be)\Big).
\eeno
Patch together all above estimates and recall that $F=\mu+f$, then we complete the proof of this lemma.
\end{proof}

Now we are ready to complete the proof of Theorem \ref{globaldecay}.
\begin{proof}[Proof of Theorem \ref{globaldecay}: Exponential moment] Let $M_0$ be sufficiently large and determined later. By
 (\ref{ene}), we derive that
\beno
\frac{d}{dt}\sum_{j=M_0}^\infty\cM(\f4\be j,0)+c_1\sum_{j=M_0}^\infty j^s\cM(\f4\be j,\ga/\be)+c_2\sum_{j=M_0}^\infty\cM_s(\f4\be j,\ga/\be)
&\ls&\mathfrak{L}+\mathfrak{K}_1+\mathfrak{K}_2,
\eeno
where $\mathfrak{K}_1$ and $\mathfrak{K}_2$ are defined in Lemma \ref{h_j} and
\beno
&&\mathfrak{L}:=\|f\|_{H^2_xL^2_{17}}\Big(\sum_{j=M_0}^\infty\big(\cM_s(\f4\be j,\ga/\be)+j^8\cM_s(\f4\be j,(\ga-16)/\be)+j^s\cM(\f4\be j,\ga/\be)+j^{6+s}\cM(\f4\be j,(\ga-14)/\be)\Big)\\
&&+\sum_{j=M_0}^\infty\big(j^{2}\cM_s(\f4\be j,(\ga-4)/\be)+j^{1+s}\cM(\f4\be j,(\ga-2)/\be)+j^{8+s}\cM(\f4\be j,(\ga-16)/\be)\big)+\|f\|^2_{H^2_xL^2_{17}}.
\eeno
Here $\mathfrak{L}$ corresponds to the summation w.r.t. $\mathfrak{G}_j$ in (\ref{ene}).

\underline{Estimate of $\mathfrak{L}+\mathfrak{K}_1$.}  Thanks to Lemma \ref{M123}(\ref{M2})(\ref{M3}) with  $l\ge1$, we have
\beno
&&\mathfrak{L}+\mathfrak{K}_1\ls  aC_{\be,s} (1 + \|f\|_{H^2_xL^2_{17}})\sum_{j=M_0}^\infty\cM_s(\f4\be j,\ga/\be)+C_{\be,s,\vep}a(\|\<\cdot\>^{2Ns+17}f\|_{H^2_xL^2_{v}}+1)\Big(\sum_{j=M_0}^\infty j^s\cM(\f4\be j,\ga/\be)+\\
&&\|f\|^2_{H^2_xL^2_v}\Big)+\vep\sum_{j=M_0}^\infty j^s\cM(\f4\be j,\ga/\be)+
aC_{M_0,s,\be,\vep} \|f\|^2_{H^2_xL^2_v}+aC_{M_0,s,\be,\vep} \|f\|^2_{H^2_xH^s_v}+C_\vep a\|\<\cdot\>^{2Ns+6}f\|^2_{H^2_xL^2_v}.
\eeno

\underline{Estimate of $\mathfrak{K}_2$.} We split $\mathfrak{K}_2$ into three parts which are as follows:
\beno
&&\mathfrak{K}_{2,1}:=C_{\vep}\sum_{j=M_0}^\infty\sum_{m=N}^{[\frac{j-\cN_0+1}{2}]}\cM(\f4\be (m+7\cN_0),-6/\be)\Big((j-m)^{2+s}\cM_\mu(\f4\be(j-m),(\ga-4)/\be)+(j-m)^{2}\\
&&\times\cM_\mu(\f4\be(j-m),(\ga-4+2s)/\be)\Big);\quad \mathfrak{K}_{2,2}:=C_{\vep}\sum_{j=M_0}^\infty\sum_{m=N}^{[\frac{j-\cN_0+1}{2}]}\cM_\mu(\f4\be (m+7\cN_0),-6/\be)\Big((j-m)^{2+s}\\
&&\times\cM(\f4\be(j-m),(\ga-4)/\be)+(j-m)^{2}\cM(\f4\be(j-m),(\ga-4+2s)/\be)\Big);\mathfrak{K}_{2,3}:=C_{\vep}\sum_{j=M_0}^\infty\sum_{m=N}^{[\frac{j-\cN_0+1}{2}]}\\
&&\cM(\f4\be (m+7\cN_0),-6/\be)\Big((j-m)^{2+s}\cM(\f4\be(j-m),(\ga-4)/\be)+(j-m)^{2}\cM(\f4\be(j-m),(\ga-4+2s)/\be)\Big).
\eeno
Thanks to Lemma \ref{M123}(\ref{M2}) and the fact that summation of $\cM_\mu$ converges, we have
\beno
&&\mathfrak{K}_{2,1}\le C_{\vep}\Big(\sum_{j=M_0-N}^\infty j^{2+s}\cM_\mu(\f4\be j,(\ga-4)/\be)+\sum_{j=M_0-N}^\infty j^{2}\cM_\mu(\f4\be j,(\ga-4+2s)/\be)\Big)\\
&&\times \Big(\sum_{m=N+7\cN_0}^\infty\cM(\f4\be m,-6/\be)\Big)\le aC_\vep\sum_{j=M_0}^\infty\cM(\f4\be j,\ga/\be)+aM_0C_{\vep}\|f\|^2_{H^2_xL^2_v},
\eeno
 For $\mathfrak{K}_{2,2}$, again by Lemma \ref{M123}(\ref{M2}), we have
\beno
&&\mathfrak{K}_{2,2}\leq C_{\vep}\Big(\sum_{j=M_0-N}^\infty j^{2+s}\cM(\f4\be j,(\ga-4)/\be)+\sum_{j=M_0-N}^\infty j^{2}\cM(\f4\be j,(\ga-4+2s)/\be)\Big)\\ &&\times\Big(\sum_{m=N+7\cN_0}^\infty\cM_\mu(\f4\be m,-6/\be)\Big)
\leq aC_\vep\sum_{j=M_0}^\infty j^s\cM(\f4\be j,\ga/\be)+aM_0C_{\vep}\|f\|^2_{H^2_xL^2_v}.
\eeno
  Similarly, for $\mathfrak{K}_{2,3}$, we have
\beno
&&\mathfrak{K}_{2,3}\leq C_{\vep}\Big(\sum_{j=M_0-N}^\infty j^{2+s}\cM(\f4\be j,(\ga-4)/\be)+\sum_{j=M_0-N}^\infty j^{2}\cM(\f4\be j,(\ga-4+2s)/\be)\Big)\\
&&\times \Big(\sum_{m=N+7\cN_0}^\infty\cM(\f4\be m,-6/\be)\Big)\le aC_{\vep}\left(\sum_{j=M_0}^\infty\cM(\f4\be j,\ga/\be)\right)\left(\sum_{j=M_0}^\infty j^s\cM(\f4\be j,\ga/\be)\right)+aM_0C_\vep\|f\|^2_{H^2_xL^2_v}\\
&&\times \sum_{j=M_0}^\infty j^s\cM(\f4\be j,\ga/\be)+aM_0^2C_{\vep}\|f\|^4_{H^2_xL^2_v}.
\eeno

Putting together all the estimate and choosing small $\vep$, the energy inequality  becomes
\beno
&&\frac{d}{dt}\sum_{j=M_0}^\infty\cM(\f4\be j,0)+\left(\frac{c_1}{2}-aC_{\be,s}(M_0\|\<\cdot\>^{2Ns+17}f\|_{H^2_xL^2_v}+1)\right)\sum_{j=M_0}^\infty j^s\cM(\f4\be j,\ga/\be)\\
&&+(c_2-aC_{\be,s} (1 +\|f\|_{H^2_xL^2_{17}}) )\sum_{j=M_0}^\infty\cM_s(\f4\be j,\ga/\be)\leq a\left(\sum_{j=M_0}^\infty\cM(\f4\be j,0)\right)\left(\sum_{j=M_0}^\infty j^s\cM(\f4\be j,\ga/\be)\right)\\
&&+aC_{M_0,s,\be}(\|\<\cdot\>^{2Ns+17}f\|^2_{H^2_xL^2_v}+\|\<\cdot\>^{2Ns+17}f\|^4_{H^2_xL^2_v})+aC_{M_0,s,\be}\|f\|^2_{H^2_xH^s_v}.
\eeno
Thanks to Theorem \ref{globaldecay} for polynomial moment case,  we have $\|f\|^2_{L^\infty([0,\infty);X_{48})\cap L^2([0,\infty);Y_{48})}\ls \vep_0$ and $f\in L^\infty([0,\infty];X_{2Ns+27})  \cap L^2([0,\infty];Y_{2Ns+27})$. Hence the above inequality becomes
\ben\label{357}
&&\notag\frac{d}{dt}\sum_{j=M_0}^\infty\cM(\f4\be j,0)+\left(\frac{c_1}{4}-aCM_0-aC\sum_{j=M_0}^\infty\cM(\f4\be j,0)\right)\sum_{j=M_0}^\infty j^s\cM(\f4\be j,\ga/\be)\\
&&+(\frac{c_2}{2}-aC)\sum_{j=M_0}^\infty\cM_s(\f4\be j,\ga/\be)\leq  aC_{M_0,s,\be}\|f\|^2_{Y_{2Ns+27}}+a\|f\|^2_{Y_{2Ns+27}}.
\een
where the constant $C$ only depend on $\be, s,\vep_0$ and the bound of $f$ in $L^\infty([0,\infty];X_{2Ns+27})  \cap L^2([0,\infty];Y_{2Ns+27})$.
Recall the definition of $\mathcal{X}^\be_{a,M_0}$ (see (\ref{exp space})). Suppose that for some $b\in \R^+$,
\ben\label{fini}
\|f_0\|_{\mathcal{X}^\be_{b,M_0}}<\infty.
\een
Then for any $0<a<b$, we have that
\beno
aC\sum_{j=M_0}^\infty\cM(\f4\be j,0)|_{t=0}\leq Ca^{1-\f 4\be 7\cN_0 s}\|f_0\|^2_{_{\mathcal{X}^\be_{a,M_0}}}\leq C a^{1-\f 4\be 7\cN_0 s}(a/b)^{\f4\be M_0s}\|f_0\|^2_{_{\mathcal{X}^\be_{b,M_0}}}.
\eeno
Thus for fixed large $M_0$, we can derive that $aC\sum_{j=M_0}^\infty\cM(\f4\be j,0)|_{t=0}<\vep$ if $a$ is sufficiently small. Moreover, we have
\beno
\frac{d}{dt}\sum_{j=M_0}^\infty\cM(\f4\be j,0)+\left(\frac{c_1}{8}-aC\sum_{j=M_0}^\infty\cM(\f4\be j,0)\right)\sum_{j=M_0}^\infty j^s\cM(\f4\be j,\ga/\be)\leq aC_{M_0,s,\be}\|f\|^2_{Y_{2Ns+27}}+a\|f\|^2_{Y_{2Ns+27}}.
\eeno
Thus by the standard continuous argument, we obtain that $\|f(t)\|_{_{\mathcal{X}^\be_{a,M_0}}}<\vep,t\geq0$. In particular, we have
\beno
\frac{c_1}{8}-aC\sum_{j=M_0}^\infty\cM(\f4\be j,0)(t)\geq \frac{c_1}{9}, t\geq0.
\eeno
Then for fixed $a,M_0$, we obtain that
\beno
\frac{d}{dt}\sum_{j=M_0}^\infty\cM(\f4\be j,0)+\frac{c_1}{9}\sum_{j=M_0}^\infty j^s\cM(\f4\be j,\ga/\be)\leq C_{a,M_0,s,\be}\|f\|^2_{H^2_xL^2_v}+C_{a,M_0,s,\be}\|f\|^2_{H^2_xH^s_v}.
\eeno
Let $\|f\|^2_{\hat{\mathcal{X}}_{a,M_0}}:=\eta_1\|f\|^2_{\mathcal{X}^\be_{a,M_0}}+\eta\|f\|^2_{X_{48}}+\int_0^\infty\|\mathcal{S}_L(\tau)f\|^2_{H^2_xL^2_v}d\tau$ with $\eta_1$ to be fixed and $\eta,k$ defined in Theorem \ref{th4.1}.  Due to Theorem \ref{th4.1}, there exists $C$ and $K$ such that
\beno
\frac{d}{dt}\|f\|^2_{\hat{\mathcal{X}}_{a,M_0}}+\eta_1\f{c_1}9\sum_{j=M_0}^\infty j^s\cM(\f4\be j,\ga/\be)\leq(-1+\eta_1C_{a,M_0,s,\be})\|f\|^2_{H^2_xL^2_v}+(C\|f\|^2_{X_{48}}-K)\|f\|^2_{Y_{48}}+\eta_1aC_{M_0,s,\be}\|f\|^2_{H^2_xH^s_v}.
\eeno
Choosing suitably small $\eta_1$ and noticing that for small initial value of $f_0$,
 \ben\label{small2}
\|f(t)\|^2_{X_{48}}\ls \vep_0,\quad \forall t\geq0.\,
 \een
the terms on the right hand sides can be cancelled. Thus for fixed $\eta,\eta_1$ and $M_0$, $a$ we choose, we obtain that
\beno
\frac{d}{dt}\|f\|^2_{\hat{\mathcal{X}}_{a,M_0}}+C\sum_{j=M_0}^\infty j^s\cM(\f4\be j,\ga/\be)\leq0.
\eeno
Moreover, we also have
\ben\label{Xaeq}
\|f\|^2_{\hat{\mathcal{X}}_{a,M_0}}\sim_{\eta,\eta_1,\M_0}\|f\|^2_{\mathcal{X}_{a,M_0}}\sim_{\eta,\eta_1,\M_0}\|f\|^2_{\mathcal{X}_a}.
\een
We remark that the initial condition we need are only (\ref{fini}) and (\ref{small2}), which means the smallness assumption only need to imposed on the initial data with polynomial moment.

In what follows, we consider two cases: $\ga+\be s\geq0$ and $\ga+\be s<0$.

\noindent$\bullet$\,\underline{$\ga+\be s\geq0.$} Thanks to Lemma \ref{M123}(\ref{M1}), we have
 $\|f\|^2_{\hat{\mathcal{X}}_{a,M_0}}\leq C_{\eta,\eta_1,a,s,\be}\sum_{j=M_0}^\infty j^s\cM(\f4\be j,\ga/\be).$
Then we have that
\beno
\frac{d}{dt}\|f\|^2_{\hat{\mathcal{X}}_{a,M_0}}+C\|f\|^2_{\hat{\mathcal{X}}_{a,M_0}}\leq0.
\eeno
Thus we obtain that $\|f\|^2_{\hat{\mathcal{X}}_{a,M_0}}\leq Ce^{-Ct}\|f_0\|^2_{\hat{\mathcal{X}}_{a,M_0}},$ which equivalent to $\|f\|^2_{\mathcal{X}_a}\leq Ce^{-Ct}\|f_0\|^2_{\mathcal{X}_a}$ by (\ref{Xaeq}).
\smallskip

\noindent$\bullet$\,\underline{$\ga+\be s<0$.} Again by  Lemma \ref{M123}(\ref{M1}), we can derive that
\ben\label{Xa0}
\frac{d}{dt}\|f\|^2_{\hat{\mathcal{X}}_{a,M_0}}+C\|f\<v\>^{\f\ga2+\f\be2 s}\|^2_{\hat{\mathcal{X}}_{a,M_0}}\leq0.
\een
Moreover, thanks to Prop.\ref{expweight}(\ref{pro1.2}), we can choose some fixed $a_1<a$ satisfies $\mathcal{G}_\be^{a_1}(v)/\mathcal{G}_\be^{a}(v)\leq e^{-C_{a_1,a,s,\be}\<v\>^\be}$ and also derive that
\ben\label{Xb}
\frac{d}{dt}\|f\|^2_{\hat{\mathcal{X}}_{a_1,M_0}}+C\|f\<v\>^{\f\ga2+\f\be2 s}\|^2_{\hat{\mathcal{X}}_{a_1,M_0}}\leq0.
\een
Similar to (\ref{Xaeq}), we have
\ben\label{eqnorm}
\|f\|^2_{\hat{\mathcal{X}}_{a_1,M_0}}\sim \|f\|^2_{\mathcal{X}_{a_1,M_0}}\sim \|f\|^2_{\mathcal{X}_{a_1}}.
\een
Noticing that
\beno
\int_{\R^3}\mathcal{G}_\be^{a_1}(v)f^2dv&=&\int_{|v|\leq R}\mathcal{G}_\be^{a_1}(v)\<v\>^{\ga+\be s}f^2\<v\>^{-(\ga+\be s)}dv+\int_{|v|>R}\mathcal{G}_\be^{a}(v)f^2\mathcal{G}_\be^{a_1}(v)/\mathcal{G}_\be^{a}(v)dv\\
&\leq&\<R\>^{-(\ga+\be s)}\|f\<v\>^{\f\ga 2+\f\be 2 s}\|^2_{L^2_{\mathcal{G}_\be^{a_1}(v)^{1/2}}}+e^{-C_{a,a_1,s,\be}\<R\>^\be}\|f\|^2_{L^2_{\mathcal{G}_\be^{b}(v)^{1/2}}}
\eeno
for any $R>0$.  Together with (\ref{eqnorm}) and the definition
\beno
\|f\|^2_{\mathcal{X}_{a_1}}:=\sum_{|\al|=0,2}\int_{\T^3\times\R^3}\mathcal{G}_\be^{a_1}(v)\<v\>^{-8\cN_0s|\al|}|\pa^\al_xf|^2dvdx,
\eeno
we deduce that
\beno
\|f\<v\>^{\f\ga 2+\f\be 2 s}\|^2_{\hat{\mathcal{X}}_{a_1,M_0}}\geq C\<R\>^{(\ga+\be s)}\|f\|^2_{\hat{\mathcal{X}}_{a_1,M_0}}-C\<R\>^{\ga+\be s}e^{-C\<R\>^\be}\|f\|^2_{\hat{\mathcal{X}}_{a,M_0}}.
\eeno
Put it into (\ref{Xb}), we obtain that
\beno
\frac{d}{dt}\|f\|^2_{\hat{\mathcal{X}}_{a_1,M_0}}+C\<R\>^{\ga+\be s}\|f\|^2_{\hat{\mathcal{X}}_{a_1,M_0}}\leq C\<R\>^{\ga+\be s}e^{-C\<R\>^\be}\|f\|^2_{\hat{\mathcal{X}}_{a,M_0}}\leq C\<R\>^{\ga+\be s}e^{-C\<R\>^\be}\|f_0\|^2_{\hat{\mathcal{X}}_{a,M_0}},
\eeno
where we use (\ref{Xa0}) in the last inequality. It yields that
\beno
\|f\|^2_{\hat{\mathcal{X}}_{a_1,M_0}}\leq C(e^{-C\<R\>^{\ga+\be s}t}+e^{-C\<R\>^\be})\|f_0\|^2_{\hat{\mathcal{X}}_{a,M_0}}.
\eeno
By choosing $\<R\>=Ct^{\f1{\be-(\ga+\be s)}}$, we finally obtain that
\beno
\|f\|^2_{\hat{\mathcal{X}}_{a_1,M_0}}\leq Ce^{-Ct^\varrho}\|f_0\|^2_{\hat{\mathcal{X}}_{a,M_0}}
\eeno
with $\varrho=\frac{\be}{\be-(\ga+\be s)}$. Moreover, by the equivalence between those norms, we have
\ben\label{ab}
\|f\|^2_{\mathcal{X}_{a_1}}\leq Ce^{-Ct^\varrho}\|f_0\|^2_{\mathcal{X}_{a}}.
\een

In the above, we need $a_1<a$ small enough, but it dose not matters, since for any $a_2\in(a_1,a)$, we have the interpolation inequality
\beno
\mathcal{G}_\be^{a_2}\leq (\mathcal{G}_\be^{a})^\th(\mathcal{G}_\be^{a_1})^{1-\th}
\eeno
with $a_2=a_1\th+(1-\th)b$, which implies that
\beno
\|f\|^2_{\mathcal{X}_{a_2}}\leq \|f\|^{2\th}_{\mathcal{X}_{a}}\|f\|^{2(1-\th)}_{\mathcal{X}_{b_1}}\leq Ce^{-C\th t^{\varrho}}\|f_0\|^{2}_{\mathcal{X}_{a}}\leq Ce^{-C\th t^{\varrho}}\|f_0\|^2_{\mathcal{X}_a}.
\eeno
Then we can also obtain (\ref{ab}) for any $a_1<a$. It ends the proof of Theorem \ref{globaldecay}.
\end{proof}

\setcounter{equation}{0}
\section{Appendix: Toolbox and Proof of Theorem \ref{le1.24}}
The appendix is devoted to the proof of Theorem \ref{le1.24}. To do that, we need to introduce well-known results on the collision operator, the dyadic decomposition and the application to the commutators.

\subsection{Toolbox}
First we list the well-known results on the collision operators.

\begin{lem}\label{chv} (\cite{ADVW}) If $b\in L^1([0,1])$, then we have  \\
(1) (Regular change of variables)
\[
\int_{\R^3} \int_{\mathbb{S}^2} b(\cos \theta) |v-v_*|^\gamma f(v') d \sigma dv= \int_{\R^3} \int_{\mathbb{S}^2} b(\cos \theta)\frac 1 {\cos^{3+\gamma} (\theta/2)} |v-v_*|^\gamma f(v) d\sigma dv.
\]
(2) (Singular change of variables)
\[
\int_{\R^3} \int_{\mathbb{S}^2} b(\cos \theta) |v-v_*|^\gamma f(v') d \sigma dv_* = \int_{\R^3} \int_{\mathbb{S}^2} b(\cos \theta)\frac 1 {\sin^{3+\gamma} (\theta/2)} |v-v_*|^\gamma f(v_*) d\sigma dv_*.
\]
\end{lem}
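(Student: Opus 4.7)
Plan of proof. Both identities are standard Jacobian computations on the pre/post-collision map, essentially the Carleman-type changes of variable introduced in \cite{ADVW}. I outline (1) in detail; part (2) follows from the parallel argument with $\cos(\theta/2)$ replaced by $\sin(\theta/2)$ throughout, and with the substitution adapted so that its image matches the effective $\sigma$-support of the right-hand side (an adjustment that is harmless under assumption $\mathbf{(A4)}$).

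For (1), fix $v_*$ and $\sigma\in\S^2$, and consider the smooth map
\[
\Phi_\sigma:\R^3\to\R^3,\qquad v\mapsto v'=\tfrac{v+v_*}{2}+\tfrac{|v-v_*|}{2}\sigma.
\]
Writing $\hat n := (v-v_*)/|v-v_*|$ and $\theta := \arccos(\hat n\cdot\sigma)$, a direct differentiation gives $d\Phi_\sigma=\tfrac12 I+\tfrac12\sigma\otimes\hat n$, so the rank-one update formula $\det(I+\sigma\hat n^T)=1+\cos\theta$ yields
\[
|\det d\Phi_\sigma|=\frac{1+\cos\theta}{8}=\frac{\cos^2(\theta/2)}{4}.
\]
Two elementary expansions complete the geometric picture: $|v'-v_*|=|v-v_*|\cos(\theta/2)$ and $\angle(v'-v_*,\sigma)=\theta/2$; in particular the image of $\Phi_\sigma$ is the half-space $\{w:(w-v_*)\cdot\sigma\ge 0\}$, which by $\mathbf{(A4)}$ coincides with the effective support of the RHS angular integrand.

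Substituting $u:=\Phi_\sigma(v)$ at fixed $\sigma$, and writing $\tilde\theta:=\angle(u-v_*,\sigma)$ so that $\theta_{\rm old}=2\tilde\theta$ and $|v-v_*|=|u-v_*|/\cos\tilde\theta$, the LHS of (1) becomes
\[
4\int_{\S^2}\!\int_{(u-v_*)\cdot\sigma\ge 0} b(\cos 2\tilde\theta)\cos^{-\gamma-2}(\tilde\theta)\,|u-v_*|^\gamma f(u)\,du\,d\sigma.
\]
For fixed $u$, parametrizing $\sigma$ by $(\tilde\theta,\phi)\in[0,\pi/2]\times[0,2\pi)$ about the polar axis $(u-v_*)/|u-v_*|$ gives $d\sigma=\sin\tilde\theta\,d\tilde\theta\,d\phi$; the substitution $\psi=2\tilde\theta$ combined with $\sin\psi=2\sin(\psi/2)\cos(\psi/2)$ then converts the angular weight as
\[
4\,b(\cos 2\tilde\theta)\cos^{-\gamma-2}(\tilde\theta)\sin\tilde\theta\,d\tilde\theta = b(\cos\psi)\cos^{-\gamma-3}(\psi/2)\sin\psi\,d\psi,
\]
which is exactly the RHS angular weight. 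The emerging range $\psi\in[0,\pi]$ collapses to $[0,\pi/2]$ since $b$ is supported there, matching the effective range on the RHS and proving (1). Part (2) runs in lock-step: the corresponding substitution on $v_*$ (at fixed $v,\sigma$) has Jacobian $\sin^2(\theta/2)/4$ and satisfies $|v-v'|=|v-v_*|\sin(\theta/2)$, so the same doubling-angle bookkeeping produces the singular factor $\sin^{-3-\gamma}(\theta/2)$.

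The only point requiring care, rather than being a genuine obstacle, is the angle-doubling $\theta_{\rm old}=2\theta_{\rm new}$ induced by the change of variable. It is precisely this doubling, interacting with the Jacobian $\cos^2(\theta/2)/4$ (respectively $\sin^2(\theta/2)/4$) through the identity $\sin\psi=2\sin(\psi/2)\cos(\psi/2)$, that upgrades the naive exponent $-\gamma-2$ coming from the Jacobian and the $|v-v_*|^\gamma$ factor to the advertised exponent $-\gamma-3$ in both formulas.
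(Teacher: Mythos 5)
Part (1) of your proof is correct and complete: the Jacobian $\det\bigl(\tfrac12(I+\sigma\otimes\hat n)\bigr)=\tfrac{1+\cos\theta}{8}=\tfrac{\cos^2(\theta/2)}{4}$, the relations $|v'-v_*|=|v-v_*|\cos(\theta/2)$ and $\angle(v'-v_*,\sigma)=\theta/2$, and the reparametrization $\psi=2\tilde\theta$ together with $\sin\psi=2\sin(\psi/2)\cos(\psi/2)$ do reproduce the weight $\cos^{-3-\gamma}(\theta/2)$. Since the paper itself gives no proof (it simply cites \cite{ADVW}), this direct Jacobian computation is the expected argument; note in passing that no support assumption is needed there, because $\tilde\theta\in[0,\pi/2]$ already sweeps out the full range $\psi\in[0,\pi]$ of the right-hand side.

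The genuine gap is in part (2), which does \emph{not} run in lock-step with part (1). The two facts you quote are right (Jacobian $\sin^2(\theta/2)/4$ for $v_*\mapsto v'$ at fixed $(v,\sigma)$, and $|v-v'|=|v-v_*|\sin(\theta/2)$), but the angle relation is not a doubling: since $(v'-v)\cdot\sigma=\tfrac{|v-v_*|}{2}(1-\cos\theta)$ and $|v'-v|=|v-v_*|\sin(\theta/2)$, the angle between $v'-v$ and $\sigma$ is $\arccos(\sin(\theta/2))=\tfrac{\pi-\theta}{2}$, equivalently $\angle(v-v',\sigma)=\tfrac{\pi+\theta}{2}$. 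If you transplant the bookkeeping of (1) literally, i.e. set $\theta=2\beta'$ with $\beta'$ the new polar angle and replace $\cos$ by $\sin$, the transported angular weight is $2b(\cos\theta)\sin^{-\gamma-1}(\theta/2)\,d\theta$, whereas the target weight is $b(\cos\theta)\sin^{-3-\gamma}(\theta/2)\sin\theta\,d\theta=2b(\cos\theta)\sin^{-\gamma-2}(\theta/2)\cos(\theta/2)\,d\theta$; these differ by a factor $\tan(\theta/2)$, so "the same doubling-angle bookkeeping'' fails as stated. The correct substitution is the complementary one: with $\beta'=\angle(v'-v,\sigma)\in[0,\pi/2]$ one has $\theta=\pi-2\beta'$, $\sin(\theta/2)=\cos\beta'$, $\sin\beta'=\cos(\theta/2)$, $d\beta'=d\theta/2$, and then $4b(\cos\theta)\sin^{-\gamma-2}(\theta/2)\sin\beta'\,d\beta'=b(\cos\theta)\sin^{-3-\gamma}(\theta/2)\sin\theta\,d\theta$, which is exactly the right-hand weight (the image half-space $\{(u-v)\cdot\sigma\ge0\}$ again corresponds to the full sphere after the substitution). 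In particular the needed adjustment has nothing to do with the "effective $\sigma$-support'' of $b$ or with $\mathbf{(A4)}$ — the identity holds for any nonnegative measurable $b$, both sides possibly infinite — the issue is purely the reflection in the polar angle, which your sketch does not supply.
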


\begin{lem}\label{canlemma} (Cancellation Lemma, \cite{ADVW} Lemma 1) For any smooth function $f$, it holds that
$\int_{\R^3 \times \mathbb{S}^{2}} B(v-v_*, \sigma) (f'-f) dv d\sigma= (f*S )(v_*),$
where
\[
S(z) = |\mathbb{S}^1| \int_{0}^{\frac \pi 2} \sin \theta \left[ \frac 1 {\cos^3(\theta/2)} B\left(\frac {|z|}  {\cos (\theta/2)} , \cos \theta \right) - B(|z|, cos \theta) \right].
\]
\end{lem}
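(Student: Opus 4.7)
The plan is to split the integral as
\[
\int_{\R^3\times\S^2} B(v-v_*,\sigma)(f(v')-f(v))\,dv\,d\sigma = \mathcal{I}_1 - \mathcal{I}_2,
\]
recognize each piece as a convolution against $v_*$, and identify the two resulting kernels with the two summands appearing in $S$. For the easy term $\mathcal{I}_2$, I will fix $v_*$, let $z=v-v_*$, and use spherical coordinates on $\S^2$ with north pole $z/|z|$, so that $\sigma\cdot z/|z|=\cos\theta$ and $d\sigma=\sin\theta\,d\theta\,d\phi$. The azimuthal integral produces the factor $|\S^1|=2\pi$, and invoking the support assumption $\mathbf{(A4)}$ ($\theta\in[0,\pi/2]$) one gets
\[
\mathcal{I}_2 = \int_{\R^3} f(v)\,|\S^1|\!\int_0^{\pi/2} B(|v-v_*|,\cos\theta)\sin\theta\,d\theta\,dv = (f*S_2)(v_*),
\]
which is the second contribution to $S$.

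For $\mathcal{I}_1$ the idea is to freeze $v_*$ and $\sigma$ and perform the change of variables $v\mapsto v'$. Writing $w=v-v_*$, the map is $w\mapsto \tfrac{1}{2}(w+|w|\sigma)$; its differential is $\tfrac12(I+\sigma\otimes\hat w)$, a rank-one perturbation of $\tfrac12 I$ whose eigenvalues are $\{\tfrac12,\tfrac12,\tfrac12(1+\cos\theta)\}=\{\tfrac12,\tfrac12,\cos^2(\theta/2)\}$, giving Jacobian $\cos^2(\theta/2)/4$. Assumption $\mathbf{(A4)}$ ensures $\sigma\cdot\hat w\ge 0$, which is exactly what is needed for injectivity of this map. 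Elementary computation yields the two key identities
\[
|v'-v_*|=|v-v_*|\cos(\theta/2),\qquad \frac{(v'-v_*)\cdot\sigma}{|v'-v_*|}=\cos(\theta/2),
\]
so $|v-v_*|=|v'-v_*|/\cos(\theta/2)$ and, in the new variables, the angle $\theta'$ between $(v'-v_*)/|v'-v_*|$ and $\sigma$ satisfies $\theta'=\theta/2\in[0,\pi/4]$.

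With this, $\mathcal{I}_1$ becomes an integral over $v'$ times a $\sigma$-integral whose integrand depends only on $|v'-v_*|$ and the new polar angle $\theta'$. I then parametrize $\sigma$ using $(v'-v_*)/|v'-v_*|$ as the north pole, writing $d\sigma=\sin\theta'\,d\theta'\,d\phi$, and change variables from $\theta'$ back to $\theta=2\theta'$. The Jacobian $4/\cos^2(\theta/2)$ combines with $d\theta=2\,d\theta'$ and the identity $2\sin(\theta/2)=\sin\theta/\cos(\theta/2)$ to produce exactly the prefactor $1/\cos^3(\theta/2)$. Re-labelling $v'$ as $v$ and collecting factors gives
\[
\mathcal{I}_1 = \int_{\R^3} f(v)\,|\S^1|\!\int_0^{\pi/2} \frac{\sin\theta}{\cos^{3}(\theta/2)}\,B\!\left(\tfrac{|v-v_*|}{\cos(\theta/2)},\cos\theta\right)d\theta\,dv=(f*S_1)(v_*).
\]
Subtracting the two kernels yields $S=S_1-S_2$ and the stated formula.

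The main (and essentially only) obstacle is the bookkeeping in the change of variables: getting the Jacobian right, verifying injectivity via $\mathbf{(A4)}$, and correctly identifying how the polar angle of $\sigma$ transforms when one switches the north pole from $(v-v_*)/|v-v_*|$ to $(v'-v_*)/|v'-v_*|$. Once the relation $\theta\mapsto\theta/2$ between the two angular parametrizations is established, the cancellation structure and the precise form of $S$ drop out mechanically.
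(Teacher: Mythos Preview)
Your argument is correct and is precisely the classical proof from \cite{ADVW}; the paper itself does not give a proof of this lemma but simply cites that reference. The change of variables $v\mapsto v'$ for fixed $(v_*,\sigma)$ with Jacobian $\cos^2(\theta/2)/4$, the identities $|v'-v_*|=|v-v_*|\cos(\theta/2)$ and $\theta'=\theta/2$, and the subsequent re-parametrization of $\sigma$ around the new pole $(v'-v_*)/|v'-v_*|$ are exactly the ingredients of the original argument, and your bookkeeping is accurate.
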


\begin{lem}\label{L12}(\cite{H})
Let $w_1, w_2 \in \R$, $a, b \in[0, 2s]$ with $w_1+w_2 =\gamma+2s$  and $a+b =2s$. Then for any smooth functions $g, h, f$ we have \\
(1) if $\gamma + 2s > 0$, then
\[
|(Q(g, h), f)_{L^2_v}| \lesssim(\Vert g \Vert_{L^1_{\gamma+2s +(-w_1)^+(-w_2)^+}}  +\Vert g \Vert_{L^2} ) \Vert h \Vert_{H^a_{w_1}}   \Vert f \Vert_{H^b_{w_2}}.
\]
(2) if $\gamma + 2s = 0$, $w_3 = \max\{\delta,(-w_1)^+ +(-w_2)^+ \}$ with $\delta>0$ sufficiently small, then
\[
|(Q(g, h), f)_{L^2_v}|\lesssim (\Vert g \Vert_{L^1_{w_3}} + \Vert g \Vert_{L^2}) \Vert h \Vert_{H^a_{w_1}}   \Vert f \Vert_{H^b_{w_2}}.
\]
(3) if $\gamma + 2s < 0$, $w_3 = \max\{-(\gamma+2s), \gamma+2s +(-w_1)^+ +(-w_2)^+ \}$, then
\[
|(Q(g, h), f)_{L^2_v}|\lesssim  (\Vert g \Vert_{L^1_{w_4}} + \Vert g \Vert_{L^2_{-(\gamma+2s)}}) \Vert h \Vert_{H^a_{w_1}}   \Vert f \Vert_{H^b_{w_2}}.
\]
\end{lem}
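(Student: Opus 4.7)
The plan is to mirror the proof strategy used in the cited work by following the now-standard non-cutoff toolkit: symmetric decomposition of the collision form, dyadic splitting of the angular kernel, Bobylev's formula in Fourier, and a careful accounting of how weights $\langle v\rangle^{w_1},\langle v\rangle^{w_2}$ and derivatives of orders $a,b$ are distributed between $h$ and $f$. First, I would write the bilinear form symmetrically as
\[
(Q(g,h),f)_{L^2_v}=\int_{\R^6\times\S^2} b(\cos\theta)|v-v_*|^\gamma g_*\,\bigl[h'f'-hf\bigr]\,dv\,dv_*\,d\sigma,
\]
and split $h'f'-hf=(h'-h)f'+h(f'-f)$. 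The second piece, after reshuffling, reduces via the cancellation lemma (Lemma \ref{canlemma}) to a pointwise product bounded by a convolution $|g|*S$ acting on $h\cdot f$, which is controlled by $\|g\|_{L^1_{\gamma+2s}}\|h\|_{L^2_{w_1}}\|f\|_{L^2_{w_2}}$ when $\gamma+2s>0$, and by $\|g\|_{L^2}$ or $\|g\|_{L^2_{-(\gamma+2s)}}$ together with Hardy-type bounds in the borderline and negative cases (this is precisely where the $L^2$ term in the conclusion appears and where the constraint $\gamma+2s>-1$ is used).

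For the genuinely non-cutoff term $(h'-h)f'$ I would dyadically decompose $b(\cos\theta)=\sum_{k\ge 0} b_k(\cos\theta)$ with $b_k$ supported in $\theta\sim 2^{-k}$, so that $\int b_k(\cos\theta)\theta^2\,d\sigma \lesssim 2^{k(2s-2)}$. On each dyadic piece one applies Bobylev's identity after freezing the weights: write $h=\langle v\rangle^{-w_1}\tilde h$ and $f=\langle v\rangle^{-w_2}\tilde f$, commute the weights past the change of variables using the pointwise inequalities $\langle v'\rangle\le \langle v\rangle\langle v_*\rangle$ and $|\langle v'\rangle^{w}-\langle v\rangle^{w}|\lesssim|v-v_*|\sin(\theta/2)\langle v\rangle^{(w-1)^+}\langle v_*\rangle^{|w|}$, and absorb the commutator error into $\|g\|_{L^1_{\gamma+2s+(-w_1)^++(-w_2)^+}}$. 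The remaining genuinely non-cutoff contribution, after Bobylev, becomes a Fourier-side integral
\[
\int_{\R^3}\hat g(\xi-\xi^+)\bigl[\hat{\tilde h}(\xi^+)-\hat{\tilde h}(\xi)\bigr]\overline{\hat{\tilde f}(\xi)}\,d\xi\,d\sigma
\]
on which Littlewood--Paley decomposition in $\xi$, combined with the elementary inequality $|\hat{\tilde h}(\xi^+)-\hat{\tilde h}(\xi)|\lesssim\min\{1,|\xi|^{2s}\theta^{2s}\}\,(|\xi|^a|\hat{\tilde h}(\xi^+)|+|\xi|^a|\hat{\tilde h}(\xi)|)^{a/(2s)}\cdots$, distributes the $2s$ derivatives as $a+b=2s$ between $\tilde h$ and $\tilde f$. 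Summing in $k$ converges thanks to $s<1$.

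The weight bookkeeping is where one splits into the three cases. When $\gamma+2s>0$, the loss $\langle v\rangle^{\gamma+2s+(-w_1)^++(-w_2)^+}$ from commuting $\langle v\rangle^{w_1},\langle v\rangle^{w_2}$ through and from the kernel $|v-v_*|^\gamma\lesssim\langle v\rangle^\gamma\langle v_*\rangle^{|\gamma|}$ is absorbed directly into $\|g\|_{L^1}$. For $\gamma+2s=0$ one loses an endpoint and must use the $L^2$ alternative for $g$, costing an arbitrarily small $\delta>0$; for $\gamma+2s<0$ the kernel $|v-v_*|^\gamma$ is locally singular and Hardy's inequality forces the $\|g\|_{L^2_{-(\gamma+2s)}}$ alternative. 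I expect the main obstacle to be precisely this last bookkeeping step: keeping the weights on $g$ as low as $\gamma+2s+(-w_1)^++(-w_2)^+$ (rather than the cruder $|w_1|+|w_2|+\gamma+2s$) requires exploiting the fact that $w_1$ or $w_2$ being positive actually \emph{helps} absorb the Jacobian change, so one must carry both signs of the weights through the commutator estimates simultaneously and invoke the symmetry $g_*\leftrightarrow g$ on appropriate subregions of the integration domain, rather than treating them as a symmetric loss.
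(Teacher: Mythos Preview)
The paper does not prove this lemma; it is cited from \cite{H} without argument, so there is no in-paper proof to compare against. Evaluating your proposal on its own merits, the very first identity is false: by the pre-post change of variables (Lemma~\ref{L112}) one has $\int B\, g_* h'f' = \int B\, g'_* hf$, so
\[
\int_{\R^6\times\S^2} B\, g_*\,[h'f'-hf]\,dv\,dv_*\,d\sigma = \int B\,(g'_*-g_*)\,hf,
\]
which is not $(Q(g,h),f)$. The correct starting point is $(Q(g,h),f)=\int B\, g_*\, h(f'-f)$ (again Lemma~\ref{L112}), and with this form your algebraic split $h'f'-hf=(h'-h)f'+h(f'-f)$ is no longer directly relevant. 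One can still run a version of your program from the correct identity---for instance by inserting $h=h'+(h-h')$ or by passing immediately to Bobylev's formula---but the two resulting pieces are structured differently from what you describe.

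Beyond this, the Fourier-side step is too schematic to verify: the displayed inequality meant to distribute the $2s$ derivatives as $a+b$ between $\tilde h$ and $\tilde f$ carries an ellipsis and an exponent $a/(2s)$ that do not obviously yield the claimed bound. The mechanism actually used in \cite{H} (visible in this paper through Lemma~\ref{lemma1.7}) is a simultaneous Littlewood--Paley decomposition of $g,h,f$ in frequency, after which regularity is traded between $h$ and $f$ block by block rather than via a pointwise interpolation inequality. Your weight bookkeeping for the three cases $\gamma+2s\gtrless 0$ is correct in spirit, but the argument cannot be completed until the basic identity and the derivative-sharing step are repaired.
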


\begin{lem}(\cite{H})\label{L13}
Suppose that $g$ is a non-negative   verifying that
$\Vert g \Vert_{L^1} \ge \delta, \quad \Vert g \Vert_{L^1_2} + \Vert g \Vert_{L \log L} < \lambda$.
Then there exist $C_1(\lambda, \delta)$ and $C_2(\lambda, \delta)$ such that\\
(1) If $\gamma +2s \ge 0$, then
\[
( -Q(g, f) , f)_{L^2_v} \ge C_1\Vert f \Vert_{H^s_{\gamma/2}}^2 - C_2 \Vert f \Vert_{L^2_{\gamma/2}}^2
\]
(2) If $-1-2s < \gamma < -2s$, then
\[
( -Q(g, f) , f)_{L^2_v}  \ge C_1\Vert f \Vert_{H^s_{\gamma/2}}^2 - C_2(1+\|g\|^{4}_{L^2_{|\ga|}})\|f\|^2_{L^2_{\ga/2}}.
\]
\end{lem}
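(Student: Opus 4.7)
\textbf{Proof plan for Lemma \ref{L13}.} The starting point is the standard symmetrization identity obtained by applying the pre-post collisional change of variables $(v,v_*)\mapsto (v',v'_*)$ inside $(Q(g,f),f)_{L^2_v}$:
\begin{equation*}
(-Q(g,f),f)_{L^2_v}=\tfrac12\int_{\R^6\times\S^2} b(\cos\theta)|v-v_*|^\gamma g_*(f'-f)^2\,d\sigma dv_* dv+\tfrac12\int_{\R^6\times\S^2}b(\cos\theta)|v-v_*|^\gamma g_*\bigl(f^2-(f')^2\bigr)\,d\sigma dv_* dv.
\end{equation*}
Call the two pieces $\mathcal D(g,f)$ and $\mathcal R(g,f)$ respectively. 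The plan is to show that $\mathcal D(g,f)$ supplies the coercive term $C_1\|f\|_{H^s_{\gamma/2}}^2$ modulo a lower order $\|f\|_{L^2_{\gamma/2}}^2$ error, while $\mathcal R(g,f)$ is absorbed into the lower order term through the cancellation lemma.

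For $\mathcal R(g,f)$ apply Lemma \ref{canlemma} with $\phi=f^2$ and kernel $B(z,\cos\theta)=b(\cos\theta)|z|^\gamma$: the radial weight becomes $|S(z)|\leq C|z|^\gamma$ because $\cos^{-3-\gamma}(\theta/2)-1=O(\theta^2)$ matches the singularity $\sin\theta\, b(\cos\theta)\sim\theta^{-1-2s}$. Thus $\mathcal R(g,f)=\tfrac12\int g_*(f^2\ast S)(v_*)\,dv_*$ is controlled by $\int (g\ast|\cdot|^\gamma)(v)\,f(v)^2\,dv$. For $\gamma+2s\geq0$, i.e.\ $\gamma\geq-2s>-2$, the Riesz potential $g\ast|\cdot|^\gamma$ is bounded by $C(\|g\|_{L^1}+\|g\|_{L^1_{|\gamma|}})\langle v\rangle^\gamma$ after splitting $|v-v_*|^\gamma\lesssim \langle v\rangle^\gamma\langle v_*\rangle^{|\gamma|}$, producing a clean $\|f\|_{L^2_{\gamma/2}}^2$ bound. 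For $-1-2s<\gamma<-2s$, the kernel $|\cdot|^\gamma$ is no longer locally integrable, so one splits the convolution into near and far regions and uses Hardy--Littlewood--Sobolev on the near piece; the exponents of the resulting interpolation force the factor $(1+\|g\|_{L^2_{|\gamma|}}^4)$, which is the origin of the quartic term stated in case (2).

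For the coercive piece $\mathcal D(g,f)$ the argument follows the Alexandre--Desvillettes--Villani--Wennberg scheme. First derive the pointwise lower bound $M(v):=\int g_*|v-v_*|^\gamma dv_*\geq c\langle v\rangle^\gamma$ from the constraints $\|g\|_{L^1}\geq\delta$ and $\|g\|_{L^1_2}+\|g\|_{L\log L}\leq\lambda$ (exactly the computation carried out in Theorem \ref{T24} to produce $\gamma_2$). Next, for each fixed $v_*$ use Bobylev's identity to write
\begin{equation*}
\int_{\S^2} b(\cos\theta)(f'-f)^2\,d\sigma=\int_{\R^3}\bigl|\widehat{f}(\xi)\bigr|^2\,K(v_*,\xi)\,d\xi+\text{cross terms},
\end{equation*}
where $K(v_*,\xi)\gtrsim(|\xi|^{2s}-1)$ at high frequencies. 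Integrate against $g_*|v-v_*|^\gamma$ and use the translation-invariant reduction $v\mapsto v-v_*$ together with the lower bound on $M(v)$ to recover $\|\langle v\rangle^{\gamma/2}|D|^s f\|_{L^2}^2$ up to a remainder controlled by $\|f\|_{L^2_{\gamma/2}}^2$; commuting $\langle v\rangle^{\gamma/2}$ past $|D|^s$ is admissible because the commutator is a lower order operator, absorbed into $C_2\|f\|_{L^2_{\gamma/2}}^2$.

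\textbf{Main obstacle.} The delicate point will be Step 3: after the Bobylev/Fourier manipulation the coercivity is a frequency statement, while the requisite weight $\langle v\rangle^{\gamma/2}$ lives in physical space, and the quantitative lower bound on $M(v)$ in terms of $g$ is what couples the two. Keeping track of the commutators between $\langle v\rangle^{\gamma/2}$, $|D|^s$, and the translation $v\mapsto v-v_*$ while ensuring all error terms either sit at the level of $\|f\|_{L^2_{\gamma/2}}^2$ (case (1)) or can be closed through the quartic norm $\|g\|_{L^2_{|\gamma|}}^4$ (case (2), via HLS near the diagonal $v=v_*$) is the step that requires care; everything else is bookkeeping.
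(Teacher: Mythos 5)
This lemma is not proved in the paper at all: it is imported verbatim from \cite{H}, so there is no in-paper argument to compare against. Your skeleton does follow the route taken in that reference (and in \cite{ADVW}): the symmetrization identity splitting $(-Q(g,f),f)_{L^2_v}$ into the dissipation $\mathcal D(g,f)=\frac12\int b\,|v-v_*|^\gamma g_*(f'-f)^2$ and a cancellation-lemma remainder, the bound $|S(z)|\lesssim|z|^\gamma$, and the observation that the quartic factor $\|g\|_{L^2_{|\gamma|}}^4$ in case (2) comes from a Hardy--Littlewood--Sobolev estimate near the diagonal followed by interpolation between $H^s_{\gamma/2}$ and $L^2_{\gamma/2}$ and Young's inequality. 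Those parts are sound.

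However, the proposal has genuine gaps. First, the heart of the lemma — extracting $C_1\|f\|^2_{H^s_{\gamma/2}}$ from $\mathcal D(g,f)$ with only an $L^2_{\gamma/2}$ error and constants depending solely on $(\delta,\lambda)$ — is exactly the step you defer as the ``main obstacle'', and the mechanism you sketch for it does not combine as stated: the pointwise lower bound $M(v)=\int g_*|v-v_*|^\gamma dv_*\gtrsim\langle v\rangle^\gamma$ lives in physical space, while the gain of $s$ derivatives is only visible after the Bobylev/Plancherel step, where the relevant non-concentration input is the frequency-side bound $\hat g(0)-|\hat g(\xi)|\geq c>0$ for $|\xi|$ bounded below (coming from the mass/energy/entropy constraints), not $M(v)$; you cannot simply ``integrate against $g_*|v-v_*|^\gamma$'' and then multiply the Fourier-side lower bound by $\langle v\rangle^\gamma$. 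The actual proof requires dyadic localization in $v$ (and in $|v-v_*|$) so that the weight is essentially constant on each block, the Fourier coercivity is applied blockwise, and the commutators between the cutoffs and the collision operator are shown to be lower order — this is precisely the machinery of \cite{H} and is not reproduced or replaced in your plan. Second, in case (1) your treatment of the remainder is wrong when $\gamma\in[-2s,0)$ (which the hypothesis $\gamma+2s\geq0$ allows): the splitting $|v-v_*|^\gamma\lesssim\langle v\rangle^\gamma\langle v_*\rangle^{|\gamma|}$ fails near the diagonal, so the near-diagonal part of $\int\!\!\int g_*|v-v_*|^\gamma f^2$ cannot be bounded by $L^1$-type norms of $g$ times $\|f\|^2_{L^2_{\gamma/2}}$; one must either invoke HLS (which would introduce an $L^2$ norm of $g$ that the case (1) statement does not permit) or absorb this piece into the dissipation $\mathcal D(g,f)$ itself, a step your plan does not provide.
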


\begin{lem}\label{L116} If $\gamma \in(-3, 1], s \in (0, 1), \gamma+2s>-1$, then for  $g\ge0$ and $f$,
\[
\mathcal{R}  := \int_{\R^3} \int_{\R^3} |v-v_*|^\gamma g_* f^2 dv_* dv \lesssim \Vert g \Vert_{L^2_{|\gamma|+2 }} \Vert f \Vert_{H^s_{ \gamma/2}}^2.
\]
\end{lem}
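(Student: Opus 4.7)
\textbf{Proof plan for Lemma \ref{L116}.} The plan is to split $\mathcal{R}=\mathcal{R}_{\mathrm{far}}+\mathcal{R}_{\mathrm{near}}$ according to whether $|v-v_*|\ge 1$ or $|v-v_*|<1$. The far piece is purely polynomial and is handled by Peetre's inequality together with Cauchy--Schwarz; the near piece, which carries the genuine singularity $|v-v_*|^\gamma$ in the regime $\gamma<0$, is reduced via the equivalence $\langle v\rangle\sim\langle v_*\rangle$ to a convolution estimate and dispatched by Young's inequality combined with the Sobolev embedding $H^s(\R^3)\hookrightarrow L^{2p}(\R^3)$.

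For the far region (and, when $\gamma\ge 0$, for the entire integral), Peetre's inequality together with $|v-v_*|\lesssim \langle v-v_*\rangle$ (respectively $|v-v_*|\ge \langle v-v_*\rangle/\sqrt 2$ on $\{|v-v_*|\ge 1\}$ when $\gamma<0$) yields the pointwise bound
\[
|v-v_*|^\gamma\,\mathbf{1}_{\{|v-v_*|\ge 1\}\cup\{\gamma\ge 0\}}\;\lesssim\;\langle v\rangle^{\gamma}\,\langle v_*\rangle^{|\gamma|}.
\]
Inserting this bound and using Cauchy--Schwarz in $v_*$ against $\langle v_*\rangle^{-2}\in L^2(\R^3)$ gives
\[
\mathcal{R}_{\mathrm{far}}\;\lesssim\;\Big(\int g(v_*)\langle v_*\rangle^{|\gamma|}\,dv_*\Big)\,\|f\|_{L^2_{\gamma/2}}^2\;\lesssim\;\|g\|_{L^2_{|\gamma|+2}}\,\|f\|_{L^2_{\gamma/2}}^2\;\le\;\|g\|_{L^2_{|\gamma|+2}}\,\|f\|_{H^s_{\gamma/2}}^2.
\]

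For the near region (so now $\gamma<0$), the bound $|v-v_*|<1$ implies $\langle v\rangle\sim \langle v_*\rangle$, which allows us to shift the compensating weight $\langle v_*\rangle^{|\gamma|}$ from the $f$-side onto the $g$-side. Setting $\tilde f:=\langle\cdot\rangle^{\gamma/2}f$, $G:=g\langle\cdot\rangle^{|\gamma|}$, and $K(w):=|w|^{\gamma}\mathbf{1}_{|w|<1}$, we obtain
\[
\mathcal{R}_{\mathrm{near}}\;\lesssim\;\int G(v_*)\,(K\ast\tilde f^{\,2})(v_*)\,dv_*\;\le\;\|G\|_{L^2}\,\|K\ast\tilde f^{\,2}\|_{L^2}.
\]
Young's convolution inequality $\|K\ast h\|_{L^2}\lesssim \|K\|_{L^r}\|h\|_{L^p}$ with $1/r+1/p=3/2$ is then applied with $h=\tilde f^{\,2}$, selecting $p\in[1,2]$ with $1/p\ge 1-2s/3$, so that Sobolev embedding yields $\|h\|_{L^p}=\|\tilde f\|_{L^{2p}}^2\lesssim \|\tilde f\|_{H^s}^2$. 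The local integrability requirement $K\in L^r$ reads $1/r>|\gamma|/3$, and together with the Sobolev constraint becomes $\gamma+2s>-3/2$, which is strictly weaker than our hypothesis $\gamma+2s>-1$. Since $\|G\|_{L^2}=\|g\|_{L^2_{|\gamma|/2}}\le \|g\|_{L^2_{|\gamma|+2}}$ and $\|\tilde f\|_{H^s}=\|f\|_{H^s_{\gamma/2}}$, this delivers $\mathcal{R}_{\mathrm{near}}\lesssim \|g\|_{L^2_{|\gamma|+2}}\|f\|_{H^s_{\gamma/2}}^2$, completing the proof.

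The main obstacle is precisely the range $\gamma<-2s$. There, the naive approach of establishing a uniform-in-$v_*$ Hardy-type inequality $\int|v-v_*|^\gamma \tilde f^{\,2}\,dv\lesssim \|\tilde f\|_{H^s}^2$ fails, since it would require fractional regularity of order $-\gamma/2>s$. The resolution, as implemented in Step~3 above, is to avoid any pointwise-in-$v_*$ estimate and instead integrate globally in $v_*$ against $G\in L^2$, which is exactly where the hypothesis $\gamma+2s>-1$ (providing enough Sobolev room to absorb $\|K\|_{L^r}$ via Young) enters in an essential way.
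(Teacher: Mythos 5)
Your proof is correct, but it takes a somewhat different route from the paper on the singular part, so let me compare. The paper also shifts weights via Peetre's inequality, but instead of a near/far cut it splits $\langle v-v_*\rangle^{|\gamma|}|v-v_*|^{-|\gamma|}\lesssim 1+|v-v_*|^{\gamma}$; the regular term is handled exactly as your far region (weighted $L^1$ of $g$ controlled by $\|g\|_{L^2_{|\gamma|+2}}$), while the singular term is treated by the Hardy--Littlewood--Sobolev inequality, $\mathcal{R}_2\lesssim \|g\|_{L^p_{|\gamma|}}\|f\|^2_{L^{2q}_{\gamma/2}}$ with $1/p+1/q=2+\gamma/3$, followed by the same Sobolev embedding $H^s\hookrightarrow L^{2q}$, with two different exponent choices according to whether $\gamma+2s\ge 0$ or $-1<\gamma+2s<0$. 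You instead truncate the kernel to $|v-v_*|<1$, use $\langle v\rangle\sim\langle v_*\rangle$ there to move the weight onto $g$, and replace HLS by Cauchy--Schwarz in $v_*$ plus Young's inequality with the compactly supported kernel $K=|w|^{\gamma}\mathbf{1}_{|w|<1}\in L^r$, $1/r>|\gamma|/3$. This is more elementary (Young rather than HLS), avoids the paper's case distinction on the sign of $\gamma+2s$, and makes explicit that the mechanism only needs $\gamma+2s>-3/2$; the same threshold is in fact implicit in the paper's exponent bookkeeping, so neither argument is sharper in range, but yours isolates it cleanly, at the modest price of the extra near/far decomposition that the global HLS estimate avoids. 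One cosmetic slip: with $G=g\langle\cdot\rangle^{|\gamma|}$ you have $\|G\|_{L^2}=\|g\|_{L^2_{|\gamma|}}$, not $\|g\|_{L^2_{|\gamma|/2}}$; this is harmless since $\|g\|_{L^2_{|\gamma|}}\le\|g\|_{L^2_{|\gamma|+2}}$, which is all you use.
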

\begin{proof}
 For $\gamma \ge 0$, it is obvious that
$\mathcal{R} \lesssim \Vert g\Vert_{L^1_{\gamma}} \Vert f \Vert_{L^2_{ \gamma/2}}^2 \lesssim  \Vert g \Vert_{L^2_{|\gamma|+2 }} \Vert f \Vert_{H^s_{ \gamma/2}}^2$. Thus we only need to focus on the case that $-3< \gamma  <0$. Using the fact that
$\langle v \rangle^\gamma \langle v-v_* \rangle^\gamma \lesssim \langle v \rangle^{\gamma}$,
we derive that
\beno
\mathcal{R}  := \int_{\R^3} \int_{\R^3} \frac { \langle v-v_*\rangle^{|\gamma|} } { | v-v_*  |^{|\gamma|  }}  ( g_* \langle v \rangle^{|\gamma|}) (f \langle v \rangle^{\gamma/2} )^2 dv_* dv\lesssim \int_{\R^3} \int_{\R^3} (1+ | v-v_*  |^{\gamma } )  ( g_* \langle v \rangle^{|\gamma|}) (f \langle v \rangle^{\gamma/2} )^2 dv_* dv :=\mathcal{R_1} +\mathcal{R_2}.
\eeno

We first   have
$\mathcal{R_1} \lesssim \Vert g\Vert_{L^1_{|\gamma|}} \Vert f \Vert_{L^2_{ \gamma/2}}^2 \lesssim  \Vert g \Vert_{L^2_{|\gamma|+2 }} \Vert f \Vert_{H^s_{ \gamma/2}}^2$.
For $\mathcal{R}_2$,   by Hardy-Littlewood-Sobolev inequality, we have
$\mathcal{R}_2 \lesssim \Vert g\Vert_{L^p_{|\gamma|}} \Vert f \Vert_{L^{2q}_{ \gamma/2}}^2$,  where $p, q \in (1, +\infty)$ and $1/p+1/q =2+\gamma/3$.
By Sobolev embedding theorem,  it holds that
$\Vert f \Vert_{L^{2q}}  \le \Vert f \Vert_{H^s}$, with  $q \in [1, 3/(3-2s)]$.
 Thus if $\gamma<0$ and $\gamma+2s \ge 0$, taking $1/p=1/q=1+\gamma/6$,
we easily get that $ p \in (1, 2)$ and $1/q \ge 1+ \gamma/3 \ge (3-2s)/3$. In this case, we have
\[
\mathcal{R}_2 \lesssim \Vert g\Vert_{L^p_{|\gamma|}} \Vert f \Vert_{L^{2q}_{ \gamma/2}}^2 \lesssim  \Vert g \Vert_{L^2_{|\gamma|+2 }} \Vert f \Vert_{H^s_{ \gamma/2}}^2.
\]
In the case of $-1 < \gamma+2s <0$, taking $1/p = 1 +(\gamma+2s)/3$ and $1/q = 1-2s/3$, we get that $p \in (1, 2)$ and
\[
\mathcal{R_2} \lesssim \Vert g\Vert_{L^p_{|\gamma|}} \Vert f \Vert_{L^{2q}_{ \gamma/2}}^2 \lesssim  \Vert g \Vert_{L^2_{|\gamma|+2 }} \Vert f \Vert_{H^s_{ \gamma/2}}^2.
\]
The theorem follows by putting together all the estimates.
\end{proof}

\begin{lem}\label{L110}For  smooth functions $f, g, h$,  if $\gamma +2s>-1$ and $b\in L^1([0,1])$, then
\[
\int_{\R^6\times\mathbb{S}^2}b(\cos \theta) |v-v_*|^\gamma f_* g h' dv dv_* d\sigma\le \int_{\mathbb{S}^2} b(\cos \theta) \sin^{-3/ 2-\gamma/2} \frac \theta 2   d\sigma \|g\|_{L^2_{|\gamma|+2}}\|f\|_{H^s_{\gamma/2}}\|h\|_{H^s_{\gamma/2}},
\]
\[
\int_{\R^6\times\mathbb{S}^2}b(\cos \theta) |v-v_*|^\gamma f_* g h' dv dv_* d\sigma
\le \int_{\mathbb{S}^2} b( \cos \theta) \cos^{-3/ 2-\gamma/2} \frac \theta 2   d\sigma
\|f\|_{L^2_{|\gamma|+2}}\|g\|_{H^s_{\gamma/2}}\|h\|_{H^s_{\gamma/2}}.
\]
\end{lem}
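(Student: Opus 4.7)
The plan is to reduce both estimates to Lemma \ref{L116} via a weighted Cauchy--Schwarz, where a fractional power of $\sin(\theta/2)$ (respectively $\cos(\theta/2)$) is inserted so as to equidistribute the Jacobian from the change of variables across the two resulting factors.

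For the first inequality I would split the integrand as
\[
b\,|v-v_*|^\gamma f_* g h' = \Bigl( b^{1/2} |v-v_*|^{\gamma/2} g^{1/2} f_* \sin^{\alpha}(\theta/2) \Bigr) \cdot \Bigl( b^{1/2} |v-v_*|^{\gamma/2} g^{1/2} h' \sin^{-\alpha}(\theta/2) \Bigr),
\]
apply Cauchy--Schwarz in $(v, v_*, \sigma)$, and evaluate each factor separately. The first factor is separable in $\sigma$ and $(v,v_*)$ and equals $\bigl(\int_{\mathbb{S}^2} b\sin^{2\alpha}(\theta/2)\,d\sigma\bigr) \cdot \int_{\R^6}|v-v_*|^\gamma g f_*^2\, dvdv_*$. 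In the second factor, the singular change of variable of Lemma \ref{chv}(2) (with $v$ fixed) transforms $(h')^2$ into $h_*^2$ and introduces the Jacobian $\sin^{-3-\gamma}(\theta/2)$, producing $\bigl(\int_{\mathbb{S}^2} b\sin^{-3-\gamma-2\alpha}(\theta/2)\,d\sigma\bigr) \cdot \int_{\R^6}|v-v_*|^\gamma g h_*^2\, dvdv_*$. Balancing the two angular exponents forces $\alpha = -(3+\gamma)/4$, and then both angular integrals collapse to $\int_{\mathbb{S}^2} b\sin^{-(3+\gamma)/2}(\theta/2)\,d\sigma$. Lemma \ref{L116}, combined with the $v\leftrightarrow v_*$ symmetry of $|v-v_*|^\gamma$, bounds the two velocity integrals by $\|g\|_{L^2_{|\gamma|+2}}\|f\|_{H^s_{\gamma/2}}^2$ and $\|g\|_{L^2_{|\gamma|+2}}\|h\|_{H^s_{\gamma/2}}^2$ respectively; taking the square root produces the stated bound.

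The second inequality follows by the mirror image of the same argument: split instead as $\bigl(b^{1/2}|v-v_*|^{\gamma/2} f_*^{1/2} g \cos^{\alpha}(\theta/2)\bigr) \cdot \bigl(b^{1/2}|v-v_*|^{\gamma/2} f_*^{1/2} h' \cos^{-\alpha}(\theta/2)\bigr)$, so that $f_*$ rather than $g$ is shared by both Cauchy--Schwarz factors. In the $h'$-factor one uses the regular change of variable of Lemma \ref{chv}(1) (with $v_*$ fixed, $v\mapsto v'$) to replace $(h')^2$ by $h^2(v)$ at the cost of $\cos^{-3-\gamma}(\theta/2)$. The same balance $\alpha = -(3+\gamma)/4$ equalises the angular integrals at $\int_{\mathbb{S}^2} b\cos^{-(3+\gamma)/2}(\theta/2)\,d\sigma$, and Lemma \ref{L116} finishes the estimate with $\|f\|_{L^2_{|\gamma|+2}}$ in place of $\|g\|_{L^2_{|\gamma|+2}}$.

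The only subtle point is the choice of the fractional weight $\alpha = -(3+\gamma)/4$. Naive Cauchy--Schwarz with no angular weight yields only $\|b\|_{L^1_\sigma}^{1/2}\,\|b\sin^{-3-\gamma}(\theta/2)\|_{L^1_\sigma}^{1/2}$, which by an elementary Cauchy--Schwarz in $\sigma$ is strictly larger than the sharp bound $\|b\sin^{-(3+\gamma)/2}(\theta/2)\|_{L^1_\sigma}$ claimed in the lemma. Inserting $\sin^{\pm(3+\gamma)/4}(\theta/2)$ (respectively $\cos^{\pm(3+\gamma)/4}(\theta/2)$) into the Cauchy--Schwarz splitting is exactly what symmetrises the angular singularity between the two factors. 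Once the correct weight is in place, the remainder of the proof is a direct application of Lemma \ref{L116}, which is where the standing hypothesis $\gamma+2s>-1$ enters.
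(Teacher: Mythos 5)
Your proposal is correct and follows essentially the same route as the paper: a Cauchy--Schwarz splitting with the angular weight $\sin^{\pm(3+\gamma)/4}(\theta/2)$ (equivalently $\sin^{\mp(3/2+\gamma/2)}$ at the squared level, and $\cos$ for the second bound), the singular respectively regular change of variables of Lemma \ref{chv} on the $h'$-factor, and Lemma \ref{L116} (with the $v\leftrightarrow v_*$ symmetry) for the velocity integrals. Nothing essential differs from the paper's argument.
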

\begin{proof} We only give the detailed proof for  the first inequality since the second one can be derived in the same manner.
By singular change of variable in Lemma \ref{chv} and Lemma \ref{L116}, we have
\beno
&& \int_{\R^6\times\mathbb{S}^2}b(\cos \theta) |v-v_*|^\gamma f_* g h' dv dv_* d\sigma  \le\big(\int_{\R^6\times\mathbb{S}^2} b(\cos \theta) \sin^{-\f32-\f\gamma2} \frac \theta 2  |v-v_*|^\gamma |f_*|^2 |g| dv dv_*  d\sigma\big)^\f12\big(
\int_{\R^6\times\mathbb{S}^2}   |g|
\\
&&\times b(\cos \theta)\sin^{ \f32+\f\gamma2} \frac \theta 2 |v-v_*|^\gamma ||h'|^2 dv dv_*  d\sigma\big)^{\f12}
\le \big(\int_{\mathbb{S}^2} b(\cos \theta) \sin^{-\f32-\f\gamma2} \frac \theta 2 d\sigma\big)\|g\|_{L^2_{|\gamma|+2}}\|f\|_{H^s_{\gamma/2}}\|h\|_{H^s_{\gamma/2}}.
\eeno This ends the proof of the lemma.
\end{proof}

\begin{lem}\label{L112}(\cite{V}, Section 1.4)(Pre-post collisional change of variable)
For any $f$ smooth enough, we have
\[
\int_{\R^3}\int_{\R^3} \int_{\mathbb{S}^2 }F(v, v_*, v', v_*') B(|v-v_*|, \cos \theta) dv dv_* d \sigma =\int_{\R^3}\int_{\R^3} \int_{\mathbb{S}^2 }F(v', v_*', v, v_*) B(|v-v_*|, \cos \theta) dv dv_* d \sigma
\]
As a consequence, we have
$\int_{\R^3} Q(f, g) h = \int_{\R^3}\int_{\R^3} \int_{\mathbb{S}^2 } B(v-v_*, \sigma) f_* g (h'-h) dv dv_*d\sigma$.
\end{lem}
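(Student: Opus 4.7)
The plan is to build the identity from a concrete involution on the integration domain $\R^3 \times \R^3 \times \SS^2$ that exchanges pre- and post-collisional variables, verify the required invariants, and then derive the weak form of $Q$ by plugging it into the definition.

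First, for fixed $(v, v_*, \sigma)$ I would introduce the new variables $(V, V_*, \Sigma) = (v', v_*', \sigma')$, where $v'$ and $v_*'$ are defined by the parametrization \eqref{sigmare} and $\sigma' := (v-v_*)/|v-v_*|$. I then need to check four properties: (i) the map is an involution, that is, applying the same recipe to $(V, V_*, \Sigma)$ recovers $(v, v_*, \sigma)$; (ii) the Jacobian of the map has absolute value $1$ (this is classical: one splits the change as $(v, v_*) \to ((v+v_*)/2, (v-v_*)/2)$, which has Jacobian $1/8$, then rotates with $\sigma$ on $\SS^2$, and the same symmetric computation is done on the other side, so the two Jacobians cancel); (iii) the modulus is preserved, $|V - V_*| = |v' - v_*'| = |v - v_*|$, which follows from the energy conservation $|v|^2 + |v_*|^2 = |v'|^2 + |v_*'|^2$ combined with momentum conservation encoded in \eqref{sigmare}; and (iv) the deviation angle is preserved, namely $(V - V_*)/|V - V_*| \cdot \Sigma = (v - v_*)/|v - v_*| \cdot \sigma = \cos\theta$, which is a direct computation from \eqref{sigmare}. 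Granted (i)--(iv), the collision kernel $B(|v-v_*|, \cos\theta)$ is invariant under the map, and the Lebesgue/surface measure $dv\,dv_*\,d\sigma$ is preserved, so relabelling the dummy variables gives the announced identity.

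For the consequence on $Q$, I would start from the definition and write, using the symmetry $dv\,dv_*\,d\sigma$,
\begin{equation*}
\int_{\R^3} Q(f, g)(v) h(v)\, dv = \int_{\R^3 \times \R^3 \times \SS^2} B(v-v_*, \sigma) (f'_* g' - f_* g) h(v)\, dv\,dv_*\,d\sigma.
\end{equation*}
On the gain part I apply the change of variables $(v, v_*, \sigma) \to (v', v_*', \sigma')$ just established, with the function $F(v, v_*, v', v_*') = B\, f'_* g' h$ rewritten as a function that after relabelling becomes $B\, f_* g h'$, thereby producing the clean representation $\int B f_* g (h' - h)$. The only mild subtlety here is making sure that $\sigma$ on the gain side transforms correctly to $\sigma' = (v-v_*)/|v-v_*|$, so that after the change the integrand involves $h' = h(v')$ with the \emph{new} $v'$ built from the relabelled $(v, v_*, \sigma)$; this is exactly the content of the first identity applied to the specific choice of $F$.

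I do not anticipate a serious obstacle: the argument is entirely bookkeeping about the involution $(v, v_*, \sigma) \leftrightarrow (v', v_*', \sigma')$. The most delicate point is checking that the Jacobian equals $1$ cleanly and that $\sigma'$ is defined so that the inverse parametrization matches, since the orientation of $\sigma$ on $\SS^2$ is not canonical. To keep the presentation honest I would handle this via an $\omega$-representation or via a symmetric change $(v, v_*, \sigma) \to ((v+v_*)/2 + \tau, (v+v_*)/2 - \tau, \sigma)$ with $\tau \in \R^3$ and then spherically parametrize, so that the unit Jacobian is manifest from Fubini.
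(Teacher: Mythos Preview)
Your proposal is correct and follows the standard argument for this classical result. The paper itself does not supply a proof: Lemma~\ref{L112} is stated with a citation to Villani's review~\cite{V}, Section~1.4, and is treated as a known tool, so there is nothing to compare against beyond noting that your sketch matches the usual derivation found there.
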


\begin{lem}\label{L28} For any smooth function $g$ and $f$, if $\gamma > -3$ and $l := \max \{\gamma+2, 2 \}$, then we have
\[
\int_{\R^3} \int_{\R^3} |v-v_*|^\gamma g_* f dv_* dv \le C\Vert g \Vert_{L^2_{ l}} \Vert f \Vert_{L^2_{ l}}.
\]
\end{lem}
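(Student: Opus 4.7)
The plan is to split the double integral into the regions $\{|v-v_*|\le 1\}$ and $\{|v-v_*|>1\}$, which decouples the near-singularity behavior from the growth at infinity, and to observe that $l=\max\{\gamma+2,2\}=\gamma^++2$ so that the same weight $\langle\cdot\rangle^l$ can absorb both contributions.

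For the region $\{|v-v_*|>1\}$, I would use the elementary bound $|v-v_*|^{\gamma}\mathbf{1}_{|v-v_*|>1}\le C\langle v\rangle^{\gamma^+}\langle v_*\rangle^{\gamma^+}$, valid for all $\gamma\in\R$ (when $\gamma\le 0$ one uses $|v-v_*|^{\gamma}\le 1$; when $\gamma>0$ one uses $|v-v_*|^{\gamma}\le C\langle v\rangle^{\gamma}\langle v_*\rangle^{\gamma}$). A separate Cauchy--Schwarz in each variable together with $\langle\cdot\rangle^{-2}\in L^2(\R^3)$ gives
\[
\int\!\!\!\int\langle v\rangle^{\gamma^+}\langle v_*\rangle^{\gamma^+}|g_*f|\,dv\,dv_*=\|g\|_{L^1_{\gamma^+}}\|f\|_{L^1_{\gamma^+}}\lesssim \|g\|_{L^2_l}\|f\|_{L^2_l},
\]
since $l-\gamma^+=2>3/2$.

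For the region $\{|v-v_*|\le 1\}$, the key observation is that $\int_{|v-v_*|\le 1}|v-v_*|^{\gamma}dv_*\le C$ whenever $\gamma>-3$. I would split the kernel symmetrically as $|v-v_*|^{\gamma}=|v-v_*|^{\gamma/2}\cdot|v-v_*|^{\gamma/2}$ and apply Cauchy--Schwarz to get
\[
\int\!\!\!\int |v-v_*|^{\gamma}\mathbf{1}_{|v-v_*|\le1}|g_*f|\,dv_*dv\le\Big(\int\!\!\!\int|v-v_*|^{\gamma}\mathbf{1}_{|v-v_*|\le1}|g_*|^2\Big)^{1/2}\Big(\int\!\!\!\int|v-v_*|^{\gamma}\mathbf{1}_{|v-v_*|\le1}|f|^2\Big)^{1/2}.
\]
Performing the $dv$ (resp.\ $dv_*$) integration first collapses each factor to a uniform constant times $\|g\|_{L^2}$ (resp.\ $\|f\|_{L^2}$), which is then dominated by $\|g\|_{L^2_l}\|f\|_{L^2_l}$ since $l\ge 0$.

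Combining the two regions gives the stated inequality. There is no genuine obstacle here; the argument is an elementary Cauchy--Schwarz plus the local integrability of $|v|^{\gamma}$ near the origin. The only point requiring care is the bookkeeping of the weights, namely verifying that the single choice $l=\gamma^++2$ simultaneously handles the unweighted $L^2$ bound in the near-diagonal region and the weighted $L^1\hookrightarrow L^2$ embedding in the far region.
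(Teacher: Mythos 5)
Your proof is correct, but it takes a different route from the paper for the singular part of the kernel. The paper splits according to the sign of $\gamma$: for $\gamma\ge 0$ it uses exactly your far-field idea, namely $|v-v_*|^{\gamma}\lesssim \langle v\rangle^{\gamma}\langle v_*\rangle^{\gamma}$ followed by the embedding $\|\cdot\|_{L^1_{\gamma}}\lesssim\|\cdot\|_{L^2_{\gamma+2}}$, while for $-3<\gamma<0$ it invokes the Hardy--Littlewood--Sobolev inequality with $1/p=\tfrac12(2+\gamma/3)\in(\tfrac12,1)$ and then embeds $L^p$ (with $p\in(1,2)$) into $L^2_2$ by weighted H\"older. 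You instead split the integration region into $\{|v-v_*|\le 1\}$ and $\{|v-v_*|>1\}$ and, on the near-diagonal part, use the Schur-type Cauchy--Schwarz bound
\[
\iint K\,|g_*f|\le\Big(\iint K\,|g_*|^2\Big)^{1/2}\Big(\iint K\,|f|^2\Big)^{1/2},\qquad K=|v-v_*|^{\gamma}\mathbf{1}_{|v-v_*|\le 1},
\]
together with the local integrability $\int_{|z|\le1}|z|^{\gamma}dz<\infty$ for $\gamma>-3$. Both arguments are complete: your weight bookkeeping ($l=\gamma^{+}+2$, the $L^1_{\gamma^{+}}\hookrightarrow L^2_{l}$ step via $\langle\cdot\rangle^{-2}\in L^2(\R^3)$) checks out. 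What the two approaches buy: yours is entirely elementary and treats all $\gamma>-3$ uniformly without a sign case-distinction, whereas the paper's HLS route is quicker to state for readers who have that tool at hand and is the same device the paper reuses in Lemma \ref{L116}, so it keeps the toolkit consistent across the section.
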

\begin{proof}
In the case of $\gamma\ge 0$, it is clear that
$\int_{\R^3} \int_{\R^3} |v-v_*|^\gamma g_* f dv_* dv\lesssim \Vert f \Vert_{L^{1}_{\gamma}}\Vert g \Vert_{L^{1}_{\gamma}}\lesssim \Vert f \Vert_{L^{2}_{\gamma+2}}\Vert g \Vert_{L^{2}_{\gamma+2}}$.
For the case $-3< \gamma < 0$, by Hardy-Littlewood-Sobolev inequality, i.e. $1/p =1/2(2 +\gamma/3) \in (\frac 1 2, 1)$, we have
\[
\int_{\R^3} \int_{\R^3} |v-v_*|^\gamma g_* f dv_* dv  \lesssim \Vert f \Vert_{L^{p}} \Vert g \Vert_{L^{p}}\lesssim \Vert f \Vert_{L^{2}_2}\Vert g \Vert_{L^{2}_{2}},
\]
 where we use the fact that $p \in (1, 2)$. It ends the proof of the lemma.
\end{proof}

We  recall the estimates for the linearized Boltzmann operator with weight $\mu^{-1/2}$.

\begin{lem}(see \cite{MS}, Theorem 1.1, \cite{DLSS}, Lemma A.3, \cite{DLYZ2}, Lemma 2.6)\label{624}
Let $L_{\mu}$ be given by (\ref{Lmu}), for any smooth function $f$ we have
\[
(L_{\mu}f, f)_{L^2}\leq -C(\|f\|^2_{H^s_{\ga/2}}+\|f\|^2_{L^2_{s+\ga/2}}), \quad P_\mu f =0
\]
where
$P_\mu g = \sum_{\varphi \in \{1,v_1,v_2,v_3,|v|^2\}} \left(\int_{\R^3 } g \varphi \sqrt{\mu} dv  \right)\varphi \sqrt{\mu}$.
Moreover, for $-1<\gamma+2s<0$ and $q>0$, for any smooth function $f$, we have
\begin{equation}
\label{L mu estimate soft}
(L_{\mu}f,e^{q\<v\>}f)_{L^2}\leq -C_q\|e^{\frac{q}{2}\<v\>}f\|^2_{L^2_{s+\ga/2}}+C\|f\|_{L^2_{B_R}}^2,
\end{equation}
where $C_q,C>0$ and $B_R$ denotes the closed ball in $\R^3_v$ with center zero and radius  $R>0$.
\end{lem}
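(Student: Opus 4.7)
Both estimates concern the self-adjoint operator $L_\mu$ on $L^2(\R^3_v)$. For the first inequality, the strategy is to split
\[
L_\mu f = \nu(v) f - K f, \qquad \nu(v) := \int_{\R^3\times\SS^2} b(\cos\theta) |v-v_*|^\gamma \mu(v_*) \bigl(1-\sqrt{\mu'_*/\mu_*}\bigr) d\sigma dv_*,
\]
with $\nu(v) \sim \langle v\rangle^{\gamma+2s}$ as $|v|\to\infty$, so that the multiplicative part already delivers the weighted bound $\|f\|^2_{L^2_{s+\gamma/2}}$. The anisotropic regularity gain $\|f\|^2_{H^s_{\gamma/2}}$ is extracted via the Carleman/Bobylev form of $(L_\mu f, f)_{L^2}$, which is equal (up to lower-order terms) to the non-negative quadratic functional
\[
\frac12 \int B(v-v_*,\sigma) \Bigl(\sqrt{\mu_*'}\,f' + \sqrt{\mu'}\,f_*' - \sqrt{\mu_*}\,f - \sqrt{\mu}\,f_*\Bigr)^2 dv\,dv_*\,d\sigma.
\]
Using the singular angular kernel (assumption $\mathbf{(A2)}$) together with Lemma \ref{L110} and Lemma \ref{L116}, I would bound this functional from below by $c\|f\|^2_{H^s_{\gamma/2}}$ modulo a compact remainder, following the decomposition philosophy of Mouhot--Strain. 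The orthogonality $P_\mu f = 0$ then upgrades the estimate on $(\ker L_\mu)^\perp$ to a genuine coercive bound by absorbing the compact lower-order piece via the classical spectral gap of $L_\mu$ on this subspace.

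For the second inequality (exponentially weighted coercivity in the soft potential range $-1 < \gamma+2s < 0$), I would conjugate and write
\[
(L_\mu f, e^{q\langle v\rangle} f)_{L^2} = \bigl(L_\mu (e^{q\langle v\rangle/2} f), e^{q\langle v\rangle/2} f\bigr)_{L^2} + \bigl(\bigl[e^{q\langle v\rangle/2}, L_\mu\bigr] f, e^{q\langle v\rangle/2} f\bigr)_{L^2} =: \mathrm{I}+\mathrm{II}.
\]
Setting $g = e^{q\langle v\rangle/2} f$, one applies the first inequality to $g$ after splitting $g = P_\mu g + (I - P_\mu) g$; because $e^{q\langle v\rangle/2}\sqrt{\mu}$ still decays like $\mu^{1/2-}$, the projection term contributes only a $\|f\|^2_{L^2(B_R)} + \epsilon\|e^{q\langle v\rangle/2} f\|^2_{L^2_{s+\gamma/2}}$ error after a large-velocity/small-velocity split. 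The commutator $\mathrm{II}$ is the technical core: it reduces to integrals of $b(\cos\theta)|v-v_*|^\gamma \sqrt{\mu_*}\bigl(e^{q\langle v'\rangle/2} - e^{q\langle v\rangle/2}\bigr)$ against pairs of $e^{q\langle v\rangle/2} f$ factors, and the Lipschitz smoothness of $\xi \mapsto e^{q\langle\xi\rangle/2}$ together with the inequality $|\langle v'\rangle - \langle v\rangle|\lesssim |v-v_*|\sin(\theta/2)$ produces an extra $\sin(\theta/2)$ that tames the angular singularity. The resulting terms are of strictly lower order than the gain $\|e^{q\langle v\rangle/2} f\|^2_{L^2_{s+\gamma/2}}$ and can be absorbed on the left, modulo a residual $\|f\|^2_{L^2(B_R)}$ for a sufficiently large radius $R$.

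\textbf{Main obstacle.} The hardest step is the anisotropic coercive lower bound for the Carleman quadratic form---i.e.\ extracting the $H^s_{\gamma/2}$ norm from the non-negative bilinear functional while properly controlling the compact remainders in the soft-potential regime $\gamma<0$. In the weighted case, the delicate point is showing that commutators with $e^{q\langle v\rangle/2}$ genuinely sit below the main coercive term in both regularity and weight; here one cannot afford a single full power of $\langle v\rangle^{2s}$ loss, so cancellations between $v'$ and $v$ must be tracked as carefully as in the proof of Theorem \ref{T24} rather than using brute Taylor expansion.
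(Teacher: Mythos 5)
The paper does not actually prove this lemma: it is imported wholesale from the literature, the citations \cite{MS} (Theorem 1.1), \cite{DLSS} (Lemma A.3) and \cite{DLYZ2} (Lemma 2.6) standing in for a proof, so there is no in-paper argument to measure you against. Your overall architecture (the nonnegative symmetrized quadratic form to extract the $H^s_{\gamma/2}$ gain, a collision-frequency-type multiplier for the $L^2_{s+\gamma/2}$ weight, and for the weighted bound the conjugation $(L_\mu f, e^{q\langle v\rangle}f)=(L_\mu g,g)+([e^{q\langle v\rangle/2},L_\mu]f,g)$ with $g=e^{q\langle v\rangle/2}f$ followed by the $P_\mu g$/$(I-P_\mu)g$ splitting) is indeed the philosophy of those references, so as a reconstruction it is pointed in the right direction.

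Two steps, however, would fail as written. First, you close the unweighted estimate by "absorbing the compact lower-order piece via the classical spectral gap of $L_\mu$." For soft potentials --- precisely the regime of the lemma's second part, $-1<\gamma+2s<0$, and of the paper --- $L_\mu$ has no spectral gap; the absorption must instead run through the degenerate non-derivative coercivity $(L_\mu f,f)\gtrsim\|(I-P_\mu)f\|^2_{L^2_{s+\gamma/2}}$ (itself essentially the cited Mouhot--Strain theorem, so invoking it wholesale makes your argument partly circular), combined with the elementary bound $\|f\|^2_{L^2(B_R)}\le C_R\|f\|^2_{L^2_{s+\gamma/2}}$ valid when $P_\mu f=0$. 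Second, in the commutator term you claim that the Lipschitz bound $|\langle v'\rangle-\langle v\rangle|\lesssim|v-v_*|\sin(\theta/2)$ yields "an extra $\sin(\theta/2)$ that tames the angular singularity." One power of $\sin(\theta/2)$ makes $b(\cos\theta)$ integrable only when $2s<1$; for $s\ge 1/2$ the resulting $\int_0^{\pi/2}\theta^{-2s}\,d\theta$ still diverges, and one needs the second-order, $\sigma$-symmetrized cancellation (the vanishing of the first-order term after angular integration, exactly as in the $\mathscr{B}_1$, $\mathscr{B}_3$ analysis of Lemma \ref{L21} or in \cite{DLSS}), together with careful bookkeeping showing the exponential factor from the mean-value theorem, $e^{q\max(\langle v\rangle,\langle v'\rangle)/2}$, is dominated by $\sqrt{\mu_*}$ via $|v'|\le|v|+|v_*|$; a uniform Lipschitz constant does not exist. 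Your closing caveat acknowledges the issue, but the step as stated is where the proof would break in the strong-singularity range.
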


From this, we deduce that

\begin{lem}\label{SL}
Let $L=-Q(\mu,\cdot)-Q(\cdot,\mu)$ and $\mathcal{S}_L$ is the semigroup of $-L$, then we have
\[
\|\Pi^\perp  \mathcal{S}_L(t)\|_{H^2_x L^2(\mu^{-(1/2+\varepsilon)})\rightarrow H^2_x L^2(\mu^{-1/2})}\leq C e^{-\lambda t^b}, \quad \Pi^\perp = I - \Pi
\]
for some $\lambda >0$, where  $b=1, \epsilon =0$ if $\gamma+2s \ge 0$ and $b\in(0,1), \epsilon > 0$ if $-1 < \gamma+2s < 0$.
\end{lem}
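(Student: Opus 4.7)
\textbf{Proof proposal for Lemma \ref{SL}.}
The plan is to reduce to the symmetrized setting and then split into the two regimes. Writing $f=\sqrt{\mu}\,g$, a direct computation gives $Lf=\sqrt{\mu}\,L_\mu g$, so that if $g$ solves $\partial_t g+L_\mu g=0$ then $f=\sqrt{\mu}\,g$ solves $\partial_t f+Lf=0$. Under this change of unknown, $\|f\|_{L^2(\mu^{-1/2})}=\|g\|_{L^2_v}$, $\|f\|_{L^2(\mu^{-(1/2+\varepsilon)})}=\|g\,\mu^{-\varepsilon/2}\|_{L^2_v}$, and the projector $\Pi^\perp$ defined via \eqref{Pi} becomes, after the substitution, precisely $P_\mu^\perp=I-P_\mu$ in Lemma \ref{624}. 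Since $L$ and $L_\mu$ act only in $v$, spatial derivatives $\partial_x^\alpha$ commute with the semigroup, so throughout the argument we may work with the $x$-independent estimate and recover the $H^2_x$ version afterwards by applying $\partial_x^\alpha$ for $|\alpha|\le 2$ and summing.

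For the case $\gamma+2s\ge 0$, Lemma \ref{624} (first part) gives the genuine spectral gap
\[
(L_\mu g,g)_{L^2_v}\ge C\bigl(\|g\|_{H^s_{\gamma/2}}^2+\|g\|_{L^2_{s+\gamma/2}}^2\bigr)\ge C\|g\|_{L^2_v}^2,\qquad P_\mu g=0,
\]
so the standard energy identity $\tfrac{d}{dt}\|g(t)\|_{L^2_v}^2=-2(L_\mu g,g)_{L^2_v}\le -2C\|g(t)\|_{L^2_v}^2$ combined with Gr\"onwall yields exponential decay with $\varepsilon=0$ and $b=1$. Reverting to $f=\sqrt{\mu}\,g$ and restoring the $H^2_x$ derivatives finishes this case.

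For the case $-1<\gamma+2s<0$, the crucial input is the weighted estimate \eqref{L mu estimate soft} of Lemma \ref{624}: for $q>0$,
\[
(L_\mu g,e^{q\langle v\rangle}g)_{L^2_v}\ge C_q\bigl\|e^{q\langle v\rangle/2}g\bigr\|_{L^2_{s+\gamma/2}}^2-C\|g\|_{L^2(B_R)}^2.
\]
The truncation $\|g\|_{L^2(B_R)}$ is absorbed via the splitting $-L=A+B$ used in the paper (with $A=M\chi_R$ truncating low velocities), so after a standard Duhamel manipulation one obtains, on the orthogonal $P_\mu^\perp$, the effective dissipation $\tfrac{d}{dt}\|e^{q\langle v\rangle/2}g\|_{L^2_v}^2\le -c\|e^{q\langle v\rangle/2}g\|_{L^2_{s+\gamma/2}}^2$. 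Choosing $\varepsilon>0$ small enough that $e^{q\langle v\rangle}\le C_q\,\mu^{-\varepsilon}$ uniformly in $v$, the assumption $f_0\in L^2(\mu^{-(1/2+\varepsilon)})$ keeps the weighted norm $\|e^{q\langle v\rangle/2}g(t)\|_{L^2_v}$ bounded for all $t\ge0$. The stretched exponential rate is then produced by interpolation: for any $R>0$,
\[
\|g\|_{L^2_v}^2\le \langle R\rangle^{-2(s+\gamma/2)}\|g\|_{L^2_{s+\gamma/2}}^2+e^{-q\langle R\rangle}\|e^{q\langle v\rangle/2}g\|_{L^2_v}^2,
\]
which turns the dissipation inequality into $y'(t)\le -c\langle R\rangle^{2(s+\gamma/2)}y(t)+C\langle R\rangle^{2(s+\gamma/2)}e^{-q\langle R\rangle}\|f_0\|_{L^2(\mu^{-(1/2+\varepsilon)})}^2$ for $y(t)=\|g(t)\|_{L^2_v}^2$; optimizing $R$ as a function of $t$ (roughly $\langle R\rangle\sim t^{1/(1-2(s+\gamma/2))}$ up to a logarithmic correction) produces the stretched exponential bound $e^{-\lambda t^b}$ for some $b\in(0,1)$, and translating back through $f=\sqrt{\mu}\,g$ gives the stated estimate.

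The main obstacle is the soft-potential regime: one must control the truncation term $\|g\|_{L^2(B_R)}^2$ appearing in \eqref{L mu estimate soft} without destroying the weighted coercivity, and simultaneously balance the loss of weight $\mu^{-\varepsilon}\gtrsim e^{q\langle v\rangle}$ against the algebraic decay coming from $\langle R\rangle^{2(s+\gamma/2)}$. This is precisely where the restriction $b<1$ and $\varepsilon>0$ become unavoidable; the rest of the argument is a routine Duhamel/interpolation optimization built on top of Lemma \ref{624}.
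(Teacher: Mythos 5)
There is a genuine gap, and it sits at the heart of both halves of your argument: you treat the decay of $\Pi^\perp\mathcal{S}_L(t)$ as a consequence of the coercivity of $L_\mu$ alone. But the projector $\Pi$ in \eqref{Pi} removes only the five \emph{global} collision invariants (integrals over $x$ \emph{and} $v$), not the local hydrodynamic part $P_\mu g(x,\cdot)$ of Lemma \ref{624}. Take $f(x,v)=a(x)\mu(v)$ with $\int_{\T^3}a\,dx=0$: then $\Pi f=0$ while $Lf\equiv 0$, so no energy/Gr\"onwall estimate based on $(L_\mu g,g)\gtrsim \|g\|^2$ for $P_\mu g=0$ can produce decay — the condition $P_\mu g(x,\cdot)=0$ is neither implied by $\Pi f=0$ nor propagated. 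The decay of the macroscopic modes comes from their coupling with the transport $v\cdot\nabla_x$ (micro--macro decomposition / hypocoercivity on the torus), which your proposal never uses; this is precisely why the paper's proof for $\gamma+2s\ge0$ invokes Theorem 8.4 of Gressman--Strain \cite{GS} (linearized decay for the full inhomogeneous equation) rather than the spectral gap of Lemma \ref{624}. Your "spatially homogeneous reduction plus $\partial_x^\alpha$ commute with the semigroup" step hides exactly this: the $x$-derivatives are indeed harmless, but the dissipative mechanism for the hydrodynamic part is not velocity-local.

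The same omission undermines the soft-potential half, even though its skeleton (the weighted estimate \eqref{L mu estimate soft} plus the interpolation/optimization in $R$ as in \eqref{ab}) is the route the paper itself sketches. Two steps are asserted rather than proved: (i) the absorption of the compact remainder $C\|g\|^2_{L^2(B_R)}$ in \eqref{L mu estimate soft} "via a standard Duhamel manipulation" — absorbing it requires an already-established decay (or at least uniform control) of the microscopic/macroscopic components, i.e. again the inhomogeneous linear theory; and (ii) the uniform-in-time bound on $\|e^{q\langle v\rangle/2}g(t)\|_{L^2}$, which is the weighted analogue of the same problem and does not follow from the differential inequality you wrote once the $B_R$ term and the undamped hydrodynamic part are present. (A small algebraic slip besides: with $f=\sqrt{\mu}\,g$ one has $\|f\|_{L^2(\mu^{-(1/2+\varepsilon)})}=\|g\,\mu^{-\varepsilon}\|_{L^2}$, not $\|g\,\mu^{-\varepsilon/2}\|_{L^2}$; this is harmless but should be fixed.) In short: the symmetrization $f=\sqrt{\mu}\,g$ and the interpolation giving $b=\tfrac{1}{1-(\gamma+2s)}\in(0,1)$ are fine, but the proof needs the hypocoercive decay of the full linearized semigroup on $\Pi^\perp$ (as in \cite{GS}, Theorem 8.4) as input; coercivity of $L_\mu$ on $(\mathrm{Null}\,L_\mu)^\perp$ is not enough.
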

\begin{proof} For the case $\gamma+2s \ge 0$ it can be proved similarly as Theorem 8.4 in \cite{GS}. For the case $-1<\gamma+2s <0$, it can be proved by using \eqref{L mu estimate soft} and  the interpolation as \eqref{ab}.
\end{proof}

\subsection{Dyadic decompositions} We will introduce two
 types of the dyadic decomposition in phase and frequency spaces. Let us first list some basic knowledge on the Littlewood-Paley decomposition. Let $B_{\frac{4}{3}}:=\{\xi\in\R^3||\xi|\leq\frac{4}{3}\}$ and $C:=\{\xi\in\R^3||\frac{3}{4}\leq|\xi|\leq\frac{8}{3}\}$. Then one way may introduce two radial functions $\psi\in C_0^\infty(B_{\frac{4}{3}})$ and $\vphi\in C_0^\infty(C)$ which satisfy
\ben
   \label{7.1}\psi,\vphi\geq0,~~&and&~~\psi(\xi)+\sum_{j\geq0}\vphi(2^{-j}\xi)=1,~\xi\in\R^3,\\
   \notag|j-k|\geq2&\Rightarrow& \rm{Supp}~\vphi(2^{-j}\cdot)\cap \rm{Supp}~\vphi(2^{-k}\cdot)=\emptyset,\\
   \notag j\geq1&\Rightarrow& \rm{Supp}~\psi\cap \rm{Supp}~\vphi(2^{-j}\cdot)=\emptyset.
\een

We first introduce the dyadic decomposition in the phase space. The dyadic operator in the phase space $\cP_j$ can be defined as
\ben
  \label{7.2}\cP_{-1}f(x):=\psi(x)f(x),~\cP_jf(x):=\vphi(2^{-j}x)f(x),~j\geq0.
\een
Let $\tP_lf(x)=\sum\limits_{|k-l|\leq N_0}\cP_kf(x)$ and $\U_jf(x)=\sum\limits_{k\leq j}\cP_kf(x)$ where $N_0\geq2$ will be chosen in the later which verifies $\cP_j\cP_k=0$ if $|j-k|\geq N_0$. For any smooth function $f$, we have $f=\cP_{-1}f+\sum\limits_{j\geq0}\cP_jf.$

Next we introduce the dyadic decomposition in the frequency space. We denote $\tm\eqdefa\cF^{-1}\psi$ and $\tphi\eqdefa\mathcal{F}^{-1}\vphi$ where they are the inverse Fourier transform of $\vphi$ and $\psi$. If we set $\tphi_j(x)=2^{3j}\tphi(2^jx)$, then the dyadic operator in the frequency space $\F_j$ can be defined as follows
\begin{align*}
  \F_{-1}f(x):=\int_{\R^3}\tm(x-y)f(y)dy,~\F_jf(x):=\int_{\R^3}\tphi_j(x-y)f(y)dy,~j\geq0.
\end{align*}
Let $\tF_jf(x)=\sum_{|k-j|\leq 3N_0}\F_kf(x)$ and $S_jf(x)=\sum_{k\leq j}\F_kf$. Then for any $f\in \mathscr{S}'$, it holds $f=\F_{-1}f+\sum_{j\geq0}\F_jf.$

\bigskip

 To simplify the above notations, we recall the definition of symbol $S^m_{1,0}$ and pseudo-differential operator:
\begin{defi}\label{de2.1}
 A smooth function $a(v,\xi)$ is said to be a symbol of type  $S^m_{1,0}$ if $a(v,\xi)$ verifies for any multi-indices $\alpha$ and $\beta$,
\beno
 |(\partial_\xi^\alpha\pa_v^\beta a)(v,\xi)|\leq C_{\alpha,\beta}\<\xi\>^{m-|\al|},
\eeno
where $C_{\alpha,\beta}$ is a constant depending only on $\alpha$ and $\beta$. $a(x,D)$ is called a pseudo-differential operator with the symbol $a(x,\xi)$ and it is defined as
\beno
(a(x,D)f)(x):=\frac{1}{(2\pi)^3}\int_{\R^3}\int_{\R^3}e^{i(x-y)\xi}a(x,\xi)f(y)dyd\xi.
\eeno
\end{defi}

\begin{defi}\label{Fj} Let  $\alpha=(\alpha_1,\alpha_2,\alpha_3,),|\alpha|:=\alpha_1+\al_2+\al_3$ and $\vphi_\alpha:=(\frac{1}{i}\pa_{x_1})^{\alpha_1}(\frac{1}{i}\pa_{x_2})^{\alpha_2}(\frac{1}{i}\pa_{x_3})^{\alpha_3}\vphi$. Then we define  $\cP_{j,\alpha}$, $\F_{j,\alpha}$ and $\hat{\F}_{j,\al}$ as
\beno
&&\cP_{-1,\alpha}f:=\psi_{\alpha}f,\quad\quad\quad\quad \cP_{j,\alpha}f:=\vphi_\alpha(2^{-j}\cdot)f,\quad j\geq0;\\
&&\F_{-1,\alpha}f:=\psi_{\alpha}(D)f,\quad\quad\quad \F_{j,\alpha}:=\vphi_\alpha(2^{-j}D)f,\quad j\geq0;\\
&&\hat{\F}_{-1,\alpha}f:=(\psi^2)_{\alpha}(D)f,\quad\quad \hat{\F}_{j,\alpha}:=(\vphi^2)_\alpha(2^{-j}D)f,\quad j\geq0.
\eeno
Similar to $\tP_j$   and $\tF_j$, we can also define
  $$\tP_{l,\alpha}=\sum_{|k-l|<N_0}\cP_{k,\alpha},\quad \U_{j,\alpha}=\sum_{k\leq j}\cP_{k,\alpha}, \quad \tF_{j,\alpha}=\sum_{|k-j|<3N_0}\F_{k,\alpha}.$$

  Finally we define the special localized operators $\mF_j$   and $\mP_j$ which fulfill the following conditions:
  \begin{enumerate} \item The support of the Fourier transform of $\mF_jf$ and the support of $\mP_jf$  will be localized in the annulus $\{|\cdot|\sim 2^j\}$;
  \item It hold that for any fixed $N\in\N$, $\|\tF_j f\|_{L^2}+\sum\limits_{|\alpha|\le N} (\|\tF_{j,\alpha}f\|_{L^2}+\|\hat{\F}_{j,\alpha}f\|_{L^2})\le C_N\|\mF_jf\|_{L^2}$ and $\|\tP_j f\|_{L^2}+\sum\limits_{|\alpha|\le N} \|\tP_{j,\alpha}f\|_{L^2}\le C_N\|\mP_jf\|_{L^2}$.
  \end{enumerate}
\end{defi}
\begin{rmk}  The definitions of the localized operators stem from the estimate for the commutator $[\cP_k, \F_j]$ thanks to   Lemma \ref{le1.2}. While the introduction of $\mF_j$ is just to simply  the presentation of $\tF_j, \tF_{j,\alpha}$ and $\hat{\F}_{j,\alpha}$.
\end{rmk}

Before going further, we recall some basic results.

\begin{lem}[see \cite{H} ]\label{le1.1}
Let $s, r\in\R$ and $a(v),b(\xi)\in C^\infty$ satisfy for any $\alpha\in\Z^3_+$,
\ben\label{abconstants}
 |D_v^\al a(v)|\leq C_{1,\al}\<v\>^{r-|\al|},~|D_\xi^\al b(\xi)|\leq C_{2,\al}\<\xi\>^{s-|\al|}
\een
for constants $C_{1,\al},C_{2,\al}$. Then there exists a constant $C$ depending only on $s,r$ and finite numbers of $C_{1,\al},C_{2,\al}$ such that for any $f\in \mathscr{S}(\R^3)$,
\beno
 \|a(v)b(D)f\|_{L^2}\leq C\|\<D\>^s\<v\>^rf\|_{L^2},~\|b(D)a(v)f\|_{L^2}\leq C\|\<v\>^r\<D\>^sf\|_{L^2}.
\eeno
As a direct consequence, we get that $\|\<D\>^m\<v\>^lf\|_{L^2}\sim\|\<v\>^l\<D\>^mf\|_{L^2}\sim|f|_{H^m_l}$.
\end{lem}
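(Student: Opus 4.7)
The plan is to view $a(v)b(D)$ and $b(D)a(v)$ as pseudo-differential operators in the Kohn--Nirenberg quantization of Definition \ref{de2.1} and reduce each inequality to the $L^2$-boundedness of a pseudo-differential operator whose full symbol lies in the zero-order H\"ormander class $S^0_{0,0}$, so that the Calder\'on--Vaillancourt theorem applies directly.

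For the first inequality I would substitute $g := \<D\>^s\<v\>^r f\in L^2$ and reformulate the target bound as $\|T_1 g\|_{L^2}\le C\|g\|_{L^2}$, where
\[
T_1 := a(v)\,b(D)\,\<v\>^{-r}\,\<D\>^{-s}.
\]
Because each of the four factors has a symbol depending on only one of the variables $(v,\xi)$, the iterated Kohn--Nirenberg composition formula produces a full symbol whose leading term is $a(v)\<v\>^{-r}\cdot b(\xi)\<\xi\>^{-s}$, uniformly bounded by \eqref{abconstants} at $|\alpha|=0$, together with correction terms of the schematic form $\frac{(-i)^{|\alpha|}}{\alpha!}\,a(v)\,\partial_\xi^\alpha b(\xi)\,\partial_v^\alpha(\<v\>^{-r})\,\<\xi\>^{-s}$, each of which is bounded by $C_\alpha\<v\>^{-|\alpha|}\<\xi\>^{-|\alpha|}$ thanks to $|\partial_\xi^\alpha b|\ls\<\xi\>^{s-|\alpha|}$, $|\partial_v^\alpha\<v\>^{-r}|\ls\<v\>^{-r-|\alpha|}$, and $|a|\ls\<v\>^r$. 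Applying Leibniz to $\partial_v^\beta\partial_\xi^\gamma\sigma_{T_1}$ and estimating each factor by the hypothesis on $a,b$ then yields $|\partial_v^\beta\partial_\xi^\gamma\sigma_{T_1}(v,\xi)|\le C_{\beta,\gamma}$ uniformly in $(v,\xi)$. Calder\'on--Vaillancourt gives $\|T_1\|_{L^2\to L^2}\le C$, which is precisely the first inequality, with $C$ depending only on $s$, $r$ and finitely many of the $C_{1,\alpha}$, $C_{2,\alpha}$.

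For the second inequality I would run the identical argument on $T_2 := b(D)\,a(v)\,\<D\>^{-s}\,\<v\>^{-r}$: first compose the rightmost three factors to obtain a symbol in $S^{0,-s}$, then compose with $b(D)$ from the left to obtain a full symbol again in $S^0_{0,0}$, hence $L^2$-bounded. Alternatively, this estimate follows from the first by duality, since $(b(D)a(v))^\ast = \bar a(v)\bar b(D)$ and the hypotheses are preserved under complex conjugation. The final consequence $\|\<D\>^m\<v\>^l f\|_{L^2}\sim\|\<v\>^l\<D\>^m f\|_{L^2}\sim|f|_{H^m_l}$ is then obtained by applying both inequalities with the specific choice $a(v)=\<v\>^l$, $b(\xi)=\<\xi\>^m$ (and $r=l$, $s=m$), whose symbol bounds are elementary.

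The main technical point is the quantitative form of the Calder\'on--Vaillancourt theorem: one must verify that only finitely many of the seminorms $\sup_{v,\xi}|\partial_v^\beta\partial_\xi^\gamma\sigma_{T_i}|$ enter the operator norm estimate, so that the resulting constant indeed depends on only a finite number of the $C_{1,\alpha}$ and $C_{2,\alpha}$ as asserted. This is standard but is the one step where care is required in tracking the remainder terms of the asymptotic composition formula.
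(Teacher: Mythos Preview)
Your approach is correct and is essentially the standard pseudo-differential calculus argument; note that the paper itself does not supply a proof of this lemma but simply cites \cite{H}, so there is no in-paper proof to compare against. The composition-plus-Calder\'on--Vaillancourt route you outline is exactly how such statements are proved in the reference, and the paper's own Lemma~\ref{le1.2} (composition formula with explicit remainder bounds in $S^{r-N}_{1,0}$) provides precisely the quantitative symbol calculus you need to make the remainder estimates rigorous and to track that only finitely many seminorms $C_{1,\alpha},C_{2,\alpha}$ enter.
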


\begin{lem}[see \cite{H}]\label{le1.2}
Let $l,s,r\in\R,M(\xi)\in S_{1,0}^r$ and $\Phi(v)\in S_{1,0}^l$. Then there exists a constant $C$ such that $|[M(D_v),\Phi(v)]f|_{H^s}\leq C|f|_{H_{l-1}^{r+s-1}}$. Moreover, for any $N\in\N,$
\ben\label{MPHICOMMU}
M(D_v)\Phi=\Phi M(D_v)+\sum_{1\leq|\al|\leq N}\frac{1}{\al!}\Phi_\al M^\al(D_v)+r_N(v,D_v),
\een
where $\Phi_\al(v)=\pa_v^\al\Phi,~M^\al(\xi)=\pa_\xi^\al(\xi)$ and $\<v\>^{N-l}r_N(v,\xi)\in S^{r-N}_{1,0}$. Moreover, for any $\beta,\beta'\in \in\Z^3_+$, we have
\ben\label{rN}
\pa^\beta_v\pa^{\beta'}_\xi r_N(v,\xi)\leq C_{\beta,\beta'}\<\xi\>^{r-N-|\beta|}\<v\>^{l-N-|\beta'|}.
\een
Furthermore, use (\ref{MPHICOMMU}) repeatedly, we can also obtain that
\ben\label{MPHICOMMU2}
M(D_v)\Phi=\Phi M(D_v)+\sum_{1\leq|\al|\leq N}C_{\al} M^\al(D_v)\Phi_\al+C_Nr_N(v,D_v).
\een
\end{lem}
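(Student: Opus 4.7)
The statement is a standard result of pseudo-differential calculus with weights, and the plan is to prove the asymptotic expansion \eqref{MPHICOMMU} first (from which the commutator estimate and the dual form \eqref{MPHICOMMU2} follow essentially for free). The strategy is to compute the symbol of the composition $M(D_v)\circ\Phi(v)$ by representing the operator as a double oscillatory integral, Taylor-expanding $\Phi$ in its argument around the output variable $v$, and integrating by parts in $\xi$ to convert polynomial factors $(y-v)^\alpha$ into derivatives of $M(\xi)$.

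More concretely, for Schwartz $f$ write
\[
M(D_v)(\Phi f)(v)=(2\pi)^{-3}\iint e^{i(v-y)\cdot\xi}M(\xi)\Phi(y)f(y)\,dy\,d\xi,
\]
and apply Taylor's formula with integral remainder:
\[
\Phi(y)=\sum_{|\alpha|<N}\frac{(y-v)^\alpha}{\alpha!}\Phi_\alpha(v)+\sum_{|\alpha|=N}\frac{N(y-v)^\alpha}{\alpha!}\int_0^1(1-t)^{N-1}\Phi_\alpha(v+t(y-v))\,dt.
\]
Plugging each monomial $(y-v)^\alpha e^{i(v-y)\cdot\xi}=(-D_\xi)^\alpha e^{i(v-y)\cdot\xi}$ back and integrating by parts in $\xi$ produces, up to the conventional factor, $\tfrac{1}{\alpha!}\Phi_\alpha(v) M^\alpha(D_v) f$ for $|\alpha|<N$ (the $\alpha=0$ term being exactly $\Phi M(D_v)f$), while the Taylor remainder is an oscillatory integral whose amplitude depends on both $v$ and $\xi$. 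This amplitude can be rewritten, via the usual oscillatory-integral reduction (a finite number of further integrations by parts), as a standard symbol $r_N(v,\xi)$.

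The key step is verifying the weighted symbol estimate $\partial_v^\beta\partial_\xi^{\beta'}r_N(v,\xi)\lesssim\langle\xi\rangle^{r-N-|\beta'|}\langle v\rangle^{l-N-|\beta|}$. This follows from $M^\alpha\in S^{r-N}_{1,0}$ for $|\alpha|=N$ together with the chain-rule observation that $\partial_v^\beta\Phi_\alpha(v+t(y-v))$, evaluated along the line segment joining $v$ to $y$, still belongs to the weight class $\langle v+t(y-v)\rangle^{l-N-|\beta|}$; combined with the oscillatory-integral decay in $y-v$ obtained from integration by parts (which also provides polynomial decay in $|y-v|$ so that the remaining $y$-integral converges and contributes the $\langle v\rangle^{l-N}$ factor rather than a worse weight), one recovers the stated symbol class. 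Once this is in hand, the commutator estimate is immediate: taking $N=1$ in \eqref{MPHICOMMU} makes the sum empty, so $[M(D_v),\Phi]=r_1(v,D_v)$ with $r_1\in S^{r-1}_{1,0}$ carrying weight $\langle v\rangle^{l-1}$, and Lemma \ref{le1.1} (applied to the reduced symbol $\langle\xi\rangle^{-(r-1)}\langle v\rangle^{-(l-1)}r_1$, which is a $0$-order, $0$-weight symbol) yields $|[M(D_v),\Phi]f|_{H^s}\lesssim|f|_{H^{s+r-1}_{l-1}}$. The alternative expansion \eqref{MPHICOMMU2} follows by iterating \eqref{MPHICOMMU} on each term $\Phi_\alpha M^\alpha(D_v)$ to flip the order of $\Phi_\alpha$ and $M^\alpha(D_v)$, the extra remainders generated at each step being of strictly lower order and thus absorbable into $C_N r_N$ after finitely many steps.

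The main obstacle is the simultaneous bookkeeping of $\xi$-order and $v$-weight in the remainder estimate: one must track that every integration by parts in $\xi$ used to tame the oscillatory integral costs exactly one power of $\langle\xi\rangle^{-1}$ without consuming the weight gain in $v$, and symmetrically, that differentiating $\Phi$ to extract the $\langle v\rangle^{l-N}$ factor does not degrade the $\xi$ class. Once this double tracking is set up cleanly the rest of the argument is routine symbol-class bookkeeping and standard $L^2$-boundedness of $0$-order, $0$-weight pseudo-differential operators.
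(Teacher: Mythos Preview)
Your proposal is correct and follows the standard pseudo-differential calculus approach that the paper invokes: the paper itself cites \cite{H} for \eqref{MPHICOMMU} and \eqref{rN} and only proves \eqref{MPHICOMMU2}, by exactly the iteration you describe. One small slip: with the paper's indexing (the sum in \eqref{MPHICOMMU} runs over $1\le|\alpha|\le N$), taking $N=1$ does not make the sum empty, but this is harmless since the $|\alpha|=1$ terms $\Phi_\alpha M^\alpha(D_v)$ already map $H^{s+r-1}_{l-1}\to H^s$ and the commutator estimate follows just as you say.
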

\begin{proof}
The proof of (\ref{MPHICOMMU}) and (\ref{rN}) can be found in  (\cite{H}). Here, we only prove (\ref{MPHICOMMU2}). Indeed, we need to change the order of operators $\Phi_\al$ and $M^\al(D_v)$. For $|\al|=1$, due to (\ref{MPHICOMMU}), we have
\beno
\Phi_\al M^\al(D_v)=M^\al(D_v)\Phi_\al+C_\be\sum_{1\leq|\be|\leq N-|\al|}\Phi_{\al+\be}M^{\al+\be}(D_v)+r_N(v,D_v),
\eeno
where $r_{N}(v,\xi)$ also satisfies (\ref{rN}). Then we can derive that
\beno
M(D_v)\Phi=\Phi M(D_v)+\sum_{|\al|=1}C_{\al} M^\al(D_v)\Phi_\al+\sum_{2\leq|\al|\leq N}C_{\al} \Phi_\al M^\al(D_v)+C_Nr_N(v,D_v).
\eeno
Then we can obtain the desired results (\ref{MPHICOMMU2}) by induction.
\end{proof}

\begin{rmk}\label{CONSTS} We emphasize that in the statement of Lemma \ref{le1.2}, the constant $C$ appearing in the inequality depends only on $C_{1,\al},C_{2,\al}$ in \eqref{abconstants} with $a=\Phi$ and $b=M$ and also the constants $ C_{\beta,\beta'}$ for $r_N(v,\xi)$. This fact is crucial for the estimates of commutators and the profiles of weighted Sobolev spaces. For instance,  if $M(D_v)$ and $\Phi(v)$ are chosen to be the localized operators $\F_j$ and $\cP_k$, the constant $C$ in Lemma \ref{le1.2} does not depend on $j$ and $k$. Indeed,
 for any $k\geq0,N\in\N$, $2^{Nk}\vphi(2^{-k}v)$ satisfies that for any $\alpha\in\Z^3_+$,
   \ben\label{Ncon}
   |D_v^\al 2^{Nk}\vphi(2^{-k}v) |\leq C_{N,\al}\<v\>^{N-|\al|}|\vphi_\al(2^{-k}v)|\leq C_{N,\al}\<v\>^{N-|\al|}.
   \een
Moreover, in this case, the term $r_N(v,\xi)$ appearing in \eqref{MPHICOMMU} satisfies
\[\pa_v^\be\pa_\xi^{\be'}r_{2N+1}(v,\xi)\leq C_{\be,\be'}\<\xi\>^{-N-1-|\beta'|}\<v\>^{-N-1-|\be|}.\]
It is clear that  $C_{N,\al}$ and $C_{\be,\be'}$ do not depend on $j$ and $k$.
\end{rmk}

\begin{lem}\label{7.8}
(Bernstein inequality). There exists a constant $C$ independent of $j$ and $f$ such that

(1) For any $s\in\R$ and $j\geq 0$,
\beno
  C^{-1}2^{js}\|\F_jf\|_{L^2(\R^3)}\leq\|\F_jf\|_{H^s(\R^3)}\leq C 2^{js}\|\F_jf\|_{L^2(\R^3)}.
\eeno

(2) For integers $j,k\geq0$ and $p,q\in[1,\infty],q\geq p$, the Bernstein inequality are shown as
\beno
 \sup_{|\al|=k}\|\pa^\al\F_jf\|_{L^q(\R^3)}\ls2^{jk}2^{3j(\frac{1}{p}-\frac{1}{q})}\|\F_jf\|_{L^p(\R^3)},\\
  \sup_{|\al|=k}\|\pa^\al S_jf\|_{L^q(\R^3)}\ls2^{jk}2^{3j(\frac{1}{p}-\frac{1}{q})}\|S_jf\|_{L^p(\R^3)},\\
 2^{jk}\|\F_jf\|_{L^p(\R^3)}\ls\sup_{|\al|=k}\|\pa^\al\F_jf\|_{L^p(\R^3)}\ls2^{jk}\|\F_jf\|_{L^p(\R^3)}.
\eeno
\end{lem}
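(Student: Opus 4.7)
The plan is to derive both parts from scaling properties of the convolution kernels of the dyadic multipliers, together with the spectral localization of $\F_j$. For part (1), since $j\ge 0$ the Fourier transform $\widehat{\F_j f}$ is supported in the annulus $\mathcal{A}_j:=\{\xi:\tfrac34\cdot 2^j\le|\xi|\le\tfrac83\cdot 2^j\}$ where $\vphi(2^{-j}\cdot)$ lives. On $\mathcal{A}_j$ one has $\<\xi\>^{2s}\sim 2^{2js}$ with constants depending only on $s$, so Plancherel's theorem immediately yields
\[
\|\F_j f\|_{H^s}^2=\int_{\R^3}\<\xi\>^{2s}|\widehat{\F_j f}(\xi)|^2 d\xi\sim 2^{2js}\int_{\R^3}|\widehat{\F_j f}(\xi)|^2 d\xi=2^{2js}\|\F_j f\|_{L^2}^2,
\]
which is exactly (1).

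For the $L^p\to L^q$ upper bound in (2), I would choose $\chi\in C_c^\infty(\R^3)$ equal to $1$ on $\mathrm{supp}\,\vphi$ and supported in a slightly larger annulus away from the origin; then $\vphi(2^{-j}\xi)=\chi(2^{-j}\xi)\vphi(2^{-j}\xi)$, so $\F_j f=\chi_j\ast\F_j f$ with $\chi_j(x):=2^{3j}(\cF^{-1}\chi)(2^j x)$. For $|\alpha|=k$ this gives $\pa^\alpha\F_j f=(\pa^\alpha\chi_j)\ast\F_j f$, and the scaling identity $(\pa^\alpha\chi_j)(x)=2^{j(3+k)}(\pa^\alpha\cF^{-1}\chi)(2^j x)$, combined with $\cF^{-1}\chi\in\mathscr{S}(\R^3)$, yields $\|\pa^\alpha\chi_j\|_{L^r}=2^{jk}2^{3j(1-1/r)}\|\pa^\alpha\cF^{-1}\chi\|_{L^r}$ for any $r\in[1,\infty]$. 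Young's convolution inequality with $1+1/q=1/p+1/r$, which forces $1-1/r=1/p-1/q$, then gives
\[
\|\pa^\alpha\F_j f\|_{L^q}\le\|\pa^\alpha\chi_j\|_{L^r}\|\F_j f\|_{L^p}\lesssim 2^{jk}2^{3j(1/p-1/q)}\|\F_j f\|_{L^p}.
\]
Choosing $p=q$ (so $r=1$) yields the upper half of the third inequality. For $S_j f$, the symbol $\psi+\sum_{k=0}^j\vphi(2^{-k}\cdot)$ is supported in $\{|\xi|\le\tfrac83\cdot 2^j\}$, so the same argument applies with $\chi$ replaced by any smooth cutoff equal to $1$ on $\{|\xi|\le\tfrac83\}$.

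The only nonroutine estimate is the lower Bernstein bound $2^{jk}\|\F_j f\|_{L^p}\lesssim\sup_{|\alpha|=k}\|\pa^\alpha\F_j f\|_{L^p}$, and the plan here exploits that $\mathrm{supp}\,\vphi$ avoids the origin. The identity $|\xi|^{2k}=\sum_{|\alpha|=k}\binom{k}{\alpha}\xi^{2\alpha}$ gives, on a slightly larger annulus where a cutoff $\tilde\vphi$ equals $1$ on $\mathrm{supp}\,\vphi$, the representation
\[
\tilde\vphi(\xi)=\sum_{|\alpha|=k}m_\alpha(\xi)(i\xi)^\alpha,\qquad m_\alpha(\xi):=\binom{k}{\alpha}i^{-k}\frac{\tilde\vphi(\xi)\,\xi^\alpha}{|\xi|^{2k}}\in C_c^\infty(\R^3).
\]
Rescaling to $\tilde\vphi(2^{-j}\xi)=2^{-jk}\sum_{|\alpha|=k}m_\alpha(2^{-j}\xi)(i\xi)^\alpha$, multiplying by $\widehat{\F_j f}$ (unchanged since $\tilde\vphi(2^{-j}\cdot)\equiv 1$ on the Fourier support of $\F_j f$), and inverting the Fourier transform yields
\[
\F_j f=2^{-jk}\sum_{|\alpha|=k}m_\alpha(2^{-j}D)\pa^\alpha\F_j f.
\]
Since $m_\alpha(2^{-j}D)$ is convolution with $2^{3j}(\cF^{-1}m_\alpha)(2^j\cdot)$, whose $L^1$ norm is the $j$-independent constant $\|\cF^{-1}m_\alpha\|_{L^1}$ (finite because $\cF^{-1}m_\alpha\in\mathscr{S}$), Young's inequality delivers the reverse bound. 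No step presents a genuine obstacle; the lemma is classical Littlewood--Paley machinery, and the main care required is just to track the scaling exponents of the kernels and verify that after rescaling each convolution kernel retains a $j$-independent $L^r$ norm.
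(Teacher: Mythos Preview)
Your proof is correct. The paper states this lemma without proof, treating it as a standard result from Littlewood--Paley theory; your argument follows the classical route (Plancherel on the annulus for part (1), Young's inequality with rescaled kernels for the upper bounds, and the Fourier-multiplier inversion via $|\xi|^{2k}=\sum_{|\alpha|=k}\binom{k}{\alpha}\xi^{2\alpha}$ for the lower bound), so there is nothing to compare.
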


\begin{lem}\label{lemma1.3}
If $\cP_k,\U_k$ and $\F_j$ are defined in in Definition \ref{de2.1} and $n\in\R^+$, then

 (i) For any $N\in \N$, there exists a constant $C_{N}$ such that
 \beno
 \|[\cP_k,\F_j\<D\>^\ell ]f\|_{L^2}&=&\|(\cP_k\F_j\<D\>^\ell -\F_j\<D\>^\ell \cP_k)f\|_{L^2}\leq C_{N,\ell } \left( 2^{(\ell -1)j}2^{-k}\sum_{|\al|=1}^{2N}\|\cP_{k,\al}\F_{j,\al}f\|_{L^2}+2^{-jN}2^{-kN}\|f\|_{H_{-N}^{\ell -N}}\right),\\
  \|[\U_k,\F_j\<D\>^\ell ]f\|_{L^2}&=&\|(\U_k\F_j\<D\>^\ell -\F_j\<D\>^\ell \U_k)f\|_{L^2}\leq C_{N,\ell } \left( 2^{(\ell -1)j}\sum_{|\al|=1}^{2N}\|\U_{k,\al}\F_{j,\al}f\|_{L^2}+2^{-jN}\|f\|_{H_{-N}^{\ell -N}}\right),\\
 \eeno
 where $\cP_{k,\al},\F_{j,\al}$ and $\U_{k,\al}$ are defined in Definition \ref{de2.1}. Moreover, replace $\cP_{k,\al}$ and $\F_{j,\al}$ by $\tP_{k,\al}$ and $\tF_{j,\al}$ respectively, the above results still hold.

(ii) For $|m-p|>N_0$ and $\forall N\in \N$, there exists a constant $C_N$ such that
 \beno
 \|\F_m\cP_k\F_pg\|_{L^2}\leq C_N2^{-(p+m+k)N}\|\F_pg\|_{L^2_{-N}},\quad \|\F_m\U_{k}\F_pg\|_{L^2}\leq C_N2^{-(p+m)N}\|\F_pg\|_{L^2_{-N}}.
 \eeno
If $m>p+N_0$, we have
\beno
 \|\F_m\cP_{k}S_pg\|_{L^2}\leq C_N2^{-(m+k)N}\|S_pg\|_{L^2_{-N}},\quad
  \|\F_m\U_{k}S_pg\|_{L^2}\leq C_N2^{-mN}\|S_pg\|_{L^2_{-N}}.
\eeno

 (iii) For any $a,w\in \R$, we have
 $\|\U_{k+N_0}h\|_{H^a}\leq C_{a,w}2^{k(-w)^+}\|h\|_{H^a_w}$ and
  $\|S_{p+N_0}h\|_{L^2_l}\leq C_{l}\|h\|_{L^2_l}.$

 (iv) For any $j\geq -1$ and $l\in\R$, we have
$ \|\F_jf\|_{L^1_l}+\|S_jf\|_{L^1_l}\leq C_l\|f\|_{L^1_l}.$
 \end{lem}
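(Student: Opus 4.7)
\textbf{Proof plan for Lemma \ref{lemma1.3}.} The starting point for (i) is to apply the symbolic expansion of Lemma \ref{le1.2} to the commutator $[\cP_k,\F_j\<D\>^\ell]$, viewing $\cP_k$ as multiplication by $\Phi(v)=\vphi(2^{-k}v)$ and $\F_j\<D\>^\ell$ as the pseudo-differential operator with symbol $M(\xi)=\vphi(2^{-j}\xi)\<\xi\>^\ell$. Direct differentiation yields $\pa_v^\al\Phi = i^{|\al|}2^{-k|\al|}\cP_{k,\al}$, while the Leibniz rule applied to $\pa_\xi^\al M$ shows that, up to lower-order terms supported in the same annulus, $M^\al(D)$ extracts on the Fourier support of $\F_j$ a factor $2^{-j|\al|}\cdot 2^{j\ell}$ together with an operator morally of the form $\F_{j,\al}$. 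Thus the $|\al|$-term of the expansion \eqref{MPHICOMMU2} is bounded by $C\,2^{-k|\al|}2^{-j|\al|}2^{j\ell}\|\cP_{k,\al}\F_{j,\al}f\|_{L^2}$, which for $|\al|\ge1$ is dominated by the uniform coefficient $2^{(\ell-1)j}2^{-k}$. The remainder $r_{2N}$ satisfies $|\pa_v^\beta\pa_\xi^{\beta'}r_{2N}|\lesssim \<\xi\>^{\ell-2N-|\beta'|}\<v\>^{-2N-|\beta|}$ by \eqref{rN}; on the relevant regions $\<\xi\>\sim 2^j$ and $\<v\>\sim 2^k$ this yields the coefficient $2^{-jN}2^{-kN}$ once the remaining $2^{j(\ell-N)}\cdot\<v\>^{-N}$ part is absorbed into the weighted Sobolev norm $\|f\|_{H^{\ell-N}_{-N}}$.

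For the variant with $\U_k$ in place of $\cP_k$, the crucial observation is that $\pa_v^\al\U_k=\sum_{p\le k}2^{-p|\al|}i^{|\al|}\cP_{p,\al}$ and the supports $\{|v|\sim 2^p\}$ of the $\cP_{p,\al}g$ are almost pairwise disjoint, so by orthogonality $\|\sum_{p\le k}2^{-p|\al|}\cP_{p,\al}g\|_{L^2}\le C\|\U_{k,\al}g\|_{L^2}$ (the dominant contribution comes from $p=0$), producing the bound with coefficient $2^{(\ell-1)j}$ but without the $2^{-k}$ factor; the claims with $\tP_{k,\al}$ and $\tF_{j,\al}$ in place of $\cP_{k,\al}$ and $\F_{j,\al}$ follow identically, since the thickened localizers enjoy the same dyadic structure. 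For part (ii), I will exploit the near-orthogonality $\F_m\F_p=0$ for $|m-p|>N_0$: iterating Lemma \ref{le1.2}, at each step the commuting piece $\F_m(\cdots)\F_p$ vanishes and only commutator corrections survive, each contributing one extra factor $2^{-k}$ (from $\pa_v\vphi(2^{-k}v)$) and one extra $2^{-\max(m,p)}$ (from $\pa_\xi$ of a frequency cut-off). Performing $N$ iterations yields the prefactor $2^{-(p+m+k)N}$, while the remaining operator maps $L^2_{-N}$ into $L^2$ with a constant depending only on $N$. The $\U_k$ variant is obtained by summing the $\cP_p$-pieces for $p\le k$: the series $\sum_{p\le k}2^{-pN}$ is absolutely convergent, and the price of absolute summation is precisely the loss of the $2^{-kN}$ factor. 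The case with $S_p$ in place of $\F_p$ and $m>p+N_0$ follows from applying the same argument dyadically to each $\F_q$ with $q\le p$, using that $m>q+N_0$ still holds.

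For (iii), on the support of $\U_{k+N_0}$ one has $\<v\>\lesssim 2^k$, so $\<v\>^{-w}\lesssim 2^{k(-w)^+}\<v\>^{-w}\cdot\<v\>^{-(-w)^+}$ on that region; commuting $\<D\>^a$ with the multiplication by $\U_{k+N_0}$ via Lemma \ref{le1.1} converts the pointwise weight bound into the desired operator bound. The $S_{p+N_0}$ estimate follows by writing $S_{p+N_0}$ as convolution with the Schwartz kernel $2^{3(p+N_0)}\tm(2^{p+N_0}\cdot)$ and applying a weighted Young-type inequality with the splitting $\<x\>^l\lesssim \<x-y\>^{|l|}\<y\>^l$, together with the fact that $\|\<\cdot\>^{|l|}\tm(2^{p+N_0}\cdot)\cdot 2^{3(p+N_0)}\|_{L^1}\le C_l$ uniformly in $p$. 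Part (iv) is handled identically: for $\F_jf=\tphi_j\ast f$ with $\tphi_j(x)=2^{3j}\tphi(2^jx)$, the change of variables $y=2^jx$ gives $\|\<\cdot\>^{|l|}\tphi_j\|_{L^1}\le C_l$ uniformly in $j\ge-1$, and the weighted Young inequality concludes.

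The principal technical obstacle is the simultaneous extraction of both $2^{-jN}$ and $2^{-kN}$ factors in the remainder of part (i). This forces the commutator expansion to be pushed up to order $|\al|=2N$ rather than the standard $|\al|=N$: one needs sufficiently many expansion steps so that the two ``budgets'' of decay, one for the physical scale $\<v\>\sim 2^k$ and one for the frequency scale $\<\xi\>\sim 2^j$, are simultaneously exhausted, using the special sharp structure of $\vphi(2^{-k}v)$ and $\vphi(2^{-j}\xi)$ (whose derivatives decay in \emph{both} scales) rather than merely their generic $S^0_{1,0}$ and $S^\ell_{1,0}$ symbol class bounds.
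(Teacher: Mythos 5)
Your overall strategy is the paper's: part (i) via the symbolic expansion of Lemma \ref{le1.2} applied to $\Phi(v)=\vphi(2^{-k}v)$ and $M(\xi)=\vphi(2^{-j}\xi)\<\xi\>^\ell$, part (ii) via the vanishing $\F_{m,\al}\F_p=0$ of all main terms when $|m-p|>N_0$ so that only the remainder survives, and parts (iii)--(iv) via pointwise weight bounds and weighted Young/kernel estimates. However, there is a genuine gap in your treatment of the remainder in (i). You claim that ``on the relevant regions $\<\xi\>\sim 2^j$ and $\<v\>\sim 2^k$'' the remainder $r_{2N}$ yields the coefficient $2^{-jN}2^{-kN}$, but the remainder of the composition formula is \emph{not} localized to these regions: its symbol involves $M^{(\al)}$ and $\pa_v^\al\Phi$ evaluated at shifted arguments inside an oscillatory integral, so from the generic $S^{0}_{1,0}$ and $S^{\ell}_{1,0}$ seminorms of the cutoffs one only gets uniform symbol-class bounds, with no dyadic gain at all. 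The paper's mechanism is to extract the factors \emph{before} expanding: write the commutator as $2^{-jN}2^{-kN}$ times the commutator of $2^{Nk}\cP_k$ and $2^{Nj}\F_j\<D\>^\ell$, and use that $|D_v^\al(2^{Nk}\vphi(2^{-k}v))|\le C_{N,\al}\<v\>^{N-|\al|}$ and similarly in $\xi$ (see \eqref{Ncon} and Remark \ref{CONSTS}), so the rescaled symbols have seminorms uniform in $j,k$ in the weight classes $\<v\>^{N-|\al|}$, $\<\xi\>^{N+\ell-|\al|}$; expanding these to order $2N+1$ leaves a remainder in the class $\<\xi\>^{\ell-N-1-|\beta'|}\<v\>^{-N-1-|\beta|}$ with uniform constants and the prefactor $2^{-jN}2^{-kN}$ already in front. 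Your closing paragraph gestures at exactly this (``use the sharp two-scale structure, not the generic symbol classes''), but merely pushing the expansion to order $2N$ does not produce the prefactors; the rescaling (or an equivalent careful bookkeeping of seminorms) is the missing step.

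A second, smaller but real omission: even after the prefactors are extracted, the estimate $\|r_{2N+1}(v,D_v)f\|_{L^2}\le C\|f\|_{H^{\ell-N}_{-N}}$ is not an ``absorption''; it is a weighted boundedness statement for a pseudo-differential operator with symbol decaying in both $v$ and $\xi$, and the paper proves it by showing that the symbol of $r_{2N+1}(v,D_v)\<D\>^{N-\ell}\<v\>^{N}$ lies in $S^0_{1,0}$ via an oscillatory-integral computation (splitting $|\eta|\le|\xi|/2$ versus $|\eta|>|\xi|/2$, integrating by parts in $u$ and $\eta$). You should either quote such a weighted $L^2$-boundedness result or reproduce this computation. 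Finally, in (ii) your bookkeeping ``each commutator correction contributes one extra $2^{-\max(m,p)}$ from $\pa_\xi$ of a frequency cut-off'' is not how the $2^{-pN}$ arises: derivatives of $\vphi(2^{-m}\xi)$ only give $2^{-m}$; the factor $2^{-pN}$ comes from applying the order-$(-N-1)$ remainder to the frequency-localized input, i.e.\ $\|\F_pg\|_{H^{-N}_{-N}}\sim 2^{-pN}\|\F_pg\|_{L^2_{-N}}$, after the main terms have been killed by $\F_{m,\al}\F_p=0$ in a single expansion (no iteration is needed). Parts (iii) and (iv) are fine and match the paper's arguments.
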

\begin{proof}We first address that  all the constants derived in the below are universal thanks to {\bf Remark} \ref{CONSTS}. 

(i) If $k\geq 0,j\geq 0$,
   due to Lemma \ref{le1.2}(\ref{MPHICOMMU2}), we have
   \beno
   &&\|(\cP_k\F_j\<D\>^\ell -\F_j\<D\>^\ell \cP_k)f\|_{L^2}=2^{-jN}2^{-kN}\|(2^{Nk}\cP_k2^{Nj}\F_j\<D\>^\ell -2^{Nj}\F_j\<D\>^\ell 2^{Nk}\cP_k)f\|_{L^2}\\
   &\leq&C_{N,\ell }2^{-Nj}2^{-Nk}\sum_{1\leq|\al|\leq 2N}2^{(N-|\al|)k}2^{(N-|\al|)j}2^{lj}\|\vphi_\al(2^{-j}D)\vphi_\al(2^{-k}v)f\|_{L^2}+C_{N,\ell }2^{-Nj}2^{-Nk}\|r_{2N+1}(v,D_v)f\|_{L^2}\\
   &\leq& C_{N,\ell }2^{(l-1)j}2^{-k}\sum_{1\leq|\al|\leq 2N}\|\vphi_\al(2^{-j}D)\vphi_\al(2^{-k}v)f\|_{L^2}+C_{N,\ell }2^{-Nj}2^{-Nk}\|r_{2N+1}(v,D_v)f\|_{L^2}\\
   &\leq& C_{N,n}2^{(\ell -1)j}2^{-k}\sum_{1\leq|\al|\leq 2N}\|\vphi_\al(2^{-k}v)\vphi_\al(2^{-j}D)f\|_{L^2}+C_{N,\ell }2^{-Nj}2^{-Nk}\|r_{2N+1}(v,D_v)f\|_{L^2},
   \eeno
    where we commute the operator $\vphi_\al(2^{-k}v)$ and $\vphi_\al(2^{-j}D)$ again by Lemma  \ref{le1.2}(\ref{MPHICOMMU}). Since
    \ben\label{rNN}
    \pa_v^\be\pa_\xi^{\be'}r_{2N+1}(v,\xi)\leq C_{N,\be,\be'}\<\xi\>^{\ell -N-1-|\beta'|}\<v\>^{-N-1-|\be|},
    \een
  it remains
     to prove that $\|r_{2N+1}(v,D_v)f\|_{L^2}\leq C\|f\|_{H^{\ell -N}_{-N}}$, which is equivalent to $\|r_{2N+1}(v,D_v)\<D\>^{N-\ell }\<v\>^{N}f\|_{L^2}\leq C\|f\|_{L^2}$. By the fundamental theorem for the algebra of pseudo-differential operators(see \cite{HKgo}),  the symbol of operator $r_{2N+1}(v,D_v)\<D\>^{N-\ell }\<v\>^{N}$ is
     \beno
     \mathcal{r}(v,\xi)=\mathrm{Os}-\f1{(2\pi)^3}\int_{\R^6}e^{-iu\cdot\eta}r(v,\xi+\eta)\<v+u\>^Ndud\eta,
     \eeno
where $r(v,\xi)=r_{2N+1}(v,\xi)\<\xi\>^{N-\ell }$ and $``\mathrm{Os-}"$ means the oscillating integral. By the boundness of pseudo-differential operator in $L^2(\R^3)$, we need to prove $\mathcal{r}(v,\xi)\in S^0_{1,0}$, that is $|\pa_v^\al\pa^\be_\xi \mathcal{r}(v,\xi)|\leq C_{\al,\be}\<\xi\>^{-|\be|}$. Using the identities
\beno
e^{-iu\cdot\eta}=\<\eta\>^{-2l}(1-\Delta_u)^le^{-iu\cdot\eta},\quad e^{-iu\cdot\eta}=\<u\>^{-2k}(1-\Delta_\eta)^ke^{-iu\cdot\eta},
\eeno
we have for $l,k\in\N$ with $l>|\be|/2+3/2$ and $k>N+3/2$,
\beno
\pa_v^\al\pa^\be_\xi\mathcal{r}(v,\xi)=\f1{(2\pi)^3}\sum_{\al_1+\al_2=\al}C^{\al_1}_{\al}\int\Big(\int e^{-iu\cdot \eta}\<u\>^{-2k}(1-\Delta_\eta)^k\{\<\eta\>^{-2l}(1-\Delta_u)^l\pa^{\al_1}_v\pa^\be_\xi r(v,\xi+\eta)\pa^{\al_2}_v(\<\cdot\>^N)(v+u)\}d\eta\Big)du.
\eeno
Let take one of these terms, we have
\beno
&&\int\{(1-\Delta_u)^l\pa^{\al_2}_v(\<\cdot\>^N)(v+u)\}\Big(\int e^{-iu\cdot \eta}(1-\Delta_\eta)^k\{\<\eta\>^{-2l}\pa^{\al_1}_v\pa^\be_\xi r(v,\xi+\eta)\}d\eta\Big)\f{du}{\<u\>^{2k}}\\
&=&\int\{(1-\Delta_u)^l\pa^{\al_2}_v(\<\cdot\>^N)(v+u)\}\Big(\int_{|\eta|\leq\f{|\xi|}2}+\int_{|\eta|>\f{|\xi|}2}\Big)\f{du}{\<u\>^{2k}}\\
&:=&\int\{(1-\Delta_u)^l\pa^{\al_2}_v(\<\cdot\>^N)(v+u)\}\Big(I_1(v,\xi;u)+I_2(v,\xi;u)\Big)\f{du}{\<u\>^{2k}}.
\eeno
Since $\<\xi\>$ and $\<\xi+\eta\>$ are equivalent in $I_1$, it follows from (\ref{rNN}) that
\beno
|I_1|\leq C\<\xi\>^{-|\be|}\<v\>^{-N},
\eeno
and moreover the same bound for $|I_2|$ holds because $2l>|\be|+3$. Furthermore, since $2k>N+3$, we can also obtain that
\beno
\int\{(1-\Delta_u)^l\pa^{\al_2}_v(\<\cdot\>^N)(v+u)\<v\>^{-N}\<u\>^{-2k}du\leq C,
\eeno
which leads us to $|\pa_v^\al\pa^\be_\xi \mathcal{r}(v,\xi)|\leq C_{\al,\be}\<\xi\>^{-|\be|}$.

    The second inequality can be proved similarly and we omit the details. We also remark that the cases that $k\geq 0,j=-1$,
    $k=-1,j\geq 0$ and
    $k=-1,j=-1$ can be proved in the same manner.

(ii) Since $|m-p|>N_0$, we have $\F_m\F_p=0$, then
 \beno
  \|\F_m\cP_k\F_pg\|_{L^2}&=&\|[\cP_k,\F_m]\F_pg\|_{L^2}
 =2^{-kN}2^{-mN}\|(2^{kN}\cP_k2^{mN}\F_m-2^{mN}\F_m2^{kN}\cP_k)\F_pg\|_{L^2}\\
  &=&2^{-kN}2^{-mN}\|r_{2N+1}(v,D_v)\F_pg\|_{L^2},
 \eeno
 where we use the fact that $\F_{m,\alpha}\F_p=0$. Since $r_{2N+1}(v,\xi)\in S^{-N-1}_{1,0}$, we have
  \beno
  \|\F_m\cP_k\F_pg\|_{L^2}&=2^{-kN}2^{-mN}\|r_{2N+1}(v,D_v)\F_pg\|_{L^2}\leq C_N2^{-(m+k)N}\|\F_pg\|_{H^{-N}_{-N}}\sim2^{-(m+p+k)N}\|\F_pg\|_{L^2_{-N}}.
 \eeno
Other inequalities can be proved similarly. 

 (iii) We begin with the first inequality. By the definition of $\U_{k+N_0}$, we have
$(\U_{k+N_0}h)(v)=[\psi(v)+\sum_{j\leq k+N_0}\varphi_k(v)]h(v):=\tilde{\psi}_{k+N_0}(v)h(v)$.
Thanks to Lemma \ref{le1.2} and the facts that if $w\geq0$,
$\partial^{\al}_v(\tilde{\psi}_{k+N_0}\<v\>^{-w})\ls\<v\>^{-w-|\al|}\ls\<v\>^{-|\al|},$
and if $w<0$
$\pa^\al_v(2^{kw}\tilde{\psi}_{k+N_0}\<v\>^{-w})\ls\<v\>^{-|\al|},$
we deduce that
\beno
\|\U_{k+N_0}h\|_{H^a}\ls\|\tilde{\psi}_{k+N_0}\<v\>^{-w}(\<v\>^wh)\|_{H^a}\ls2^{k(-w)^+}\|h\|_{H^a_w}.
\eeno
The second inequality can be proved by Lemma \ref{le1.2} and the fact that $S_{p+N_0}\in S^{0}_{1,0}$ and $\<\cdot\>^l\in S^{l}_{1,0}$. Indeed, noting that  $(S_{p+N_0}f)(v)=\cF^{-1}((\psi+\sum_{k\leq p+N_0}\vphi(2^{-k}\cdot))\cF f)(v)$, we have that $\pa^\al_{\xi}((\psi+\sum_{k\leq p+N_0}\vphi(2^{-k}\xi)))\leq C \<\xi\>^{-|\al|}$ with the constant $C$ independent of $p$. Then we complete the proof.

(iv) We first prove  first inequality with $j\geq0$. We have
\beno
\|\F_jf\|_{L^1_l}&\le& \int_{\R^3}\int_{\R^3}\<v\>^l|\tphi_j(v-u)f(u)|dudv\ls C_l\int_{\R^3\times\R^3}\<v-u\>^l|\tphi_j(v-u)f(u)|dudv\\&&+\int_{\R^3\times\R^3}|\tphi_j(v-u)f(u)|\<u\>^ldudv\ls
C_l\||\<\cdot\>^l\tphi_j|*|f|\|_{L^1}+C_l\||\tphi_j|*|f\<\cdot\>^l|\|_{L^1}.
\eeno
By Young inequality and the facts $\|\tphi_j\|_{L^1}=\|\tphi\|_{L^1}$ and $\|\<\cdot\>^l\tphi_j\|_{L^1}\ls \|\tphi\|_{L^1_l}$, we deduce that
$\|\F_jf\|_{L^1_l}\leq C_l\|\tphi\|_{L^1_l}\|g\|_{L^1_l}.$
We remark that the case of $j=-1$ can be handled similarly.

Noting that  $(S_jf)(v)=\cF^{-1}((\psi+\sum_{k\leq j}\vphi(2^{-k}\cdot))\cF f)(v)$. By setting $V_j=\cF^{-1} (\psi+\sum_{k\leq j}\vphi(2^{-k}\cdot))$,   we may copy the   argument for the first inequality  to obtain the desired result. Then we ends the proof of this lemma.
\end{proof}
 \begin{lem}\label{lemma1.4}
(i) Let $m, l\in \R.$ Then for $f\in H_l^m$,
\ben\label{Ber}
\sum_{k=-1}^\infty2^{2kl}\|\tP_k f\|^2_{H^m}\sim\sum_{k=-1}^\infty2^{2kl}\|\cP_k f\|^2_{H^m}\sim\|f\|^2_{H^m_l}\sim\sum_{j=-1}^\infty2^{2 j m}\|\F_jf\|^2_{L^2_l}\sim\sum_{j=-1}^\infty2^{2 j m}\|\tF_jf\|^2_{L^2_l}.
\een
  Moreover, we   have
\ben\label{7.70}
 \sum_{k=-1}^\infty2^{2kl}\|\mP_{k}f\|^2_{H^m}\le C_{m,l}\sum_{k=-1}^\infty2^{2kl}\|\cP_kf\|^2_{H^m},\quad
  \sum_{j=-1}^\infty2^{2jm}\|\mF_{j}f\|^2_{L^2_l}\le C_{m,l}\sum_{j=-1}^\infty2^{2jm}\|\F_jf\|^2_{L^2_l}.
\een

(ii) If $m, n, l\in\R$ and $\de>0$, then we have
\ben\label{7.77}
\sum_{j=-1}^\infty2^{2 j n}  \|\F_jf\|^2_{H^m_l}  \lesssim C_{m, n, l}\|f\|^2_{H^{m+n}_l},\quad \|f\|_{H^{-\frac{3}{2}-\delta}_l}   \lesssim C_l\|f\|_{L^1_l}.
\een
\end{lem}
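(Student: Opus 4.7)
The plan is to reduce both parts to two ingredients already established in the paper: the standard Littlewood--Paley almost-orthogonality identity in $L^2$, and the commutator machinery from Lemma~\ref{le1.1}, Lemma~\ref{le1.2}, and Lemma~\ref{lemma1.3}. The equivalences in \eqref{Ber} should be proved in the order ``frequency side first, then phase side'', since the phase-side estimate in fact passes through the frequency characterization.

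For the frequency side of Part (i), I would start from Lemma~\ref{le1.1}, which gives $\|f\|_{H^m_l}\sim\|\<v\>^l\<D\>^m f\|_{L^2}$, and then apply the Plancherel identity $\|g\|_{L^2}^2\sim\sum_j\|\F_j g\|_{L^2}^2$ to $g=\<v\>^l\<D\>^m f$. The point is then to move $\F_j$ past $\<D\>^m$ and $\<v\>^l$ so that a factor $2^{jm}$ is extracted from the first commutation (by Fourier localization) and the weight $\<v\>^l$ is transferred onto $\F_j f$ with an $L^2_l$-absorbable commutator coming from Lemma~\ref{le1.2} (the remainder $r_N$ gives $H^{-N}_{-N}$ control, summed by Cauchy--Schwarz in $j$). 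The equivalence with $\sum_j 2^{2jm}\|\tF_j f\|_{L^2_l}^2$ is then immediate from the bounded-overlap structure of $\tF_j$. For the phase side, I rewrite $\|f\|_{H^m_l}^2\sim\sum_k\|\cP_k(\<v\>^l\<D\>^m f)\|_{L^2}^2$ using that $\psi^2+\sum_{k\ge0}\varphi(2^{-k}\cdot)^2$ is bounded above and below (this exploits the finite overlap of the supports together with the partition of unity \eqref{7.1}), then commute $\cP_k$ past $\<D\>^m$ via Lemma~\ref{lemma1.3}(i) and use $\<v\>^l\sim2^{kl}$ on $\mathrm{supp}\,\varphi(2^{-k}\cdot)$ to obtain $2^{2kl}\|\cP_k f\|_{H^m}^2$ up to commutator remainders. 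The inequalities \eqref{7.70} are then direct from the defining property of $\mF_j$ and $\mP_j$ in Definition~\ref{Fj}, together with Lemma~\ref{lemma1.4}(i) already proved for $\cP_k$ and $\F_j$.

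For Part (ii), the first inequality is Bernstein plus almost orthogonality: since $\F_j f$ has Fourier support in an annulus of size $2^j$, Lemma~\ref{7.8} yields $\|\F_j f\|_{H^m_l}\lesssim 2^{jm}\|\F_j f\|_{L^2_l}$ after a further commutator with $\<v\>^l$ handled by Lemma~\ref{le1.2}; summing in $j$ with weights $2^{2jn}$ and invoking the frequency characterization from Part (i) gives the bound by $\|f\|_{H^{m+n}_l}^2$. For the second inequality, I apply Lemma~\ref{le1.1} to reduce $\|f\|_{H^{-3/2-\delta}_l}$ to $\|\<D\>^{-3/2-\delta}\<v\>^l f\|_{L^2}$ and then invoke the classical Sobolev embedding $L^1(\R^3)\hookrightarrow H^{-3/2-\delta}(\R^3)$, which follows from $\<\xi\>^{-3/2-\delta}\in L^2_\xi$ by Plancherel applied to $\widehat{\<v\>^l f}\in L^\infty$.

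The main obstacle, both in the phase-side direction of Part (i) and in the weighted Bernstein step of Part (ii), is to sum the commutator remainders uniformly in $j$ and $k$. Here I would rely crucially on Remark~\ref{CONSTS}, which ensures that the constants in Lemma~\ref{le1.2} depend only on finitely many seminorms of $\varphi$ and $\psi$ and hence not on $j$ or $k$, together with the decay factors $2^{-k}$ and $2^{-jN}$ explicitly produced by Lemma~\ref{lemma1.3}(i). These gains allow the commutator pieces to be summed into a geometric series dominated by a small multiple of the main term, which can then be absorbed on the left-hand side of the desired equivalence.
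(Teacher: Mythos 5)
Your proposal is correct, but it does noticeably more work than the paper and takes a partly different route. For the equivalences \eqref{Ber} the paper gives no proof at all: it simply cites \cite{H, HJZ}, whereas you reconstruct them from Lemma \ref{le1.1}, the pseudo-differential commutator Lemma \ref{le1.2}, and the uniformity of constants in Remark \ref{CONSTS}; this is the standard weighted Littlewood--Paley argument and is sound (the only delicate point, absorbing the commutator remainders to get the two-sided bounds, is exactly the one you flag), so your sketch is an acceptable substitute for the citation. For the first inequality in \eqref{7.77} the paper avoids any new commutator estimate: it applies \eqref{Ber} to the block $\F_jf$ itself, uses the exact almost-orthogonality $\F_k\F_j=0$ for $|k-j|\ge N_0$ to reduce the double sum to $|k-j|<N_0$, and then invokes Lemma \ref{lemma1.3}(iii) to discard the extra $\F_k$ in $L^2_l$; your route instead re-derives a weighted Bernstein inequality $\|\F_jf\|_{H^m_l}\ls 2^{jm}\|\F_jf\|_{L^2_l}$ by commuting $\<v\>^l$ through the multiplier via Lemma \ref{le1.2}. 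Both work, and since only an upper bound is needed the commutator terms in your version need not be absorbed, only summed; the paper's version is simply shorter because it recycles \eqref{Ber}. For the second inequality in \eqref{7.77} the paper goes through the phase decomposition, $\|f\|^2_{H^{-3/2-\delta}_l}\sim\sum_k 2^{2kl}\|\cP_kf\|^2_{H^{-3/2-\delta}}\ls\sum_k 2^{2kl}\|\cP_kf\|^2_{L^1}\ls\|f\|^2_{L^1_l}$, i.e.\ it applies the embedding $L^1\hookrightarrow H^{-3/2-\delta}$ blockwise, while you apply it globally to $\<v\>^lf$ via $\|\<\xi\>^{-3/2-\delta}\|_{L^2}\|\widehat{\<v\>^lf}\|_{L^\infty}\ls\|f\|_{L^1_l}$; your version is, if anything, more direct and does not even need Lemma \ref{le1.1}, since by the paper's definition $\|f\|_{H^{-3/2-\delta}_l}=\|\<D\>^{-3/2-\delta}\<v\>^lf\|_{L^2}$. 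Finally, your treatment of \eqref{7.70} (direct from Definition \ref{Fj} together with part (i)) is at the same level of detail as the paper's one-line remark, so no gap there either.
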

\begin{proof} We address that  all the constants derived in the below are universal thanks to {\bf Remark} \ref{CONSTS}.
	The  equivalences in $(i)$ are proved in \cite{H, HJZ}. \eqref{7.70}   can be easily checked by the definitions of $\mP_k$ and $\mF_j$.   Thanks to the results in $(i)$, we get that   \beno
	\sum_{j=-1}^\infty2^{2jn}\|\F_jf\|^2_{H^m_l}&\sim&\sum_{j=-1}^\infty2^{2jn}\sum_{k=-1}^\infty2^{2km}\|\F_k\F_jf\|^2_{L^2_l}
	\ls\sum_{j=-1}^\infty\sum_{|k-j|<N_0}2^{2j(n+m)}2^{2(k-j)m}\|\F_k\F_jf\|^2_{L^2_l},
	\eeno
	from which together with Lemma \ref{lemma1.3}(iii), we deduce that
	\beno
	\sum_{j=-1}^\infty2^{2jn}\|\F_jf\|^2_{H^m_l}
	\ls\sum_{j=-1}^\infty2^{2j(n+m)}\|\F_jf\|^2_{L^2_l}
	\sim\|f\|_{H^{m+n}_l},
	\eeno
	which yields the first part of (\ref{7.77}). To prove the second part of (\ref{7.77}), we notice that
	\beno
	\|f\|^2_{H^{-\frac{3}{2}-\delta}_l}&\sim&\sum_{k=-1}^\infty2^{2kl}\|\cP_kf\|^2_{H^{-\frac{3}{2}-\delta}}\ls\sum_{k=-1}^\infty2^{2kl}\|\cP_kf\|^2_{L^1}\ls C_l\|f\|^2_{L^1_l}.
	\eeno
	This ends the proof of the lemma.\end{proof}

\subsection{Proof of Theorem \ref{le1.24}} We first apply the dyadic decomposition to the collision operator.

\subsubsection{Dyadic decomposition of the operator in the  phase space} We first use the dyadic decompositions to reduce the commutator to the annulus in the phase space.
 We
set \ben\label{DefPhi} \Phi_k^\gamma(v):=\left\{\begin{aligned} & |v|^\gamma \varphi(2^{-k}|v|), \quad\mbox{if}\quad k\ge0;\\
& |v|^\gamma \psi( |v|),\quad\mbox{if}\quad k=-1.\end{aligned}\right.\een
 Then we derive that
$\langle Q(g, h), f \rangle_v=\sum_{k=-1}^\infty \langle Q_k(g, h), f \rangle_v=\sum_{k=-1}^\infty\sum_{j=-1}^\infty \langle Q_k(\mathcal{P}_jg, h), f \rangle_v$,
where
 $$ Q_{k}(g, h):=\iint_{\sigma\in \SS^2,v_*\in \R^3} \Phi_k^\gamma(|v-v_*|)b(\cos\theta) (g'_*h'-g_*h)d\sigma dv_*.$$

It is not difficult to check that there exists a integer $N_0\in \N$ such that(see also  $(2.1)$ in \cite{H})
\ben\label{ubdecom} \langle Q(g,h), f\rangle_v &=&\sum_{k\ge N_0-1}\langle Q_k(\U_{k-N_0} g, \tilde{\mathcal{P}}_kh), \tilde{\mathcal{P}}_kf \rangle_v +
\sum_{j\ge k+N_0}\langle Q_k(\mathcal{P}_{j} g, \tilde{\mathcal{P}}_jh), \tilde{\mathcal{P}}_jf \rangle_v\notag\\&&\quad+\sum_{|j-k|\le N_0}\langle Q_k( \mathcal{P}_{j} g, \U_{k+N_0}h), \U_{k+N_0}f \rangle_v.  \een

\subsubsection{Dyadic decomposition of the operator in the  frequency space} By Bobylev's equality we have
 \ben\label{bobylev}&& \qquad\langle\mathcal{F}\big( Q_k(g, h)\big), \mathcal{F}f \rangle\\&&=\iint_{\sigma\in \SS^2, \eta,\xi\in \R^3} b(\f{\xi}{|\xi|}\cdot \sigma)\big[ \mathcal{F}(\Phi_k^\gamma ) (\eta-\xi^{-})-\mathcal{F}(\Phi_k^\gamma)(\eta)\big](\mathcal{F}g)(\eta)(\mathcal{F}h)(\xi-\eta)\overline{(\mathcal{F}f)}(\xi)d\sigma d\eta d\xi,\nonumber \een
where $\mathcal{F}f$ denotes the Fourier transform of $f$ and $\xi^{\pm}:= \frac{\xi\pm|\xi|\si}{2}$. Then one may derive that
\ben
\label{2.2} \<\F_jQ_k(g,h),\F_jf\>
\notag&=&\sum_{|p-j|<2N_0}\sum_{p'\leq p+3N_0}\<\F_jQ_k(\F_{p'}g,\F_ph),\F_jf\>
 +\sum_{p>j+2N_0}\sum_{|p-p'|\leq N_0}\<\F_jQ_k(\F_{p'}g,\F_ph),\F_jf\>\\&&
+\sum_{p<j-2N_0}\sum_{|m-j|\leq 2N_0}\<\F_jQ_k(\F_mg,\F_ph),\F_jf\>.
\een

\subsubsection{Dyadic decomposition of the commutator} Now we go back to the commutator. Observe that
 \beno
 &&(\<D\>^\ell Q(g,h)-Q(g,\<D\>^\ell h),\<D\>^\ell f)=\sum_{j=-1}^\infty(\F_j^{\f12}\<D\>^\ell Q(g,h)-\F_j^{\f12}Q(g,\<D\>^\ell h),\F_j^{\f12}\<D\>^\ell f)\\
 &=&\sum_{j=-1}^\infty(\F_j^{\f12}\<D\>^\ell Q(g,h)-Q(g,\F_j^{\f12}\<D\>^\ell h),\F_j^{\f12}\<D\>^\ell f)-\sum_{j=-1}^\infty(\F_j^{\f12}Q(g,\<D\>^\ell h)-Q(g,\F_j^{\f12}\<D\>^\ell h),\F_j^{\f12}\<D\>^\ell f).
 \eeno
The second term in the right-hand side can be regarded as a special case of the first term. Thus we only need to estimate the term $(\F_j\<D\>^\ell Q(g,h)-Q(g,\F_j\<D\>^\ell h),\F_j\<D\>^\ell f)$, here we replace $\F_j^{\f12}$ by $\F_j$ which will not change the results.
Thanks to \eqref{ubdecom}, we further have
\ben\label{2.1}&&\<\F_j\<D\>^\ell Q(g,h)-Q(g,\F_j\<D\>^\ell h),\F_j\<D\>^\ell f\>
=\<Q(g,h),\F^2_j\<D\>^{2\ell }f\>-\<Q(g,\F_j\<D\>^\ell h),\F_j\<D\>^\ell f\>\notag\\
 &&=\sum_{k\geq N_0-1}\<Q_k(\U_{k-N_0}g,\tP_kh),\tP_k\F^2_j\<D\>^{2\ell }f\>+\sum_{l\geq k+N_0}\<Q_k(\cP_lg,\tP_lh),\tP_l\F^2_j\<D\>^{2\ell }f\> \notag\\
 &&+\sum_{|l-k|\leq N_0}\<Q_k(\cP_lg,\U_{k+N_0}h),\U_{k+N_0}\F^2_jf\<D\>^{2\ell }\>-\sum_{k\geq N_0-1}\<Q_k(\U_{k-N_0}g,\tP_k\F_j\<D\>^\ell h),\tP_k\F_j\<D\>^\ell f\>\\
 &&-\sum_{l\geq k+N_0}\<Q_k(\cP_lg,\tP_l\F_j\<D\>^\ell h),\tP_l\F_j\<D\>^\ell f\>-\sum_{|l-k|\leq N_0}\<Q_k(\cP_lg,\U_{k+N_0}\F_j\<D\>^\ell h),\U_{k+N_0}\F_j\<D\>^\ell f\>.\notag
\een
From this, we finally derive that
\beno
\<\F_j\<D\>^\ell Q(g,h)-Q(g,\F_j\<D\>^\ell h),\F_j\<D\>^\ell f\>&=&\mathfrak{D}_1+\mathfrak{D}_2+\mathfrak{D}_3,
\eeno
where
\beno
 \mathfrak{D}_1&:=&\sum_{k\geq N_0-1}\<\F_j\<D\>^\ell Q_k(\U_{k-N_0}g, \tP_kh)-Q_k(\U_{k-N_0}g,\F_j\<D\>^\ell \tP_kh),\F_j\<D\>^\ell \tP_kf\>\\
 &&+\sum_{l\geq k+N_0,k\geq0}\<\F_j\<D\>^\ell Q_k(\cP_lg,\tP_lh)-Q_k(\cP_lg,\F_j\<D\>^\ell \tP_lh),\F_j\<D\>^n\tP_lf\>\eeno\beno
 &&+\sum_{|l-k|\leq N_0,k\geq0}\<\F_j\<D\>^\ell Q_k(\cP_lg,\U_{k+N_0}h)-Q_k(\cP_lg,\F_j\<D\>^\ell \U_{k+N_0}h),\F_j\<D\>^\ell \U_{k+N_0}f\>;\\
  &&\mathfrak{D}_2:=\sum_{k\geq N_0-1}\bigg(\<Q_k(\U_{k-N_0}g,\tP_kh),(\tP_k\F^2_j\<D\>^{2\ell }-\F^2_j\<D\>^{2\ell }\tP_k)f\>+\<Q_k(\U_{k-N_0}g,(\F_j\<D\>^\ell \tP_k-\tP_k\F_j\<D\>^\ell )h)\\
  &&,\F_j\<D\>^\ell \tP_kf\>+\<Q_k(\U_{k-N_0}g,\tP_k\F_j\<D\>^\ell h),(\F_j\<D\>^\ell \tP_k-\tP_k\F_j\<D\>^\ell )f\>\bigg) +\sum_{l\geq k+N_0,k\geq0}\bigg(\<Q_k(\cP_lg,\tP_lh),\\
  &&(\tP_l\F^2_j\<D\>^{2\ell }-\F^2_j\<D\>^{2\ell }\tP_l)f\>+\<Q_k(\cP_lg,(\F_j\<D\>^\ell \tP_l-\tP_l\F_j\<D\>^\ell )h),\F_j\<D\>^\ell \tP_lf\>
 +\<Q_k(\cP_lg,\tP_l\F_j\<D\>^\ell h),\\
 &&(\F_j\<D\>^\ell \tP_l-\tP_l\F_j\<D\>^\ell )f\>\bigg)+\sum_{|l-k|\leq N_0,k\geq0}\bigg(\<Q_k(\cP_lg,\U_{k+N_0}h),(\U_{k+N_0}\F^2_j\<D\>^{2\ell }-\F^2_j\<D\>^{2\ell }\U_{k+N_0})f\>\\
 &&+\<Q_k(\cP_lg,(\F_j\<D\>^\ell \U_{k+N_0}-\U_{k+N_0}\F_j\<D\>^\ell )h,\F_j\<D\>^\ell \U_{k+N_0}f\> +\<Q_k(\cP_lg,\U_{k+N_0}\F_j\<D\>^\ell h),(\F_j\<D\>^\ell \U_{k+N_0}\\
&&-\U_{k+N_0}\F_j\<D\>^\ell )f\>\bigg); \qquad \mathfrak{D}_3 :=\<\F_j\<D\>^\ell Q_{-1}(g,h)-Q_{-1}(g,\F_j\<D\>^\ell h),\F_j\<D\>^\ell f\>.
\eeno

Roughly speaking, $\mathfrak{D}_1$ contains the commutator between localized operator $\F_j$ and the localized collision operator $Q_k$; $\mathfrak{D}_2$ focuses on the commutators between localized operators $\F_j$ and $\cP_k$; while $\mathfrak{D}_3$ concentrates on the commutator for the singular part of the collision operator.   The rest of the section is devoted to the  upper bounds of $\mathfrak{D}_1, \mathfrak{D}_2$ and $\mathfrak{D}_3$.

\begin{lem}\label{lemma1.5}
Recall  that $\<Q(g,h),f\>=\sum_{k=-1}^\infty\<Q_k(g, h),f\>$, where  $Q_k(g, h)=\iint_{\si\in\mathbb{S}^2,v_*\in\R^3}\Phi_k^\ga(|v-v_*|)  b(\cos\th)(g_*'h'-g_*h)d\si dv_*$ with $\ga\in(-3,2]$ and
\ben
 \label{1.2}\Phi^\ga_k(v):=\begin{cases}
|v|^\ga\varphi(2^{-k}|v|),~\mbox{if}~~k\ge  0; \\
|v|^\ga\psi(|v|),~\mbox{if}~~k=-1.
\end{cases}
\een
  Then we have
\begin{enumerate}
\item For $k\ge 0$, $i\in \N$
\ben\label{PhiK1}
&\int_{\R^3} |\cF(\Phi_k^\ga)(y)||y|dy\ls 2^{k(\ga-1)},|\nabla^i\cF(\Phi_k^\ga)(\eta)|\ls C_{N,i}2^{k(\ga+3+i)}\<2^k\eta\>^{-N},
\een
where $N$ can be arbitrarily large.
\item For $k=-1$,
\ben\label{PhiK2}
 |\nabla^i\cF(\Phi_{-1}^\ga)(\eta)|\ls \<\eta\>^{-(\ga+3+i)}.
\een \end{enumerate}
\end{lem}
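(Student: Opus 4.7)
The plan is to exploit the essential scaling structure of the dyadic blocks, which reduces the entire lemma to Fourier estimates for a single fixed Schwartz function. The unifying observation is that in both the annulus case $k\ge 0$ and the compact-support case $k=-1$, the relevant rescaled building block is
\beno
\Psi(w):=|w|^\ga\varphi(|w|),
\eeno
which belongs to $C_c^\infty(\R^3\setminus\{0\})\subset\mathcal{S}(\R^3)$ because $\varphi$ is supported in the annulus $\{3/4\le|w|\le 8/3\}$, away from the singularity of $|w|^\ga$.

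First I would treat the case $k\ge 0$. Setting $w=2^{-k}v$ gives the exact scaling $\Phi_k^\ga(v)=2^{k\ga}\Psi(2^{-k}v)$, hence $\cF(\Phi_k^\ga)(\eta)=2^{k(\ga+3)}\cF(\Psi)(2^k\eta)$. Differentiating $i$ times contributes an additional factor $2^{ki}$, and the Schwartz decay $|\nabla^i\cF(\Psi)(\xi)|\ls_{N,i}\<\xi\>^{-N}$ yields the second estimate in \eqref{PhiK1}. For the weighted $L^1$ bound, the change of variables $z=2^k y$ turns the integral into $2^{k(\ga-1)}\int|\cF(\Psi)(z)||z|\,dz$, which is finite because $\cF(\Psi)$ is Schwartz; this gives the first inequality in \eqref{PhiK1}.

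For $k=-1$ the key step will be a dyadic decomposition \emph{from below} that disposes of the singularity at the origin. Starting from the partition of unity $\psi(v)+\sum_{j\ge 0}\varphi(2^{-j}v)=1$ and telescoping $\psi(v)=\psi(2^n v)+\sum_{j=1}^{n}\varphi(2^j v)$ while sending $n\to\infty$ (so that $\psi(2^n v)\to 0$ for every $v\ne 0$), one obtains
\beno
\psi(v)=\sum_{j\ge 1}\varphi(2^j v),\quad v\ne 0,\qquad \Phi_{-1}^\ga(v)=\sum_{j\ge 1}|v|^\ga\varphi(2^j|v|),
\eeno
and each summand is supported in the annulus $|v|\sim 2^{-j}$. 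Applying the scaling argument of the previous paragraph to each piece (with $k$ replaced by $-j$) gives
\beno
|\nabla^i\cF(|v|^\ga\varphi(2^j|\cdot|))(\eta)|\ls_{N,i} 2^{-j(\ga+3+i)}\<2^{-j}\eta\>^{-N}.
\eeno

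It then remains to sum over $j\ge 1$ and recover the target decay $\<\eta\>^{-(\ga+3+i)}$; this is where the only real care is needed. When $|\eta|\ge 1$, split the sum at $j_0$ with $2^{j_0}\sim|\eta|$: for $j\le j_0$ the Schwartz factor gives $\<2^{-j}\eta\>^{-N}\sim 2^{-(j_0-j)N}$, and choosing $N>\ga+3+i$ (always possible since $N$ is arbitrary) makes the resulting geometric series peak at $j=j_0$ with value $\sim 2^{-j_0(\ga+3+i)}\sim|\eta|^{-(\ga+3+i)}$; for $j>j_0$ the factor $\<2^{-j}\eta\>^{-N}\sim 1$ and convergence of $\sum_{j>j_0}2^{-j(\ga+3+i)}$ (using $\ga+3+i>0$, which holds because $\ga\in(-3,2]$ and $i\in\N$) gives a tail of the same order $|\eta|^{-(\ga+3+i)}$. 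When $|\eta|\le 1$ only the second regime is present and summability alone yields the uniform bound $\<\eta\>^{-(\ga+3+i)}\sim 1$. The main obstacle is thus simply bookkeeping the two regimes; the structural constraint $\ga+3+i>0$ is exactly what ensures the series converges in both of them.
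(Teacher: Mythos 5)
Your proof is correct. For $k\ge 0$ your argument is the same as the paper's: the exact scaling $\Phi_k^\ga(v)=2^{k\ga}\Phi_0^\ga(2^{-k}v)$ (your $\Psi=\Phi_0^\ga$) plus the Schwartz decay of $\cF(\Phi_0^\ga)$ and a change of variables for the weighted $L^1$ bound. For $k=-1$, however, you take a genuinely different route. The paper works directly with $\cF(\Phi_{-1}^\ga)$: for $-3<\ga<0$ it uses the explicit formula $\cF(|v|^\ga)=C|\xi|^{-3-\ga}$, writes $\nabla^i\cF(\Phi_{-1}^\ga)$ as a convolution of $|\cdot|^{-3-\ga}$ with derivatives of $\cF\psi$, and estimates by splitting at $|\xi|=1$ and integrating by parts; for $0\le\ga\le 2$ it then reduces to the negative case by passing $\Delta_v$ through (writing $|\eta|^2\cF(\Phi_{-1}^\ga)$ in terms of $\cF(|v|^{\ga-2}\psi)$), and it only carries out $i=2$ explicitly. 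Your argument instead decomposes $\psi$ homogeneously below unit scale, $\psi(v)=\sum_{j\ge1}\varphi(2^jv)$ for $v\ne0$, applies the same scaling estimate to each annular piece (i.e.\ the $k\ge0$ bound with $k=-j$), and sums a geometric series split at $2^{j_0}\sim|\eta|$, with $N>\ga+3+i$ and $\ga+3+i>0$ guaranteeing convergence in both regimes. This buys uniformity: all $\ga\in(-3,2]$ and all $i$ are handled at once, with no appeal to the Fourier transform of the homogeneous distribution $|v|^\ga$ and no separate Laplacian reduction, at the modest cost of the dyadic bookkeeping you carry out. The only step you leave implicit is the interchange $\nabla^i\cF(\Phi_{-1}^\ga)=\sum_{j\ge1}\nabla^i\cF(|v|^\ga\varphi(2^j\cdot))$; this is legitimate because the pieces are supported in $|v|\ls 2^{-j}$ and their $L^1$ norms against the weight $|v|^i$ are $\ls 2^{-j(\ga+3+i)}$, hence absolutely summable, but it is worth stating explicitly.
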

\begin{proof}
(1)	For $k\geq0,$ by definition, it is easy to see that
	\ben\label{1.3}
	\cF(\Phi_k^\ga)(y)=2^{(\ga+3)k}\cF(\Phi_0^\ga)(2^ky),~\nabla^i\cF(\Phi_k^\ga)(\eta)=2^{(\ga+3+i)k}\nabla^i(\cF(\Phi_0^\ga)(2^k\eta)).
	\een
	Then
	\beno
	\int_{\R^3}|\cF(\Phi_k^\ga)(y)||y|dy=2^{k(\ga+3)}\int_{\R^3}|\cF(\Phi_0^\ga)(2^ky)||y|dy=2^{k(\ga-1)}\int_{\R^3}|\cF(\Phi_0^\ga)(y)||y|dy\ls2^{k(\ga-1)}.
	\eeno
	Since   $\na^i\cF(\Phi_0^\ga)$ is a Schwartz function, for any $N\in \N$, we have
	$\na^i \cF(\Phi_0^\ga)(\eta)\leq C_{N,i}\<\eta\>^{-N}$. From this together with
	(\ref{1.3}), we conclude the result.
	
(2)	For $k=-1$ and $\ga<0$, we prove it for $i=2$ and $i=0,1$ can be handled similarly. We only need to consider large $\eta$. By direct calculation, one has
	\beno
	&\pa_{ij}\cF(\Phi_{-1}^\ga)(\eta)=C\pa_{ij}(|\cdot|^{-3-\ga}*(\cF\psi))(\eta)=C\int_{\R^3}\pa_{ij}\cF(\psi)(\eta-\xi)|\xi|^{-3-\ga}d\xi.
	\eeno
	We split the integration domain into  two parts $|\xi|>1$ and $|\xi|\leq1$. Since $\cF(\psi)\in\mathscr{S}$ and $\<\eta\>\sim\<\eta-\xi\>$ for $|\xi|\leq1$, it is easy to obtain that
	$\big|\int_{|\xi|\leq1}\pa_{ij}\cF(\psi)(\eta-\xi)|\xi|^{-3-\ga}d\xi\big|\ls\<\eta\>^{-(\ga+5)}$.
	On the other hand, integrating by parts, we have
	\beno
	\int_{|\xi|>1}\pa_{ij}\cF(\psi)(\eta-\xi)|\xi|^{-3-\ga}d\xi=\int_{|\xi|=1}\pa_{j}\cF(\psi)(\eta-\xi)\xi_i-\cF(\psi)(\eta-\xi)\xi_i\xi_jdS+\int_{|\xi|>1}\cF(\psi)(\eta-\xi)\pa_{ij}|\xi|^{-3-\ga}d\xi.
	\eeno
	The first term in the right-hand side can be bounded by $\<\eta\>^{-(5+\ga)}$, and for the second term, we have
	\beno
	&&\left|\int_{|\xi|>1}\cF(\psi)(\eta-\xi)\pa_{ij}|\xi|^{-3-\ga}d\xi\right|
	\ls\int|\cF(\psi)(\eta-\xi)|\<
	\eta-\xi\>^{(5+\ga)}d\xi\<\eta\>^{-(5+\ga)}\ls\<\eta\>^{-(5+\ga)}.
	\eeno
	
For $k=-1$ and $0\le   \gamma \le  2$, we only prove the case $0<\gamma<2$ since it is trivial for $\gamma =0,2$. We first give the proof of $i=0$. Noticing that $\Phi_{-1}^\ga(v)=|v|^\ga \psi(|v|)$ has compact support and belongs to space $L^1$, so it holds for for $|\eta|\leq1$. For $|\eta|>1$, we aims to prove that $|\eta|^2\cF(|v|^\gamma\psi(|v|))(\eta)\ls\<\eta\>^{-(1+\ga)}$, which is equivalent to $\cF(\Delta(|v|^\gamma\psi(|v|)))=C\cF(|v|^{\gamma -2}\psi(|v|)))+C\cF(\na|v|^\gamma\cdot\na\psi(|v|))+C\cF(|v|^\gamma  \Delta \psi(|v|) )\lesssim \<\eta\>^{-(1+\gamma)}$. Noticing that $\na\Phi$ vanishes near $0$, thus $\na|v|^\gamma\cdot\na\psi(|v|)$ and $|v|^\gamma  \Delta \psi(|v|)$ are smoothing function with compact support. Then we only need to consider the first term. Since $\gamma - 2\in(-3,0)$, then by the previous results, we have $\cF(|v|^{\gamma-2}\psi(|v|))(\eta)\leq\<\eta\>^{-(1+\ga)}$, which ends the proof of case $i=0$.

For case $i=1$ and $i=2$, by the similar argument, one may check that
\beno
\cF(\Delta(v_i|v|^\ga\psi(|v|)))(\eta)\leq \<\eta\>^{-(2+\ga)}\quad\mbox{and}\quad\cF(\Delta(v_iv_j|v|^\ga\psi(|v|)))(\eta)\leq \<\eta\>^{-(3+\ga)}.
\eeno
It ends the proof of this lemma.
\end{proof}

\begin{lem}\label{lemma1.7}
(see \cite{H}(2.3), Lemma 2.1, 2.2 and 2.3)Recall that $\Phi_k^\ga(v)$ is defined by (\ref{1.2}). Then we have the following decomposition
\beno
&&\<Q_k(g,h),f\>_v=\sum_{l\leq p-N_0}\fM^1_{k,p,l}(g,h,f)+\sum_{l\geq-1}\fM^2_{k,l}(g,h,f)+\sum_{p\geq-1}\fM^3_{k,p}(g,h,f)+\sum_{m<p-N_0}\fM^4_{k,p,m}(g,h,f),
\eeno
where
\beno
\fM^1_{k,p,l}(g,h,f)&:=&\iint_{\si\in\S^2,v_*,v\in\R^3}(\tF_p\Phi_k^\ga)(|v-v_*|)b(\cos\th)(\F_pg)_*(\F_lh)[(\tF_pf)'-\tF_pf]d\si dv_*dv,\\
\fM^2_{k,l}(g,h,f)&:=&\iint_{\si\in\S^2,v_*,v\in\R^3}\Phi_k^\ga(|v-v_*|)b(\cos\th)(S_{l-N_0} g)_*(\F_lh)[(\tF_lf)'-\tF_lh]d\si dv_*dv,\\
\fM^3_{k,p}(g,h,f)&:=&\iint_{\si\in\S^2,v_*,v\in\R^3}\Phi_k^\ga(|v-v_*|)b(\cos\th)(\F_pg)_*(\tF_ph)[(\tF_pf)'-\tF_pf]d\si dv_*dv,\\
\fM^4_{k,p,m}(g,h,f)&:=&\iint_{\si\in\S^2,v_*,v\in\R^3}(\tF_p\Phi_k^\ga)(|v-v_*|)b(\cos\th)(\F_pg)_*(\tF_ph)[(\F_mf)'-\F_mf]d\si dv_*dv.
\eeno
(i) If $l\leq p-N_0$, then for $k\ge 0$, for any $N \ge 0$,
\beno
|\fM^1_{k,p,l}|&\ls& 2^{k(\ga+\frac{5}{2}-N)}(2^{-p(N-2s)}2^{2s(l-p)}+2^{-(N-\frac{5}{2})p}2^{\frac{3}{2}(l-p)})\|\Phi_0^\ga\|_{H^{N+2}}\|\vphi\|_{W_N^{2,\infty}}\|\F_pg\|_{L^1}\|\F_lh\|_{L^2}\|\tF_pf\|_{L^2}.
\eeno
(ii) For $k\geq0$,
\beno
|\fM_{k,l}^2|\ls2^{(\ga+2s)k}2^{2sl}\|S_{l-N_0}g\|_{L^1}\|\F_lh\|_{L^2}\|\tF_lf\|_{L^2},\,
|\fM^3_{k,p}|\ls2^{(\ga+2s)k}2^{2s p}\|\F_pg\|_{L^1}\|\tF_ph\|_{L^2}\|\tF_pf\|_{L^2}.
\eeno
(iii) If $m<p-N_0$, then for $k\geq0,$
\beno
|\fM^4_{k,p,m}|&\ls&2^{2s(m-p)}2^{(\ga+\frac{3}{2}-N)k}2^{-p(N-\frac{5}{2})}\|\Phi_0^\ga\|_{H^{N+2}}\|\vphi\|_{W_N^{2,\infty}}\|\F_pg\|_{L^1}\|\tF_ph\|_{L^2}\|\F_mf\|_{L^2}.
\eeno
(iv) Let $a, b\in[0,2s]$ with $a+b=2s$, then
$|\<Q_{-1}(g, h),f\>|\ls (\|g\|_{L^1}+\|g\|_{L^2})\|h\|_{H^a}\|f\|_{H^b}.
 $
\end{lem}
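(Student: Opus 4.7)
The plan is to first derive the frequency decomposition of $\langle Q_k(g,h),f\rangle$ by inserting Littlewood--Paley partitions of unity on the three functions, and then estimate each piece separately using Bobylev's identity \eqref{bobylev} together with the kernel bounds in Lemma \ref{lemma1.5}. To obtain the decomposition, I would write $g=\sum_p\F_p g$, $h=\sum_l\F_l h$, $f=\sum_q\F_q f$, and, for each fixed $k$, analyze the combinations that survive. Reasoning as in \eqref{2.2} via the frequency balance in Bobylev ($\xi=\eta+(\xi-\eta)$), only three essentially nontrivial regimes of $(p,l,q)$ contribute: the diagonal regime $|l-p|\le N_0$, $|q-p|\le N_0$ (giving $\fM^3$); the regime where the frequency of $g$ is much smaller than that of $h$ and $f$ (giving $\fM^2$ after summing $\F_p g$ for $p\le l-N_0$ into $S_{l-N_0}g$); the regime where $h$ has much lower frequency than $g\sim f$ (giving $\fM^1$); and the regime where $f$ has much lower frequency than $g\sim h$ (giving $\fM^4$). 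Writing $\tF_p\Phi_k^\gamma$ in $\fM^1$ and $\fM^4$ is a bookkeeping device: it isolates the part of the kernel whose Fourier transform ``sees'' the scale $2^p$ that must be absorbed.

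For $\fM^2$ and $\fM^3$ the estimate is the standard cutoff-Boltzmann bound after Taylor expansion in $\sigma$. Using $|(\tF_l f)'-\tF_l f|\lesssim \sin(\theta/2)|v-v_*|\,\|\nabla\tF_l f\|$ near $\theta=0$ (with a symmetrization argument to eliminate the first order term, so that effectively one gains $\sin^{2}(\theta/2)$) together with the Bernstein inequality $\|\nabla^s\tF_l f\|_{L^2}\lesssim 2^{sl}\|\tF_lf\|_{L^2}$, Cauchy--Schwarz in $v$, and the fact that $\Phi_k^\gamma$ is supported in $|v-v_*|\sim 2^k$ which gives $|v-v_*|^{\gamma}\lesssim 2^{\gamma k}$, one obtains the factor $2^{(\gamma+2s)k}2^{2sl}$ (resp.\ $2^{2sp}$) together with the $L^1$--$L^2$--$L^2$ trilinear structure. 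The $L^1$ estimate on $g$ is forced by Young's inequality after using that the kernel $\Phi_k^\gamma$ is in $L^\infty$ on its support.

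The main obstacle is $\fM^1$ and $\fM^4$, where one has to exploit a \emph{frequency mismatch} to get the gain $2^{-N p}$ or $2^{2s(m-p)}$. Here the natural tool is Bobylev's identity \eqref{bobylev} applied to $\tF_p\Phi_k^\gamma$ in place of $\Phi_k^\gamma$, combined with the pointwise Fourier estimate $|\nabla^i\widehat{\tF_p\Phi_k^\gamma}(\eta)|\lesssim 2^{k(\gamma+3+i)}\langle 2^k\eta\rangle^{-N}$ coming from \eqref{PhiK1}. For $\fM^1$ I would split $\int_{\S^2}b\,d\sigma$ into $\theta\le 2^{-p+l}$ and $\theta\ge 2^{-p+l}$; on the first set, Taylor expansion of $\widehat{\tF_p\Phi_k^\gamma}(\eta-\xi^-)-\widehat{\tF_p\Phi_k^\gamma}(\eta)$ to second order in $\xi^-$ produces the factor $\sin^2(\theta/2)$ that tames the angular singularity and yields $2^{2s(l-p)}$; on the second set, the singular integral in $\theta$ contributes $\theta^{-2s}|_{\theta\sim 2^{l-p}}\sim 2^{2s(p-l)}$, but this is controlled by extracting $2^{-(N-5/2)p}2^{(3/2)(l-p)}$ from the Fourier decay of $\widehat{\tF_p\Phi_k^\gamma}$ together with Plancherel on the $f,h$ pair. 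For $\fM^4$ the analogous argument is performed with the roles of $(l,p)$ replaced by $(p,m)$. The two $L^2$ factors on $\F_l h$ and $\tF_p f$ (resp.\ on $\tF_p h$ and $\F_m f$) come directly from Plancherel in the remaining Fourier integrals.

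Finally, $Q_{-1}$ is handled separately. Since $k=-1$ means $\Phi_{-1}^\gamma$ is a smooth, compactly supported function, \eqref{PhiK2} shows its Fourier transform is a Schwartz symbol of order $-(\gamma+3)$, so the kernel has no singularity; one then reduces to a standard non-cutoff Boltzmann upper bound as in Lemma \ref{L12} with parameters $a+b=2s$, yielding the trilinear estimate $(\|g\|_{L^1}+\|g\|_{L^2})\|h\|_{H^a}\|f\|_{H^b}$. Assembling the four dyadic pieces plus the $k=-1$ piece completes the proof.
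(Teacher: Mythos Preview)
The paper does not prove this lemma; it is quoted from \cite{H}. Your outline is broadly along the right lines for the decomposition and for $\fM^1,\fM^4$, where you correctly invoke Bobylev's identity and an angular splitting to extract the frequency-mismatch gain. But there is a genuine gap in your treatment of $\fM^2$ and $\fM^3$.

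As written, your physical-space argument cannot produce the factor $2^{(\gamma+2s)k}2^{2sl}$. Taylor expansion of $(\tF_l f)'-\tF_l f$ plus symmetrization yields a second-order remainder of size $\sin^2(\theta/2)\,|v-v_*|^2\,|\nabla^2\tF_l f|$; the angular integral $\int b(\cos\theta)\sin^2(\theta/2)\,d\sigma$ is then a fixed finite constant, and Bernstein gives $\|\nabla^2\tF_l f\|_{L^2}\sim 2^{2l}\|\tF_l f\|_{L^2}$. Combined with $\Phi_k^\gamma\sim 2^{\gamma k}$ and $|v-v_*|^2\sim 2^{2k}$ this gives $2^{(\gamma+2)k}2^{2l}$, not $2^{(\gamma+2s)k}2^{2sl}$. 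Your appeal to ``$\|\nabla^s\tF_l f\|_{L^2}\lesssim 2^{sl}\|\tF_l f\|_{L^2}$'' is a non sequitur: the Taylor remainder involves integer derivatives, not fractional ones. The missing mechanism is an \emph{adaptive} angular cutoff at $\theta_0\sim 2^{-k-l}$ (equivalently, in Bobylev's formula, the splitting $|\xi^-|\lessgtr 2^{-k}$): on $\theta<\theta_0$ the second-order Taylor contributes $\theta_0^{2-2s}2^{(\gamma+2)k}2^{2l}$, on $\theta>\theta_0$ one uses $|\hat\Phi_k^\gamma|$ directly together with $\int_{\theta_0}^{\pi/2}\theta^{-1-2s}d\theta\sim\theta_0^{-2s}$ and $2^{\gamma k}$; balancing the two at $\theta_0=2^{-k-l}$ is exactly what yields $2^{(\gamma+2s)k}2^{2sl}$. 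This is the content of Lemmas~2.2--2.3 in \cite{H}, and your sketch skips it.

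A minor point: $\Phi_{-1}^\gamma(v)=|v|^\gamma\psi(|v|)$ is not smooth at the origin for non-integer $\gamma$; only the limited Fourier decay \eqref{PhiK2} holds. The $Q_{-1}$ estimate still follows from Bobylev plus \eqref{PhiK2}, but your sentence ``$\Phi_{-1}^\gamma$ is a smooth, compactly supported function, \ldots\ so the kernel has no singularity'' is incorrect as stated.
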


 \subsection{Estimate of $\mathfrak{D}_1$} For $\mathfrak{D}_1$, we first consider the estimate for $|\<\F_j\<D\>^\ell Q_k(g,h)-Q_k(g,\F_j\<D\>^\ell h),\F_j\<D\>^\ell f\>|$.
  We have the following lemma:
  \begin{lem}\label{leD1}
 For smooth function $g,h$ and $f$, we have
 \begin{itemize}
  \item[(i)] If $2s<1$,
\ben\label{le2.1form1}
  |\<\F_j\<D\>^\ell  Q_k(g,h)-Q_k(g,\F_j\<D\>^\ell h),\F_j\<D\>^\ell f\>|\ls 2^{k(\ga+2s-1)}2^{2j\ell }\|g\|_{L^1}\|h\|_{L^2}\|\F_j f\|_{L^2},~k\geq0.
\een
 \item[(ii)]  If $2s>1$, \ben\label{le2.1form2}
  \quad\quad|\<\F_j\<D\>^\ell  Q_k(g,h)-Q_k(g,\F_j\<D\>^\ell h),\F_j\<D\>^\ell f\>|\ls2^{k(\ga+2s-1)}2^{(2\ell +2s-1)j}\|g\|_{L^1}\|h\|_{L^2}\|\F_jf\|_{L^2},~k\geq0.
\een
\item[(iii)]If $2s=1,$
\ben\label{le2.1form2.1}
  &&\\
  \notag&&|\<\F_j\<D\>^\ell  Q_k(g,h)-Q_k(g,\F_j\<D\>^\ell h),\F_j\<D\>^\ell f\>|\ls2^{k(\ga+2s-1-)}2^{(2\ell +2s-1-)j}\|g\|_{L^1}\|h\|_{L^2}\|\F_jf\|_{L^2},~k\geq0.
\een
\end{itemize}
 \end{lem}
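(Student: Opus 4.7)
I would begin by applying Bobylev's identity \eqref{bobylev} to the two inner products making up the commutator. Writing $m(\xi)\eqdef\varphi(2^{-j}\xi)\langle\xi\rangle^\ell$, so $\mathcal{F}_j\langle D\rangle^\ell$ is the Fourier multiplier with symbol $m$, a direct computation collapses the difference to
\begin{equation*}
\mathrm{comm}=\iiint b\!\left(\tfrac{\xi}{|\xi|}\!\cdot\!\sigma\right)\!\bigl[\widehat{\Phi_k^\gamma}(\eta-\xi^-)-\widehat{\Phi_k^\gamma}(\eta)\bigr]\widehat{g}(\eta)\widehat{h}(\xi-\eta)\,m(\xi)\bigl[m(\xi)-m(\xi-\eta)\bigr]\overline{\widehat{f}(\xi)}\,d\sigma d\eta d\xi.
\end{equation*}
Two independent cancellations are now visible: the kernel bracket vanishes as $\xi^-\!\to 0$ (i.e.\ as $\theta\to 0$), and the symbol bracket vanishes as $\eta\to 0$. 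The entire proof is an exercise in exploiting them simultaneously, coupled with the $L^1$-weighted estimate $\int|\widehat{\Phi_k^\gamma}(y)||y|\,dy\lesssim 2^{k(\gamma-1)}$ from Lemma~\ref{lemma1.5}(\ref{PhiK1}), which is the precise source of the commutator gain $2^{-k}$ relative to the standard $Q_k$-bound $2^{k(\gamma+2s)}$.

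\textbf{Double Taylor expansion.} First-order Taylor yields $m(\xi)-m(\xi-\eta)=\int_0^1\eta\cdot\nabla m(\xi-s\eta)\,ds$; since $\nabla m$ is essentially supported in $|\xi|\sim 2^j$ with $|\nabla m|\lesssim 2^{j(\ell-1)}$ there, we get $|m(\xi)-m(\xi-\eta)|\lesssim\min\{2^{j\ell},|\eta|2^{j(\ell-1)}\}$. Similarly, using $|\xi^-|=|\xi|\sin(\theta/2)\sim 2^j\sin(\theta/2)$ on the support of $m(\xi)$,
$\widehat{\Phi_k^\gamma}(\eta-\xi^-)-\widehat{\Phi_k^\gamma}(\eta)=-\int_0^1\xi^-\cdot\nabla\widehat{\Phi_k^\gamma}(\eta-t\xi^-)\,dt$, and Lemma~\ref{lemma1.5}(\ref{PhiK1}) supplies both the pointwise bound $|\nabla\widehat{\Phi_k^\gamma}(\eta)|\lesssim 2^{k(\gamma+4)}\langle 2^k\eta\rangle^{-N}$ and its $L^1$-type integral control.

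\textbf{Angular splitting and the three regimes.} I would split the $\sigma$-integral at $\sin(\theta/2)\simeq 2^{-k-j}$, the threshold where $|2^k\xi^-|\simeq 1$. On the inner region first-order Taylor in $\xi^-$ combined with the $|\eta|$ cancellation and the weights $m(\xi)[\nabla m]$ yields a bilinear factor $2^{2j\ell}\cdot 2^j\sin(\theta/2)\cdot|\eta|$, and the $|\eta|$-weighted $L^1$ estimate on $\widehat{\Phi_k^\gamma}$ collects the gain $2^{k(\gamma-1)}$. On the outer region direct bounds on $\widehat{\Phi_k^\gamma}$ together with $|m(\xi)-m(\xi-\eta)|\lesssim 2^{j\ell}$ suffice. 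The three cases in the statement are then dictated by convergence of the angular integral $\int b(\cos\theta)\sin(\theta/2)\,d\sigma\asymp\int\theta^{-2s}d\theta$: for $2s<1$ this converges and one obtains exactly $2^{k(\gamma+2s-1)}2^{2j\ell}$; for $2s=1$ the logarithmic divergence is absorbed as the $\epsilon$-losses ``$2s-1-$'' in both exponents giving \eqref{le2.1form2.1}; for $2s>1$ first-order cancellation in $\xi^-$ alone is not angularly integrable, so I would interpolate between the trivial bound and the first-order Taylor bound, paying an additional $2^{(2s-1)j}$ in $\xi$-frequency (since $\xi^-\sim 2^j\sin(\theta/2)$) to regain integrability, producing the stated $2^{(2\ell+2s-1)j}$.

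\textbf{Finalisation and main obstacle.} The remaining $\eta,\xi$ integrals are closed by $|\widehat{g}|_{L^\infty}\!\le\!\|g\|_{L^1}$ for the $g$-slot, Plancherel against $\widehat{h}(\xi-\eta)$ for $\|h\|_{L^2}$, and the frequency localisation of $m(\xi)\overline{\widehat{f}(\xi)}$ for $\|\mathcal{F}_j f\|_{L^2}$ (the slight $N_0$-thickening caused by the convolution in $\xi-\eta$ is absorbed harmlessly as in Lemma~\ref{lemma1.3}). The delicate step is a Fubini/Cauchy–Schwarz arrangement that separates the $\eta$-convolution from the $\xi$-mass while respecting the bilinear cancellation. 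The main obstacle is the case $2s>1$: the linear cancellation in $\xi^-$ alone fails to be angularly integrable, forcing a genuine interpolation that trades part of the gain against the Sobolev-order growth $2^{(2s-1)j}$ without losing the $2^{-k}$ improvement; and the borderline $2s=1$ requires the logarithmic $\epsilon$-loss appearing in \eqref{le2.1form2.1}, which is sharp up to the $\epsilon$.
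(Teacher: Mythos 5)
Your setup (Bobylev's identity, the two cancellations, the weighted bound $\int|\cF(\Phi_k^\gamma)(y)||y|\,dy\lesssim 2^{k(\gamma-1)}$ as the source of the $2^{-k}$ gain, and the symbol estimate $|\<\xi\>^\ell\vphi(2^{-j}\xi)-\<\xi-\eta\>^\ell\vphi(2^{-j}(\xi-\eta))|\lesssim 2^{(\ell-1)j}|\eta|$) matches the paper, and your treatment of $2s<1$ is essentially the paper's easy case. The gap is in the main case $2s\ge 1$. Your plan there is to ``interpolate between the trivial bound and the first-order Taylor bound'' for the kernel difference $\cF(\Phi_k^\gamma)(\eta-\xi^-)-\cF(\Phi_k^\gamma)(\eta)$. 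That interpolation can only produce a factor $|\xi^-|^{\theta}\sim(|\xi|\sin(\theta/2))^{\theta}$ with $\theta\le 1$, i.e.\ at most one power of $\sin(\theta/2)$; but against the angular measure $b(\cos\theta)\sin\theta\,d\theta\sim\theta^{-1-2s}d\theta$ one needs strictly more than $2s>1$ powers of $\theta$ to converge. Paying $2^{(2s-1)j}$ in $\xi$-frequency does not create the missing angular powers, so the singular part of the $\sigma$-integral is not controlled by your argument.

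The paper closes exactly this hole with two ingredients you do not have. In the small-deviation region $2|\xi^-|\le 2^{-k}\<\eta\>$ it Taylor-expands the kernel to \emph{second} order, and for the surviving first-order term it exploits the symmetry of the $\sigma$-integral: the component of $\xi^-$ orthogonal to $\xi$ averages to zero, and the radial component has size $|\xi^-\cdot\tfrac{\xi}{|\xi|}|=|\xi|\sin^2(\theta/2)$, so the effective cancellation is quadratic in $\theta$ and the angular integral converges for all $s\in(0,1)$, producing the factor $\min\{1,2^{-k}\<\eta\>/|\xi|\}^{2-2s}$ whose $|\xi|^{2s-1}$ is what really generates the loss $2^{(2s-1)j}$ (the paper records it as $\|\F_jf\|_{H^{2s-1}}$). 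In the complementary region $2|\xi^-|>2^{-k}\<\eta\>$ no Taylor expansion is used at all: one estimates the two kernel terms separately, performs the change of variables $\eta-\xi^-\mapsto\tilde\eta$ and Cauchy--Schwarz in $(\sigma,\tilde\eta,\xi)$, and uses the lower bound $\sin(\theta/2)\gtrsim 2^{-k}\<\eta\>/|\xi|$ to convert the angular singularity into the factor $2^{2sk}$, which combines with $\int|\cF(\Phi_k^\gamma)(y)||y|\,dy$ to give $2^{k(\gamma+2s-1)}$; the $2s=1$ endpoint produces the logarithmic factors $kj$ there, which is where the $\epsilon$-losses in \eqref{le2.1form2.1} actually originate. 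Also note the paper's region split is $\eta$-dependent ($2|\xi^-|$ versus $2^{-k}\<\eta\>$), not a fixed threshold $\sin(\theta/2)\simeq 2^{-k-j}$; this is what guarantees $\<\eta-t\xi^-\>\sim\<\eta\>$ in the second-order remainder and lets the decay $\<2^k\eta\>^{-N}$ of $\nabla\cF(\Phi_k^\gamma)$ be used. Without the symmetrization and the second-order remainder your argument does not prove (ii) or (iii).
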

\begin{proof}
 We only give the detail proof for $j\geq0$, and $j=-1$ can be handle similarly. Recalling the definition $\xi^{\pm}:=\frac{\xi\pm|\xi|\si}{2}$ and the  Bobylev's formula \eqref{bobylev},
  we observe that
 \beno
   &&\<\F_j\<D\>^\ell Q_k(g,h)-Q_k(g,\F_j\<D\>^\ell h),\F_j\<D\>^\ell f\>=\int_{\si\in \mathbb{S}^2,\eta,\xi\in \R^3}b(\frac{\xi}{|\xi|}\cdot\si)[\cF(\Phi_k^\ga)(\eta-\xi^-)-\cF(\Phi_k^\ga)(\eta)]\\
   &&\times(\cF g)(\eta)(\cF h)(\xi-\eta)\<\xi\>^\ell \vphi(2^{-j}\xi)\overline{(\cF f)}(\xi)(\<\xi\>^\ell \vphi(2^{-j}\xi)-\<\xi-\eta\>^\ell \vphi(2^{-j}(\xi-\eta))d\si d\eta d\xi:=\A.
 \eeno
Next we split the integration domain of $\A$ into two parts: $2|\xi^-|\leq2^{-k}\<\eta\>$ and $2|\xi^-|>2^{-k}\<\eta\>$. Correspondingly  $\A$ can be decomposed into two parts: $\A_1$ and $\A_2$. The proof will be decomposed into three steps. The first two steps will focus on the proof of \eqref{le2.1form2} and  \eqref{le2.1form2.1}, that is, in the case of $2s\geq1$. The we will explain   how to extend the proof to the case \eqref{le2.1form1} in the last step.
\smallskip

\noindent\underline{\it Step 1: Estimate of $\A_1$.} In the region $2|\xi^-|\leq2^{-k}\<\eta\>$, for $t\in[0,1]$, we have
\ben\label{restrictoftheta1}
 \sin(\th/2)\le \frac 1 2 2^{-k}\<\eta\>/|\xi|,  \quad \<\eta-t\xi^-\>\thicksim\<\eta\>.
 \een Then
\beno
 &&|\A_1|\ls\Big|\int_{2|\xi^-|\leq2^{-k}\<\eta\>}b(\frac{\xi}{|\xi|}\cdot\si)(\nabla\cF(\Phi_k^\ga)(\eta))\xi^-(\cF g)(\eta)(\cF h)(\xi-\eta)\<\xi\>^\ell
 \vphi(2^{-j}\xi)(\overline{\cF f})(\xi)(\<\xi\>^\ell \vphi(2^{-j}\xi)\\
 &&-\<\xi-\eta\>^\ell \vphi(2^{-j}(\xi-\eta))d\si d\eta d\xi\Big|+\int_0^1\int_{2|\xi^-|\leq2^{-k}\<\eta\>}b(\frac{\xi}{|\xi|}\cdot\si)|(\nabla^2\cF(\Phi_k^\ga)(\eta-t\xi^-))||\xi^-|^2| (\cF g)(\eta)||(\cF h)(\eta-\xi)|\\
 &&\times\<\xi\>^\ell |\vphi(2^{-j}\xi)(\overline{\cF f})(\xi)(\<\xi\>^\ell \vphi(2^{-j}\xi)-\<\xi-\eta\>^\ell \vphi(2^{-j}(\xi-\eta))|d\si d\eta d\xi dt:= \A_{1,1}+\A_{1,2}.
\eeno

$\bullet$ For $\A_{1,1}$, thanks to the symmetric structure, for any   function $\Psi$,  we have
\beno \int_{\sigma\in\S^2} \Psi(\frac{\xi}{|\xi|}\cdot\si) b(\frac{\xi}{|\xi|}\cdot\si)\xi^-d\si= \int_{\sigma\in\S^2} \Psi(\frac{\xi}{|\xi|}\cdot\si) b(\frac{\xi}{|\xi|}\cdot\si)(\xi^-\cdot \frac{\xi}{|\xi|})\frac{\xi}{|\xi|} d\si. \eeno
Recall that $\xi^-=\f{\xi-|\xi|\sigma}2$ which implies that $|(\xi^-\cdot \frac{\xi}{|\xi|})|=|\xi|\sin^2(\th/2)$.
From this together with \eqref{restrictoftheta1} and  Assumption $\mathbf{(A2)}$ i.e. $b(\cos\th)\sin\th\sim\th^{-1-2s}$, we are led to that
\begin{align*}
\A_{1,1}&\ls\int2^{(2\ell-1)j}|\nabla\cF(\Phi_k^\ga)(\eta)|\min\{1,(2^{-k}\<\eta\>/|\xi|)\}^{2-2s}|\xi||\eta||(\cF g)(\eta)||(\cF h)(\eta-\xi)||\vphi(2^{-j}\xi)(\cF f)(\xi)|d\xi d\eta\\
&\ls 2^{-k(2-2s)}2^{2j\ell}\|g\|_{L^1}\|h\|_{L^2}\|\F_j f\|_{H^{2s-1}}\int |\nabla\cF(\Phi_k^\ga)(\eta)||\eta|\<\eta\>^{2-2s}d\eta.
\end{align*}
Here we use the fact $|\<\xi\>^\ell \vphi(2^{-j}\xi)-\<\xi-\eta\>^\ell \vphi(2^{-j}(\xi-\eta))|\ls 2^{(\ell -1)j}|\eta|$.
Due to (\ref{PhiK1}), one has
\beno
&&2^{-k(2-2s)}\int  |\nabla\cF(\Phi_k^\ga)(\eta)||\eta|\<\eta\>^{2-2s}d\eta
 \ls C_N2^{-k(2-2s)}\int_{|\eta|\geq1}\<\eta\>^{2-2s}|\eta|\<2^k\eta\>^{-N}d\eta2^{k(\ga+3+1)}\\&&\qquad\qquad+2^{-k(2-2s)}\int_{|\eta|<1}\<2^k\eta\>^{-N}|\eta|\<\eta\>^{2-2s}d\eta2^{k(\ga+3+1)}
\ls C_N2^{k(-N+\ga+2s+12)}\\
&&+2^{k(\ga+2s+2)}\int_{|\eta|<2^k}\<\eta\>^{-N}2^{-k}|\eta|2^{-3k}d\eta\leq C_N2^{k(-N+\ga+2s+12)}+2^{k(\ga+2s-2)},
\eeno
which implies that
$\A_{1,1}
\ls2^{k(\ga+2s-1)}2^{2j\ell }\|g\|_{L^1}\|h\|_{L^2}\|\F_j f\|_{H^{2s-1}}.$

$\bullet$ For $\A_{1,2}$,
 also by \eqref{restrictoftheta1} and (\ref{PhiK1}), we have
\beno
 \A_{1,2}&\ls&\int2^{(2\ell-1)j}|\eta|(2^{-k}\<\eta\>/|\xi|)^{2-2s}|\xi|^2 2^{k(\ga+5)}\<2^k\eta\>^{-N}|(\cF g)(\eta)||(\cF h)(\eta-\xi)||\vphi(2^{-j}\xi)(\cF f)(\xi)|d\xi d\eta\\
 &\ls&2^{-k(2-2s)}2^{2j\ell}\|g\|_{L^1}\|h\|_{L^2}\|\F_j f\|_{H^{2s-1}}\int\<\eta\>^{2-2s}|\eta|\<2^k\eta\>^{-N}d\eta 2^{k(\ga+5)}\\
 &\ls&(C_N2^{k(-N+\gamma+2s+3)}+2^{k(\ga+2s-1)})2^{2j\ell }\|g\|_{L^1}\|h\|_{L^2}\|\F_j f\|_{H^{2s-1}}.
 \eeno
Thanks to the above estimates, we conclude that for $2s\ge 1$,
 $|\A_1|\ls2^{k(\ga+2s-1)}2^{2j\ell }|g|_{L^1}|h|_{L^2}|\F_j f|_{H^{2s-1}}.$

\noindent\underline{\it Step 2: Estimate of $\A_2$.} In the region $2|\xi^-|>2^{-k}\<\eta\>$, since $\sin(\th/2)=|\xi^-|/|\xi|$, it is easy to check that
\ben\label{Restrictoftheta2} \sin(\th/2)\gs2^{-k}\<\eta-\xi^-\>/(3|\xi|)\geq2^{-k}/(3|\xi|), \quad \sin(\th/2)\geq 2^{-k} \<\eta\>/(2|\xi|)\geq2^{-k}/(2|\xi|).\een
Using the fact $|\<\xi\>^\ell (\vphi(2^{-j}\xi)-\<\xi-\eta\>^\ell \vphi(2^{-j}(\xi-\eta))|\ls 2^{(\ell -1)j}|\eta|$, we have
\begin{align*}
|\A_2|
&\ls \int_{2|\xi^-|>2^{-k}\<\eta\>} 2^{(2\ell-1)j}b(\frac{\xi}{|\xi|}\cdot \si)\Big[|\cF(\Phi_k^\ga)(\eta-\xi^-)||\eta-\xi^-|+|\cF(\Phi_k^\ga)(\eta)||\eta|\Big]|(\cF g)(\eta)|\\
&~~~~~~~~\times|(\cF h)(\xi-\eta)| |\vphi(2^{-j}\xi)(\F f)(\xi)|d\si d\eta d\xi+\int_{2|\xi^-|>2^{-k}\<\eta\>} 2^{(2\ell-1)j}b(\frac{\xi}{|\xi|}\cdot \si)|\xi|\sin\frac{\th}{2}|\cF(\Phi_k^\ga)(\eta-\xi^-)|\\
&~~~~~~~~\times|(\cF g)(\eta)||(\cF f)(\xi-\eta)||\vphi(2^{-j}\xi)(\cF f)(\xi)|d\si d\eta d\xi:=\A_{2,1}+\A_{2,2}+\A_{2,3}.
\end{align*}

$\bullet$ \underline{Estimate of $\A_{2,1}$.}
By Cauchy-Schwartz inequality and  the change of variables $\eta-\xi^-\rightarrow \tilde{\eta}$ (which implies $\xi-\eta=\xi^+- \tilde{\eta}$), we get that
\begin{align*}
\A_{2,1}&\ls 2^{2\ell j} |g|_{L^1}\left(\int_{2|\xi^-|>2^{-k}\<\eta\>} b(\frac{\xi}{|\xi|}\cdot \si)|\cF(\Phi_k^\ga)(\eta-\xi^-)||\eta-\xi^-||(\cF h)(\xi-\eta)|^2|\xi|^{-2s}d\si d\eta d\xi\right)^{\frac{1}{2}}\\
&~~~~~~~~\times\left(\int_{2|\xi^-|>2^{-k}\<\eta\>} b(\frac{\xi}{|\xi|}\cdot \si)|\cF(\Phi_k^\ga)(\eta-\xi^-)||\eta-\xi^-||\vphi(2^{-j}\xi)(\cF f)(\xi)|^2|\xi|^{2s}2^{-2j}d\si d\eta d\xi\right)^{\frac{1}{2}}\\
&\ls  2^{2\ell j}|g|_{L^1}\left(\int_{\sin(\theta/2)\ge2^{-k}/(3|\xi|)} b(\frac{\xi^+}{|\xi^+|}\cdot\si)|\cF(\Phi_k^\ga)( \tilde{\eta})|| \tilde{\eta}||(\cF h)(\xi^+- \tilde{\eta})|^2|\xi^+|^{-2s}d\si d \tilde{\eta} d\xi^+\right)^{\frac{1}{2}}\\
&~~~~~~~~\times\left(\int_{\sin(\theta/2)\ge2^{-k}/(3|\xi|)} b(\frac{\xi}{|\xi|}\cdot \si)|\cF(\Phi_k^\ga)( \tilde{\eta})|| \tilde{\eta}||\vphi(2^{-j}\xi)(\cF f)(\xi)|^2|\xi|^{2s-2}d\si d \tilde{\eta} d\xi\right)^{\frac{1}{2}}\\
&\ls 2^{2sk }2^{2\ell j}\|g\|_{L^1}\|h\|_{L^2}\|\F_jf\|_{H^{2s-1}}\int_{\R^3}|\cF(\Phi_k^\ga)( y)|| y|dy,
\end{align*}
where we use the facts $\frac{\xi}{|\xi|}\cdot \si=\cos\theta ,\frac{\xi^+}{|\xi^+|}\cdot\si=\cos(\th/2)$ and   $|\xi^+| \sim |\xi|$. Thanks to \eqref{PhiK1}, we obtain that
\begin{equation*}
 \A_{2,1}\ls2^{k(\ga+2s-1)}2^{2\ell j}\|g\|_{L^1}\|h\|_{L^2}\|\F_jf\|_{H^{2s-1}}.
\end{equation*}

$\bullet$ \underline{Estimate of $\A_{2,2}$ and $\A_{2,3}$.}
The similar argument can be applied to $\A_{2,2}$  to get
\beno
 \A_{2,2}
 &\ls&2^{k(\ga+2s-1)}2^{2\ell j}\|g\|_{L^1}\|h\|_{L^2}\|\F_jf\|_{H^{2s-1}}.
 \eeno
For $\A_{2,3}$, by change of variables and the condition \eqref{Restrictoftheta2}, we have
 \beno
 \A_{2,3}&\ls& 2^{(2\ell -1)j}\int_{2|\xi^-|>2^{-k}\<\eta\>}b(\frac{\xi}{|\xi|}\cdot \si)|\xi|\sin\frac{\th}{2}|\cF(\Phi_k^\ga)(\eta-\xi^-)|
|(\cF g)(\eta)||(\cF f)(\xi-\eta)||\vphi(2^{-j}\xi)(\cF f)(\xi)|d\si d\eta d\xi\\
&\ls&2^{2\ell j}|g|_{L^1}\left(\int_{\sin(\th/2)\gs2^{-k}\<\tilde{\eta}\>/(3|\xi|)} b(\frac{\xi^+}{|\xi^+|}\cdot \si)\sin\frac{\th}{2} |\cF(\Phi_k^\ga)(\tilde{\eta})||(\cF h)(\xi^++\tilde{\eta})|^2  |\xi|^{-2s+1}    d\si d\tilde{\eta} d\xi\right)^\frac{1}{2}\\
&&~~~~~~~~\times\left(\int_{\sin(\th/2)\gs 2^{-k}\<\tilde{\eta}\>/(3|\xi|)} b(\frac{\xi}{|\xi|}\cdot \si)\sin\frac{\th}{2}|\cF(\Phi_k^\ga)(\tilde{\eta})||\vphi(2^{-j}\xi)(\cF f)(\xi)|^2  |\xi|^{2s-1}   d\si d\tilde{\eta} d\xi\right)^\frac{1}{2}\\
&\ls&1_{2s>1}2^{k(\ga+2s-1)}2^{2\ell j}\|g\|_{L^1}\|h\|_{L^2}\|\F_jf\|_{H^{2s-1}}+1_{2s=1}kj2^{k\ga}2^{2\ell j}\|g\|_{L^1}\|h\|_{L^2}\|\F_jf\|_{L^2}.
\eeno

Now we may conclude that
for $2s\ge 1$,
\beno
 |\A_2|\ls1_{2s>1}2^{k(\ga+2s-1)}2^{2\ell j}\|g\|_{L^1}\|h\|_{L^2}\|\F_jf\|_{H^{2s-1}}+1_{2s=1}kj2^{k\ga}2^{2\ell j}\|g\|_{L^1}\|h\|_{L^2}\|\F_jf\|_{L^2}.
\eeno

Since we have $k\ls 2^{k\epsilon}$ and $j\ls 2^{j\epsilon}$ for any $\epsilon >0$, then from the estimates of $\A_1$ and $\A_2$, we complete the proof for \eqref{le2.1form2} and \eqref{le2.1form2.1}.
\smallskip

\noindent\underline{\it Step 3: The proof of case \eqref{le2.1form1}.} The difference only lies in the decomposition of the integration domain. To prove \eqref{le2.1form1}, we separate the domain into regions: $2|\xi^-|\leq\<\eta\>$ and $2|\xi^-|\ge\<\eta\>$. One may easily get desired results by following the same argument used in the previous steps.
\end{proof}

Next we will use the dyadic decompositions in frequency space to improve the above results.
\begin{lem}\label{le2.2}
For $k\geq0$ and sufficiently large $N\in\N$, we have
 \begin{itemize}
  \item[(i)] If $2s<1$,
\ben\label{le2.2form1}
 &&|\<\F_j\<D\>^\ell  Q_k(g,h)-Q_k(g,\F_j\<D\>^\ell h),\F_j\<D\>^\ell f\>|\ls 2^{k(\ga+2s-1)}2^{2\ell j}\|g\|_{L^1}\|\tF_jh\|_{L^2}\|\F_jf\|_{L^2}\\
  &&+C_{N,\ell }\sum_{p>j+2N_0}2^{-kN}2^{-pN}\|g\|_{L^1}\|\F_ph\|_{L^2}\|\F_jf\|_{L^2}+C_{N,\ell }\sum_{m\leq j-2N_0}2^{-kN}2^{-jN}\|g\|_{L^1}\|\F_mh\|_{L^2}\|\F_jf\|_{L^2}; \notag
\een
 \item[(ii)]  If $2s>1$, \ben\label{le2.2form2}
 &&  |\<\F_j\<D\>^\ell  Q_k(g,h)-Q_k(g,\F_j\<D\>^\ell h),\F_j\<D\>^\ell f\>|\ls 2^{k(\ga+2s-1)}2^{(2\ell +2s-1)j}\|g\|_{L^1}\|\tF_jh\|_{L^2}\|\F_jf\|_{L^2}\\
&&+C_{N,\ell }\sum_{p>j+2N_0}2^{-kN}2^{-pN}\|g\|_{L^1}\notag\|\F_ph\|_{L^2}\|\F_jf\|_{L^2}+C_{N,\ell }\sum_{m\leq j-2N_0}2^{-kN}2^{-jN}\|g\|_{L^1}\|\F_mh\|_{L^2}\|\F_jf\|_{L^2};
\een
\item[(iii)] If $2s=1$,
\ben\label{le2.2form2.1}
  &&|\<\F_j\<D\>^\ell  Q_k(g,h)-Q_k(g,\F_j\<D\>^\ell h),\F_j\<D\>^\ell f\>|\ls 2^{k(\ga+2s-1-)}2^{(2\ell +2s-1-)j}\|g\|_{L^1}\|\tF_jh\|_{L^2}\|\F_jf\|_{L^2}\\
&&+C_{N,\ell }\sum_{p>j+2N_0}2^{-kN}2^{-pN}\notag\|g\|_{L^1}\|\F_ph\|_{L^2}\|\F_jf\|_{L^2}+C_{N,\ell }\sum_{m\leq j-2N_0}2^{-kN}2^{-jN}\|g\|_{L^1}\|\F_mh\|_{L^2}\|\F_jf\|_{L^2}.
\een
\end{itemize}
 \end{lem}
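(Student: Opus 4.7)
The plan is to reduce Lemma~\ref{le2.2} to Lemma~\ref{leD1} via the Littlewood--Paley decomposition $h=\sum_{p\ge -1}\F_p h$, splitting the sum over $p$ into three regimes relative to $j$: the diagonal $|p-j|\le 2N_0$, the high-frequency tail $p>j+2N_0$, and the low-frequency tail $p<j-2N_0$. The diagonal piece will produce the main term with $\|\tF_j h\|_{L^2}$, while the two off-diagonal tails will deliver the rapidly decaying remainders.

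For the diagonal regime, I apply Lemma~\ref{leD1} with $h$ replaced by $\F_p h$ (its right-hand side depends on $h$ only through $\|h\|_{L^2}$), obtaining the stated bound with $\|\F_p h\|_{L^2}$ in place of $\|h\|_{L^2}$. Summing over this finite window and using $\sum_{|p-j|\le 2N_0}\|\F_p h\|_{L^2}\ls \|\tF_j h\|_{L^2}$ yields the main terms of \eqref{le2.2form1}--\eqref{le2.2form2.1}. For the off-diagonal regimes, the crucial observation is the Fourier-support identity $\F_j \<D\>^\ell \F_p=0$ whenever $|p-j|\ge 2$, which eliminates the second half of the commutator. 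I am then left to control $\<\F_j \<D\>^\ell Q_k(g,\F_p h),\F_j \<D\>^\ell f\>$. Writing this via Bobylev's formula \eqref{bobylev}, the multipliers $\vphi(2^{-j}\xi)$ and $\cF(\F_p h)(\xi-\eta)$ localize $|\xi|\sim 2^j$ and $|\xi-\eta|\sim 2^p$; after the change of variables $\tilde\eta=\eta-\xi^-$, which turns $\xi-\eta$ into $\xi^+-\tilde\eta$ with $|\xi^+|\sim |\xi|\sim 2^j$, these constraints force $|\tilde\eta|\sim 2^{\max(j,p)}$. The rapid-decay bound $|\cF(\Phi_k^\ga)(\tilde\eta)|\ls C_N 2^{k(\ga+3)}\<2^k\tilde\eta\>^{-N}$ from \eqref{PhiK1} then delivers the factor $2^{-kN}2^{-\max(j,p)N}$, while $\|g\|_{L^1}$ comes from $\|\cF g\|_{L^\infty}\le \|g\|_{L^1}$.

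The main obstacle will be the companion un-shifted kernel $\cF(\Phi_k^\ga)(\eta)$ appearing in the Bobylev difference, whose argument is not directly affected by the $\tilde\eta$-change of variables: when $|\xi^-|$ is comparable to $|\xi|$, $\eta$ can a priori be small even when $\tilde\eta$ is forced large. To resolve this I will reuse the angular splitting from the proof of Lemma~\ref{leD1}, namely $2|\xi^-|\le 2^{-k}\<\eta\>$ versus $2|\xi^-|>2^{-k}\<\eta\>$. On the small-$|\xi^-|$ region, a first-order Taylor expansion reduces the difference to a gradient evaluated between $\eta$ and $\eta-\xi^-$ with $\<\eta-t\xi^-\>\sim\<\eta\>$, so only the single kernel at $\eta$ needs to be controlled, and the constraint $|\xi-\eta|\sim 2^p$ now forces $|\eta|\sim 2^{\max(j,p)}$ directly since $|\xi^-|$ is much smaller. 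On the large-$|\xi^-|$ region, the two Bobylev kernels are estimated separately: the shifted one through the $\tilde\eta$-change of variables, and the un-shifted one by a $\sigma$-symmetrization together with the lower bound $\sin(\theta/2)\gs 2^{-k}/|\xi|$ that keeps the $b(\cos\theta)$-integration over the restricted angular region finite.

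Finally, once both regimes yield the decay $2^{-kN}2^{-\max(j,p)N}\|g\|_{L^1}\|\F_p h\|_{L^2}\|\F_j f\|_{L^2}$ (with $N$ absorbing any polynomial loss in $j,p,k$ by choosing $N$ large), the off-diagonal sums reproduce exactly the two residual sums in \eqref{le2.2form1}--\eqref{le2.2form2.1}; assembling the diagonal main term with these residuals completes the proof.
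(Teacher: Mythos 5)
Your proposal is correct and follows essentially the same route as the paper: the same three-regime frequency decomposition of $h$ relative to $j$, Lemma~\ref{leD1} applied on the near-diagonal block to produce the main term with $\|\tF_j h\|_{L^2}$, the vanishing of the second half of the commutator off the diagonal via $\F_j\<D\>^\ell\F_p=0$, and rapidly decaying bounds for the two tails. The only (harmless) difference is that you re-derive the tail estimates from Bobylev's formula, the kernel decay \eqref{PhiK1} and the angular splitting, whereas the paper just identifies them with $\fM^4_{k,p,m}$ and $\fM^1_{k,p,l}$ and cites Lemma~\ref{lemma1.7}(i),(iii); note also that in both off-diagonal regimes the frequency constraints already force $|\eta|\sim 2^{\max(j,p)}$, so the ``obstacle'' you raise for the un-shifted kernel does not actually occur, and your splitting is needed only to tame the angular singularity of $b$.
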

\begin{proof}
By Bobylev's equality \eqref{bobylev} and \eqref{2.2},
we have
\beno
 &&\<\F_j\<D\>^\ell Q_k(g,h)-Q_k(g,\F_j\<D\>^\ell h),\F_j\<D\>^\ell f\>\\
 &=&\sum_{|p-j|<2N_0}\sum_{|p-p'|\leq3N_0}\<\F_j\<D\>^\ell Q_k(\F_{p'}g,\F_ph)-Q_k(\F_{p'}g,\F_j\<D\>^\ell \F_ph),\F_j\<D\>^\ell f\>\\
\notag&&+\sum_{p>j+2N_0}\sum_{|p-p'|\leq N_0}\<\F_j\<D\>^\ell Q_k(\F_{p'}g,\F_ph),\F_j\<D\>^\ell f\>
+\sum_{|p-j|\leq N_0}\sum_{m<j-2N_0}\<\F_j\<D\>^\ell Q_k(\F_pg,\F_mh),\F_j\<D\>^\ell f\>\\
 &:=&\cB_1+\cB_2+\cB_3.
\eeno
We first give the proof of (\ref{le2.2form2}). Thanks to Lemma \ref{leD1}(\ref{le2.1form2}), we obtain that
\begin{align*}
|\cB_1|\ls2^{k(\ga+2s-1)}2^{(2\ell +2s-1)j}\|g\|_{L^1}\|\tF_jh\|_{L^2}\|\F_jf\|_{L^2}.
\end{align*}
For $\cB_2$ and $\cB_3$,   by Bobylev's equality, it is easy to see that $\<\F_j\<D\>^\ell Q_k(\F_{p'}g,\F_ph),\F_j\<D\>^\ell f\>$ \\and $\<\F_j\<D\>^\ell Q_k(\F_pg,\F_mh),\F_j\<D\>^\ell f\>$ enjoy the same structure as  $\fM^4_{k,p,m}$ and $\fM^1_{k,p,l}$ defined in Lemma \ref{lemma1.7} respectively, then from Lemma \ref{lemma1.7} (i) and (iii), we have
\beno
|\cB_2|+|\cB_3|&\leq& C_{N,\ell }\sum_{p>j+2N_0}2^{-kN}2^{-pN}\|g\|_{L^1}\|\F_ph\|_{L^2}\|\F_jf\|_{L^2}+C_{N,n}\sum_{m\leq j-2N_0}2^{-kN}2^{-jN}\|g\|_{L^1}\|\F_mh\|_{L^2}\|\F_jf\|_{L^2},
\eeno
where $N$ can be large enough. Thus for $k\geq 0$, we derive that
\beno
  &&|\<\F_j\<D\>^\ell  Q_k(g,h)-Q_k(g,\F_j\<D\>^\ell h),\F_j\<D\>^\ell f\>|
  \ls 2^{k(\ga+2s-1)}2^{(2\ell +2s-1)j}\|g\|_{L^1}\|\tF_jh\|_{L^2}\|\F_jf\|_{L^2}\\
  &&+C_{N,\ell }\sum_{p>j+2N_0}2^{-kN}2^{-pN}\|g\|_{L^1}\|\F_ph\|_{L^2}\|\F_jf\|_{L^2}
+C_{N,\ell }\sum_{m\leq j-2N_0}2^{-kN}2^{-jN}\|g\|_{L^1}\|\F_mh\|_{L^2}\|\F_jf\|_{L^2},~k\geq0.
\eeno

We finally remark that the other cases can be obtained in a similar way.  We skip the details here and end the proof.
\end{proof}

Now we can give the estimate of $\mathfrak{D}_1$:
\begin{lem}\label{le2.3}

For smooth function $g,h$ and $f$ and sufficiently large $N\in\N$, we have
 \begin{itemize}
  \item[(i)] If $2s<1$,
\ben\label{le2.3form1}
  \mathfrak{D}_1\ls C_{N,\ell}\|g\|_{L^1_{(\ga+2s-1)^+(-\omega_1)^++(-\omega_2)^++\de}}(\|\mF_jh\|_{H^a_{\omega_1}}+2^{-jN}\|h\|_{H_{-N}^{-N}})(\|\mF_jf\|_{H^{b}_{\omega_2}}+2^{-jN}\|f\|_{H_{-N}^{-N}}).
\een
 \item[(ii)]  If $2s>1$, \ben\label{le2.3form2}
  \mathfrak{D}_1\ls
C_{N,\ell}\|g\|_{L^1_{(\ga+2s-1)^++(-\omega_1)^++(-\omega_2)^++\de}}(\|\mF_jh\|_{H^a_{\omega_1}}+2^{-jN}\|h\|_{H_{-N}^{-N}})(\|\mF_jf\|_{H^{b}_{\omega_2}}+2^{-jN}\|f\|_{H_{-N}^{-N}}).
\een
\item[(iii)] If $2s=1$ ,
\ben\label{le2.3form2.1}
  \quad\mathfrak{D}_1\ls C_{N,\ell}\|g\|_{L^1_{(\ga+2s-1+\de)^++(-\omega_1)^++(-\omega_2)^++\de}}(\|\mF_jh\|_{H^a_{\omega_1}}+2^{-jN}\|h\|_{H_{-N}^{-N}})(\|\mF_jf\|_{H^{b}_{\omega_2}}+2^{-jN}\|f\|_{H_{-N}^{-N}}),
\een
\end{itemize}
where  $a,b\geq 0,~a+b=(2\ell )1_{2s<1}+(2\ell+ 2s-1)1_{2s>1}+(2\ell +2s-1+\de)1_{2s=1}$, $\omega_1+\omega_2=(\ga+2s-1)1_{2s\neq1}+(\ga+2s-1+\de)1_{2s=1},\de\ll1$ and $\mF_j$ is defined in Definition \ref{Fj}.
 \end{lem}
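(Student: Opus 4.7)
The plan is to apply Lemma \ref{le2.2} to each dyadic block in the three-piece decomposition of $\mathfrak{D}_1$, then perform a Besov-type summation in the phase-space index $k$ (and, for two of the pieces, also in the paired index $l$) using Cauchy--Schwartz together with the equivalences in Lemma \ref{lemma1.4}. I will describe the argument for $2s>1$; the cases $2s<1$ and $2s=1$ are identical in structure and only the exponents change (in particular the $\delta$-loss in (\ref{le2.3form2.1}) comes from part (iii) of Lemma \ref{le2.2}).

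First, for the opening sum $\sum_{k\ge N_0-1}$ with $g$ replaced by $\U_{k-N_0}g$ and $h,f$ by $\tP_k h,\tP_k f$, I would invoke Lemma \ref{le2.2}(ii). The off-diagonal residuals (those with prefactor $2^{-kN}2^{-pN}$ or $2^{-kN}2^{-jN}$) sum to the clean remainders $2^{-jN}\|h\|_{H^{-N}_{-N}}$ and $2^{-jN}\|f\|_{H^{-N}_{-N}}$ using Bernstein's inequality (Lemma \ref{7.8}) and the characterization (\ref{Ber}). For the leading term $2^{k(\gamma+2s-1)}2^{(2\ell+2s-1)j}\|\U_{k-N_0}g\|_{L^1}\|\tF_j\tP_kh\|_{L^2}\|\F_j\tP_kf\|_{L^2}$, I split $2^{(2\ell+2s-1)j}=2^{aj}2^{bj}$ with $a+b=2\ell+2s-1$ and use that $\F_j,\tF_j$ localize frequencies to $|\xi|\sim 2^j$ so that $2^{aj}\|\tF_j\tP_kh\|_{L^2}\sim\|\tF_j\tP_kh\|_{H^a}$ and similarly for $f$. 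Then I split $2^{k(\gamma+2s-1)}=2^{k\omega_1}2^{k\omega_2}$ with $\omega_1+\omega_2=\gamma+2s-1$ and, using $\<v\>\sim 2^k$ on the support of $\tP_k$, rewrite $2^{k\omega_i}\|\tP_k\,\cdot\,\|_{H^\alpha}\sim\|\tP_k\,\cdot\,\|_{H^\alpha_{\omega_i}}$. A Cauchy--Schwartz in $k$ then yields
\begin{align*}
\sum_{k\ge N_0-1}\|\U_{k-N_0}g\|_{L^1}\|\tF_j\tP_kh\|_{H^a_{\omega_1}}\|\F_j\tP_kf\|_{H^b_{\omega_2}} \le \sup_k\|\U_{k-N_0}g\|_{L^1}\Bigl(\sum_k\|\tF_j\tP_kh\|_{H^a_{\omega_1}}^2\Bigr)^{1/2}\Bigl(\sum_k\|\F_j\tP_kf\|_{H^b_{\omega_2}}^2\Bigr)^{1/2},
\end{align*}
and by (\ref{Ber}) the two sums are controlled by $\|\mF_jh\|_{H^a_{\omega_1}}$ and $\|\mF_jf\|_{H^b_{\omega_2}}$, respectively.

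For the second piece $\sum_{l\ge k+N_0,k\ge 0}$, the same application of Lemma \ref{le2.2} gives a main term with $\cP_l g$ in place of $\U_{k-N_0}g$. I first execute the finite geometric $k$-sum for $k\le l-N_0$, which produces an extra factor comparable to $2^{l(\gamma+2s-1)^+}$, and then the $l$-sum is treated by Cauchy--Schwartz together with (\ref{Ber}) applied to $g$, giving precisely the weight $\|g\|_{L^1_{(\gamma+2s-1)^+ + \dots}}$. The third piece $\sum_{|l-k|\le N_0,k\ge 0}$ has $l\sim k$ and $g,h,f$ at the same scale $\sim 2^k$, so after applying Lemma \ref{le2.2} the double sum collapses and the Cauchy--Schwartz argument of Step 1 applies verbatim with $\U_{k+N_0}h,\U_{k+N_0}f$ in place of $\tP_kh,\tP_kf$.

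The main obstacle, which also dictates the precise form of the $g$-weight, is the bookkeeping in the $k$-summation when $\gamma+2s-1>0$ or when one of $\omega_1,\omega_2$ is negative. In the former case $\sum_k 2^{k(\gamma+2s-1)}\|\U_{k-N_0}g\|_{L^1}$ does not converge on its own; swapping the order of summation and integration (a Fubini argument) converts the geometric factor $\sum_{k\ge\log_2|v|}2^{k(\gamma+2s-1)}$ into the pointwise weight $\<v\>^{\gamma+2s-1+\delta}$ at the cost of an arbitrarily small $\delta>0$. In the latter case, when $\omega_i<0$, the factor $2^{k\omega_i}$ cannot be absorbed as a genuine weight on $h$ or $f$ without creating divergence; the cure is the pointwise estimate $|\U_{k-N_0}g|\le|g|$ combined with the same Fubini argument, which transfers $(-\omega_i)^+$ units of polynomial weight onto $g$. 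Adding these contributions yields precisely the exponent $(\gamma+2s-1)^+ + (-\omega_1)^+ + (-\omega_2)^+ + \delta$ on $g$ demanded by (\ref{le2.3form1})--(\ref{le2.3form2.1}); for $2s=1$ the additional $\delta$ in $(\gamma+2s-1+\delta)^+$ arises from the logarithmic loss already present in Lemma \ref{le2.2}(iii).
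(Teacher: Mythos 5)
Your proposal follows the paper's own route: the same three-piece splitting of $\mathfrak{D}_1$, blockwise application of Lemma \ref{le2.2}, and resummation in $k$ (and $l$) via the commutator estimates of Lemma \ref{lemma1.3} and the equivalences of Lemma \ref{lemma1.4}, with the final bookkeeping $(\ga+2s-1)^++(-\omega_1)^++(-\omega_2)^++\de$ on $g$ coming out as in (\ref{le2.3form1})--(\ref{le2.3form2.1}). Two points, however, need fixing before the write-up is sound.

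First, the Fubini remark in your ``main obstacle'' paragraph is wrong as stated (though, fortunately, never needed): in the first piece $g$ appears as $\U_{k-N_0}g$, whose support forces $k\gs\log_2\langle v\rangle$, so the exchanged sum is $\sum_{k\ge\log_2\langle v\rangle}2^{k(\ga+2s-1)}$, which \emph{diverges} when $\ga+2s-1>0$ and cannot be converted into the weight $\langle v\rangle^{\ga+2s-1+\de}$. The correct treatment of that piece is exactly your Step 1 (and the paper's): since $h,f$ carry $\tP_k$, the whole factor $2^{k(\ga+2s-1)}=2^{k\omega_1}2^{k\omega_2}$ is absorbed into $\|\tP_k\,\cdot\,\|_{H^a_{\omega_1}}$ and $\|\tP_k\,\cdot\,\|_{H^b_{\omega_2}}$, and $g$ keeps only $\|g\|_{L^1}$. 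The convergent geometric sum $\sum_{k\le l-N_0}2^{k(\ga+2s-1)}\ls 2^{l((\ga+2s-1)^++\de)}$ occurs only in the second piece, where $\cP_lg$ lives at $|v|\sim2^l$; that is where the $(\ga+2s-1)^+$ weight on $g$ genuinely originates. Second, in the third piece the claim that Step 1 applies ``verbatim'' is too quick: $\U_{k+N_0}h$ and $\U_{k+N_0}f$ are low-pass in $v$, not shell-localized, so positive powers $2^{k\omega_i}$ (and the positive part of $2^{k(\ga+2s-1)}$) cannot be absorbed by them; one must use $\|\U_{k+N_0}h\|_{H^a}\ls2^{k(-\omega_1)^+}\|h\|_{H^a_{\omega_1}}$ from Lemma \ref{lemma1.3}(iii) and push all remaining positive powers of $2^k$ onto $\cP_lg$ with $l\sim k$, which is precisely how the paper produces the $g$-weight there, the $l$-sum on $g$ being controlled as in your second-piece argument. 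With these corrections your argument coincides with the paper's proof.
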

\begin{proof}
We first give the proof of (\ref{le2.3form2}). We set
$\mathfrak{D}_1=F_1+F_2+F_3$, where $F_1=\sum\limits_{k\geq N_0-1}\<\F_j\<D\>^\ell Q_k(\U_{k-N_0}g,\tP_kh)-Q_k(\U_{k-N_0}g,\F_j\<D\>^\ell \tP_kh),\F_j\<D\>^\ell \tP_kf\>$,
$F_2=\sum\limits_{l\geq k+N_0,k\geq0}\<\F_j\<D\>^\ell Q_k(\cP_lg,\tP_lh)-Q_k(\cP_lg,\F_j\<D\>^\ell \tP_lh),\F_j\<D\>^\ell \tP_lf\>$ and $F_3=\sum\limits_{|l-k|\leq N_0,k\geq0}\<\F_j\<D\>^\ell Q_k(\cP_lg,\U_{k+N_0}h)-Q_k(\cP_lg,\F_j\<D\>^\ell \U_{k+N_0}h),\F_j\<D\>^\ell \U_{k+N_0}f\>$.
  \smallskip

\underline{\it Estimate of $F_1$.} Thanks to Lemma \ref{le2.2}(\ref{le2.2form2}), we obtain that
 \beno
  |F_1|&\ls&\sum_{k\geq N_0-1,k\geq0}\Big(2^{k(\ga+2s-1)}\|\U_{k-N_0}g\|_{L^1}\|\tF_j\tP_kh\|_{H^a}\|\F_j\tP_kf\|_{H^b}+C_N\sum_{p>j+2N_0}2^{-3kN}2^{-3pN}\\
  &&\times\|\U_{k-N_0}g\|_{L^1}\|\F_p\tP_kh\|_{L^2}\|\F_j\tP_kf\|_{L^2}
+C_N\sum_{m<j-2N_0}2^{-3kN}2^{-3jN}\|\U_{k-N_0}g\|_{L^1}\|\F_m\tP_kh\|_{L^2}\|\F_j\tP_kf\|_{L^2}\Big)
 \eeno
with $a+b=2\ell +2s-1$. By Lemma \ref{lemma1.3} and Lemma \ref{lemma1.4}, we derive that
  \beno
&&\sum_{k\geq N_0-1,k\geq0}2^{k(\ga+2s-1)}\|\U_{k-N_0}g\|_{L^1}\|\tF_j\tP_kh\|_{H^a}\|\F_j\tP_kf\|_{H^b}\\
&\ls&C_{N,\ell }\|g\|_{L^1}\sum_{k\geq N_0-1}(\sum_{|\alpha|=0}^{2N+2\ell +1}2^{\omega_1k}2^{aj}\|\tP_{k,\al}\tF_{j,\al}h\|_{L^2}+2^{-jN}2^{-kN}2^{\omega_1k}\|h\|_{H_{-N}^{-N}})(\sum_{|\alpha|=0}^{2N+2\ell+1}2^{\omega_2k}2^{bj}\\
&&\times\|\tP_{k,\al}\tF_{j,\al}f\|_{L^2}+2^{-jN}2^{-kN}2^{\omega_2k}\|f\|_{H^{-N}_{-N}})
\\&\ls& C_{N,\ell }\|g\|_{L^1}(\sum_{|\al|=0}^{2N+2\ell+1}\|\tF_{j,\al}h\|_{H^a_{\omega_1}}+2^{-jN}\|h\|_{H_{-N}^{-N}})(\sum_{|\al|=0}^{2N+2\ell+1}\|\tF_{j,\al}f\|_{H^{b}_{\omega_2}}+2^{-jN}\|f\|_{H_{-N}^{-N}}) \\
&\ls& C_{N,\ell }\|g\|_{L^1}(\|\mF_jh\|_{H^a_{\omega_1}}+2^{-jN}\|h\|_{H_{-N}^{-N}})(\|\mF_jf\|_{H^{b}_{\omega_2}}+2^{-jN}\|f\|_{H_{-N}^{-N}}),
 \eeno
 where $\omega_1+\omega_2=\ga+2s-1$. From this together with Lemma \ref{lemma1.4}, we conclude that
 \beno
|F_1|\ls C_{N,\ell }\|g\|_{L^1}(\|\mF_jh\|_{H^a_{\omega_1}}+2^{-jN}\|h\|_{H_{-N}^{-N}})(\|\mF_jf\|_{H^{b}_{\omega_2}}+2^{-jN}\|f\|_{H_{-N}^{-N}}).
 \eeno

\underline{\it Estimate of $F_2$.} We have
 \beno
  |F_2|&\ls&\sum_{l\geq k+N_0,k\geq0}\Big(2^{k(\ga+2s-1)}\|\cP_lg\|_{L^1}\|\tF_j\tP_lh\|_{H^a}\|\F_j\tP_lf\|_{H^b}+C_{N,\ell }\sum_{p>j+2N_0}2^{-3kN}2^{-3pN}\\
  &&\times\|\cP_lg\|_{L^1}\|\F_p\tP_lh\|_{L^2}\|\F_j\tP_lf\|_{L^2}
+C_{N,\ell }\sum_{m<j-2N_0}2^{-3kN}2^{-3jN}\|\cP_lg\|_{L^1}\|\F_m\tP_lh\|_{L^2}\|\F_j\tP_lf\|_{L^2}\Big)
 \eeno
By Lemma \ref{lemma1.3} and Lemma \ref{lemma1.4}, we derive that
\beno
&&\sum_{l\geq k+N_0,k\geq0}2^{k(\ga+2s-1)}\|\cP_lg\|_{L^1}\|\tF_j\tP_lh\|_{H^a}\|\F_j\tP_lf\|_{H^b}\ls\sum_{l\geq0} 2^{l(\de+(\ga+2s-1)^+)}\|\cP_lg\|_{L^1}\|\tF_j\tP_lh\|_{H^a}\|\F_j\tP_lf\|_{H^b}\\
&\ls& C_{N,\ell }\|g\|_{L^1_{(\ga+2s-1)^++(-\omega_1)^++(-\omega_2)^++\de}}(\|\mF_jh\|_{H^a_{\omega_1}}+2^{-jN}\|h\|_{H_{-N}^{-N}})(\|\mF_jf\|_{H^{b}_{\omega_2}}+2^{-jN}\|f\|_{H_{-N}^{-N}}).
\eeno
Then we conclude that
\beno
|F_2|\ls C_{N,\ell }\|g\|_{L^1_{(\ga+2s-1)^++(-\omega_1)^++(-\omega_2)^++\de}}(\|\mF_jh\|_{H^a_{\omega_1}}+2^{-jN}\|h\|_{H_{-N}^{-N}})(\|\mF_jf\|_{H^{b}_{\omega_2}}+2^{-jN}\|f\|_{H_{-N}^{-N}}).
\eeno
\underline{\it Estimate of $F_3$.} Similarly, we have
 \beno
 && |F_3| \ls \sum_{|l-k|\leq N_0,k\geq0}\Big(2^{k(\ga+2s-1)}\|\cP_lg\|_{L^1}\|\tF_j\U_{k+N_0}h\|_{H^a}\|\F_j\U_{k+N_0}f\|_{H^b}+C_{N,\ell }\sum_{p>j+2N_0}2^{-3kN}2^{-3pN}\\
  &&\times\|\cP_lg\|_{L^1}\|\F_p\U_{k+N_0}h\|_{L^2}\|\F_j\U_{k+N_0}f\|_{L^2}
+C_{N,\ell }\sum_{m<j-2N_0}2^{-3kN}2^{-3jN}\|\cP_lg\|_{L^1}\|\F_m\U_{k+N_0}h\|_{L^2}\|\F_j\U_{k+N_0}f\|_{L^2}\Big)
 \eeno
 Thanks to $\|\U_{k+N_0}h\|_{H^a}\leq C_{a,w}2^{k(-w)^+}\|h\|_{H^a_w}$(see Lemma \ref{lemma1.3}(iii)). We can obtain that
 \beno
 &&\sum_{|l-k|\leq N_0,k\geq0}2^{k(\ga+2s-1)}\|\cP_lg\|_{L^1}\|\tF_j\U_{k+N_0}h\|_{H^a}\|\F_j\U_{k+N_0}f\|_{H^b}\\
 &\ls& C_{N,\ell}\|g\|_{L^1_{(\ga+2s-1)^++(-\omega_1)^++(-\omega_2)^++\de}}(\|\mF_jh\|_{H^a_{\omega_1}}+2^{-jN}\|h\|_{H_{-N}^{-N}})(\|\mF_jf\|_{H^{b}_{\omega_2}}+2^{-jN}\|f\|_{H_{-N}^{-N}}).
 \eeno
 Then we conclude that
\beno
|F_3|\ls C_{N,\ell}\|g\|_{L^1_{(\ga+2s-1)^++(-\omega_1)^++(-\omega_2)^++\de}}(\|\mF_jh\|_{H^a_{\omega_1}}+2^{-jN}\|h\|_{H_{-N}^{-N}})(\|\mF_jf\|_{H^{b}_{\omega_2}}+2^{-jN}\|f\|_{H_{-N}^{-N}}).
\eeno

 We complete the proof of (\ref{le2.3form2}). We finally remark that
 the other cases can be handled in a similar way. This ends the proof of the lemma.
\end{proof}

\subsection{Estimate of $\mathfrak{D}_2$} Now we give the estimate of  $\mathfrak{D}_2$.
 \begin{lem}\label{le2.4}
 For smooth function $g,h$ and $f$, we have
\beno
  \mathfrak{D}_2\ls C_{N,\ell}\|g\|_{L^1_{(\ga+2s)^++(-\omega_1)^++(-\omega_2)^++\de}}(\|\mF_jh\|_{H^a_{\omega_1}}+2^{-jN}\|h\|_{H_{-N}^{-N}})(\|\mF_jf\|_{H^{b}_{\omega_2}}+2^{-jN}\|f\|_{H_{-N}^{-N}}).
\eeno
where $a,b\geq0$ and $\om_1,\om_2\in\R$ satisfying $a+b=2\ell+2s-1$, $\omega_1+\omega_2=\ga+2s-1,\de\ll1$, $N\in\N$ can be large enough  and $\mF_j$ is defined in Definition \ref{Fj}.
 \end{lem}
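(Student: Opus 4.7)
We adapt the scheme used for $\mathfrak{D}_1$ in Lemma \ref{le2.3}, with the notable simplification that here the commutators are not of the Bobylev type built into $Q_k$ but of the symbol-calculus type between phase-space cutoffs and frequency-localized multipliers. Decompose $\mathfrak{D}_2$ according to its nine constituent trilinear forms. For a typical summand of the first block, say
\[
\langle Q_k(\U_{k-N_0}g,\tP_k h),[\tP_k,\F_j^r\langle D\rangle^{r\ell}]f\rangle,\quad r\in\{1,2\},
\]
Lemma \ref{lemma1.3}(i) provides
\[
\|[\tP_k,\F_j^r\langle D\rangle^{r\ell}]u\|_{L^2}\ls C_{N,\ell}\Big(2^{(r\ell-1)j}2^{-k}\!\!\sum_{1\le|\al|\le 2N}\|\tP_{k,\al}(\F_{j,\al}\text{ or }\hat{\F}_{j,\al})u\|_{L^2}+2^{-(j+k)N}\|u\|_{H^{r\ell-N}_{-N}}\Big),
\]
and similarly with $\U_{k+N_0}$ in place of $\tP_k$. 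Hence each summand acquires a structural $2^{-k}$ gain over a plain $Q_k$-pairing together with a $2^{(r\ell-1)j}$ factor, one power of $j$ lower than the naive $2^{r\ell j}$.

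Bounding the $Q_k$-part by an estimate of the form $|\langle Q_k(g',h'),f'\rangle|\ls 2^{(\gamma+2s)k}\|g'\|_{L^1}\|h'\|_{L^2}\|f'\|_{L^2}$ derived from Lemma \ref{lemma1.7}(ii)--(iii), splitting the $2^{2sp}$-frequency factor as $2^{aj}\cdot 2^{bj}$ with $a+b=2\ell+2s-1$, and using the support relation $\|\tP_{k,\al}u\|_{L^2}\sim 2^{-k\omega}\|\tP_{k,\al}u\|_{L^2_\omega}$ (which holds because $\tP_{k,\al}u$ is localized in $\{|v|\sim 2^k\}$), the main contribution for each term carries total $k$-power $2^{k(\gamma+2s-1-\omega_1-\omega_2)}=2^0$ under the constraint $\omega_1+\omega_2=\gamma+2s-1$. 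Summability in $k$ is then ensured by an extra buffer $2^{-k\delta}$ coming from $\|g\|_{L^1_\delta}$, while the blocks containing $\cP_l g$ use $\|\cP_l g\|_{L^1}\ls 2^{-l(\gamma+2s)^+}\|g\|_{L^1_{(\gamma+2s)^+}}$ to sum over $l\ge k+N_0$ or $|l-k|\le N_0$. Negative values of $\omega_i$ produce an additional $2^{k(-\omega_i)^+}$ penalty that is offset by $\|g\|_{L^1_{(-\omega_i)^+}}$. Together these contributions generate the stated weight $(\gamma+2s)^++(-\omega_1)^++(-\omega_2)^++\delta$ on $g$.

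The $j$-summation of the frequency-localized main pieces is then performed via the Littlewood--Paley equivalence $\sum_j 2^{2ja}\|\F_j u\|_{L^2_\omega}^2\sim\|u\|_{H^a_\omega}^2$ from Lemma \ref{lemma1.4}, after which the finite sums over $|\al|\le 2N$ of the variants $\tP_{k,\al}$, $\F_{j,\al}$, $\hat{\F}_{j,\al}$ collapse into the single majorants $\mF_j$ by the defining property in Definition \ref{Fj}. The residual tail of order $2^{-(j+k)N}\|u\|_{H^{r\ell-N}_{-N}}$ sums trivially in $k$ and yields the $2^{-jN}\|h\|_{H^{-N}_{-N}}$ and $2^{-jN}\|f\|_{H^{-N}_{-N}}$ pieces on the right-hand side, provided $N$ is chosen larger than all of $\ell$, $|\gamma|$, $s$, $|\omega_i|$. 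The second and third blocks are handled identically using the $\U_{k+N_0}$-variant of Lemma \ref{lemma1.3}(i). The principal obstacle is purely organizational: maintaining a consistent ledger of phase-space weights ($\omega_1,\omega_2$), frequency weights ($a,b$), and the exponents on $\langle v\rangle$ and $\langle D\rangle$ across the nine commutator terms and the two possible placements of the commutator (on $h$ versus on $f$); no new analytical input beyond Lemmas \ref{lemma1.3}, \ref{lemma1.4}, and \ref{lemma1.7} is required.
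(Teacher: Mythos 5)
Your overall architecture coincides with the paper's: split $\mathfrak{D}_2$ into the three blocks, treat the cutoff commutators with Lemma \ref{lemma1.3}(i) (this is precisely the paper's estimates \eqref{fpPkfj} and \eqref{fpukfj}, giving the structural $2^{-k}2^{(r\ell-1)j}$ gain plus rapidly decaying tails), bound the surviving $Q_k$-pairings through the dyadic forms $\fM^2,\fM^3$ of Lemma \ref{lemma1.7} (with $\fM^1,\fM^4$ and the off-diagonal pieces of Lemma \ref{lemma1.3}(ii) feeding the $2^{-jN}$ remainders), absorb the $k$-powers into $\langle v\rangle$-weights using the $|v|\sim 2^k$ localization, and reassemble via Lemma \ref{lemma1.4} and Definition \ref{Fj}. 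So the route is the paper's route, and most of the bookkeeping you describe is the right bookkeeping.

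There is, however, one step that fails as stated: the $k$-summation in the first block, where $g$ enters as $\U_{k-N_0}g$. You claim summability in $k$ is ``ensured by an extra buffer $2^{-k\delta}$ coming from $\|g\|_{L^1_\delta}$''. But $\U_{k-N_0}g$ is the low phase-space cutoff, supported in $|v|\lesssim 2^{k}$, so $\|\U_{k-N_0}g\|_{L^1_\delta}\le\|g\|_{L^1_\delta}$ carries no decay in $k$ at all, and you cannot create decay by shifting weight onto $h,f$ without exceeding the constraint $\omega_1+\omega_2=\gamma+2s-1$ that the statement saturates. The correct mechanism, and the one the paper uses, is Cauchy--Schwarz in $k$: write $2^{(\gamma+2s-1)k}=2^{\omega_1k}2^{\omega_2k}$, put each factor on the corresponding $\tP_{k,\al}$-localized piece, and sum the products $\bigl(2^{\omega_1k}\|\tP_{k,\al}\tF_{j,\al}h\|_{L^2}\bigr)\bigl(2^{\omega_2k}\|\tP_{k,\al}\hat{\F}_{j,\al}f\|_{L^2}\bigr)$ using the square-function equivalence $\sum_k2^{2k\omega}\|\tP_{k,\al}u\|_{L^2}^2\sim\|u\|_{L^2_\omega}^2$ of Lemma \ref{lemma1.4}; no weight on $g$ is needed for this block, and the $+\delta$, $(\gamma+2s)^+$ and $(-\omega_i)^+$ weights are only consumed in the blocks where $g$ appears as $\cP_lg$ (summing $2^{k(\gamma+2s-1)}$ over $k\lesssim l$) or where $\|\U_{k+N_0}h\|_{H^a}\le C2^{k(-w)^+}\|h\|_{H^a_w}$ from Lemma \ref{lemma1.3}(iii) is invoked. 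A smaller imprecision: the ``plain'' bound $|\langle Q_k(g',h'),f'\rangle|\lesssim 2^{(\gamma+2s)k}\|g'\|_{L^1}\|h'\|_{L^2}\|f'\|_{L^2}$ is not what Lemma \ref{lemma1.7}(ii)--(iii) give; the factor $2^{2sl}$ (i.e.\ $2s$ derivatives split between $h'$ and $f'$) must be kept and only becomes $2^{2sj}$ because the commutator output is frequency-localized near $2^j$ up to the rapidly decaying pieces, which therefore have to be tracked rather than dropped. With these repairs your ledger closes and reproduces the paper's proof.
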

\begin{proof}
 We first focus on case: $2s>1$. We set $\mathfrak{D}_2=H_1+H_2+H_3$, where
 \beno
  H_1&=&\sum_{k\geq N_0-1}\bigg(\<Q_k(\U_{k-N_0}g,\tP_kh),(\tP_k\F^2_j\<D\>^{2\ell} -\F^2_j\<D\>^{2\ell} \tP_k)f\>+\<Q_k(\U_{k-N_0}g,(\F_j\<D\>^\ell \tP_k-\tP_k\F_j\<D\>^\ell )h,\\ &&\F_j\<D\>^\ell \tP_kf\>
  +\<Q_k(\U_{k-N_0}g,\tP_k\F_j\<D\>^\ell h),(\F_j\<D\>^\ell \tP_k-\tP_k\F_j\<D\>^\ell )f\>\bigg),\\
  H_2&=& \sum_{l\geq k+N_0,k\geq0}\bigg(\<Q_k(\cP_lg,\tP_lh),(\tP_l\F^2_j\<D\>^{2\ell} -\F^2_j\<D\>^{2\ell} \tP_l)f\>+\<Q_k(\cP_lg,(\F_j\<D\>^\ell \tP_l-\tP_l\F_j\<D\>^\ell )h), \\&&\F_j\<D\>^\ell \tP_lf\>
+\<Q_k(\cP_lg,\tP_l\F_j\<D\>^\ell h),(\F_j\<D\>^\ell \tP_l-\tP_l\F_j\<D\>^\ell )f\>\bigg),
 \\H_3&=&\sum_{|l-k|\leq N_0,k\geq0}\bigg(\<Q_k(\cP_lg,\U_{k+N_0}h),(\U_{k+N_0}\F^2_j\<D\>^{2\ell} -\F^2_j\<D\>^{2\ell} \U_{k+N_0})f\>+\<Q_k(\cP_lg,(\F_j\<D\>^\ell \U_{k+N_0}\\
 &&-\U_{k+N_0}\F_j\<D\>^\ell )h,\F_j\<D\>^\ell \U_{k+N_0}f\>
  +\<Q_k(\cP_lg,\U_{k+N_0}\F_j\<D\>^\ell h),(\F_j\<D\>^\ell \U_{k+N_0}-\U_{k+N_0}\F_j\<D\>^\ell )f\>\bigg).
\eeno

\noindent\underline{\it Step 1: Estimate of $H_{1}$.} We first handle
\beno&&
H_{1,1}:=\sum_{k\geq N_0-1,k\geq0}\<Q_k(\U_{k-N_0}g,\tP_kh),(\tP_k\F^2_j\<D\>^{2\ell} -\F^2_j\<D\>^{2\ell} \tP_k)f\>
=\sum_{k\geq N_0-1}\Big(\sum_{l\leq p-N_0}\fM^1_{k,p,l}(\U_{k-N_0}g,\tP_kh,\\&&(\tP_k\F^2_j\<D\>^{2\ell} -\F^2_j\<D\>^{2\ell} \tP_k)f)+\sum_{l\geq-1}\fM^2_{k,l}(\U_{k-N_0}g,\tP_kh,(\tP_k\F^2_j\<D\>^{2\ell} -\F^2_j\<D\>^{2\ell} \tP_k)f)+\sum_{p\geq-1}\fM^3_{k,p}(\U_{k-N_0}g,\\
&&\tP_kh,(\tP_k\F^2_j\<D\>^{2\ell} -\F^2_j\<D\>^{2\ell} \tP_k)f)+\sum_{m<p-N_0}\fM^4_{k,p,m}(\U_{k-N_0}g,\tP_kh,(\tP_k\F^2_j\<D\>^{2\ell} -\F^2_j\<D\>^{2\ell} \tP_k)f)\Big):=\sum_{i=1}^4H_{1,1}^i,
\eeno
where $\fM^1_{k,p,l},\fM^2_{k,l},\fM^3_{k,p}$ and $\fM^4_{k,p,m}$ are defined in Lemma \ref{lemma1.7}.

$\bullet$ For $H_{1,1}^1$ and $H_{1,1}^4$, thanks to Lemma \ref{lemma1.7}, we have
\beno
 &&\big| H_{1,1}^1+H_{1,1}^4\big|
 \ls C_N\sum_{k\geq N_0-1}\Big(\sum_{l\leq p-N_0}2^{k(\ga+\frac{5}{2}-4N)}(2^{-p(4N-2s)}2^{2s(l-p)}+2^{-(4N-\frac{5}{2})p}2^{\frac{3}{2}(l-p)})\|\F_p\U_{k-N_0}g\|_{L^1}\\
&&\qquad\times\|\F_l\tP_kh\|_{L^2}\|\tF_p(\tP_k\F^2_j\<D\>^{2\ell} -\F^2_j\<D\>^{2\ell} \tP_k)f\|_{L^2}+\sum_{m<p-N_0}2^{2s(m-p)}2^{(\ga+\frac{3}{2}-4N)k}2^{-p(4N-\frac{5}{2})}\\
&&\qquad\times\|\F_p\U_{k-N_0}g\|_{L^1}\|\tF_p\tP_kh\|_{L^2}\|\F_m(\tP_k\F^2_j\<D\>^{2\ell} -\F^2_j\<D\>^{2\ell} \tP_k)f\|_{L^2}\Big).\eeno
Due to Lemma \ref{lemma1.3}, we get that
\ben\label{fpPkfj} &&\|\tF_p(\tP_k\F^2_j\<D\>^{2\ell} -\F^2_j\<D\>^{2\ell} \tP_k)f\|_{L^2}\ls\mathrm{1}_{|p-j|\le 3N_0} \|[\tP_k,\F^2_j\<D\>^{2\ell} ]f\|_{L^2}+\mathrm{1}_{|p-j|> 3N_0}\|\tF_p\tP_k\F^2_j\<D\>^{2\ell} f\|_{L^2}\notag\\
&&\ls \mathrm{1}_{|p-j|\le 3N_0}C_{N,\ell}(2^{(2\ell-1)j-k}\sum_{|\alpha|=1}^{2N+2\ell }\|\tP_{k,\alpha}\hat{\F}_{j,\alpha}f\|_{L^2}+2^{-jN-kN}\|f\|_{H_{-N}^{-N}})+ \mathrm{1}_{|p-j|> 3N_0}C_{N,\ell}2^{-(p+k+j)3N} \|\F_jf\|_{L_{-N}^2}\notag\\&&\hspace{1cm}\ls
C_{N,\ell}(\mathrm{1}_{|p-j|\le 3N_0}2^{(2\ell-1)j-k}\sum_{|\alpha|=1}^{2N+2\ell }\|\tP_{k,\alpha}\hat{\F}_{j,\alpha}f\|_{L^2})+\mathrm{1}_{|p-j|\le 3N_0}2^{-jN}\|f\|_{H_{-N}^{-N}}+2^{-2jN}\|f\|_{H_{-N}^{-N}}),\een
which implies that
\beno
&&\big| H_{1,1}^1+H_{1,1}^4\big|
 \ls C_{N,\ell}\|g\|_{L^1}\sum_{k\geq N_0-1}\Big(\sum_{l\leq p-N_0}2^{k(\ga+\frac{5}{2}-4N)}(2^{-p(N-2s)}2^{(2s+N)(l-p)}+2^{-(N-\frac{5}{2})p}2^{(\frac{3}{2}+N)(l-p)}) \\
&&\quad\times(2^{-lN}\|\F_l\tP_kh\|_{L^2})C_{N,\ell}(\mathrm{1}_{|p-j|\le 3N_0}2^{-2jN}\sum_{|\alpha|=1}^{2N+2\ell }\|\tP_{k,\alpha}\hat{\F}_{j,\alpha}f\|_{L^2}+2^{-2jN}\|f\|_{H_{-N}^{-N}})+\sum_{m<p-N_0}2^{2s(m-p)}\\
&&\quad\times2^{(\ga+\frac{3}{2}-4N)k}2^{-p(N-\frac{5}{2})}(2^{-pN}\|\tF_p\tP_kh\|_{L^2})
C_{N,\ell}(\mathrm{1}_{|m-j|\le 3N_0}2^{-2jN}\sum_{|\alpha\|=1}^{2N+2\ell }\|\tP_{k,\alpha},\hat{\F}_{j,\alpha}f\|_{L^2}+2^{-2jN}\|f\|_{H_{-N}^{-N}})\Big)\\&&\ls C_{N,\ell}2^{-2jN}\|g\|_{L^1}\|h\|_{H_{-N}^{-N}}\|f\|_{H_{-N}^{-N}}.
\eeno

 $\bullet$  For $H_{1,1}^2$ and $H_{1,1}^3$,  due to Lemma \ref{lemma1.7} and Lemma \ref{lemma1.3}(iv), we have
\beno
&&\big| H_{1,1}^2+H_{1,1}^3\big|
 \ls C_N\sum_{k\geq N_0-1}\Big(\sum_{l\geq-1} 2^{(\ga+2s)k}2^{2sl}\|S_{l-N_0}\U_{k-N_0}g\|_{L^1}\|\F_l\tP_kh\|_{L^2}\|\tF_l(\tP_k\F^2_j\<D\>^{2\ell} -\F^2_j\<D\>^{2\ell} \tP_k)f\|_{L^2}\\
&&+\sum_{p\geq-1}
2^{(\ga+2s)k}2^{2s p}\|\F_p\U_{k-N_0}g\|_{L^1}\|\tF_p\tP_kh\|_{L^2}\|\tF_p(\tP_k\F^2_j\<D\>^{2\ell} -\F^2_j\<D\>^{2\ell} \tP_k)f\|_{L^2}\Big)\\
&\ls&\sum_{k\geq N_0-1}\sum_{l\geq-1}2^{(\ga+2s)k}\|\U_{k-N_0}g\|_{L^1}\|\F_l\tP_kh\|_{H^s}\|\tF_l(\tP_k\F^2_j\<D\>^{2\ell} -\F^2_j\<D\>^{2\ell} \tP_k)f)\|_{H^{s}}\\
&\ls&\sum_{k\geq N_0-1}\sum_{|l-j|\le 3N_0}2^{(\ga+2s)k}\|\U_{k-N_0}g\|_{L^1}\|\tF_j\tP_kh\|_{L^2}2^{2sj}\|(\tP_k\F^2_j\<D\>^{2\ell} -\F^2_j\<D\>^{2\ell} \tP_k)f)\|_{L^2}\\
&&+\sum_{k\geq N_0-1}\sum_{|l-j|\geq3N_0}2^{(\ga+2s)k}\|\U_{k-N_0}g\|_{L^1}\|\F_l\tP_kh\|_{H^s}2^{sl}\|\tF_l\tP_k\F^2_j\<D\>^{2\ell} f\|_{L^2}.
\eeno
Recalling the definition of  $\hat{\F}_{j,\al}$(see Definition \ref{Fj}),  Lemma \ref{lemma1.3}(i)(ii), the above can be bounded by
\beno
&&C_{N,\ell}\sum_{k\geq N_0-1}2^{(\ga+2s)k}\|\U_{k-N_0}g\|_{L^1}2^{2sj}(\|\tP_k\tF_jh\|_{L^2}+ 2^{-j}2^{-k}\sum_{|\al|=1}^{2N+2\ell }\|\tP_{k,\al}\tF_{j,\al}h\|_{L^2}+2^{-jN}2^{-kN}\|h\|_{H_{-N}^{-N}})(2^{(2n-1)j}2^{-k}\\
&&\times\sum_{|\al|=1}^{2N+2\ell }\|\tP_{k,\al}\hat{\F}_{j,\al}f\|_{L^2}+2^{-jN}2^{-kN}\|f\|_{H_{-N}^{-N}})+C_{N,\ell}\sum_{k\geq N_0-1}\sum_{|l-j|\geq3N_0}2^{(\ga+2s)k}\|\U_{k-N_0}g\|_{L^1}\\
&&\times \|\F_l\tP_kh\|_{L^2}2^{-l(3N+2)+2ls}2^{-k(3N+2)}2^{-j(3N+2)}\|\F^2_j\<D\>^{2\ell} f\|_{L^2}.
\eeno
Thanks to Lemma \ref{lemma1.4}, we finally  derive that
\beno
&&\big| H_{1,1}^2+H_{1,1}^3\big|
 \ls C_{N,\ell}\sum_{k\ge N_0-1}2^{(\ga+2s-1)k}\|g\|_{L^1}2^{(2n+2s-1)j}\big(\|\tP_k\tF_jh\|_{L^2}+ 2^{-j}2^{-k}\sum_{|\al|=1}^{2N+2\ell }\|\tP_{k,\al}\tF_{j,\al}h\|_{L^2}\\
&&+2^{-jN}2^{-kN}\|h\|_{H_{-N}^{-N}}\big) \big(\sum_{|\al|=1}^{2N+2\ell }\|\tP_{k,\al}\hat{\F}_{j,\al}f\|_{L^2}+2^{-jN}2^{-kN}\|f\|_{H_{-N}^{-N}}\big)+C_{N,\ell}2^{-2jN}\|g\|_{L^1}\|h\|_{H_{-N}^{-N}}\|f\|_{H_{-N}^{-N}}\\
&\ls&C_{N,\ell}\|g\|_{L^1}(\sum_{|\al|=0}^{2N+2\ell }\|\tF_{j,\alpha}h\|_{H^a_{\om_1}}+2^{-jN}\|h\|_{H_{-N}^{-N}})(\sum_{|\al|=0}^{2N+2\ell }\|\hat{\F}_{j,\al}f\|_{H^{b}_{\om_2}}+2^{-jN}\|f\|_{H_{-N}^{-N}}).
\eeno
By the definition of $\mF_j$ (see Definition \ref{Fj}), we conclude that
\beno
|H_{1,1}|\ls C_{N,\ell}\|g\|_{L^1}(\|\mF_jh\|_{H^a_{\omega_1}}+2^{-jN}\|h\|_{H_{-N}^{-N}})(\|\mF_jf\|_{H^{b}_{\omega_2}}+2^{-jN}\|f\|_{H_{-N}^{-N}}).
\eeno
 The other two terms in  $H_{1}$ can be treated by the same manner. We conclude the estimate for $H_1$.

\smallskip
\underline{\it Step 2: Estimate of $H_{2}$}. By the definition of $H_2$, we first focus on the term
\beno&&
H_{2,2}:=\sum_{l\geq k+N_0,k\geq0}\<Q_k(\cP_lg,(\F_j\<D\>^\ell \tP_l-\tP_l\F_j\<D\>^\ell )h),\F_j\<D\>^\ell \tP_lf\>\\
&&=\sum_{l\geq k+N_0,k\geq0}\Big(\sum_{a\leq p-N_0}\fM^1_{k,p,a}(\cP_lg,(\F_j\<D\>^\ell \tP_l-\tP_l\F_j\<D\>^\ell )h),\F_j\<D\>^\ell \tP_lf)\\
&&+\sum_{a\geq-1}\fM^2_{k,a}(\cP_lg,(\F_j\<D\>^\ell \tP_l-\tP_l\F_j\<D\>^\ell )h),\F_j\<D\>^\ell \tP_lf)+\sum_{p\geq-1}\fM^3_{k,p}(\cP_lg,(\F_j\<D\>^\ell \tP_l-\tP_l\F_j\<D\>^\ell )h),\\&&
\F_j\<D\>^\ell \tP_lf)+\sum_{m<p-N_0}\fM^4_{k,p,m}(\cP_lg,(\F_j\<D\>^\ell \tP_l-\tP_l\F_j\<D\>^\ell )h),\F_j\<D\>^\ell \tP_lf)\Big) :=\sum_{i=1}^4H_{2,2}^{i}.
\eeno

 $\bullet$ For $H_{2,2}^{1}$ and $H_{2,2}^{4}$,  by   Lemma \ref{lemma1.7} and \eqref{fpPkfj}, we have
\beno &&|H_{2,2}^{1}+H_{2,2}^{4}|\ls \sum_{l\geq k+N_0,k\geq0}\Big(\sum_{a\leq p-N_0}2^{k(\ga+\frac{5}{2}-4N)}(2^{-p(4N-2s)}2^{2s(a-p)}+2^{-(4N-\frac{5}{2})p}2^{\frac{3}{2}(l-p)}) \|\F_p\cP_lg\|_{L^1}\\
&&\times\|\F_a(\F_j\<D\>^\ell \tP_l-\tP_l\F_j\<D\>^\ell )h\|_{L^2}\|\tF_p\F_j\<D\>^\ell \tP_lf\|_{L^2}+\sum_{m<p-N_0}2^{2s(m-p)}2^{(\ga+\frac{3}{2}-4N)k}2^{-p(4N-\frac{5}{2})} \\
&&\times\|\F_p\cP_lg\|_{L^1}\|\tF_p(\F_j\<D\>^\ell \tP_l-\tP_l\F_j\<D\>^\ell )h\|_{L^2}\|\F_m\F_j\<D\>^\ell \tP_lf\|_{L^2}\Big)\ls C_{N,\ell}2^{-2jN}\|g\|_{L^1}\|h\|_{H_{-N}^{-N}}\|f\|_{H_{-N}^{-N}}.
\eeno

$\bullet$ For $H_{2,2}^{2}$ and $H_{2,2}^{3}$,
\beno
  &&|H_{2,2}^{2}+H_{2,2}^{3}|\ls  \sum_{l\geq k+N_0,k\geq0}\Big(\sum_{a\geq-1} 2^{(\ga+2s)k}2^{2sa}\|S_{a-N_0}\cP_l g\|_{L^1}\|\F_a(\F_j\<D\>^\ell \tP_l-\tP_l\F_j\<D\>^\ell )h\|_{L^2}\|\tF_a\F_j\<D\>^\ell \tP_lf\|_{L^2}\\
&&+\sum_{p\geq-1}
2^{(\ga+2s)k}2^{2s p}\|\F_p\cP_l g\|_{L^1}\|\tF_p(\F_j\<D\>^\ell \tP_l-\tP_l\F_j\<D\>^\ell )h\|_{L^2}\|\tF_p\F_j\<D\>^\ell \tP_lf\|_{L^2}\Big)\\
&\ls&\sum_{l\geq N_0}\Big(\sum_{p\geq-1}2^{(\ga+2s)l}2^{2ps}\|\cP_lg\|_{L^1}\|\tF_p(\F_j\<D\>^\ell \tP_l-\tP_l\F_j\<D\>^\ell )h\|_{L^2}\|\tF_p\F_j\<D\>^\ell \tP_lf\|_{L^2}\Big),
\eeno
where we use Lemma \ref{lemma1.3}(iv). Thanks to \eqref{fpPkfj}, Lemma \ref{lemma1.3}(i)  and Lemma \ref{lemma1.4}, it can be bounded by
\beno
C_{N,\ell}\|g\|_{L^1_{(\ga+2s-1)^++(-\omega_1)^++(-\omega_2)^++\de}}(\|\mF_jh\|_{H^a_{\omega_1}}+2^{-jN}\|h\|_{H_{-N}^{-N}})(\|\mF_jf\|_{H^{b}_{\omega_2}}+2^{-jN}\|f\|_{H_{-N}^{-N}}),
\eeno
which yields that
\beno
H_{2,2}\ls C_{N,\ell}\|g\|_{L^1_{(\ga+2s-1)^++(-\omega_1)^++(-\omega_2)^++\de}}(\|\mF_jh\|_{H^a_{\omega_1}}+2^{-jN}\|h\|_{H_{-N}^{-N}})(\|\mF_jf\|_{H^{b}_{\omega_2}}+2^{-jN}\|f\|_{H_{-N}^{-N}}).
\eeno

The other terms in $H_2$ can be treated by the same manner and we skip the details here to conclude the estimate for $H_2$.

\underline{\it Step 3: Estimate of $H_{3}$}. Let us give the estimate for the typical term in $H_3$:
\beno
&&H_{3,3}:=\sum_{|l-k|\leq N_0,k\geq0}\<Q_k(\cP_lg,\U_{k+N_0}\F_j\<D\>^\ell h),(\F_j\<D\>^\ell \U_{k+N_0}-\U_{k+N_0}\F_j\<D\>^\ell )f\>\\
&=&\sum_{|l-k|\leq N_0,k\geq0}\Big(\sum_{a\leq p-N_0}\fM^1_{k,p,a}(\cP_l g,\U_{k+N_0}\F_j\<D\>^\ell h,(\F_j\<D\>^\ell \U_{k+N_0}-\U_{k+N_0}\F_j\<D\>^\ell )f)+\sum_{a\geq-1}\fM^2_{k,a}(\cP_l g,\\
&&\U_{k+N_0}\F_j\<D\>^\ell h,(\F_j\<D\>^\ell \U_{k+N_0}-\U_{k+N_0}\F_j\<D\>^\ell )f)+\sum_{p\geq-1}\fM^3_{k,p}(\cP_l g,\U_{k+N_0}\F_j\<D\>^\ell h,(\F_j\<D\>^\ell \U_{k+N_0}\\
&&-\U_{k+N_0}\F_j\<D\>^\ell )f)+\sum_{m<p-N_0}\fM^4_{k,p,m}(\cP_l g,\U_{k+N_0}\F_j\<D\>^\ell h,(\F_j\<D\>^\ell \U_{k+N_0}-\U_{k+N_0}\F_j\<D\>^\ell )f)\Big):=\sum_{i=1}^4H_{3,3}^i.
\eeno

$\bullet$ For $H_{3,3}^1$ and $H_{3,3}^4$,    by Lemma \ref{lemma1.7}, we have
\beno
&& |H_{3,3}^1+H_{3,3}^4|
\ls C_N\sum_{|l-k|\leq N_0,k\geq0}\Big(\sum_{a\leq p-N_0}2^{k(\ga+\frac{5}{2}-4N)}(2^{-p(4N-2s)}2^{2s(a-p)}+2^{-(4N-\frac{5}{2})p}2^{\frac{3}{2}(l-p)}) \|\F_p\cP_lg\|_{L^1}\\
&&\times\|\F_a\U_{k+N_0}\F_j\<D\>^\ell h\|_{L^2}\|\tF_p(\F_j\<D\>^\ell \U_{k+N_0}-\U_{k+N_0}\F_j\<D\>^\ell )f\|_{L^2}+\sum_{m<p-N_0}2^{2s(m-p)}2^{(\ga+\frac{3}{2}-4N)k}2^{-p(4N-\frac{5}{2})} \\
&&\times \|\F_p\cP_lg\|_{L^1}\|\tF_p\U_{k+N_0}\F_j\<D\>^\ell h\|_{L^2}\|\F_m(\F_j\<D\>^\ell \U_{k+N_0}-\U_{k+N_0}\F_j\<D\>^\ell )f\|_{L^2}\Big).
\eeno

Similar to \eqref{fpPkfj}, we have
\ben\label{fpukfj}&&\hspace{1cm}\|\tF_p(\F_j\<D\>^\ell \U_{k+N_0}-\U_{k+N_0}\F_j\<D\>^\ell )f\|_{L^2}\\
&&\notag\ls
C_{N,\ell}(\mathrm{1}_{|p-j|\le 3N_0}2^{(\ell-1)j}\sum_{|\alpha|=1}^{2N+2\ell}\|\U_{k+N_0,\alpha}\F_{j,\alpha}f\|_{L^2})+\mathrm{1}_{|p-j|\le 3N_0}2^{-jN}\|f\|_{H_{-N}^{-N}}+\mathrm{1}_{|p-j|> 3N_0}2^{-2jN}\|f\|_{H_{-N}^{-N}}),\een
which implies that
  $|H_{3,3}^1+H_{3,3}^4|\ls C_{N,\ell}2^{-2jN}\|g\|_{L^1}\|h\|_{H_{-N}^{-N}}\|f\|_{H_{-N}^{-N}}.$

$\bullet$ For $H_{3,3}^2$ and $H_{3,3}^3$,    by Lemma \ref{lemma1.7}, we first have
\beno
&& |H_{3,3}^2+H_{3,3}^3|
\ls\sum_{|l-k|\leq N_0,k\geq0}\Big(\sum_{a\geq-1} 2^{(\ga+2s)k}2^{2sa}\|S_{a-N_0}\cP_l g\|_{L^1}\|\F_a\U_{k+N_0}\F_j\<D\>^\ell h\|_{L^2}\|\tF_a(\F_j\<D\>^\ell \U_{k+N_0}\\
&&-\U_{k+N_0}\F_j\<D\>^\ell )f\|_{L^2}+\sum_{p\geq-1}
2^{(\ga+2s)k}2^{2s p}\|\F_p\cP_l g\|_{L^1}\|\tF_p\U_{k+N_0}\F_j\<D\>^\ell h\|_{L^2}\|\tF_p(\F_j\<D\>^\ell \U_{k+N_0}-\U_{k+N_0}\F_j\<D\>^\ell )f\|_{L^2}\Big)\\
&\ls&\sum_{|l-k|\leq N_0,k\geq0}\Big(\sum_{p\geq-1}2^{(\ga+2s)k}2^{2sp}\|\cP_lg\|_{L^1}\|\tF_p\U_{k+N_0}\F_j\<D\>^\ell h\|_{L^2}\|\tF_p(\F_j\<D\>^\ell \U_{k+N_0}-\U_{k+N_0}\F_j\<D\>^\ell )f\|_{L^2}\Big).
\eeno

Due to \eqref{fpukfj}, Lemma \ref{lemma1.3}(ii)(iii) and Lemma \ref{lemma1.4}, the above can be bounded by
\beno
&&C_{N,\ell}\sum_{|l-k|\leq N_0,k\geq0}\sum_{|p-j|>3N_0}2^{(\ga+2s)k}\|\cP_lg\|_{L^1} 2^{(s+\ell)p-2pN-2jN}\|\F_jh\|_{L^2}2^{(s+\ell)p-2Np-2Nj}\|\F_jf\|_{L^2}
\\
&&+C_{N,\ell}\sum_{|l-k|\leq N_0,k\geq0}2^{(\ga+2s)k}2^{(2\ell+2s)j}\|\cP_lg\|_{L^1} \|\U_{k+N_0}\F_jh\|_{L^2} \Big(2^{-j}\sum_{|\al|=1}^{2N+2\ell}\|\U_{k+N_0,\al}\F_{j,\alpha}f\|_{L^2}+2^{-jN}\|f\|_{H_{-N}^{-N}}\Big)
\\
&\ls&C_{N,\ell}\sum_{l\geq-1}2^{(\ga+2s)l}2^{-2jN}\|\cP_lg\|_{L^1}\|h\|_{H_{-N}^{-N}}\|f\|_{H_{-N}^{-N}}+C_N\sum_{k\ge-1} 2^{(\gamma+2s)k}\|\cP_k g\|_{L^1}2^{(-\omega_1)^+k}
\\
&&\times \|\F_jh\|_{H_{\omega_1}^a} 2^{(-\omega_2)^+k}\Big(\sum_{|\al|=1}^{2N+2\ell}\|\F_{j,\alpha}f\|_{H^{b}_{\omega_2}}+2^{-jN}\|f\|_{H_{-N}^{-N}}\Big)
\ls C_{N,\ell}\sum_{l\geq-1}2^{-2jN}\|g\|_{L^1_{(\ga+2s)^+}}\|h\|_{H_{-N}^{-N}}\|f\|_{H_{-N}^{-N}}\\
&&+C_N\|g\|_{L_{(\ga+2s)^++(\omega_1)^++(\omega_2)^+}^1}\|\F_jh\|_{H_{\omega_1}^a}
 (\sum_{|\al|=0}^{2N+2\ell}\|\F_{j,\alpha}f\|_{H^{b}_{\omega_2}}+2^{-jN}\|f\|_{H_{-N}^{-N}}),
\eeno
which implies that
\beno
|H_{3,3}|\ls C_{N,\ell}\|g\|_{L^1_{(\ga+2s)^++(\omega_1)^++(\omega_2)^+}}(\|\mF_jh\|_{H^a_{\omega_1}}+2^{-jN}\|h\|_{H_{-N}^{-N}})(\|\mF_jf\|_{H^{b}_{\omega_2}}+2^{-jN}\|f\|_{H_{-N}^{-N}}).
\eeno
It is not difficult to check that the other terms in $H_3$ can be estimated by the same way. We conclude that
 \beno
|H_{3}| &\ls C_{N,\ell}\|g\|_{L^1_{(\ga+2s)^++(\omega_1)^++(\omega_2)^++\de}}(\|\mF_jh\|_{H^a_{\omega_1}}+2^{-jN}\|h\|_{H_{-N}^{-N}})(\|\mF_jf\|_{H^{b}_{\omega_2}}+2^{-jN}\|f\|_{H_{-N}^{-N}}).
\eeno
We complete the proof for case (ii) by patching together all above estimates.
\smallskip

\noindent\underline{\it Step 4: Proof of the other cases.} We may copy the idea in the above to treat the other cases. We skip the details   and conclude the lemma.
\end{proof}

\subsection{Estimate of $\mathfrak{D}_3$} Similar to \eqref{2.2}, we have
\ben\label{D3}
 \mathfrak{D}_3
 &=&\sum_{|p-j|\leq3N_0}\<\F_j\<D\>^\ell Q_{-1}(S_{p+4N_0}g,\F_ph)-Q_{-1}(S_{p+4N_0}g,\F_j\<D\>^\ell \F_ph),\F_j\<D\>^\ell f\>\\ \notag&&+\sum_{p>j+3N_0}\<\F_j\<D\>^\ell Q_{-1}(\tF_{p}g,\F_ph),\F_j\<D\>^\ell f\>
 +\<\F_j\<D\>^\ell Q_{-1}(\tF_jg,S_{j-3N_0}h),\F_j\<D\>^\ell f\>.
\een
The rest is to estimate the righthand side of \eqref{D3} term by term.

\begin{lem}\label{31}
For smooth functions $g,h,f$, it holds that
\begin{itemize}
  \item[(i)] If $p>j+3N_0$,
\ben\label{2.30}\qquad
 &&\|\<\F_j\<D\>^\ell Q_{-1}(\tF_pg,\F_ph),\F_j\<D\>^\ell f\>\|
 \ls2^{-(3-2s)(p-j)}2^{(2\ell +(2s-1/2)^+)j}\|g\|_{L^2}\|\|\F_ph\|_{L^2}\|\F_jf\|_{L^2}
 \een
  \item[(ii)]
\ben\label{2.31}
|\<\F_j\<D\>^\ell Q_{-1}(\tF_jg,S_{j-3N_0}h),\F_j\<D\>^\ell f\>|
\ls 2^{(2\ell +(2s-1/2)^+)j}\|h\|_{L^2}\|\tF_jg\|_{L^2}\|\F_jf\|_{L^2}1_{2s\geq1}.
\een
\item[(iii)] If $|p-j|\leq3N_0$,
\ben\label{2.32}
&&|\<\F_j\<D\>^\ell Q_{-1}(S_{p+4N_0}g,\F_ph)-Q_{-1}(S_{p+4N_0}g,\F_j\<D\>^\ell \F_ph),\F_j\<D\>^\ell f\>|\\
&\ls& \notag2^{(2\ell +(2s-1/2)^+)j}\|g\|_{L^2}\|\F_ph\|_{L^2}\|\F_jf\|_{L^2}1_{s>1/2}+2^{(2\ell +1/2+\de)j}\|g\|_{L^2}\|\F_ph\|_{L^2}\|\F_jf\|_{L^2}1_{s=1/2}.
\een
\end{itemize}
\end{lem}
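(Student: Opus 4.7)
\smallskip
\noindent\textbf{Proof plan for Lemma \ref{31}.}
The overall strategy mirrors the argument used for Lemma \ref{leD1} (treating the commutator $\langle\F_j\langle D\rangle^\ell Q_k(g,h)-Q_k(g,\F_j\langle D\rangle^\ell h),\F_j\langle D\rangle^\ell f\rangle$), but we must replace the bounds from Lemma \ref{lemma1.5}(1) for $\mathcal{F}(\Phi_k^\gamma)$ with $k\ge 0$ by the bounds from Lemma \ref{lemma1.5}(2) for the singular piece $\Phi_{-1}^\gamma(v)=|v|^\gamma\psi(|v|)$, namely $|\nabla^i\mathcal{F}(\Phi_{-1}^\gamma)(\eta)|\lesssim\langle\eta\rangle^{-(\gamma+3+i)}$. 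All three items will be derived from Bobylev's identity (\ref{bobylev}) applied to $Q_{-1}$, writing the integrand as $b(\frac{\xi}{|\xi|}\cdot\sigma)[\mathcal{F}(\Phi_{-1}^\gamma)(\eta-\xi^-)-\mathcal{F}(\Phi_{-1}^\gamma)(\eta)](\mathcal{F}A)(\eta)(\mathcal{F}B)(\xi-\eta)\overline{(\mathcal{F}C)}(\xi)$ and choosing $(A,B,C)$ according to the item.

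For (i), the frequency separation $p>j+3N_0$ forces $|\xi-\eta|\sim 2^p$ and $|\xi|\sim 2^j$, hence $|\eta|\sim 2^p$. I will split the $\sigma$-integral at $2|\xi^-|\le \langle\eta\rangle$ vs.\ $2|\xi^-|>\langle\eta\rangle$ exactly as in Step 2 of Lemma \ref{leD1}, but now the factor $\mathcal{F}(\Phi_{-1}^\gamma)$ contributes $\langle\eta\rangle^{-(\gamma+3)}$ (or $\langle\eta-\xi^-\rangle^{-(\gamma+3)}$) rather than $2^{k(\gamma+3)}\langle 2^k\eta\rangle^{-N}$. After performing Cauchy--Schwarz in $(\sigma,\eta,\xi)$ and the pre-post change of variables, the $\eta$-integral yields a power $\langle\eta\rangle^{-(\gamma+3)+\text{corrections}}$ which, combined with $\langle\eta\rangle\sim 2^p$ and the $|\xi|^{2s-1}$ piece from the angular integration on $|\xi|\sim 2^j$, produces the claimed factor $2^{-(3-2s)(p-j)}2^{(2\ell+(2s-1/2)^+)j}$. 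The $(2s-1/2)^+$ in the $j$-exponent comes from the $2s\ge 1$ borderline: when $2s\ge 1$ the angular integral on the region $2|\xi^-|>\langle\eta\rangle$ loses $|\xi|^{2s-1/2}$ rather than $|\xi|^{2s-1}$ (the extra $1/2$ arising from the Cauchy--Schwarz bookkeeping with the $\sin\frac\theta2$ factor, exactly as $\A_{2,3}$ in Lemma \ref{leD1}).

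For (ii), the support condition $|\xi|,|\xi-\eta|\lesssim 2^j$ on $\F_j f$ and $S_{j-3N_0} h$ and $|\eta|\sim 2^j$ on $\tF_j g$ places us genuinely on the $j$-th annulus (so there is no small cross-parameter to exploit). I will drop the kernel via $|\mathcal{F}(\Phi_{-1}^\gamma)(\eta-\xi^-)-\mathcal{F}(\Phi_{-1}^\gamma)(\eta)|\lesssim \langle\eta\rangle^{-(\gamma+3)}+\langle\eta-\xi^-\rangle^{-(\gamma+3)}$ on the large-$|\xi^-|$ region and, on the small-$|\xi^-|$ region, by $|\nabla\mathcal{F}(\Phi_{-1}^\gamma)(\eta-t\xi^-)||\xi^-|\lesssim \langle\eta\rangle^{-(\gamma+4)}|\xi^-|$. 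Since $\gamma+3>0$ these $\eta$-factors are harmless after integrating against $(\mathcal{F}\tF_j g)$; the $2^{(2\ell+(2s-1/2)^+)j}$ factor arises once again from the angular Cauchy--Schwarz in the large-$|\xi^-|$ region in exactly the same way as in Step 2 of Lemma \ref{leD1}, and the restriction $2s\ge 1$ appears because for $2s<1$ this term is absorbed into the other two estimates.

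For (iii), the frequencies of $g$ are $\lesssim 2^j$ while $h$ and $f$ are localized at $2^j$, so a genuine commutator difference $\langle\xi\rangle^\ell\varphi(2^{-j}\xi)-\langle\xi-\eta\rangle^\ell\varphi(2^{-j}(\xi-\eta))$ appears, bounded by $2^{(\ell-1)j}|\eta|$ just as in the proof of Lemma \ref{leD1}. I will again split in $|\xi^-|$. On the small-$|\xi^-|$ region, a first-order Taylor expansion of $\mathcal{F}(\Phi_{-1}^\gamma)$ together with the symmetric cancellation $\int b(\frac{\xi}{|\xi|}\cdot\sigma)\xi^-d\sigma=\int b(\frac{\xi}{|\xi|}\cdot\sigma)|\xi|\sin^2(\theta/2)\frac{\xi}{|\xi|}d\sigma$ yields an integrable angular loss, and the gradient bound $|\nabla\mathcal{F}(\Phi_{-1}^\gamma)(\eta)|\lesssim\langle\eta\rangle^{-(\gamma+4)}$ ensures the $\eta$-integral is finite (this is where $\gamma<2$ is used crucially via $\gamma+3>1$). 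On the large-$|\xi^-|$ region, the argument is identical to $\A_{2,1}$--$\A_{2,3}$ of Lemma \ref{leD1}. The main obstacle is the case $2s=1/2$: the angular integral $\int b(\cos\theta)\sin^{1/2}(\theta/2)\,d\sigma$ becomes logarithmically divergent and produces the extra $2^{\delta j}$ loss displayed in (\ref{2.32}); I will absorb this via the standard trick $\log\lesssim 2^{\delta j}$ for any $\delta>0$.

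The chief technical point throughout is bookkeeping: all three estimates must be expressed purely in terms of $\|\F_p h\|_{L^2}$, $\|\tF_j g\|_{L^2}$, $\|\F_j f\|_{L^2}$ (no extra weighted-moment factors like the $2^{k(\gamma+2s-1)}$ of Lemma \ref{leD1}), because the $k=-1$ piece carries no $k$-summation, and the cancellations forced by Bobylev's formula must be handled without losing track of the $(2s-1/2)^+$ threshold that distinguishes the three subcases of $2s<1$, $2s=1/2$, and $2s\ge 1$.
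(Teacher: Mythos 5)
Your overall route is the paper's: Bobylev's identity for $Q_{-1}$, the kernel bounds $|\nabla^i\mathcal{F}(\Phi_{-1}^\gamma)(\eta)|\lesssim\langle\eta\rangle^{-(\gamma+3+i)}$ from Lemma \ref{lemma1.5}(2), a split in $|\xi^-|$ with second-order Taylor expansion and the symmetrization of $\xi^-$ on the small region, angular Cauchy--Schwarz with the pre--post change of variables on the large region, and the bound $|\langle\xi\rangle^\ell\varphi(2^{-j}\xi)-\langle\xi-\eta\rangle^\ell\varphi(2^{-j}(\xi-\eta))|\lesssim 2^{(\ell-1)j}|\eta|$ in (iii). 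But two points of your exponent bookkeeping are off. In (i), the region $2|\xi^-|>\langle\eta\rangle$ that you invoke ("exactly as $\A_{2,3}$") is empty: the localizations of $\F_p h$ and $\F_j f$ force $|\xi-\eta|\sim 2^p$ and $|\xi|\sim 2^j\ll 2^p$, hence $\langle\eta\rangle\sim\langle\eta-t\xi^-\rangle\sim 2^p$ while $|\xi^-|\le|\xi|\lesssim 2^j$, so the whole of (\ref{2.30}) is a Taylor-region estimate. There the first- and second-order terms give $\langle\eta\rangle^{-(\gamma+4)}|\xi|+\langle\eta\rangle^{-(\gamma+5)}|\xi|^2$ after symmetrization, crude Cauchy--Schwarz in $\eta$ and over $|\xi|\sim 2^j$ gives $2^{-(\gamma+4)p}2^{(2\ell+5/2)j}$, and one concludes by writing $2^{-(\gamma+4)p}2^{(5/2)j}=2^{-(\gamma+2s+1)p}\,2^{-(3-2s)(p-j)}2^{(2s-1/2)j}$ and using $\gamma+2s+1>0$; the $(2s-1/2)^+$ is pure bookkeeping, not an angular loss, and your accounting with $\langle\eta\rangle^{-(\gamma+3)}$ times an "$|\xi|^{2s-1}$ piece" does not close when $\gamma+2s<0$.

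The genuine gap is in (iii): you locate the logarithmic borderline at "$2s=1/2$" via $\int b(\cos\theta)\sin^{1/2}(\theta/2)\,d\sigma$, but the loss $2^{(2\ell+1/2+\delta)j}$ in (\ref{2.32}) occurs at $s=1/2$, i.e.\ $2s=1$, exactly as the indicator $1_{s=1/2}$ in the statement records. The problematic term is the one carrying $|\xi^-|\,\mathcal{F}(\Phi_{-1}^\gamma)(\eta-\xi^-)$ (the analogue of $\A_{2,3}$, called $R_{2,3}$ in the paper): after Cauchy--Schwarz the angular factor left on each side is $\int b(\cos\theta)\sin(\theta/2)\mathrm{1}_{\sin(\theta/2)\gtrsim\langle\eta\rangle/|\xi|}\,d\sigma$, which behaves like $(\langle\eta\rangle/|\xi|)^{1-2s}$ for $2s>1$ and diverges logarithmically precisely at $2s=1$; the fix is to distribute $\sin^{1-2\delta}(\theta/2)$ and $\sin^{1+2\delta}(\theta/2)$ between the two Cauchy--Schwarz factors, which is what produces the $2^{\delta j}$ loss. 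As written, your plan inserts an unnecessary modification at $2s=1/2$ and leaves the actual borderline $2s=1$ untreated, so it does not yield (\ref{2.32}) in the case $s=1/2$; once the $\delta$-trick is moved to $2s=1$ (and the $2s<1$ cases are noted to need only a first-order Taylor expansion), the argument matches the paper's.
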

\begin{proof}  We first give the detailed proof for the case $2s\geq1$ and $j\geq0$. The case $2s\geq1$ and $j=-1$ can be handled   similarly.

 \underline{\it Step 1: Proof of $(i)$.} Denote $T:=|\<\F_j\<D\>^\ell Q_{-1}(\tF_pg,\F_ph),\F_j\<D\>^\ell f\>|$. Then by  (\ref{bobylev}), we have
\beno
T
&=&\Big|\int_{\si\in \mathbb{S}^2,\eta,\xi\in \R^3}b(\frac{\xi}{|\xi|}\cdot\si)[\cF(\Phi_{-1}^\ga)(\eta-\xi^-)-\cF(\Phi_{-1}^\ga)(\eta)](\cF \tF_pg)(\eta)(\cF\F_p h)(\xi-\eta)\<\xi\>^{2\ell} \vphi(2^{-j}\xi)\overline{(\cF f)}(\xi)\\
&&\times\vphi(2^{-j}\xi)d\si d\eta d\xi\Big|
  \ls\Big|\int_{\si\in \mathbb{S}^2,\eta,\xi\in \R^3}b(\frac{\xi}{|\xi|}\cdot\si)\xi^-\cdot\nabla\cF(\Phi^\ga_{-1})(\eta)(\cF \tF_pg)(\eta)(\cF\F_p h)(\xi-\eta)\<\xi\>^{2\ell} \vphi(2^{-j}\xi)\overline{(\cF f)}(\xi)\\
  &&\times\vphi(2^{-j}\xi)d\si d\eta d\xi\Big|+\Big|\int_0^1\int_{\si\in \mathbb{S}^2,\eta,\xi\in \R^3}b(\frac{\xi}{|\xi|}\cdot\si)\nabla^2\cF(\Phi^{\ga}_{-1})(\eta-t\xi^-):\xi^-\otimes\xi^-|(\cF \tF_pg)(\eta)(\cF\F_p h)(\xi-\eta)\\
  &&\times\vphi(2^{-j}\xi)\overline{(\cF f)}(\xi)\<\xi\>^{2\ell} \vphi(2^{-j}\xi)d\si d\eta d\xi dt\Big|.
\eeno
Since  $p>j+3N_0$, we have $\<\eta\>\sim\<\xi-\eta\>\sim\<\eta-t\xi^-\>\sim 2^p$ for $t\in[0,1]$. Thanks to  \eqref{PhiK2}, we derive that
 \beno
T&\ls&2^{2j\ell }\int (\<\eta\>^{-(\ga+4)}|\xi|+\<\eta\>^{-(\ga+5)}|\xi|^2)|(\cF \tF_pg)(\eta)(\cF\F_p h)(\xi-\eta)\vphi(2^{-j}\xi)(\cF f)(\xi)|d\xi d\eta\\
 &\ls&2^{2j\ell }2^{-(\ga+4)p}\int_{|\xi|\sim 2^j}|\xi||\vphi(2^{-j}\xi)(\cF f)(\xi)|\int_{\eta\in\R^3}|(\cF \tF_pg)(\eta)(\cF\F_p h)(\xi-\eta)|d\eta d\xi.
 \eeno
 By Cauchy-Schwartz inequality, we observe that
  \beno
 T&\ls&2^{-(\ga+4)p}2^{(\frac{5}{2}+2\ell )j}\|\tF_pg\|_{L^2}\|\F_ph\|_{L^2}\|\F_jf\|_{L^2}\ls2^{-(\ga+2s+1)p}2^{-(3-2s)(p-j)}2^{(2\ell+2s-1/2)j}\|\tF_pg\|_{L^2}\|\F_ph\|_{L^2}\|\F_jf\|_{L^2}.
 \eeno

\underline{\it Step 2: Proof of (iii).} Set $R:=|\<\F_j\<D\>^\ell Q_{-1}(S_{p+4N_0}g,\F_ph)-Q_{-1}(S_{p+4N_0}g,\F_j\<D\>^\ell \F_ph),\F_jf\>|$. By \eqref{bobylev}, one has
\beno
R&=&|\int_{\si\in \mathbb{S}^2,\eta,\xi\in \R^3}b(\frac{\xi}{|\xi|}\cdot\si)[\cF(\Phi_{-1}^\ga)(\eta-\xi^-)-\cF(\Phi_{-1}^\ga)(\eta)](\cF S_{p+4N_0} g)(\eta)(\cF \F_ph)(\xi-\eta)\\
  \notag &&\times\<\xi\>^\ell \vphi(2^{-j}\xi)\overline{(\cF f)}(\xi)(\<\xi\>^\ell \vphi(2^{-j}\xi)-\<\xi-\eta\>^\ell \vphi(2^{-j}(\xi-\eta))d\si d\eta d\xi|.
\eeno
Similar to the proof of Lemma \ref{leD1}, we split   $R$  into two parts: $R_{1}$ and $R_{2}$, which correspond to the integration domain of $R$: $2|\xi^-|\leq \<\eta\>$ and $2|\xi^-|>\<\eta\>$ respectively. The proof falls in several steps.
\smallskip

\noindent\underline{\it Step 2.1: Estimate of $R_1$.} In the region $2|\xi^-|\leq \<\eta\>$, we have $\sin(\th/2)\leq\<\eta\>/|\xi|$ and $\<\eta-t\xi^-\>\sim\<\eta\>$ for $t\in[0,1]$. By Taylor expansion and Lemma \ref{lemma1.5}((\ref{PhiK2})), by $|\<\xi\>^\ell \vphi(2^{-j}\xi)-\<\xi-\eta\>^\ell \vphi(2^{-j}(\xi-\eta))|\ls 2^{(\ell -1)j}|\eta|$, we have
\beno
|R_{1}|&\leq&\bigg|\int_{2|\xi^-|\leq \<\eta\>}b(\frac{\xi}{|\xi|}\cdot\si)(\na \cF(\Phi^\ga_{-1}))(\eta)\cdot\xi^-(\cF S_{p+4N_0}g)(\eta)(\cF\F_ph)(\xi-\eta)\<\xi\>^\ell \vphi(2^{-j}\xi)\overline{(\cF f)}(\xi)(\<\xi\>^\ell \vphi(2^{-j}\xi)\\
&&-\<\xi-\eta\>^\ell \vphi(2^{-j}(\xi-\eta))d\si d\eta d\xi\bigg|+\bigg|\int_0^1\int_{{2|\xi^-|\leq \<\eta\>}}b(\frac{\xi}{|\xi|}\cdot\si)(\na^2\cF(\Phi^\ga_{-1}))(\eta-t\xi^-):\xi^-\otimes\xi^-)\\
&&\times(\cF S_{p+4N_0}g)(\eta)(\cF\F_ph)(\xi-\eta)\<\xi\>^\ell \vphi(2^{-j}\xi)\overline{(\cF f)}(\xi)(\<\xi\>^\ell \vphi(2^{-j}\xi)-\<\xi-\eta\>^\ell \vphi(2^{-j}(\xi-\eta))d\si d\eta d\xi dt\bigg|\\
&\ls& R_{1,1}+R_{1,2},
\eeno
where 
\beno
R_{1,1}&=& 2^{(2\ell -1)j}\int_{}|\eta|\<\eta\>^{-(\ga+4)}|\xi|\min\{1,(\<\eta\>/|\xi|)^{2-2s}\}|(\cF S_{p+4N_0}g)(\eta)(\cF\F_ph)(\xi-\eta)\vphi(2^{-j}\xi)\overline{(\cF f)}(\xi)|d\eta d\xi,\\
R_{1,2}&=& 2^{(2\ell -1)j}\int_{}|\eta|\<\eta\>^{-(\ga+5)}|\xi|^2(\<\eta\>/|\xi|)^{2-2s}|(\cF S_{p+4N_0}g)(\eta)(\cF\F_ph)(\xi-\eta)\vphi(2^{-j}\xi)\overline{(\cF f)}(\xi)|d\eta d\xi.
\eeno

\noindent$\bullet$ \underline{Estimate of $R_{1,1}$.}  We split the integration domain of $R_{1,1}$ into two parts: $\<\eta\>\geq |\xi|$ and $\<\eta\>< |\xi|$, which are due to $\min\{1,(\<\eta\>/|\xi|)^{2-2s}\}$. Recalling that $\<\eta\>\ls 2^j,\<\xi\>\sim 2^j$ and $p\sim j$, we have
\beno
R_{1,1}
&\ls&2^{(2\ell -1)j}\int_{\<\eta\>\geq|\xi|}\<\eta\>^{-(\ga+3)}|\xi||(\cF S_{p+4N_0}g)(\eta)(\cF\F_ph)(\xi-\eta)\vphi(2^{-j}\xi)\overline{(\cF f)}(\xi)|d\eta d\xi\\
&&+2^{(2\ell -1)j}\int_{{\<\eta\><|\xi|}}\<\eta\>^{-(\ga+2s+1)}|\xi|^{2s-1}|(\cF S_{p+4N_0}g)(\eta)(\cF\F_ph)(\xi-\eta)\vphi(2^{-j}\xi)\overline{(\cF f)}(\xi)|d\eta d\xi\\&\ls&2^{-(\ga+2s+1)j}2^{(2\ell +2s-1/2)j}\|S_{p+4N_0}g\|_{L^2}\|\F_ph\|_{L^2}\|\F_jf\|_{L^2}.
\eeno

\noindent$\bullet$ \underline{Estimate of $R_{1,2}$.} We have
\beno
R_{1,2}
&\ls&2^{(2\ell -1)j}\int\,\<\eta\>^{-(\ga+2s+2)}|\xi|^{2s}|(\cF S_{p+4N_0}g)(\eta)(\cF\F_ph)(\xi-\eta)\vphi(2^{-j}\xi)\overline{(\cF f)}(\xi)|d\eta d\xi\\
&\ls&2^{-(\ga+2s+1)j}2^{(2\ell +2s-1/2)j}\|S_{p+4N_0}g\|_{L^2}\|\F_ph\|_{L^2}\|\F_jf\|_{L^2}.
\eeno

Patching together the estimates of $R_{1,1}$ and $R_{1,2}$, we have
\beno
R_{1}\ls2^{-(\ga+2s+1)j}2^{(2\ell+2s-1/2)j}\|S_{p+4N_0}g\|_{L^2}\|\F_ph\|_{L^2}\|\F_jf\|_{L^2}.
\eeno

\noindent\underline{\it Step 2.2: Estimate of $R_2$.} We first note that
\beno
|R_{2}|
&\ls&2^{(2\ell -1)j}\int_{2|\xi^-|\geq\<\eta\>}|\eta|b(\frac{\xi}{|\xi|}\cdot\si)|\cF(\Phi^\ga_{-1}(\eta)||(\cF S_{p+4N_0}g)(\eta)(\cF\F_ph)(\xi-\eta)\vphi(2^{-j}\xi)\overline{(\cF f)}(\xi)|d\si d\eta d\xi\\
&+&2^{(2\ell -1)j}\int_{2|\xi^-|\geq\<\eta\>}|\eta-\xi^-|b(\frac{\xi}{|\xi|}\cdot\si)|\cF(\Phi^\ga_{-1})(\eta-\xi^-)|(\cF S_{p+4N_0}g)(\eta)(\cF\F_ph)(\xi-\eta)\vphi(2^{-j}\xi)\overline{(\cF f)}(\xi)|d\si d\eta d\xi\\
&+&2^{(2\ell -1)j}\int_{2|\xi^-|\geq\<\eta\>}|\xi^-|b(\frac{\xi}{|\xi|}\cdot\si)|\cF(\Phi^\ga_{-1})(\eta-\xi^-)|(\cF S_{p+4N_0}g)(\eta)(\cF\F_ph)(\xi-\eta)\vphi(2^{-j}\xi)\overline{(\cF f)}(\xi)|d\si d\eta d\xi\\
&:=& R_{2,1}+R_{2,2}+R_{2,3}.
\eeno
Since  $2|\xi^-|>\<\eta\>$, in what follows, we will frequently use the facts that \ben\label{thetaR} \sin(\th/2)\gs\<\eta-\xi^-\>/(3|\xi|), \quad \sin(\th/2)\geq\<\eta\>/(2|\xi|). \een
\noindent$\bullet$ \underline{Estimate of $R_{2,1}$.} We have
\beno
R_{2,1}
&\ls&2^{(2\ell -1)j}\int\,\<\eta\>^{-(\ga+2s+2)}|\xi|^{2s}|(\cF S_{p+4N_0}g)(\eta)(\cF\F_ph)(\xi-\eta)\vphi(2^{-j}\xi)\overline{(\cF f)}(\xi)|d\eta d\xi.
\eeno
Similar to $R_{1,2}$, we easily have
$R_{2,1}
 \ls 2^{-(\ga+2s+1)j}2^{(2\ell +2s-1/2)j}\|S_{p+4N_0}g\|_{L^2}\|\F_ph\|_{L^2}\|\F_jf\|_{L^2}.$

\noindent$\bullet$ \underline{Estimate of $R_{2,2}$.}
We may split the integration domain of $R_{2,2}$ into two parts:  $|\eta-\xi^-|\geq\<\eta\>$ and $|\eta-\xi^-|<\<\eta\>$. In the region $|\eta-\xi^-|\geq\<\eta\>$, thanks to Lemma \ref{lemma1.5}, one may get $|\eta-\xi^-||\cF(\Phi^\ga_{-1})(\eta-\xi^-)|\ls \<\eta\>^{-(\ga+2)}$. While in the region $|\eta-\xi^-|\leq\<\eta\>$, we use the change of variables: $\eta-\xi^-\rightarrow \tilde{\eta}$(see the estimate of $\A_{2,1}$ in Lemma \ref{leD1}) and \eqref{thetaR}. Then  we have
\beno
R_{2,2}
&\ls&2^{(2\ell -1)j}\int_{\eta,\xi}\<\eta\>^{-(\ga+2s+2)}|\xi|^{2s}|(\cF S_{p+4N_0}g)(\eta)(\cF\F_ph)(\xi-\eta)\vphi(2^{-j}\xi)\overline{(\cF f)}(\xi)|d\eta d\xi\\&&+2^{(2\ell -1)j}\left(\int_{\sin(\theta/2)\ge\<\eta\>/(2|\xi|)}b(\frac{\xi}{|\xi|}\cdot\si)|\cF S_{p+4N_0}g(\eta)|^2|\cF\F_jf(\xi)|^2\<\xi\>^{2s}\<\eta\>^{2s}d\si d\eta d\xi\right)^{1/2}\\
&&\times\left(\int_{\sin(\theta/2)\gs\<\tilde{\eta}\>/(3|\xi|)}b(\frac{\xi^+}{|\xi^+|}\cdot\si)|\tilde{\eta}|^2|\cF\Phi^\ga_{-1}(\tilde{\eta})|^2\<\xi^+\>^{-2s}\<\tilde{\eta}\>^{-2s}|\F_ph(\xi^+-\tilde{\eta})|^2d\si d\tilde{\eta} d\xi^+\right)^{1/2}.\eeno
From this, we get that
$R_{2,2}\ls2^{(2\ell +2s-1/2)j}\|S_{p+4N_0}g\|_{L^2}\|\F_ph\|_{L^2}\|\F_jf\|_{L^2}.$

\noindent$\bullet$ \underline{Estimate of $R_{2,3}$.}
Similar to $R_{1,2}$, we shall split the integration domain of $R_{2,3}$ into two parts: $|\eta-\xi^-|\geq4\<\eta\>$ and $|\eta-\xi^-|<4\<\eta\>$.  In the region $|\eta-\xi^-|\ge 4\<\eta\>$,  one get that $|\xi^-||\cF(\Phi^\ga_{-1})(\eta-\xi^-)|\ls \<\eta\>^{-(\ga+2)}$.   \\
$\bullet$  If $2s>1$, by Cauchy-Schwartz inequality, we derive that
\beno
R_{2,3}
&\ls&2^{(2\ell -1)j}   \int\<\eta\>^{-(\ga+2s+2)}|\xi|^{2s}|(\cF S_{p+4N_0}g)(\eta)(\cF\F_ph)(\xi-\eta)\vphi(2^{-j}\xi)\overline{(\cF f)}(\xi)|d\eta d\xi
\\
&&+2^{(2\ell -1)j} \left(\int_{\sin(\theta/2)\ge\<\eta\>/(2|\xi|)}  |\xi|^2 \sin(\theta/2)b(\frac{\xi}{|\xi|}\cdot\si)|\cF S_{p+4N_0}g(\eta)|^2|\cF\F_jf(\xi)|^2\<\xi\>^{2s-1}\<\eta\>^{2s-1}d\si d\eta d\xi\right)^{1/2}\\
&&\times\left(\int_{\sin(\theta/2)\gs\<\tilde{\eta}\>/(3|\xi|)}\sin(\theta /2)b(\frac{\xi^+}{|\xi^+|}\cdot\si)|\cF\Phi^\ga_{-1}(\tilde{\eta})|^2\<\xi^+\>^{1-2s}\<\tilde{\eta}\>^{1-2s}|\F_ph(\xi^+-\tilde{\eta})|^2d\si d\tilde{\eta} d\xi^+\right)^{1/2}\\
&\ls&2^{(2\ell +2s-1/2)j}\|S_{p+4N_0}g\|_{L^2}\|\F_ph\|_{L^2}\|\F_jf\|_{L^2}.\eeno
$\bullet$ If $2s=1$, for any $0<\delta\ll 1$,   we have
\beno
R_{2,3}&\ls&2^{(2\ell -1)j}\int\<\eta\>^{-(\ga+2s+2)}|\xi|^{2s}|(\cF S_{p+4N_0}g)(\eta)(\cF\F_ph)(\xi-\eta)\vphi(2^{-j}\xi)\overline{(\cF f)}(\xi)|d\eta d\xi\\
&&+\left(\int_{\sin(\theta/2)\ge\<\eta\>/(2|\xi|)}\sin^{1-2\de}(\th/2)b(\frac{\xi}{|\xi|}\cdot\si)|\cF S_{p+4N_0}g(\eta)|^2|\cF\F_jf(\xi)|^2d\si d\eta d\xi\right)^{1/2}\\
&&\times\left(\int_{\sin(\theta/2)\gs\<\tilde{\eta}\>/(3|\xi|)}\sin^{1+2\de}(\th/2)b(\frac{\xi^+}{|\xi^+|}\cdot\si)|\cF\Phi^\ga_{-1}(\tilde{\eta})|^2|\F_ph(\xi^+-\tilde{\eta})|^2d\si d\tilde{\eta} d\xi^+\right)^{1/2}\\
&\ls&2^{(2\ell +1/2+\de)j}\|S_{p+4N_0}g\|_{L^2}\|\F_ph\|_{L^2}\|\F_jf\|_{L^2}..
\eeno
We derive  that
 \beno R_{2,3}\ls2^{(2\ell +2s-1/2)j}\|g\|_{L^2}\|\F_ph\|_{L^2}\|\F_jf\|_{L^2}1_{s>1/2}+2^{(2\ell +1/2+\de)j}\|g\|_{L^2}\|\F_ph\|_{L^2}\|\F_jf\|_{L^2}1_{s=1/2}\},\eeno
from which together with the estimates of $R_{2,1}$ and  $R_{2,2}$, we get that
\beno
R_{2}\ls2^{(2\ell+2s-1/2)j}\|g\|_{L^2}\|\F_ph\|_{L^2}\|\F_jf\|_{L^2}1_{s>1/2}+2^{(2\ell +1/2+\de)j}\|g\|_{L^2}\|\F_ph\|_{L^2}\|\F_jf\|_{L^2}1_{s=1/2}\}.
\eeno

Finally patching together all the estimates, we conclude that
\beno
R\ls2^{(2\ell +2s-1/2)j}\|g\|_{L^2}\|\F_ph\|_{L^2}\|\F_jf\|_{L^2}1_{s>1/2}+2^{(2\ell +1/2+\de)j}\|g\|_{L^2}\|\F_ph\|_{L^2}\|\F_jf\|_{L^2}1_{s=1/2}\}.
\eeno
It ends the proof of (\ref{2.32}).
\smallskip

\underline{\it Step 3: Proof of $(ii)$.} We denote $P:=|\<\F_j\<D\>^\ell Q_{-1}(\tF_jg,S_{j-3N_0}h),\F_j\<D\>^\ell f\>|$. We have
\beno
P&=& \Big|\int_{\si\in \mathbb{S}^2,\eta,\xi\in \R^3}b(\frac{\xi}{|\xi|}\cdot\si)[\cF(\Phi_{-1}^\ga)(\eta-\xi^-)-\cF(\Phi_{-1}^\ga)(\eta)](\cF \tF_jg)(\eta)(\cF S_{j-3N_0} h)(\xi-\eta)\\
  \notag &&\times\<\xi\>^{2\ell} \vphi(2^{-j}\xi)\overline{(\cF f)}(\xi)\vphi(2^{-j}\xi)d\si d\eta d\xi\Big|.
\eeno
Notice that $|\xi|\sim 2^j,|\eta|\sim 2^j$ and $|\eta-\xi|\ls 2^{j-3N_0}$, then $|\eta-\xi^-|=|\eta-\xi+\xi^+|\sim 2^j$. We  split the integration domain of $P$ into two parts: $2|\xi^-|\leq \<\eta\>$ and $2|\xi^-|>\<\eta\>$ and denote them by $P_{1}$ and $P_{2}$ respectively.
\smallskip

\noindent$\bullet$ \underline{Estimate of $P_{1}$.} We may copy the argument for $R_{1}$ to $P_{1}$ to get that
\beno
|P_{1}|&\ls&2^{-(\ga+2s+1)j}2^{(2\ell+2s-1/2)j}\|\tF_jg\|_{L^2}\|S_{j-3N_0}h\|_{L^2}\|\F_jf\|_{L^2}.
\eeno

\noindent$\bullet$ \underline{Estimate of $P_{2}$.} We have
\beno
|P_{2}|
&\ls&2^{2\ell j}\int_{2|\xi^-|\geq\<\eta\>}b(\frac{\xi}{|\xi|}\cdot\si)|\cF(\Phi^\ga_{-1}(\eta)||(\cF \tF_jg)(\eta)(\cF\F_ph)(\xi-\eta)\vphi(2^{-j}\xi)\overline{(\cF f)}(\xi)|d\si d\eta d\xi\\
&&+2^{2\ell j}\int_{2|\xi^-|\geq\<\eta\>}b(\frac{\xi}{|\xi|}\cdot\si)|\cF(\Phi^\ga_{-1})(\eta-\xi^-)|(\cF \tF_jg)(\eta)(\cF S_{j-3N_0}h)(\xi-\eta)\vphi(2^{-j}\xi)\overline{(\cF f)}(\xi)|d\si d\eta d\xi.
\eeno
Since $\<\eta-\xi^-\>,\<\eta\> \sim2^j$, we deduce that
\beno
|P_{2}|&\ls&
2^{-(\ga+2s+1)j}2^{(2\ell +2s-1/2)j}\|\tF_jg\|_{L^2}\|S_{j-3N_0}h\|_{L^2}\|\F_jf\|_{L^2}.
\eeno
Patching together the estimate of $P_1$ and $P_2$, we derive that
\beno
P&\ls&2^{-(\ga+2s+1)j}2^{(2\ell +2s-1/2)j}\|\tF_jg\|_{L^2}\|S_{j-3N_0}h\|_{L^2}\|\F_jf\|_{L^2}.
\eeno
Then we conclude the estimate (\ref{2.31}).

We emphasize that it is even easier to get the estimate for the case $2s<1$ since we only need the first order Taylor expansion. Thus we omit the details and end the proof of this lemma.
 \end{proof}

Next we use the dyadic decomposition in phase space to improve the above estimates. Indeed, we have the following lemma.
\begin{lem}\label{32}
For smooth functions $g,h,f$ and $M>0$, we have that
\begin{itemize}
  \item[(i)] If $p>j+3N_0$,
\ben\label{reg1}
&&\<\F_j\<D\>^\ell Q_{-1}(\tF_pg,\F_ph),\F_j\<D\>^\ell f\>| \\
 &\ls&\notag C_{N,\ell}2^{-(3-2s)(p-j)}\|g\|_{L^2_{(-\omega_3)^++(-\omega_4)^+}}(\|\mF_p h\|_{H^{c}_{\omega_3}}+2^{-pN}\|h\|_{H_{-N}^{-N}})(\|\mF_j f\|_{H^{d}_{\omega_4}}+2^{-jN}\|f\|_{H_{-N}^{-N}})
\\ \quad {\rm (ii) }\label{reg2}
&&|\<\F_j\<D\>^\ell Q_{-1}(\tF_jg,S_{j-3N_0}h),\F_j\<D\>^\ell f\>|\\
&\ls&\notag C_{N,\ell}\|h\|_{L^2_{(-\omega_3)^++(-\omega_4)^+}}(\|\mF_jg\|_{H^{c}_{\omega_3}}+2^{-jN}\|g\|_{H_{-N}^{-N}})(\|\mF_jf\|_{H^{d}_{\omega_4}}+2^{-jN}\|f\|_{H_{-N}^{-N}})\\
&&\notag+C_N2^{-2jN}\|g\|_{L^1}\|h\|_{H_{-N}^{-N}}\|f\|_{H_{-N}^{-N}}.
 \een
 \item[(iii)] If $|p-j|\leq3N_0$,
\ben\label{reg3}
\notag&&|\<\F_j\<D\>^\ell Q_{-1}(S_{p+4N_0}g,\F_ph)-Q_{-1}(S_{p+4N_0}g,\F_j\<D\>^\ell \F_ph),\F_j\<D\>^\ell f\>|
\\&&\ls C_{N,\ell}\|g\|_{L^2_{2+(-\omega_3)^++(-\omega_4)^+}}(\|\mF_ph\|_{H^{c}_{\omega_3}}+2^{-pN}\|h\|_{H_{-N}^{-N}})(\|\mF_jf\|_{H^{d}_{\omega_4}}+2^{-jN}\|f\|_{H_{-N}^{-N}}).
\een
\end{itemize}
where $\om_3,\om_4\in\R$ satisfying $\om_3+\om_4=\ga+2s-1$ and $c,d\geq0$ satisfying $c+d=(2\ell +2s-1/2)_{2s\geq1/2,2s\neq1}+(2\ell +1/2+\de)1_{2s=1}+2\ell 1_{2s<1/2}$ with $0<\de\ll1$, $N\in\N$ can be large enough  and $\mF_j$ is defined in Definition \ref{Fj}. We remark that $\om_3,\om_4$ and $c,d$ can be different in different lines.
\end{lem}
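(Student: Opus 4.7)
The plan is to upgrade Lemma~\ref{31} by layering a phase-space dyadic decomposition on top of the frequency-space localizations already in place. The gain in weights $\omega_3,\omega_4$ with $\omega_3+\omega_4=\ga+2s-1$ on the test functions $h$ and $f$ will arise from the fact that $\Phi_{-1}^\ga$ is essentially supported where $|v-v_*|\lesssim 1$, so in any integrand appearing in $Q_{-1}$ the four velocities $v,v_*,v',v_*'$ must cluster in a ball of diameter $O(1)$. This rigid support constraint lets me slip $\tP_n$ cutoffs in front of $\F_p h$ and $\F_j f$ whenever the corresponding input $g$ is replaced by $\cP_n g$, up to commutator remainders whose rapid decay is controlled by Lemma~\ref{lemma1.3}.

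For case (i), I would write $g=\sum_{n\ge -1}\cP_n g$ and, for each dyadic piece, replace $\F_p h$ and $\F_j\<D\>^\ell f$ by $\tP_n\F_p h$ and $\tP_n\F_j\<D\>^\ell f$, the errors supplying the $2^{-jN}\|\cdot\|_{H^{-N}_{-N}}$ tails in the statement. Applying Lemma~\ref{31}(i) to each localized piece and splitting the derivative exponent as $2^{(2\ell+(2s-1/2)^+)j}=2^{cj}\cdot 2^{dj}$ with $c+d=(2\ell+2s-1/2)\mathbf{1}_{2s>1/2,2s\ne 1}+(2\ell+1/2+\delta)\mathbf{1}_{2s=1}+2\ell\mathbf{1}_{2s<1/2}$, the resulting bound has the shape $2^{-(3-2s)(p-j)}\|\cP_n g\|_{L^2}\cdot 2^{cj}\|\tP_n\F_p h\|_{L^2}\cdot 2^{dj}\|\tP_n\F_j f\|_{L^2}$. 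Inserting the trivial identity $2^{-n(\omega_3+\omega_4)}\cdot 2^{n\omega_3}\cdot 2^{n\omega_4}=1$ redistributes weights between the three factors, and summing in $n$ by Cauchy--Schwarz along the (essentially forced) diagonal, Lemma~\ref{lemma1.4} converts the $\ell^2_n$-sums of $\cP_n$- and $\tP_n$-pieces into the weighted norms $\|g\|_{L^2_{(-\omega_3)^++(-\omega_4)^+}}$, $\|\mF_p h\|_{H^c_{\omega_3}}$, $\|\mF_j f\|_{H^d_{\omega_4}}$. Case (ii) is handled analogously with the roles of $g$ and $h$ swapped: decompose $S_{j-3N_0}h=\sum_n\cP_n S_{j-3N_0}h$ and layer $\tP_n$ cutoffs on $\tF_j g$ and $\F_j f$, producing a weight on $h$ rather than on $g$ together with the additional error $2^{-2jN}\|g\|_{L^1}\|h\|_{H_{-N}^{-N}}\|f\|_{H_{-N}^{-N}}$ coming from the compatibility mismatch between the phase-space scale of $h$ and the low-frequency cutoff $S_{j-3N_0}$.

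Case (iii) will be the main technical obstacle: the helpful prefactor $2^{-(3-2s)(p-j)}$ is absent when $|p-j|\le 3N_0$, and the commutator $\F_j\<D\>^\ell \F_p h-\F_p\F_j\<D\>^\ell h$ is non-trivial, so the dyadic reduction cannot simply be applied term-by-term. The extra polynomial weight $\<v\>^2$ on $g$, visible in the ``$2$'' of $\|g\|_{L^2_{2+\cdot}}$, should enter directly through the second-order Taylor remainder used in the proof of Lemma~\ref{31}(iii): the expansion $\cF(\Phi_{-1}^\ga)(\eta-\xi^-)-\cF(\Phi_{-1}^\ga)(\eta)$ to second order forces one to estimate $\nabla^2\cF(\Phi_{-1}^\ga)$, which by \eqref{PhiK2} decays only as $\<\eta\>^{-(\ga+5)}$, and the two lost powers of $\<\cdot\>$ are recovered by pairing with an $L^2_2$-weight on $S_{p+4N_0}g$. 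Once this is accounted for, the phase-space decomposition strategy of cases (i)--(ii) applies verbatim, with the nested commutators among $\cP_n,\tP_n,\F_j\<D\>^\ell,\F_p,S_{p+4N_0}$ controlled by the iterated bounds of Lemma~\ref{lemma1.3} and summed via Lemma~\ref{lemma1.4}. The delicate point throughout is managing three phase-space scales (for $g$, $h$, $f$) against three frequency scales simultaneously; what makes the estimate close is that the rigid constraint $|v-v_*|\lesssim 1$ enforced by $\Phi_{-1}^\ga$ collapses the otherwise triple phase-space sum to an essential single-index diagonal, thereby avoiding any loss of summability.
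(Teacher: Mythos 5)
Your treatment of (i) and (ii) is essentially the paper's argument: superpose the phase-space dyadic decomposition on the given frequency localizations, apply Lemma \ref{31} to the diagonal pieces, redistribute the phase weight $2^{(\omega_3+\omega_4)l}$ among the three factors, and resum with Lemma \ref{lemma1.4}; decomposing $h$ instead of $g$ in (ii) is a harmless variation, since the support of $\Phi_{-1}^\ga$ ties all three phase scales together. One caveat already here: the errors created by slipping $\tP_l$ past $\F_j\<D\>^\ell$ are \emph{not} all rapidly decaying $2^{-jN}$ tails, as you suggest. The commutator $[\tP_l,\F_j\<D\>^\ell]$ at matching frequencies is a main-size contribution with only a $2^{-j}2^{-l}$ gain per commutation (Lemma \ref{lemma1.3}(i)); in (i)--(ii) these pieces still have the trilinear structure of Lemma \ref{31} and can be fed back into it, but they must be estimated, not discarded.

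The genuine gap is in (iii), exactly where you locate the crux. Your explanation of the weight $\|g\|_{L^2_{2+\cdots}}$ is incorrect: the second-order remainder in the proof of Lemma \ref{31}(iii) involves $|\nabla^2\cF(\Phi_{-1}^\ga)(\eta-t\xi^-)|\ls\<\eta\>^{-(\ga+5)}$ by \eqref{PhiK2}, which is decay in the \emph{frequency} variable $\eta$ and costs no velocity weight at all; indeed Lemma \ref{31}(iii) holds with a plain $\|g\|_{L^2}$, and the main localized pieces of (iii) (the paper's $Y_1,Y_2$) need only $\|g\|_{L^2_{(-\omega_3)^++(-\omega_4)^+}}$. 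The extra $\<v\>^{2}$ enters solely through the commutator pieces ($Y_3,Y_4$, e.g. terms of the form $\<Q_{-1}(\cP_lS_{p+4N_0}g,\tP_l\F_ph),(\tP_l\F_j^2\<D\>^{2\ell}-\F_j^2\<D\>^{2\ell}\tP_l)f\>$), to which Lemma \ref{31}(iii) does not apply: its smallness comes from the cancellation $\<\xi\>^\ell\vphi(2^{-j}\xi)-\<\xi-\eta\>^\ell\vphi(2^{-j}(\xi-\eta))$, which is no longer present once the commutator sits on a localization of $f$ rather than on $\<D\>^\ell$. For these terms one must fall back on the crude trilinear bound of Lemma \ref{lemma1.7}(iv), $|\<Q_{-1}(g,h),f\>|\ls(\|g\|_{L^1}+\|g\|_{L^2})\|h\|_{H^a}\|f\|_{H^b}$ with $a+b=2s$, which costs $2s$ derivatives at frequency $2^j$ and forces $\|g\|_{L^1}+\|g\|_{L^2}\ls\|g\|_{L^2_2}$ --- this is the true origin of the ``$2$'' --- and the derivative count closes only because the commutator supplies the extra factor $2^{-j}$ of \eqref{fpPkfj}, so that $2^{2sj}\cdot2^{(2\ell-1)j}$ stays within the budget $c+d=2\ell+2s-1/2$. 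Your plan to run the (i)--(ii) strategy ``verbatim'' after adjusting the Taylor remainder omits this mechanism, so as written the commutator part of (iii) does not close.
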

\begin{proof} We only provide the proof for the case $2s>1$. Then case $2s\leq1$ can be handled similarly.

 \underline{\it Step 1: Proof of (i).}  Similar to (\ref{2.1}), we have $\<\F_j\<D\>^\ell Q_{-1}(\tF_pg,\F_ph),\F_j\<D\>^\ell f\>
 := G_1+G_2$,
where
\beno
G_1&=&\sum_{l\geq N_0}\<\F_j\<D\>^\ell Q_{-1}(\cP_l\tF_pg,\F_p\tP_lh),\F_j\<D\>^\ell \tP_lf\>+\sum_{l<N_0}\<\F_j\<D\>^\ell Q_{-1}(\cP_l\tF_pg,\F_p\U_{N_0}h),\F_j\<D\>^\ell \U_{N_0}f\>\\
G_2&=&\sum_{l\geq N_0}(\<Q_{-1}(\cP_l\tF_pg,\tP_l\F_ph),(\F_j^2\<D\>^{2\ell} \tP_l-\tP_l\F_j^2\<D\>^{2\ell} )f\>+\<Q_{-1}(\cP_l\tF_pg,(\tP_l\F_p-\F_p\tP_l)h),\F_j^2\<D\>^{2\ell} \tP_lf\>)\\
&&+\sum_{l<N_0}(\<Q_{-1}(\cP_l\tF_pg,\F_p\U_{N_0}h),(\F_j^2\<D\>^{2\ell} \U_{N_0}-\U_{N_0}\F_j^2\<D\>^{2\ell} )f\>+\<Q_{-1}(\cP_l\tF_pg,(\F_p\U_{N_0}-\U_{N_0}\F_p)h),\\
&&\F_j^2\<D\>^{2\ell} \U_{N_0}f\>):=G_{2,1}+G_{2,2}+G_{2,3}+G_{2,4}.
\eeno

\noindent \underline{\it Step 1.1 Estimate of $G_1$.} Since $\xi\sim 2^j,\<\eta-\xi\>\sim 2^p$ and $p>j+3N_0$, we have that $\<\eta\>\sim 2^p$, which implies
\beno
G_1&=&\sum_{l\geq N_0}\<\F_j\<D\>^\ell Q_{-1}(\tF_p\cP_l\tF_pg,\F_p\tP_lh),\F_j\<D\>^\ell \tP_lf\>+\sum_{l< N_0}\<\F_j\<D\>^\ell Q_{-1}(\tF_p\cP_l\tF_pg,\F_p\U_{N_0}h),\F_j\<D\>^\ell \U_{N_0}f\>.
\eeno
From  Lemma \ref{31}(\ref{2.30}), we have
\beno
 |G_{1}|&\ls&\sum_{l\geq N_0}2^{-(3-2s)(p-j)}2^{(2\ell +2s-1/2)j}\|\cP_l\tF_pg\|_{L^2}\|\F_p\tP_lh\|_{L^2}\|\F_j\tP_lf\|_{L^2}\\
 &&+\sum_{l<N_0}2^{-(3-2s)(p-j)}2^{(2\ell +2s-1/2)j} \|\cP_l\tF_pg\|_{L^2}\|\F_p\U_{N_0}h\|_{L^2}\|\F_j\U_{N_0}f\|_{L^2}.
\eeno
Thanks to Lemma \ref{lemma1.3}(i),
\ben\label{FPPLC}\qquad\qquad &&\|\F_p\tP_lh\|_{L^2}\ls C_{N,\ell}(\|\tP_l\F_ph\|_{L^2}+\sum_{|\al|=1}^{2N+2\ell }\|\tP_{l,\al}\tF_{p,\al}h\|_{L^2}+2^{-lN}2^{-pN}\|h\|_{H_{-N}^{-N}}), \een
together with  Lemma \ref{lemma1.4}(\ref{7.70}), we have
\beno
&&\sum_{l\geq N_0}2^{(2\ell +2s-1/2)j}\|\cP_l\tF_pg\|_{L^2}\|\F_p\tP_lh\|_{L^2}\|\F_j\tP_lf\|_{L^2}\ls C_{N,\ell}\sum_{l\geq N_0}2^{(2\ell +2s-1/2)j}\|\cP_l\tF_pg\|_{L^2_{(-\omega_3)^++(-\omega_2)^+}}2^{(\omega_3+\omega_4)l}\big(\|\tP_l\F_ph\|_{L^2}\\&&+\sum_{|\al|=1}^{2N+2\ell }\|\tP_{l,\al}\tF_{p,\al}h\|_{L^2}+2^{-lN}2^{-pN}\|h\|_{H_{-N}^{-N}}\big) \big(\|\tP_l\F_jf\|_{L^2}+\sum_{|\al|=1}^{2N+2\ell }\|\tP_{l,\al}\tF_{j,\al}f\|_{L^2}+2^{-lN}2^{-jN}\|f\|_{H_{-N}^{-N}}\big)
\eeno\beno &&\ls C_{N,\ell} \|\tF_pg\|_{L^2_{(-\omega_3)^++(-\omega_2)^+}}(\|\mF_{p}h\|_{H^{c}_{\omega_3}}+2^{-pN}\|h\|_{H_{-N}^{-N}})(\|\mF_{j}f\|_{H^{d}_{\omega_4}}+2^{-jN}\|f\|_{H_{-N}^{-N}}). \eeno
We can also copy the above argument to $\|\cP_l\tF_pg\|_{L^2}\|\F_p\U_{N_0}h\|_{L^2}\|\F_j\U_{N_0}f\|_{L^2}$. Thanks to
 facts $\|\tF_pg\|_{L^2_{l}}\ls\|g\|_{L^2_{l}}$(Lemma \ref{lemma1.4}), we conclude that
\beno
|G_{1}|&\ls&C_{N,\ell}2^{-(3-2s)(p-j)}\|\tF_pg\|_{L^2_{(-\omega_3)^++(-\omega_2)^+}}(\|\mF_{p}h\|_{H^{c}_{\omega_3}}+2^{-pN}\|h\|_{H_{-N}^{-N}})(\|\mF_{j}f\|_{H^{d}_{\omega_4}}+2^{-jN}\|f\|_{H_{-N}^{-N}}).
\eeno

\noindent  \underline{\it Step 1.2 Estimate of $G_2$.} We shall give the estimates term by term.

$\bullet$ \underline{\it Estimate of $G_{2,1}$.} We introduce the following decomposition: $G_{2,1}=\sum_{i=1}^5G^{(i)}_{2,1}$ where\\
$G^{(1)}_{2,1}=\sum\limits_{l\geq N_0}\sum\limits_{|a-p|>N_0}\<Q_{-1}(\cP_l\tF_pg,\F_a\tP_l\F_ph),(\F_j^2\<D\>^{2\ell} \tP_l-\tP_l\F_j^2\<D\>^{2\ell} )f\>$, $G^{(2)}_{2,1}=\sum\limits_{l\geq N_0}\sum\limits_{|a-p|\leq N_0}\sum\limits_{m<j-N_0}\<Q_{-1}(\cP_l\tF_pg,\\
\F_a\tP_l\F_ph),\F_m(\F_j^2\<D\>^{2\ell} \tP_l-\tP_l\F_j^2\<D\>^{2\ell} )f\>$,  $G^{(3)}_{2,1}=\sum\limits_{l\geq N_0}\sum\limits_{|a-p|\leq N_0}\sum\limits_{j-N_0\leq m<j+N_0}\<Q_{-1}(\cP_l\tF_pg,\F_a\tP_l\F_ph),\F_m(\F_j^2\<D\>^{2\ell} \tP_l\\-\tP_l\F_j^2\<D\>^{2\ell} )f\>$,  $G^{(4)}_{2,1}=\sum\limits_{l\geq N_0}\sum\limits_{|a-p|\leq N_0}\sum\limits_{j+N_0\leq m<p-2N_0}\<Q_{-1}(\cP_l\tF_pg,\F_a\tP_l\F_ph),\F_m(\F_j^2\<D\>^{2\ell} \tP_l-\tP_l\F_j^2\<D\>^{2\ell} )f\>$ and\\ $G^{(5)}_{2,1}=\sum\limits_{l\geq N_0}\sum\limits_{|a-p|\leq N_0}\sum\limits_{m\geq p-2N_0}\<Q_{-1}(\cP_l\tF_pg,\F_a\tP_l\F_ph),\F_m(\F_j^2\<D\>^{2\ell} \tP_l-\tP_l\F_j^2\<D\>^{2\ell} )f\>$.

\underline{\it Estimate of $G^{(1)}_{2,1}$ and $G^{(5)}_{2,1}$.} We begin with the estimate of $G^{(1)}_{2,1}$. From Lemma \ref{lemma1.7}(iv), we have
\beno
|G^{(1)}_{2,1}|&=&\sum_{l\geq N_0}\sum_{|a-p|>N_0}\<Q_{-1}(\cP_l\tF_pg,\F_a\tP_l\F_ph),(\F_j^2\<D\>^{2\ell} \tP_l-\tP_l\F_j^2\<D\>^{2\ell} )f\>\\
 &\ls& \sum_{l\geq N_0}\sum_{|a-p|>N_0}(\|\cP_l\tF_pg\|_{L^1}+\|\cP_l\tF_pg\|_{L^2})\|\F_a\tP_l\F_ph\|_{H^{2s}}\|(\F_j^2\<D\>^{2\ell} \tP_l-\tP_l\F_j^2\<D\>^{2\ell} )f\|_{L^2}.
\eeno By Bernstein inequality(see Lemma \ref{7.8}) that $\|\F_pf\|_{L^2}\ls2^{\frac{3}{2}p}\|\F_pf\|_{L^1}$ and Lemma \ref{lemma1.3}(ii), we derive that
\beno
|G^{(1)}_{2,1}|
 &\ls& C_{N,\ell}\sum_{l\geq N_0}\sum_{|a-p|>N_0}2^{-4(a+p+l)N}\|\tF_pg\|_{L^1}2^{2as}2^{2\ell j}\|\F_ph\|_{L^2}(\sum_{|\al|=1}^{2N+2\ell }\|\tP_{l,\al}\hat{\F}_{j,\al}f\|_{L^2}+2^{-jN}2^{-lN}\|f\|_{H_{-N}^{-N}})\\
 &\ls&C_{N,\ell}2^{-jN-pN}2^{-(p-j)N}\|g\|_{L^1}\|h\|_{H_{-N}^{-N}}\|f\|_{H_{-N}^{-N}}.
  \eeno
Similarly, since $p>j+3N_0$, we have $m\geq p-2N_0>j+N_0$. Then Lemma \ref{lemma1.3}(ii) implies that
 \beno
|G^{(5)}_{2,1}|
&=&\big|\sum_{l\geq N_0}\sum_{m\geq p-2N_0}\<Q_{-1}(\cP_l\tF_pg,\tF_p\tP_l\F_ph),\F_m\tP_l\F_j^2\<D\>^{2\ell} f\>\big|\\
&\ls&\sum_{l\geq N_0} \sum_{m\geq p-2N_0}(\|\cP_l\tF_pg\|_{L^1}+\|\cP_l\tF_pg\|_{L^2})2^{2sm}\|\tF_p\tP_l\F_ph\|_{L^2}\|\F_m\tP_l\F^2_j\<D\>^{2\ell} f\|_{L^2}\\
&\ls& C_{N,\ell}2^{-jN-pN}2^{-(p-j)N}\|g\|_{L^1}\|h\|_{H_{-N}^{-N}}\|f|
_{H_{-N}^{-N}}.\eeno

\underline{\it Estimate of $G^{(2)}_{2,1}$ and $G^{(4)}_{2,1}$.} It is not difficult to see that
 $G^{(2)}_{2,1}
 =\sum\limits_{l\geq N_0}\sum\limits_{|a-p|\leq N_0}\sum\limits_{m<j-N_0}\<Q_{-1}(\cP_l\tF_pg,\F_a\tP_l\F_ph),\\\F_m\tP_l\F_j^2\<D\>^{2\ell} f\>.$
 Notice that $m<j-N_0<p-3N_0<a-2N_0$.   Then by Lemma \ref{31}(\ref{2.30}), we have
\beno
|G^{(2)}_{2,1}|
 &\ls&\sum_{l\geq N_0}\sum_{m<j-N_0} 2^{-(3-2s)(p-m)}2^{(2\ell +2s-1/2)m}(\|\cP_l\tF_pg\|_{L^2}\|\tF_p\tP_l\F_ph\|_{L^2}\|\F_m\tP_l\F^2_jf\|_{L^2}.
 \eeno
Since $m<j-N_0$, we may apply Lemma \ref{lemma1.3}(ii) to $\F_m\tP_l\F^2_jf$ to get that
\beno
&&|G^{(2)}_{2,1}|
 \ls C_N\sum_{l\ge C_{N,\ell}q N_0}\sum_{m<j-N_0} 2^{-(m+j+l)N}2^{-(3-2s)(p-m)}\|\cP_l\tF_pg\|_{L^2}\|\tP_l\F_ph\|_{L^2}\|\F_jf\|_{H^{-N}}\\
 &\ls& C_{N,\ell}2^{-(3-2s)p}2^{-jN}\|\tF_pg\|_{L^2_{(-\omega_3)^+}}\|\F_ph\|_{L^2_{\omega_3}}\|\mF_jf\|_{H^{-N}}.
\eeno

For $G^{(4)}_{2,1}$, we first have $G^{(4)}_{2,1}=-\sum\limits_{l\geq N_0}\sum\limits_{|a-p|\leq N_0}\sum\limits_{j+N_0\leq m<p-2N_0}\<Q_{-1}(\cP_l\tF_pg,\F_a\tP_l\F_ph),\F_m\tP_l\F_j^2\<D\>^{2\ell} )f\>.$
Since    $m<p-2N_0\leq a-N_0$, we may copy the argument for $G^{(2)}_{2,1}$ to get that
\beno
 |G^{(4)}_{2,1}|
 &\ls&C_{N,\ell}2^{-(3-2s)p}2^{-jN}\|\tF_pg\|_{L^2_{(-\omega_3)^+}}\|\F_ph\|_{L^2_{\omega_3}}\|\mF_jf\|_{H^{-N}}.
\eeno

\underline{\it Estimate of $G^{(3)}_{2,1}$.} We first note that $m<j+N_0<p-2N_0\leq a-N_0$.  Lemma \ref{31}(\ref{2.30}) implies that
\beno
|G^{(3)}_{2,1}|
 &\ls& \sum_{l \geq N_0}2^{-(3-2s)(p-j)}2^{(2s-1/2)j}\|\cP_l\tF_pg\|_{L^2}\|\tF_p\tP_l\F_ph\|_{L^2}\|\tF_j(\F^2_j\<D\>^{2\ell} \tP_l-\tP_l\F_j^2\<D\>^{2\ell} )f\|_{L^2}.
\eeno
From this together with \eqref{fpPkfj} and Lemma \ref{lemma1.4}(\ref{7.70}), we deduce that
\beno
|G^{(3)}_{2,1}|
 &\ls&C_{N,\ell}2^{-(3-2s)(p-j)}\|\tF_pg\|_{L^2_{(\omega_3)^++(-\omega_4)^+}}\|\F_ph\|_{H^{c}_{\omega_3}}(\|\mF_jf\|_{H^{d}_{\omega_4}}+2^{-jN}\|f\|_{H_{-N}^{-N}}).
\eeno

Now putting together all these estimates, we obtain that
\beno
 |G_{2,1}|
 &\ls&C_{N,\ell}2^{-(3-2s)(p-j)}\|\tF_pg\|_{L^2_{(\omega_3)^++(-\omega_4)^+}}\|\F_ph\|_{H^{c}_{\omega_3}}(\|\mF_jf\|_{H^{d}_{\omega_4}}+2^{-jN}\|f\|_{H_{-N}^{-N}}).
\eeno

$\bullet$\underline{\it Estimate of $G_{2,4}$.} We set $G_{2,4}=G_{2,4}^{(1)}+G_{2,4}^{(2)}$, where
$G_{2,4}^{(1)}=\sum\limits_{l< N_0}\sum\limits_{|a-p|>N_0}\<Q_{-1}(\cP_l\tF_pg, \F_a(\F_p\U_{N_0}-\U_{N_0}\F_p)h),\F_j^2\<D\>^{2\ell} \U_{N_0}f\>$ and $G_{2,4}^{(2)}=\sum\limits_{l< N_0}\sum\limits_{|a-p|\leq N_0,|m-j|<N_0}\<Q_{-1}(\cP_l\tF_pg,\F_a(\F_p\U_{N_0}-\U_{N_0}\F_p)h),\F_m\F_j^2\<D\>^{2\ell} \U_{N_0}f\>$.

\underline{\it Estimate of $G_{2,4}^{(1)}$.} We first observe that $G_{2,4}^{(1)}=\sum\limits_{l< N_0}\sum\limits_{|a-p|>N_0}\<Q_{-1}(\cP_l\tF_pg,\F_a\U_{N_0}\F_ph),\F_j^2\<D\>^{2\ell} \U_{N_0}f\>$.
Then by Lemma \ref{lemma1.7}(iv) and \eqref{fpukfj}, we have
\beno
|G_{2,4}^{(1)}|&\ls&\sum_{l< N_0}\sum_{|a-p|>N_0}\|\cP_l\tF_pg\|_{L^1}\|\F_a\U_{N_0}\F_ph\|_{H^{2s}}\|\F_j^2\<D\>^{2\ell} \U_{N_0}f\|_{L^2}\ls C_N\sum_{l< N_0}\sum_{|a-p|>N_0}2^{-a(4N+1)}2^{-p(4N+1)}2^{2sa}\\&&\times\|\cP_l\tF_pg\|_{L^1}\|\F_ph\|_{L^2}\|\F_j^2\<D\>^{2\ell} \U_{N_0}f\|_{L^2}\ls C_N2^{-pN-jN}2^{-2(p-j)}\|g\|_{L^1}\|h\|_{H_{-N}^{-N}}\|f\|_{H_{-N}^{-N}}.
\eeno

\underline{\it Estimate of $G_{2,4}^{(2)}$.}  Since
 $G_{2,4}^{(2)}=\sum\limits_{l< N_0}\sum\limits_{|a-p|\leq N_0,|m-j|<N_0}\<Q_{-1}(\cP_l\tF_pg,\F_a(\F_p\U_{N_0}-\U_{N_0}\F_p)h),\F_m\F_j^2\<D\>^{2\ell} \U_{N_0}f\>$,  Lemma \ref{31}(\ref{2.30}) and Lemma \ref{lemma1.3} imply that
\beno
|G_{2,4}^{(2)}|
&\ls&\sum_{l< N_0}2^{-(3-2s)(p-j)}2^{(2\ell +2s-1)j}\|\cP_l\tF_pg\|_{L^2_1}\|(\F_p\U_{N_0}-\U_{N_0}\F_p)h\|_{L^2}\|\F_j^2\U_{N_0}f\|_{L^2}\\
&\ls&C_{N,\ell}2^{-(3-2s)(p-j)}\|\tF_pg\|_{L^2_{(\omega_3)^++(-\omega_4)^+}}(\|\mF_p h\|_{H^{c}_{\omega_3}}+2^{-pN}\|h\|_{H_{-N}^{-N}})(\|\mF_j f\|_{H^{d}_{\omega_4}}+2^{-jN}\|f\|_{H_{-N}^{-N}}).
\eeno

We conclude that
\beno
|G_{2,4}|&\ls&C_{N,\ell}2^{-(3-2s)(p-j)}\|\tF_pg\|_{L^2_{(\omega_3)^++(-\omega_4)^+}}(\|\mF_p h\|_{H^{c}_{\omega_3}}+2^{-pN}\|h\|_{H_{-N}^{-N}})(\|\mF_j f\|_{H^{d}_{\omega_4}}+2^{-jN}\|f\|_{H_{-N}^{-N}}).
\eeno

It is not difficult to check that structures of  $G_{2,2}$ and $G_{2,3}$ are similar to $G_{2,1}$ and $G_{2,4}$.
We have
\beno
|G_2|
&\ls &C_{N,\ell}2^{-(3-2s)(p-j)}\|\tF_pg\|_{L^2_{(\omega_3)^++(-\omega_4)^+}}(\|\mF_p h\|_{H^{c}_{\omega_3}}+2^{-pN}\|h\|_{H_{-N}^{-N}})(\|\mF_j f\|_{H^{d}_{\omega_4}}+2^{-jN}\|f\|_{H_{-N}^{-N}}).
\eeno

\underline{\it Step 1.3 Conclusion.} Finally  due to Lemma \ref{lemma1.3}(iii)(iv), we deduce that
\beno
&&|\<\F_j\<D\>^\ell Q_{-1}(\tF_pg,\F_ph),\F_j\<D\>^\ell f\>|\\
&\ls& C_{N,\ell}2^{-(3-2s)(p-j)}\|g\|_{L^2_{(\omega_3)^++(-\omega_4)^+}}(\|\mF_p h\|_{H^{c}_{\omega_3}}+2^{-pN}\|h\|_{H^{-N}})(\|\mF_j f\|_{H^{d}_{\omega_4}}+2^{-jN}\|f\|_{H^{-N}}),
\eeno
which implies  (\ref{reg1}).
\smallskip

\noindent\underline{\it Step 2: Proof of (ii).} Observe that $\<\F_j\<D\>^\ell Q_{-1}(\tF_jg,S_{j-3N_0}h),\F_j\<D\>^\ell f\>=X_1+X_2$,
where
\beno
X_1&=&\sum_{l\geq N_0}\<\F_j\<D\>^\ell Q_{-1}(\tF_j\cP_lg,\tP_lS_{j-3N_0}h),\F_j\<D\>^\ell \tP_lf\>+\sum_{l<N_0}\<\F_j\<D\>^\ell Q_{-1}(\tF_j\tP_lg,\U_{N_0}S_{j-3N_0}h),\F_j\<D\>^\ell \U_{N_0}f\>,\\
X_2
&=&\sum_{l\geq N_0}(\<Q_{-1}(\cP_l\tF_jg,\tP_lS_{j-3N_0}h),(\tP_l\F_j^2\<D\>^{2\ell} -\F_j^2\<D\>^{2\ell} \tP_l)f\>+\<Q_{-1}((\cP_l\tF_j-\tF_j\cP_l)g,\tP_lS_{j-3N_0}h),\F_j^2\<D\>^{2\ell} \tP_lf\>)\\
&&+\sum_{l< N_0}(\<Q_{-1}(\cP_l\tF_jg,\U_{N_0}S_{j-3N_0}h),(\U_{N_0}\F_j^2\<D\>^{2\ell} -\F_j^2\<D\>^{2\ell} \U_{N_0})f\>+\<Q_{-1}((\cP_l\tF_j-\tF_j\cP_l)g,\U_{N_0}S_{j-3N_0}h),\\&&\F_j^2\<D\>^{2\ell} \U_{N_0}f\>):=X_{2,1}+X_{2,2}+X_{2,3}+X_{2,4}.
\eeno

$\bullet$ \noindent\underline{\it Estimate of $X_1$.} We split $X_1$ into two parts: $X_1=X_{1,1}+X_{1,2}$ where\\
\beno
X_{1,1}&=&\sum\limits_{l\geq N_0}\<\F_j\<D\>^\ell Q_{-1}(\tF_j\cP_lg,S_{j-2N_0}\tP_lS_{j-3N_0}h),\F_j\<D\>^\ell \tP_lf\>\\
&&+\sum\limits_{l< N_0}\<\F_j\<D\>^\ell Q_{-1}(\tF_j\cP_lg,S_{j-2N_0}\U_{N_0}S_{j-3N_0}h),\F_j\<D\>^\ell \U_{N_0}f\>
\eeno
 and
 \beno
 X_{1,2}&=&\sum\limits_{l\geq N_0}\sum\limits_{a\geq j-2N_0}\<\F_j\<D\>^\ell Q_{-1}(\tF_j\cP_lg,\F_a\tP_lS_{j-3N_0}h),\F_j\<D\>^\ell \tP_lf\>\\
 &&+\sum\limits_{l<N_0}\sum\limits_{a\geq j-2N_0}\<\F_j\<D\>^\ell Q_{-1}(\tF_j\cP_lg,\F_a\U_{N_0}S_{j-3N_0}h),\F_j\<D\>^\ell \U_{N_0}f\>.
 \eeno

\smallskip
\noindent\underline{\it Estimate of $X_{1,1}$.}
From Lemma \ref{31}(\ref{2.31}), \eqref{FPPLC} and Lemma \ref{lemma1.4}(\ref{7.70}), we have
\beno
|X_{1,1}|&\ls& \sum_{l\geq N_0}2^{(2\ell +2s-1/2)j}\|\tP_lS_{j-3N_0}h\|_{L^2}\|\F_j\cP_lg\|_{L^2}\|\F_j\tP_lf\|_{L^2}+\sum_{l<N_0}2^{(2\ell +2s-1/2)j}\|\U_{N_0}S_{j-3N_0}h\|_{L^2}\|\F_j\cP_lg\|_{L^2}\\
&&\times\|\F_j\U_{N_0}f\|_{L^2}
\ls C_{N,\ell}\|h\|_{L^2_{(-\omega_3)^++(-\omega_4)^+}}(\|\mF_jg\|_{H^{c}_{\omega_3}}+2^{-jN}\|g\|_{H_{-N}^{-N}})(\|\mF_jf\|_{H^{d}_{\omega_4}}+2^{-jN}\|f\|_{H_{-N}^{-N}}).
\eeno

\noindent\underline{\it Estimate of $X_{1,2}$.} Due to Lemma \ref{lemma1.7}$(iv)$ and Bernstein's inequality(see Lemma \ref{7.8}), one has
\beno
 |X_{1,2}|&\ls&\sum_{l\geq N_0}\sum_{a\geq j-2N_0}2^{\frac{3}{2}j}\|\tF_j\cP_lg\|_{L^1}\|\F_a\tP_lS_{j-3N_0}h\|_{L^2}2^{(2\ell +2s)j}\|\F_j\tP_lf\|_{L^2}\\
 &&+\sum_{l< N_0}\sum_{a\geq j-2N_0}2^{\frac{3}{2}j}\|\tF_j\cP_lg\|_{L^1}\|\F_a\U_{N_0}S_{j-3N_0}h\|_{L^2}2^{(2\ell +2s)j}\|\F_j\U_{N_0}f\|_{L^2}).\eeno
Applying Lemma \ref{lemma1.3}$(ii)$ to $\F_a\tP_lS_{j-3N_0}h$ and $\F_a\U_{N_0}S_{j-3N_0}h$, we get that
 \beno  |X_{1,2}|
&\ls& C_{N,\ell}\sum_{l\geq N_0}2^{-4lN}2^{-4jN}\|\tF_j\cP_lg\|_{L^1}\|S_{j-3N_0}h\|_{L^2}\|\F_j\tP_lf\|_{L^2}\\
&&+C_{N,\ell}\sum_{l<N_0}2^{-4jN}\|\tF_j\cP_lg\|_{L^1}\|S_{j-3N_0}h\|_{L^2}\|\F_j\U_{N_0}f\|_{L^2}\ls C_{N,\ell}2^{-2jN}\|g\|_{L^1}\|h\|_{H_{-N}^{-N}}\|f\|_{H_{-N}^{-N}}.\eeno

We conclude that
\beno
|X_{1}|&\ls& C_{N,\ell}\|h\|_{L^2_{(-\omega_3)^++(-\omega_4)^+}}(\|\mF_jg\|_{H^{c}_{\omega_3}}+2^{-jN}\|g\|_{H_{-N}^{-N}})(\|\mF_jf\|_{H^{d}_{\omega_4}}+2^{-jN}\|f\|_{H_{-N}^{-N}})\\
&&+C_{N,\ell}2^{-2jN}\|g\|_{L^1}\|h\|_{H_{-N}^{-N}}\|f\|_{H_{-N}^{-N}}.
\eeno

$\bullet$ \noindent\underline{\it Estimate of $X_2$.} We  will give the estimates term by term.

\noindent\underline{\it Estimate of $X_{2,1}$.} We have  $X_{2,1}=X_{2,1}^{(1)}+X_{2,1}^{(2)}+X_{2,1}^{(3)}$, where  $X_{2,1}^{(1)}=\sum\limits_{l\geq N_0}\sum\limits_{|a-j|>N_0}\<Q_{-1}(\cP_l\tF_jg,\\\tP_lS_{j-3N_0}h),\F_a(\tP_l\F_j^2\<D\>^{2\ell} -\F_j^2\<D\>^{2\ell} \tP_l)f\>$, $X_{2,1}^{(2)}=\sum\limits_{l\geq N_0}\sum\limits_{|a-j|\leq N_0}\sum\limits_{b>j-2N_0}\<Q_{-1}(\cP_l\tF_jg,\F_b\tP_lS_{j-3N_0}h),\F_a(\tP_l\F_j^2\<D\>^{2\ell} \\-\F_j^2\<D\>^{2\ell} \tP_l)f\>$ and  $X_{2,1}^{(3)}=\sum\limits_{l\geq N_0}\sum\limits_{|a-j|\leq N_0}\<Q_{-1}(\cP_l\tF_jg,S_{j-2N_0}\tP_lS_{j-3N_0}h),\F_a(\tP_l\F_j^2\<D\>^{2\ell} -\F_j^2\<D\>^{2\ell} \tP_l)f\>$.

 Similar to the estimate of $G_{2,1}^{(1)}$, we have
\beno
|X_{2,1}^{(1)}|+|X_{2,1}^{(2)}|&\ls&C_{N,\ell}2^{-2jN}\|g\|_{L^1}\|h\|_{H_{-N}^{-N}}\|f\|_{H_{-N}^{-N}}.
\eeno

For $X_{2,1}^{(3)}$, we have   $|\xi|\sim 2^j$ and $|\xi-\eta|\ls 2^{j-2N_0}$, which implies $|\eta|\sim 2^j$. Similar to $X_{1,1}$, we get that
\beno
|X_{2,1}^{(3)}|&\ls& C_{N,\ell}\|h\|_{L^2_{(-\omega_3)^++(-\omega_4)^+}}(\|\mF_jg\|_{H^{c}_{\omega_3}}+2^{-jN}\|g\|_{H_{-N}^{-N}})(\|\mF_jf\|_{H^{d}_{\omega_4}}+2^{-jN}\|f\|_{H_{-N}^{-N}}).
\eeno
Then we conclude that
\beno &&
|X_{2,1}|\ls C_{N,\ell}\|h\|_{L^2_{(-\omega_3)^++(-\omega_4)^+}}(\|\mF_jg\|_{H^{c}_{\omega_3}}+2^{-jN}\|g\|_{H_{-N}^{-N}})(\|\mF_jf\|_{H^{d}_{\omega_4}}+2^{-jN}\|f\|_{H_{-N}^{-N}})\\
&&+C_{N,\ell}2^{-2jN}\|g\|_{L^1}\|h\|_{H_{-N}^{-N}}\|f\|_{H_{-N}^{-N}}.
\eeno

\noindent\underline{\it Estimate of $X_{2,4}$.} We introduce $X_{2,4}=X_{2,4}^{(1)}+X_{2,4}^{(2)}$, where $X_{2,4}^{(1)}=\sum\limits_{l<N_0}\sum\limits_{a>j-2N_0}\<Q_{-1}((\cP_l\tF_j-\tF_j\cP_l)g,\\\F_a\U_{N_0}S_{j-3N_0}h),\F_j^2\<D\>^{2\ell} \U_{N_0}f\>$ and $X_{2,4}^{(2)}=\sum\limits_{l<N_0}\<Q_{-1}((\cP_l\tF_j-\tF_j\cP_l)g,S_{j-2N_0}\U_{N_0}S_{j-3N_0}h),\F_j^2\<D\>^{2\ell} \U_{N_0}f\>$.

 Similar to $X_{1.2}$, we first have
\beno
|X_{2,4}^{(1)}|&\ls&\sum_{l<N_0}\sum_{a>j-2N_0} (\|\cP_l\tF_jg\|_{L^1}+\|\tF_j\cP_lg\|_{L^1}+\|\cP_l\tF_jg\|_{L^2}+\|\tF_j\cP_lg\|_{L^2})\|\F_a\U_{N_0}S_{j-3N_0}h\|_{L^2}\|\F_j^2\<D\>^{2\ell} \U_{N_0}f\|_{L^2}\\
&\ls& C_{N,\ell}2^{-2jN}\|g\|_{L^1}\|h\|_{H_{-N}^{-N}}\|f\|_{H_{-N}^{-N}}.
\eeno

Copying the argument used for $X_{2,1}^{(3)}$ to $X_{2,4}^{(2)}$, then we have
\beno
|X_{2,4}^{(2)}|&\ls& C_{N,\ell}\|h\|_{L^2_{(-\omega_3)^++(-\omega_4)^+}}(\|\mF_jg\|_{H^{c}_{\omega_3}}+2^{-jN}\|g\|_{H_{-N}^{-N}})(\|\mF_jf\|_{H^{d}_{\omega_4}}+2^{-jN}\|f\|_{H_{-N}^{-N}}).
\eeno
Then we conclude that
\beno
|X_{2,4}|&\ls& C_{N,\ell}\|h\|_{L^2_{(-\omega_3)^++(-\omega_4)^+}}(\|\mF_jg\|_{H^{c}_{\omega_3}}\\
&&+2^{-jN}\|g\|_{H_{-N}^{-N}})(\|\mF_jf\|_{H^{d}_{\omega_4}}+2^{-jN}\|f\|_{H_{-N}^{-N}})+C_{N,\ell}2^{-2jN}\|g\|_{L^1}\|h\|_{H_{-N}^{-N}}\|f\|_{H_{-N}^{-N}}.
\eeno

 The estimates of $X_{2,2}$ and $X_{2,3}$ could be handled in a similar manner as  $X_{2,1}$ and $X_{2,4}$. We skip the details here and then conclude our desired result.
\smallskip

\noindent\underline{\it Step 3: Proof of (iii).} For $(iii)$, we introduce the following decompositions:  $\<\F_j\<D\>^\ell Q_{-1}(S_{p+4N_0}g,\F_ph)\\-Q_{-1}(S_{p+4N_0}g,\F_j\<D\>^\ell \F_ph),\F_j\<D\>^\ell f\>=\sum\limits_{i=1}^4Y_i$, where
\beno &&Y_1=\sum\limits_{l\geq N_0}\<\F_j\<D\>^\ell Q_{-1}(\cP_lS_{p+4N_0}g,\F_p\tP_lh)-Q_{-1}(\cP_lS_{p+4N_0}g,\F_j\<D\>^\ell \F_p\tP_lh),\F_j\<D\>^\ell \tP_lf\>,\\&&
 Y_2=\sum\limits_{l<N_0}\<\F_j\<D\>^\ell Q_{-1}(\cP_lS_{p+4N_0}g,\F_p\U_{N_0}h)-Q_{-1}(\cP_lS_{p+4N_0}g,\F_j\<D\>^\ell \F_p\U_{N_0}h),\F_j\<D\>^\ell \U_{N_0}f\>,
  \\&& Y_3=\sum\limits_{l\geq N_0}\big(\<Q_{-1}(\cP_lS_{p+4N_0}g,\tP_l\F_ph),(\tP_l\F^2_j\<D\>^{2\ell} -\F^2_j\<D\>^{2\ell} \tP_l)f\>+\<Q_{-1}(\cP_lS_{p+4N_0}g,\tP_l\F_j\<D\>^\ell \F_ph),\\ &&(\tP_l\F_j\<D\>^\ell -\F_j\<D\>^\ell \tP_l)f\>
+\<Q_{-1}(\cP_lS_{p+4N_0}g,\F_j\<D\>^\ell (\F_p\tP_l-\tP_l\F_p)h),\F_j\<D\>^\ell \tP_lf)\>\\ &&+\<Q_{-1}(\cP_lS_{p+4N_0}g,(\F_j\<D\>^\ell \tP_l-\tP_l\F_j\<D\>^\ell )\F_jh),\F_j\<D\>^\ell
\tP_lf)\>\big), \\&&Y_4=\sum\limits_{l< N_0}\big(\<Q_{-1}(\cP_lS_{p+4N_0}g,\U_{N_0}\F_ph),(\U_{N_0}\F^2_j\<D\>^{2\ell} -\F^2_j\<D\>^{2\ell} \U_{N_0})f\>+\<Q_{-1}(\cP_lS_{p+4N_0}g,\U_{N_0}\F_j\<D\>^\ell \F_ph),\\ &&(\U_{N_0}\F_j\<D\>^\ell
-\F_j\<D\>^\ell \U_{N_0})f\>+\<Q_{-1}(\cP_lS_{p+4N_0}g,\F_j\<D\>^\ell (\F_p\U_{N_0}-\U_{N_0}\F_p)h),\F_j\<D\>^\ell \U_{N_0}f\>\\ &&+\<Q_{-1}(\cP_lS_{p+4N_0}g,(\F_j\<D\>^\ell \U_{N_0}-\U_{N_0}\F_j)\F_jh),\F_j\<D\>^\ell \U_{N_0}f\>\big).\eeno

  Since  $Y_1$ and $Y_2$ enjoy almost the same structure, we only need to give the detailed proof for $Y_1$. By \eqref{bobylev}, we first note that
$Y_1=\sum\limits_{l\geq N_0}\<\F_j\<D\>^\ell
Q_{-1}(S_{p+4N_0}\cP_lS_{p+4N_0}g,\F_p\tP_lh)-Q_{-1}(S_{p+4N_0}\cP_lS_{p+4N_0}g,\F_j\<D\>^\ell \F_p\tP_lh),\\\F_j\<D\>^\ell \tP_lf\>$. Then by (\ref{2.32}), \eqref{FPPLC} and Lemma \ref{lemma1.3}(iii), one has
\beno |Y_1|&\ls& \sum_{l\geq N_0}2^{(2\ell +2s-1/2)j}(\|\cP_lS_{p+4N_0}g\|_{L^2}\|\F_p\tP_lh\|_{L^2}\|\F_j\tP_lf\|_{L^2}+\|\cP_lS_{p+4N_0}g\|_{L^2}\|\F_p\U_{N_0}h\|_{L^2}\|\F_j\U_{N_0}f\|_{L^2})
\\&\ls& C_{N,\ell}\|g\|_{L^2_{(-\omega_3)^++(-\omega_4)^+}}(\|\mF_jh\|_{H^{c}_{\omega_3}}+2^{-pN}\|h\|_{H_{-N}^{-N}}) (\|\mF_jf\|_{H^{d}_{\omega_4}}+2^{-jN}\|f\|_{H_{-N}^{-N}}).
\eeno
As for $Y_3$ and $Y_4$, let us choose $Z:=\sum\limits_{l\geq N_0}\<Q_{-1}(\cP_lS_{p+4N_0}g,\tP_l\F_ph),(\tP_l\F^2_j\<D\>^{2\ell} -\F^2_j\<D\>^{2\ell} \tP_l)f\>$ as a typical term to give the estimate. It is easy to see that $Z:=Z_1+Z_2+Z_3$ where $Z_1=\sum\limits_{l\geq N_0}\sum\limits_{|a-p|>N_0}\<Q_{-1}(\cP_lS_{p+4N_0}g,\F_a\tP_l\F_ph),\\(\tP_l\F^2_j\<D\>^{2\ell} -\F^2_j\<D\>^{2\ell} \tP_l)f\>$, $Z_2=\sum\limits_{l\geq N_0}\sum\limits_{|a-p|\leq N_0}\sum\limits_{|b-j|>N_0}\<Q_{-1}(\cP_lS_{p+4N_0}g,\F_a\tP_l\F_ph), \F_b(\tP_l\F^2_j\<D\>^{2\ell} -\F^2_j\<D\>^{2\ell} \tP_l)f\>$ and\\
$Z_3=\sum\limits_{l\geq N_0}\sum\limits_{|a-p|\leq N_0}\sum\limits_{|b-j|\leq N_0}\<Q_{-1}(S_{p+6N_0}\cP_lS_{p+4N_0}g,\F_a\tP_l\F_ph),\F_b(\tP_l\F^2_j\<D\>^{2\ell} -\F^2_j\<D\>^{2\ell} \tP_l)f\>$.

Applying the argument used for $G^{(1)}_{2,1}$, one may obtain that
$|Z_1+Z_2|\ls C_{N,\ell}2^{-2jN}\|g\|_{L^1}\|h\|_{H_{-N}^{-N}}\|f\|_{H_{-N}^{-N}}$.  Moreover, due to Lemma \ref{lemma1.7}(iv), we have
\beno
|Z_3|&\ls&\sum_{l\geq N_0}\sum\limits_{|b-j|\leq N_0}(2^{2sj}\|S_{p+6N_0}\cP_lS_{p+4N_0}g\|_{L^2_2}\|\tP_l\F_ph\|_{L^2}\|\F_b(\tP_l\F^2_j\<D\>^{2\ell} -\F^2_j\<D\>^{2\ell} \tP_l)f\|_{L^2}.
\eeno
Then \eqref{fpPkfj}, Lemma \ref{lemma1.3}(iii) and Lemma \ref{lemma1.4} yield that
\beno
|Z_3|&\ls&C_{N,\ell}\|g\|_{L^2_{2+(-\omega_3)^++(-\omega_4)^+}}\|\F_ph\|_{H^{c}_{\omega_3}}(\|\mF_jf\|_{H^{d}_{\omega_4}}+2^{-jN}\|f\|_{H_{-N}^{-N}}).
\eeno
From these, we get that $|Z|\ls C_{N,\ell}\|g\|_{L^2_{2+(-\omega_3)^++(-\omega_4)^+}}(\|\mF_jh\|_{H^{c}_{\omega_3}}+2^{-pN}\|h\|_{H_{-N}^{-N}}) (\|\mF_jf\|_{H^{d}_{\omega_4}}+2^{-jN}\|f\|_{H_{-N}^{-N}}).$
We complete the proof of $(iii)$ by patching together the estimates of $Y_1$ and $Z$.
This ends the proof of the lemma.
\end{proof}

\begin{lem}\label{le2.5}
 For smooth function $g,h$ and $f$, we have
\ben\label{regu}
&& \notag\mathfrak{D}_3\ls \sum_{p>j+3N_0}C_{N,\ell}2^{-(3-2s)(p-j)}\|g\|_{L^2_{(-\omega_3)^++(-\omega_4)^+}}(\|\mF_p h\|_{H^{c}_{\omega_3}}+2^{-pN}\|h\|_{H_{-N}^{-N}})(\|\mF_j f\|_{H^{d}_{\omega_4}}+2^{-jN}\|f\|_{H_{-N}^{-N}})\\
&&+\notag C_{N,\ell}\|h\|_{L^2_{(-\omega_3)^++(-\omega_4)^+}}(\|\mF_jg\|_{H^{c}_{\omega_3}}+2^{-jN}\|g\|_{H_{-N}^{-N}})(\|\mF_jf\|_{H^{d}_{\omega_4}}+2^{-jN}\|f\|_{H_{-N}^{-N}})+C_{N,\ell}2^{-2jN}\|g\|_{L^1}\|h\|_{H_{-N}^{-N}}\\
&&\times\|f\|_{H_{-N}^{-N}}+C_{N,\ell}\|g\|_{L^2_{2+(-\omega_3)^++(-\omega_4)^+}}(\|\mF_ph\|_{H^{c}_{\omega_3}}+2^{-pN}\|h\|_{H_{-N}^{-N}})(\|\mF_jf\|_{H^{d}_{\omega_4}}+2^{-jN}\|f\|_{H_{-N}^{-N}}).
\een
where $\om_3,\om_4,c,d,\de$ are defined in Lemma \ref{32}. We remark that $\om_3,\om_4$ and $c,d$ can be different in different lines.
\end{lem}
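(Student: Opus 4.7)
The proof will be essentially a bookkeeping combination of the three estimates in Lemma \ref{32} applied to the three pieces of the decomposition \eqref{D3} for $\mathfrak{D}_3$. Recall that \eqref{D3} splits $\mathfrak{D}_3$ into (a) a \emph{diagonal} piece $\sum_{|p-j|\le 3N_0}\<\F_j\<D\>^\ell Q_{-1}(S_{p+4N_0}g,\F_p h)-Q_{-1}(S_{p+4N_0}g,\F_j\<D\>^\ell\F_p h),\F_j\<D\>^\ell f\>$, (b) an \emph{upper off-diagonal} piece $\sum_{p>j+3N_0}\<\F_j\<D\>^\ell Q_{-1}(\tF_p g,\F_p h),\F_j\<D\>^\ell f\>$, and (c) a \emph{lower off-diagonal} piece $\<\F_j\<D\>^\ell Q_{-1}(\tF_j g,S_{j-3N_0}h),\F_j\<D\>^\ell f\>$. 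These are precisely the objects estimated in Lemma \ref{32}(iii), (i), (ii) respectively.

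First I would treat the upper off-diagonal piece by applying Lemma \ref{32}(i) term by term in $p$. This produces the decay factor $2^{-(3-2s)(p-j)}$; since $3-2s>1$, the series in $p$ is summable, but as the statement of Lemma \ref{le2.5} keeps the summation explicit, no further work is needed here and the first line of the claimed bound is obtained verbatim. Next I would apply Lemma \ref{32}(ii) to the lower off-diagonal piece; this single term contributes both the second line (with $\|h\|_{L^2}$ localized in $g$ via $\mF_j$) and the third line ($2^{-2jN}\|g\|_{L^1}\|h\|_{H^{-N}_{-N}}\|f\|_{H^{-N}_{-N}}$) of the claimed bound, again verbatim. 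Finally, I would apply Lemma \ref{32}(iii) to each of the $O(N_0)$ terms in the diagonal sum; because the range $|p-j|\le 3N_0$ contains a bounded number of indices, summation simply produces a harmless universal constant and yields the last line of the claimed bound.

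There is no genuine obstacle at this stage: all three ingredients of Lemma \ref{32} were designed precisely to match the three pieces of \eqref{D3}, so the proof reduces to applying each ingredient to the appropriate piece and adding the resulting inequalities. The substantive work (propagating the weights $\omega_3,\omega_4$ with $\omega_3+\omega_4=\gamma+2s-1$, balancing the regularity indices $c+d$ according to the three regimes $2s<1$, $2s=1$, $2s>1$, and handling the dyadic commutators $[\cP_k,\F_j]$ and $[\U_k,\F_j]$ via Lemma \ref{lemma1.3}) has already been carried out in Lemmas \ref{31} and \ref{32}. The only minor point worth checking is that the different choices of $(\omega_3,\omega_4,c,d)$ permitted in each branch of Lemma \ref{32} are compatible with the statement of Lemma \ref{le2.5}, which is why the statement is phrased with the caveat that these parameters may vary from line to line. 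Once this is noted, the proof reduces to writing $\mathfrak{D}_3=(\mathrm{a})+(\mathrm{b})+(\mathrm{c})$ and invoking Lemma \ref{32}.
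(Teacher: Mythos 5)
Your proposal is correct and is essentially the paper's own proof: the paper derives Lemma \ref{le2.5} exactly by splitting $\mathfrak{D}_3$ via \eqref{D3} and invoking Lemma \ref{32}(i), (ii), (iii) on the three pieces, then summing. Your term-by-term bookkeeping (including the remark about $\omega_3,\omega_4,c,d$ varying between lines) matches the intended argument.
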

\begin{proof} This can be easily derived by
 Lemma \ref{32} and  (\ref{D3}).
\end{proof}

Now we are ready to prove Theorem \ref{le1.24}.
\begin{proof}[Proof of Theorem \ref{le1.24}]
Patching together the estimates of $\mathfrak{D}_1,\mathfrak{D}_2$ and $\mathfrak{D}_3$, we obtain that
\beno
&&\left|\sum_{j=-1}^\infty(\F_j\<D\>^\ell Q(g,h)-Q(g,\F_j\<D\>^\ell h),\F_j\<D\>^\ell f)\right|\\
&\ls& \sum_{j=-1}^\infty C_{N,\ell}(\|g\|_{L^1_{(\ga+2s-1)^++(-\omega_1)^++(-\omega_2)^++\de}}(\|\mF_jh\|_{H^a_{\omega_1}}+2^{-jN}\|h\|_{H_{-N}^{-N}})(\|\mF_jf\|_{H^{b}_{\omega_2}}+2^{-jN}\|f\|_{H_{-N}^{-N}})\\
&&+\sum_{p>j+3N_0}C_{N,\ell}2^{-(3-2s)(p-j)}\|g\|_{L^2_{(-\omega_3)^++(-\omega_4)^+}}(\|\mF_p h\|_{H^{c_1}_{\omega_3}}+2^{-pN}\|h\|_{H_{-N}^{-N}})(\|\mF_j f\|_{H^{d_1}_{\omega_4}}+2^{-jN}\|f\|_{H_{-N}^{-N}})\\
&&+\notag C_{N,\ell}\|h\|_{L^2_{(-\omega_5)^++(-\omega_6)^+}}(\|\mF_jg\|_{H^{c_2}_{\omega_5}}+2^{-jN}\|g\|_{H_{-N}^{-N}})(\|\mF_jf\|_{H^{d_2}_{\omega_6}}+2^{-jN}\|f\|_{H_{-N}^{-N}})+C_N2^{-2jN}\|g\|_{L^1}\|h\|_{H_{-N}^{-N}}\|f\|_{H_{-N}^{-N}}\\
&&+\notag C_{N,\ell}\|g\|_{L^2_{2+(-\omega_3)^++(-\omega_4)^+}}(\|\mF_jh\|_{H^{c_1}_{\omega_3}}+2^{-jN}\|h\|_{H_{-N}^{-N}})(\|\mF_jf\|_{H^{d_1}_{\omega_4}}+2^{-jN}\|f\|_{H_{-N}^{-N}}))\\
&\ls& C_{N,\ell}(\|g\|_{L^1_{(\ga+2s-1)^++(-\omega_1)^++(-\omega_2)^++\de}}\|h\|_{H^a_{\omega_1}}\|f\|_{H^b_{\omega_2}}+\|g\|_{L^2_{2+(-\omega_3)^++(-\omega_4)^+}}\|h\|_{H^{c_1}_{\omega_3}}\|f\|_{H^{d_1}_{\omega_4}}\\
&&+\|h\|_{L^2_{(-\omega_5)^++(-\omega_6)^+}}\|g\|_{H^{c_2}_{\omega_5}}\|f\|_{H^{d_2}_{\omega_6}}+(\|g\|_{L^1_{(\ga+2s-1)^++(-\omega_1)^++(-\omega_2)^++\de}}+\|g\|_{L^2_{2+(-\omega_3)^++(-\omega_4)^+}})\\
&&\times\|h\|_{H_{-N}^{-N}}\|f\|_{H_{-N}^{-N}}+\|h\|_{L^2_{(-\omega_5)^++(-\omega_6)^+}}\|g\|_{H_{-N}^{-N}}\|f\|_{H_{-N}^{-N}})
\eeno
with $a,b\geq0,a+b=(2\ell +2s-1)1_{2s>1}+(2\ell+2s-1+\de)1_{2s=1}+2\ell 1_{2s<1}$ and $c_j,d_j\geq0,c_j+d_j=(2\ell +2s-1/2)_{2s\geq1/2,2s\neq1}+(2\ell +1/2+\de)1_{2s=1}+2\ell 1_{2s<1/2},j=1,2$. $\om_i\in\R,i=1,\cdots,6$ satisfying $\om_i+\om_{i+1}=\ga+2s-1,i=1,3,5$.
In particular, we have
\beno
&&\left|\sum_{j=-1}^\infty(\F_jQ(g,\<D\>^\ell h)-Q(g,\F_j\<D\>^\ell h),\F_j\<D\>^\ell f)\right|\\
&\ls& C_{N,\ell}(\|g\|_{L^1_{(\ga+2s-1)^++(-\omega_1)^++(-\omega_2)^++\de}}\|\<D\>^\ell h\|_{H^{a_1}_{\omega_1}}\|\<D\>^\ell f\|_{H^{b_1}_{\omega_2}}+\|g\|_{L^2_{2+(-\omega_3)^++(-\omega_4)^+}}\|\<D\>^\ell
h\|_{H^{c_1}_{\omega_3}}\|\<D\>^\ell f\|_{H^{d_1}_{\omega_4}}\\
&&+\|\<D\>^\ell h\|_{L^2_{(-\omega_7)^++(-\omega_8)^+}}\|g\|_{H^{c_3}_{\omega_7}}\|\<D\>^\ell f\|_{H^{d_3}_{\omega_8}}+(\|g\|_{L^1_{(\ga+2s-1)^++(-\omega_1)^++(-\omega_2)^++\de}}+\|g\|_{L^2_{2+(-\omega_3)^++(-\omega_4)^+}})\\
&&\times\|\<D\>^\ell h\|_{H_{-N}^{-N}}\|\<D\>^\ell
f\|_{H_{-N}^{-N}}+\|\<D\>^\ell h\|_{L^2_{(-\omega_7)^++(-\omega_8)^+}}\|g\|_{H_{-N}^{-N}}\|\<D\>^\ell f\|_{H_{-N}^{-N}}).
\eeno
where $a_1,b_1,c_1,d_1$ satisfy $a_1+b_1=(2s-1)1_{2s>1}+(2s-1+\de)1_{2s=1}$ and $a_1=b_1=0$ when $2s<1$. $c_j+d_j=(2s-1/2)1_{2s\geq1/2,2s\neq1}+(1/2+\de)1_{2s=1}$ and $c_j=d_j=0$ when $2s<1/2$,$j=1,3$. $\om_7,\om_8\in\R$ satisfying $\om_7+\om_8=\ga+2s-1.$
Then we conclude the desired results by combining above two estimates.
\end{proof}

 {\bf Acknowledgments.} Chuqi Cao  is supported by grants from Beijing Institute of Mathematical Sciences and Applications and Yau Mathematical Science Center, Tsinghua University.  Ling-Bing He  and Jie Ji are supported by NSF of China under  Grants 11771236 and 12141102.

\end{document}